\renewcommand{\chaptermark}[1]{\markboth{\chaptername\ \thechapter.\ \MakeUppercase{#1}}{} }
\renewcommand{\sectionmark}[1]{\markright{\thesection.\ #1}{} }
\newtheorem{theorem}{\bf Theorem}[chapter]
\newtheorem{assumption}{\bf Assumption}[chapter]
\newtheorem{proposition}{\bf Proposition}[chapter]
\newtheorem{lemma}[theorem]{\bf Lemma}
\newtheorem{corollary}{\bf Corollary}[chapter]
\newtheorem{definition}[theorem]{\bf Definition}
\newtheorem{remark}[theorem]{\bf Remark}
\newtheorem{example}[theorem]{\bf Example}
\newcommand{\beq}{\begin{equation}}
\newcommand{\bea}[1]{\begin{array}{#1} }
\newcommand{\eeq}{ \end{equation}}
\newcommand{\ea}{ \end{array}}
\newcommand{\btab}[1]{\begin{tabular}[t]{#1}}
\newcommand{\etab}{\end{tabular}}
\newcommand{\ds}{\displaystyle}
\newcommand{\ud}{\mathrm{d}}
\newcommand{\incrx}[1]{\left(#1(t_{i+1}^n)-#1(t_i^n)\right)}
\newcommand{\fqv}[1] {finite quadratic variation along $#1$ }
\newcommand{\prop}[1] {Proposition \ref{prop:#1}}
\newcommand{\thm}[1] {Theorem \ref{thm:#1}}
\newcommand{\cor}[1] {Corollary \ref{cor:#1}}
\newcommand{\defin}[1] {Definition \ref{def:#1}}
\newcommand{\ass}[1] {Assumption \ref{ass:#1}}
\newcommand{\rmk}[1] {Remark \ref{rmk:#1}}
\newcommand{\lem}[1] {Lemma \ref{lem:#1}}
\newcommand{\Sec}[1] {Section \ref{sec:#1}}
\newcommand{\chap}[1] {Chapter \ref{chap:#1}}
\newcommand{\eq}[1] {\eqref{eq:#1}}
\newcommand{\pa}[1] {\frac{\partial}{\partial #1}}
\newcommand{\ppa}[1] {\frac{\partial^2}{\partial #1^2}}
\newcommand{\norm}[1]{\left\lVert #1 \right\rVert} 
\newcommand{\abs}[1]{\left\lvert #1 \right\rvert} 
\newcommand{\pqv}[1]{\!\left\langle #1 \right\rangle\!} 
\newcommand{\Et}[2]{\EE\left[ #1 \mid \F_{#2}\right]} 
\newcommand{\conc}[1]{\underset{#1}{\oplus}}
\newcommand{\HRule}{\rule{\linewidth}{0.5mm}}
\def \g {{\gamma}}
\def \eps {{\varepsilon}}
\def \x {{\xi}}
\def \t {{\tau}}
\def \n {{\nu}}
\def \m {{\mu}}
\def \y {{\eta}}
\def \th {{\theta}}
\def \z {{\zeta}}
\def \a {{\alpha}}
\def \d {{\delta}}
\def \k {{\kappa}}
\def \b {{\beta}}
\def \s {{\sigma}}
\def \w {{\omega}}
\def \e {{\epsilon}}
\def \r {{\varrho}}
\def \p {{\varphi}}
\def\l {\lambda}
\def \O {{\Omega}}
\def \De {{\Delta}}
\def \G {{\Gamma}}
\def \L {{\Lambda}}
\def \R  {{\mathbb {R}}} 
\def \N {{\mathcal {N}}}
\def \C {{\mathcal {C}}}
\def \B {{\mathcal {B}}}
\def \A {\mathcal{A}}
\def \M {\mathcal{M}}
\def \P {{\cal{P}}}
\def \F {\mathcal{F}}  
\def \H {\mathcal{H}}
\def \K {\mathcal{K}}
\def \S {\mathcal{S}}
\def \Si {\Sigma}
\def \I {\mathcal{I}}
\def \W {\mathcal{W}}  
\def \WW {\mathbb{W}}
\def \NN {{\mathbb {N}}}
\def \VV  {{\mathbb {V}}}
\def \PP  {{\mathbb {P}}}
\def \QQ  {{\mathbb {Q}}}
\def \EE  {{\mathbb {E}}}  
\def \FF  {{\mathbb {F}}}  
\def \CC {\mathbb{C}}
\def \BB {\mathbb{B}}
\def \Ft {\left(\mathcal{F}_t\right)_{t\in[0,T]}}  
\def \Cloc {\mathbb{C}^{1,2}_{loc}}
\def \Cb {\mathbb{C}^{1,2}_{b}}
\def \LL {\mathcal{L}^2}
\def \DT {D([0,T],\R^d)}
\def \Ft {\left(\mathcal{F}_t\right)_{t\in[0,T]}}  
\def \Cloc {\mathbb{C}^{1,2}_{loc}}
\def \Cb {\mathbb{C}^{1,2}_{b}}
\def \DT {D([0,T],\R^d)}
\def \ind {{\mathds{1}}}
\def \ss {{$\s$-algebra }}
\def \Limn {\lim_{n\rightarrow \infty}}
\def \limn {\xrightarrow[n\rightarrow \infty]{}}
\def \limnp {\xrightarrow[n\rightarrow \infty]{\ \PP\ }}
\def \limucp {\xrightarrow{\ ucp(\PP)\ }}
\def \limmed {\lim\mathrm{med}_n}
\def \zs {, \qquad t\in[0,T]}
\def \MC {{Monte Carlo }}
\def \ito {It\^o}
\def \follmer {F\"{o}llmer}
\def \cadlag {c\`adl\`ag}
\def \caglad {c\`agl\`ad}
\def \lf {\left(}
\def \rg {\right)}
\def \ps {$(\O,\F,(\F_t)_{0\leq t\leq T},\PP)$}
\def \OT {$[0,T]$}
\def \supp {\mathrm{supp}}
\def \over {\overline}
\def \under {\underline}
\def \naf {non-anticipative functional}
\def \vd {\nabla_{\w}}
\def \hd {\mathcal{D}}
\def \dinf {\textrm{d}_{\infty}}
\def \tr {\mathrm{tr}}
\def \rint {\!\!\!\!\!{\phantom{\int}}^{(+)}\!\!\!\!\int}
\def \lint {\!\!\!\!\!{\phantom{\int}}^{(-)}\!\!\!\!\int}
\def \e {{\varepsilon}}
\def \n {{\nu}}
\def \m {{\mu}}
\def \zs {, \qquad t\in[0,T]}
\def \caratt {{\mathds{1}}}
\def \k {{\kappa}}
\def \H {{\tilde{H}}}
\def \MC {{Monte Carlo }}
\def \K {{\nu}}
\def \w {{\omega}}
\def \N {{\mathbb {N}}}
\def \x {{\xi}}
\def \e {{\varepsilon}}
\def \eps {{\varepsilon}}
\def \r {{\varrho}}
\def \tilde {\widetilde}
\def\pa {\partial}
\def \ss {{$\s$-algebra }}
\def \I {\mathcal{I}}
\def \P {{\cal{P}}}
\def \B {\mathscr{B}}
\def \à {{\`a }}
\def \è {{\`e }}
\def \ò {{\`o }}
\def \ù {{\`u }}
\def \xbar {{\bar{x}}}
\def \rle {{\bar{r}}}
\begin{document}

\frontmatter

\newgeometry{margin=3cm,centering}

\begin{titlepage}
\thispagestyle{empty}
\begin{center}

\vspace*{-1cm}

\includegraphics[trim=0 0.2cm 0 0, height=1.8cm,keepaspectratio=true]{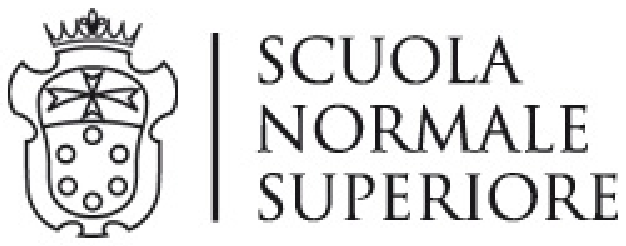}\hfill
\includegraphics[trim=0 -0.04cm 0 0, height=1.4cm,keepaspectratio=true]{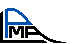}\hfill
\includegraphics[trim=0 0.8cm 0 -2cm, height=2.5cm,keepaspectratio=true]{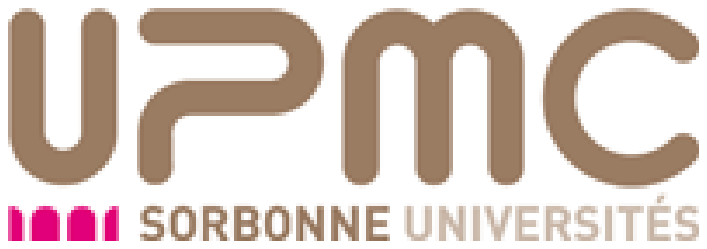}
\vspace{0.3cm}

\textsc{\LARGE Scuola Normale Superiore di Pisa}\\
{\it Classe di Scienze}\\[0.3cm]
\textsc{\LARGE Universit\'e Pierre et Marie Curie}\\
{\'Ecole Doctorale de Sciences Math\'ematiques de Paris Centre}\\
{\it Laboratoire de Probabilit\'es et Mod\`eles Al\'eatoires}

\vfill

\HRule

{\Large \bf Pathwise functional calculus\\and applications to continuous-time finance\\[0.5cm]
Calcul fonctionnel non-anticipatif\\et application en finance}

\HRule

\vfill

{\large  Candia Riga\\
Tesi di perfezionamento in Matematica per la Finanza\\
Th\`ese de doctorat de Math\'ematiques\\[0.3cm]
Dirig\'e par Rama Cont, co-dirig\'e par Sara Biagini\\
}
\vfill
Rapporteurs: Hans F\"ollmer et St\'ephane Cr\'epey
\vfill
Pr\'esent\'ee et soutenue publiquement le 26/06/2015, devant un jury compos\'e de:
\vfill
\begin{tabular}{lll}
Ambrosio Luigi&Scuola Normale Superiore di Pisa&Examinateur\\
Biagini Sara&Universit\`a di Pisa&Co-Directeur\\
Cont Rama&Universit\'e Pierre et Marie Curie&Directeur\\
Cr\'epey St\'ephane&Universit\'e d'Evry&Rapporteur\\
Marmi Stefano&Scuola Normale Superiore di Pisa&Examinateur\\
Tankov Peter	&Universit\'e Denis Diderot&Examinateur
\end{tabular}

\end{center}
\end{titlepage}

\restoregeometry

\chapter*{Abstract}
\markboth{Abstract}{}

This thesis develops a mathematical framework for the analysis of continuous\hyp time trading strategies which, in contrast to the classical setting of continuous-time finance, does not rely on stochastic integrals or other probabilistic notions.

Using the recently developed \lq non-anticipative functional calculus\rq, we first develop a pathwise definition of the gain process for a large class of continuous-time trading strategies which include the important class of delta-hedging strategies, as well as a pathwise definition of the self-financing condition.

Using these concepts, we propose a framework for analyzing the performance and  robustness of delta-hedging strategies for path-dependent derivatives across a given set of scenarios. Our setting allows for general path-dependent payoffs and does not require any probabilistic assumption on the dynamics of the underlying asset, thereby extending previous results on robustness of hedging strategies in the setting of diffusion models. We obtain a pathwise formula for the hedging error for a general path-dependent derivative and provide sufficient conditions ensuring the robustness of the delta hedge. We show in particular that robust hedges may be obtained in a large class of continuous exponential martingale models under a vertical convexity condition on the payoff functional. Under the same conditions, we show that discontinuities in the underlying asset always deteriorate the hedging performance. These results are applied to the case of Asian options and barrier options.

The last chapter, independent of the rest of the thesis,  proposes a novel method, jointly developed with Andrea Pascucci and Stefano Pagliarani, for analytical approximations in local volatility models with L\'evy jumps. The main result is an expansion of the characteristic function in a local L\'evy model, which
is worked out in the Fourier space by consid\'ering the adjoint formulation of the pricing problem.
Combined with standard Fourier methods, our result provides efficient and accurate pricing
formulae. In the case of Gaussian jumps, we also derive an explicit approximation of the
transition density of the underlying process by a heat kernel expansion; the approximation is
obtained in two ways: using PIDE techniques and working in the Fourier space. Numerical tests
confirm the effectiveness of the method.

\chapter*{Sommario}
\markboth{Sommario}{}

Questa tesi sviluppa un approccio \lq per traiettorie\rq\ alla modellizzazione dei mercati finanziari in tempo continuo, senza fare ricorso a delle ipotesi probabilistiche o a dei modelli stocastici. Lo strumento principale utilizzato in questa tesi \`e il calcolo funzionale non-anticipativo, una teoria analitica che sostituisce il calcolo stocastico solitamente utilizzato in finanza matematica.

Cominciamo nel Capitolo 1 introducendo la teoria di base del calcolo funzionale non-anticipativo e i suoi principali risultati che utilizzeremo nel corso della tesi.
Il Capitolo 2 mostra in dettaglio la versione probabilistica di tale calcolo, soprannominata \emph{Calcolo di \ito\ funzionale}, e mostra come essa permetta di estendere i risultati classici sulla valutazione e la replicazione dei derivati finanziari al caso di opzioni dipendenti dalla traiettoria dei prezzi. Inoltre illustriamo la relazione tra le equazioni alle derivate parziali con coefficienti dipendenti dal cammino e le equazioni differenziali stocastiche \lq backward\rq. Infine prendiamo in consid\'erazione altre nozioni deboli di soluzione a tali equazioni alle derivate parziali dipendenti dal cammino, utilizzate nella letteratura nel caso in cui non esistano soluzioni classiche.

In seguito, nel Capitolo 3, costruiamo un modello di mercato finanziario in tempo continuo, senza ipotesi probabilistiche e con un orizzonte temporale finito, dove i tempi di transazione sono rappresentati da una sequenza crescente di partizioni temporali, il cui passo converge a 0.
Identifichiamo le traiettorie \lq plausibili\rq\ con quelle che possiedono una variazione quadratica finita, nel senso di F\"ollmer, lungo tale sequenza di partizioni.
Tale condizione di plausibilit\`a sull'insieme dei cammini ammissibili rispetta il punto di vista delle condizioni \lq per traiettorie\rq\ di non-arbitraggio.

Completiamo il quadro introducendo una nozione \lq per traiettorie\rq\  di strategie auto-finanzianti su un insieme di traiettorie di prezzi. Queste strategie sono definite come limite di strategie semplici e auto-finanzianti, i cui tempi di transizione appartengono alla sequenza di partizioni temporali fissata.
Identifichiamo una classe speciale di strategie di trading che dimostriamo essere auto-finanzianti e il cui guadagno pu\`o essere calcolato traiettoria per traiettoria come limite di somme di Riemann.
Inoltre, presentiamo un risultato di replicazione per traiettorie e una formula analitica esplicita per stimare l'errore di replicazione.
Infine, definiamo una famiglia di operatori integrali indicizzati sui cammini come delle isometrie tra spazi normati completi.

Il Capitolo 4 utilizza questo quadro teorico per proporre un'analisi per traiettorie delle strategie di replicazione dinamica. Ci interessiamo in particolare alla robustezza della loro performance nel caso della replicazione di derivati dipendenti dalla traiettoria dei prezzi e monitorati in tempo continuo. Supponiamo che l'agente di mercato utilizzi un modello di martingala esponenziale di quadrato integrabile per calcolare il prezzo e il portafoglio di replicazione; analizziamo quindi la performance della strategia di delta-hedging quando viene applicata alla traiettoria realizzata dei prezzi del sottostante piuttosto che a una dinamica stocastica.

Innanzitutto, consid\'eriamo il caso in cui disponiamo di un funzionale di prezzo regolare e mostriamo che la replicazione tramite delta-hedging \`e robusta se la derivata verticale seconda del funzionale di prezzo ha lo stesso segno della differenza tra la volatilit\`a del modello e la volatilit\`a realizzata dei prezzi di mercato. Otteniamo cos\`i una formula esplicita per l'errore di replicazione data una traiettoria. Questa formula \`e l'analogo per traiettorie del risultato ottenuto da EL Karoui et al (1997) e la generalizza al caso dipendente dalla traiettoria, senza ricorrere a delle ipotesi probabilistiche o alla propiet\`a di Markov circa la dinamica reale dei prezzi di mercati. Presentiamo infine delle codizioni sufficienti affinch\'e il funzionale di valutazione abbia la regolarit\`a richiesta per tali risultati sullo spazio dei cammini continui.

Questi risultati permettono di analizzare la robustezza delle strategie di replicazione dinamica.
Forniamo una condizione sufficiente sul funzionale di payoff che assicura la positivit\`a della derivata verticale seconda del funzionale di prezzo, ovvero la convessit\`a di una certa funzione reale.
Analizziamo ugualmente il contributo di salti della traiettoria dei prezzi all'errore di replicazione ottenuto agendo sul mercato secondo la strategia di delta-hedging. Osserviamo che le discontinuit\`a deteriorano la performance della replicazione.
Nel caso speciale di un modello Black-Scholes generalizzato utilizzato dall'agente, se il derivato venduto ha un payoff monitorato a tempo discreto, allora il funzionale di prezzo \`e localmente regolare su tutto lo spazio dei cammini continui stoppati e le sue derivate, verticale e orizzontale, sono date in forma esplicita.
consid\'eriamo anche il caso di un modello con volatilit\`a dipendente dalla traiettoria dei prezzi, il modello Hobson-Rogers, e mostriamo come il problema di pricing sia anche in quel caso riconducibile all'equazione di pricing universale introdotta nel secondo capitolo.
Infine, mostriamo qualche esempio di applicazione della nostra analisi, precisamente la replicazione di opzioni asiatiche e barriera.

L'ultimo capitolo \`e uno studio indipendente dal resto della tesi, sviluppato insieme ad Andrea Pascucci e Stefano Pagliarani, in cui proponiamo un nuovo metodo di approssimazione analatica in modelli a volatilit\`a locale con salti di tipo L\'evy. Il risultato principale \`e un'espansione in serie della funzione caratteristica in un modello di L\'evy locale, ottenuta nello spazio di Fourier consid\'erando la formulazione aggiunta del problema di \lq pricing\rq. Congiuntamente ai metodi di Fourier standard, il nostro risultato fornisce formule di \lq pricing\rq\ efficienti e accurate. Nel caso di salti gaussiani, deriviamo anche un'approssimazione esplicita della densit\`a di transizione del processo sottostante tramite un'espansione con nucleo del calore; tale approssimazione \`e ottenuta in due modi: usando tecniche PIDE e lavorando nello spazio di Fourier. Test numerici confermano l'efficacit\`a del metodo.

\chapter*{R\'esum\'e}
\markboth{R\'esum\'e}{}

Cette th\`ese d\'eveloppe une approche trajectorielle pour  la mod\'elisation des march\'es financiers en temps continu, sans faire appel \`a des hypoth\`eses probabilistes ou \`a des mo\-d\`eles  stochastiques. L'outil principal dans cette th\`ese est le calcul fonctionnel non-anticipatif, un  cadre analytique qui remplace le calcul stochastique habituellement utilis\'e en finance
math\'ematique.

Nous commen\c cons dans le Chapitre 1 par introduire la th\'eorie de base du calcul fonctionnel non-anticipatif et ses principaux r\'esultats que nous utilisons tout au long de la th\`ese.
Le Chapitre 2 d\'etaille la contrepartie probabiliste de ce calcul, le Calcul d'\ito\ fonctionnel, et montre comment ce calcul permet d'\'etendre les r\'esultats classiques sur l'\'evaluation et la couverture des produits d\'eriv\'es au cas des options avec une d\'ependance trajectorielle. Par ailleurs, nous d\'ecrivons la relation entre les \'equations aux d\'eriv\'ees partielles avec coefficients d\'ependant du chemin et les \'equations diff\'erentielles stochastiques r\'etrogrades. Finalement, nous consid\'erons d'autres notions plus faibles de solution \`a ces \'equations aux d\'eriv\'ees partielles avec coefficients d\'ependant du chemin, lesquelles sont utilis\'ees dans la litt\'erature au cas o\`u des solutions classiques n'existent pas.

Ensuite nous mettons en place, dans le Chapitre 3, un mod\'ele de march\'e financier en temps continu, sans hypoth\`eses probabilistes et avec un horizon fini o\`u les temps de transaction sont repr\'esent\'es par  une suite embo\^it\'ee de partitions dont le pas converge vers $0$.
Nous proposons une condition de plausibilit\'e sur l'ensemble des chemins admissibles du point de vue des conditions trajectorielles de non-arbitrage.
Les trajectoires \lq plausibles\rq\ sont r\'ev\'el\'ees avoir une variation quadratique finie, au sens de F\"ollmer, le long de cette suite de partitions.

 Nous compl\'etons le cadre en introduisant une notion trajectorielle de strat\'egie  auto-finan\c cante sur un ensemble de trajectoires de prix.
\\Ces strat\'egies sont d\'efinies comme des limites de strat\'egies simples et auto-finan\c cantes, dont les temps de transactions appartiennent \`a la suite de partitions temporelles fix\'ee.
Nous identifions une classe sp\'eciale de strat\'egies de trading que nous prouvons \^etre auto-finan\c cantes et dont le gain peut \^etre calcul\'e trajectoire par trajectoire comme limite de sommes de Riemann.
Par ailleurs, nous pr\'esentons un r\'esultat de r\'eplication trajectorielle et une formule analytique explicite pour estimer l'erreur de couverture.
Finalement nous d\'efinissons une famille d'op\'erateurs int\'egrals trajectoriels (index\'es par les chemins) comme des isom\'etries entre des espaces norm\'es complets.

Le Chapitre 4 emploie ce cadre th\'eorique pour proposer une analyse trajectorielle des strat\'egies de couverture dynamique.
Nous nous int\'eressons en particulier \`a la robustesse de leur performance dans la couverture de produits d\'eriv\'es \emph{path-dependent} monitor\'es en temps continu. Nous supposons que l'agent utilise un mod\`ele de martingale exponentielle de carr\'e int\'egrable pour calculer les prix et les portefeuilles de couverture, et nous analysons la performance de la strat\'egie delta-neutre lorsqu'elle est appliqu\'ee \`a la trajectoire du prix sous-jacent r\'ealis\'e plut\^ot qu'\`a une dynamique stochastique.
D'abord nous consid\'erons le cas o\`u nous disposons d'une fonctionnelle de prix r\'eguli\`ere et nous montrons que la couverture delta-neutre est robuste si la d\'eriv\'ee verticale seconde de la fonctionnelle de prix est du m\^eme signe que la diff\'erence entre la volatilit\'e du mod\`ele et la volatilit\'e r\'ealis\'ee du march\'e. Nous obtenons aussi une formule explicite pour l'erreur de couverture sur une trajectoire donn\'ee. Cette formule est l'analogue trajectorielle du r\'esultat de El Karoui et al (1997) et le g\'en\'eralise  au cas \emph{path-dependent}, sans faire appel \`a des hypoth\'eses probabilistes ou \`a la propri\'et\'e de Markov. Enfin nous pr\'esentons des conditions suffisantes  pour que la fonctionnelle d'\'evaluation ait la r\'egularit\'e requise pour ces r\'esultats sur l'espace des chemins continus.

Ces r\'esultats permettent d'analyser la robustesse  des strat\'egies de couverture dynamiques.
Nous fournissons une condition suffisante sur la fonctionnelle de payoff qui assure la positivit\'e de la d\'eriv\'e verticale seconde de la fonctionnelle d'\'evaluation, i.e. la convexit\'e d'une certaine fonction r\'eelle.
Nous analysons \'egalement la contribution des sauts de la trajectoire des prix \`a l'erreur de couverture obtenue en \'echangeant sur le march\'e selon la strat\'egie delta-neutre. Nous remarquons que les discontinuit\'es d\'et\'eriorent la performance de la couverture. Dans le cas sp\'ecial d'un mod\`ele Black-Scholes g\'en\'eralis\'e utilis\'e par l'agent, si le produit d\'eriv\'e vendu a un payoff monitor\'e en temps discret, alors la fonctionnelle de prix est localement r\'eguli\`ere sur tout l'espace des chemins continus arr\^et\'es et ses d\'eriv\'ees verticale et horizontale sont donn\'ees dans une forme explicite. Nous consid\'erons aussi le cas d'un mod\`ele avec volatilit\'e d\'ependante de la trajectoire des prix, le mod\`ele Hobsons-Rogers, et nous montrons comment le probl\`eme de \lq pricing\rq\ peut encore \^etre r\'eduit \`a l'\'equation universelle introduite dans le Chapitre 2.
Finalement, nous montrons quelques applications de notre analyse, notamment la couverture des options Asiatiques et barri\`eres.

Le dernier chapitre, ind\'ependant du reste de la th\`ese,  est une \'etude en collaboration avec Andrea Pascucci and Stefano Pagliarani, o\`u nous proposons une nouvelle m\'ethode pour l'approximation analytique dans des mo\-d\`eles \`a volatilit\'e locale avec des sauts de type L\'evy.
Le r\'esultat principal est un d\'eveloppement asymptotique de la fonction caract\'eristique dans un mod\`ele de L\'evy local, qui est obtenu dans l'espace de Fourier en consid\'erant la formulation adjointe du probl\`eme de \lq pricing\rq.
Associ\'e aux m\'ethodes de Fourier standard, notre r\'esultat fournit des approximations pr\'ecises du prix.
Dans le cas de sauts gaussiens, nous d\'erivons aussi une approximation explicite de la densit\'e de transition du processus sous-jacent \`a l'aide d'une expansion avec noyau de la chaleur; cette approximation est obtenue de deux fa\c cons: en utilisant des techniques PIDE et en travaillant dans l'espace de Fourier.
Des test num\'eriques confirment l'efficacit\'e de la m\'ethode.

\chapter*{Acknowledgments}
\markboth{Acknowledgments}{}

First, I would like to thank my advisor Professor Rama Cont for giving me the valuable opportunity to be under his guidance and to join the team at the \emph{Laboratoire de Probabilit\'e et mod\`eles al\'eatoires} in Paris, for sharing with me his precious insights, and for encouraging my continuation in the academic research. He also tought me to have a more independent attitude to research.
I would also like to thank my co-supervisor Professor Sara Biagini for her patient and precious support at the beginning of my research project in Pisa.

I really thank Professors Stefano Marmi, Fabrizio Lillo and Mariano Giaquinta for awarding me the PhD fellowship at the \emph{Scuola Normale Superiore} in Pisa, and Stefano for his helpfulness as my tutor and for the financial support.
I am very thankful to Professors Franco Flandoli and Maurizio Pratelli who welcomed me at my arrival in Pisa, let me join their seminars at the department of Mathematics, and were always available for helpful discussions and advises.

I am very thankful to my master thesis advisor, Professor Andrea Pascucci, for guiding me into research, for his sincere advises and his constant availability. I thank as well Professor Pierluigi Contucci for his helpful encouragement and his collaboration, and Professors Paolo Foschi and Hykel Hosni for inviting me to present my ongoing research at the workshops organized respectively at \emph{Imperial College of London} and at \emph{Centro di Ricerca Matematica Ennio De Giorgi} in Pisa.

I would like to thank all my colleagues and friends that shared these three years of PhD with me in Pisa, Paris and Cambridge, especially Adam, Giacomo, Mario, Dario, Giovanni, Laura A., Alessandro, Olga, Fernando, Pierre-Louis, Eric A., Nina, Alice, Tibault, Nils, Hamed. I especially thank Professor Rama Cont's other PhD students, for their friendship and reciprocal support: Yi, Eric S., Laura S. and Anna. 

I thank all the administrative teams at \emph{Scuola Normale Superiore} in Pisa, at \emph{Laboratoire de Probabilit\'e et mod\`eles al\'eatoires} in Paris and at the \emph{Newton Institute for Mathematical Sciences} in Cambridge (UK), for being always very helpful.
I also thank the French Embassy in Rome for financing my staying in Paris and the \emph{Isaac Newton Institute} for financing my staying in Cambridge.

I am grateful to Professors Ashkan Nikeghbali and Jean-Charles Rochet for inviting me at the University of Zurich, for their interest and trust in my research, and for doing me the honour of welcoming me in their team.
I also thank Delia for her friendship and support since my arrival in Zurich.

I am very grateful to Professors Hans F\"ollmer and St\'ephane Cr\'epey for accepting to be referees for this thesis, for helpful comments, and for the very nice and supporting reports.
I would like to thank as well Professors Luigi Ambrosio and Peter Tankov for agreeing to be examiners for my defense.

Last but not least, I am infinitely thankful to my parents who have always supported me, my brother Kelvin for making me stronger, and my significant other Thomas for his love and for always inciting me to follow the right path for my studies even when this implied a long distance between us.

\markboth{\MakeUppercase{Contents}}{}
\tableofcontents

\chapter*{Notation}
\addcontentsline{toc}{chapter}{Notation}
\markboth{Notation}{}

\subsubsection{Acronyms and abbreviations}
\begin{description}
  \item[\cadlag] = right continuous with left limits
  \item[\caglad] = left continuous with right limits
  \item[SDE] = stochastic differential equation
  \item[BSDE] = backward stochastic differential equation
  \item[PDE] = partial differential equation
  \item[FPDE] = functional partial differential equation
  \item[PPDE] = path-dependent partial differential equation
  \item[EMM] = equivalent martingale measure
  \item[NA] = no-arbitrage condition
  \item[NA1] = ``no arbitrage of the first kind'' condition
  \item[NFL] = ``no free lunch'' condition
  \item[NFLVR] = ``no free lunch with vanishing risk'' condition
  \item[s.t.] = such that
  \item[a.s.] = almost surely
  \item[a.e.] = almost everywhere
  \item[e.g.] = exempli gratia $\equiv$ example given
  \item[i.e.] = id est $\equiv$ that is
\end{description}

\subsubsection{Basic mathematical notation}
\begin{description}
  \item[$\R^d_+$] = positive orthant in $\R^d$
  \item[\mbox{$\DT$ (resp. $D([0,T],\R^d_+)$)}] = space of \cadlag\ functions from $[0,T]$ to $\R^d$ (respectively $\R^d_+$), $d\in\NN$
  \item[\mbox{$C([0,T],\R^d_+)$ (resp. $C([0,T],\R^d_+)$)}] = space of continuous functions from $[0,T]$ to $\R^d$ (respectively $\R^d_+$), $d\in\NN$
  \item[$\S^d_+$] = set of symmetric positive-definite $d\times d$ matrices
  \item[$\FF=\Ft$] = natural filtration generated by the coordinate process 
  \item[$\FF^X=\Ft^X$] = natural filtration generated by a stochastic process $X$
  \item[$\EE^\PP$] = expectation under the probability measure $\PP$
  \item[$\xrightarrow{\ \PP\ }$] = limit in probability $\PP$
  \item[$\xrightarrow{\ ucp(\PP)\ }$] = limit in the topology defined by uniform convergence on compacts in probability $\PP$
  \item[$\cdot$] = scalar product in $\R^d$ (unless differently specified)
  \item[$\pqv{\cdot}$] = Frobenius inner product in $\R^{d\times d}$ (unless differently specified)
  \item[$\norm{\cdot}_\infty$] = sup norm in spaces of paths, e.g in $D([0,T],\R^d)$, $C([0,T],\R^d)$, $D([0,T],\R^d_+)$, $C([0,T],\R^d_+)$,\ldots
  \item[$\norm{\cdot}_p$] = $L^p$-norm, $1\leq p\leq\infty$
  \item[{$[\cdot]$} {($[\cdot,\cdot]$)}] = quadratic (co-)variation process
  \item[$\bullet$] = stochastic integral operator
  \item[$\tr$] = trace operator, i.e. $\tr(A)=\sum_{i=1}^dA_{i,i}$ where $A\in\R^{d\times d}$.
  \item[${}^t\!A$] = transpose of a matrix $A$
  \item[$x(t-)$] = left limit of $x$ at $t$, i.e. $\lim_{s\nearrow t}x(s)$
  \item[$x(t+)$] = right limit of $x$ at $t$, i.e. $\lim_{s\searrow t}x(s)$
  \item[$\De x(t)\equiv \De^-x(t)$] = left-side jump of $x$ at $t$, i.e. $x(t)-x(t-)$
  \item[$\De^+x(t)$] = right-side jump of $x$ at $t$, i.e. $x(t+)-x(t)$
  \item[$\partial_x$] = $\pa{x}$
  \item[$\partial_{xy}$] = $\frac{\partial^2}{\partial x \partial y}$
\end{description}

\subsubsection{Functional notation}
\begin{description}
\item[$x(t)$] = value of $x$ at time $t$, e.g. $x(t)\in\R^d$ if $x\in\DT$; 
\item[$x_t$] = $x(t\wedge\cdot)\in\DT$ the path of $x$ \lq stopped\rq\ at the time $t$; 
\item[$x_{t-}$] = $x\ind_{[0,t)}+x(t-)\ind_{[t,T]}\in\DT$; 
\item[$x_t^\d$] = $x_t+\d\ind_{[t,T]}\in\DT$ the \textit{vertical perturbation} -- of size and direction given by the vector  $\d\in\R^d$ -- of the path of $x$ stopped at $t$ over the future time interval $[t,T]$; 
\item[$\L_T$] = space of (\cadlag) stopped paths
\item[$\W_T$] = subspace of $\L_T$ of continuous stopped paths
\item[$\dinf$] = distance introduced on the space of stopped paths
  \item[$\hd F$] = horizontal derivative of a \naf\ $F$
  \item[$\vd F$] = vertical derivative of a \naf\ $F$
  \item[$\nabla_X$] = vertical derivative operator defined on the space of square-integrable $\F^X$-martingales
\end{description}

\mainmatter

\chapter*{Introduction}
\addcontentsline{toc}{chapter}{Introduction}
\markboth{\MakeUppercase{Introduction}}{}

\label{chap:intro}


The mathematical modeling of financial markets dates back to 1900, with the doctoral thesis~\citep{bachelier} of Louis Bachelier, who first introduce the Brownian motion as a model for the price fluctuation of a liquid traded financial asset. After a long break, in the mid-sixties, \citet{sam} revived Bachelier's intuition by proposing the use of geometric Brownian motion which, as well as stock prices, remains positive. This became soon a reference financial model, thanks to \citet{bs} and \citet{merton}, who derived closed formulas for the price of call options under this setting, later named the ``Black-Scholes model'', and introduced the novelty of linking the option pricing issue with hedging.
The seminal paper by \citet{harr-pliska} linked the theory of continuous-time trading to the theory of stochastic integrals, which has been used ever since as the standard setting in Mathematical Finance.

Since then, advanced stochastic tools have been used to describe the price dynamics of financial assets and its interplay with the pricing and hedging of financial derivatives contingent on the trajectory of the same assets.
The common framework has been to model the financial market as a filtered probability space \ps\ under which the prices of liquid traded assets are represented by stochastic processes $X=(X_t)_{t\geq0}$ and the payoffs of derivatives as functionals of the underlying price process. The probability measure $\PP$, also called \textit{real world, historical, physical} or \textit{objective} probability tries to capture the observed patterns and, in the equilibrium interpretation, represents the (subjective) expectation of the ``representative investor''. The objective probability must satisfy certain constraints of market efficiency, the strongest form of which requires $X$ to be a $\Ft$-martingale under $\PP$. However, usually, weaker forms of market efficiency are assumed by no-arbitrage considerations, which translate, by the several versions of the Fundamental Theorem of Asset Pricing (see \cite{schachermayer,schachermayerEMM} and references therein), to the existence of an equivalent \textit{martingale} (or \textit{risk-neutral}) \textit{measure} $\QQ$, that can be interpreted as the expectation of a ``risk-neutral investor'' as well as a consistent price system describing the market consensus.
The first result in this stream of literature (concerning continuous-time financial models) is found in \citet{ross78} in 1978, where the \emph{no-arbitrage} condition (NA) is formalized, then major advances came in 1979 by \citet{harr-kreps} and in 1981 by \citet{harr-pliska} and in particular by \citet{kreps81}, who introduced the \emph{no free lunch} condition (NFL), proven to be equivalent to the existence of a local martingale measure. More general versions of the Fundamental Theorem of Asset Pricing are due to \citet{ds94,ds98}, whose most general statement pertains to a general multi-dimensional semimartingale model and establishes the equivalence between the condition of \emph{no free lunch with vanishing risk} (NFLVR) and the existence of a sigma-martingale measure.
The model assumption that the price process behaves as a semimartingale comes from the theory of stochastic analysis, since it is known that there is a good integration theory for a stochastic process $X$ if and only if it is a semimartingale. At the same time, such assumption is also in agreement with the financial reasoning, as it is shown in~\cite{ds94} that a very weak form of no free lunch condition, assuring also the existence of an equivalent local martingale measure, is enough to imply that if $X$ is locally bounded then it must be a semimartingale under the objective measure $\PP$.
In \cite{ds-book} the authors present in a ``guided tour'' all important results pertaining to this theme.

The choice of an objective probability measure is not obvious and always encompasses a certain amount of model risk and model ambiguity. 
Recently, there has been a growing emphasis on the dangerous consequences of relying on a specific probabilistic model.
The concept of the so-called \textit{Knightian uncertainty}, introduced way back in 1921 by Frank Knight~\citep{knight}, while distinguishing between ``risk'' and ``uncertainty'', is still as relevant today and led to a new challenging research area in Mathematical Finance.
More fundamentally, the existence of a single objective probability does not even make sense, agreeing with the criticism raised by \citet{definetti31,definetti37}.

After the booming experienced in the seventies and eighties, in the late eighties the continuous-time modeling of financial markets evoked new interpretations that can more faithfully represent the economic reality.
In the growing flow of literature addressing the issue of model ambiguity, we may recognize two approaches:
\begin{itemize}
\item \textbf{model-independent}, where the single probability measure $\PP$ is replaced by a family $\P$ of plausible probability measures;
\item  \textbf{model-free}, that eliminates probabilistic a priori assumptions altogether, and relies instead on pathwise statements.
\end{itemize}

The first versions of the Fundamental Theorem of Asset Pricing under model ambiguity are presented in \cite{bouchard-nutz,bfm,abps} in discrete time, and \cite{sara-bkn} in continuous time, using a model-independent approach.

The model-free approach to effectively deal with the issue of model ambiguity also provides a solution to another problem affecting the classical probabilistic modeling of financial markets.
Indeed, in continuous-time financial models, the gain process of a self-financing trading strategy is represented as a stochastic integral.
However, despite the elegance of the probabilistic representation, some real concerns arise.
Beside the issue of the impossible consensus on a probability measure, the representation of the gain from trading lacks a pathwise meaning: while being a limit in probability of approximating Riemann sums, the stochastic integral does not have a well-defined value on a given \lq state of the world\rq.
This causes a gap in the use of probabilistic models, in the sense that it is not possible to compute the gain of a trading portfolio given the realized trajectory of the underlying asset price, which constitutes a drawback in terms of interpretation.

Beginning in the nineties, a new branch of the literature has addressed the issue of pathwise integration in the context of financial mathematics.


The approach of this thesis is probability-free. In the first part, we set up a framework for continuous-time trading where everything has a pathwise characterization. This purely analytical structure allows us to effectively deal with the issue of model ambiguity (or Knightian uncertainty) and the lack of a path-by-path computation of the gain of trading strategies.

A breakthrough in this direction was the seminal paper written by \citet{follmer} in 1981. He proved a pathwise version of the \ito\ formula, conceiving the construction of an integral of a $C^1$-class function of a \cadlag\ path with respect to that path itself, as a limit of non-anticipative Riemann sums. His purely analytical approach does not ask for any probabilistic structure, which may instead come into play only at a later point by considering stochastic processes that satisfy almost surely, i.e. for almost all paths, the analytical requirements. In this case, the so-called \emph{F\"ollmer integral} provides a path-by-path construction of the stochastic integral.
F\"ollmer's framework turns out to be of main interest in finance (see also \cite{schied-CPPI}, \cite[Sections 4,5]{follmer-schied}, and \cite[Chapter 2]{sondermann}) as it allows to avoid any probabilistic assumption on the dynamics of traded assets and consequently to avoid any model risk/ambiguity. Reasonably, only observed price trajectories are involved.

In 1994, \citet{bickwill} provided an interesting economic interpretation of F\"ollmer's pathwise calculus, leading to new perspectives in the mathematical modeling of financial markets.
Bick and Willinger reduced the computation of the initial cost of a replicating trading strategy to an exercise of analysis. Moreover, for a given price trajectory (state of the world), they showed one is able to compute the outcome of a given trading strategy, that is the gain from trade. 
Other contributions towards the pathwise characterization of stochastic integrals have been obtained via probabilistic techniques by Wong and Zakai (1965), \citet{bichteler}, \citet{karandikar} and \citet{nutz-int} (only existence), and via convergence of discrete-time economies by \citet{willtaq}.

We are interested only in the model-free approach: we set our framework in a similar way to \cite{bickwill}, and we enhance it  by the aid of the pathwise calculus for \naf s, 
developed by \citet{contf2010}. This theory extends the F\"ollmer's pathwise calculus to a large class of non-anticipative functionals.

Another problem related to the model uncertainty, addressed in the second part of this thesis is the \emph{robustness} of hedging strategies used by market agents to cover the risks involved in the sale of financial derivatives.
The issue of robustness came to light in the nineties, dealing mostly with the analysis of the performance, in a given complete model, of pricing and hedging simple payoffs under a mis-specification of the volatility process.
The problem under consideration is the following. Let us imagine a market participant who sells an (exotic) option with payoff $H$ and maturity $T$ on some underlying asset which is assumed to follow some model (say, Black-Scholes), at price given by
$$ V_t = E^{\mathbb{Q}}[H|{\cal F}_t]$$
and hedges the resulting profit and loss using the hedging strategy derived from the same model (say, Black-Scholes delta hedge for $H$).
However, the {\it true} dynamics of the underlying asset may, of course, be different from the assumed dynamics.
Therefore, the hedger is interested in a few questions: How good is the result of the hedging strategy? How 'robust' is it to model mis-specification? How does the hedging error relate to model parameters and option characteristics?
In 1998, \citet{elkaroui} provided an answer to the important questions above in the setting of diffusion models, for non-path-dependent options. They provided an explicit formula for the profit and loss, or \textit{tracking error} as they call it, of the hedging strategy. Specifically, they show that if the underlying asset follows a Markovian diffusion
$$\ud S_t= r(t)S(t)\ud t+ S(t)\sigma(t) \ud W(t) \qquad \text{under}\ \mathbb{P}$$
such that the discounted price $S/M$ is a square-integrable martingale, then a hedging strategy computed in a (mis-specified) model with local volatility $\sigma_0$, satisfying some technical conditions, leads to a tracking error equal to
$$\int_0^T \frac{\sigma_0^{2}(t,S(t))-\sigma^{2}(t)}{2}S(t)^2e^{\int_t^T r(s)\ud s}\overbrace{\partial_{xx}^2f(t,S(t))}^{\Gamma(t)}\ud t,$$
$\PP$-almost surely. This fundamental equation, called by \citet{davis} \lq the most important equation in option pricing theory\rq, shows that the exposure of a mis-specified delta hedge over a short time period is proportional to the Gamma of the option times the specification error measured in quadratic variation terms.
Other two papers studying the monotonicity and super-replication properties of non-path-dependent option prices under mis-specified models are \cite{bergman} and \cite{hobson}, respectively by PDE and coupling techniques.
The robustness of dynamic hedging strategies in the context of model ambiguity has been considered by several authors in the literature (\citet{bickwill,avlevyparas,lyons,cont2006}).
\citet{ss} studied the robustness of delta hedging strategies for discretely monitored path-dependent derivatives in a Markovian diffusion (\lq local volatility\rq) model from a pathwise perspective: they looked at the performance of the delta hedging strategy derived from some model when applied to the realized underlying price path, rather than to some supposedly true stochastic dynamics. 
In the present thesis, we investigate the robustness of delta hedging from this pathwise perspective, but we consider a general square-integrable exponential model used by the hedger for continuously - instead of discretely - monitored path-dependent derivatives.
In order to conduct this pathwise analysis, we resort to the pathwise functional calculus developed in \citet{contf2010} and the functional \ito\ calculus developed in \cite{contf2013,cont-notes}. In particular we use the results of \chap{path-trading} of this thesis, which provide an analytical framework for the analysis of self-financing trading strategies in a continuous-time financial market.

The last chapter of this thesis deals with a completely different problem, that is the search for accurate approximation formulas for the price of financial derivatives under a model with local volatility and L\'evy-type jumps.
Precisely, we consider a one-dimensional {\it local L\'evy model}: the risk-neutral dynamics of the underlying log-asset process $X$ is given by
$$\ud X(t)=\m(t,X(t-))\ud t+\s(t,X(t)) \ud W(t)+ \ud J(t),$$
where $W$ is a standard real Brownian motion on a filtered probability space
$(\O,\F,(\F_t)_{0\leq t\leq T},\mathbb{P})$ with the usual assumptions on the filtration and $J$
is a pure-jump L\'evy process, independent of $W$, with L\'evy triplet $(\m_{1},0,\n)$.
Our main result is a fourth order approximation formula of the characteristic function $\phi_{X^{t,x}(T)}$ of the log-asset price $X^{t,x}(T)$ starting from $x$ at time $t$, that is
 $$\phi_{X^{t,x}(T)}(\x)=E^\PP\left[e^{i\x X^{t,x}(T)}\right],\qquad \x\in\R,$$
In some particular cases, we also obtain an explicit approximation of the transition density of $X$.

Local L\'evy models of this form have attracted an increasing
interest in the theory of volatility modeling (see, for instance, \cite{AndersenAndreasen2000},
\cite{CGMY2004} and \cite{ContLantosPironneau2011}); however to date only in a few cases closed
pricing formulae are available. Our approximation formulas provide a way to compute efficiently and accurately option prices and sensitivities by using
standard and well-known Fourier methods (see, for instance, Heston \cite{Heston1993}, Carr and
Madan \cite{CarrMadan1999}, Raible \cite{Raible2000} and Lipton \cite{Lipton2002}).

We derive the approximation formulas by introducing an ``adjoint'' expansion method:
this is worked out in the Fourier space by considering the adjoint formulation of the pricing
problem.
Generally speaking, our approach makes use of Fourier analysis and PDE techniques.

The thesis is structured as follows:

\paragraph{Chapter 1}
The first chapter introduces the pathwise functional calculus, as developed by Cont and Fourni\'e \cite{contf2010,cont-notes}, and states some of its key results.
The most important theorem is a change-of-variable formula extending the pathwise \ito\ formula proven in \cite{follmer} to \naf s, and applies to a class of paths with finite quadratic variation. The chapther then includes a discussion on the different notions of \emph{quadratic variation} given by different authors in the literature.

\paragraph{Chapter 2}
The second chapter presents the probabilistic counterpart of the pathwise functional calculus, the so-called \lq functional \ito\ calculus\rq, following the ground-breaking work of Cont and Fourni\'e \cite{ContFournie09a,contf2013,cont-notes}.
Moreover, the weak functional calculus, which applies to a large class of square-integrable processes, is introduced. Then, in \Sec{kolmogorov} we show how to apply the functional \ito\ calculus to extend the relation between Markov processes and partial differential equations to the path-dependent setting. These tools have useful applications for the pricing and hedging of path-dependent derivatives. In this respect, we state the universal pricing and hedging formulas. Finally, in \Sec{PPDE}, we report the results linking forward-backward stochastic differential equations to path-dependent partial differential equations and we recall some of the recent papers investigating weak and viscosity solutions of such path-dependent PDEs.

\paragraph{Chapter 3}
\Sec{lit-path} presents a synopsis of the various approaches in the literature attempting a pathwise construction of stochastic integrals, and clarifies the connection with appropriate no-arbitrage conditions.
In \Sec{setting}, we set our analytical framework 
 and we start by defining \emph{simple trading strategies}, whose trading times are covered by the elements of a given sequence of partitions of the time horizon $[0,T]$ and for which the self-financing condition is straightforward.
We also remark on the difference between our setting and the ones presented in \Sec{arbitrage} about no-arbitrage and we provide some kind of justification, in terms of a condition on the set of admissible price paths, to the assumptions underlying our main results. 
In \Sec{self-fin}, we define equivalent self-financing conditions for (non-simple) trading strategies on a set of paths, whose gain from trading is the limit of gains of simple strategies and satisfies the pathwise counterpart equation of the classical self-financing condition. Similar conditions were assumed in \cite{bickwill} for convergence of general trading strategies.
In \Sec{gain}, we show the first of the main results of the chapter: in \prop{G} for the continuous case and in \prop{G-cadlag} for the \cadlag\ case, we obtain the path-by-path computability of the gain of path-dependent trading strategies in a certain class of $\R^d$-valued \caglad\ adapted processes, which are also self-financing on the set of paths with finite quadratic variation along $\Pi$.
For dynamic asset positions $\phi$ in the vector space of \emph{vertical 1-forms}, 
the gain of the corresponding self-financing trading strategy is well-defined as a \cadlag\ process $G(\cdot,\cdot;\phi)$ such that
 \begin{align*}
   G(t,\w;\phi)=&\int_0^t \phi(u,\w_u)\cdot\ud^{\Pi}\w \\
   =&\lim_{n\rightarrow\infty}\sum_{t^n_i\in\pi^n, t^n_i\leq t}\phi(t_i^n,\w^{n}_{t^n_i})\cdot(\w(t_{i+1}^n)-\w(t_i^n))
  \end{align*}
for all continuous paths of finite quadratic variation along $\Pi$, where $\w^n$ is a piecewise constant approximation of $\w$ defined in \eq{wn}. 
In \Sec{replication}, we present a pathwise replication result, \prop{hedge}, that can be seen as the model-free and path-dependent counterpart of the well known pricing PDE in mathematical finance, giving furthermore an explicit formula for the \emph{hedging error}. That is, if a \lq smooth\rq\ \naf\ $F$ solves
$$\left\{\bea{ll}
\hd F(t,\w_t)+\frac12\tr\lf A(t)\cdot\vd^2F(t,\w_t)\rg=0,\quad t\in[0,T), \w\in Q_A(\Pi) \\
F(T,\w)=H(\w),
\ea\right.$$
where $H$ is a continuous (in sup norm) payoff functional and $Q_A(\Pi)$ is the set of paths with absolutely continuous quadratic variation along $\Pi$ with density $A$, then the hedging error of the delta-hedging strategy for $H$ with initial investment $F(0,\cdot)$ and asset position $\vd F$ is
\beq\label{eq:er}
\frac12\int_{0}^T\tr\lf\vd^2F(t,\w)\cdot \lf A(t)-\tilde A(t)\rg\rg \ud t
\eeq
on all paths $\w\in Q_{\tilde A}(\Pi)$.
In particular, if the underlying price path $\w$ lies in $Q_A(\Pi)$, the delta-hedging strategy $(F(0,\cdot),\vd F)$ replicates the $T$-claim with payoff $H$ and its portfolio's value at any time $t\in[0,T]$ is given by $F(t,\w_t)$.
The explicit error formula \eq{er} is the purely analytical counterpart of the probabilistic formula  given in \cite{elkaroui}, where a mis-specification of volatility is considered in a stochastic framework.
Finally, in \Sec{isometry} we propose, in \prop{Iw}, the construction of a family of pathwise integral operators (indexed by the paths) as extended isometries between normed spaces defined as quotient spaces.

\paragraph{Chapter 4}
The last chapter begins with a review of the results, from the present literature, that focus on the problem of robustness which we are interested in, in particular the propagation of convexity and the hedging error formula for non-path-dependent derivatives, as well as a contribution to the pathwise analysis of path-dependent hedging for discretely-monitored derivatives.
In \Sec{path-robust}, we introduce the notion of robustness that we are investigating (see \defin{rob}): the delta-hedging strategy is robust on a certain set $U$ of price paths if it super-replicates the claim at maturity, when trading with the market prices, as far as the price trajectory belongs to $U$. We then state in \prop{robust} a first result which applies to the case where the derivative being sold admits a smooth pricing functional under the model used by the hedger: robustness holds if the second vertical derivative of the value functional, $\vd^2F$, is (almost everywhere) of same sign as the difference between the model volatility and the realized market volatility. Moreover, we give the explicit representation of the \emph{hedging error} at maturity, that is
$$\frac12\int_{0}^T \lf\s(t,\w)^2-\s^{\mathrm{mkt}}(t,\w)^2\rg\w^2(t)\vd^2F(t,\w) \ud t,$$
where $\s$ is the model volatility and $\s^{\mathrm{mkt}}$ is the realized market volatility, defined by $t\mapsto\s^{\mathrm{mkt}}(t,\w)=\frac1{\w(t)}\sqrt{\frac{\ud}{\ud t}[w](t)}$.
In \Sec{exist}, \prop{exist} provides a constructive existence result for a pricing functional which is twice left-continuously vertically differentiable on continuous paths, given a log-price payoff functional $h$ which is \emph{vertically smooth} on the space of continuous paths (see \defin{vsmooth}).
We then show in \Sec{convex}, namely in \prop{convex}, that a sufficient condition for the second vertical derivative of the pricing functional to be positive is the convexity of the real map
$$v^H(\cdot;t,\w):\R\rightarrow\R,\quad e\mapsto v^H(e;t,\w)=H\lf\w(1+e\ind_{[t,T]})\rg$$
in a neighborhood of 0. This condition may be readily checked for all path-dependent payoffs.
In \Sec{jumps}, we analyze the contribution of jumps of the price trajectory to the hedging error obtained trading on the market according to a delta-hedging strategy. We show in \prop{jumps} that the term carried by the jumps is of negative sign if the second vertical derivative of the value functional is positive.
In \Sec{HR}, we consider a specific pricing model with path-dependent volaility, the Hobson-Rogers model. 
Finally, in \Sec{ex}, we apply the results of the previous sections to common examples, specifically the hedging of discretely monitored path-dependent derivatives, Asian options and barrier options. In the first case, we show in \lem{BS} that in the Black-Scholes model the pricing functional is of class $\Cloc$ and its vertical and horizontal derivatives are given in closed form. Regarding Asian options, both the Black-Scholes and the Hobson-Rogers pricing functional have already been proved to be regular by means of classical results, and, assuming that the market price path lies in the set of paths with absolutely continuous \fqv{\mbox{the}} given sequence of partitions and the model volatility overestimates the realized market volatility, the delta hedge is \emph{robust}. Regarding barrier options, the robustness fails to be satisfied: Black-Scholes delta-hedging strategies for barrier options are not robust to volatility mis-specifications.

\paragraph{Chapter 5}

Chapter 5, independent from the rest of the thesis,  is based on joint work with Andrea Pascucci and Stefano Pagliarani.

In \Sec{sec1}, we present the general procedure that allows to approximate analy\-tically the
transition density (or the characteristic function), in terms of the solutions of a sequence of
nested Cauchy problems. Then we also prove explicit error bounds for the expansion that generalize some classical estimates. In \Sec{Merton} and \Sec{LV-J}, the previous Cauchy problems are solved explicitly by using different
approaches. Precisely, in \Sec{Merton} we focus on the special class of local L\'evy models with
Gaussian jumps and we provide a heat kernel expansion of the transition density of the underlying
process. The same results are derived in an alternative way in Subsection \ref{sec:secsimpl}, by
working in the Fourier space.

\Sec{LV-J} contains the main contribution of the chapter: we consider the general class of local L\'evy models and provide high order approximations of the characteristic function. Since all the computations are carried out in the Fourier space, we are forced to introduce {\it a dual formulation} of the approximating problems, which involves the adjoint (forward) Kolmogorov operator.
Even if at first sight the adjoint expansion method seems a bit odd, it turns out to be much more natural and simpler than the direct formulation. To the best of our knowledge, the interplay between perturbation methods and Fourier analysis has not been previously studied in finance.
Actually our approach seems to be advantageous for several reasons:
\begin{enumerate}[(i)]
  \item working in the Fourier space is natural and allows to get simple and clear results;
  \item we can treat the entire class of L\'evy processes and not only jump-diffusion processes or processes which can be approximated by heat kernel expansions --potentially, we can take as leading term of the expansion every process which admits an explicit characteristic function and not necessarily a Gaussian kernel; 
  \item our method can be easily adapted to the case of stochastic volatility or multi-asset models;
  \item higher order approximations are rather easy to derive and the approximation results are generally very accurate. Potentially, it is possible to derive approximation formulae for the characteristic function and plain vanilla options, at any prescribed order. For example, in Subsection \ref{HOA} we provide also the $3^{\text{rd}}$ and $4^{\text{th}}$ order expansions of the characteristic function, used in the numerical tests of \Sec{numeric}. A Mathematica notebook with the implemented formulae is freely available on \url{https://explicitsolutions.wordpress.com}.
\end{enumerate}
 Finally, in \Sec{numeric}, we present some numerical tests under the Merton and Variance-Gamma models and show the effectiveness of the analytical approximations compared with Monte Carlo simulation.

\chapter{Pathwise calculus for \naf s}
\label{chap:pfc}
\chaptermark{Pathwise functional calculus}

This chapter is devoted to the presentation of the pathwise calculus for non-anticipative functionals developed by \citet{contf2010} and having as main result a change of variable formula (also called \emph{chain rule}) for \naf s. This pathwise functional calculus extends the pathwise calculus introduced by F\"ollmer in his seminal paper \emph{Calcul d'\ito\ sans probabilit\'es} in 1981. Its probabilistic counterpart, called the \lq functional \ito\ calculus\rq\ and presented in \chap{fic}, can either stand by itself or rest entirely on the pathwise results, e.g. by introducing a probability measure under which the integrator process is a semimartingale. This shows clearly the pathwise nature of the theory, as well as \citeauthor{follmer} proved that the classical \ito\ formula has a pathwise meaning.
Other chain rules were derived in \cite{norvaisa} for extended Riemann-Stieltjes integrals and for a type of one-sided integral similar to F\"ollmer's one.

Before presenting the functional case we are concerned with, let us set the stage by introducing the pathwise calculus for ordinary functions.
First, let us give the definition of quadratic variation for a function that we are going to use throughout this thesis and review other notions of quadratic variation.

\section{Quadratic variation along a sequence of partitions}
\label{sec:qv}
\sectionmark{Quadratic variation of paths}

Let $\Pi=\{\pi_n\}_{n\geq1}$ be a sequence of partitions of $[0,T]$, that is for all $n\geq1$ $\pi_n=(t_i^n)_{i=0,\ldots,m(n)},\;0=t_0^n<\ldots<t_{m(n)}^n=T$. We say that $\Pi$ is \emph{dense} if $\cup_{n\geq1}\pi_n$ is dense in $[0,T]$, or equivalently the mesh $\abs{\pi^n}:=\max_{i=1,\ldots m(n)}|t^n_i-t^n_{i-1}|$ goes to 0 as $n$ goes to infinity, and we say that $\Pi$ is \emph{nested} if $\pi_{n+1}\subset\pi_{n}$ for all $n\in\NN$. 
\begin{definition}\label{def:qv1}
  Let $\Pi$ be a dense sequence of partitions of $[0,T]$, a \cadlag\ function $x:[0,T]\to\R$ is said to be of \emph{finite quadratic variation along $\Pi$} if there exists a non-negative \cadlag\ function $[x]_\Pi:[0,T]\to\R_+$ such that
\beq\label{eq:qv}
\forall t\in[0,T],\quad[x]_\Pi(t)=\Limn\sum_{\stackrel{i=0,\ldots,m(n)-1:}{t^n_i\leq t}}(x(t^n_{i+1})-x(t^n_{i}))^2<\infty
\eeq
and 
\begin{equation} \label{eq:qv-jumps}
[x]_\Pi(t)=[x]_\Pi^c(t)+\sum_{0<s\leq t}\De x^2(s) \zs,
\end{equation}
where $[x]_\Pi^c$ is a continuous non-decreasing function and $\De x(t):=x(t)-x(t-)$ as usual.
In this case, the non-decreasing function $[x]_\Pi$ is called the \emph{quadratic variation of $x$ along $\Pi$}.
\end{definition}
Note that the quadratic variation $[x]_\Pi$ depends strongly on the sequence of partitions $\Pi$. Indeed, as remarked in \cite[Example 2.18]{cont-notes}, for any real-valued continuous function we can construct a sequence of partition along which that function has null quadratic variation.

In the multi-dimensional case, the definition is modified as follows.
\begin{definition}\label{def:qvd}
An $\R^d$-valued \cadlag\ function $x$ is of \emph{finite quadratic variation along $\Pi$} if, for all $1\leq i,j\leq d$, $x^i,x^i+x^j$ have finite quadratic variation along $\Pi$. In this case, the function $[x]_\Pi$ has values in the set $\S^+(d)$ of positive symmetric $d\times d$ matrices:
$$\forall t\in[0,T],\quad[x]_\Pi(t)=\Limn\sum_{\stackrel{i=0,\ldots,m(n)-1:}{t^n_i\leq t}}\incrx{x}\cdot\,^t\!\!\incrx{x},$$
whose elements are given by
\begin{eqnarray*}
([x]_\Pi)_{i,j}(t) &=& \frac12\lf[x^i+x^j]_\Pi(t)-[x^i]_\Pi(t)-[x^j]_\Pi(t)\rg \\
 &=& [x^i,x^j]_\Pi^c(t)+\sum_{0<s\leq t}\De x^i(s)\De x^j(s) 
\end{eqnarray*}
for $i,j=1,\ldots d$.
\end{definition}

For any set $U$ of \cadlag\ paths with values in $\R$ (or $\R^d$), we denote by $Q(U,\Pi)$ the subset of $U$ of paths having \fqv{\Pi}.

Note that $Q(D([0,T],\R),\Pi)$ is not a vector space, because assuming $x^1,x^2\in\penalty0 Q(D([0,T],\R),\Pi)$ does not imply $x^1+x^2\in Q(D([0,T],\R),\Pi)$ in general. This is the reason of the additional requirement $x^i+x^j\in Q(D([0,T],\R),\Pi)$ in \defin{qvd}. As remarked in \cite[Remark 2.20]{cont-notes}, the subset of paths $x$ being $C^1$-functions of a same path $\w\in D([0,T],\R^d)$, i.e. 
$$\{x\in Q(D([0,T],\R),\Pi),\;\exists f\in C^1(\R^d,\R),\,x(t)=f(\w(t))\,\forall t\in[0,T]\},$$
is instead closed with respect to the quadratic variation composed with the sum of two elements.

Henceforth, when considering a function $x\in Q(U,\Pi)$, we will drop the subscript in the notation of its quadratic variation, thus denoting $[x]$ instead of $[x]_\Pi$.

\subsection{Relation with the other notions of quadratic variation}

An important distinguish is between \defin{qv1} and the notions of $2$-variation and local 2-variation considered in the theory of extended Riemann-Stieltjes integrals (see e.g. \citet[Chapters 1,2]{dudley-norvaisa} and \citet[Section 1]{norvaisa}). 
Let $f$ be any real-valued function on $[0,T]$ and $0<p<\infty$, the \emph{$p$-variation} of $f$ is defined as 
\beq\label{eq:p-var}
v_p(f):=\sup_{\k\in P[0,T]}s_p(f;\k)
\eeq
where $P[0,T]$ is the set of all partitions of $[0,T]$ and
$$s_p(f;\k)=\sum_{i=1}^n\abs{f(t_i)-f(t_{i-1})}^p,\quad\text{for }\k=\{t_i\}_{i=0}^n\in P[0,T].$$
The set of functions with finite $p$-variation is denoted by $\W_p$. We also denote by $\mathrm{vi}(f)$ the variation index of $f$, that is the unique number in $[0,\infty]$ such that
$$\bea{l}v_p(f)<\infty,\quad\mbox{for all }p>\mathrm{vi}(f),\\v_p(f)=\infty,\quad\mbox{for all }p<\mathrm{vi}(f)\ea.$$

For $1<p<\infty$, $f$ has the \emph{local $p$-variation} if the directed function $(s_p(f;\cdot),\mathfrak R)$, where $\mathfrak R:=\{\mathcal R(\k)=\left\{\pi\in P[0,T],\,\k\subset\pi\},\,\k\in P[0,T]\right\}$, converges.
An equivalent characterization of functions with local $p$-variation was introduced by \citet{love-young} and it is given by the Wiener class $\W^*_p$ of functions $f\in\W_p$ such that 
$$\limsup_{\k,\mathfrak R}s_p(f;\k)=\sum_{(0,T]}\abs{\De^-f}^p+\sum_{[0,T)}\abs{\De^+f}^p,$$
where the two sums converge unconditionally. 
We refer to \cite[Appendix A]{norvaisa} for convergence of directed functions and unconditionally convergent sums.
The Wiener class satisfies
$\cup_{1\leq q<p}\W_q\subset\W_p^*\subset \W_p$.

A theory on Stieltjes integrability for functions of bounded $p$-variation was developed by \citet{young36,young38} in the thirties and generalized among others by \cite{dudley-norvaisa99,norvaisa02} around the years 2000. 
According to Young's most well known theorem on Stieltjes integrability, if 
\beq\label{eq:ys}
f\in\W_p,\;g\in\W_q,\quad p^{-1}+q^{-1}>1,\,p,q>0,
\eeq
then the integral $\int_0^Tf\ud g$ exists: in the \emph{Riemann-Stieltjes} sense if $f,g$ have no common discontinuities, in the \emph{refinement Riemann-Stieltjes} sense if $f,g$ have no common discontinuities on the same side, and always in the \emph{Central Young} sense. \cite{dudley-norvaisa99} showed that under condition \eq{ys} also the \emph{refinement Young-Stieltjes} integral always exists.
However, in the applications, we often deal with paths of unbounded 2-variation, like sample paths of the Brownian motion. For example, given a Brownian motion $B$ on a complete probability space $(\O,\F,\PP)$, the pathwise integral $(RS)\!\int_0^Tf\ud B(\cdot,\w)$ is defined in the Riemann-Stieltjes sense, for $\PP$-almost all $\w\in\O$, for any function having bounded $p$-variation for some $p<2$, which does not apply to sample paths of $B$.
In particular, in Mathematical Finance, one necessarily deals with price paths having unbounded 2-variation. In the special case of a market with continuous price paths, as shown in \Sec{arbitrage}, \cite{vovk-proba} proved that non-constant price paths must have a variation index equal to 2 and infinite 2-variation in order to rule out \lq arbitrage opportunities of the first kind\rq. 
In the special case where the integrand $f$ is replaced by a smooth function of the integrator $g$, weaker conditions than \eq{ys} on the $p$-variation are sufficient (see \cite{norvaisa02} or the survey in \cite[Chapter 2.4]{norvaisa}) to obtain chain rules and integration-by-parts formulas for extended Riemann-Stieltjes integrals, like the refinement Young-Stieltjes integral, the symmetric Young-Stieltjes integral, the Central Young integral, the Left and Right Young integrals, and others.
However, these conditions are still quite restrictive.

 As a consequence, other notions of quadratic variation were formulated and integration theories for them followed.

\subsubsection{\follmer's quadratic variation and pathwise calculus}

In 1981, \citet{follmer} derived a pathwise version of the \ito\ formula, conceiving a construction path-by-path of the stochastic integral of a special class of functions. His purely analytic approach does not ask for any probabilistic structure, which may instead come into play only in a later moment by considering stochastic processes that satisfy almost surely, i.e. for almost all paths, a certain condition.
F\"ollmer considers functions on the half line $[0,\infty)$, but we present here his definitions and results adapted to the finite horizon time $[0,T]$.
His notion of quadratic variation is given in terms of weak convergence of measures and is renamed here in his name in order to make the distinguish between the different definitions.
\begin{definition}\label{def:qv-follmer}
Given a dense sequence $\Pi=\{\pi_n\}_{n\geq1}$ of partitions of $[0,T]$, for $n\geq1\; \pi_n=(t_i^n)_{i=0,\ldots,m(n)}$, $0=t_0^n<\ldots<t_{m(n)}^n<\infty$, a \cadlag\ function $x:[0,T]\to\R$ is said to have \emph{F\"ollmer's quadratic variation along} $\Pi$ if the Borel measures 
\beq\label{eq:xin}
\xi_n:=\sum\limits_{i=0}^{m(n)-1}\incrx{x}^2\d_{t_i^n},
\eeq
where $\d_{t_i^n}$ is the Dirac measure centered in $t_i^n$, converge weakly to a finite measure $\xi$ on $[0,T]$ with cumulative function $[x]$ and Lebesgue decomposition 
\beq\label{eq:dec-follmer}
[x](t)=[x]^c(t)+\sum_{0<s\leq t}\De x^2(s),\quad \forall t\in[0,T]
\eeq
where $[x]^c$ is the continuous part.
\end{definition}

\begin{proposition}[Follmer's pathwise \ito\ formula]
  Let $x:[0,T]\to\R$ be a \cadlag\ function having F\"ollmer's quadratic variation along $\Pi$.
  Then, for all $t\in[0,T]$, a function $f\in\C^2(\R)$ satisfies
\begin{align}
  \label{eq:follmer_ito}\nonumber
f(x(t))={}& f(x(0))+\int_0^tf'(x(s-))\ud x(s)+\frac12\int_{(0,t]}f''(x(s-))\ud[x](s) \\ \nonumber
&{}+\sum_{0<s\leq t}\lf f(x(s))-f(x(s-))-f'(x(s-))\De x(s)-\frac12f''(x(s-))\De x(s)^2 \rg \\\nonumber
={}& f(x(0))+\int_0^tf'(x(s-))\ud x(s)+\frac12\int_{(0,t]}f''(x(s))\ud[x]^c(s) \\ 
&{}+\sum_{0<s\leq t}\lf f(x(s))-f(x(s-))-f'(x(s-))\De x(s) \rg,
\end{align}
where the pathwise definition
\beq \label{eq:follmer_int}
\int_0^tf'(x(s-))\ud x(s):=\Limn \sum_{t_i^n\leq t}f'(x(t_i^n))\lf x(t_{i+1}^n\wedge T)-x(t_i^n\wedge T)\rg
\eeq
is well posed by absolute convergence.
\end{proposition}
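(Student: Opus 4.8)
The plan is to fix $t\in[0,T]$, telescope $f(x(t))-f(x(0))$ along the points of $\pi_n$ up to $t$, expand each increment by Taylor's formula to second order, and then pass to the limit separately in the three resulting sums. Let $K\subset\R$ be a compact interval containing the range $x([0,t])$ (bounded, since $x$ is \cadlag), let $\varpi$ be a modulus of continuity of $f''$ on $K$, and write $e^n_i:=x(t^n_{i+1}\wedge t)-x(t^n_i\wedge t)$. Taylor's formula gives, for $a,b\in K$, $f(b)-f(a)=f'(a)(b-a)+\tfrac12 f''(a)(b-a)^2+R(a,b)$ with $\abs{R(a,b)}\le\tfrac12(b-a)^2\varpi(\abs{b-a})$, so telescoping produces $f(x(t))-f(x(0))=A_n+\tfrac12 B_n+C_n$, where $A_n=\sum_i f'(x(t^n_i))\,e^n_i$, $B_n=\sum_i f''(x(t^n_i))\,(e^n_i)^2$ and $C_n=\sum_i R\bigl(x(t^n_i\wedge t),x(t^n_{i+1}\wedge t)\bigr)$. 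Note that $R(a,b)$ is exactly the summand appearing in the jump series of \eq{follmer_ito}, evaluated at $(a,b)=(x(s-),x(s))$.

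I expect the main obstacle to be the convergence $B_n\to\int_{(0,t]}f''(x(s-))\,\ud[x](s)$. Here $B_n$ is the integral of $f''(x(\cdot))$, read along the partition, against the discrete measure $\xi_n$ of \eq{xin}, and by \defin{qv-follmer} the $\xi_n$ converge weakly to a finite measure $\xi$ whose atoms, by the Lebesgue decomposition \eq{dec-follmer}, sit exactly at the jump times of $x$ with mass $\De x(s)^2$; so one would like to read off the limit from weak convergence. The difficulty is that the limiting integrand $s\mapsto f''(x(s-))$ is only \caglad\ and is discontinuous precisely on the atoms of $\xi$, so the portmanteau theorem does not apply directly. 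I would resolve this by a jump-truncation. Given $\eta>0$, there are finitely many $s_1,\dots,s_m\in(0,t]$ with $\abs{\De x(s_j)}>\eta$, and for $n$ large each lies in the interior of a distinct interval $(t^n_{k_j},t^n_{k_j+1})$ of $\pi_n$; on the $j$-th such interval $f''(x(t^n_{k_j}))\to f''(x(s_j-))$ and $(e^n_{k_j})^2\to\De x(s_j)^2$ by the one-sided limits of $x$, so these $m$ terms contribute $\sum_j f''(x(s_j-))\De x(s_j)^2$, i.e.\ exactly the atomic part of $\int f''(x(s-))\,\ud[x](s)$. On the remaining indices the increments $e^n_i$ are eventually $\le C\eta$ (a standard \cadlag\ property: once the jumps larger than $\eta$ are excised, the increments over the surviving partition intervals are ultimately of order $\eta$), which lets one replace the integrand there by a genuinely continuous one, up to an $L^1$-error of order $\sup_K\abs{f''}\cdot\sum_{0<s\le t,\,\abs{\De x(s)}\le\eta}\De x(s)^2$, and invoke the portmanteau theorem for the continuous integrand. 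Letting $\eta\to0$ and using $\sum_{0<s\le t}\De x(s)^2<\infty$ gives the convergence of $B_n$; splitting $\ud[x]$ into its continuous and jump parts, and using $f''(x(s-))=f''(x(s))$ for $\ud[x]^c$-a.e.\ $s$, turns this into either of the two equivalent second-order terms of \eq{follmer_ito}.

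The remainder $C_n$ does not vanish — unlike in the continuous case, $\max_i\abs{e^n_i}$ does not tend to $0$ once $x$ has jumps — but it yields to the same truncation. With $s_1,\dots,s_m$ as above, for $n$ large one splits the indices into the jump indices $k_j$ and the rest; for $i\notin\{k_j\}$ one has $\abs{e^n_i}\le C\eta$, so $\bigl|\sum_{i\notin\{k_j\}}R(x(t^n_i\wedge t),x(t^n_{i+1}\wedge t))\bigr|\le\tfrac12\varpi(C\eta)\sum_i(e^n_i)^2$, which is $O(\varpi(C\eta))$ because $\sum_i(e^n_i)^2\to[x](t)$; for $i=k_j$, $R(x(t^n_{k_j}),x(t^n_{k_j+1}))\to R(x(s_j-),x(s_j))=f(x(s_j))-f(x(s_j-))-f'(x(s_j-))\De x(s_j)-\tfrac12 f''(x(s_j-))\De x(s_j)^2$ by the one-sided limits of $x$. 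Sending $n\to\infty$ and then $\eta\to0$ — justified by dominated convergence on the jump series, with summable majorant $\tfrac12\sup_K\abs{f''}\,\De x(s)^2$ — gives $C_n\to\sum_{0<s\le t}\bigl(f(x(s))-f(x(s-))-f'(x(s-))\De x(s)-\tfrac12 f''(x(s-))\De x(s)^2\bigr)$.

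Finally, since $f(x(t))-f(x(0))$ does not depend on $n$ while $\tfrac12 B_n+C_n$ has just been shown to converge, $A_n$ converges as well, and its limit is \emph{by definition} the \follmer\ integral $\int_0^t f'(x(s-))\,\ud x(s)$ of \eq{follmer_int} — the discrepancy between truncating the last increment at $t$ and at $T$ is harmless, since that single term tends to $0$ by right-continuity of $x$. Adding the three limits and regrouping the $\tfrac12 f''(x(s-))\De x(s)^2$ terms gives both displayed forms of \eq{follmer_ito}, while the asserted absolute convergence of the compensated-jump series is immediate from $\abs{f(x(s))-f(x(s-))-f'(x(s-))\De x(s)}\le\tfrac12\sup_K\abs{f''}\,\De x(s)^2$ together with $\sum_{0<s\le t}\De x(s)^2<\infty$.
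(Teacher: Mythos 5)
Your argument essentially reproduces F\"ollmer's original 1981 proof, which the chapter cites (\cite{follmer}) but does not reproduce; there is thus no proof in the paper to compare against, and the natural reference is F\"ollmer's own. The overall structure — telescoping along $\pi_n$, second-order Taylor expansion of each increment into $A_n+\tfrac12 B_n+C_n$, truncating the jumps at a level $\eta$, sending $n\to\infty$ and then $\eta\to 0$, and recovering $A_n$ as the F\"ollmer integral by subtraction — is correct, as is the handling of the boundary term at $t$ versus $T$, the dominated-convergence argument for the jump series in $C_n$, the absolute-convergence claim, and the observation that $f''(x(s-))=f''(x(s))$ for $[x]^c$-a.e.\ $s$ which reconciles the two displayed forms.

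The one step stated too loosely is the convergence of $B_n$. You propose to replace the integrand on the non-big-jump indices by a continuous function $\phi$ up to an ``$L^1$-error of order $\sup_K\abs{f''}\cdot\sum_{\abs{\De x}\le\eta}\De x^2$'' and then invoke portmanteau. But the error that must be controlled is $\int\abs{g-\phi}\,\ud\xi_n$ for all $n$, not merely $\int\abs{g-\phi}\,\ud\xi$; since $\abs{g-\phi}$ is not continuous (it concentrates near the small jumps, which may be infinite in number and dense), weak convergence of $\xi_n$ gives no handle on its $\xi_n$-integral, so the stated bound does not transfer. What actually carries the step is a pair of facts you have available but do not invoke: once the big jumps are excised, $s\mapsto f''(x(s-))$ has jump sizes bounded by $\varpi(\eta)$ where $\varpi$ is the modulus of continuity of $f''$ on $K$; and a regulated function with jump sizes $\le\delta$ on a compact interval can be approximated \emph{uniformly}, not just in $L^1$, by a continuous function to within $O(\delta)$. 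With that uniform approximation the $\xi_n$-error is $O(\varpi(C\eta)\,[x](t))$ plus the $\xi_n$-mass of small neighbourhoods of the finitely many big-jump times (controlled via weak convergence), both of which vanish as $\eta\to 0$. The conclusion and the overall strategy are F\"ollmer's, so this is a correctable imprecision rather than a wrong approach, but the single estimate you write for $B_n$ is not the one doing the work.
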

The integral on the left-hand side of \eq{follmer_int} is referred to as the \emph{F\"ollmer integral} of $f\circ x$ with respect to $x$ along $\Pi$.

In the multi-dimensional case, where $x$ is $\R^d$-valued and $f\in\C^2(\R^d)$, the pathwise \ito\ formula gives
\begin{align} \nonumber
f(x(t))={}& f(x(0))+\int_0^t\nabla f(x(s-))\cdot \ud x(s)+\frac12\int_{(0,t]}\mathrm{tr}\lf \nabla^2f(x(s))\ud[x]^c(s) \rg\\\label{eq:follmer_Dito}
&{}+\sum_{0<s\leq t}\lf f(x(s))-f(x(s-))-\nabla f(x(s-))\cdot\De x(s) \rg
\end{align}
and
$$\int_0^t\nabla f(x(s-))\cdot \ud x(s):=\Limn \sum_{t_i^n\leq t}\nabla f(x(t_i^n))\cdot\incrx{x},$$ 
where $[x]=([x^i,x^j])_{i,j=1,\ldots,d}$ and, for all $t\geq0$,
\begin{align*}
[x^i,x^j](t)={}&\frac12\lf[x^i+x^j](t)-[x^i](t)-[x^j](t)\rg\\
{}={}&[x^i,x^j]^c(t)+\sum_{0<s\leq t}\De x^i(s)\De x^j(s).
\end{align*}
F\"ollmer also pointed out that the class of functions with finite quadratic variation is stable under $\C^1$ transformations and, given $x$ with finite quadratic variation along $\Pi$ and $f\in\C^1(\R^d)$, the composite function $y=f\circ x$ has finite quadratic variation 
$$[y](t)=\int_{(0,t]}\mathrm{tr}\lf \nabla^2f(x(s))^\mathrm{t}\ud[x]^c(s)\rg+\sum_{0<s\leq t}\De y^2(s).$$
Further, he has enlarged the scope of the above results by considering stochastic processes with almost sure finite quadratic variation along some proper sequence of partition.
For example, let $S$ be a semimartingale on a probability space \ps, it is well known that there exists a sequence of random partitions, $\Pi=(\pi_n)_{n\geq1}$, $|\pi_n|\limn0$ $\PP$-almost surely, such that $$\PP\lf\{\w\in\O,\; S(\cdot,\w) \text{ has F\"ollmer's quadratic variation along }\Pi\}\rg=1.$$
More generally, this holds for any so-called \textit{Dirichlet} (or \textit{finite energy}) \textit{process}, that is the sum of a semimartingale and a process with zero quadratic variation along the dyadic subdivisions. Thus, the pathwise \ito\ formula holds and the pathwise F\"ollmer integral is still defined for all paths outside a null set.

A last comment on the link between \ito\ and \follmer\ integrals is the following. For a semimartingale $X$ and a \cadlag\ adapted process $H$, we know that, for any $t\geq0$,
$$ \sum_{t_i^n\leq t} H(t_i^n)\cdot\incrx{x} \limnp \int_0^tH(s-)\cdot\ud X(s),$$
hence we have almost sure pathwise convergence by choosing properly an absorbing set of paths dependent on $H$, which is not of practical utility. However, in the case $H=f\circ X$ with $f\in\C^1$, we can select a priori the null set out of which the definition~\eqref{eq:follmer_int} holds and so, by almost sure uniqueness of the limit in probability, the F\"ollmer integral must coincide almost surely with the \ito\ integral.

\subsubsection{Norvai\u sa's quadratic variation and chain rules}

Norvai\u sa's notion of quadratic variation was proposed in \cite{norvaisa} in order to weaken the requirement of local 2-variation used to prove chain rules and integration-by-parts formulas for extended Riemann-Stieltjes integrals.
\begin{definition}\label{def:qv-norvaisa}
  Given a dense nested sequence $\l=\{\l_n\}_{n\geq1}$ of partitions of $[0,T]$, \emph{Norvai\u sa's quadratic $\l$-variation} of a regulated function $f:[0,T]\to\R$ is defined, if it exists, as a regulated function $H:[0,T]\to\R$ such that $H(0)=0$ and, for any $0\leq s\leq t\leq T$,
\beq\label{eq:Nqv}
H(t)-H(s)=\Limn s_2(f;\l_n\Cap[s,t]),
\eeq
\beq\label{eq:Njumps}
\De^-H(t)=(\De^-f(t))^2\quad \text{and}\quad \De^+H(t)=(\De^+f(t))^2,
\eeq
where $\l_n\Cap[s,t]:=(\l_n\cap[s,t])\cup\{s\}\cup\{t\}$, $\De^-x(t)=x(t)-x(t-)$, and $\De^+x(t)=x(t+)-x(t)$.
\end{definition}
In reality, Norvai\u sa's original definition is given in terms of an additive upper continuous function defined on the simplex of extended intervals of $[0,T]$, but he showed the equivalence to the definition given here and we chose to report the latter because it allows us to avoid introducing further notations.

Following F\"ollmer's approach in \cite{follmer}, \citet{norvaisa} also proved a chain rule for a function with finite $\l$-quadratic variation, involving a new type of integrals called Left (respectively Right) Cauchy $\l$-integrals.
We report here the formula obtained for the left integral, but a symmetric formula holds for the right integral.
 Given two regulated functions $f,g$ on $[0,T]$ and a dense nested sequence of partitions $\l=\{\l_n\}$, then \emph{the Left Cauchy $\l$-integral} $(LC)\!\int \phi\ud_\l g$ is defined on $[0,T]$ if there exists a regulated function $\Phi$ on $[0,T]$ such that $\Phi(0)=0$ and, for any $0\leq u<v\leq T$, 
$$\bea{c}\Phi(v)-\Phi(u)=\Limn S_{LC}(\phi,g;\l_n\Cap[u,v]),\\
\De^-\Phi(v)=\phi(v-)\De^-g(v),\quad\De^+\Phi(u)=\phi\De^+g(u),\ea$$
where $$S_{LC}(\phi,g;\k):=\sum_{i=0}^{m-1}\phi(t_i)(g(t_{i+1})-g(t_i))\quad\text{for any }\k=\{t_i\}_{i=0}^m.$$
In such a case, denote $(LC)\!\int_u^v\phi\ud_\l g:=\Phi(v)-\Phi(u).$
\begin{proposition}[Proposition 1.4 in \cite{norvaisa}]
  Let $g$ be a regulated function on $[0,T]$ and $\l=\{\l_n\}$ a dense nested sequence of partitions such that $\{t:\,\De^+g(t)\neq0\}\subset\cup_{n\in\NN}\l_n$. The following are equivalent:
  \begin{enumerate}[(i)]
  \item $g$ has Norvai\u sa's $\l$-quadratic variation;
  \item for any $C^1$ function $\phi$, $\phi\circ g$ is Left Cauchy $\l$-integrable on $[0,T]$ and, for any $0\leq u<v\leq T$,
\begin{align}\label{eq:chainrule-LC}
\Phi\circ g(v)-\Phi\circ g(u)={}&(LC)\!\int_u^v(\phi\circ g)\ud_\l g+\frac12\int_u^v(\phi'\circ g)\ud[g]^c_\l\\
&{}+\sum_{t\in[u,v)}\lf\De^-(\Phi\circ g)(t)-(\phi\circ g)(t-)\De^-g(t)\rg \nonumber\\
&{}+\sum_{t\in(u,v]}\lf\De^+(\Phi\circ g)(t)-(\phi\circ g)(t)\De^+g(t)\rg. \nonumber
\end{align}
  \end{enumerate}
\end{proposition}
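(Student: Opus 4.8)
The plan is to prove the two implications separately, handling \emph{(ii)$\Rightarrow$(i)} first as it is the short one. Assuming \eqref{eq:chainrule-LC} holds for every $\phi\in C^1$, specialize it to $\phi(x)=x$ (so $\Phi(x)=x^2/2$, $\phi'\equiv1$): then in particular $\phi\circ g=g$ is Left Cauchy $\l$-integrable, and from the identity $g(t_{i+1})^2-g(t_i)^2=2g(t_i)\lf g(t_{i+1})-g(t_i)\rg+\lf g(t_{i+1})-g(t_i)\rg^2$, summed over $\l_n\Cap[s,t]$ and telescoped, one obtains
\[
s_2(g;\l_n\Cap[s,t])=g(t)^2-g(s)^2-2\,S_{LC}(g,g;\l_n\Cap[s,t]).
\]
Since $g$ is Left Cauchy $\l$-integrable against itself the right-hand side converges, so $H(t):=g(t)^2-g(0)^2-2\,(LC)\!\int_0^t g\,\ud_\l g$ is a regulated function with $H(0)=0$ satisfying \eqref{eq:Nqv}; the one-sided jump relations \eqref{eq:Njumps} follow by combining the jump conditions built into the integral function of $(LC)\!\int g\,\ud_\l g$ with the elementary identities $\De^-(g^2)(t)=2g(t-)\De^-g(t)+(\De^-g(t))^2$ and $\De^+(g^2)(t)=2g(t)\De^+g(t)+(\De^+g(t))^2$. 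Hence $g$ has Norvai\u sa's $\l$-quadratic variation.

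For \emph{(i)$\Rightarrow$(ii)}, fix $\phi\in C^1$ with antiderivative $\Phi$ (so $\Phi\in C^2$, $\Phi''=\phi'$) and fix $0\le u<v\le T$. Since $g$ is regulated it is bounded, say $g([0,T])\subset[-M,M]$, and $\phi'$ is uniformly continuous on $[-M,M]$ with some modulus $\w$. Writing $\l_n\Cap[u,v]=\{t_0<\dots<t_m\}$ and $\De g_i:=g(t_{i+1})-g(t_i)$, a telescoping sum gives
\[
\Phi(g(v))-\Phi(g(u))-S_{LC}(\phi\circ g,g;\l_n\Cap[u,v])=\sum_i\psi_i^n,\qquad \psi_i^n:=\Phi(g(t_{i+1}))-\Phi(g(t_i))-\phi(g(t_i))\,\De g_i,
\]
and a second-order Taylor expansion of $\Phi$ shows $\psi_i^n=\tfrac12\phi'(g(t_i))(\De g_i)^2+R_i^n$ with $|R_i^n|\le\tfrac12\,\w(|\De g_i|)(\De g_i)^2$. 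The core of the argument is to pass to the limit in $\sum_i\psi_i^n$ and obtain $\tfrac12\int_u^v(\phi'\circ g)\,\ud[g]^c_\l$ plus the two correction sums on the right of \eqref{eq:chainrule-LC}. As in F\"ollmer's proof of \eqref{eq:follmer_ito}, I would fix $\eps>0$, isolate the finitely many partition intervals straddling a jump of $g$ of size exceeding $\eps$, and treat the others in bulk: on the latter the oscillation of $g$ is uniformly below some $C\eps$, so their $R_i^n$-contribution is at most $\tfrac12\w(C\eps)\sup_n s_2(g;\l_n\Cap[u,v])$ and $\tfrac12\sum'\phi'(g(t_i))(\De g_i)^2$ is a Riemann--Stieltjes sum converging, up to $O(\eps)$, to $\tfrac12\int_u^v(\phi'\circ g)\,\ud[g]^c_\l$; on a straddling interval around a jump point $t^*$, continuity of $\Phi,\phi$ and existence of the one-sided limits of $g$ at $t^*$ force $\psi_i^n$ to converge to $\De^-(\Phi\circ g)(t^*)-(\phi\circ g)(t^*-)\De^-g(t^*)$ or to $\De^+(\Phi\circ g)(t^*)-(\phi\circ g)(t^*)\De^+g(t^*)$, i.e.\ to the summands of the two correction sums. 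Letting $\eps\downarrow0$ then yields both that $\lim_n S_{LC}(\phi\circ g,g;\l_n\Cap[u,v])$ exists --- so $\phi\circ g$ is Left Cauchy $\l$-integrable --- and the identity \eqref{eq:chainrule-LC}; applying the same computation on single straddling intervals as the mesh shrinks gives the one-sided jump conditions required of the integral function of $(LC)\!\int(\phi\circ g)\,\ud_\l g$.

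The step I expect to be the real obstacle is exactly this last limit passage, because of the jump bookkeeping for \emph{regulated} --- as opposed to merely \cadlag\ --- integrators. A regulated $g$ may be discontinuous from both sides, while the Left Cauchy sums evaluate the integrand at left endpoints, so at a partition point which is a right-discontinuity of $g$ the sum sees $g(t^*)$ whereas the correction terms in \eqref{eq:chainrule-LC} involve $g(t^*-)$ at left jumps and $g(t^*)$ at right jumps; reconciling these prevents a clean appeal to weak convergence of the measures $\sum_i(\De g_i)^2\d_{t_i}$ (which do converge to $\ud[g]_\l$, but $\phi'\circ g$ is discontinuous precisely at the atoms) and forces one to keep the large jumps explicit throughout. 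This is where the standing hypothesis $\{t:\De^+g(t)\neq0\}\subset\bigcup_n\l_n$ enters: it ensures every right jump of $g$ is ultimately a partition point, so no partition increment straddles a right discontinuity in an uncontrolled way. The remaining ingredient, uniform control of the Taylor remainder, is routine and rests only on boundedness of $g$ and uniform continuity of $\phi'$ on a compact set.
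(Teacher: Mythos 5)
The paper does not actually prove this statement: it reports it as Proposition~1.4 of Norvai\u sa's paper and explicitly notes that Norvai\u sa's proof ``follows F\"ollmer's approach in \cite{follmer}.'' Your sketch follows that same F\"ollmer-style route --- telescoping plus second-order Taylor expansion, with the large jumps isolated and handled explicitly before letting $\eps\downarrow 0$ --- and the direction (ii)$\Rightarrow$(i) via $\phi(x)=x$ and the algebraic identity $s_2(g;\k)=g(t)^2-g(s)^2-2S_{LC}(g,g;\k)$ is correct, as is your reading of the role of the hypothesis $\{t:\De^+g(t)\neq 0\}\subset\bigcup_n\l_n$; so the proposal is essentially the same argument the cited source uses, with the delicate limit passage for regulated (two-sided jump) integrators sketched rather than carried out.
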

Note that the change of variable formula \eq{chainrule-LC} gives the F\"ollmer's formula \eq{follmer_ito} when $g$ is right-continuous, and the Left Cauchy $\l$-integral coincides with the F\"ollmer integral along $\l$ defined in \eq{follmer_int}.

\subsubsection{Vovk's quadratic variation}
\citet{vovk-cadlag} defines a notion of quadratic variation along a sequence of partitions not necessarily dense in $[0,T]$ and uses it to investigate the properties of \lq typical price paths\rq, that are price paths which rule out arbitrage opportunities in his pathwise framework, following a game-theoretic probability approach.
\begin{definition}\label{def:qv-vovk}
  Given a nested sequence $\Pi=\{\pi_n\}_{n\geq1}$ of partitions of $[0,T]$, $\pi_n=(t_i^n)_{i=0,\ldots,m(n)}$ for all $n\in\NN$, a \cadlag\ function $x:[0,T]\to\R$ is said to have \emph{Vovk's quadratic variation along} $\Pi$ if the sequence $\{A^{n,\Pi}\}_{n\in\NN}$ of functions defined by 
$$A^{n,\Pi}(t):=\sum_{i=0}^{m(n)-1}(x(t^n_{i+1}\wedge t)-x(t^n_i\wedge t))^2,\quad t\in[0,T],$$
converges uniformly in time. In this case, the limit is denoted by $A^\Pi$ and called the Vovk's quadratic variation of $x$ along $\Pi$.
\end{definition}
An interesting result in \cite{vovk-cadlag} is that typical paths have the Vovk's quadratic variation along a specific nested sequence $\{\t_n\}_{n\geq1}$ of partitions composed by stopping times and such that, on each realized path $\w$, $\{\t_n(\w)\}_{n\geq1}$ \emph{exhausts} $\w$, i.e. $\{t:\,\De\w(t)\neq0\}\subset\cup_{n\in\NN}\t_n(\w)$ and, for each open interval $(u,v)$ in which $\w$ is not constant, $(u,v)\cap(\cup_{n\in\NN}\t_n(\w))\neq\emptyset$.

The most evident difference between definitions \ref{def:qv1}, \ref{def:qv-follmer}, \ref{def:qv-norvaisa}, \ref{def:qv-vovk} is that the first two of them require the sequence of partitions to be dense, the third one requires the sequence of partitions to be dense and nested, and the last one requires a nested sequence of partitions.
Moreover, Norvai\v sa's definition is given for a regulated, rather than \cadlag, function.

Vovk proved that for a nested sequence $\Pi=\{\pi_n\}_{n\geq1}$ of partitions of $[0,T]$ that exhausts $\w\in D([0,T],\R)$, the following are equivalent:
\begin{enumerate}[(a)]
\item $\w$ has Norvai\v sa's quadratic $\Pi$-variation;
\item $\w$ has Vovk's quadratic variation along $\Pi$;
\item $\w$ has \emph{weak quadratic variation of $\w$ along $\Pi$}, i.e. there exists a \cadlag\ function $V:[0,T]\to\R$ such that
$$V(t)=\Limn\sum_{i=0}^{m(n)-1}(x(t^n_{i+1}\wedge t)-x(t^n_i\wedge t))^2$$
for all points $t\in[0,T]$ of continuity of $V$ and it satisfies \eq{qv-jumps} where $[x]_\Pi$ is replace by $V$.
\end{enumerate}
Moreover, if any of the above condition is satisfied, then $H=A^\Pi=V$.

If, furthermore, $\Pi$ is also dense, than $\w$ has F\"ollmer's quadratic variation along $\Pi$ if and only if it has any of the quadratic variations in (a)-(c), in which case  $H=A^\Pi=V=[\w]$.

In this thesis, we will always consider the quadratic variation of a \cadlag\ path $w$ along a dense nested sequence $\Pi$ of partitions that exhausts $\w$, in which case our \defin{qv1} is equivalent to all the other ones mentioned above. It is sufficient to note that condition (b) implies that $\w$ has finite quadratic variation according to \defin{qv1} and $[\w]=A$, because the properties in  \defin{qv1} imply the ones in \defin{qv-follmer}, which, by Proposition 4 in \cite{vovk-cadlag}, imply condition (b). Therefore, we denote $\bar k(n,t):=\max\{i=0,\ldots,m(n)-1:\,t^n_i\leq t\}$ and note that
\begin{multline*}
A^{n,\Pi}(t)-\sum_{\stackrel{i=0,\ldots,m(n)-1:}{t^n_i\leq t}}(x(t^n_{i+1})-x(t^n_{i}))^2=\\
=(\w(t)-\w(t^n_{\bar k(n,t)}))^2-(\w(t^n_{\bar k(n,t)+1})-\w(t^n_{\bar k(n,t)}))^2\limn 0
\end{multline*}
by right-continuity of $\w$ if $t\in\cup_{n\in\NN}\pi_n$, and by the assumption that $\Pi$ exhausts $\w$ if $t\notin\cup_{n\in\NN}\pi_n$.

\section{Non-anticipative functionals}\label{sec:pre}

First, we resume the functional notation we are adopting in this thesis, according to the lecture notes \cite{cont-notes}, which unify the different notations from the present papers on the subject into a unique clear language.

As usual, we denote by $\DT$ the space of \cadlag\ functions on $[0,T]$ with values in $\R^d$.
Concerning maps $x\in\DT$, for any $t\in[0,T]$ we denote:
\begin{itemize}
\item $x(t)\in\R^d$ its value at $t$; 
\item $x_t=x(t\wedge\cdot)\in\DT$ its path \lq stopped\rq\ at time $t$; 
\item $x_{t-}=x\ind_{[0,t)}+x(t-)\ind_{[t,T]}\in\DT$; 
\item for $\d\in\R^d$, $x_t^\d=x_t+\d\ind_{[t,T]}\in\DT$ the \textit{vertical perturbation} of size $\d$ of the path of $x$ stopped at $t$ over the future time interval $[t,T]$; 
\end{itemize}

A \textit{non-anticipative functional} on $\DT$ is defined as a family of functionals on $\DT$ adapted to the natural filtration $\FF=(\F_t)_{t\in[0,T]}$ of the canonical process on $\DT$, i.e. $F=\{F(t,\cdot),\,t\in[0,T]\},$ such that
$$\forall t\in[0,T],\quad F(t,\cdot):\DT\mapsto\R\text{ is }\F_t\text{-measurable}.$$
It can be viewed as a map on the space of 'stopped' paths $\L_T:=\{(t,x_t):\:(t,x)\in[0,T]\times\DT\}$, that is in turn the quotient of $[0,T]\times\DT$ by the equivalence relation $\sim$ such that $$\forall(t,x),(t',x')\in[0,T]\times\DT,\quad(t,x)\sim(t',x') \iff t=t',x_t=x'_{t}.$$
Thus, we will usually write a \naf\ as a map $F:\L_T\to\R^d$.

The space $\L_T$ is equipped with a distance $\dinf$, defined by
$$\dinf((t,x),(t',x'))=\sup_{u\in[0,T]}|x(u\wedge t)-x'(u\wedge t')|+|t-t'|=||x_t-x'_{t'}||_\infty+|t-t'|,$$
for all $(t,x),(t',x')\in\L_T$.
Note that $(\L_T,\dinf)$ is a complete metric space and the subset of continuous stopped paths,
$$\W_T:=\{(t,x)\in\L_T:\,x\in\C([0,T],\R^d)\},$$
is a closed subspace of $(\L_T,\dinf)$. 

We recall here all the notions of functional regularity that will be used henceforth.
\begin{definition}\label{def:regF}
A \naf\ $F$ is:
\begin{itemize}
\item \emph{continuous at fixed times} if, for all $t\in[0,T]$, 
$$F(t,\cdot):\lf\lf\{t\}\times\DT\rg/\sim,||\cdot||_\infty\rg\mapsto\R$$
is continuous, that is 
$$\bea{c}\forall x\in\DT, \forall\e>0,\,\exists\eta>0:\quad \forall x'\in\DT,\\
||x_t-x'_t||_\infty<\eta\quad\Rightarrow\quad|F(t,x)-F(t,x')|<\e;\ea$$
\item \emph{jointly-continuous}, i.e. $F\in\CC^{0,0}(\L_T)$, if 
$F:\lf\L_T,\dinf\rg\to\R$ is continuous; 
\item \emph{left-continuous}, i.e. $F\in\CC_l^{0,0}(\L_T)$, if 
$$\bea{c}\forall (t,x)\in\L_T, \forall\e>0,\,\exists\eta>0:\quad \forall h\in[0,t],\,\forall (t-h,x')\in\L_T,\\
\quad \dinf((t,x),(t-h,x'))<\eta\quad\Rightarrow\quad|F(t, x)-F(t-h,x')|<\e;\ea$$
a symmetric definition characterizes the set $\CC_r^{0,0}(\L_T)$ of \emph{right-continuous} functionals;
\item \emph{boundedness-preserving}, i.e. $F\in\BB(\L_T)$, if, 
$$\bea{c}\forall K\subset\R^d\text{ compact, }\forall t_0\in[0,T],\,\exists C_{K,t_0}>0;\quad \forall t\in[0,t_0],\,\forall (t,x)\in\L_T,\\
x([0,t])\subset K \Rightarrow |F(t,x)|<C_{K,t_0}.\ea$$
\end{itemize}
\end{definition}

Now, we recall the notions of differentiability for \naf s.
\begin{definition}\label{def:derF}
  A \naf\ $F$ is said:
\begin{itemize}
\item \emph{horizontally differentiable at} $(t,x)\in\L_T$ if the limit
$$\lim_{h\rightarrow0^+}\frac{F(t+h,x_{t})-F(t,x_t)}{h}$$
exists and is finite, in which case it is denoted by $\hd F(t,x)$; if this holds for all $(t,x)\in\L_T$ and $t<T$, then the \naf\ $\hd F=(\hd F(t,\cdot))_{t\in[0,T)}$ is called the \emph{horizontal derivative} of $F$;
\item \emph{vertically differentiable at} $(t,x)\in\L_T$ if the map 
$$\R^d\rightarrow\R,\; e\mapsto F(t, x_t^e)$$ is differentiable at 0 and in this case its gradient at 0 is denoted by $\vd F(t, x)$;
if this holds for all $(t,x)\in\L_T$, then the $\R^d$-valued \naf\ $\vd F=(\vd F(t,\cdot))_{t\in[0,T]}$ is called the \emph{vertical derivative} of $F$.  
\end{itemize}
\end{definition}

Then, the class of smooth functionals is defined as follows:
\begin{itemize}
\item $\CC^{1,k}(\L_T)$ the set of \naf s $F$ which are 
\begin{itemize}
\item horizontally differentiable with $\hd F$ continuous at fixed times,
\item $k$ times vertically differentiable with $\vd^j F\in\CC^{0,0}_l(\L_T)$ for $j=0,\ldots,k$;
\end{itemize}
\item $\CC^{1,k}_b(\L_T)$ the set of \naf s $F\in\CC^{1,k}(\L_T)$ such that $\hd F,\vd F,\ldots,\vd^kF\in\BB(\L_T)$. 
\end{itemize}
However, many examples of functionals in applications fail to be globally smooth, especially those involving exit times. Fortunately, the global smoothness characterizing the class $\Cb(\L_T)$ is in fact sufficient but not necessary to get the functional \ito\ formula. Thus, we will often require only the following weaker property of local smoothness, introduced in \cite{fournie}.
  A \naf\ $F$ is said to be \emph{locally regular}, i.e. $F\in\Cloc(\L_T)$, if $F\in\CC^{0,0}(\L_T)$ and there exist a sequence of stopping times $\{\t_k\}_{k\geq0}$ on $(\DT,\F_T,\FF)$, such that $\t_0=0$ and $\t_k\to_{k\to\infty}\infty$, and a family of \naf s $\{F^k\in\Cb(\L_T)\}_{k\geq0},$
such that
$$F(t,x_t)=\sum_{k\geq0}F^k(t,x_t)\ind_{[\t_k( x),\t_{k+1}(x))}(t) \zs.$$

\section{Change of variable formulae for functionals}

In 2010, \citet{contf2010} extended the \follmer's change of variable formula to \naf s on $\DT$, hence allowing to define an analogue of the \follmer\ integral for functionals. The pathwise formulas are also viable for a wide class of stochastic process in an ``almost-sure'' sense. The setting of \citet{contf2010} is more general than what we need, so we report here its main results in a simplified version.


\begin{remark}[Proposition 1 in \cite{contf2010}]\label{rmk:regularity}
Useful pathwise regularities follow from the continuity of \naf s:
\begin{enumerate}
\item If $F\in\CC_l^{0,0}(\L_T)$, then for all $x\in\DT$ the path $t\mapsto F(t,x_{t-})$ is left-continuous;
\item If $F\in\CC_r^{0,0}(\L_T)$, then for all $x\in\DT$ the path $t\mapsto F(t,x_t)$ is right-continuous;
\item If $F\in\CC^{0,0}(\L_T)$, then for all $x\in\DT$ the path $t\mapsto F(t,x_t)$ is \cadlag\ and continuous at each point where $x$ is continuous.
\item If $F\in\BB(\L_T)$, then $\forall x\in\DT$ the path $t\mapsto F(t,x_t)$ is bounded.
\end{enumerate}
\end{remark}

Below is one of the main results of  \cite{contf2010}: the \emph{change of variable formula for \naf s of \cadlag\ paths}. We only report the formula for \cadlag\ paths because the change of variable formula for functionals of continuous paths (\cite[Theorem 3]{contf2010}) can then be obtained with straightforward modifications. 

\begin{theorem}[Theorem 4 in \cite{contf2010}]\label{thm:fif-d}
  Let $x\in Q(\DT,\Pi)$ such that
\begin{equation}  \label{eq:ass_w}
\sup\limits_{t\in[0,T]\setminus\pi^n}|\De x(t)|\limn0.
\end{equation}
 and denote
\beq\label{eq:wn}
x^n:=\sum_{i=0}^{m(n)-1}x(t^n_{i+1}-)\ind_{[t^n_i,t^n_{i+1})}+x(T)\ind_{\{T\}}
\eeq
Then, for any $F\in\Cloc(\L_T)$, the limit
\begin{equation}  \label{eq:int-d}
  \Limn\sum_{i=0}^{m(n)-1}\vd F(t_i^n,x^{n,\De x(t^n_i)}_{t^n_i-})(x(t_{i+1}^n)-x(t_i^n))
\end{equation}
exists, denoted by $\int_0^T\vd F(t,x_{t-})\cdot\ud^{\Pi}x$, and 
\begin{align}  \label{eq:fif-d}
  F(T,x)={} & F(0,x)+\int_0^T\vd F(t,x_{t-})\cdot\ud^{\Pi}x+ \\
 &{} +\int_0^T\hd F(t,x_{t-})\ud t+\int_0^T\frac12\tr\lf\vd^2F(t,x_{t-})\ud[x]_\Pi^c(t)\rg+\nonumber  \\
 &{} +\sum_{u\in(0,T]}\lf F(u,x)-F(u,x_{u-})-\vd F(u,x_{u-})\cdot\De x(u)\rg.\nonumber
\end{align}
\end{theorem}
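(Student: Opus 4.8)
The plan is to adapt F\"ollmer's strategy to the functional setting: approximate $F(T,x)-F(0,x)$ along the partitions $\pi^n$, split each partition increment into a \emph{horizontal} part (time advances, the path is frozen) and a \emph{vertical} part (the current value of the path is updated), and then match the limits of the resulting sums with the four terms on the right-hand side of \eqref{eq:fif-d}.

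First I would reduce to $F\in\Cb(\L_T)$. Since $x\in\DT$, its range $x([0,T])$ is relatively compact, hence only finitely many of the localizing stopping times $\t_k$ lie in $[0,T]$; on each of the finitely many resulting subintervals $F$ agrees with some $F^k\in\Cb(\L_T)$, and \eqref{eq:fif-d} for $F$ follows by concatenating the formulae for the pieces, using joint continuity of $F$ to glue at the $\t_k$. Next I would record the two elementary facts that are the only places where the hypotheses enter the passages to the limit: (i) under \eqref{eq:ass_w} the step paths $x^n$ of \eqref{eq:wn} converge to $x$ uniformly on $[0,T]$, and each $x^n$ jumps only at points of $\pi^n$; (ii) since $x\in Q(\DT,\Pi)$, the quadratic sums of $x$ along $\pi^n$ converge to $[x]_\Pi(T)$, with the decomposition \eqref{eq:qv-jumps}, and moreover the jumps off the partition are negligible, $\sum_{u\notin\pi^n}\De x(u)^2\to 0$ (because $\sum_u\De x(u)^2<\infty$ and, by \eqref{eq:ass_w}, every fixed jump time eventually lies in $\pi^n$).

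The heart of the argument is the telescoping identity $F(T,x^n)-F(0,x^n)=\sum_{i=0}^{m(n)-1}(F(t_{i+1}^n,x^n_{t_{i+1}^n})-F(t_i^n,x^n_{t_i^n}))$, in which each summand is written as a \emph{vertical} increment (same time $t_{i+1}^n$, the stopped path of $x^n$ at $t_i^n$ is perturbed to its stopped path at $t_{i+1}^n$) plus a \emph{horizontal} increment (the stopped path of $x^n$ at $t_i^n$ is kept frozen while time runs from $t_i^n$ to $t_{i+1}^n$). For the horizontal increment I would use horizontal differentiability together with continuity of $\hd F$ at fixed times to write it as $\int_{t_i^n}^{t_{i+1}^n}\hd F(s,\text{frozen path})\,\ud s$, and then pass to the limit $\sum_i(\cdots)\to\int_0^T\hd F(s,x_{s-})\,\ud s$ by dominated convergence, using $\hd F\in\BB(\L_T)$, the left-continuity of $\hd F$ and the uniform convergence $x^n\to x$. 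For the vertical increment I would separate the finitely many indices whose node $t_i^n$ carries a (by now large) jump of $x$, keeping there the \emph{exact} increment $F(u,x)-F(u,x_{u-})=\vd F(u,x_{u-})\cdot\De x(u)+(F(u,x)-F(u,x_{u-})-\vd F(u,x_{u-})\cdot\De x(u))$, the last parenthesis being destined to feed the jump sum in \eqref{eq:fif-d}; at the remaining nodes I would Taylor-expand $e\mapsto F(t_{i+1}^n,(\cdot)^e)$ to second order, producing a first-order term $\vd F\cdot(\text{increment})$, a second-order term $\frac12\,{}^t(\text{increment})\,\vd^2F\,(\text{increment})$, and a remainder that is $o$ of the squared increment, uniformly.

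It then remains to identify the three limits. Rearranging the telescoping sum and collecting all first-order vertical pieces yields, up to asymptotically negligible terms coming from the small jumps and from $x^n\neq x$, the Riemann sums $\sum_i\vd F(t_i^n,x^{n,\De x(t_i^n)}_{t_i^n-})\cdot(x(t_{i+1}^n)-x(t_i^n))$ of \eqref{eq:int-d}; the \emph{existence} of their limit need not be proved directly --- it follows by solving the telescoping identity for this sum, every other term having just been shown to converge --- and by definition that limit is $\int_0^T\vd F(t,x_{t-})\cdot\ud^{\Pi}x$. The second-order pieces converge to $\frac12\int_0^T\tr\lf\vd^2F(s,x_{s-})\,\ud[x]_\Pi^c(s)\rg$ by combining fact (ii), the decomposition \eqref{eq:qv-jumps}, and the left-continuity and local boundedness of $\vd^2F$ (the smeared small jumps contribute $O(\sum_{u\notin\pi^n}\De x(u)^2)\to 0$), while the remainders sum to $0$ for the same reasons; adding the exact jump increments collected above then produces the final sum $\sum_{u\in(0,T]}(F(u,x)-F(u,x_{u-})-\vd F(u,x_{u-})\cdot\De x(u))$. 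Collecting everything yields \eqref{eq:fif-d}. The delicate point throughout is the bookkeeping around jumps: one must peel off the finitely many large jumps --- which \eqref{eq:ass_w} guarantees are eventually partition points, so that the \emph{exact} vertical increment can be used there --- show that the remaining, uncaptured jumps are asymptotically negligible, and control the second-order remainders uniformly; this last step is where the uniform (local) continuity of $\vd^2F$ on the compact tube swept out by the $x^n$, i.e.\ the regularity encoded in $\CC^{0,0}_l\cap\BB$, is genuinely needed.
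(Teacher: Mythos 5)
Your plan is correct and coincides with the route the paper itself takes (following Cont--Fourni\'e's proof of their Theorem 4): telescope along $\pi^n$, decompose each increment into a horizontal piece and a vertical piece, peel off the finitely many $\eps$-large jumps so the \emph{exact} vertical increment can be used there, Taylor-expand $e\mapsto F(t,(\cdot)^e)$ to second order at the remaining nodes with a uniform control on the remainder, pass to the limit first in $n$ and then in $\eps$, and solve the telescoping identity for the first-order Riemann sum to get \eqref{eq:int-d}. The only slip is the appeal to relative compactness of $x([0,T])$ to justify that finitely many localizing times $\t_k(x)$ lie in $[0,T]$: what one actually uses is simply that $\t_k\to\infty$ in the definition of $\Cloc(\L_T)$, which for the fixed path $x$ gives $\t_k(x)>T$ for all large $k$.
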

Note that the assumption \eq{ass_w} can always be removed, simply by including all jump times of the \cadlag\ path $\w$ in the fixed sequence of partitions $\Pi$. Hence, in the sequel we will omit such an assumption.

The proof, in the simpler case of continuous paths, turns around the idea of rewriting the variation of $F(\cdot,x)$ on \OT\ as the limit for $n$ going to infinity of the sum of the variations of $F(\cdot,x^n)$ on the consecutive time intervals in the partition $\pi^n$. In particular, these variations can be decomposed along two directions, horizontal and vertical. That is:
$$F(T,x_T)-F(0,x_{0})=\Limn\sum_{i=0}^{m(n)-1}\lf F(t_{i+1}^n,x^{n}_{t^n_{i+1}-})-F(t_{i}^n,x^{n}_{t^n_{i}-})\rg,$$
where
\begin{align}
F(t_{i+1}^n,x^{n}_{t^n_{i+1}-})-F(t_{i}^n,x^{n}_{t^n_{i}-})={}&F(t_{i+1}^n,x^{n}_{t^n_i})-F(t_{i}^n,x^{n}_{t^n_{i}})\label{eq:incr1}\\
&{}+F(t_{i}^n,x^{n}_{t^n_i})-F(t_{i}^n,x^{n}_{t^n_{i}-})\label{eq:incr2}.
\end{align}
Then, it is possible to rewrite the two increments on the right-hand side in terms of increments of two functions on $\R^d$.
Indeed: defined the left-continuous and right-differentiable function $\psi(u):=F(t_i^n+u,x^{n}_{t^n_i})$, \eq{incr1} is equal to 
$$\psi(h^n_i)-\psi(0)=\int_{t^n_i}^{t^n_{i+1}}\hd F(t,x^{n}_{t^n_{i}})\ud t,$$ while, defined the function $\phi(u):=F(t_i^n,x^{n,u}_{t^n_i-})$ of class $\C^2(B(0,\y_n),\R)$, where 
$$\y_n:=\sup\{\abs{x(u)-x(t^n_{i+1})}+\abs{t^n_{i+1}-t^n_i},\,0\leq i\leq m(n)-1,\,u\in[t^n_i,t^n_{i+1})\},$$
\eq{incr2} is equal to 
$$\phi(\d x^n_i)-\phi(0)=\vd F(t_i^n,x^{n}_{t^n_i-})\cdot\d x^n_i+\frac12\tr\lf\vd^2F(t_i^n,x^{n}_{t^n_i-})\,^t\!(\d x^n_i)\d x^n_i\rg+r^n_i,$$
where $\d x^n_i:=x(t^n_{i+1})-x(t^n_i)$ and
$$r^n_i\leq K\abs{\d x^n_i}^2\sup_{u\in B(0,\y_n)}\abs{\vd^2F(t_i^n,x^{n,u}_{t^n_i-})-\vd^2F(t_i^n,x^{n}_{t^n_i-})}.$$
The sum over $i=0,\ldots,m(n)-1$ of \eq{incr1}, by the dominated convergence theorem, converges to $\int_{0}^T\hd F(t,x_t)\ud t$. On the other hand, by Lemma 12 in \cite{contf2010} and weak convergence of the Radon measures in \eq{xin}, we have
$$\sum_{i=0}^{m(n)-1}\frac12\tr\lf\vd^2F(t_i^n,x^{n}_{t^n_i-})\,^t\!(\d x^n_i)\d x^n_i\rg \limn \int_{0}^T\frac12\tr\lf\vd^2F(t,x_t) \ud[x](t)\rg$$
and the sum of the remainders goes to 0.
Therefore, the limit of the sum of the first order terms exists and the change of variable formula (see \eq{fif-c} below) holds.

The route to prove the change of variable formula for \cadlag\ paths is much more intricate than in the continuous case, but the idea is the following.
We can rewrite the variation of $F$ over \OT\ as before, but now we separate the indexes between two complementary sets $I_1(n),I_2(n)$. Namely: let $\eps>0$ and let $C_2(\eps)$ be the set of jump times such that $\sum_{s\in C_2(\eps)}\abs{\De x(s)}^2<\eps^2$ and $C_1(\eps)$ be its complementary finite set of jump times, denote $I_1(n):=\{i\in\{1,\ldots m(n)\}:\,(t^n_i,t^n_{i+1}]\cap C_1(\eps)\neq0\}$ and $I_2(n):=\{i\in\pi^n: i\notin I_1(n)\}$, then
\begin{align*}
F(T,x_T)-F(0,x_{0})={}&\Limn\sum_{i\in I_1(n)}\lf F(t_{i+1}^n,x^{n,\De x(t^n_{i+1})}_{t^n_{i+1}-})-F(t_i^n,x^{n,\De x(t^n_{i})}_{t^n_i-})\rg+\\
&{}+\Limn\sum_{i\in I_2(n)}\lf F(t_{i+1}^n,x^{n,\De x(t^n_{i+1})}_{t^n_{i+1}-})-F(t_i^n,x^{n,\De x(t^n_{i})}_{t^n_i-})\rg.
\end{align*}
The first sum converges, for $n$ going to infinity, to $\sum_{u\in C_1(\eps)}\lf F(u,x_u)-F(u,x_{u-})\rg$, while the increments in the second sum are further decomposed into a horizontal and two vertical variations. After many steps:
\begin{align}
&F(T,x_T)-F(0,x_{0})=\nonumber\\
={}&\int_{(0,T]}\hd F(t,x_t)\ud t+\int_{(0,T]}\frac12\tr\lf\vd^2F(t,x_t) \ud[x](t)\rg+\nonumber\\
&{}+\Limn\sum_{i=0}^{m(n)-1}\vd F_{t_i^n}(x^{n,\De x(t^n_i)}_{t^n_i-},v^n_{t^n_i-})\cdot(x(t_{i+1}^n)-x(t_i^n))+\nonumber\\
&{}+\sum_{u\in C_1(\eps)}\lf F(u,x_u)-F(u,x_{u-})-\vd F(u,x_{u-})\cdot\De x(u)\rg+\a(\eps),\label{eq:sum}
\end{align}
where $\a(\eps)\leq K(\eps^2+T\eps)$.
Finally, the sum in \eq{sum} over $C_1(\eps)$ converges, for $\eps$ going to 0, to the same sum over $(0,T]$ and the formula \eq{fif-d} holds.

It is important to remark that to obtain the change of variable formula on continuous paths it suffices to require the smoothness of the restriction of the \naf\ $F$ to the subspace of continuous stopped paths (see \cite[Theorems 2.27,2.28]{cont-notes}). To this regard, it is defined the class $\Cb(\W_T)$ of \naf s $F$ such that there exists an extension $\tilde F$ of class $\Cb(\L_T)$ that coincides with $F$ if restricted to $\W_T$. Then, the following theorem holds:
\begin{theorem}[Theorems 2.29 in \cite{cont-notes}]\label{thm:fif-c}
 For any $F\in\Cloc(\W_T)$ and $x\in Q(C([0,T],\R^d),\Pi)$, the limit
\begin{equation}  \label{eq:int-c}
  \Limn\sum_{i=0}^{m(n)-1}\vd F(t_i^n,x^{n}_{t^n_i})(x(t_{i+1}^n)-x(t_i^n))
\end{equation}
exists, denoted by $\int_0^T\vd F(t,x_{t})\cdot\ud^{\Pi}x$, and
\begin{align}  \label{eq:fif-c}
  F(T,x)={} & F(0,x)+\int_0^T\vd F(t,x_{t})\cdot\ud^{\Pi}x+ \\
 &{} +\int_0^T\hd F(t,x_{t})\ud t+\int_0^T\frac12\tr\lf\vd^2F(t,x_{t})\ud[x](t)\rg.\nonumber
\end{align}
\end{theorem}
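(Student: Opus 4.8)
The plan is to obtain \eq{fif-c} by running the scheme behind \thm{fif-d} along a continuous path — where $[x]=[x]^c$, $\De x\equiv0$ and the jump sum of \eq{fif-d} disappears — supplemented by two reductions: a localization to remove the $\Cloc$ assumption, and an extension step turning smoothness on all of $\L_T$ into smoothness on $\W_T$ alone. Fix $x\in Q(C([0,T],\R^d),\Pi)$; since $x$ is uniformly continuous on $[0,T]$ and $|\pi^n|\to0$, the piecewise-constant approximants $x^n$ of \eq{wn} satisfy $\norm{x^n-x}_\infty\to0$. By local regularity, write $F|_{\W_T}=\sum_{k\ge0}F^k\ind_{[\t_k,\t_{k+1})}$ with $F^k\in\Cb(\W_T)$; as $[0,T]$ is compact only finitely many of the $\t_k(x)$ meet $[0,T]$, so it suffices to prove the formula for $F\in\Cb(\W_T)$ and then concatenate over the finitely many resulting subintervals, matching boundary values by joint continuity. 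For such an $F$, pick by definition of $\Cb(\W_T)$ an extension $\tilde F\in\Cb(\L_T)$; the computation is carried out with $\tilde F$, which is smooth everywhere and in particular can be evaluated and differentiated at the discontinuous step paths $x^n$, and only at the end is one restricted back to continuous stopped paths, where $\tilde F$, $\hd\tilde F$, $\vd^2\tilde F$ agree with $F$, $\hd F$, $\vd^2 F$.

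I would then write the variation of $\tilde F(\cdot,x^n)$ over $[0,T]$ as a telescoping sum over the intervals $[t^n_i,t^n_{i+1}]$ — the endpoint terms converging to $\tilde F(T,x_T)-\tilde F(0,x_0)=F(T,x_T)-F(0,x_0)$ by joint continuity and $\norm{x^n-x}_\infty\to0$ — and split each increment into a horizontal piece (time runs from $t^n_i$ to $t^n_{i+1}$, the path held frozen at $x^n_{t^n_i}$) and a vertical piece (time frozen, path perturbed by $\d x^n_i:=x(t^n_{i+1})-x(t^n_i)$), exactly as in \eq{incr1}--\eq{incr2}. The horizontal piece equals $\int_{t^n_i}^{t^n_{i+1}}\hd\tilde F(t,x^n_{t^n_i})\,\ud t$ by the fundamental theorem of calculus applied to the right-differentiable map $u\mapsto\tilde F(t^n_i+u,x^n_{t^n_i})$ with right-derivative $\hd\tilde F$; summing over $i$ and using $\hd\tilde F\in\BB(\L_T)$, its continuity at fixed times, and $x^n\to x$, dominated convergence gives $\int_0^T\hd F(t,x_t)\,\ud t$. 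The vertical piece, by a second-order Taylor expansion — legitimate by the $\CC^{1,2}$-regularity of $\tilde F$, since $u\mapsto\tilde F(t^n_i,(x^n_{t^n_i-})^u)$ is twice differentiable near $0$ with derivatives $\vd\tilde F$, $\vd^2\tilde F$ — equals $\vd\tilde F(t^n_i,x^n_{t^n_i-})\cdot\d x^n_i+\tfrac12\tr\lf\vd^2\tilde F(t^n_i,x^n_{t^n_i-})\,{}^t(\d x^n_i)\,\d x^n_i\rg+r^n_i$, with $\abs{r^n_i}\le K\abs{\d x^n_i}^2\sup_{|u|\le\y_n}\abs{\vd^2\tilde F(t^n_i,(x^n_{t^n_i-})^u)-\vd^2\tilde F(t^n_i,x^n_{t^n_i-})}$ and $\y_n\to0$.

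The core of the proof is the passage to the limit in the two sums produced by the vertical piece. For the second-order term I would invoke the weak convergence of the discrete measures in \eq{xin} towards the (continuous) measure $\ud[x]$ — which, under the standing hypotheses on $\Pi$, is the very meaning of $x\in Q(C([0,T],\R^d),\Pi)$, cf.\ \defin{qv-follmer} — together with Lemma 12 of \cite{contf2010}, which is what lets one pass the $n$-dependent integrand $\vd^2\tilde F(t^n_i,x^n_{t^n_i-})$ to $\vd^2 F(t,x_t)$ in the limit: the stopped paths $(t^n_i,x^n_{t^n_i-})$ remain in a relatively compact subset of $(\L_T,\dinf)$, clustering around the compact curve $\{(t,x_t):t\in[0,T]\}$ because $\norm{x^n-x}_\infty\to0$, and there the continuous functional $\vd^2\tilde F$ is uniformly continuous. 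The same uniform continuity, through $\y_n\to0$, forces $\sup_{i,\,|u|\le\y_n}\abs{\vd^2\tilde F(t^n_i,(x^n_{t^n_i-})^u)-\vd^2\tilde F(t^n_i,x^n_{t^n_i-})}\to0$, whence $\sum_i\abs{r^n_i}\to0$ since $\sum_i\abs{\d x^n_i}^2$ stays bounded. Because the left-hand side and every right-hand term except the first-order Riemann sum have now been shown to converge, that sum converges too; identifying its limit as $\int_0^T\vd F(t,x_t)\cdot\ud^\Pi x$ and rearranging yields \eq{fif-c}. One then notes that this limit equals $F(T,x_T)-F(0,x_0)-\int_0^T\hd F(t,x_t)\,\ud t-\tfrac12\int_0^T\tr\lf\vd^2F(t,x_t)\,\ud[x](t)\rg$, hence depends only on $F|_{\W_T}$ and not on the chosen extension.

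I expect the main obstacle to be precisely this limiting step for the vertical piece: pushing the $n$-dependent integrand $\vd^2\tilde F(t^n_i,x^n_{t^n_i-})$ past the weak convergence of the measures in \eq{xin} and making the Taylor remainders vanish, both of which rest on showing that the family of stopped step paths appearing in the sums is relatively compact in $(\L_T,\dinf)$, so that the merely continuous functionals $\hd F$, $\vd^2 F$ are uniformly continuous there. This compactness/uniform-continuity bookkeeping — rather than any single estimate — is the delicate point, and it is exactly where the apparatus already assembled for \thm{fif-d} (Lemma 12 of \cite{contf2010} and the weak-convergence argument) is reused.
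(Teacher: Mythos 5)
Your proposal reproduces the paper's own argument essentially step for step: the localization via the stopping times $\t_k$ to reduce to $F\in\Cb$, the extension $\tilde F\in\Cb(\L_T)$ of $F\in\Cb(\W_T)$, the telescoping along $\pi^n$ with the horizontal/vertical split of \eq{incr1}--\eq{incr2}, the fundamental theorem of calculus plus dominated convergence for the horizontal sum, the second-order Taylor expansion for the vertical increments, the weak convergence of the Radon measures $\xi_n$ of \eq{xin} together with Lemma 12 of \cite{contf2010} for the quadratic term, the uniform-continuity bound on the remainders $r^n_i$, and finally the convergence of the first-order Riemann sums by elimination. This is precisely the route the paper sketches in the discussion preceding Theorem \ref{thm:fif-c}, so there is nothing to contrast.
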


As remarked in \cite{contf2010}, the change of variable formula \eq{fif-d} also holds in the case of right-continuous functionals instead of left-continuous, by redefining the pathwise integral \eq{int-d} as
$$\Limn\sum_{i=0}^{m(n)-1}\vd F_{t_{i+1}^n}(x^{n}_{t^n_i},v^n_{t^n_i})\cdot(x(t_{i+1}^n)-x(t_i^n))$$
and the stepwise approximation $x^n$ in \eq{wn} as
$$x^n:=\sum_{i=0}^{m(n)-1}x(t^n_{i})\ind_{[t^n_i,t^n_{i+1})}+x(T)\ind_{\{T\}}.$$

\chapter{Functional \ito\ Calculus}
\label{chap:fic}

The  \lq\ito\ calculus\rq\ is a  powerful tool at the core of stochastic analysis and lies at the foundation of modern Mathematical Finance. It is a calculus which applies to functions of the current state of a stochastic process, and extends the standard differential calculus to functions of processes with non-smooth paths of infinite variation. However, in many applications, uncertainty affects the current situation even through the whole (past) history of the process and it is necessary to consider functionals, rather than functions, of a stochastic process, i.e. quantities of the form
$$F(X_t),\quad\text{where }X_t=\{X(u), u\in[0,t]\}.$$
These ones appear in many financial applications, such as the pricing and hedging of path-dependent options, and in (non-Markovian) stochastic control problems.
One framework allowing to deal with functionals of stochastic processes is  the Fr\'echet calculus, but many path-dependent  quantities intervening in stochastic analysis  are  not Fr\'echet-differentiable. This instigated the development of a new theoretical framework to deal with functionals of a stochastic process: the Malliavin calculus \cite{malliavin,nualart09}, which is a weak (variational) differential  calculus for functionals on the Wiener space. The theory of Malliavin calculus has found many applications in financial mathematics, specifically to problems dealing with path-dependent instruments. 
However, the Malliavin derivative involves perturbations affecting the whole path (both past and future) of the process. This notion of perturbation is not readily interpretable in applications such as optimal control, or hedging, where the quantities are required to be causal or non-anticipative processes.

In an  insightful paper, Bruno Dupire~\cite{dupire}, inspired by methods used by practitioners for the sensitivity analysis of path-dependent derivatives, introduced a new notion of functional derivative, and used it to extend the \ito\ formula to the path-dependent case.
Inspired by Dupire's work, Cont and Fourni\'e \cite{ContFournie09a,contf2010,contf2013} developed a rigorous mathematical framework for a  path-dependent extension of the \ito\ calculus, the  Functional \ito\ Calculus~\cite{contf2013}, as well as a purely pathwise functional calculus~\cite{contf2010} (see \chap{pfc}), proving the pathwise nature of some of the results obtained in the probabilistic framework.

The idea is to control the variations of a functional along a path by controlling its sensitivity to horizontal and vertical perturbations of the path, by defining functional derivatives corresponding to infinitesimal versions of these perturbations.
These tools led to 
\begin{itemize}\item a new class of {\bf ``path-dependent PDEs''} on the space of \cadlag\ paths $D([0,T],R^d)$, extending the Kolmogorov equations to a non-Markovian setting,
\item a {\bf universal hedging formula} and a {\bf universal pricing equation} for path-dependent options.\end{itemize}

In this chapter we develop the key concepts and main results of the Functional Ito calculus, following \citet{contf2013,cont-notes}.

\section{Functional \ito\ formulae}
\sectionmark{The Functional \ito\ formula}
\label{sec:fif}

The change of variable formula \eq{fif-d} implies as a corollary the extension of the classical \ito\ formula to the case of \naf s, called the \emph{functional \ito\ formula}. This holds for very general stochastic processes as Dirichlet process, in particular for semimartingales. We report here the results obtained with respect to \cadlag\ and continuous semimartingales, in which case the pathwise integral \eq{int-d} coincides almost surely with the stochastic integral. The following theorems correspond to Proposition 6 in \cite{contf2010} and Theorem 4.1 in \cite{contf2013}, respectively. 

\begin{theorem}[Functional \ito\ formula: \cadlag\ case]\label{thm:fif-sm}
Let $X$ be a $\R^d$-valued semimartingale on $(\O,\F,\PP,\FF)$ and $F\in\Cloc(\L_T)$, then, for all $t\in[0,T)$, 
\begin{align*}
  F(t,X_t)={} & F(0,X_{0})+\int_{(0,t]}\vd F(u,X_{u-})\cdot\ud X(u)+ \\
 &{} +\int_{(0,t]}\hd F(u,X_{u-})\ud u+\int_{(0,t]}\frac12\tr\lf\vd^2F(u,X_{u-}) \ud[X]^c(u)\rg \\
 &{} +\sum_{u\in(0,t]}\lf F(u,X_{u})-F(u,X_{u-})-\vd F(u,X_{u-})\cdot\De X(u)\rg,
\end{align*}
$\PP$-almost surely. In particular, $(F(t,X_t), t\in[0,T])$ is a semimartingale.
\end{theorem}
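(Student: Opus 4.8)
The plan is to derive this \ito-type formula as an almost-sure corollary of the pathwise change of variable formula \thm{fif-d}, applied trajectory by trajectory. First I would fix a sequence of partitions $\Pi=\{\pi_n\}$ along which almost every trajectory of $X$ has \follmer's finite quadratic variation, with limit $[X]_\Pi=[X]$ equal to the usual semimartingale quadratic variation: as recalled in \Sec{qv} following \follmer, such a sequence exists for every semimartingale (for instance a refining sequence of partitions by stopping times, or the dyadic subdivisions in the continuous case). Off a $\PP$-null set every path of $X$ then lies in $Q(\DT,\Pi)$ and, including all jump times in $\cup_n\pi_n$, satisfies \eq{ass_w} as well. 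Before anything else I would dispose of the localisation built into $\Cloc(\L_T)$: writing $F=\sum_{k\geq0}F^k\ind_{[\t_k,\t_{k+1})}$ with $F^k\in\Cb(\L_T)$ and $\t_k\uparrow\infty$ stopping times, it is enough to establish the formula for each $F^k$ along $X$ stopped at $\t_{k+1}$ and then paste the resulting identities on the stochastic intervals; this reduces the problem to $F\in\Cb(\L_T)$.

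For such an $F$, fixing $t\in[0,T)$ and a trajectory outside the exceptional null set, I would apply \thm{fif-d} with time horizon $t$ to that path. This already produces, pathwise, the right-hand side of the claimed formula, except that $\int_{(0,t]}\vd F(u,X_{u-})\cdot\ud X(u)$ is replaced by the \follmer-type limit of the non-anticipative Riemann sums $\sum_{t^n_i\leq t}\vd F(t^n_i,X^{n,\De X(t^n_i)}_{t^n_i-})\cdot(X(t^n_{i+1})-X(t^n_i))$, and the continuous quadratic-variation and jump terms are written with $[X]^c_\Pi$ and the pathwise jumps of $X$. On the exceptional-free set these last objects already coincide with $[X]^c$ and the semimartingale jumps, and the time integral of $\hd F(\cdot,X_{\cdot-})$ together with the absolutely convergent jump sum are genuine (Lebesgue, Lebesgue--Stieltjes) pathwise integrals, so they need no further identification. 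The remaining work is to match the pathwise integral with the \ito\ integral.

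Here is the crux. By \rmk{regularity} the process $u\mapsto\vd F(u,X_{u-})$ is \caglad\ and adapted, and locally bounded since $F\in\Cb(\L_T)$; the left-continuity and boundedness-preserving properties of $\vd^2F$ control, uniformly along the realised path as $|\pi^n|\to0$, the discrepancy between $\vd F$ evaluated at the perturbed stopped path $X^{n,\De X(t^n_i)}_{t^n_i-}$ and at $X_{t^n_i-}$. Hence the Riemann sums above differ from $\sum_{t^n_i\leq t}\vd F(t^n_i,X_{t^n_i-})\cdot(X(t^n_{i+1})-X(t^n_i))$ by a term that converges to $0$ in probability, while the latter converge in probability to $\int_{(0,t]}\vd F(u,X_{u-})\cdot\ud X(u)$ by the standard approximation theorem for stochastic integrals with \caglad\ adapted integrands. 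Since the pathwise limit exists a.s. and limits in probability are a.s. unique (pass to an a.s.-convergent subsequence), the two limits agree $\PP$-a.s. Assembling the pieces over the exceptional-free set gives the formula a.s. Finally, the semimartingale property of $(F(t,X_t))_{t\in[0,T]}$ follows by inspection: decomposing $X=M+A$, the term $\int\vd F(\cdot,X_{\cdot-})\cdot\ud X$ splits into a local martingale and a finite-variation process, and the $\hd F$ integral, the $\vd^2F$ term and the jump compensator are all of finite variation.

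I expect the main obstacle to be this identification step: one must justify, along the very partitions for which the quadratic variation converges almost surely, that the perturbed approximations $X^{n,\De X(t^n_i)}_{t^n_i-}$ contribute in the limit exactly the \ito\ integral, which needs a dominated-convergence argument for stochastic integrals together with the left-continuity and local boundedness of $\vd F$ and $\vd^2F$, plus the subsequence trick to bridge almost-sure and in-probability convergence.
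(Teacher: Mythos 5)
Your proposal follows exactly the route the paper itself indicates for this result: the theorem is presented as a corollary of the pathwise change of variable formula \thm{fif-d}, and the paper's own discussion at the end of \Sec{qv} on the link between the \ito\ and \follmer\ integrals spells out precisely the uniqueness-of-limits-in-probability argument you use to identify the two integrals. Your ingredients --- a sequence of (stopping-time) partitions along which the semimartingale has almost-sure \follmer\ quadratic variation equal to $[X]$, localisation from $\Cloc$ to $\Cb$ on stochastic intervals, pathwise application of \thm{fif-d}, identification of the Riemann-sum limit with the \ito\ integral via the standard approximation theorem for \caglad\ integrands plus the subsequence trick, and the observation that boundedness-preservation and left-continuity of $\vd F$ let a dominated-convergence argument kill the discrepancy coming from evaluating at the perturbed piecewise-constant path $X^{n,\De X(t^n_i)}_{t^n_i-}$ rather than at $X_{t^n_i-}$ --- are all the right ones, and you correctly locate the crux at that last identification step. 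The only point worth flagging is that the paper only spells this identification out for $H=f\circ X$ with $f\in C^1$; your proposal supplies the natural functional analogue, which is what a complete write-up would need.
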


\begin{theorem}[Functional \ito\ formula: continuous case]\label{thm:fif-csm}
Let $X$ be a $\R^d$-valued continuous semimartingale on $(\O,\F,\PP,\FF)$ and $F\in\Cloc(\W_T)$, then, for all $t\in[0,T)$, 
\begin{align}\label{eq:fif-csm}
  F(t,X_t)={} & F(0,X_{0})+\int_0^t\vd F(u,X_{u})\cdot\ud X(u)+ \\\nonumber
 &{} +\int_0^t\hd F(u,X_{u})\ud u+\int_0^t\frac12\tr\lf\vd^2F(u,X_{u})\ud[X](u)\rg
\end{align}
$\PP$-almost surely. In particular, $(F(t,X_t), t\in[0,T])$ is a semimartingale.
\end{theorem}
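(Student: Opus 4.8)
The plan is to derive the continuous-semimartingale functional \ito\ formula (\thm{fif-csm}) as an almost-sure consequence of the pathwise change-of-variable formula \thm{fif-c}, exactly in the spirit of how \follmer's probability-free \ito\ formula yields the classical one. The key observation is that the right-hand side of \eq{fif-c} is entirely pathwise, provided the integrator $x$ lies in $Q(C([0,T],\R^d),\Pi)$ for a suitable sequence of partitions $\Pi$; so the whole task reduces to (a) choosing $\Pi$ so that almost every sample path of $X$ satisfies the quadratic variation hypothesis, and (b) identifying the pathwise integral $\int_0^T \vd F(t,x_t)\cdot \ud^\Pi x$ with the \ito\ stochastic integral $\int_0^t \vd F(u,X_u)\cdot\ud X(u)$, $\PP$-a.s.

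First I would fix $t\in[0,T)$ and recall the classical fact (due to \follmer, discussed in \Sec{qv}) that for a continuous semimartingale $X$ on \ps\ there exists a dense nested sequence of partitions $\Pi=\{\pi_n\}$ of $[0,T]$ — e.g. a deterministic refining sequence works for continuous semimartingales, or one passes to a.s.-convergent subsequences — such that, outside a $\PP$-null set $\N$, the path $t\mapsto X(t,\w)$ has finite quadratic variation along $\Pi$ in the sense of \defin{qv1}, with $[X(\cdot,\w)]_\Pi(t) = [X]_t(\w)$ the usual quadratic variation process (continuous here, since $X$ is continuous). Since $X$ has continuous paths, the jump-exhaustion condition is vacuous and assumption \eq{ass_w} is trivially met. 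Then for each $\w\notin\N$, the stopped path $X_t(\w)$ belongs to $Q(C([0,T],\R^d),\Pi)$, so \thm{fif-c} applies pathwise and gives
\begin{align*}
F(t,X_t(\w)) ={}& F(0,X_0(\w)) + \int_0^t \vd F(u,X_u(\w))\cdot\ud^\Pi X(\cdot,\w) \\
&{}+ \int_0^t \hd F(u,X_u(\w))\,\ud u + \int_0^t \tfrac12 \tr\lf \vd^2 F(u,X_u(\w))\,\ud[X(\cdot,\w)]_\Pi(u)\rg,
\end{align*}
where I have applied the formula on $[0,t]$ rather than $[0,T]$ (the localized version of \thm{fif-c} at an arbitrary horizon; alternatively, stop $X$ at $t$). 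Here the local regularity $F\in\Cloc(\W_T)$ is used precisely to invoke \thm{fif-c}: the localizing stopping times $\t_k$ of the definition of $\Cloc$ handle functionals, such as those involving exit times, that are not globally in $\Cb(\W_T)$.

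Next I would identify the pathwise integral with the stochastic one. By the definition \eq{int-c}, $\int_0^t \vd F(u,X_u)\cdot\ud^\Pi X = \Limn \sum_{t_i^n\le t} \vd F(t_i^n, X^n_{t_i^n})\cdot(X(t_{i+1}^n)-X(t_i^n))$, an a.s. limit of non-anticipative Riemann sums. On the other hand, since $u\mapsto \vd F(u, X_u)$ is a left-continuous (hence \caglad, by \rmk{regularity}) adapted process, locally bounded on each $[\t_k,\t_{k+1})$, the approximating Riemann sums built from it converge in probability to the \ito\ integral $\int_0^t \vd F(u,X_u)\cdot\ud X(u)$; one has to check that replacing the genuine integrand value $\vd F(t_i^n, X_{t_i^n})$ by the piecewise-constant-path version $\vd F(t_i^n, X^n_{t_i^n})$ does not change the limit, which follows from joint continuity / left-continuity of $\vd F$ together with $\|X^n_{t_i^n}-X_{t_i^n}\|_\infty\to 0$ and a uniform-integrability / dominated-convergence argument on the Riemann sums. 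Since an a.s. limit and an $L^0$-limit of the same sequence must agree a.s., the two integrals coincide $\PP$-a.s. Substituting and noting $[X(\cdot,\w)]_\Pi = [X]$ a.s. yields \eq{fif-csm}, and the displayed identity exhibits $(F(t,X_t))_{t\in[0,T)}$ as a sum of a stochastic integral against a continuous semimartingale, a finite-variation (drift) term, and a finite-variation term against $\ud[X]$ — hence itself a continuous semimartingale.

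The main obstacle is the identification step: reconciling the pathwise (a.s.-convergent, partition-dependent) Riemann sums of \thm{fif-c} with the probabilistic (convergence-in-probability) definition of the \ito\ integral, and in particular controlling the discrepancy between $\vd F$ evaluated along the true path and along its step approximation $X^n$ uniformly enough to pass to the limit. The localization by the stopping times $\{\t_k\}$ from the definition of $\Cloc(\W_T)$ is what makes the boundedness estimates go through; away from that, the argument is essentially a careful bookkeeping of two notions of limit that are known to agree for $C^1$-type integrands, as already noted at the end of \Sec{qv} for the non-functional \follmer\ integral.
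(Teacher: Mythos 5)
Your proposal is correct, but it takes a genuinely different route from the proof the paper actually presents. You derive the formula as an almost-sure specialization of the pathwise change-of-variable \thm{fif-c}: choose a sequence of partitions $\Pi$ along which the paths of $X$ have finite quadratic variation $\PP$-a.s., apply the pathwise identity scenario by scenario, and then identify the $\follmer$-type integral $\int \vd F(u,X_u)\cdot\ud^\Pi X$ with the \ito\ integral by matching an a.s.\ limit of non-anticipative Riemann sums against a limit in probability of (nearly) the same sums. The paper explicitly acknowledges that this deduction is available --- "the functional \ito\ formulae are a consequence of the stronger pathwise change of variable formulae" --- but then gives a \emph{direct probabilistic} proof instead: it decomposes each increment $F(t^n_{i+1},X^n_{t^n_{i+1}-})-F(t^n_i,X^n_{t^n_i-})$ into a horizontal and a vertical part, treats the horizontal part as in the pathwise argument, applies the \emph{classical} \ito\ formula (for $C^2$ functions of the semimartingale $(X(t^n_i+s)-X(t^n_i))_{s\ge 0}$) to the vertical part, sums over $i$, and passes to the limit with the dominated convergence theorem for Lebesgue--Stieltjes integrals and the dominated convergence theorem for stochastic integrals, finishing with a localization to compact sets via the stopping times $\bar\t_k$.

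The trade-off is informative. Your route exposes the pathwise content of the theorem and avoids any appeal to the classical \ito\ formula, but it shifts the entire burden onto the identification step --- which you correctly flag as the main obstacle --- namely controlling the discrepancy between $\vd F(t^n_i,X^n_{t^n_i})$ (the step-path evaluation appearing in \eq{int-c}) and $\vd F(t^n_i,X_{t^n_i})$ (the left-endpoint evaluation of the \caglad\ integrand appearing in the definition of the \ito\ integral), uniformly enough to conclude that the a.s.\ pathwise limit and the ucp limit agree a.s. This is exactly the delicate point the paper sidesteps by doing the probabilistic computation directly: there, no reconciliation of two limit notions is needed, and the localization via $\bar\t_k$ (increasing compacts) plays the same role as your use of the $\t_k$ from the definition of $\Cloc(\W_T)$ in making the boundedness/uniform-continuity estimates available. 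One minor caveat on your setup: for a generic continuous semimartingale and a \emph{deterministic} refining sequence, the Riemann-sum approximations of $[X]$ converge only in probability; as you note, one must pass to a subsequence (or use a random exhausting sequence, as in the $\follmer$ discussion in \Sec{qv}) to get a.s.\ finite quadratic variation in the sense of \defin{qv1} for almost every path, so $\Pi$ should be chosen accordingly before invoking \thm{fif-c}.
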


Although the functional \ito\ formulae are a consequence of the stronger pathwise change of variable formulae, \citet{contf2013,cont-notes} also provided a direct probabilistic proof for the functional \ito\ formula for continuous semimartingales, based on the classical \ito\ formula. 
The proof follows the lines of the proof to \thm{fif-d} in the case of continuous paths, first considering the case of $X$ having values in a compact set $K$, $\PP$-almost surely, then going to the general case.
The $i$-th increment of $F(t,X_t)$ along the $n^{\mathrm{th}}$ partition $\pi_n$ is decomposed as:
\begin{align*}
F(t_{i+1}^n,X^{n}_{t^n_{i+1}-})-F(t_{i}^n,X^{n}_{t^n_{i}-})={}&F(t_{i+1}^n,X^{n}_{t^n_i})-F(t_{i}^n,X^{n}_{t^n_{i}})\\
&{}+F(t_{i}^n,X^{n}_{t^n_i})-F(t_{i}^n,X^{n}_{t^n_{i}-}).
\end{align*}
 The horizontal increment is treated analogously to the pathwise proof, while for the vertical increment,  the classical \ito\ formula is applied to the partial map, which is a $\C^2$-function of the continuous $(\F_{t^n_i+s})_{s\geq0}$-semimartingale $(X(t^n_i+s)-X(t^n_i),\,s\geq0)$. The sum of the increments of the functionals along $\pi_n$ gives:
\begin{align*}
  F(t,X^n_t)-F(0,X^n_0)={}&\int_0^t\hd F(u,X^n_{i(u)})\ud u\\
&{}+\frac12\int_0^t\tr\lf\vd^2F(t^n_{\bar k(u,n)},X_{t^n_{\bar k(u,n)}-}^{n,X(u)-X(t^n_{\bar k(u,n)})})\ud[X](u)\rg\\
&{}+\int_0^t\vd F(t^n_{\bar k(u,n)},X_{t^n_{\bar k(u,n)}-}^{n,X(u)-X(t^n_{\bar k(u,n)})})\cdot\ud X(u).
\end{align*}
Formula \eq{fif-csm} then follows by applying the dominated convergence theorem to the Stieltjes integrals on the first two lines and the dominated convergence theorem for stochastic integrals to the stochastic integral on the third line.
As for the general case, it suffices to take a sequence of increasing compact sets $(K_n)_{n\geq0}$, $\cup_{n\geq0}K_n=\R^d$, define the stopping times $\bar\t_k:=\inf\{s<t,\,X_s\notin K_k\}\wedge t$, and apply the previous result to the stopped process $(X_{t\wedge\bar\t_k})$. Finally, taking the limit for $k$ going to infinity completes the proof.

As an immediate corollary, if $X$ is a local martingale, for any $F\in\Cb$, $F(X_t,A_t)$ has finite variation if and only if $\vd F_t=0$ $\ud[X](t)\times\ud\PP$-almost everywhere.

\section{Weak functional calculus and martingale representation}
\sectionmark{Weak functional calculus}
\label{sec:weak}

\citet{contf2013} extended the pathwise theory to a weak functional calculus that can be applied to all square-integrable martingales adapted to the filtration $\FF^X$ generated by a given $\R^d$-valued square-integrable \ito\ process $X$.
\citet{cont-notes} carries the extension further, that is to all square-integrable semimartingales.
Below are the main results on the functional \ito\ calculus obtained in \cite{contf2013,cont-notes}.

Let $X$ be the coordinate process on the canonical space $\DT$ of $\R^d$-valued \cadlag\ processes and $\PP$ be a probability measure under which $X$ is a square-integrable semimartingale such that
\beq
\ud [X](t)=\int_0^tA(u)\ud u
\eeq
for some $\S^d_+$-valued \cadlag\ process $A$ satisfying 
\beq\label{eq:Anon-deg}
\mathrm{det}(A(t))\neq0\text{ for almost every }t\in[0,T],\ \PP\text{-almost surely}.
\eeq
Denote by $\FF=\Ft$ the filtration $(\F^X_{t+})_{t\in[0,T]}$ after $\PP$-augmentation.
Then define
\beq\label{eq:CX}
\Cloc(X):=\{Y:\;\exists F\in\Cloc,\;Y(t)=F(t,X_t)\; \ud t\times\ud\PP\text{-a.e.}\}.
\eeq
Thanks to the assumption \eq{Anon-deg}, for any adapted process $Y\in\Cb(X)$, the \emph{vertical derivative of $Y$ with respect to $X$}, $\nabla_XY(t)$, is well defined as $\nabla_XY(t)=\vd F(t,X_t)$ where $F$ satisfies \eq{CX}, and it is unique up to an evanescent set independently of the choice of $F\in\Cb$ in the representation \eq{CX}.

\thm{fif-sm} leads to the following representation for smooth local martingales.
\begin{proposition}[Prop. 4.3 in \cite{cont-notes}]
  Let $Y\in\Cb(X)$ be a local martingale, then
$$Y(T)=Y(0)+\int_0^T\nabla_XY(t)\cdot\ud X(t).$$
\end{proposition}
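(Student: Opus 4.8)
The plan is to apply the functional \ito\ formula for continuous semimartingales (\thm{fif-csm}) to $Y$, using the fact that $Y\in\Cb(X)$ means $Y(t)=F(t,X_t)$ for some $F\in\Cb(\L_T)$, and that $X$ is a (square-integrable, hence continuous) semimartingale under $\PP$ with $\ud[X](t)=A(t)\ud t$. First I would write the pathwise decomposition
\[
  Y(T)=Y(0)+\int_0^T\vd F(t,X_t)\cdot\ud X(t)+\int_0^T\hd F(t,X_t)\ud t+\frac12\int_0^T\tr\lf\vd^2F(t,X_t)A(t)\rg\ud t,
\]
$\PP$-almost surely, and recall that $\vd F(t,X_t)=\nabla_XY(t)$ by the definition in \eq{CX}. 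This exhibits $Y$ as the sum of a stochastic integral against $X$ (a local martingale, since $X$ is) and a finite-variation continuous process $B(t):=\int_0^t\bigl(\hd F(u,X_u)+\frac12\tr(\vd^2F(u,X_u)A(u))\bigr)\ud u$.

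The second step is to conclude that the finite-variation part vanishes. Since $Y$ is assumed to be a local martingale and the stochastic integral $\int_0^\cdot\nabla_XY\cdot\ud X$ is also a local martingale, their difference $B$ is a continuous local martingale of finite variation started at $0$; by the standard uniqueness-of-decomposition result for continuous semimartingales (equivalently, a continuous finite-variation local martingale is constant), $B\equiv0$ up to an evanescent set. Hence $Y(t)=Y(0)+\int_0^t\nabla_XY(u)\cdot\ud X(u)$ for all $t$, and in particular at $t=T$, which is the claim. One should note the corollary stated just after \thm{fif-csm} already records precisely this vanishing (``$F(X_t,A_t)$ has finite variation iff $\vd F_t=0$''), so the argument is really an instance of that observation applied in the martingale direction.

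The only genuine subtlety — and the step I would treat most carefully — is the localization: a priori $X$ is merely a semimartingale and $Y$ merely a local martingale, so the stochastic integral is a local martingale rather than a true martingale, and one must make sure \thm{fif-csm} applies (which it does, since $F\in\Cb(\L_T)\subset\Cloc(\W_T)$ and continuous semimartingales are admissible integrators there). If one wanted the representation as an honest $L^2$-martingale identity one would additionally invoke the square-integrability of $X$ together with the boundedness-preserving property $\vd F\in\BB(\L_T)$ along localizing stopping times $\bar\t_k=\inf\{t:\,X_t\notin K_k\}\wedge T$, pass to the stopped processes $X^{\bar\t_k}$, and let $k\to\infty$; but for the bare statement as written, the finite-variation-local-martingale argument suffices and the stopping-time machinery is only needed to justify that the functional \ito\ formula was legitimately applied on all of $[0,T]$. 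I would also remark that nothing here requires the non-degeneracy assumption \eq{Anon-deg} beyond what is already built into the well-definedness of $\nabla_XY$.
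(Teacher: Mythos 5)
Your proof is correct and follows the route that the paper itself signals (the text before the proposition says ``\thm{fif-sm} leads to the following representation''): apply the functional \ito\ formula to $F(\cdot,X_\cdot)$, identify $\vd F(t,X_t)$ with $\nabla_XY(t)$, and kill the drift term by the uniqueness of the semimartingale decomposition — precisely the content of the corollary recorded after \thm{fif-csm}. Two small imprecisions worth flagging: (i) the continuity of the paths of $X$ comes from the assumption that $[X]$ is absolutely continuous (which forces $\sum_{s\le t}|\De X(s)|^2=0$), not from square-integrability as you parenthetically suggest; and (ii) you assert that $\int_0^{\cdot}\nabla_XY\cdot\ud X$ is a local martingale ``since $X$ is'', but the surrounding setup only declares $X$ to be a square-integrable semimartingale. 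If $X=M+A_0$ with $A_0\not\equiv 0$ of finite variation, the argument still shows $Y=Y(0)+\int\vd F\,\ud M$, but the stated identity against $\ud X$ would additionally require $\int\vd F\,\ud A_0\equiv 0$; so the proposition as written implicitly takes $X$ to be a local martingale, which is consistent with the corollary you cite and with every subsequent use in the thesis, but is worth saying explicitly rather than inheriting silently from that corollary's hypothesis.
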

On the other hand, under specific assumptions on $X$, this leads to an explicit martingale representation formula. 
\begin{proposition}[Prop. 4.3 in \cite{cont-notes}]\label{prop:mg-repr}
If $X$ is a square-integrable $\PP$-Brownian martingale, for any square integrable $\FF$-martingale $Y\in\Cloc(X)$, then $\nabla_XY$ is the unique process in the Hilbert space
$$\LL(X):=\left\{\phi\,\text{progressively-measurable},\;\EE^\PP\left[\int_0^T|\phi(t)|^2\ud[X](t)\right]<\infty\right\},$$ 
endowed with the norm $\ds\norm{\phi}_{\LL(X)}:=\EE^\PP\left[\int_0^T|\phi(t)|^2\ud[X](t)\right]^{\frac12}$,
such that
$$Y(T)=Y(0)+\int_0^T\nabla_XY(t)\cdot\ud X(t)\quad \PP\text{-a.s.}$$  
\end{proposition}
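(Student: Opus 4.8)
The plan is to read the representation off the functional Itô formula, use the martingale property of $Y$ to cancel the finite-variation terms, and then obtain square-integrability of the integrand and its uniqueness from the Itô isometry; the non-degeneracy condition \eq{Anon-deg} is what makes $\nabla_X Y$ unambiguously defined. First I would pick $F\in\Cloc$ representing $Y$ as in \eq{CX}; since $X$ has continuous paths, $F$ may be taken in $\Cloc(\W_T)$, and since $F$ is jointly continuous the path $t\mapsto F(t,X_t)$ is continuous, so (passing to a continuous modification of the martingale $Y$) one may assume $Y(t)=F(t,X_t)$ for every $t\in[0,T]$, $\PP$-a.s. Applying the functional Itô formula \thm{fif-csm} to $F$ and the continuous semimartingale $X$ yields, $\PP$-a.s.,
$$Y(t)=Y(0)+\int_0^t\vd F(u,X_u)\cdot\ud X(u)+\int_0^t\hd F(u,X_u)\,\ud u+\frac12\int_0^t\tr\lf\vd^2F(u,X_u)\,\ud[X](u)\rg.$$
Since $X$ is a continuous (square-integrable) martingale, the first two terms form a continuous local martingale and the last two a continuous finite-variation process null at $0$; as $Y-Y(0)$ is itself a continuous local martingale, uniqueness of the semimartingale decomposition forces the finite-variation part to vanish identically. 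Recalling that $\nabla_X Y(u)=\vd F(u,X_u)$ is, by \eq{Anon-deg}, defined up to indistinguishability independently of the choice of $F$, this gives $Y(t)=Y(0)+\int_0^t\nabla_X Y(u)\cdot\ud X(u)$ for all $t\in[0,T]$, $\PP$-a.s.

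Next I would check $\nabla_X Y\in\LL(X)$. Put $M:=Y-Y(0)=\int_0^\cdot\nabla_X Y(u)\cdot\ud X(u)$. Because $Y$ is a \emph{square-integrable} martingale, $M$ is a true $L^2$-bounded continuous martingale (Doob's inequality gives $\EE^\PP[\sup_{t\le T}M(t)^2]\le 4\,\EE^\PP[M(T)^2]<\infty$), so the Itô isometry applies and reads
$$\EE^\PP\!\left[\int_0^T|\nabla_X Y(t)|^2\,\ud[X](t)\right]=\EE^\PP\bigl[M(T)^2\bigr]=\EE^\PP\bigl[(Y(T)-Y(0))^2\bigr]<\infty,$$
so that $\nabla_X Y\in\LL(X)$ with $\norm{\nabla_X Y}_{\LL(X)}^2=\EE^\PP[(Y(T)-Y(0))^2]$; together with the first step this is the asserted representation. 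For uniqueness, if $\phi\in\LL(X)$ also satisfies $Y(T)=Y(0)+\int_0^T\phi(t)\cdot\ud X(t)$ $\PP$-a.s., then $\int_0^\cdot(\phi(u)-\nabla_X Y(u))\cdot\ud X(u)$ is an $L^2$-bounded continuous martingale whose value at $T$ is $0$ $\PP$-a.s., so the Itô isometry gives $\EE^\PP[\int_0^T|\phi(t)-\nabla_X Y(t)|^2\,\ud[X](t)]=0$; hence $\phi=\nabla_X Y$ in $\LL(X)$, and invoking \eq{Anon-deg} even $\ud t\times\ud\PP$-a.e.

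The only genuinely delicate point is the passage, in the second step, from the \emph{local} martingale produced by the Itô formula to a \emph{bona fide} square-integrable martingale: this is exactly where the hypothesis that $Y$ is square-integrable (and not merely a local martingale) is needed, and it simultaneously absorbs the localization that working with $\Cloc(X)$ rather than $\Cb(X)$ would otherwise require, so that no separate argument along the defining stopping-time sequence of $\Cloc$ is necessary. Everything else --- joint continuity of functionals in $\Cloc$, uniqueness of the semimartingale decomposition, and the Itô isometry --- is standard.
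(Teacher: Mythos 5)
Your proof is correct, and it follows the route one would expect (and which the cited reference \cite{cont-notes} takes): apply the functional It\^o formula \eq{fif-csm} to a representative $F\in\Cloc$ of $Y$, invoke uniqueness of the continuous semimartingale decomposition to kill the finite-variation terms, and then use the It\^o isometry for both the $\LL(X)$-membership and the uniqueness claim. A few points you handled that are worth flagging as genuinely necessary rather than cosmetic: passing to a continuous modification is needed because $Y(t)=F(t,X_t)$ is a priori only a $\ud t\times\ud\PP$-a.e.\ identity, and you upgrade it to indistinguishability by observing that $F(\cdot,X_\cdot)$ is continuous (Remark~\ref{rmk:regularity}(3), $F\in\CC^{0,0}$ being part of the definition of $\Cloc$); the non-degeneracy assumption \eq{Anon-deg} is what makes $\nabla_X Y$ well-defined independently of the choice of $F$ and is what upgrades the $\LL(X)$-uniqueness to $\ud t\times\ud\PP$-a.e.\ uniqueness; and you are right that since \thm{fif-csm} is already stated for $\Cloc(\W_T)$, no separate localization along the $\Cloc$-stopping-time sequence is needed at this level --- the square-integrability of $Y$ is what turns the local martingale produced by the formula into the genuine $L^2$-martingale that the It\^o isometry requires. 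No gaps.
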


This is used in \cite{contf2013} to extend the domain of the vertical derivative operator $\nabla_X$ to the space of square-integrable $\FF$-martingales $\M^2(X)$, by a density argument.
 

On the space of smooth square-integrable martingales, $\Cb(X)\cap\M^2(X)$, which is dense in $\M^2(X)$, an integration-by-parts formula holds: for any $Y,Z\in \Cb(X)\cap\M^2(X)$,
$$\EE[Y(T)Z(T)]=\EE\left[\int_0^TY(T)Z(T)\ud[X](t)\right].$$
By this and by density of $\{\nabla_XY,\,Y\in\Cloc(X)\}$ in $\LL(X)$, the extension of the vertical derivative operator follows.
\begin{theorem}[Theorem 5.9 in \cite{contf2013}]\label{th:weakvd}
  The operator $\nabla_X:\Cb(X)\cap\M^2(X)\rightarrow\LL(X)$ admits a closure in $\M^2(X)$. Its closure is a bijective isometry 
\beq\label{eq:vdM2}
\nabla_X:\M^2(X)\rightarrow\LL(X), \quad \int_0^\cdot\phi(t)\ud X(t)\mapsto\phi,
\eeq
characterized by the property that, for any $Y\in\M^2$, $\nabla_X Y$ is the unique element of $\LL(X)$ such that $$\forall Z\in \Cb(X)\cap\M^2(X),\quad \EE[Y(T)Z(T)]=\EE\left[\int_0^T\nabla_XY(t)\nabla_XZ(t)\ud[X](t)\right].$$
In particular $\nabla_X$ is the adjoint of the \ito\ stochastic integral 
$$I_X:\LL(X)\rightarrow\M^2(X), \quad \phi\mapsto\int_0^\cdot\phi(t)\cdot\ud X(t),$$
in the following sense: for all $\phi\in\LL(X)$ and for all $Y\in\M^2(X)$,
$$\EE\left[Y(T)\int_0^T\phi(t)\cdot\ud X(t)\right]=\EE\left[\int_0^T\nabla_XY(T)\phi(t)\ud[X](t)\right].$$
\end{theorem}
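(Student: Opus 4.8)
The plan is to recognise $\nabla_X$, restricted to the dense core $\mathcal{D}:=\Cb(X)\cap\M^2(X)$, as a linear isometry onto a dense subspace of $\LL(X)$, and then to invoke the elementary fact that an isometry defined on a dense subspace of a Hilbert space and with dense range extends uniquely to an isometric isomorphism of the two (already complete) Hilbert spaces. Throughout I work under the standing hypothesis that $X$ is a square-integrable $\PP$-Brownian martingale, and with the convention that elements of $\M^2(X)$ are normalised to vanish at $t=0$ — equivalently, $\M^2(X)$ is read modulo constants, which form the kernel of $\nabla_X$ — so that $\norm{Y}_{\M^2(X)}=\EE[Y(T)^2]^{1/2}$. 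The first step is the isometry on the core: for $Y\in\mathcal{D}$ the integration-by-parts identity recalled above, with $Z=Y$, reads $\EE[Y(T)^2]=\EE\big[\int_0^T|\nabla_XY(t)|^2\,\ud[X](t)\big]=\norm{\nabla_XY}_{\LL(X)}^2$ (equivalently, combine \prop{mg-repr}, which gives $Y=\int_0^\cdot\nabla_XY(t)\cdot\ud X(t)$, with the It\^o isometry), so $\nabla_X\colon\mathcal{D}\to\LL(X)$ is a linear isometry; in particular it is closable in $\M^2(X)$, since $Y_n\in\mathcal{D}$ with $Y_n\to0$ in $\M^2(X)$ and $\nabla_XY_n\to\psi$ in $\LL(X)$ forces $\norm{\psi}_{\LL(X)}=\Limn\norm{\nabla_XY_n}_{\LL(X)}=\Limn\norm{Y_n}_{\M^2(X)}=0$.

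\emph{The closure is a bijective isometry with inverse $I_X$.} Since $\mathcal{D}$ is dense in $\M^2(X)$, each $Y\in\M^2(X)$ is the limit of some $Y_n\in\mathcal{D}$; then $(\nabla_XY_n)$ is Cauchy in $\LL(X)$, hence convergent, and its limit is independent of the approximating sequence by the isometry — this defines the extension $\nabla_X\colon\M^2(X)\to\LL(X)$, still an isometry. Its range is therefore closed and contains $\{\nabla_XY:Y\in\Cloc(X)\}$, which is dense in $\LL(X)$, so the range is all of $\LL(X)$ and $\nabla_X$ is a bijective isometry. Passing the identity of \prop{mg-repr} to the limit, using continuity of the It\^o integral $I_X\colon\LL(X)\to\M^2(X)$, gives $I_X(\nabla_XY)=Y$ for every $Y\in\M^2(X)$; combined with bijectivity of $\nabla_X$ this shows $I_X=\nabla_X^{-1}$, and in particular $\nabla_X\big(\int_0^\cdot\phi(t)\cdot\ud X(t)\big)=\phi$ for every $\phi\in\LL(X)$.

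\emph{Variational characterisation and adjointness.} For $Y,Z\in\mathcal{D}$ the integration-by-parts formula gives $\EE[Y(T)Z(T)]=\EE\big[\int_0^T\nabla_XY(t)\,\nabla_XZ(t)\,\ud[X](t)\big]$; both sides are jointly continuous bilinear forms on $\M^2(X)\times\M^2(X)$ — the left through $Y\mapsto Y(T)\in L^2$, the right through $Y\mapsto\nabla_XY\in\LL(X)$, continuous by the previous step — so by density the identity holds for all $Y,Z\in\M^2(X)$. For fixed $Y$, uniqueness of $\phi=\nabla_XY$ as the element of $\LL(X)$ satisfying $\EE[Y(T)Z(T)]=\EE[\int_0^T\phi(t)\,\nabla_XZ(t)\,\ud[X](t)]$ for all $Z\in\mathcal{D}$ follows because $\{\nabla_XZ:Z\in\Cloc(X)\}$ is dense in $\LL(X)$. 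Finally, given $\phi\in\LL(X)$, set $Z=\int_0^\cdot\phi(t)\cdot\ud X(t)\in\M^2(X)$, so $\nabla_XZ=\phi$; substituting into the extended identity yields $\EE\big[Y(T)\int_0^T\phi(t)\cdot\ud X(t)\big]=\EE\big[\int_0^T\nabla_XY(t)\,\phi(t)\,\ud[X](t)\big]$ for all $Y\in\M^2(X)$, which is the asserted adjointness of $\nabla_X$ and $I_X$.

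\emph{Main obstacle.} The genuinely substantial inputs — the integration-by-parts formula on the core $\Cb(X)\cap\M^2(X)$ and the density of $\{\nabla_XY:Y\in\Cloc(X)\}$ in $\LL(X)$ — have already been secured via the functional It\^o formula and the smooth martingale representation \prop{mg-repr}, after which the argument above is soft functional analysis. What still needs care is the bookkeeping of constants (which must be quotiented out for $\nabla_X$ to be injective) and the passage of the integration-by-parts identity from smooth test martingales $Z\in\Cb(X)\cap\M^2(X)$ to arbitrary $Z\in\M^2(X)$ — indispensable for the adjoint statement, since $\int_0^\cdot\phi(t)\cdot\ud X(t)$ need not be a smooth functional of $X$.
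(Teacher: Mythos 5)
Your argument is correct and follows the same route the paper sketches: use the integration-by-parts formula on the core $\Cb(X)\cap\M^2(X)$ (with $Y=Z$) to obtain the isometry, extend by density of the core in $\M^2(X)$, and get surjectivity from density of $\{\nabla_XY:Y\in\Cloc(X)\}$ in $\LL(X)$. The paper compresses this into a single line ("by this and by density\ldots the extension follows"), whereas you fill in the functional-analytic bookkeeping — closability via the isometry, closed range of an isometry on a complete space, continuity of the bilinear form to pass the integration-by-parts identity to arbitrary $Y,Z\in\M^2(X)$, and the choice $Z=I_X\phi$ for the adjoint identity — all of which is the intended argument.
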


Thus, for any square-integrable $\FF$-martingale $Y$, the following martingale representation formula holds:
\beq\label{eq:mgrepr}
Y(T)=Y(0)+\int_0^T\nabla_XY(t)\cdot\ud X(t), \quad \PP\text{-a.s.}
\eeq



Then, denote by $A^2(\FF)$ the space of $\FF$-predictable absolutely continuous processes $H=H(0)+\int_0^\cdot h(u)\ud u$ with finite variation, such that 
$$\norm{H}^2_{\A^2}:=\EE^{\PP}\left[ \abs{H(0)}^2+ \int_0^T\abs{h(u)}^2\ud u\right]<\infty$$
and by $\S^{1,2}(X)$ the space of square-integrable $FF$-adapted special semimartingales, $\S^{1,2}(X)=\M^2(X)\oplus\A^2(\FF)$, equipped with the norm $\norm{\cdot}_{1,2}$ defined by
$$\norm{S}_{1,2}^2:=\EE^\PP\left[[M](T)\right]+\norm{H}^2_{\A^2}, \quad S\in\S^{1,2}(X),$$
where $S=M+H$ is the unique decomposition of $S$ such that $M\in\M^2(X)$, $M(0)=0$ and $H\in\A^2(\FF)$, $H(0)=S(0)$.

The vertical derivative operator admits a unique continuous extension to $\S^{1,2}(X)$ such that its restriction to $\M^2(X)$ coincides with the bijective isometry in \eq{vdM2} and it is null if restricted to $\A^2(\FF)$.

By iterating this construction it is possible to define a series of \lq Sobolev\rq\ spaces $\S^{k,2}(X)$ on which the vertical derivative of order $k$, $\nabla^k_X$ is defined as a continuous operator. We restrict our attention to the space of order 2:
$$\S^{2,2}(X):=\{Y\in\S^{1,2}(X):\;\nabla_X Y\in\S^{1,2}(X)\},$$
equipped with the norm $\norm{\cdot}_{2,2}^2$ defined by
$$\norm{Y}_{2,2}^2=\norm{H}^2_{\A^2}+\norm{\nabla_XY}_{\LL(X)}+\norm{\nabla^2_{X}Y}_{\LL(X)},\quad Y\in\S^{2,2}(X).$$

Note that the second vertical derivative of a process $Y\in\S^{2,2}(X)$ has values in $\R^d\times\R^d$ but it needs not be a symmetric matrix, differently from the (pathwise) second vertical derivative of a smooth functional $F\in\C^{1,2}_b(\L_T)$.

The power of this construction is that it is very general, e.g. it applies to functionals with no regularity, and it makes possible to derive a \lq weak functional \ito\ formula\rq\ involving vertical derivatives of square-integrable processes and a weak horizontal derivative defined as follow.
For any $S\in S^{2,2}(X)$, the weak horizontal derivative of $S$ is the unique $\FF$-adapted process $\hd S$ such that: for all $t\in[0,T]$
\beq\label{eq:weak-hd}
\quad \int_0^t\hd S(u)\ud u=S(t)-S(0)-\int_0^t\nabla_XS\ud X-\frac12\int_0^t\tr(\nabla^2_XS(u)\ud[X](u))
\eeq
and $\EE^\PP\left[\int_0^T\abs{\hd S(t)}^2\ud t\right]<\infty$.
\begin{proposition}[Proposition 4.18 in \cite{cont-notes}]
For any $S\in S^{2,2}(X)$, the following \lq weak functional \ito\ formula\rq\ holds $\ud t\times\ud\PP$-almost everywhere:
\beq\label{eq:weak-ito}
S(t)=S(0)+\int_0^t\nabla_XS\ud X+\frac12\int_0^t\tr(\nabla^2_XS\ud[X])+\int_0^t\hd S(u)\ud u.
\eeq
\end{proposition}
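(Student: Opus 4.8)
The plan is to recognize \eq{weak-ito} as a density statement: it is trivially true for smooth functionals, where it coincides with the probabilistic functional \ito\ formula, and for a general $S\in\S^{2,2}(X)$ it should follow by passing to the limit. Concretely, I would proceed in three steps: (i) verify the identity for $S\in\Cb(X)\cap\S^{2,2}(X)$; (ii) approximate an arbitrary $S$ by smooth functionals in $\norm{\cdot}_{2,2}$ and pass to the limit term by term in \eq{weak-ito}; (iii) read off from the limit that the weak horizontal derivative $\hd S$ exists, is unique in $L^2(\ud u\times\ud\PP)$, and therefore that the defining relation \eq{weak-hd} is well posed.

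For step (i): if $S(t)=F(t,X_t)$ with $F\in\Cb(\W_T)$, then \thm{fif-csm} (or \thm{fif-sm} in the \cadlag\ case) is exactly \eq{weak-ito}, with $\nabla_X S=\vd F(\cdot,X_\cdot)$, $\nabla_X^2 S=\vd^2F(\cdot,X_\cdot)$ and $\hd S(\cdot)=\hd F(\cdot,X_\cdot)$; the last process is patently the $\ud u$-density of an absolutely continuous process, and it lies in $L^2(\ud u\times\ud\PP)$ because the $\A^2$-part of $\norm{S}_{2,2}$ is finite (the finite-variation part of $S$ has $\ud u$-density $\hd F(u,X_u)+\frac12\tr(\vd^2F(u,X_u)A(u))$). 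For step (ii), I would fix $S\in\S^{2,2}(X)$, take smooth $S^{(n)}\to S$ in $\norm{\cdot}_{2,2}$ (such a sequence exists by the way the spaces $\S^{k,2}(X)$ are built, as closures of smooth functionals), and pass to the limit in \eq{weak-ito} for $S^{(n)}$: the term $S^{(n)}(t)$ converges uniformly in $t$ in $L^2$ along a subsequence, since $\norm{\cdot}_{2,2}$ dominates the $\S^{1,2}$-norm and hence, via Doob's inequality on the martingale part and the $L^1$-bound on the finite-variation density, controls $\sup_t|S(t)|$; $\int_0^\cdot\nabla_X S^{(n)}\,\ud X$ converges in $\M^2(X)$ by continuity of $\nabla_X\colon\S^{2,2}(X)\to\S^{1,2}(X)$ and the \ito\ isometry \eq{vdM2}; and $\frac12\int_0^\cdot\tr(\nabla_X^2 S^{(n)}\,\ud[X])$ converges because $\nabla_X^2\colon\S^{2,2}(X)\to\LL(X)$ is continuous and $\ud[X]=A\,\ud u$. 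Then $\int_0^\cdot\hd S^{(n)}(u)\,\ud u$, as the difference of convergent terms, converges; moreover $\hd S^{(n)}=h^{(n)}-\frac12\tr(\nabla_X^2 S^{(n)}A)$ is Cauchy in $L^2(\ud u\times\ud\PP)$ --- $h^{(n)}$ being the $\A^2$-density of $S^{(n)}$, and $\tr(\nabla_X^2 S^{(n)}A)$ Cauchy by the $\LL(X)$-bound --- so its $L^2$-limit $\hd S$ gives $\int_0^t\hd S(u)\,\ud u=\lim_n\int_0^t\hd S^{(n)}(u)\,\ud u$. Taking limits in \eq{weak-ito} for $S^{(n)}$ then yields \eq{weak-ito} for $S$, $\ud t\times\ud\PP$-a.e. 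For step (iii), uniqueness of $\hd S$ follows since $\int_0^t g(u)\,\ud u\equiv0$ with $g\in L^2(\ud u\times\ud\PP)$ forces $g=0$ a.e.; consistency of $\nabla_X S$, $\nabla_X^2 S$ with their earlier definitions is automatic, those being continuous closures.

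The hard part will be the $L^2$-control of $\hd S^{(n)}$ --- equivalently, the $\norm{\cdot}_{2,2}$-continuity of the map sending a smooth functional to its horizontal derivative along $X$ --- because the comparison $\norm{\tr(\nabla_X^2 S\,A)}_{L^2(\ud u\times\ud\PP)}\lesssim\norm{\nabla_X^2 S}_{\LL(X)}$, and similarly the $\M^2$-bound on $\int\nabla_X S\,\ud X$, are transparent only when $A$ and $A^{-1}$ are bounded. In general I would localize along the stopping times $\tau_k:=\inf\{t\in[0,T]:\ \norm{A(t)}+\norm{A(t)^{-1}}>k\}\wedge T$ that are already part of the $\loc$-construction, prove \eq{weak-ito} on each stochastic interval $[0,\tau_k]$ --- where $A,A^{-1}$ are bounded and all the above comparisons are legitimate --- and then let $k\to\infty$, invoking the non-degeneracy assumption \eq{Anon-deg} to guarantee $\tau_k\uparrow T$.
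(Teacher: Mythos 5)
Your proposal misreads the structure of the statement: \eq{weak-ito} is not ``a density statement'' but a rearrangement of the defining identity \eq{weak-hd} for the weak horizontal derivative $\hd S$. Once one shows that a process $\hd S\in L^2(\ud u\times\ud\PP)$ satisfying \eq{weak-hd} exists, the proposition follows by moving three terms across the equals sign; and existence is an immediate consequence of the decomposition $\S^{1,2}(X)=\M^2(X)\oplus\A^2(\FF)$ and the martingale representation formula, with no approximating sequence at all. Write $S=M+H$ with $M\in\M^2(X)$, $M(0)=0$, $H\in\A^2(\FF)$, $H(0)=S(0)$; since $\nabla_X$ annihilates $\A^2(\FF)$, $\nabla_X S=\nabla_X M$, so \eq{mgrepr} gives $M(t)=\int_0^t\nabla_X S\,\ud X$, whence $S(t)-S(0)-\int_0^t\nabla_X S\,\ud X=H(t)-H(0)=\int_0^t h(u)\,\ud u$ with $h\in L^2(\ud u\times\ud\PP)$ by definition of $\A^2(\FF)$. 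Subtracting the absolutely continuous process $\frac12\int_0^t\tr\lf\nabla^2_X S(u)A(u)\rg\ud u$ identifies $\hd S=h-\frac12\tr(\nabla^2_X S\,A)$ and \eq{weak-ito} reads off. Your steps (i)--(iii) reprove this by an $L^2$ limit of smooth functionals; that is not incorrect, but it is roundabout, since the density of smooth functionals is already baked into the construction of $\M^2(X)$, $\S^{1,2}(X)$ and the closure of $\nabla_X$, so the passage to the limit brings nothing new. The subtlety you rightly flag --- that $\norm{\nabla^2_X S}_{\LL(X)}$ controls $\EE^\PP\int_0^T|\nabla^2_X S|^2 A\,\ud u$ but not $\EE^\PP\int_0^T|\tr(\nabla^2_X S\,A)|^2\ud u$, so $\hd S\in L^2(\ud u\times\ud\PP)$ is not automatic for unbounded $A$ --- is genuine, but it surfaces at the identical point in the direct argument, so it is not an argument for the density route; and your proposed localization along $\tau_k$ only yields \eq{weak-ito} on each $[0,\tau_k]$, which recovers the $L^2$ bound on $\hd S$ over $[0,T]$ only if the constants are uniform in $k$, which you would still have to argue.
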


\section{Functional Kolmogorov equations}
\label{sec:kolmogorov}

Another important result in \cite{cont-notes} is the characterization of smooth harmonic functionals as solutions of functional Kolmogorov equations.
Specifically, a \naf\ $F:\L_T\to\R$ is called \emph{$\PP$-harmonic} if $F(\cdot,X_\cdot)$ is a $\PP$-local martingale, where $X$ is the unique weak solution to the path-dependent stochastic differential equation
$$\ud X(t)=b(t,X_t)\ud t+\s(t,X_t)\ud W(t),\quad X(0)=X_0,$$
where $b,\s$ are \naf s with enough regularity and $W$ is a $d$-dimensional Brownian motion on $(\DT,\F_T,\PP)$.

\begin{proposition}[Theorem 5.6 in \cite{cont-notes}]\label{prop:harmonic}
  If $F\in\Cb(\W_T)$, $\hd F\in\CC^{0,0}_l(\W_T)$, then $F$ is a $\PP$-harmonic functional if and only if it satisfies
\beq\label{eq:FPDE-cont}
\hd F(t,\w_t)+b(t,\w_t)\vd F(t,\w_t)+\frac12\tr\lf\vd^2F(t,\w_t)\s(t,\w_t){}^t\!\s(t,\w_t)\rg=0
\eeq
for all $t\in[0,T]$ and all $\w\in\supp(X)$, where
\begin{align}  \label{eq:supp}
  \supp(X):=\big\{&\w\in C([0,T],\R^d):\;\PP(X_T\in V)>0\\
&\forall \text{ neighborhood $V$ of }\w\text{ in }\lf C([0,T],\R^d),\norm{\cdot}_\infty\rg\big\},\nonumber
\end{align} 
is the topological support of $(X,\PP)$ in $(C([0,T],\R^d),\norm{\cdot}_\infty)$.
\end{proposition}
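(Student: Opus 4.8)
The plan is to apply the functional \ito\ formula for continuous semimartingales (\thm{fif-csm}) to the process $t\mapsto F(t,X_t)$, read off the stated characterization from the resulting semimartingale decomposition, and then convert the ``$\ud t\times\ud\PP$-almost everywhere'' conclusion into a pointwise statement on the topological support $\supp(X)$ by a continuity argument. Since $F\in\Cb(\W_T)\subset\Cloc(\W_T)$ and $X$, being the weak solution of the path-dependent SDE, is a continuous semimartingale with $\ud X(u)=b(u,X_u)\ud u+\s(u,X_u)\ud W(u)$ and $\ud[X](u)=\s(u,X_u)\,{}^t\!\s(u,X_u)\ud u$, \thm{fif-csm} gives, $\PP$-a.s.\ and for all $t\in[0,T]$,
$$F(t,X_t)=F(0,X_0)+\int_0^t\vd F(u,X_u)\cdot\s(u,X_u)\,\ud W(u)+\int_0^t r(u,X_u)\,\ud u,$$
where $r$ is the \naf\ on $\W_T$ defined by
$$r(t,\w_t):=\hd F(t,\w_t)+b(t,\w_t)\vd F(t,\w_t)+\frac12\tr\lf\vd^2F(t,\w_t)\s(t,\w_t){}^t\!\s(t,\w_t)\rg.$$
Under the stated regularity of $F$, $\hd F$, $b$ and $\s$, the functional $r$ is jointly continuous on $(\W_T,\dinf)$; the first integral above is a continuous local martingale and the second a continuous finite-variation process vanishing at $t=0$.

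\emph{Sufficiency.} Assume \eq{FPDE-cont} holds, i.e.\ $r(t,\w_t)=0$ for all $t$ and all $\w\in\supp(X)$. Since $(C([0,T],\R^d),\norm{\cdot}_\infty)$ is a Polish space, $\PP$-almost every path of $X$ lies in $\supp(X)$, so for $\PP$-a.e.\ $\w$ one has $r(u,X_u)=0$ for every $u\in[0,T]$; hence the finite-variation term in the decomposition is $\PP$-a.s.\ identically zero and $F(\cdot,X_\cdot)$ equals $F(0,X_0)$ plus a stochastic integral with respect to $W$, hence is a $\PP$-local martingale. Thus $F$ is $\PP$-harmonic.

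\emph{Necessity.} Assume $F(\cdot,X_\cdot)$ is a $\PP$-local martingale. Subtracting the stochastic integral term, $t\mapsto\int_0^t r(u,X_u)\,\ud u$ is then a continuous local martingale of finite variation started at $0$, hence indistinguishable from the zero process; therefore $r(u,X_u)=0$ for $\ud t\times\ud\PP$-a.e.\ $(u,\w)$. It remains to upgrade this to $r(t,\w_t)=0$ for every $t\in[0,T)$ and every $\w\in\supp(X)$ (the value at $t=T$ then follows from the left-continuity of $\hd F$ together with continuity of the remaining terms). Fix such $t$ and $\w$ and suppose, for contradiction, that $|r(t,\w_t)|>\eps$ for some $\eps>0$. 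By joint continuity of $r$ there is $\eta>0$ with $|r(s,\w')|>\eps/2$ whenever $\dinf((s,\w'),(t,\w_t))<\eta$. Since stopping is a contraction for $\norm{\cdot}_\infty$ and $\w$ is uniformly continuous, one can choose $\d>0$ and an interval $I\ni t$ of positive length such that on the event $\{\norm{X-\w}_\infty<\d\}$ one has $\dinf((s,X_s),(t,\w_t))<\eta$ for every $s\in I$. By the defining property \eq{supp} of $\supp(X)$, this event has positive $\PP$-probability, so $|r(s,X_s)|>\eps/2$ on $I\times\{\norm{X-\w}_\infty<\d\}$, a set of positive $\ud t\times\ud\PP$-measure, contradicting the a.e.\ vanishing of $r$. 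Hence $r\equiv0$ on $\supp(X)$, which is exactly \eq{FPDE-cont}.

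The main obstacle is the last step: transferring the $\ud t\times\ud\PP$-negligible exceptional set into a genuinely pointwise statement on the whole topological support of $(X,\PP)$. The application of \thm{fif-csm} and the ``continuous local martingale of finite variation is constant'' argument are routine; it is precisely in this upgrading step that the joint $\dinf$-continuity of \emph{all} functionals appearing in \eq{FPDE-cont} (hence the regularity hypotheses on $F$, $\hd F$, $b$, $\s$) and the density property built into the definition \eq{supp} of $\supp(X)$ are indispensable.
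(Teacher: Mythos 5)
Your overall structure is correct and matches the standard proof from the cited reference (Cont's lecture notes): apply the functional It\^o formula \eqref{eq:fif-csm}, read off the finite-variation part as $\int_0^t r(u,X_u)\ud u$, use the fact that a continuous local martingale of finite variation started at $0$ is indistinguishable from $0$, and then upgrade the $\ud t\times\ud\PP$-a.e. vanishing of $r$ to a pointwise statement on $\supp(X)$ by combining continuity of the functional with the positive-probability property of neighborhoods of support points. That upgrading step is indeed where the content lies, and you have correctly isolated it.

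There is, however, an imprecision in your regularity claim that should be corrected. You assert that ``the functional $r$ is jointly continuous on $(\W_T,\dinf)$,'' but the hypotheses $F\in\Cb(\W_T)$ and $\hd F\in\CC^{0,0}_l(\W_T)$ only yield \emph{left}-continuity of $\hd F$, $\vd F$ and $\vd^2 F$ in the $\dinf$ topology: by definition, $\Cb$ requires $\vd^j F\in\CC^{0,0}_l$, not $\CC^{0,0}$. So $r$ is left-continuous, not jointly continuous, on $\W_T$. You acknowledge the left-continuity issue only for $\hd F$ and only at $t=T$, but it is present for the vertical derivatives as well, at every $t$. Fortunately your argument survives this correction: one must simply choose the interval $I$ inside $[0,t]$ (a left neighborhood) instead of a two-sided interval around $t$. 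Since stopping is a contraction for $\norm{\cdot}_\infty$ and $\w$ is uniformly continuous, for $s\in[t-\d',t]$ and $\norm{X-\w}_\infty<\d$ one has $\dinf((s,X_s),(t,\w_t))<\eta$, so the set $I\times\{\norm{X-\w}_\infty<\d\}$ still has positive $\ud t\times\ud\PP$-measure and the contradiction goes through. Your concluding remark that \emph{joint} continuity is ``indispensable'' is therefore an overstatement; left-continuity suffices, and this matters precisely because $\Cb$-regularity cannot be strengthened to two-sided continuity without changing the hypotheses of the proposition.
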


Analogously to classical finite-dimensional parabolic PDEs, we can introduce the notions of sub-solution and super-solution of the functional (or path-dependent) PDE \eq{FPDE-cont}, for which \cite{cont-notes} proved a comparison principle allowing to state uniqueness of solutions.

\begin{definition}
  $F\in\CC^{1,2}(\L_T)$ is called a \emph{sub-solution} (respectively \emph{super-solution}) of \eq{FPDE-cont} on a domain $U\subset\L_T$ if, for all $(t,\w)\in U$,
  \begin{equation} \label{eq:sub}
    \hd F(t,\w_t)+b(t,\w_t)\vd F(t,\w_t)+\frac12\tr\lf\vd^2F(t,\w_t)\s(t,\w_t){}^t\!\s(t,\w_t)\rg\geq0
  \end{equation}
(resp. $\hd F(t,\w_t)+b(t,\w_t)\vd F(t,\w_t)+\frac12\tr\lf\vd^2F(t,\w_t)\s(t,\w_t){}^t\!\s(t,\w_t)\rg\leq0$).
\end{definition}

\begin{theorem}[Comparison principle (Theorem 5.11 in \cite{cont-notes})]
  Let $\under F\in\CC^{1,2}(\L_T)$ and  $\over F\in\CC^{1,2}(\L_T)$ be respectively a sub-solution and a super-solution of \eq{FPDE-cont}, such that
$$\bea{c}\forall\w\in C([0,T,\R^d),\quad \under F(T,\w)\leq\over F(T.\w),\\
\EE^\PP\left[ \sup_{t\in[0,T]}|\under F(t,X_t)-\over F(t,X_t)|\right]<\infty.\ea$$
Then, $$\forall t\in[0,T),\,\forall\w\in\supp(X),\quad\under F(t,X_t)\leq\over F(t,X_t).$$
\end{theorem}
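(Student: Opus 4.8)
Here is the plan. Write $\Psi:=\over F-\under F$ and aim to show that $\Psi(\cdot,X_\cdot)$ is a supermartingale with non\hyp negative terminal value (hence non\hyp negative), and then transfer the resulting almost\hyp sure inequality to every point of $\supp(X)$ using the continuity of $\Psi$ together with the very definition of the topological support. As a first step I would note that $\Psi\in\CC^{1,2}(\L_T)$ (a difference of two such functionals), that $\Psi(T,\cdot)\geq0$ on $C([0,T],\R^d)$ by hypothesis, and that subtracting the two differential inequalities \eqref{eq:sub} shows that $\Psi$ is a \emph{super\hyp solution} of \eqref{eq:FPDE-cont}, i.e. $\hd\Psi+b\cdot\vd\Psi+\tfrac12\tr\bigl(\vd^2\Psi\,\s\,{}^t\!\s\bigr)\leq0$ on $\supp(X)$.

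Next I would apply the functional \ito\ formula along the diffusion $X$. Since $X$ is the weak solution of $\ud X=b\,\ud t+\s\,\ud W$, one has $\ud[X](t)=\s(t,X_t)\,{}^t\!\s(t,X_t)\,\ud t$ and $X_\cdot\in\supp(X)$ $\PP$-almost surely. Applying \thm{fif-csm} to $\Psi(\cdot,X_\cdot)$ — after the usual localization of $X$ by its exit times from balls, which reduces matters to functionals with bounded derivatives and thus puts us in the setting of that theorem — yields $\Psi(t,X_t)=\Psi(0,X_0)+M_t+C_t$, where $M_t:=\int_0^t\vd\Psi(u,X_u)\cdot\s(u,X_u)\,\ud W(u)$ is a continuous local martingale and $C_t:=\int_0^t\bigl[\hd\Psi+b\cdot\vd\Psi+\tfrac12\tr(\vd^2\Psi\,\s\,{}^t\!\s)\bigr](u,X_u)\,\ud u$ is continuous and non\hyp increasing.

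The crux is to upgrade $M$ to a genuine supermartingale. Writing $M_t=\bigl(\Psi(t,X_t)-\Psi(0,X_0)\bigr)+(-C_t)$ with $-C_t\geq0$ shows $M_t\geq-\Xi$ for all $t\in[0,T]$, where $\Xi:=2\sup_{s\in[0,T]}|\Psi(s,X_s)|=2\sup_{s\in[0,T]}|\over F(s,X_s)-\under F(s,X_s)|$ is integrable by hypothesis. A continuous local martingale that is bounded below, uniformly in time, by a single integrable random variable is a supermartingale (localize and use conditional Fatou, which also gives $\EE^\PP[M_t]\leq0$ and hence $M_t\in L^1$). Consequently $C_t=\Psi(t,X_t)-\Psi(0,X_0)-M_t$ is integrable and, being non\hyp increasing and adapted, is itself a supermartingale; thus $\Psi(\cdot,X_\cdot)$ is a supermartingale, as a constant plus two supermartingales. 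Therefore, for every $t\in[0,T)$,
$$\Psi(t,X_t)\ \geq\ \EE^\PP\bigl[\Psi(T,X_T)\mid\F_t\bigr]\ \geq\ 0\qquad\PP\text{-a.s.},$$
i.e. $\under F(t,X_t)\leq\over F(t,X_t)$ $\PP$-almost surely.

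Finally I would transfer this to the support. Fix $t_0\in[0,T)$ and $\w^0\in\supp(X)$ and suppose, for contradiction, that $\Psi(t_0,\w^0_{t_0})<0$. Since $\Psi=\vd^0\Psi\in\CC_l^{0,0}(\L_T)$, taking $h=0$ in the definition of left\hyp continuity shows that $\Psi(t_0,\cdot)$ is continuous at the fixed time $t_0$ in sup norm; hence there is $\eta>0$ such that the open set $V:=\{\w\in C([0,T],\R^d):\ \norm{\w_{t_0}-\w^0_{t_0}}_\infty<\eta\}$ (a neighborhood of $\w^0$) satisfies $\Psi(t_0,\w_{t_0})<0$ for all $\w\in V$. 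By the definition \eqref{eq:supp} of $\supp(X)$, $\PP(X_T\in V)>0$, i.e. $\PP\bigl(\norm{X_{t_0}-\w^0_{t_0}}_\infty<\eta\bigr)>0$, which forces $\PP\bigl(\Psi(t_0,X_{t_0})<0\bigr)>0$, contradicting the almost\hyp sure inequality just established. Hence $\under F(t_0,\w^0_{t_0})\leq\over F(t_0,\w^0_{t_0})$, as claimed. The main obstacle is the third step: turning the local\hyp martingale part of the functional \ito\ decomposition into a true supermartingale without any a priori integrability on $\under F$ or $\over F$ \emph{separately} — this is exactly where the hypothesis $\EE^\PP[\sup_t|\under F(t,X_t)-\over F(t,X_t)|]<\infty$ is used, and it is the reason one must argue with the difference $\Psi$ rather than with $\under F$ and $\over F$ individually; the \ito\ bookkeeping (matching $\ud[X]$ with $\s\,{}^t\!\s$, checking the sign of $C$, and the localization needed to invoke \thm{fif-csm}) is routine, as is the support argument.
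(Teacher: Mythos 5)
Your argument is correct and is essentially the standard proof of this comparison principle (the thesis itself only cites the result from \cite{cont-notes} without reproducing the proof): apply the functional It\^o formula to the difference $\Psi=\over F-\under F$, use the super-solution inequality to make the finite-variation part non-increasing, promote the local-martingale part to a true supermartingale by exploiting the integrable lower bound $-2\sup_t|\Psi(t,X_t)|$ together with conditional Fatou, and transfer the resulting $\PP$-a.s.\ inequality to all of $\supp(X)$ via continuity of $\Psi$ at fixed times. Working with the \emph{difference} $\Psi$ rather than $\under F$, $\over F$ separately is exactly the right move, since the hypothesis only controls $\sup_t|\under F(t,X_t)-\over F(t,X_t)|$; and your final support argument, using that $\{\w:\norm{\w_{t_0}-\w^0_{t_0}}_\infty<\eta\}$ is a sup-norm neighborhood of $\w^0$, matches the definition \eq{supp} precisely.
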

This leads to a uniqueness result on the topological support of $X$ for $\PP$-uniformly integrable solutions of the functional Kolmogorov equation.

\begin{theorem}[Uniqueness of solutions (Theorem 5.12 in \cite{cont-notes})]
  Let $H:(C([0,T],\R^d),\norm{\cdot}_\infty)\to\R$ be continuous and let $F^1,F^2\in\Cb(\L_T)$ be solutions of \eq{FPDE-cont} verifying 
$$\bea{c}\forall\w\in C([0,T,\R^d),\quad F^1(T,\w)=F^2(T.\w)=H(\w_T),\\
\EE^\PP\left[ \sup_{t\in[0,T]}|F^1(t,X_t)-F^2(t,X_t)|\right]<\infty.\ea$$
Then:
$$\forall (t,\w)\in[0,T]\times\supp(X),\quad F^1(t,\w)=F^2(t,\w).$$
\end{theorem}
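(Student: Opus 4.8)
The plan is to obtain the statement as an immediate consequence of the comparison principle stated just above, invoked twice with the roles of $F^1$ and $F^2$ interchanged. The starting observation is that a classical solution of the functional Kolmogorov equation \eq{FPDE-cont} is simultaneously a sub-solution and a super-solution of it on all of $\L_T$: for $F=F^i$ the left-hand side of \eq{sub} vanishes identically, so both the defining inequality of a sub-solution ($\geq 0$) and that of a super-solution ($\leq 0$) hold with equality, on the whole domain in particular. Moreover $F^1,F^2\in\Cb(\L_T)\subseteq\CC^{1,2}(\L_T)$, so the regularity requirements of the comparison principle are satisfied by both.

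First I would apply the comparison principle with $\under F=F^1$ and $\over F=F^2$. The terminal inequality $\under F(T,\w)\leq\over F(T,\w)$ holds — indeed with equality — since $F^1(T,\w)=H(\w_T)=F^2(T,\w)$ for every $\w\in C([0,T],\R^d)$, and the integrability requirement $\EE^\PP\big[\sup_{t\in[0,T]}|\under F(t,X_t)-\over F(t,X_t)|\big]<\infty$ is exactly the hypothesis assumed on $F^1,F^2$. The comparison principle then yields $F^1(t,\w_t)\leq F^2(t,\w_t)$ for all $t\in[0,T)$ and all $\w\in\supp(X)$.

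Next I would repeat the argument with the roles exchanged, i.e. $\under F=F^2$ and $\over F=F^1$: the hypotheses are symmetric under this swap, because the terminal values coincide and $|F^2(t,X_t)-F^1(t,X_t)|=|F^1(t,X_t)-F^2(t,X_t)|$ leaves the uniform-integrability bound unchanged. This gives the reverse inequality $F^2(t,\w_t)\leq F^1(t,\w_t)$ on the same set, so $F^1(t,\w)=F^2(t,\w)$ for every $t\in[0,T)$ and $\w\in\supp(X)$; the case $t=T$ is immediate from the common terminal condition $F^1(T,\w)=H(\w_T)=F^2(T,\w)$. There is no real obstacle here — the only points deserving a line of verification are that a solution of \eq{FPDE-cont} is indeed both a sub- and a super-solution in the sense of the comparison theorem, that $\Cb(\L_T)\subseteq\CC^{1,2}(\L_T)$ so the theorem applies, and that the hypotheses are genuinely symmetric in $F^1$ and $F^2$; once these are noted, the conclusion follows by two invocations of the preceding result.
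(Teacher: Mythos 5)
Your proposal is correct and is precisely the argument the text intends: the paper states the comparison principle immediately before the uniqueness theorem and explicitly presents the latter as a consequence of the former, so two applications of the comparison principle with $F^1,F^2$ in the two roles, followed by the trivial case $t=T$ from the shared terminal data, is exactly the right route. The supporting observations you flag — that a solution is simultaneously a sub- and super-solution since equality implies both $\geq 0$ and $\leq 0$, that $\Cb(\L_T)\subseteq\CC^{1,2}(\L_T)$ so the regularity hypothesis of the comparison theorem is met, and that the hypotheses are symmetric under the swap — are exactly the points one should check, and they all go through.
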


The uniqueness result, together with the representation of $\PP$-harmonic functionals as solutions of a functional Kolmogorov equation, leads to a Feynman-Kac formula for \naf s.

\begin{theorem}[Feynman-Kac, path-dependent (Theorem 5.13 in \cite{cont-notes})]
  Let $H:(C([0,T],\R^d),\norm{\cdot}_\infty)\to\R$ be continuous and let $F\in\Cb(\L_T)$ be a solution of \eq{FPDE-cont} verifying $F(T,\w)=H(\w_T)$ for all $\w\in C([0,T,\R^d)$ and $\EE^\PP\left[\sup_{t\in[0,T]}|F(t,X_t)|\right]<\infty$.
Then:
$$\quad F(t,\w)=\EE^\PP[H(X_T)|\F_t]\quad\ud t\times\ud\PP\text{-a.s.}$$
\end{theorem}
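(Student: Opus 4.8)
The plan is to deduce the identity directly from the martingale characterization of $\PP$-harmonic functionals in \prop{harmonic}, combined with a uniform-integrability argument that upgrades a local martingale to a closed martingale; no appeal to uniqueness or to the comparison principle is needed.

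First I would verify that $F$ falls within the scope of \prop{harmonic}: since $F\in\Cb(\L_T)$, its restriction to $\W_T$ belongs to $\Cb(\W_T)$ with the required horizontal regularity, and by hypothesis $F$ solves the functional Kolmogorov equation \eq{FPDE-cont} on the topological support $\supp(X)$ defined in \eq{supp}. Hence \prop{harmonic} applies and tells us that the process $M_t:=F(t,X_t)$, $t\in[0,T]$, is a $\PP$-local martingale for the filtration $\FF$. Because $X$ is the coordinate process with $\PP$-a.s. continuous paths and $F\in\CC^{0,0}(\L_T)$, \rmk{regularity} guarantees that $M$ is a genuine, $\PP$-a.s. continuous, $\FF$-adapted process, so that $M$ is unambiguously defined pathwise — there is no evanescent-set ambiguity here, in contrast with the weak calculus of \Sec{weak}.

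Next I would promote $M$ to a uniformly integrable martingale. The hypothesis $\EE^\PP\big[\sup_{t\in[0,T]}|F(t,X_t)|\big]<\infty$ exhibits the integrable random variable $Z:=\sup_{t\in[0,T]}|M_t|$ dominating $|M_\tau|$ for every stopping time $\tau$, so $M$ is of class (D); a \cadlag\ local martingale of class (D) on the finite horizon $[0,T]$ is a true, uniformly integrable martingale, closed by its terminal value. Consequently, for every $t\in[0,T]$,
\begin{equation*}
F(t,X_t)=M_t=\EE^\PP\big[M_T\mid\F_t\big]=\EE^\PP\big[F(T,X_T)\mid\F_t\big]\qquad\PP\text{-a.s.}
\end{equation*}
Since $X_T\in C([0,T],\R^d)$ $\PP$-a.s., the terminal condition $F(T,\w)=H(\w_T)$ yields $F(T,X_T)=H(X_T)$ $\PP$-a.s., so the right-hand side above equals $\EE^\PP[H(X_T)\mid\F_t]$. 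Using non-anticipativity of $F$ to identify $F(t,X_t)=F(t,\w_t)$ along the coordinate process and letting $t$ range over $[0,T]$ gives the asserted identity, a fortiori $\ud t\times\ud\PP$-almost everywhere.

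The only genuinely delicate step is the passage from local martingale to closed martingale, and even there the substance is just to recognize that the stated integrability hypothesis \emph{is} the class-(D) condition (via the crude domination above) and to invoke the standard closure theorem. The remaining items — matching the regularity hypotheses of \prop{harmonic}, the continuity of $M$, and the bookkeeping around the $\ud t\times\ud\PP$ qualifier, which is vestigial here since $F$ is a bona fide continuous functional and the identity in fact holds for every $t$, $\PP$-a.s. — are routine.
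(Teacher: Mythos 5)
Your argument is correct, and it is the natural one: apply the functional Kolmogorov characterization (equivalently, \thm{fif-csm} plus the PDE) to get a local martingale, recognize the integrability hypothesis as exactly the class-(D) condition, close the martingale, and condition. The paper itself states the theorem without proof (it cites \cite{cont-notes}), but its lead-in phrases the result as a consequence of the \emph{uniqueness} theorem together with the harmonic characterization; you deliberately bypass uniqueness and the comparison principle, and that is a genuine simplification --- the comparison-principle route would anyway run into the difficulty that the conditional-expectation process need not have a $\CC^{1,2}_b$ representative to which the comparison theorem could be applied, whereas your direct argument never needs one.

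Two small regularity points deserve tightening. First, \prop{harmonic} as stated carries the extra hypothesis $\hd F\in\CC^{0,0}_l(\W_T)$, which is not among the hypotheses of the Feynman--Kac statement (membership in $\Cb(\L_T)$ only forces $\hd F$ to be continuous at fixed times). You do not actually need that hypothesis for the direction you use: the implication \lq\lq PDE $\Rightarrow F(\cdot,X_\cdot)$ is a local martingale\rq\rq\ follows directly from \thm{fif-csm} (which needs only $F\in\Cloc(\W_T)$) together with the vanishing of the drift term on $\supp(X)$, and it is cleaner to cite the \ito\ formula rather than the iff-form of \prop{harmonic}. Second, $F\in\Cb(\L_T)$ gives $F\in\CC^{0,0}_l(\L_T)$, not $\CC^{0,0}(\L_T)$, so your appeal to \rmk{regularity} for continuity of $t\mapsto F(t,X_t)$ is slightly off; what you actually get is that $F(\cdot,X_\cdot)$ is a continuous semimartingale via \thm{fif-csm}, which is what the class-(D) argument needs and also explains the $\ud t\times\ud\PP$ qualifier in the conclusion. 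Neither point affects the substance: the key step --- translating $\EE^\PP[\sup_t|F(t,X_t)|]<\infty$ into the class-(D) closure --- is exactly right.
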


\subsection{Universal pricing and hedging equations}
\label{sec:hedgeprice}

Straightforward applications to the pricing and hedging of path-dependent derivatives then follow from the representation of $\PP$-harmonic functionals.

Now we consider the point of view of a market agent and we suppose that the asset price process $S$ is modeled as the coordinate process on the path space $\DT$, and it is a square-integrable martingale under a pricing measure $\PP$,
$$\ud S(t)=\s(t,S_t)\ud W(t).$$
Let $H:\DT\to\R$ be the payoff functional of a path-dependent derivative that the agent wants to sell. The price of such derivative at time $t$ is computed as 
$$Y(t)=\EE^\PP\left[H(S_T)\mid\F_t\right].$$

The following proposition is a direct corollary of \prop{mg-repr}.
\begin{proposition}[Universal hedging formula]
If $\EE^\PP\left[\abs{H(S_T)}^2\right]<\infty$ and if the price process has a smooth functional representation of $S$, that is $Y\in\Cloc(S)$, then:
\begin{align}
\PP\text{-a.s.}\quad H&=\EE^\PP\left[H(S_T)\mid\F_t\right]+\int_t^T\nabla_S Y(u)\cdot\ud S\label{eq:implicithedge}\\
&=\EE^\PP\left[H(S_T)\mid\F_t\right]+\int_t^T\vd F(u,S_u)\cdot\ud S,\label{eq:univ-hedge}
\end{align}
where $Y(t)=F(t,S_t)$ $\ud t\times\ud\PP$-almost everywhere and $\vd F(\cdot,S_\cdot)$ is the unique (up to indistinguishable processes) asset position process of the hedging strategy for $H$.
\end{proposition}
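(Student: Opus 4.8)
The plan is to deduce this statement as a specialization of the martingale representation result \prop{mg-repr} to the particular martingale $Y(t)=\EE^\PP[H(S_T)\mid\F_t]$. First I would verify its hypotheses. Since $\EE^\PP[\abs{H(S_T)}^2]<\infty$, the process $Y$ is a $\PP$-martingale by the tower property, and it is square-integrable because conditional expectation is an $L^2(\PP)$-contraction; hence $Y\in\M^2(S)$. By assumption the coordinate process $S$ is a square-integrable $\PP$-martingale with $\ud S(t)=\s(t,S_t)\ud W(t)$, so it is a Brownian martingale of the type required in \prop{mg-repr}, and the non-degeneracy condition \eqref{eq:Anon-deg} holds for $A=\s\,{}^t\!\s$, which is precisely what makes the vertical derivative operator $\nabla_S$ well defined on $\Cloc(S)$.

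Next, applying \prop{mg-repr} to $Y\in\Cloc(S)\cap\M^2(S)$ gives
$$Y(T)=Y(0)+\int_0^T\nabla_S Y(u)\cdot\ud S(u)\qquad\PP\text{-a.s.},$$
with $\nabla_S Y\in\LL(S)$. Because $Y\in\Cloc(S)$, by \eqref{eq:CX} there is $F\in\Cloc$ with $Y(t)=F(t,S_t)$ $\ud t\times\ud\PP$-a.e.; by the definition of the vertical derivative with respect to $S$, $\nabla_S Y(t)=\vd F(t,S_t)$ up to an evanescent set, independently of the representative $F$ chosen. Substituting yields \eqref{eq:univ-hedge}. To pass to the conditional form \eqref{eq:implicithedge} I would subtract the value at time $t$: from $H(S_T)=Y(T)=Y(t)+\int_t^T\nabla_S Y(u)\cdot\ud S(u)$ together with $Y(t)=\EE^\PP[H(S_T)\mid\F_t]$ the formula follows. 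Uniqueness of the hedging process up to indistinguishability is inherited from the isometry in Theorem \ref{th:weakvd}: if two processes in $\LL(S)$ integrate $S$ to the same element of $\M^2(S)$, the It\^o isometry together with the a.e. invertibility of $A$ granted by \eqref{eq:Anon-deg} forces them to agree $\ud t\times\ud\PP$-a.e.

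There is no substantial obstacle here: the argument is bookkeeping around \prop{mg-repr}. The only delicate point is the identification $\nabla_S Y=\vd F$, which genuinely uses the non-degeneracy \eqref{eq:Anon-deg} to ensure that the vertical derivative of $Y$ does not depend on which functional representative of $Y$ is chosen.

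If one preferred a proof not citing \prop{mg-repr}, an alternative is to apply the functional \ito\ formula \thm{fif-csm} to $F(t,S_t)$, observe that a $\PP$-martingale of class $\Cloc(S)$ must have vanishing finite-variation part, i.e. $\hd F+\tfrac12\tr(\vd^2F\,\s\,{}^t\!\s)=0$ on $\supp(S)$ (cf. \prop{harmonic}), so that only the stochastic integral $\int\vd F(u,S_u)\cdot\ud S$ survives; the integrability $\vd F(\cdot,S_\cdot)\in\LL(S)$ then follows from the $L^2$-isometry, since this stochastic integral equals the square-integrable martingale $Y-Y(0)$.
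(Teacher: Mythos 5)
Your proposal is correct and follows the same route as the paper, which presents this result as a direct corollary of \prop{mg-repr}; your verification of the hypotheses (square-integrability of $Y$, the non-degeneracy condition \eqref{eq:Anon-deg} underpinning the well-definedness of $\nabla_S$, and the identification $\nabla_S Y(t)=\vd F(t,S_t)$ up to indistinguishability) is exactly the bookkeeping the paper leaves implicit, and the passage from the time-$0$ representation to the conditional one by subtracting $Y(t)$ is the right step. The alternative via the functional \ito\ formula and \prop{harmonic} is also sound and matches the spirit of \Sec{kolmogorov}.
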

We refer to the equation \eq{univ-hedge} as the \lq universal hedging formula\rq, because it gives an explicit representation of the hedging strategy for a path-dependent option $H$. The only dependence on the model lies in the computation of the price $Y$.
\begin{remark}
If the price process does not have a smooth functional representation of $S$, but the payoff functional still satisfies $\EE^\PP\left[\abs{H(S_T)}^2\right]<\infty$, then the equation \eq{implicithedge} still holds.
\end{remark}
In this case, the hedging strategy is not given explicitly, being the vertical derivative of a square-integrable martingale, but it can be uniformly approximated by regular functionals that are the vertical derivatives of smooth \naf s.
Namely: there exists a sequence of smooth functionals 
$$\{F^n\in\Cb(\L_T),\,F^n(\cdot,S_\cdot)\in\M^2(S),\,\norm{F^n(\cdot,S_\cdot)}_2<\infty\}_{n\geq1},$$ 
where $$\norm{Y}_2:=\EE^\PP\left[\abs{Y(T)}^2\right]^{\frac12}<\infty,\quad Y\in\M^2(S),$$
such that $$\norm{F^n(\cdot,S_\cdot)-Y}_2\limn0\quad \text{and}\quad \norm{\nabla_S Y-\nabla_S F^n(\cdot,S_\cdot)}_{\LL(S)}\limn0.$$

For example, \citet{contlu} compute an explicit approximation for the integrand in the representation \eq{implicithedge}, which cannot be itself computed through pathwise perturbations. They allow the underlying process $X$ to be the strong solution of a path-dependent stochastic differential equation with non-anticipative Lipschitz-continuous and non-degenerate coefficients, then they consider the Euler-Maruyama scheme of such SDE. They proved the strong convergence of the Euler-Maruyama approximation to the original process.
By assuming that the payoff functional $H:(\DT,\norm{\cdot}_\infty)\to\R$ is continuous with polynomial growth, they are able to define a sequence $\{F_n\}_{n\geq1}$ of smooth functionals $F_n\in\CC^{1,\infty}(\L_T)$ that approximate the pricing functional and provide thus a smooth functional approximation sequence $\{\vd F_n(\cdot,S_\cdot)\}_{n\geq1}$ for the hedging process $\nabla_SY$.

Another application is derived from \prop{harmonic} for the pricing of path-dependent derivatives.
\begin{proposition}[Universal pricing equation]\label{prop:universalprice} If there exists a smooth functional representation of the price process $Y$ for $H$, i.e.
$$  \exists F\in\Cb(\W_T):\quad F(t,S_t)=\EE^{\PP}[H(S_T)|\F_t]\quad \ud t\times \ud\PP\text{-a.s.},$$
such that $\hd F\in\CC^{0,0}_l(\W_T)$,
then the following path-dependent partial differential equation holds on the topological support of $S$ in $\lf C([0,T],\R^d),\norm{\cdot}_\infty\rg$ for all $t\in[0,T]$:
\beq\label{eq:pricing}
\hd F(t,\w_t)+\frac12\tr\lf\vd^2F(t,\w_t)\s(t,\w_t)\,{}^t\!\s(t,\w_t)\rg=0.
\eeq
\end{proposition}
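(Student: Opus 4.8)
The strategy is to recognize the pricing functional as a $\PP$-harmonic functional and to conclude via \prop{harmonic} in the driftless case. First I would note that the process $F(\cdot,S_\cdot)$ --- being a continuous version of $t\mapsto\EE^\PP[H(S_T)\mid\F_t]$, which by the tower property of conditional expectations is a $\PP$-martingale --- is itself a (continuous, and square-integrable whenever $\EE^\PP[|H(S_T)|^2]<\infty$ as in the ambient setup) $\PP$-martingale. Hence $F$ is $\PP$-harmonic relative to the coordinate process $S$, which here satisfies the path-dependent SDE $\ud S(t)=\s(t,S_t)\ud W(t)$, i.e.\ the SDE appearing before \prop{harmonic} with drift $b\equiv0$.

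Given the standing hypotheses $F\in\Cb(\W_T)$ and $\hd F\in\CC^{0,0}_l(\W_T)$, \prop{harmonic} then applies and yields that $F$ solves the functional Kolmogorov equation \eq{FPDE-cont} for every $t\in[0,T]$ and every $\w\in\supp(S)$. Setting $b\equiv0$ in \eq{FPDE-cont} annihilates the first-order term $b(t,\w_t)\vd F(t,\w_t)$ and leaves precisely
\[
\hd F(t,\w_t)+\tfrac12\tr\big(\vd^2F(t,\w_t)\,\s(t,\w_t)\,{}^t\!\s(t,\w_t)\big)=0,
\]
which is \eq{pricing}; this is the whole argument once \prop{harmonic} is invoked.

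For a self-contained proof not routed through \prop{harmonic}, I would instead apply the functional \ito\ formula \thm{fif-csm} to $F(t,S_t)$: since $S$ is a continuous square-integrable martingale with $\ud[S](t)=\s(t,S_t)\,{}^t\!\s(t,S_t)\,\ud t$,
\begin{align*}
F(t,S_t)={}&F(0,S_0)+\int_0^t\vd F(u,S_u)\cdot\ud S(u)\\
&{}+\int_0^t\Big(\hd F(u,S_u)+\tfrac12\tr\big(\vd^2F(u,S_u)\,\s(u,S_u)\,{}^t\!\s(u,S_u)\big)\Big)\ud u,
\end{align*}
$\PP$-almost surely. The left-hand side is a martingale and the stochastic integral a local martingale, so the absolutely continuous term must vanish $\ud t\times\ud\PP$-a.e., i.e.\ the integrand is zero $\ud t\times\ud\PP$-a.e. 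The main obstacle --- and really the only delicate step, exactly as in the proof of \prop{harmonic} --- is upgrading this ``$\ud t\times\ud\PP$-a.e.'' statement to ``for every $t$ and every $\w\in\supp(S)$'': one uses that $(t,\w)\mapsto\hd F(t,\w_t)+\tfrac12\tr(\vd^2F(t,\w_t)\,\s(t,\w_t)\,{}^t\!\s(t,\w_t))$ is continuous on $(\W_T,\dinf)$ (from $\hd F\in\CC^{0,0}_l(\W_T)$ together with continuity of $\vd^2F$ and $\s$), so that if it were nonzero at some $(t_0,\w_0)$ with $\w_0\in\supp(S)$ it would be nonzero on a $\dinf$-ball around $(t_0,\w_0)$, which by the very definition of the topological support carries positive $\ud t\times\ud\PP$-mass --- a contradiction. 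Everything else is routine bookkeeping with the functional \ito\ formula.
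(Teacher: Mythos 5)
Your first argument is exactly the paper's own route: the paper presents \prop{universalprice} as an immediate corollary of \prop{harmonic}, obtained by observing that $F(\cdot,S_\cdot)=\EE^\PP[H(S_T)\mid\F_\cdot]$ is a $\PP$-martingale (so $F$ is $\PP$-harmonic) and that the coordinate process here satisfies the driftless SDE $\ud S=\s(t,S_t)\,\ud W$, which kills the $b\,\vd F$ term in \eq{FPDE-cont}. Your second, self-contained argument via the functional \ito\ formula simply reproduces the proof of \prop{harmonic} in this special case, including the continuity argument that upgrades the $\ud t\times\ud\PP$-a.e.\ vanishing to vanishing on $[0,T]\times\supp(S,\PP)$; it is correct but adds nothing beyond what \prop{harmonic} already packages.
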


\begin{remark}
  If there exists a smooth functional representation of the price process $Y$ for $H$, but the horizontal derivative is not left-continuous, then the pricing equation \eq{pricing} cannot hold on the whole topological support of $S$ in $\lf C([0,T],\R^d),\norm{\cdot}_\infty\rg$, but it still holds for $\PP$-almost every $\w\in C([0,T],\R^d)$.
\end{remark}

\section{Path-dependent PDEs and BSDEs}
\label{sec:PPDE}

In the Markovian setting, there is a well-known relation between backward stochastic differential equations (BSDEs) and semi-linear parabolic PDEs, via the so-called nonlinear Feynman-Kac formula introduced by \citet{pardoux-peng92} (see also \citet{pardoux-peng90} for the introduction to BSDEs and \citet{elkaroui-peng-quenez} for a comprehensive guide on BSDEs and their application in finance).
This relation can be extended to a non-Markovian setting using the functional \ito\ calculus.

Consider the following forward-backward stochastic differential equation (FBSDE) with path-dependent coefficients:
\begin{eqnarray}
  X(t)&=&x+\int_0^tb(u,X_{u})\ud u+\int_0^t\s(u,X_{u})\cdot\ud W(u)\label{eq:FBSDE1}\\
  Y(t)&=&H(X_T)+\int_t^Tf(u,X_{u},Y(u),Z(u))\ud u-\int_t^TZ(u)\cdot\ud X(u)\label{eq:FBSDE2},
\end{eqnarray}
where $W$ is a $d$-dimensional Brownian motion on $(D([0,T],\R^d),\PP)$, $\FF=\Ft$ is the $\PP$-augmented natural filtration of the coordinate process $X$, the terminal value is a square-integrable $\F_T$-adapted random variable, i.e. $H\in L^2(\O,\F_T,\PP)$, and the coefficients
$$b:\W_T\to\R^d,\ \s:\W_T\to\R^{d\times d},\ f:\W_T\times\R\times\R^d\to\R$$
are assumed to satisfy the standard assumptions that guarantee that the process $M$, $M(t)=\int_0^t\s(u,X_{u})\cdot\ud W(u)$ is a square-integrable martingale, and the forward equation \eq{FBSDE1} has a unique strong solution $X$ satisfying $\EE^\PP\left[\sup_{t\in[0,T]}|X(t)|^2\right]<\infty$. Moreover, assuming also $det\lf\s(t,X_{t-},X(t))\rg\neq0$ $\ud t\times\ud\PP$-almost surely, they guarantee that the FBSDE \eq{FBSDE1}-\eq{FBSDE2} has a unique solution $(Y,Z)\in\S^{1,2}(M)\times\L^2(M)$ such that $\EE^\PP\left[\sup_{t\in[0,T]}|Y(t)|^2\right]<\infty$ and $Z=\nabla_MY$.

The following is the extension of the non-linear Feynman-Kac formula of \cite{pardoux-peng92} to the non-Markovian setting.

\begin{theorem}[Theorem 5.14 in \cite{cont-notes}]\label{thm:FK-cont}
Let $F\in\Cloc(\W_T)$ be a solution of the path-dependent PDE
$$\begin{cases}
  \hd F(t,\w)+f(t,\w_t,F(t,\w)\vd F(t,\w))+\frac12\tr(\s(t,\w)\,^t\!\s(t,\w)\vd^2F(t,\w))=0\\
  F(T,\w)=H(\w_T)
\end{cases}$$
for $(t,\w)\in[0,T]\times\supp(X)$. Then, the pair $(Y,Z)=(F(\cdot,X_\cdot),\vd F(\cdot,X_\cdot))$ solves the FBSDE \eq{FBSDE1}-\eq{FBSDE2}.
\end{theorem}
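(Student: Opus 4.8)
The plan is to apply the functional \ito\ formula of \thm{fif-csm} to the process $t\mapsto F(t,X_t)$, where $X$ is the unique strong solution of the forward equation \eq{FBSDE1}, and then to read off the backward equation \eq{FBSDE2} by substituting the path-dependent PDE satisfied by $F$. The point is that $F$ is \emph{given} here, so nothing has to be constructed: one only needs to verify that the prescribed pair solves the FBSDE.

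First I would note that, under the standing assumptions on $b,\s$, the process $X$ is a continuous semimartingale on $(\DT,\F_T,\PP)$ with $\ud[X](t)=\s(t,X_t)\,{}^t\!\s(t,X_t)\,\ud t$. Since $F\in\Cloc(\W_T)$, \thm{fif-csm} applies and gives, $\PP$-almost surely, for every $t\in[0,T]$,
\begin{align*}
F(t,X_t)={}&F(0,X_0)+\int_0^t\vd F(u,X_u)\cdot\ud X(u)\\
&{}+\int_0^t\Big(\hd F(u,X_u)+\frac12\tr\big(\vd^2F(u,X_u)\,\s(u,X_u)\,{}^t\!\s(u,X_u)\big)\Big)\ud u.
\end{align*}
Next, since $X(\w)\in\supp(X)$ for $\PP$-almost every $\w$, I may evaluate the PDE along the trajectories of $X$: $\ud t\times\ud\PP$-almost everywhere,
$$\hd F(t,X_t)+\frac12\tr\big(\vd^2F(t,X_t)\,\s(t,X_t)\,{}^t\!\s(t,X_t)\big)=-f\big(t,X_t,F(t,X_t),\vd F(t,X_t)\big).$$
Substituting this into the expansion above and putting $Y(t):=F(t,X_t)$ and $Z(t):=\vd F(t,X_t)$, I obtain
$$Y(t)=Y(0)+\int_0^tZ(u)\cdot\ud X(u)-\int_0^tf\big(u,X_u,Y(u),Z(u)\big)\,\ud u,\qquad t\in[0,T],$$
and subtracting this relation at time $t$ from the one at time $T$, together with the terminal condition $F(T,X_T)=H(X_T)$, yields precisely \eq{FBSDE2}; the forward equation \eq{FBSDE1} holds by the very definition of $X$.

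Finally I would verify that $(Y,Z)$ belongs to $\S^{1,2}(M)\times\L^2(M)$, so that it is in fact \emph{the} (unique) solution of the FBSDE and, in particular, $Z=\nabla_MY$: this follows from $F\in\Cloc(\W_T)$, the growth and integrability hypotheses on $b,\s,f,H$, the moment bound $\EE^\PP[\sup_{t\le T}|X(t)|^2]<\infty$, and the weak functional calculus of \Sec{weak} (cf.\ \prop{mg-repr}). The algebraic part of the proof is thus a one-line consequence of the functional \ito\ formula; the genuine work lies in two measure-theoretic points: justifying that the PDE, assumed only on $\supp(X)$, may legitimately be inserted along $\PP$-almost every path of $X$ — this is where the description of the topological support and the joint continuity of $F$ are needed — and checking the integrability that places $(Y,Z)$ in the correct Banach spaces so that the FBSDE uniqueness result becomes applicable. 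I expect this second, integrability/uniqueness, step to be the main obstacle.
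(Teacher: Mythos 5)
Your proof is correct and is exactly the argument one expects for a path-dependent Feynman--Kac result: apply the functional It\^o formula of \thm{fif-csm} to $F(\cdot,X_\cdot)$, note that the PDE may be inserted along $\PP$-a.e.\ trajectory since $X_T\in\supp(X)$ $\PP$-almost surely, and read off the backward relation. The thesis cites the theorem from \cite{cont-notes} without reproducing a proof, so there is no internal argument to compare against; your derivation is the standard one and handles the only genuinely subtle steps (the support argument and the placement of $(Y,Z)$ in $\S^{1,2}(M)\times\L^2(M)$) correctly. One small clarification: the joint continuity of $F$ is part of the hypothesis $F\in\Cloc(\W_T)$ and is what licenses the functional It\^o formula; it plays no role in the passage from ``PDE holds on $\supp(X)$'' to ``PDE holds $\ud t\times\ud\PP$-a.e.\ along $X$'', which is purely a consequence of the definition of the topological support. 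Also note that, because the backward stochastic integral in \eq{FBSDE2} is taken against the full semimartingale $X$ rather than against its martingale part, no term $b\cdot\vd F$ appears in the PDE -- you handled this correctly by keeping $\int\vd F\cdot\ud X$ intact rather than splitting off the drift, but it is worth making that point explicit, since it is the one place where this path-dependent Feynman--Kac PDE differs visibly from the Kolmogorov equation \eq{FPDE-cont}.
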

Together with the standard comparison theorem for BSDEs, \thm{FK-cont} provides a comparison principle for functional Kolmogorov equations and uniqueness of the solution.

To prove existence of a solution to \eq{FPDE-cont}, additional regularity of the coefficients is needed. A result in this direction is provided by \citet{peng}, using BSDEs where the forward process is a Brownian motion.
\citet{peng} considers the following backward stochastic differential equation:
\beq\label{eq:peng}
Y^{(t,\g)}(s)=H(W^{(t,\g)}_T)+\int_s^Tf(W^{(t,\g)}_u,Y^{(t,\g)}(u),Z^{(t,\g)}(u))\ud u-\int_s^TZ^{(t,\g)}(u)\ud W(u),
\eeq
where $W$ is the coordinate process on the Wiener space $(C([0,T],\R^d),\PP)$ and, for all $(t,\g)\in\L_T$, $W^{(t,\g)}=\g\ind_{[0,t)}+(\g(t)+W-W(t))\ind_{[t,T]}$.
Note that the notation has been rearranged to be consistent with the presentation in this thesis.

The BSDE \eq{peng} has a unique solution $(Y^{(t,\g)},Z^{(t,\g)})\in S^2([t,T])\times M^2([t,T])$, where $M^2([t,T])$ and $S^2([t,T])$ denote respectively the space of $\R^m$-valued processes $X$ such that $X\in L^2([t,T]\times\O,\ud t\times\ud\PP)$ and $\R^{m\times d}$-valued processes $X$ such that $\EE^\PP[\sup_{u\in[t,T]}|X(u)|^2]<\infty$, both adapted to the completion of the filtration generated by $\{W(u)-W(t),\,u\in[t,T]\}$, under the following assumptions on the coefficients:
\begin{enumerate}
\item $H:\L_T\to\R^m$ satisfies
  \begin{enumerate}
  \item $\psi^{(t,\g)}:\R^d\to\R^m,\,e\mapsto H(\g+e\ind_{[t,T]})$ is twice differentiable in 0 for all $(t,\g)\in[0,T]\times D([0,T],\R^d)$,
  \item $\abs{H(\g_T)-H(\g'_T)}\leq C(1+\norm{\g_T}_\infty^k+\norm{\g'_T}_\infty^k)\norm{\g_T-\g'_T}_\infty$ for all $\g,\g'\in D([0,T],\R^d)$,
  \item $\partial^j_e\psi^{(t,\g)}(0)-\partial^j_e\psi^{(t',\g')}(0)\leq C(1+\norm{\g_T}_\infty^k+\norm{\g'_T}_\infty^k)(\abs{t-t'}+\norm{\g_T-\g'_T}_\infty)$ for all $\g,\g'\in D([0,T],\R^d)$, $t,t'\in[0,T]$, $j=1,2$;
  \end{enumerate}
\item $f:\L_T\times\R^m\times\R^{m\times d}\to\R^m$ is continuous; for any $(t,\g)\in\L_T$ and $s\in[0,t]$ $(x,y,z)\mapsto f(t,\g_t+x\ind_{[s,T]},y,z)$ is of class $C^3(\R^d\times\R^m\times\R^{m\times d},\R^m)$ with first-order partial derivatives and second-order partial derivatives with respect to $(y,z)$ uniformly bounded, and all partial derivatives up to order three growing at most as a polynomial at infinity; for any $(t,y,z)$, $\g\mapsto f(t,\g_t,y,z)$ satisfies assumptions 1(a),1(b),1(c) replacing $H$ with $f(t,\cdot_t,y,z)$, $\g\mapsto \partial_yf(t,\g_t,y,z),\partial_zf(t,\g_t,y,z)$ satisfy assumptions 1(a),1(b) and 1(c) with only $j=1$, and $$\g\mapsto \partial_{yy}f(t,\g_t,y,z),\partial_{zz}f(t,\g_t,y,z),\partial_{yz}f(t,\g_t,y,z)$$ satisfy the assumptions 1(a),1(b).
\end{enumerate}
The functional Kolmogorov equation associated is the following: for all $\g\in D([0,T],\R^d)$ and $t\in[0,T]$,
\beq\label{eq:FPDE-peng}
\begin{cases}\hd F(t,\g_t)+\frac12\tr(\vd^2F(t,\g_t))+f(t,\g_t,F(t,\g_t),\vd F(t,\g_t))=0,\\F(T,\g_T)=H(\g_T)
\end{cases}
\eeq
First, by the functional \ito\ formula, they obtain the analogue of \thm{FK-cont}, then they prove the converse result: the \naf\ $F$ defined by $F(t,\g)=Y^{(t,\g_t)}(t)$ is the unique $\CC^{1,2}(\L_T)$-solution of the functional Kolmogorov equation \eq{FPDE-peng}. This significant result is achieved based on the theory of BSDEs.

Another approach to study the connection between PDEs and SDEs in the path-dependent case is provided by \citet{flandoli-zanco}, who reformulate the problem into an infinite-dimensional setting on Banach spaces, where solutions of the SDE are intended in the mild sense and the Kolmogorov equations are defined appropriately. However, in the infinite-dimensional framework, the regularity requirements are very strong, involving at least Fr\'echet differentiability.

\subsection{Weak and viscosity solutions of path-dependent PDEs}
\label{sec:viscosity}

The results seen above in \Sec{kolmogorov} require a regularity that is often difficult to prove and classical solutions of the above path-dependent PDEs may fail to exist. To find a way around this issue, more general notions of solution have been proposed, analogously to the Markovian case where weak solutions of PDEs are considered or viscosity solutions are used to link solutions of BSDEs to the associated PDE.

\citet{cont-notes} proposed the following notion of \emph{weak solution}, using the weak functional \ito\ calculus presented in \Sec{weak} and generalizing \prop{harmonic}.

Consider the stochastic differential equation \eq{FBSDE1} with path-dependent coefficients such that $X$ is the unique strong solution and $M$ is a square-integrable martingale.

Denote by $\WW^{1,2}(\PP)$ the Sobolev space of $\ud t\times\ud\PP$-equivalence classes of \naf s $F:\L_T\to\R$ such that the process $S=F(\cdot,X_\cdot)$ belongs to $\S^{1,2}(M)$, equipped with the norm $\norm{\cdot}_{\WW^{1,2}}$ defined by
\begin{align*}
\norm{F}^2_{\WW^{1,2}}:={}&\norm{F(\cdot,X_\cdot)}_{1,2}^2\\
={}&\EE^\PP\left[|F(0,X_0)|^2+\int_0^T\tr(\nabla_M F(t,X_t)^t\nabla_M F(t,X_t)\ud[M](t))\right.\\
&\left.\quad{}+\int_0^T|v(t)|^2\ud t\right],
\end{align*}
where $F(t,X_t)=V(t)+\int_0^t\nabla_MS\ud M$ and $V(t)=S(0)+\int_0^tv(u)\ud u$, $V\in A^2(\FF)$.
Equivalently, $\WW^{1,2}(\PP)$ can be defined as the completion of $(\Cb(\L_T),\norm{\cdot}_{\WW^{1,2}})$.

Note that, in general, it is not possible to define for $F\in\WW^{1,2}(\PP)$ the $\FF$-adapted process $\hd F(\cdot,X_\cdot)$, because it requires $F\in\S^{2,2}(M)$. On the other hand, the finite-variation part of $S$ belongs to the Sobolev space $H^1([0,T])$, so the process $U$ defined by
$$U(t):=F(T,X_T)- F(t,X_t)-\int_t^T\nabla_MF(u,X_u)\ud M(u),\quad t\in[0,T],$$
has paths in $H^1([0,T])$, almost surely.
By integration by parts, for all $\Phi\in A^2(\FF)$, $\Phi(t)=\int_0^t\phi(u)\ud u$ for $t\in[0,T]$,
$$\bea{l}\int_0^T\Phi(t)\frac\ud{\ud t}\lf F(t,X_t)-\int_0^t\nabla_MF(u,X_u)\ud M(u)\rg \ud t\\
=\int_0^T\Phi(t)\lf-\frac\ud{\ud t}U(t)\rg\ud t\\
=\int_0^T\phi(t)\lf F(T,X_T)-F(t,X_t)-\int_t^T\nabla_MF(u,X_u)\ud M(u)\rg \ud t.
\ea$$

Thus, the following notion of weak solution is well defined.
\begin{definition}
  A \naf\ $F\in\WW^{1,2}(\PP)$ is called a \emph{weak solution} of the path-dependent PDE \eq{FPDE-cont} on $\supp(X)$ with terminal condition $H(X_T)\in L^2(\O,\PP)$ if, for all $\phi\in L^2([0,T]\times\O,\ud t\times\ud\PP)$, it satisfies
\beq\label{eq:weak}
\begin{cases}
\EE^\PP\left[\int_0^T\phi(t)\lf H(X_T)-F(t,X_t)-\int_t^T\nabla_MF(u,X_u)\ud M(u)\rg \ud t\right]=0,\\
F(T,X_T)=H(X_T).
\end{cases}
\eeq
\end{definition}

Using the tools provided by the functional \ito\ calculus presented in this chapter, different notions of viscosity solutions have been recently proposed, depending on the path-dependent partial differential equation considered. 
\citet{ektz} proposed a notion of viscosity solution for semi-linear parabolic path-dependent PDEs that allows to extend the non-linear Feynman-Kac formula to non-Markovian case. \citet{ektz1} generalizes the definition of viscosity solutions introduced in \cite{ektz} to deal with fully non-linear path-dependent parabolic PDEs. Then, in \cite{ektz2} they prove a comparison result for such viscosity solutions that implies a well-posedness result. \citet{cosso} extended the results of \cite{ektz2} to the case of a possibly degenerate diffusion coefficient for the forward process driving the BSDE.

We remark that, although these approaches are useful to study solutions of path-dependent PDEs from a theoretical point of view and in applications, the problem studied in this thesis cannot be faced by means of viscosity or weak solutions. This is due to the fact that the change of variable formula for \naf s and the pathwise definition of the F\"ollmer integral are the key tools that allow us to achieve the robustness results, and they require smoothness ($\CC^{1,2}$ regularity) of the portfolio value functionals.

\chapter{A pathwise approach to continuous-time trading}
\label{chap:path-trading}
\chaptermark{Pathwise continuous-time trading}

The \ito\ theory of stochastic integration defines the integral of a general non-anticipative integrand as either an $L^2$ limit or a limit in probability of non-anticipative Riemann sums. The resulting integral is therefore defined almost-surely but does not have a well-defined value along a given sample path.
If one interprets such an integral as the gain of a strategy, this poses a problem for interpretation: the gain cannot necessarily be defined for a given scenario, which does not make sense financially.
It is therefore important to dispose of a construction which allows to give  a meaning to such integrals in a pathwise sense.

In this Chapter, after reviewing in \Sec{lit-path} various approaches proposed in the literature for the pathwise construction of integrals with respect to stochastic processes, we present an analytical setting where the pathwise computation of the gain from trading is possible for a class of continuous-time trading strategies which includes in particular \lq delta-hedging\rq\ strategies. This construction also allows for a pathwise definition of the self-financing property.

\section{Pathwise integration and model-free arbitrage}
\label{sec:lit-path}

\subsection{Pathwise construction of stochastic integrals}
\label{sec:pathint}

A first attempt to a pathwise construction of the stochastic integral deals with Brownian integrals and dates back to the sixties, due to \citet{wong-zakai}. They stated that, for a restricted class of integrands, the sequence of Riemann-Stieltjes integrals obtained by replacing the Brownian motion with a sequence of approximating smooth integrators converges in mean square (hence pathwise along a properly chosen subsequence) to a Stratonovich integral. 
This approach is based on approximating the integrator process.

In 1981, \citet{bichteler} obtained almost-sure convergent subsequences by using stopping times. Namely, given a \caglad\ process $\phi$ and a sequence of non-negative real numbers $(c_n)_{n\geq0}$ such that $\sum\limits_{n\geq0}c_n<\infty$, by defining for each $n\geq0$ a sequence of stopping times $T^n_0=0$, $T_{k+1}^n=\inf\{t>T^n_k:\,|\phi(t)-\phi(T^n_k)|>c_n\}$, $k\geq 0$, for a certain class of integrands $M$ (more general than square-integrable martingales) the following holds: for almost all states $\w\in\O$, $\lf\int\phi\ud M\rg(\w)$ is the uniform limit on every bounded interval of the pathwise integrals $\lf\int\phi^n\ud M\rg(\w)$ of the approximating elementary processes $\phi^n(t)=\sum\limits_{k\geq0}\phi(T^n_k)\ind_{(T^n_k,T^n_{k+1}]}(t)$, $t\geq0$.
Though Bichteler's method is constructive, it involves stopping times. Moreover, note that the $\PP$-null set outside of which convergence does not hold depends on $\phi$.

\subsubsection{Pathwise stochastic integration by means of ``skeleton approximations''}
In 1989, Willinger and Taqqu~\cite{willtaq} proposed a constructive method to compute stochastic integrals path-by-path by making both time and the probability space discrete.
The discrete and finite case contains the main idea of their approach and shows the connection between the completeness property, i.e. the martingale representation property, and stochastic integration.
It is given a probability space $(\O,\F,\PP)$ endowed with a filtration $\FF=(\F_t)_{t=0,1,\ldots,T}$ generated by minimal partitions of $\O$, $\F_t=\s(\P_t)$ for all $t=0,1,\ldots,T$, and an $\R^{d+1}$-valued $(\FF,\PP)$-martingale $Z=(Z(t))_{t=0,1,\ldots,T}$ with components $Z^0\equiv1$ and $Z^1(0)=\ldots=Z^d(0)=0$. They denote by $\Phi$ the space of all $\R^{d+1}$-valued $\FF$-predictable stochastic processes $\phi=(\phi(t))_{t=0,1,\ldots,T}$, where $\phi(t)$ is $\F_{t-1}$-measurable $\forall t=1,\ldots,T$, and such that 
\begin{equation}
  \label{eq:wt_1}
\phi(t)\cdot Z(t)=\phi(t+1)\cdot Z(t)\quad \PP\text{-a.s., }t=0,1,\ldots,T,  
\end{equation}
where by definition $\phi_0\equiv\phi_1$. Property~\eqref{eq:wt_1} has an interpretation in the context of discrete financial markets as the \textit{self-financing} condition for a strategy $\phi$ trading the assets $Z$, in the sense that at each trading date the investor rebalances his portfolio without neither withdrawing nor paying any cash. Moreover, it implies 
$$(\phi\bullet Z)(t):=\phi(1)\cdot Z(0)+\sum_{s=1}^t\phi(s)\cdot(Z(s)-Z(s-1))=\phi(t)\cdot Z(t)\quad \PP\text{-a.s., }t=0,1,\ldots,T,  $$
where $\phi\bullet Z$ is the \textit{discrete stochastic integral} of the predictable process $\phi$ with respect to $Z$. The last equation is still meaningful in financial terms, having on the left-hand side the initial investment plus the accumulated gain and on the right-hand side the current value of the portfolio.
The $\R^{d+1}$-valued $(\FF,\PP)$-martingale $Z$ is defined to be \textit{complete} if for every real random variable $Y\in L^1(\O,\F,\PP)$ there exists $\phi\in\Phi$ such that for $\PP$-almost all $\w\in\O$, $Y(\w)=(\phi\bullet Z)(T,\w)$, i.e.
\begin{equation}
  \label{eq:wt_2}
  \{\phi\bullet Z,\;\phi\in\Phi\}=L^1(\O,\F,\PP).
\end{equation}
The $(Z,\Phi)$-representation problem~\eqref{eq:wt_2} is reduced to a duality structure between the completeness of $Z$ and the uniqueness of an equivalent martingale measure for $Z$, which are furthermore proved (\citet{willtaq87}) to be equivalent to a technical condition on the flow of information and the dynamics of $Z$, that is: $\forall t=1,\ldots,T,\;A\in\P_{t-1},$
\begin{equation}
  \label{eq:wt_3}
  \dim\lf\mathrm{span}\lf\{Z(t,\w)-Z(t-1,\w),\,\w\in A\}\rg\rg=\sharp\{A'\subset\P_t:\,A'\subset A\}-1.
\end{equation}
The discrete-time construction extends then to stochastic integrals of continuous martingales, by using a ``skeleton approach''. The probability space $(\O,\F,\PP)$ is now assumed to be complete and endowed with a filtration $\FF^Z=\FF=\Ft$, where $Z=(Z(t))_{t\in[0T]}$ denotes an $\R^{d+1}$-valued continuous $\PP$-martingale with the components $Z^0\equiv1$ and $Z^1(0)=\ldots=Z^d(0)=0$ $\PP$-a.s. and $\FF$ satisfies the usual condition and is \textit{continuous} in the sense that, for all measurable set $B\in\F$, the $(\FF,\PP)$-martingale $\lf\PP(B|\F_t)\rg_{t\in[0,T]}$ has a continuous modification. 
The key notion of the skeleton approach is the following.
\begin{definition}\label{def:wt}
  A triplet $(I^\z,\FF^\z,\z)$ is a \emph{continuous-time skeleton} of $(\FF,Z)$ if:
  \begin{enumerate}
  \item[(i)] $I^\z$ is a finite partition $0=t^\z_0<\ldots<t^\z_{N}=:T^\z\leq T$;
  \item[(ii)] for all $t\in[0,T]$, $\F^\z_t=\sum\limits_{k=0}^{N-1}\F^\z_{t^\z_k}\ind_{[t^\z_k,t^\z_{k+1})}(t)$, such that for all $k=0,\ldots,N$ there exists a minimal partition of $\O$ which generates the sub-\ss $\F^\z_{t^\z_k}\subset\F_{t^\z_k}$;
  \item[(iii)]for all $t\in[0,T]$, $\z(t)=\sum\limits_{k=0}^{N-1}\z_{t^\z_k}\ind_{[t^\z_k,t^\z_{k+1})}(t)$ where $\z_{t^\z_k}$ is $\F^\z_{t^\z_k}$-measurable for all $k=0,\ldots,N$.
  \end{enumerate}
Given an $\R^{d+1}$-valued stochastic process $\nu=(\nu(t))_{t\in[0,T]}$ and $I^\nu,\FF^\nu$ satisfying (i),(ii), $(I^\nu,\FF^\nu,\nu)$ is called a $\FF^\nu$-predictable (continuous-time) skeleton if, for all $t\in[0,T]$, $\nu(t)=\sum\limits_{k=1}^{N}\nu_{t^\nu_k}\ind_{(t^\nu_{k-1},t^\nu_k]}(t)$ where $\nu_{t^\nu_k}$ is $\F^\nu_{t^\nu_{k-1}}$-measurable for all $k=1,\ldots,N$.
\\A sequence of continuous-time skeletons $(I^n,\FF^n,\z^n)_{n\geq0}$ is then called a  continuous-time skeleton approximation of $(\FF,Z)$ if the sequence of time partitions $(I^n)_{n\geq0}=\{0=t^n_0<\ldots<t^n_{N^n}=:T^n\leq T\}_{n\geq0}$ has mesh going to 0 as $n\rightarrow\infty$, the \textit{skeleton filtrations} $\FF^n$ converge to $\FF$ in the sense that, for each $t\in[0,T]$, 
$$\F_t^0\subset\cdots\subset\F_t^{n-1}\subset\F_t^n\subset\s\lf\underset{k\geq0}\cup\F^k_t\rg=\F_t$$ and the \textit{skeleton processes} $\z^n$ converge to $Z$ uniformly in time, as $n\rightarrow\infty$, $\PP$-a.s.
\end{definition}
Given $\bar Y\in L^1(\O,\F,\PP)$ and considered the $(\FF,\PP)$-martingale $Y=(Y(t))_{t\in[0,T]}$, $Y(t)=\EE^\PP[\bar Y|\F_t]\,\PP\text{-a.s.}$, the pathwise construction of stochastic integrals with respect to $Z$ runs as follows.
\begin{enumerate}
  \item Choose a complete continuous-time skeleton approximation $(I^n,\FF^n,\z^n)_{n\geq0}$ of $(\FF,Z)$ such that, defined $Y^n=\lf Y^n_t=\EE^\PP[\bar Y|\F^n_t]\,\PP\text{-a.s.}\rg_{t\in[0,T^n]}$ for all $n\geq0$, the sequence $(I^n,\FF^n,Y^n)_{n\geq0}$ defines a continuous-time skeleton approximation of $(\FF,Y)$.
  \item Thanks to the completeness characterization in discrete time, for each $n\geq0$, there exists an $\FF^n$-predictable skeleton $(I^n,\FF^n,\phi^n)$ such that
$$\phi^n(t^n_k)\cdot \z^n(t^n_k)=\phi^n(t^n_{k+1})\cdot \z^n(t^n_k)\quad \PP\text{-a.s., }k=0,1,\ldots,N^n,$$ and $$Y^n=(\phi^n\bullet\z^n)(T^n)=\phi^n(T^n)\cdot\z^n(T^n)\quad \PP\text{-a.s.} $$
  \item Define the pathwise integral
    \begin{equation}
      \label{eq:wt_4}
      \int_0^t\phi(s,\w)\cdot Z(s,\w):=\Limn(\phi^n\bullet\z^n)(t,\w),\quad t\in[0,T]
    \end{equation}
for $\PP$-almost all $\w\in\O$, namely on the set of scenarios $\w$ where the discrete stochastic integrals converge uniformly.
\end{enumerate}
\citet{willtaq} applied their methodology to obtain a convergence theory in the context of models for continuous security market with exogenously given equilibrium prices.
Thanks to the preservation of the martingale property and completeness and to the pathwise nature of their approximating scheme, they were able to characterize important features of continuous security models by convergence of ``real life'' economies, where trading occurs at discrete times. In particular, for a continuous security market model represented by a probability space $(\O,\F,\PP)$ and an $(\FF,\PP)$-martingale $Z$ on $[0,T]$, the notions of ``no-arbitrage'' and ``self-financing'' are understood through the existence of converging discrete market approximations $(T^n,\FF^n,\z^n)$ which are all free of arbitrage opportunities (as $\z^n$ is an $(\FF^n,\PP)$-martingale) and complete.
Moreover, the characterization~\eqref{eq:wt_3} of completeness in finite market models relates the structure of the skeleton filtrations $\FF^n$ to the number of non-redundant securities needed to complete the approximations $\z^n$.

However, this construction lacks an appropriate convergence result of the sequence $(\phi^n)_{n\geq0}$ to the predictable integrand $\phi$; moreover it deals exclusively with a given martingale in the role of the integrator process, which restricts the spectrum of suitable financial models.

\subsubsection{Continuous-time trading without probability}
In 1994, \citet{bickwill} looked at the current financial modeling issues from a new perspective: they provided an economic interpretation of \follmer's pathwise \ito\ calculus in the field of continuous-time trading models.
\follmer's framework turns out to be of interest in finance, as it allows to avoid any probabilistic assumption on the dynamics of traded assets and consequently any resulting model risk. Reasonably, only observed price trajectories are needed.
Bick and Willinger reduced the computation of the initial cost of a replicating trading strategy to an exercise of analysis.
For a given stock price trajectory (state of the world), they showed one is able to compute the outcome of a given trading strategy, that is the gain from trading. 

The set of possible stock price trajectories is taken to be the space of positive \cadlag\ functions, $D([0,T],\R_+)$, and trading strategies are defined only based on the past price information.

They define a \textit{simple trading strategy} to be a couple $(V_0,\phi)$ where $V_0:\R_+\rightarrow\R$ is a measurable function representing the initial investment depending only on the initial stock price and $\phi:(0,T]\times D([0,T],\R_+)\rightarrow\R$ is such that, for any trajectory $S\in  D([0,T],\R_+)$, $\phi(\cdot,S)$ is a \caglad\ stepwise function on a time grid $0\equiv\t_0(S)<\t_1(S)<\ldots<\t_m(S)\equiv T$, and satisfies the following \lq adaptation\rq\ property: for all $t\in(0,T],$ given $S_1,S_2\in  D([0,T],\R_+)$, if $S_{1\mid_{(0,t]}}=S_{2\mid_{(0,t]}}$, then $\phi(t+,S_1)=\phi(t+,S_2)$, where $\phi(t+,\cdot):=\lim\limits_{u\searrow t}\phi(u,\cdot)$. The value $\phi(t,S)$ represents the amount of shares of the stock held at time $t$. They restrict the attention to self-financing portfolios of the stock and bond (always referring to their discounted prices), so that the number of bonds in the portfolio is described by the map $\psi:(0,T]\rightarrow\R$, 
$$\psi(t)=V_0(S(0))-\phi(0+,S)S(0)-\sum_{j=1}^{m}S(\t_j\wedge t)(\phi(t_{j+1}\wedge t,S)-\phi(t_j\wedge t,S)).$$
The cumulative gain is denoted by 
$$G(t,S)=\sum_{j=1}^m\phi(t_j\wedge t,S)(S(t_j\wedge t)-S(t_{j-1}\wedge t)).$$
The self-financing assumption supplies us with the following well-known equation linking the gain to the value of the portfolio,
\beq\label{eq:bw_sf}
V(t,S):=\psi(t)+\phi(t,S)S(t)=V_0(S(0))+G(t,S),
\eeq
and makes $V$ be a \cadlag\ function in time.

Then, they define a \textit{general trading strategy} to be a triple $(V_0,\phi,\Pi)$ where $\phi(\cdot,S)$ is more generally a \caglad\ function, satisfying the same \lq adaptation\rq\ property and $\Pi=(\pi_n(S))_{n\ge1}$ is a sequence of partitions $\pi_n\equiv\pi_n(S)=\{0=\t^n_0<\ldots<\t^n_{m^n}=T\}$ whose mesh tends to $0$ and such that $\pi_n\cap[0,t]$ depends only on the price trajectory up to time $t$.
To any such triple is associated a sequence of simple trading strategies $\{(V_0,\phi^n)\}$, where $\phi^n(t,S)=\sum\limits_{j=1}^{m^n}\ind_{(\t^n_j,\t^n_{j+1}]}\phi(\t^n_j+,S)$, and for each $n\geq1$ the correspondent numbers of bonds, cumulative gains and portfolio values are denoted respectively by $\psi^n$, $G^n$ and $V^n$. 
They define a notion of \textit{convergence for $S$} of a general trading strategy $(V_0,\phi,\Pi)$ involving several conditions, that we simplify in the following:
\begin{enumerate}
\item $\ds \exists \Limn\psi^n(t,S)=:\psi(t,S)<\infty$ for all $t\in(0,T]$;
\item $\psi(\cdot,S)$ is a \caglad\ function;
\item $\psi(t+,S)-\psi(t,S)=-S(t)\lf\phi(t+,S)-\phi(t,S)\rg$ for all $t\in(0,T)$.
\end{enumerate}
The limiting gain and portfolio value of the approximating sequence, if exist, are denoted by $\ds G^n(t,S)=\Limn G(t,S)$ and $\ds V(t,S)=\Limn V^n(t,S)$.
Note that condition 1. can be equivalently reformulated in terms of $G$ or $V$ and, in case it holds, equation \eq{bw_sf} is still satisfied by the limiting quantities. Assuming 1., Condition 2. is equivalent to the equation
\beq\label{eq:bw_sf2}
V(t,S)-V(t-,S)=\phi(t,S)(S(t)-S(t-)) \quad \forall t\in(0,T],
\eeq
while condition 3. equates to the right-continuity of $V(\cdot,S)$.
In this setting, the objects of main interest can be expressed in terms of properly defined \lq one-sided\rq\  integrals, namely
\beq\label{eq:bw_psi}
\psi(t,S)=V_0(S(0))-\phi(0+,S)S(0)-\!\!\rint_0^t S(u) \ud \phi(u+,S)+S(t)(\phi(t+,S)-\phi(t,S)),
\eeq
where the \emph{right integral} of $S$ with respect to $(\phi(\cdot+,S),\Pi)$ is defined as
\beq\label{eq:bw_right}
\rint_0^t S(u) \ud \phi(u+,S):=\Limn \sum_{j=1}^{m^n}S(\t^n_j\wedge t) \lf\phi((t^n_j\wedge t)+,S)-\phi((t_{j-1}\wedge t)+,S) \rg,
\eeq
and $G(t,S)=\lint_0^t\phi(u+,S)\ud S(u)$, where the \emph{left integral} of $\phi(\cdot+,S)$ with respect to $(S,\Pi)$ is defined as
\beq\label{eq:bw_left}
\lint_0^t\phi(u+,S)\ud S(u):=\Limn \sum_{j=1}^{m^n}\phi(\t^n_{j-1}+,S) \lf S(t^n_j\wedge t)-S(t_{j-1}\wedge t) \rg.
\eeq
The existence and finiteness of either integral is equivalent to condition 1., hence equation~\eq{bw_sf} turns into the following integration-by-parts formula:
$$\lint_0^t\phi(u+,S)\ud S(u)=\phi(t+,S)S(t)-\phi(0+,S)S(0)-\rint_0^t S(u) \ud \phi(u+,S).$$
It is important to note that the one-sided integrals can exist even if the correspondent Riemann-Stieltjes integrals do not, in which case the right-integral may differ in value from the left-integral with respect to the same functions. When the Riemann-Stieltjes integrals exist, they necessarily coincide respectively with \eq{bw_right} and \eq{bw_left}. Moreover, these latter are associated to a specific sequence of partitions $\Pi$ along which convergence for $S$ holds true.
Once established the set-up, \citeauthor{bickwill} provide a few examples showing how to compute the portfolio value in different situations where convergence holds for $S$ in a certain sub-class of $  D([0,T],\R_+)$, along an arbitrary sequence of partitions.


Finally, they use the pathwise calculus introduced in \citep{follmer} to compute the portfolio value of general trading strategies depending only on time and on the current observed price in a smooth way. 

Their two main claims, slightly reformulated, are the following.
\begin{proposition}[Proposition 2 in \cite{bickwill}]\label{prop:bw1}
  Let $f:[0,T]\times\R_+\rightarrow\R$ be such that $f\in\C^2([0,T)\times\R_+)\cap\C(\{T\}\times\R_+)$ and $\Pi$ be a given sequence of partitions whose mesh tends to $0$. If the price path $S\in  D([0,T],\R_+)$ has finite quadratic variation along $\Pi$ and if $f,\partial_{x}f,\partial_{t}f,\partial_{tx}f,\partial_{xx}f,\partial_{tt}f$ have finite left limits in $T$, then the trading strategy $(0,\phi,\Pi)$, where $\phi(t,S)=f_x(t-,S(t-))$, converges for $S$ and its portfolio value at any time $t\in[0,T]$ is given by
  \begin{align}
    \lint_0^t\phi(u+,S)\ud S(u)={}& f(t,S(t))-f(0,S(0))-\int_0^t\partial_{t}f(u,S(u))\ud u  \label{eq:bw1}\\
&{} -\frac12\int_{[0,t]}\partial_{xx}f(u-,S(u-))\ud[S](u) \nonumber \\
\nonumber & {}-\sum_{u\leq t}\Big[f(u,S(u))-f(u-,S(u-)) \\
\nonumber & \left. {}-\partial_{x}f(u-,S(u-))\De S(u)-\frac12\partial_{xx}f(u-,S(u-))\De S^2(u)\right] .   
  \end{align}
\end{proposition}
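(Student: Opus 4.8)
The plan is to deduce Proposition 2 of \cite{bickwill} directly from F\"ollmer's pathwise \ito\ formula \eqref{eq:follmer_ito}, by identifying the left integral $\lint_0^t\phi(u+,S)\ud S(u)$ with the F\"ollmer integral along $\Pi$ and then rearranging terms. The key observation is that the integrand $\phi(u+,S)=\partial_x f(u,S(u))$ is exactly a $C^1$-function of the pair $(u,S(u))$, so the limit of non-anticipative Riemann sums defining the left integral coincides with the F\"ollmer integral, provided we can handle the explicit time-dependence of $f$.

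First I would reduce to the time-homogeneous situation by a standard device: apply F\"ollmer's multi-dimensional formula \eqref{eq:follmer_Dito} to the $\R^2$-valued path $t\mapsto(t,S(t))$, which has finite quadratic variation along $\Pi$ with $[(\cdot,S)]^c = \begin{pmatrix}0&0\\0&[S]^c\end{pmatrix}$ since the time-component has zero quadratic variation and zero covariation with $S$. Applying \eqref{eq:follmer_Dito} to $f\in C^2$ on $[0,T)\times\R_+$ (extended by left limits at $T$, which is legitimate by the stated hypothesis that $f$ and its derivatives up to order two have finite left limits at $T$) gives
\begin{align*}
f(t,S(t))={}& f(0,S(0))+\int_0^t\partial_t f(u,S(u))\ud u+\int_0^t\partial_x f(u-,S(u-))\ud S(u)\\
&{}+\frac12\int_{(0,t]}\partial_{xx}f(u,S(u))\ud[S]^c(u)\\
&{}+\sum_{u\leq t}\lf f(u,S(u))-f(u-,S(u-))-\partial_t f(u,S(u))\cdot 0-\partial_x f(u-,S(u-))\De S(u)\rg,
\end{align*}
where the F\"ollmer integral $\int_0^t\partial_x f(u-,S(u-))\ud S(u)$ is the limit of the sums $\sum_{t^n_j\le t}\partial_x f(t^n_j,S(t^n_j))(S(t^n_{j+1}\wedge t)-S(t^n_j\wedge t))$ by \eqref{eq:follmer_int}. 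The time-integral $\int_0^t\partial_t f\,\ud u$ arises because the first component contributes a term $\int_0^t \partial_t f\,\ud u$ to the first-order part and nothing to the second-order part.

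Next I would check that this F\"ollmer integral is precisely $\lint_0^t\phi(u+,S)\ud S(u)$ as defined in \eqref{eq:bw_left}: indeed $\phi(\t^n_{j-1}+,S)=\partial_x f(\t^n_{j-1},S(\t^n_{j-1}))$ by the definition $\phi(t,S)=\partial_x f(t-,S(t-))$ together with left-continuity of $t\mapsto\partial_x f(t-,S(t-))$ and the fact that $\phi(u+,S)$ evaluated at partition points recovers $\partial_x f$ at those points; hence the Riemann sums in \eqref{eq:bw_left} are exactly the F\"ollmer sums, so the two limits agree and, in particular, the left integral exists. Convergence for $S$ of the strategy $(0,\phi,\Pi)$ then follows because the existence and finiteness of the left integral is equivalent to Bick--Willinger's condition~1, as recalled in the excerpt. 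Finally, to obtain the stated formula \eqref{eq:bw1} I would solve the \ito\ expansion above for the F\"ollmer integral and rewrite the jump sum: using $\De S^2(u)=(\De S(u))^2$ and $[S](u)=[S]^c(u)+\sum_{s\le u}\De S^2(s)$, the term $\frac12\int_{(0,t]}\partial_{xx}f(u,S(u))\ud[S]^c(u)$ combines with $-\frac12\sum_{u\le t}\partial_{xx}f(u-,S(u-))\De S^2(u)$ to give $-\frac12\int_{[0,t]}\partial_{xx}f(u-,S(u-))\ud[S](u)$ up to the discrepancy between $\partial_{xx}f(u,S(u))$ and $\partial_{xx}f(u-,S(u-))$ on the (at most countable) jump set, which is absorbed into the corrected jump sum; after moving $f(0,S(0))$ and $\int_0^t\partial_t f\,\ud u$ to the right-hand side one recovers exactly \eqref{eq:bw1}.

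The main obstacle I anticipate is bookkeeping at the jumps and the boundary point $T$: one must be careful that the F\"ollmer formula is being applied with the correct one-sided evaluations ($\partial_x f(u-,S(u-))$ versus $\partial_x f(u,S(u))$ inside sums versus integrals), that the continuous quadratic-variation term uses $[S]^c$ while the final formula is written against the full $[S]$, and that the $C^2$-regularity only up to $T^-$ with finite left limits suffices to run \eqref{eq:follmer_Dito} on $[0,t]$ for $t<T$ and then pass to $t=T$ by continuity of $f(T,\cdot)$ and of the right-hand side. These are routine once the identification of the left integral with the F\"ollmer integral is in place, so the conceptual content of the proof is entirely contained in that identification plus the two-dimensional F\"ollmer formula applied to $(t,S(t))$.
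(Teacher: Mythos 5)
Your proposal is correct and follows essentially the same route as the paper, which likewise identifies $\lint_0^t\phi(u+,S)\,\ud S(u)$ with the F\"ollmer integral by applying the two-dimensional formula \eqref{eq:follmer_Dito} to the path $u\mapsto(u,S(u))$ and then noting that the convergence conditions 1--3 follow from the resulting pathwise identity. The only minor superfluity is your worry about a ``discrepancy'' between $\partial_{xx}f(u,S(u))$ and $\partial_{xx}f(u-,S(u-))$ in the $[S]^c$-integral: since $[S]^c$ has no atoms, the two integrands agree $\ud[S]^c$-almost everywhere, so passing between the two forms of F\"ollmer's formula is exact with no leftover term.
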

This statement is a straightforward application of the \follmer's equation~\eq{follmer_Dito} by the choice $x(t)=(t,S(t))$, which makes the definition of the \follmer's integral~\eq{follmer_int} equivalent to the sum of a Riemann integral and a left-integral, i.e.
$$\int_0^t\nabla f(x(u-))\cdot \ud x(u)=\int_0^t\partial_tf(u,S(u))\ud u+\lint_0^t\partial_xf(u,S(u))\ud S(u).$$
Moreover, the convergence is ensured by remarking that the pathwise formula~\eq{bw1} implies that the portfolio value $V(t,S)=\lint_0^t\phi(u+,S)\ud S(u)$ is a \cadlag\ function and has jumps 
$$\De V(t)= \partial_xf(t-,S(t-))\De S(t)=\phi(t,S)\De S(t)\text{ for all }t\in(0,T],$$
hence conditions 2. and 3. are respectively satisfied.

The second statement is a direct implication of the previous one and provides a non-probabilistic version of the pricing problem for one-dimensional diffusion models.
\begin{proposition}[Proposition 3 in \cite{bickwill}]\label{prop:bw2}
  Let $f:[0,T]\times\R_+\rightarrow\R$ be such that $f\in\C^2([0,T)\times\R_+)\cap\C(\{T\}\times\R_+)$ and $f,\partial_{x}f,\partial_{t}f,\partial_{tx}f,\partial_{xx}f,\partial_{tt}f$ have finite left limits in $T$, and let $\Pi$ be a given sequence of partitions whose mesh tends to $0$. Assume that $f$ satisfies the partial differential equation
\beq\label{eq:bw_pde}
\partial_tf(t,x)+\frac12\b^2(t,x)\partial_{xx}f(t,x)=0,\quad t\in[0,T],x\in\R_+,
\eeq 
where $\b:[0,T]\times\R_+\rightarrow\R$ is a continuous function.
If the price path $S\in  D([0,T],\R_+)$ has finite quadratic variation along $\Pi$ of the form $[S](t)=\int_0^t\b^2(u,S(u))\ud u$ for all $t\in[0,T]$, then the trading strategy $(f(0,S(0)),\phi,\Pi)$, where $\phi(t,S)=\partial_xf(t-,S(t-))$, converges for $S$ and its portfolio value at time $t\in[0,T]$ is $f(t,S(t))$.
\end{proposition}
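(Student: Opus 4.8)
The plan is to obtain this statement as a direct corollary of \prop{bw1}, since the convergence for $S$ and the pathwise change-of-variable formula have already been established there; what remains is to insert the special form of $[S]$ and the PDE and to read off the cancellations.

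First I would apply \prop{bw1} to the given $f$: its regularity assumptions ($f\in\C^2([0,T)\times\R_+)\cap\C(\{T\}\times\R_+)$ with finite left limits at $T$ of $f,\partial_xf,\partial_tf,\partial_{tx}f,\partial_{xx}f,\partial_{tt}f$) are exactly those imposed here, and $S$ has finite quadratic variation along $\Pi$. Hence the trading strategy $(0,\phi,\Pi)$ with $\phi(t,S)=\partial_xf(t-,S(t-))$ converges for $S$, and its portfolio value at $t\in[0,T]$ equals the right-hand side of \eq{bw1}. Since changing the initial investment from $0$ to $f(0,S(0))$ shifts $\psi^n$ and $V^n$ — and their limits $\psi,V$ — by the single constant $f(0,S(0))$, while leaving $G^n$ (hence the \caglad\ property and the jump relations appearing in the convergence conditions) untouched, the strategy $(f(0,S(0)),\phi,\Pi)$ also converges for $S$, with value $V(t,S)=f(0,S(0))+\lint_0^t\phi(u+,S)\ud S(u)$.

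Next I would exploit that $[S](t)=\int_0^t\b^2(u,S(u))\ud u$ is absolutely continuous, hence continuous; combined with the decomposition $[S]=[S]^c+\sum_{0<s\leq\cdot}\De S^2(s)$ from \defin{qv1} and the continuity of $[S]^c$, this forces the non-decreasing pure-jump function $t\mapsto\sum_{0<s\leq t}\De S^2(s)$ to vanish identically, so $\De S(s)=0$ for every $s\in(0,T]$ and $S$ is continuous. Consequently, in \eq{bw1} the jump sum disappears, $\partial_{xx}f(u-,S(u-))=\partial_{xx}f(u,S(u))$, and $[S]^c=[S]$, so the Stieltjes term equals $\frac12\int_0^t\partial_{xx}f(u,S(u))\b^2(u,S(u))\ud u$; that is,
\begin{align*}
\lint_0^t\phi(u+,S)\ud S(u)={}&f(t,S(t))-f(0,S(0))\\
&{}-\int_0^t\lf\partial_tf(u,S(u))+\frac12\b^2(u,S(u))\partial_{xx}f(u,S(u))\rg\ud u.
\end{align*}

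Finally, the PDE \eq{bw_pde} makes the integrand in the last line vanish for every $u\in[0,T)$ (the endpoint $u=T$ carries no Lebesgue mass), so the integral is zero, $\lint_0^t\phi(u+,S)\ud S(u)=f(t,S(t))-f(0,S(0))$, and therefore $V(t,S)=f(t,S(t))$ for all $t\in[0,T]$ — the case $t=T$ being covered by the left-limit hypotheses inherited from \prop{bw1}. I do not expect a genuine obstacle here: the only step requiring a moment's thought is the observation that absolute continuity of $[S]$ rules out jumps of $S$, which is precisely what kills the jump corrections in \eq{bw1}; everything else is the algebraic cancellation furnished by the PDE.
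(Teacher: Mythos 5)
Your proof is correct and follows exactly the route the thesis indicates, deriving Proposition \ref{prop:bw2} as a direct consequence of Proposition \ref{prop:bw1} by inserting the PDE and the form of $[S]$ into \eqref{eq:bw1}. The only step the thesis leaves implicit—that absolute continuity of $[S]$ together with the decomposition \eqref{eq:qv-jumps} forces $\De S\equiv 0$, so the jump sum in \eqref{eq:bw1} vanishes and $[S]^c=[S]$—is the one piece of genuine content, and you have supplied it correctly.
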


Following Bick and Willinger's approach, all that has to be specified is the set of all possible scenarios and the trading instructions for each possible scenario. 
The investor's probabilistic beliefs can then be considered as a way to express the set of all possible scenarios together with their odds, however there may be no need to consider them. Indeed, by taking any financial market model in which the price process satisfies almost surely the assumptions  of either above proposition, the portfolio value of the correspondent trading strategy, computed pathwise, will provide almost surely the model-based value of such portfolio.
In this way, on one hand the negligible set outside of which the pathwise results do not hold depends on the specific sequence of time partitions, but on the other hand we get a path-by-path interpretation of the hedging issue, which was missing in the stochastic approach.

\subsubsection{Karandikar's pathwise construction of stochastic integrals}
In 1994, \citet{karandikar} proposed another pathwise approach to stochastic integration for continuous time stochastic processes. She proved a pathwise integration formula, first for Brownian integrals, then for the general case of semimartingales and a large class of integrands.
It is fixed a complete probability space $(\O,\F,\PP)$, equipped with a filtration $(\F_t)_{t\geq0}$ satisfying the usual conditions.
\begin{proposition}[Pathwise Brownian integral, \cite{karandikar}]
  Let $W$ be a $(\F_t)$-Brownian motion and $Z$ be a \cadlag\ $(\F_t)$-adapted process. For all $n\geq1$, let $\{\t^n_1\}_{i\geq0}$ be the random time partition defined by
$$\t^n_0:=0,\quad\t^n_{i+1}:=\inf\{t\geq\t^n_i:\:|Z(t)-Z(\t^n_i)|\geq2^{-n}\},\quad i\geq0,$$
and $(Y^n(t))_{t\geq0}$ be a stochastic process defined by, for all $t\in[0,\infty)$, 
$$Y^n(t):=\sum_{i=0}^{\infty}Z(\t^n_i\wedge t)(W(\t^n_{i+1}\wedge t)-W(\t^n_i\wedge t)).$$
Then, for all $T\in[0,\infty)$, almost surely, $\ds \sup_{t\in[0,T]}\left|Y^n(t)-\int_0^tZ\ud W\right|\limn0$.
\end{proposition}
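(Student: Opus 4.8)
The plan is to recognise each $Y^n$ as the Itô integral of an elementary predictable process and then to exploit the geometric rate $2^{-n}$ that is hard-wired into the definition of the stopping times.

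First I would take care of the bookkeeping. Each $\tau^n_{i+1}$ is the hitting time of a closed set by the \cadlag\ adapted process $Z(\cdot)-Z(\tau^n_i)$ started at the stopping time $\tau^n_i$, hence an $(\F_t)$-stopping time. Moreover, for every fixed $n$ one has $\tau^n_i\to\infty$ as $i\to\infty$ almost surely: on a \cadlag\ path, if $\tau^n_i\uparrow t^\ast<\infty$ then $Z(\tau^n_i)$ and $Z(\tau^n_{i+1})$ would both tend to $Z(t^\ast-)$, contradicting $|Z(\tau^n_{i+1})-Z(\tau^n_i)|\ge 2^{-n}$ (or the analogous inequality for left limits, if the infimum is not attained) for all large $i$. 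Hence only finitely many $\tau^n_i$ lie in $[0,T]$, the sum defining $Y^n(t)$ is locally finite, and
\[
Z^n(s):=\sum_{i\ge 0}Z(\tau^n_i)\,\ind_{(\tau^n_i,\tau^n_{i+1}]}(s)
\]
is a well-defined left-continuous, piecewise-constant, adapted process satisfying $Y^n(t)=\int_0^t Z^n\,\ud W$ for all $t$, by the very definition of the stochastic integral of an elementary integrand.

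The main step is the pathwise uniform estimate $|Z^n(s)-Z(s-)|\le 2^{-n}$ for every $s$. Indeed, if $s\in(\tau^n_i,\tau^n_{i+1})$ then $|Z(u)-Z(\tau^n_i)|<2^{-n}$ for every $u\in[\tau^n_i,s)$, and letting $u\uparrow s$ gives $|Z(s-)-Z(\tau^n_i)|\le 2^{-n}$; the endpoint $s=\tau^n_{i+1}$ is handled the same way using the left limit. Since $Z$ and $Z_-$ differ only on a countable, hence Lebesgue-null, set, $\int_0^t Z\,\ud W=\int_0^t Z(s-)\,\ud W(s)$, and therefore
\[
Y^n(t)-\int_0^t Z\,\ud W=\int_0^t\bigl(Z^n(s)-Z(s-)\bigr)\,\ud W(s).
\]
The integrand here is bounded by $2^{-n}$, so the right-hand side is a square-integrable martingale on $[0,T]$ with quadratic variation at most $4^{-n}T$, and Doob's $L^2$-maximal inequality yields
\[
\EE\!\left[\sup_{t\in[0,T]}\Bigl|Y^n(t)-\int_0^t Z\,\ud W\Bigr|^2\right]\le 4\cdot 4^{-n}\,T.
\]

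Finally I would sum over $n$: since $\sum_n 4^{-n}<\infty$, monotone convergence shows that the expectation of $\sum_n\sup_{t\in[0,T]}|Y^n(t)-\int_0^t Z\,\ud W|^2$ is finite, so that series converges almost surely, and hence $\sup_{t\in[0,T]}|Y^n(t)-\int_0^t Z\,\ud W|\limnas 0$. The delicate point — and the reason one obtains almost-sure convergence of the \emph{whole} sequence rather than merely convergence in probability along a subsequence — is precisely that the geometric rate $2^{-n}$ built into the $\tau^n_i$ makes the $L^2$ errors summable; without it one would only recover a subsequence. The sole technical caveat is the usual requirement that $Z$ be locally bounded, which is automatic for \cadlag\ paths and makes $\int_0^\cdot Z\,\ud W$ well defined as a (local) martingale; alternatively one localises along $\sigma_m=\inf\{t:|Z(t)|\ge m\}$ and lets $m\to\infty$.
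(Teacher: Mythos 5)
Your proof is correct and follows essentially the same route as the paper's: you rewrite $Y^n(t)-\int_0^t Z\,\ud W$ as the It\^o integral of the elementary difference $Z^n-Z(\cdot-)$, note that the construction of the $\tau^n_i$ forces this integrand to be uniformly bounded by $2^{-n}$, apply Doob's $L^2$-maximal inequality to get $\EE\bigl[\sup_{t\le T}|Y^n(t)-\int_0^t Z\,\ud W|^2\bigr]\le 4T\cdot 4^{-n}$, and conclude almost-sure convergence of the whole sequence from summability of the errors. The only cosmetic difference is that the paper passes from the $L^2$ bound to an $L^1$ bound via H\"older and then sums the expectations $\EE[U_n]$, whereas you sum $\EE[U_n^2]$ directly; both are routine. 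Your additional remarks — that the $\tau^n_i$ increase to $\infty$ on \cadlag\ paths so the sums are locally finite, and that the difference $Y^n-\int Z\,\ud W$ is a genuine square-integrable martingale even though $\int Z\,\ud W$ itself may only be a local martingale — are sound and make the argument slightly more self-contained than the paper's terse presentation.
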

The proof hinges on the Doob's inequality for $p=2$, which says that a \cadlag\ martingale $M$ such that, for all $t\in[0,T]$, $\EE[|M(t)|^2]<\infty$, satisfies $$\norm{\sup_{t\in[0,T]}|M(t)|}_{L^2(\PP)} \leq 4\norm{M(T)}_{L^2(\PP)}.$$
Indeed, by taking $M(t)=\int_0^t(Z^n-Z)\ud W$, where $\ds Z^n:=\sum_{i=1}^{\infty}Z(\t^n_{i-1})\ind_{[\t^n_{i-1},\t^n_i)}$, the Doob's inequality holds and gives
$$\EE\left[\sup_{t\in[0,T]}\left|Y^n(t)-\int_0^tZ\ud W\right|^2\right]\leq 4T2^{-2n},$$ by the definitions of $\{\t^n_i\}$ and $Y^n$.
\\Finally, by denoting $\ds U_n:=\sup_{t\in[0,T]}\left|Y^n(t)-\int_0^tZ\ud W\right|$, the H\"older's inequality implies that 
$$\EE\left[\sum_{n\geq1}U_n\right]\leq2\sqrt{T}\sum_{n\geq1}2^{-n}<\infty,$$ 
hence, almost surely, $\sum\limits_{n\geq1}U_n<\infty$, whence the claim.

The generalization to semimartingale integrators is the following.
\begin{proposition}[Pathwise stochastic integral, \cite{karandikar}]
\label{Prop:kar}
  Let $X$ be a semimartingale and $Z$ be a \cadlag\ $(\F_t)$-adapted process. For all $n\geq1$, let $\{\t^n_1\}_{i\geq0}$ be the time partition defined as in the previous theorem and $Y^n$ be the process defined by, for all $t\in[0,\infty)$, 
$$Y^n(t):=Z(0)X(0)+\sum_{i=1}^{\infty}Z(\t^n_{i-1}\wedge t)(X(\t^n_{i}\wedge t)-X(\t^n_{i-1}\wedge t)).$$
Then, for all $T\in[0,\infty)$, almost surely, 
$$\ds \sup_{t\in[0,T]}\left|Y^n(t)-\int_0^tZ(u-)\ud X(u)\right|\limn0.$$
\end{proposition}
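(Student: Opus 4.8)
My plan is to recognise $Y^n$ as the It\^o integral of a simple predictable process that approximates $Z(\cdot-)$ uniformly, and then to control the approximation error by splitting $X$ into a local martingale and a finite-variation process, exactly mimicking the Brownian argument above (Doob's $L^2$-inequality plus H\"older) for the martingale part and using ordinary Stieltjes estimates for the finite-variation part.

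\emph{Step 1: $Y^n$ as a stochastic integral and the uniform bound.} For each $n$ set $Z^n:=\sum_{i\geq1}Z(\t^n_{i-1})\ind_{(\t^n_{i-1},\t^n_i]}$. Since $Z$ is \cadlag\ and adapted and each $\t^n_i$ is a stopping time, $Z(\t^n_{i-1})$ is $\F_{\t^n_{i-1}}$-measurable, so $Z^n$ is simple predictable. Because $Z$ is \cadlag, on any $[0,T]$ it can move by $2^{-n}$ only finitely many times, hence $\t^n_i\to\infty$ as $i\to\infty$, $\PP$-a.s.; thus the sum defining $Z^n$ is locally finite and $Y^n(t)=Z(0)X(0)+\int_0^tZ^n\ud X$. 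Interpreting $\int_0^tZ(u-)\ud X(u)$ so that it carries the initial term $Z(0)X(0)$ (equivalently, replacing $X$ by $X-X(0)$), we get $Y^n(t)-\int_0^tZ(u-)\ud X(u)=\int_0^tH^n\ud X$ with $H^n:=Z^n-Z(\cdot-)$. By definition of $\t^n_i$, for $u\in(\t^n_{i-1},\t^n_i)$ one has $\abs{Z(u)-Z(\t^n_{i-1})}<2^{-n}$, and passing to left limits this bounds $\abs{Z(u-)-Z(\t^n_{i-1})}$ for $u\in(\t^n_{i-1},\t^n_i]$ as well; since $\bigcup_i(\t^n_{i-1},\t^n_i]=(0,\infty)$ a.s., we obtain $\sup_{u\in(0,T]}\abs{H^n(u)}\leq 2^{-n}$ almost surely.

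\emph{Step 2: splitting $X$.} Fix $T$ and write $X=X(0)+M+A$ with $A$ \cadlag\ adapted of finite variation, $A(0)=0$, and $M$ a local martingale, $M(0)=0$; using the fundamental theorem of local martingales we may (after moving a finite-variation piece into $A$) assume the jumps of $M$ are bounded by $1$, so that $M$ is locally in $\mathcal H^2$: there are stopping times $R_k\uparrow\infty$ with $C_k:=\EE[[M]_{R_k}]<\infty$. The finite-variation part is handled pathwise: $\sup_{t\leq T}\abs{\int_0^tH^n\ud A}\leq\int_0^T\abs{H^n}\ud\abs{A}\leq 2^{-n}\abs{A}_T\limn 0$ a.s., since $\abs{A}_T<\infty$ a.s. For the martingale part, the stopped process $N^{n,k}(t):=\int_0^{t\wedge R_k}H^n\ud M$ is an $L^2$-martingale, and Doob's $L^2$-inequality together with the It\^o isometry give $\EE[\sup_{t\leq T}\abs{N^{n,k}(t)}^2]\leq 4\,\EE[\int_0^{T\wedge R_k}\abs{H^n}^2\ud[M]]\leq 4\cdot2^{-2n}C_k$, whence, exactly as in the Brownian case, $\EE[\sum_{n\geq1}\sup_{t\leq T}\abs{N^{n,k}(t)}]\leq 2\sqrt{C_k}\sum_{n\geq1}2^{-n}<\infty$, so $\sup_{t\leq T}\abs{N^{n,k}(t)}\limn 0$ $\PP$-a.s.

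\emph{Step 3: conclusion, and the main obstacle.} On $\{R_k\geq T\}$ we have $\int_0^tH^n\ud M=N^{n,k}(t)$ for all $t\leq T$, so $\sup_{t\leq T}\abs{Y^n(t)-\int_0^tZ(u-)\ud X(u)}\leq\sup_{t\leq T}\abs{N^{n,k}(t)}+2^{-n}\abs{A}_T\limn 0$ $\PP$-a.s.\ on $\{R_k\geq T\}$; since $R_k\uparrow\infty$, $\PP(\bigcup_k\{R_k\geq T\})=1$ and the claim follows. The delicate points are entirely in the martingale term: one must first pass to a decomposition in which $M$ has bounded (hence locally square-integrable) jumps so that Doob's inequality and the isometry apply, and then arrange the localization so that a single a.s.\ statement holds on all of $\O$ — which is why one sums over $n$ inside each $\{R_k\geq T\}$ and then lets $k\to\infty$, rather than fixing $k$. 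The other point requiring care is the uniform bound $\sup\abs{H^n}\leq 2^{-n}$, which rests on $\t^n_i\to\infty$, i.e.\ precisely on the \cadlag\ regularity of $Z$.
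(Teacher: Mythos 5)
Your argument is correct and follows essentially the same route as the paper's: decompose $X$ into a locally square-integrable local martingale $M$ (with jumps bounded via the fundamental theorem of local martingales) plus a finite-variation process $A$, bound the integrand error uniformly by $2^{-n}$ from the definition of the stopping times, control the $M$-part by Doob's $L^2$ inequality and the isometry up to localizing times with $\EE[[M]_{R_k}]<\infty$, and handle the $A$-part pathwise by total variation. Your write-up is somewhat more explicit than the paper's (why $\t^n_i\to\infty$, why one sums over $n$ before letting $k\to\infty$), but there is no substantive difference in approach.
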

The proof is carried out analogously to the Brownian case, using some basic properties of semimartingales and predictable processes. Precisely, $X$ can be decomposed as $X=M+A$, where $M$ is a locally square-integrable martingale and $A$ has finite variation on bounded intervals, and let $\{\s_k\}_{k>0}$ be a sequence of stopping times increasing to $\infty$ such that $C_k=\EE\left[\pqv{M}(\s_k)\right]<\infty$. By rewriting $Y^n(t)=\int_0^tZ^n\ud X$, where
$$Z^n:=Z(0)\ind_{0}+\sum_{i=1}^\infty Z(\t^n_i)\ind_{(\t^n_i,\t^n_{i+1}]},$$
the Doob's inequality 
gives
$$\EE\left[\sup_{t\in[0,\s_k]}\left|\int_0^t(Z^n(u)-Z(u-))\ud M\right|^2\right]\leq 4C_k2^{-2n},$$ by the definitions of $\{\t^n_i\}$. Then, proceeding as before and using $\s_k\nearrow\infty$, for all $T\in[0,\infty)$, almost surely 
$$\ds \sup_{t\in[0,T]}\left|\int_0^t(Z^n(u)-Z(u-))\ud M(u)\right|\limn0.$$
As regards the Stieltjes integrals with respect to $A$, the uniform convergence of $Z^n$ to the left-continuous version of $Z$ implies directly that, almost surely,
$$\ds \sup_{t\in[0,T]}\left|\int_0^t(Z^n(u)-Z(u-))\ud A(u)\right|\limn0.$$


The main tool in Karandikar's pathwise characterization of stochastic integrals is the martingale Doob's inequality.
A recent work by \citet{traj-doob} establishes a deterministic version of the Doob's martingale inequality, which provides an alternative proof of the latter, both in discrete and continuous time. 
Using the trajectorial counterparts, they also improve the classical Doob's estimates for non-negative \cadlag\ submartingales by using the initial value of the process, obtaining sharp inequalities.

These continuous-time inequalities are proven by means of ad hoc constructed pathwise integrals.
First, let us recall the following notion of pathwise integral (see \cite[Chapter 2.5]{norvaisa}):
\begin{definition}
  Given two \cadlag\ functions $f,g:[0,T]\rightarrow[0,\infty)$, the \textrm{Left Cauchy-Stieltjes integral} of $g$ with respect to $f$ is defined as the limit, denoted $(LCS)\!\!\int_0^Tg\ud f$, of the directed function $(S_{LC}(f;\cdot),\mathfrak R)$, where the \emph{Left Cauchy sum} is defined by
  \begin{equation}
    \label{eq:left-C-S}
    S_{LC}(g,f;\k):=\sum_{t_i\in\k}g(t_i)(f(t_{i+1})-f(t_i)),\quad\k\in P[0,T].
  \end{equation}
\end{definition}
\citet{traj-doob} are interested in the particular case where the integrand is of the form $g=h(\bar f)$ and $h:[0,\infty)\rightarrow[0,\infty)$ is a continuous monotone function. 
In this case, the limit of the sums in \eq{left-C-S} in the sense of refinements of partitions exists if and only if its predictable version $\ds (LCS)\!\!\int_0^Tg(t-)\ud f(t):=\Limn\sum_{t^n_i\in\pi^n}g(t^n_i-)(f(t^n_{i+1})-f(t^n_i))$ exists for every dense sequence of partitions $(\pi^n)_{n\geq0}$, in which case the two limits coincide.
By monotonicity of $g$ and rearranging the finite sums, it follows that $\int_0^Tg(t)\ud f(t)$ is well defined if and only if $\int_0^Tf(t)\ud g(t)$ is; if so, they lead to the following integration-by-parts formula:
\begin{align}
  \nonumber
  (LCS)\!\!\int_0^Tg(t)\ud f(t)={}&g(T)f(T)-g(0)f(0)-(LCS)\!\!\int_0^Tf(t)\ud g(t)\\
&{}-\sum_{0\leq t\leq T}\De g(t)\De f(t).\label{eq:ibp}
\end{align}
By the assumptions on $h$, the two integrals exist and the equation \eq{ibp} holds.
Moreover, given a martingale $S$ on $(\O,\F,(\F_t)_{t\geq},\PP)$ and taking $f$ to be the path of $S$, the Left Cauchy-Stieltjes integral coincides almost surely with the \ito\ integral, i.e. 
$$(h(\bar S)\bullet S)(T,\w)=(LCS)\!\!\int_0^Th(\bar S(t-,\w))\ud S(t,\w),\text{ for $\PP$-almost all }\w\in\O.$$
Indeed, \citet{karandikar} showed the almost sure uniform convergence of the sums in \eq{left-C-S} to the \ito\ integral along a specific sequence of random partitions; therefore, by the existence of the pathwise integral and uniqueness of the limit, the two coincide.

The trajectorial Doob inequality obtained in continuous time and using the pathwise integral defined above is the following.
\begin{proposition}
  Let $f:[0,T]\rightarrow[0,\infty)$ be a \cadlag\ function, $1<p<\infty$ and $h(x):=-\frac{p^2}{p-1}x^{p-1}$, then
$$\bar{f}^p(T)\leq(LCS)\!\!\int_0^Th(\bar{f}(t))\ud f(t)-\frac p{p-1}f(0)^p+\lf\frac p{p-1}\rg^pf(T)^p.$$
\end{proposition}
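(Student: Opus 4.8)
The plan is to combine the integration-by-parts formula \eqref{eq:ibp} with the elementary fact that the running maximum $\bar f$ increases only on the contact set $\{t:f(t)=\bar f(t)\}$, and then to close the estimate with a sharp Young inequality; the constant $\lf p/(p-1)\rg^{p}$ will emerge from the latter. Throughout write $M:=\bar f(T)$, $a:=f(0)=\bar f(0)$, $b:=f(T)$ and set $g:=h(\bar f)$. Since $\bar f$ is non-decreasing and $h$ is continuous and monotone, $g$ is monotone, so the one-sided integrals $(LCS)\!\int_{0}^{T}g\,\ud f$ and $(LCS)\!\int_{0}^{T}f\,\ud g$ exist and \eqref{eq:ibp} applies to the pair $(g,f)$.

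First I would reduce the assertion to a one-sided estimate. Applying \eqref{eq:ibp} to $g=h(\bar f)$ and $f$, and using $g(T)f(T)=h(M)b=-\frac{p^{2}}{p-1}M^{p-1}b$ together with $g(0)f(0)=h(a)a=-\frac{p^{2}}{p-1}a^{p}$, one gets
\[
(LCS)\!\int_{0}^{T}h(\bar f)\,\ud f=-\frac{p^{2}}{p-1}M^{p-1}b+\frac{p^{2}}{p-1}a^{p}-(LCS)\!\int_{0}^{T}f\,\ud g-\sum_{0\le t\le T}\De g(t)\,\De f(t).
\]
Since $\frac{p^{2}}{p-1}-p=\frac{p}{p-1}$, it therefore suffices to prove the single inequality
\[
(LCS)\!\int_{0}^{T}f\,\ud g+\sum_{0\le t\le T}\De g(t)\,\De f(t)\ \le\ p\,a^{p}-p\,M^{p},
\]
for then $(LCS)\!\int_{0}^{T}h(\bar f)\,\ud f+\frac{p^{2}}{p-1}M^{p-1}b-\frac{p}{p-1}a^{p}\ge p\,M^{p}$.

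To prove this last inequality I would perform a change of variables along $\bar f$. The point is that $\bar f$, and hence $g=h\circ\bar f$, varies only at times where $f$ attains a running maximum, and at those times $f=\bar f$; so, replacing $f$ by $\bar f$ wherever $g$ varies (which is legitimate up to jump corrections that have the favourable sign because $h$ is decreasing), the left-hand side is dominated by the one-dimensional Cauchy--Stieltjes integral of $v\mapsto v$ with respect to $v\mapsto h(v)$ taken along the monotone path $\bar f$, which runs from $a$ to $M$. With $h'(v)=-p^{2}v^{p-2}$ this integral equals
\[
\int_{a}^{M}v\,h'(v)\,\ud v=\int_{a}^{M}\lf-p^{2}v^{p-1}\rg\ud v=p\,a^{p}-p\,M^{p},
\]
which is exactly the bound required above (in the continuous case the two sides coincide).

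Combining the two previous steps yields $p\,\bar f(T)^{p}\le (LCS)\!\int_{0}^{T}h(\bar f)\,\ud f+\frac{p^{2}}{p-1}\bar f(T)^{p-1}f(T)-\frac{p}{p-1}f(0)^{p}$, and it then only remains to treat the cross term by Young's inequality. Using the scaled form $XY\le\frac{\eps}{p'}X^{p'}+\frac{\eps^{1-p}}{p}Y^{p}$ with conjugate exponents $p'=\frac{p}{p-1}$ and $p$, applied to $X=\bar f(T)^{p-1}$, $Y=\frac{p^{2}}{p-1}f(T)$, and the optimal choice $\eps=p$ — which makes $\frac{\eps}{p'}=p-1$ and $\frac{\eps^{1-p}}{p}\lf\frac{p^{2}}{p-1}\rg^{p}=\lf\frac{p}{p-1}\rg^{p}$ — gives $\frac{p^{2}}{p-1}\bar f(T)^{p-1}f(T)\le(p-1)\bar f(T)^{p}+\lf\frac{p}{p-1}\rg^{p}f(T)^{p}$; substituting this and cancelling $(p-1)\bar f(T)^{p}$ produces the claimed inequality, and the sharpness of the constant is manifest from this last computation. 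The hard part will be the change-of-variables step: one must check carefully that at each jump of $\bar f$ the discrete contributions to $(LCS)\!\int f\,\ud g$, to $\sum\De g\,\De f$, and to the comparison integral $\int_{a}^{M}v\,h'(v)\,\ud v$ combine with the right sign — which rests only on the monotonicity of $h$ — together with the routine approximation of the $(LCS)$ integrals by Left Cauchy sums along refining partitions.
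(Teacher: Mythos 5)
The paper states this proposition without proof, citing the original reference, so there is no in-text argument to compare against; I will therefore simply assess your proposal. Your scheme — integration by parts via \eqref{eq:ibp}, a change of variables along the non-decreasing path $\bar f$, and then Young's inequality with $\eps=p$ to absorb the boundary term — is correct, and I checked each step: the algebra after applying \eqref{eq:ibp} (using $\tfrac{p^2}{p-1}-p=\tfrac{p}{p-1}$), the reduction to the one-sided estimate $(LCS)\!\int_0^T f\,\ud g+\sum\De g\,\De f\le p\,a^p-p\,M^p$, and the final Young step all go through. The only place where the prose is slightly off is in the change-of-variables step. You say ``replacing $f$ by $\bar f$ wherever $g$ varies \ldots is legitimate up to jump corrections''; in fact that replacement is \emph{exact}: $(LCS)\!\int_0^T f\,\ud g+\sum\De g\,\De f=\int_{(0,T]}f(t)\,\ud g(t)$ in the Lebesgue--Stieltjes sense, and the measure $\ud g=\ud\big(h\circ\bar f\big)$ is carried — both its atomic part and its continuous part — by times where $f(t)=\bar f(t)$. (At a jump of $\bar f$ one has $\bar f(t)=f(t)$ by definition of the running maximum; for $\ud\bar f^{c}$ one argues that $\{f<\bar f\}$ is covered by countably many intervals on which $\bar f$ is constant, which are $\ud\bar f^{c}$-null — this Skorokhod-type observation deserves a sentence in a written-out proof.) The ``favourable-sign'' jump correction then enters in the \emph{second} step, when one compares the resulting integral
\[
\int_{(0,T]}\bar f(t)\,\ud g(t)
=\int_0^T\bar f(t)\,h'(\bar f(t))\,\ud\bar f^{c}(t)+\sum_{t}\bar f(t)\,\De g(t)
\]
with $\int_a^M v\,h'(v)\,\ud v
=\int_0^T\bar f(t)\,h'(\bar f(t))\,\ud\bar f^{c}(t)+\sum_{t}\int_{\bar f(t-)}^{\bar f(t)}v\,h'(v)\,\ud v$: at each jump, since $h'<0$ and $\bar f(t)\ge v$ on $[\bar f(t-),\bar f(t)]$, one has $\bar f(t)\,\De g(t)=\bar f(t)\int_{\bar f(t-)}^{\bar f(t)}h'(v)\,\ud v\le\int_{\bar f(t-)}^{\bar f(t)}v\,h'(v)\,\ud v$, which gives exactly the required $\le p\,a^p-p\,M^p$. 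Modulo this clarification — which you yourself flagged as ``the hard part'' — the proof is complete and the Young step with $\eps=p$ is indeed the sharp choice.
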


\subsubsection{Pathwise integration under a family of measures}
In 2012, motivated by problems involving stochastic integrals under families of measures, \citet{nutz-int} proposed a different pathwise ``construction'' of the \ito\ integral of an arbitrary predictable process under a general set of probability measures $\P$ which is not dominated by a finite measure and under which the integrator process is a semimartingale.
However, his result concerns only existence and does not provide a constructive procedure to compute such integral.

Let us briefly recall his technique. It is fixed a measurable space $(\O,\F)$ endowed with a right-continuous filtration $\FF^*=(\F^*_t)_{t\in[0,1]}$ which is $\P$-universally augmented. $X$ denotes a \cadlag\ $(\FF^*,\PP)$-semimartingale for all $\PP\in\P$ and $H$ is an $\FF^*$-predictable process.
The approach is to average $H$ in time in order to obtain approximating processes of finite variation which allow to define (pathwise) a sequence of Lebesgue-Stieltjes integrals converging in medial limit to the \ito\ integrals. To this aim, a domination assumption is needed, but it is imposed at the level of characteristics, thus preserving the non-dominated nature of $\P$ encountered in all applications.
So, it is assumed that there exists a predictable \cadlag\ increasing process $A$ such that
$$B^\PP+\pqv{X^c}^\PP+(x^2\wedge1)\ast\nu^\PP\ll A \quad \PP\text{-a.s., for all }\PP\in\P,$$
where $(B^\PP,\pqv{X^c}^\PP,\nu^\PP)$ is the canonical triplet (i.e. the triplet associated with the truncation function $h(x)=x\ind_{\{|x|<1\}}$) of predictable characteristics of $X$.
The main result is the following.
\begin{theorem}
  Under the assumption above, there exists an $\FF^*$-adapted \cadlag\ process $\lf\int_0^t H\ud X\rg_{t\in[0,1]}$ such that $\int_0^\cdot H\ud X=(H\bullet X)^\PP$ $\PP$-almost surely, for all $\PP\in\P$, where the construction of $\lf\int H\ud X\rg(\w)$ involves only $H(\w)$ and $X(\w)$.
\end{theorem}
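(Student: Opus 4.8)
The plan is to regularize $H$ in the time variable so as to obtain finite-variation approximations whose integral against $X$ can be read off path-by-path, and then to glue the resulting sequence into a single process valid under all $\PP\in\P$ by means of a medial limit. Fix $\varepsilon_n\downarrow 0$ and, for each $\w$, set
$$H^n_t:=\Bigl(\int_{((t-\varepsilon_n)^+,\,t]}\ud(A_s+s)\Bigr)^{-1}\int_{((t-\varepsilon_n)^+,\,t]}H_s\,\ud(A_s+s),$$
a weighted average of $H$ over the window $((t-\varepsilon_n)^+,t]$ (the denominator is $\geq t\wedge\varepsilon_n>0$). Each $H^n$ is $\FF^*$-predictable, has finite variation in $t$ along every path, and is locally bounded on $\{|H|\leq k\}$. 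Because $H^n$ is of finite variation in time, the process $\int_0^{\cdot}H^n_{s-}\,\ud X_s$ admits a purely pathwise definition through the integration-by-parts identity
$$\int_0^t H^n_{s-}\,\ud X_s:=H^n_tX_t-H^n_0X_0-\int_0^t X_{s-}\,\ud H^n_s-\sum_{0<s\leq t}\Delta H^n_s\,\Delta X_s,$$
in which $\int_0^t X_{s-}\,\ud H^n_s$ is an ordinary Lebesgue--Stieltjes integral and the jump sum converges absolutely; the right-hand side is built only from the paths $H(\w)$, $X(\w)$ and the fixed data $A(\w)$, and one checks that it coincides $\PP$-a.s. with the \ito\ integral $(H^n\bullet X)^\PP$ for every $\PP\in\P$.

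The heart of the argument is to show that $(H^n\bullet X)^\PP\to (H\bullet X)^\PP$ uniformly on $[0,1]$ in $\PP$-probability, \emph{with a control that depends on $\PP$ only through the common process $A$}. To this end I would decompose $X$ into its canonical components $X_0+B^\PP+X^{\PP,c}+(x\ind_{\{|x|\leq1\}})\ast(\mu-\nu^\PP)+(x\ind_{\{|x|>1\}})\ast\mu$ and estimate $((H^n-H)\bullet X)^\PP$ term by term: the finite-variation parts $B^\PP$ and the small-jump compensator are bounded in variation by $\int_0^1|H^n_s-H_s|\,\ud A_s$, while for $X^{\PP,c}$ and the compensated small-jump martingale Doob's $L^2$-inequality and the Burkholder--Davis--Gundy inequality reduce matters to $\EE^\PP\bigl[\int_0^1|H^n_s-H_s|^2\,\ud(\langle X^c\rangle^\PP+(x^2\wedge1)\ast\nu^\PP)_s\bigr]\leq\EE^\PP\bigl[\int_0^1|H^n_s-H_s|^2\,\ud A_s\bigr]$, using the standing domination hypothesis. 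Since $A$ has right-continuous nondecreasing paths, a Lebesgue-differentiation argument for the measure $\ud A+\ud s$ gives $H^n\to H$ in $L^2_{\mathrm{loc}}(\ud A)$ path-by-path; localizing along stopping times $\sigma_k$ for which $\int_0^{\sigma_k}(1+|H_s|^2)\,\ud A_s$ is $\PP$-integrable, and letting $\sigma_k\uparrow\infty$, one obtains the claimed uniform convergence in probability, with exceptional sets expressible through $A$ alone.

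Finally, applying Mokobodzki's medial limit to the sequence of universally measurable maps $\w\mapsto\bigl(\int_0^t H^n_{s-}\,\ud X_s\bigr)(\w)$ produces a single universally measurable process $\bigl(\int_0^t H\,\ud X\bigr)_{t\in[0,1]}$ which, by the previous step, equals the $\PP$-limit in probability --- hence $(H\bullet X)^\PP$ --- $\PP$-a.s.\ for \emph{every} $\PP\in\P$, and whose value at $\w$ uses only $H(\w)$ and $X(\w)$. One then passes to a \cadlag\ modification (right-continuity with left limits being inherited from the $(H^n\bullet X)^\PP$ through the uniform-in-time convergence) and checks $\FF^*$-adaptedness, which holds because the construction at time $t$ uses only the stopped paths $H_t,X_t,A_t$ and $\FF^*$ is $\P$-universally augmented. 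I expect the convergence step to be the real obstacle: one must push the Doob and Burkholder--Davis--Gundy estimates through the L\'evy--\ito\ decomposition in such a way that every constant and every controlling integral is expressed through $A$ alone, so that a single exceptional null set serves the whole undominated family $\P$; the medial limit itself is essentially a black box once that uniform convergence in probability is in hand.
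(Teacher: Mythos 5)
Your proof reproduces the same strategy the paper uses (Nutz's original construction): time-average $H$ over a sliding window against $\ud A$ to obtain predictable finite-variation approximations $H^n$, define each $\int H^n\,\ud X$ pathwise by integration by parts, establish $ucp(\PP)$ convergence under every $\PP\in\P$ using the uniform domination by $A$, and glue the sequence into a single process via Mokobodzki's medial limit followed by a \cadlag\ modification. The variations you introduce — averaging against $\ud(A_s+s)$ rather than invoking the paper's reduction $A_t-A_s\geq t-s$, writing the covariation jump term explicitly in the integration-by-parts identity, and carrying out the $L^2$ estimate through the semimartingale characteristics with Doob and Burkholder--Davis--Gundy rather than just asserting it — are sound and merely make explicit steps the paper leaves compressed into the statements of its two lemmas.
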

The proof stands on two lemmas.
Without loss of generality and to simplify notation, it is set $X(0)=0$ and defined $H(t)=A(t)=0$ for all $t<0$; it is also assumed that $X$ has bounded jumps, $|\De X|\leq1$, $H$ is uniformly bounded, $|H|\leq c$, and $A(t)-A(s)\geq t-s$ for all $0\leq s\leq t\leq1$.
\begin{lemma}\label{lem:nutz1}
  For all $n\geq1$, the processes $H^n,Y^n$, defined by
$$\bea{l}
H^n(0):=0,\quad H^n(t):=\frac1{A_t-A_{t-\frac1n}}\int_{t-\frac1n}^tH(s)\ud A(s),\quad 0<t\leq1,\\
Y^n:=H^nX-\int_0^\cdot X(s-)\ud H^n(s),
\ea$$
are well defined (pathwise) in the Lebesgue-Stieltjes sense and
$$Y^n=(H^n\bullet X)^\PP\ \PP\text{-a.s.},\quad Y^n\limucp(H\bullet X)^\PP\text{ for all }\PP\in\P.$$
\end{lemma}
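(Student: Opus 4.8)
\emph{Overall strategy.} The claim has three parts, which I would handle in order: pathwise well-definedness of $H^n$ and $Y^n$; the identity $Y^n=(H^n\bullet X)^\PP$ $\PP$-a.s.; and the $\ucp(\PP)$ convergence $(H^n\bullet X)^\PP\to(H\bullet X)^\PP$. Since $A(t)-A(s)\geq t-s$ by the reductions, the denominator $A_t-A_{t-\frac1n}\geq\frac1n>0$ never vanishes; for fixed $\w$ the section $s\mapsto H(s,\w)$ is bounded Borel and $s\mapsto A(s,\w)$ is \cadlag\ non-decreasing, so the numerator $\int_{(t-\frac1n,t]}H\,\ud A$ is a legitimate Lebesgue--Stieltjes integral with modulus $\leq c(A_t-A_{t-\frac1n})$; hence $H^n$ is well defined with $\abs{H^n}\leq c$. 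Writing $H^n_t=N(t)/D(t)$ with $N(t)=\int_0^tH\,\ud A-\int_0^{t-\frac1n}H\,\ud A$, $D(t)=A_t-A_{t-\frac1n}$, both $N$ and $D$ are \cadlag\ of finite variation on $[0,1]$ with $D\geq\frac1n$, so $H^n$ has \cadlag\ paths of finite variation; since $s\mapsto X(s-,\w)$ is \caglad\ and locally bounded, $\int_0^\cdot X(s-)\,\ud H^n(s)$ is again a pathwise Lebesgue--Stieltjes integral, and $Y^n$ is defined path by path.

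\emph{Identification.} Fix $\PP\in\P$. Being of finite variation, $H^n$ is a $\PP$-semimartingale with $H^n(0)=0$, so the It\^o product rule yields
\[
H^n_tX_t=\int_0^tH^n_{s-}\,\ud X_s+\int_0^tX_{s-}\,\ud H^n_s+[H^n,X]_t ,
\]
hence $Y^n_t=\int_0^tH^n_{s-}\,\ud X_s+[H^n,X]_t$. One then checks $[H^n,X]\equiv0$ $\PP$-a.s.\ --- here the reductions are essential, the jumps of $H^n$ sitting on the predictable jump set of $A$ and not being matched by jumps of $X$ --- and, $H^n$ being predictable with \cadlag\ paths, $\int_0^\cdot H^n_{s-}\,\ud X_s=(H^n\bullet X)^\PP$; so $Y^n=(H^n\bullet X)^\PP$ $\PP$-a.s.

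\emph{Convergence.} Decompose $X=X(0)+M+B$ into canonical $\PP$-parts ($X$ is special since $\abs{\De X}\leq1$), with $M$ locally square-integrable and $B$ predictable of finite variation; by the domination hypothesis $\ud\pqv{M}=a\,\ud A$ and $\ud B=b\,\ud A$ with $\int_0^1(\abs{a}+\abs{b})\,\ud A<\infty$. The heart of the proof is the pathwise convergence $H^n(t,\w)\to H(t,\w)$ for $\ud A(\cdot,\w)$-a.e.\ $t$: a Lebesgue differentiation theorem for the right-continuous non-decreasing $A$ applied to the bounded Borel integrand $H(\cdot,\w)$, atoms causing no trouble since at an atom $t_0$ both numerator and denominator of $H^n$ pick up the mass $\De A(t_0)$ so that $H^n(t_0)\to H(t_0,\w)$. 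With $\abs{H^n-H}\leq2c$ and dominated convergence this gives $\int_0^1\abs{H^n-H}^2\,\ud A\to0$ pathwise, hence $\EE^\PP\!\big[\int_0^{\sigma_k}\abs{H^n-H}^2\,\ud\pqv{M}\big]\to0$ along a localizing sequence $(\sigma_k)$ making $\pqv{M}$ integrable. Doob's $L^2$ inequality and the It\^o isometry then force $\EE^\PP\!\big[\sup_{t\leq\sigma_k}\abs{((H^n-H)\bullet M)(t)}^2\big]\to0$, so $(H^n-H)\bullet M\to0$ in $\ucp(\PP)$ (on $[0,1]$, since $\sigma_k=1$ eventually a.s.), while $\sup_{t\leq1}\abs{((H^n-H)\bullet B)(t)}\leq\int_0^1\abs{H^n-H}\abs{b}\,\ud A\to0$ pathwise. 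Adding the two terms and using $Y^n=(H^n\bullet X)^\PP$ from the previous step gives $Y^n=(H^n-H)\bullet X+(H\bullet X)^\PP\limucp(H\bullet X)^\PP$.

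\emph{Main obstacle.} The delicate part is the approximation step just described: establishing $H^n\to H$ $\ud A$-a.e.\ for an arbitrary right-continuous non-decreasing $A$ (atoms and singular part included), and then transferring this $A$-weighted pathwise convergence into $\ucp$ convergence of stochastic integrals --- which is exactly what the domination $\ud\pqv{M}+\abs{\ud B}\ll\ud A$ is there to make possible. The vanishing of $[H^n,X]$ in the identification step is the other point needing genuine care, since it relies on the precise reductions rather than on soft integration-by-parts.
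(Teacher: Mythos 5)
Your overall strategy tracks the route sketched in the paper: pathwise well-posedness from the lower bound $A(t)-A(t-\tfrac1n)\geq\tfrac1n$; identification of $Y^n$ with the $\PP$-stochastic integral; and transfer of the pathwise convergence of $H^n$ to $H$ with respect to $\ud A(\w)$ into $ucp(\PP)$ convergence via the semimartingale decomposition of $X$, Doob's inequality and the It\^o isometry. The paper states the last two points in exactly these terms, attributing the $L^2(\PP)$ bound on $\sup_t\lvert\int_0^t(H^n-H)\ud X\rvert$ to the pathwise $L^1([0,1],\ud A(\w))$-convergence of $H^n$ to $H$; your $L^2(\ud A)$ variant via Besicovitch differentiation and dominated convergence is equivalent, and your split into the martingale and drift parts is the right way to use the domination $\ud B+\ud\pqv{M}\ll\ud A$.

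The identification step, however, rests on a false claim. You assert $[H^n,X]\equiv 0$ on the grounds that the jumps of $H^n$ sit on the predictable jump set of $A$ and are therefore "not matched" by jumps of $X$. Nothing in the reductions forbids $X$ from jumping at a predictable time at which $A$ also jumps --- the domination of characteristics by $A$ is entirely compatible with $\nu^\PP(\{T\}\times\ud x)\neq 0$ at atoms $T$ of $A$ --- so $[H^n,X]=\sum\De H^n\De X$ is nonzero in general. Symmetrically, your claim that $\int_0^\cdot H^n_{s-}\ud X_s=(H^n\bullet X)^\PP$ is also off: $H^n$ is a \cadlag\ predictable process, so $(H^n\bullet X)^\PP=\int_0^\cdot H^n_s\ud X_s$, which differs from $\int_0^\cdot H^n_{s-}\ud X_s$ by $\int\De H^n\ud X=\sum\De H^n\De X=[H^n,X]$ (the jump times of $H^n$ being exhausted by predictable stopping times, $\int\ind_{\llbracket T\rrbracket}\ud X=\De X_T\ind_{\{T\leq\cdot\}}$). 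The two mistakes cancel, which is why your conclusion is still correct, but the argument as written is unsound. The clean statement is that both $Y^n$ and $(H^n\bullet X)^\PP$ equal $\int_0^\cdot H^n_{s-}\ud X_s+[H^n,X]$ --- the former by the It\^o product rule applied to the FV process $H^n$, the latter by writing $H^n=H^n_-+\De H^n$ with both summands predictable --- and no vanishing of the covariation bracket is needed or available.
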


\begin{lemma}\label{lem:nutz2}
  Let $(Y^n)_{n\geq1}$ be a sequence of $\FF^*$-adapted \cadlag\ processes and assume that for each $\PP\in\P$ there exists a \cadlag\ process $Y^\PP$ such that $Y^n(t)\limnp Y^\PP(t)$ for all $t\in[0,1]$. Then, there exists an $\FF^*$-adapted \cadlag\ process $Y$ such that $Y=Y^\PP$ $\PP$-almost surely for all $\PP\in\P$.
\end{lemma}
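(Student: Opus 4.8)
The plan is to aggregate the $Y^n$ by means of Mokobodzki's \emph{medial limit}, the standard tool when the family $\P$ is not dominated by a finite measure: in that situation no single subsequence of $(Y^n)$ can be expected to converge $\PP$-a.s.\ simultaneously for all $\PP\in\P$, since different $\PP$ may call for different subsequences. Granting the set-theoretic hypothesis under which it exists (consistent with ZFC, and standard throughout this circle of ideas), there is a universally measurable functional $\limmed$ on real sequences which is linear, lies between $\liminf$ and $\limsup$, and satisfies the decisive property that $\limmed\,\xi_n=\xi$ almost surely whenever random variables $\xi_n$ converge to $\xi$ in probability. Composing the $\F^*_t$-measurable $Y^n(t)$ with $\limmed$ thus produces a $\P$-universally measurable random variable, which is genuinely $\F^*_t$-measurable because $\FF^*$ is $\P$-universally augmented. (If one prefers to apply $\limmed$ only to bounded sequences, one first conjugates by a bounded homeomorphism $\R\to(-1,1)$.)

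First I would fix a countable dense set $D\subseteq[0,1]$ containing $1$ — say the dyadic rationals — and define $\widehat Y(s):=\limmed\,Y^n(s)$ for $s\in D$. Fixing $\PP\in\P$: for each $s\in D$ the convergence $Y^n(s)\to Y^\PP(s)$ in $\PP$-probability gives $\widehat Y(s)=Y^\PP(s)$ $\PP$-a.s., so, intersecting over the countable set $D$, the event $B_\PP:=\{\widehat Y(s)=Y^\PP(s)\text{ for all }s\in D\}$ has full $\PP$-measure. Since $Y^\PP$ has \cadlag\ paths, on $B_\PP$ the restriction to $D$ of $(\widehat Y(s))_{s\in D}$ is the restriction of an element of $D([0,1],\R)$, and that element is recovered from its values on $D$ by taking right limits along $D$.

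Next I would turn $\widehat Y$ into an honest process. Let $\Gamma\subseteq\R^D$ be the Borel set of functions $D\to\R$ that extend to an element of $D([0,1],\R)$ (a Borel condition, being expressible through boundedness together with finiteness of all $\varepsilon$-fluctuation counts over finite subsets of $D$, for rational $\varepsilon$), and put $G:=\{(\widehat Y(s))_{s\in D}\in\Gamma\}\in\F^*_1$. Define $Y(t):=\lim_{s\downarrow t,\,s\in D}\widehat Y(s)$ on $G$ for $t<1$, $Y(1):=\widehat Y(1)$ on $G$, and $Y\equiv0$ on $G^c$; then every path of $Y$ is \cadlag\ by construction, and on $B_\PP$ one reads off $Y(t)=\lim_{s\downarrow t}Y^\PP(s)=Y^\PP(t)$ for all $t$, so $Y=Y^\PP$ up to $\PP$-indistinguishability. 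It remains to check adaptedness. Here the apparent obstruction is that $G$ depends on $(\widehat Y(s))_{s\in D}$ through all times $s$; but $B_\PP\subseteq G$ for every $\PP\in\P$, so $G^c$ is $\PP$-null for all $\PP$, i.e.\ $\P$-polar, hence $G^c\in\F^*_0$ by the $\P$-universal augmentation. Therefore $\mathbf 1_G$ is $\F^*_0$-measurable, while the right limit $\lim_{s\downarrow t,\,s\in D}\widehat Y(s)$ is $\F^*_{t+}$-measurable; their product $Y(t)$ is $\F^*_{t+}=\F^*_t$-measurable by right-continuity of $\FF^*$. This yields an $\FF^*$-adapted \cadlag\ process $Y$ with $Y=Y^\PP$ $\PP$-a.s.\ for every $\PP\in\P$.

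I expect the only real subtlety to be precisely this last measurability point — reconciling the ``look-ahead'' nature of the exceptional set $G^c$ with adaptedness — which is resolved by the combination of $\P$-polarity of $G^c$ and the $\P$-universal augmentation of the filtration; granted the medial limit and these structural hypotheses, the remaining steps are routine.
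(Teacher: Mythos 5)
Your proof is correct and takes the same route the paper indicates — aggregating via Mokobodzki's medial limit applied to $Y^n(s)$ at countably many times, then extending by right limits using the \cadlag\ structure. The paper (which is surveying Nutz's lemma) only gives a one-sentence pointer to the medial limit; your write-up correctly supplies the aggregation over a countable dense set, the Borel exceptional-set argument, and the adaptedness verification via $\P$-polarity and the universally augmented, right-continuous filtration.
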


The first claim in Lemma \ref{lem:nutz1} is a consequence of the assumptions on $H,A$, while the convergence in $ucp(\PP)$ is implied by the $L^2(\PP)$ convergence
$$\EE^\PP\left[\sup_{t\in[0,1]}\left|\int_0^tH^n(s)\ud X(s)-\int_0^tH(s)\ud X(s)\right|^2\right]\limn0,$$ 
which in turn is proven thanks to the convergence of $H^n(\w)$ to $H(\w)$ in $L^1([0,1],\ud A(\w))$ for all $\w\in\O$.

Instead, Lemma \ref{lem:nutz2} relies on the notion of \emph{Mokobodzki's medial limit}, a kind of \lq projective limit\rq\ of convergence in measure.
More precisely, the medial limit $\limmed$ is a map on the set of real sequences, such that, if $(Z_n)_{n\geq1}$ is a sequence of random variables on a measurable space, the medial limit defines a universally 
measurable random variable Z, $Z(\w):=\limmed Z_n(\w)$, such that, if for some probability measure $\PP$, $Z_n\limnp Z^\PP$, then $Z^\PP=Z$ $\PP$-almost surely.

However, as anticipated above, Nutz's method does not give a pathwise computation of stochastic integrals, though it supplies us with a process which coincides $\PP$-almost surely with the $\PP$-stochastic integral for each $\PP$ in the set of measures $\P$ and is a limit in $ucp(\PP)$ of approximating Stieltjes integrals.

\subsection{Model-free arbitrage strategies}
\label{sec:arbitrage}

Once we have at our disposal a pathwise notion of gain process, a natural next step is to examine the corresponding notion of arbitrage strategy.

The literature investigating arbitrage notions in financial markets admitting uncertainty is recent and there are different approaches to the subject. The mainstream approach is that of model-uncertainty, where arbitrage notions are reformulated for families of probability measures in a way analogous to the classical case of a stochastic model. However, most of the contributions in this direction deal with discrete-time frameworks. In continuous time, recent results are found in \cite{sara-bkn}.

An important series of papers exploring arbitrage-like notions by a model-free approach is due to Vladimir Vovk (see e.g. \citet{vovk-vol,vovk-proba,vovk-rough,vovk-cadlag}). 
 He introduced an outer measure (see \cite[Definition 1.7.1]{tao} for the definition of \emph{outer measure}) on the space of possible price paths, called \emph{upper price} (\defin{upperP}), as the minimum super-replication price of a very special class of European contingent claims. The important intuition behind this notion of upper price is that the sets of price paths with zero upper price, called \emph{null sets}, allow for the infinite gain of a positive portfolio capital with unitary initial endowment. The need to guarantee this type of market efficiency in a financial market leads to discard the null sets.
\citeauthor{vovk-proba} says that a property holds for \emph{typical paths} if the set of paths where it does not hold is null, i.e. has zero upper price. 
Let us give some details.
\begin{definition}[Vovk's upper price]\label{def:upperP}
  The \emph{upper price} of a set $E\subset\O$ is defined as 
\beq\label{eq:upperP}
\bar\PP(E):=\inf_{S\in\S}\{S(0)|\,\forall\w\in\O,\; S(T,\w)\geq\ind_E(\w)\},
\eeq
where $\S$ is the set of all \emph{positive capital processes} $S$, that is: $S=\sum_{n=1}^\infty\K^{c_n,G_n}$, where $\K^{c_n,G_n}$ are the portfolio values of bounded simple predictable strategies trading at a non-decreasing infinite sequence of stopping times $\{\t^n_i\}_{i\geq1}$, such that for all $\w\in\O$ $\t^n_i(\w)=\infty$ for all but finitely many $i\in\NN$, with initial capitals $c_n$ and with the constraints $\K^{c_n,G_n}\geq0$ on $[0,T]\times\O$ for all $n\in\NN$ and $\sum_{n=1}^\infty c_n<\infty$.
\end{definition}
It is immediate to see that $\bar\PP(E)=0$ if and only if there exists a positive capital process $S$ with initial capital $S(0)=1$ and infinite capital at time $T$ on all paths in $E$, i.e. $S(T,\w)=\infty$ for all $\w\in E$.

Depending on what space $\O$ is considered, Vovk obtained specific results. In particular, he investigated properties of typical paths that concern their measure of variability. The most general framework considered is $\O=D([0,T],\R_+)$. He proved in \cite{vovk-rough} that typical paths $\w$ have a \emph{$p$-variation index} less or equal to 2, which means that the $p$-variation is finite for all $p>2$, but we have no information for $p=2$ (a stronger result is stated in \cite[Proposition 1]{vovk-rough}). If we relax the positivity and we restrict to \cadlag\ path with all components having \lq moderate jumps\rq\ in the sense of \eq{mod-jumps}, then \citet{vovk-cadlag} obtained appealing results regarding the quadratic variation of typical paths along special sequences of random partitions.
Indeed, by adding a control on the size of the jumps, in the sense of considering the sample space $\O=\O_\psi$, defined as
\beq\label{eq:mod-jumps}
\O_\phi:=\left\{\w\in D([0,T],\R)\bigg|\,\forall t\in(0,T],\;\abs{\De\w(t)}\leq\psi\lf \sup_{s\in[0,t)}\abs{\w(s)}\rg\right\}
\eeq
for a given non-decreasing function $\psi:[0,\infty)$, \citet{vovk-cadlag} obtained finer results. In particular, he proved the existence for typical paths of the quadratic variation in \defin{qv-vovk} along a special sequence of nested vertical partitions. It is however important to remark (\cite[Proposition 1]{vovk-cadlag}) that the same result applies to all sequences of nested partitions of dyadic type, and that any two sequences of dyadic type give the same value of quadratic variation for typical paths. A sequence of nested partitions is called of \emph{dyadic type} if it is composed of stopping times such that there exist a polynomial $p$ and a constant $C$ and
\begin{enumerate}
\item for all $\w\in\O_\psi$, $n\in\NN_0$, $0\leq s<t\leq T$, if $\abs{\w(t)-\w(s)}>C2^{-n}$, then there is an element of the $n^{th}$ partition which belongs to $(s,t]$,
\item for typical $\w$, from some $n$ on, the number of finite elements of the $n^{th}$ partition is at most $p(n)2^{2n}$.
\end{enumerate}

The sharper results are obtained when the sample space is $\O=C([0,T],\R)$ (or equivalently $\O=C([0,T],[0,\infty))$). In this case, in \cite{vovk-proba} it is proved that typical paths are constant or have a $p$-variation which is finite for all $p>2$ and infinite for $p\leq2$ (stronger results are stated in \cite[Corollaries 4.6,4.7]{vovk-proba}.
Note that the situation changes remarkably from the space of \cadlag\ paths to the space of continuous paths. Indeed, no (positive) \cadlag\ path which is bounded away from zero and has finite total variation can belong to a null set in $D([0,T],\R^d_+)$, while all continuous paths with finite total variation belong to a null set in $C([0,T],\R^d_+)$.

A similar notion of outer measure is introduced by \citet{perk-promel} (see also \citet{perkowski-thesis}), which is more intuitive in terms of hedging strategies. He considers portfolio values that are limits of simple predictable portfolios with the same positive initial capital and whose correspondent simple trading strategies never risk more than the initial capital.
\begin{definition}[Definition 3.2.1 in \cite{perkowski-thesis}]\label{def:outerP}
  The \emph{outer content} of a set $E\subset\O:=C([0,T],\R^d)$ is defined as 
\beq\label{eq:outerP}
\tilde\PP(E):=\inf_{(H^n)_{n\geq1}\in\H_{\l,s}}\{\l|\,\forall\w\in\O,\; \liminf_{n\to\infty}(\l+(H^n\bullet\w)(T))\geq\ind_E(\w)\},
\eeq
where $\H_{\l,s}$ is the set of all \emph{$\l$-admissible simple strategies}, that is of bounded simple predictable strategies $H^n$ trading at a non-decreasing infinite sequence of stopping times $\{\t^n_i\}_{i\geq1}$, $\t^n_i(\w)=\infty$ for all but finitely many $i\in\NN$ for all $\w\in\O$, such that $(H^n\bullet\w)(t)\geq-\l$ for all $(t,\w)\in[0,T]\times\O$.
\end{definition}
Analogously to Vovk's upper price, the $\tilde\PP$-null sets are identified with the sets where the inferior limit of some sequence of 1-admissible simple strategies brings infinite capital at time $T$. This characterization is shown to be a model-free interpretation of the condition of \emph{no arbitrage of the first kind} (NA1) from mathematical finance, also referred to as \emph{no unbounded profit with bounded risk} (see e.g. \cite{kk2007,kardaras}). Indeed, in a financial model where the price process is a semimartingale on some probability space $(\O,\F,\PP)$, the (NA1) property holds if the set $\{1+(H\bullet S)(T),\,H\in\H_1\}$ is bounded in $\PP$-probability, i.e. if 
$$\lim_{c\to\infty}\sup_{H\in\H_{1,s}}\PP(1+(H\bullet S)(T)\geq c)=0.$$
On the other hand, \cite[Proposition 3.28]{perkowski-thesis} proved that an event $A\in\F$ which is $\tilde\PP$-null has zero probability for any probability measure on $(\O,\F)$ such that the coordinate process satisfies (NA1).

However, the characterization of null sets in \cite{perk-promel,perkowski-thesis} is possibly weaker than Vovk's one. In fact, the outer measure $\tilde\PP$ is dominated by the outer measure $\bar\PP$.


A distinct approach to a model-free characterization of arbitrage is proposed by \citet{riedel}, although he only allows for static hedging. He considers a Polish space $(\O,\ud)$ with the Borel sigma-field and he assumes that there are $D$ uncertain assets in the market with known non-negative prices $f_d\geq0$ at time 0 and uncertain values $S_d$ at time $T$, which are continuous on  $(\O,\ud)$, $d=1,\ldots,D$. A portfolio is a vector $\pi$ in $\R^{D+1}$ and it is called an \emph{arbitrage} if $\pi\cdot f\leq0$, $\pi\cdot S\geq0$ and $\pi\cdot S(\w)>0$ for some $\w\in\O$, where $f_0=S_0=1$. Thus the classical ``almost surely'' is replaced by ``for all scenarios'' and ``with positive probability'' is replaced by ``for some scenarios''.
The main theorem in \cite{riedel} is a model-free version of the FTAP and states that the market is \emph{arbitrage-free} if and only if there exists a \emph{full support martingale measure}, that is a probability measure whose topological support in the polish space of reference is the full space and under which the expectation of the final prices $S$ is equal to the initial prices $f$.
This is proven thanks to the continuity assumption of $S(\w)$ in $\w$ on one side and a separation argument on the other side. Even without a prior probability assumption, it shows that, if there are no (static) arbitrages in the market, it is possible to introduce a pricing probability measure, which assigns positive probability to all open sets.

\section{The setting}
\label{sec:setting}

We consider a continuous-time frictionless market open for trade during the time interval $[0,T]$, where $d$ risky (non-dividend-paying) assets are traded besides a riskless security, named \lq bond'. The latter is assumed to be the numeraire security and we refer directly to the forward asset prices and portfolio values, which makes this framework of simplified notation without loss of generality. 
Our setting does not make use of any (subjective) probabilistic assumption on the market dynamics and we construct trading strategies based on the realized paths of the asset prices. 

Precisely, we consider the metric space $(\O,||\cdot||_\infty)$, $\O:=D([0,T],\R^d_+)$, provided with the Borel sigma-field $\F$ and the canonical filtration $\FF=\Ft$, that is the natural filtration generated by the coordinate process $S$, $S(t,\w):=\w(t)$ for all $\w\in\O$, $t\in[0,T]$. Thinking of our financial market, $\O$ represents the space of all possible trajectories of the asset prices up to time $T$. When considering only continuous price trajectories, we will restrict to the subspace $\O^0:=C([0,T],\R^d_+)$.


In such analytical framework, we think of a continuous-time path\hyp dependent trading strategy as determined by the value of the initial investment and the quantities of asset and bond holdings, given by functions of time and of the price trajectory.
\begin{definition}
  A \emph{trading strategy} in $(\O,\F)$ is any triple $(V_0,\phi,\psi)$, where $V_0:\O\to\R$ is $\F_0$-measurable and $\phi=(\phi(t,\cdot))_{t\in(0,T]},\psi=(\psi(t,\cdot))_{t\in(0,T]}$ are $\FF$-adapted \caglad\ processes on $(\O,\F)$, respectively with values in $\R^d$ and in $\R$. The portfolio value $V$ of such trading strategy at any time $t\in[0,T]$ and for any price path $\w\in\O$ is given by $$V(t,\w;\phi,\psi)=\phi(t,\w)\cdot\w(t)+\psi(t,\w).$$
\end{definition}
Economically speaking, $\phi(t,\w),\psi(t,\w)$ represent the vectors of the number of assets and bonds, respectively, held in the trading portfolio at time $t$ in the scenario $w\in\O$. The left-continuity of the trading processes comes from the fact that any revision to the portfolio will be executed the instant just after the time the decision is made. On the other hand, their right-continuous modifications $\phi(t+,\w),\psi(t+,\w)$, defined by 
$$\phi(t+,\w):=\lim\limits_{s\searrow t}\phi(s,\w),\ \psi(t+,\w):=\lim\limits_{s\searrow t}\psi(s,\w),\quad\forall\w\in\O,\,t\in[0,T)$$
represent respectively the number of assets and bonds in the portfolio just after any revision of the trading portfolio decided at time $t$.
The choice of strategies adapted to the canonical filtration conveys the realistic assumption that any trading decision makes use only of the price information available at the time it takes place.

We aim to identify \emph{self-financing trading strategies} in this pathwise framework, that is portfolios where changes in the asset position are necessarily financed by buying or selling bonds without adding or withdrawing any cash. In particular, we look for those of them which trade continuously in time but still allow for an explicit computation of the gain from trading.
In the classical literature about continuous-time financial market models, unlike for discrete-time models, we don't have a general pathwise characterization of self-financing dynamic trading strategies, mainly because of the probabilistic characterization of the gain in terms of a stochastic integral with respect to the asset price process. In the same way, the number of bonds which continuously rebalances the portfolio has no pathwise representation.

Here, we start from considering strategies where the portfolio is rebalanced only a finite number of times, for which the self-financing condition is well established and whose gain is given by a pathwise integral, equal to a Riemann sum.

Henceforth, we will take as given a dense nested sequence of time partitions, $\Pi=(\pi^n)_{n\geq1}$, i.e. $\pi^n=\{0=t^n_0<t^n_1<\ldots,t^n_{m(n)}=T\}$, $\pi^n\subset\pi^{n+1}$, $\abs{\pi^n}\limn\infty$.

We denote by $\Si(\Pi)$ the set of simple predictable processes whose jump times are covered by one of the partitions in $\Pi$\footnote{We could assume in more generality that the jump times are only covered by $\cup_{n\geq1}\pi^n$, but at the expense of more complicated formulas}:
\begin{align*}
\Si(\pi^n):={}&\bigg\{\phi:\;\forall i=0,\ldots,m(n)-1,\;\exists \l_i\,\F_{t^n_i}\mbox{-measurable }\R^d\mbox{-valued}\\
&\quad\mbox{random variable on }(\O,\F),\;\phi(t,\w)=\sum_{i=0}^{m(n)-1}\l_i(\w)\ind_{(t^n_i,t^n_{i+1}]}\bigg\},\\
\Si(\Pi):={}&\underset{n\geq1}\cup\Si(\pi^n).
\end{align*}

\section{Self-financing strategies}
\label{sec:self-fin}

\begin{definition}
  $(V_0,\phi,\psi)$ is called a \emph{simple self-financing trading strategy} if it is a trading strategy such that $\phi\in\Si(\pi^n)$ for some $n\in\NN$ and
\begin{align}
\nonumber\psi(t,\w;\phi)={}&V_0-\phi(0+,\w)\cdot\w(0)-\sum_{i=1}^{m(n)-1}\w(t^n_i\wedge t)\cdot(\phi(t^n_{i+1}\wedge t,\w)-\phi(t^n_i\wedge t,\w)) \\
  ={}& V_0-\phi(0+,\w)\cdot\w(0)-\sum_{i=1}^{k(t,n)}\w(t^n_i)\cdot(\l_{i}(\w)-\l_{i-1}(\w)),\label{eq:psi-sf}
\end{align}
where $\phi(t,\w)=\sum_{i=0}^{m(n)-1}\l_i(\w)\ind_{(t^n_i,t^n_{i+1}]}$ and $k(t,n):=\max\{i\in\{1,\ldots,m\}\;:\;t^n_i<t\}$.
The \emph{gain} of such a strategy is defined at any time $t\in[0,T]$ by
\begin{align*}
G(t,\w;\phi):={}&\sum_{i=1}^{m(n)}\phi(t^n_{i}\wedge t,\w)\cdot(\w(t^n_{i}\wedge t)-\w(t^n_{i-1}\wedge t)) \\
  ={}& \sum_{i=1}^{k(t,n)}\l_{i-1}(\w)\cdot(\w(t^n_{i})-\w(t^n_{i-1}))+\l_{k(t,n)}(\w)\cdot(\w(t)-\w(t^n_{k(t,n)})).
\end{align*}
\end{definition}
In the following, when there is no ambiguity, we drop the dependence of $k$ on $t,n$ and write $k\equiv k(t,n)$.

Note that the definition \eq{psi-sf} is equivalent to requiring that the trading strategy $(V_0,\phi,\psi)$ satisfies
$$V(t,\w;\phi,\psi)\equiv V(t,\w;\phi)=V_0+G(t,\w;\phi).$$

Since a simple self-financing trading strategy is uniquely determined by its initial investment and the asset position at all times, we will drop the dependence on $\psi$ of the quantities involved. For instance, when we are referring to a simple self-financing strategy $(V_0,\phi)$, we implicitly refer to the triplet $(V_0,\phi,\psi)$ with $\psi\equiv\psi(\cdot,\cdot;\phi)$ defined in \eq{psi-sf}.

\begin{remark}
The portfolio value $V(\cdot,\cdot;\phi)$ of a simple self-financing strategy $(V_0,\phi,\psi)$ is a \cadlag\ $\FF$-adapted process on $(\O,\F)$, satisfying
$$\Delta V(t,\w;\phi)=\phi(t,\w)\cdot\Delta\w(t),\quad \forall t\in[0,T],\w\in\O.$$
\end{remark}
The right-continuity of $V$ comes from the definition \eq{psi-sf}, which implies, for all $t\in[0,T]$ and $\w\in\O$,
$$\psi(t,\w)+\phi(t,\w)\cdot\w(t)=\psi(t+,\w)+\phi(t+,\w)\cdot\w(t).$$

Below, we are going to establish the self-financing conditions for (non\hyp simple) trading strategies.

\begin{definition}\label{def:path-sf} 
  Given an $\F_0$-measurable random variable $V_0:\O\to\R$ and an $\FF$-adapted $\R^d$-valued \caglad\ process $\phi=(\phi(t,\cdot))_{t\in(0,T]}$ on $(\O,\F)$, we say that $(V_0,\phi)$ is a \emph{self-financing trading strategy on} $U\subset\O$ if there exists a sequence of self-financing simple trading strategies $\{(V_0,\phi^n,\psi^n), n\in\NN\}$, such that 
$$\forall\w\in U,\,\forall t\in[0,T],\quad\phi^n(t,\w)\limn\phi(t,\w),$$
and any of the following conditions is satisfied:
 \begin{enumerate}[(i)]
 \item there exists an $\FF$-adapted real-valued \cadlag\ process $G(\cdot,\cdot;\phi)$ on $(\O,\F)$ such that, for all $t\in[0,T],\w\in U$,
$$G(t,\w;\phi^n)\limn G(t,\w;\phi)\quad\text{and}\quad\Delta G(t,\w;\phi)=\phi(t,\w)\cdot\Delta\w(t);$$
 \item there exists an $\FF$-adapted real-valued \cadlag\ process $\psi(\cdot,\cdot;\phi)$ on $(\O,\F)$ such that, for all $t\in[0,T],\w\in U$,
$$\psi^n(t,\w)\limn\psi(t,\w;\phi)$$
and $$\psi(t+,\w;\phi)-\psi(t,\w;\phi)=-\w(t)\cdot\lf\phi(t+,\w)-\phi(t,\w)\rg;$$
 \item there exists an $\FF$-adapted real-valued \cadlag\ process $V(\cdot,\cdot;\phi)$ on $(\O,\F)$ such that, for all $t\in[0,T],\w\in U$,
$$V(t,\w;\phi^n)\limn V(t,\w;\phi)\quad\text{and}\quad\Delta V(t,\w;\phi)=\phi(t,\w)\cdot\Delta\w(t).$$
 \end{enumerate}
\end{definition}

\begin{remark}\label{rmk:path-sf}
It is easy to see that the three conditions (i)-(iii) of Definition \ref{def:path-sf} are equivalent. If any of them is fulfilled, the limiting processes $G,\psi,V$ are respectively the gain, bond holdings and portfolio value of the self-financing strategy $(V_0,\phi)$ on $U$ and they satisfy, for all $t\in[0,T],\w\in U$,
\begin{equation}
   \label{eq:sf}
V(t,\w;\phi)=V_0+G(t,\w;\phi)
 \end{equation}
and
\begin{equation}
  \label{eq:psi}
  \psi(t,\w;\phi)=V_0-\phi(0+,\w)-\Limn\sum_{i=1}^{m(n)}\w(t^n_i\wedge t)\cdot(\phi^n(t^n_{i+1}\wedge t,\w)-\phi^n(t^n_{i}\wedge t,\w)).
\end{equation}
\end{remark}
Equation \eq{sf} is the pathwise counterpart of the classical definition of self-financing in probabilistic financial market models. However, in our purely analytical framework, we couldn't take it directly as the self-financing condition because some prior assumptions are needed to define path-by-path the quantities involved.

\section{A plausibility requirement}
\label{sec:reasonable}

The results reviewed in Subsection \ref{sec:arbitrage} cannot directly be applied to our framework, because the partitions considered there consist of stopping times, i.e. depend on the path, while we want to work with a fixed sequence of partitions $\Pi$ rather than with a random one.
Nonetheless, we can deduce that if we consider a singleton $\{\w\}$, where $\w\in\O_\psi$ with $\O_\psi$ defined in \eq{mod-jumps}, and our sequence of partition is of dyadic type for $\w$, then the property of finite quadratic variation for $\w$ is necessary to prevent the existence of a positive capital process, according to \defin{upperP}, trading at times in $\Pi$, that starts from a finite initial capital but ends up with infinite capital at time $T$.
However, the conditions imposed on the sequence of partitions are difficult to check.

Instead, we turn around the point of view: we want to keep our sequence of partitions $\Pi$ fixed and to identify the right subset of paths in $\O$ that is \emph{plausible} working with. 
To do so, we propose the following notion of \emph{plausibility} that, together with a technical condition on the paths, suggests that it is indeed a good choice to work on set of price paths with finite quadratic variation along $\Pi$, as we do in all the following sections.
\begin{definition}
  A set of paths $U\subset\O$ is called \emph{plausible} if there does not exist a sequence $(V_0^n,\phi^n)$ of simple self-financing strategies such that:
  \begin{enumerate}[(i)]
  \item the correspondent sequence of portfolio values, $\{V(t,\w;\phi^n)\}_{n\geq1}$, is non-decreasing for all paths $\w\in U$ at any time $t\in[0,T]$,
  \item the correspondent sequence of initial investments $\{V^n_0(\w_0)\}_{n\geq1}$ converges for all paths $\w\in U$,
  \item the correspondent sequence of gains along some path $\w\in U$ at the final time $T$ grows to infinity with $n$, i.e. $G(T,\w;\phi^n)\limn\infty$.
  \end{enumerate}
\end{definition}

\begin{proposition}
Let $U\subset\O$ be a set of price paths satisfying, for all $(t,\w)\in[0,T]\times U$ and all $n\in\NN$, 
\beq\label{eq:cn-conv}
\sum_{n=1}^\infty\lf\sum_{i=0}^{m(n-1)-1}\!\!\!\!\!\!\sum_{\stackrel{j,k:\,j\neq k,}{t^{n-1}_i\leq t_j^n,t^n_k<t^{n-1}_{i+1}}}\!\!\!\!\!\!(\w(t^n_{j+1}\wedge t)-\w(t^n_j\wedge t))\cdot(\w(t^n_{k+1}\wedge t)-\w(t^n_k\wedge t))\rg^-
\eeq
is finite, where $(x)^-:=\max\{0,-x\}$ denotes the negative part of $x\in\R$. Then, if $U$ is plausible, all paths $\w\in U$ have \fqv{\Pi}.
\end{proposition}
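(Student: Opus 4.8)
The plan is to exhibit, for a path $\omega$ failing to have finite quadratic variation along $\Pi$, a sequence of simple self-financing strategies contradicting the plausibility of $U$, the bridge being the classical identity relating realized quadratic variation to the gain of a discretized quadratic hedge. Fix $\omega\in\O$ and set
$$Q^n(t,\omega):=\sum_{i:\,t^n_i\le t}\bigl|\omega(t^n_{i+1}\wedge t)-\omega(t^n_i\wedge t)\bigr|^2.$$
Take $\phi^n\in\Si(\pi^n)$ with $\phi^n(t,\eta)=-2\bigl(\eta(t^n_i)-\eta(0)\bigr)$ on $(t^n_i,t^n_{i+1}]$; applying $|b-c|^2-|a-c|^2=|b-a|^2+2\langle a-c,b-a\rangle$ with $c=\eta(0)$ along the points of $\pi^n$ and telescoping yields, for every $t$ and every $\eta$,
$$G(t,\eta;\phi^n)=Q^n(t,\eta)-\bigl|\eta(t)-\eta(0)\bigr|^2,$$
so that $(0,\phi^n)$ is a simple self-financing strategy whose portfolio value is $Q^n(\cdot,\eta)-|\eta(\cdot)-\eta(0)|^2$; controlling $(Q^n(T,\omega))_n$ is thus a statement about its terminal gain.

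\textbf{Key steps.} First I would record the algebraic consequence of nestedness $\pi^{n-1}\subset\pi^n$: writing each increment over an interval of $\pi^{n-1}$ as a sum of consecutive $\pi^n$-increments and squaring gives $Q^{n-1}(t,\omega)=Q^n(t,\omega)+C_n(t,\omega)$, where $C_n(t,\omega)$ is precisely the inner double sum of \eqref{eq:cn-conv}. Telescoping, $Q^n(t,\omega)=Q^1(t,\omega)-\sum_{m=2}^nC_m(t,\omega)$; since $Q^n(t,\omega)\ge0$ and $Q^1(t,\omega)$ is a finite sum, the partial sums $\sum_{m=2}^nC_m(t,\omega)$ are bounded above by $Q^1(t,\omega)$, and combined with the finiteness of $\sum_m(C_m(t,\omega))^-$ from \eqref{eq:cn-conv} this forces $\sum_m(C_m(t,\omega))^+<\infty$ as well. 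Hence $\sum_m|C_m(t,\omega)|<\infty$ and $Q^n(t,\omega)$ converges to a finite limit $[\omega]_\Pi(t)\ge0$ for every $t$ and every $\omega\in U$. I would then verify that $[\omega]_\Pi$ is non-decreasing, \cadlag\ and obeys \eqref{eq:qv-jumps}: monotonicity holds on $\cup_n\pi^n$ because $Q^n(s,\omega)\le Q^n(t,\omega)$ for partition points $s<t$, and the \cadlag\ regularity of $\omega$ together with the density of $\Pi$ extends it to all times and produces $[\omega]_\Pi(t)-[\omega]_\Pi(t-)=|\Delta\omega(t)|^2$, whence \eqref{eq:qv-jumps} follows by splitting off the summable pure-jump part. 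Were instead some $\omega^\star\in U$ to have $(Q^n(T,\omega^\star))_n$ unbounded, a subsequence of the strategies $(0,\phi^n)$ would have constant — hence convergent — initial cost, portfolio values rendered non-decreasing in $n$ by \eqref{eq:cn-conv} up to a convergent cash adjustment, and terminal gains $Q^n(T,\omega^\star)-|\omega^\star(T)-\omega^\star(0)|^2\to\infty$, contradicting the plausibility of $U$. Finally, the $\R^d$-valued statement for the matrix $[\omega]_\Pi$ follows from the scalar case by polarization, exactly as in \defin{qvd}, applied to each $\omega^i$ and each $\omega^i+\omega^j$.

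\textbf{Main obstacle.} The delicate point is the interplay of the two hypotheses. The bare increment $Q^n(t,\omega)-Q^{n-1}(t,\omega)=-C_n(t,\omega)$ carries no definite sign, so the naive family $(0,\phi^n)$ need not have portfolio values that are non-decreasing in $n$, as condition (i) in the definition of plausibility demands; it is exactly the finiteness of $\sum_n(C_n)^-$ in \eqref{eq:cn-conv} that lets one redistribute this defect into a convergent cash correction while preserving the self-financing property, and making this redistribution uniform over all of $U$ is the main technical hurdle. The second, more computational, nuisance is the verification of \eqref{eq:qv-jumps} for the pointwise limit: one has to keep track of the $\pi^n$-predecessor of a fixed time $t$ as $n\to\infty$ and use right-continuity of $\omega$ to identify the jumps of $[\omega]_\Pi$ with $\{|\Delta\omega(s)|^2:\Delta\omega(s)\neq0\}$ and the continuous part as non-decreasing.
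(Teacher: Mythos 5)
Your main argument takes a genuinely different, and in fact more economical, route than the paper's. The paper's proof is explicitly a no-free-lunch argument: it defines the simple strategies $\phi^n$, uses \eqref{eq:cn-conv} to choose cash injections $k_n$ rendering $V(t,\w;\phi^n)$ non-decreasing in $n$ with $\sum k_n<\infty$, and then invokes the plausibility of $U$ to rule out $G(T,\w;\phi^n)\to\infty$, which forces $A^n(T,\w)$ to converge. You, on the other hand, observe that \eqref{eq:cn-conv} together with the trivial inequality $Q^n\ge0$ already closes the loop: from $Q^n=Q^1-\sum_{m=2}^nC_m\ge0$ the partial sums of $\sum C_m$ are bounded above, and adding $\sum(C_m)^-<\infty$ gives $\sum(C_m)^+<\infty$, hence absolute summability of the $C_m$ and pointwise convergence of $Q^n(t,\w)$. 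This is correct, cleaner, and uniform in the way the hypotheses are matched (\eqref{eq:cn-conv} is stated pointwise in $t$, and your deduction is pointwise in $t$, whereas the paper's construction of $V^n_0$ actually needs a $\sup_t$ control that \eqref{eq:cn-conv} does not literally supply). The plausibility contradiction that you offer afterwards ("Were instead \ldots") is essentially the paper's argument; in your write-up it comes across as redundant, and it is worth pausing on the fact that your direct argument appears to prove the conclusion without the plausibility hypothesis at all — which is an interesting strengthening of the statement rather than an error, but you should say so explicitly rather than present both arguments as if each were needed.

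Two caveats. First, a minor bookkeeping point: the relation is $A^{n-1}-A^n=\sum_i\sum_{j\ne k}a_ja_k$ over \emph{ordered} pairs, so depending on whether the paper's $j\ne k$ means ordered or unordered pairs your $C_n$ may differ from the displayed double sum by a factor of $2$; this has no bearing on the argument but should be stated to avoid confusion with the paper's stray factor of $-2$. Second, and more substantively: what you (and the paper) establish is pointwise convergence of $Q^n(t,\w)$ to a finite nonnegative limit for each $t$. Definition~\ref{def:qv1} also requires the limit to be a \cadlag\ function satisfying the jump decomposition \eqref{eq:qv-jumps}, and you explicitly acknowledge this as remaining work. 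It is nontrivial: pointwise convergence of nondecreasing approximations does not by itself give right-continuity of the limit, and identifying $[\w]_\Pi(t)-[\w]_\Pi(t-)$ with $\abs{\De\w(t)}^2$ requires tracking the $\pi^n$-neighbours of a fixed $t$ and using both right-continuity of $\w$ and the density of $\Pi$. The paper's own proof stops at the same point, so this is a shared gap, but since you flag it you should either carry it through or state explicitly that it is being deferred.
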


\proof
First, let us write explicitly what the condition \eq{cn-conv} means in terms of the relation between the $\w$ and the sequence of nested partitions $\Pi$.
Let $d=1$ for sake of notation. Denote by $A^n$ the $n^{th}$-approximation of the quadratic variation along $\Pi$, i.e.
$$A^n(t,\w):=\sum_{i=0}^{m(n)-1}(\w(t^n_{i+1}\wedge t)-\w(t^n_i\wedge t))^2\quad\forall(t,\w)\in[0,T]\times\O.$$
Then:
\begin{align*}
  &A^n(t,\w)-A^{n-1}(t,\w)=\\
={}&\sum_{i=0}^{m(n)-1}(\w(t^n_{i+1}\wedge t)-\w(t^n_i\wedge t))^2-\sum_{i=0}^{m(n-1)-1}(\w(t^{n-1}_{i+1}\wedge t)-\w(t^{n-1}_i\wedge t))^2\\
={}&\sum_{i=0}^{m(n-1)-1}\!\lf\sum_{t^{n-1}_i\leq t_j^n< t^{n-1}_{i+1}}\!\!(\w(t^n_{j+1}\wedge t)-\w(t^n_j\wedge t))^2-(\w(t^{n-1}_{i+1}\wedge t)-\w(t^{n-1}_i\wedge t))^2\rg\\
={}&{}-2\sum_{i=0}^{m(n-1)-1}\!\sum_{\stackrel{j,k:\,j\neq k,}{t^{n-1}_i\leq t_j^n,t^n_k<t^{n-1}_{i+1}}}\!(\w(t^n_{j+1}\wedge t)-\w(t^n_j\wedge t))(\w(t^n_{k+1}\wedge t)-\w(t^n_k\wedge t)).
\end{align*}
Thus the series in \eq{cn-conv} is exactly the series $\sum_{n=1}^\infty (A^n(t,\w)-A^{n-1}(t,\w))^-$.
Now, for $n\in\NN$, let us define a simple predictable process $\phi^n\in\Si(\pi^n)$ by
  \begin{align}
    \label{eq:Vn}
    \phi^n(t,\w):={}&{}-2\sum_{i=0}^{m(n)-1}\w(t^n_i)\ind_{(t^n_i,t^n_{i+1}]}(t)
  \end{align}
Then, we can rewrite the $n^{\mathrm{th}}$ approximation of the quadratic variation of $\w$ at time $t\in[0,T]$ as 
\begin{align}
  A^n(t,\w)={}&\w(t)^2-\w(0)^2-2\sum_{i=0}^{m(n)-1}\w(t^n_i)(\w(t^n_{i+1}\wedge t)-\w(t^n_i\wedge t))\nonumber\\
={}&\w(t)^2-\w(0)^2+G(t,\w;\phi^n)\nonumber\\
={}&V(t,\w;\phi^n)-c_n,  \label{eq:An}
\end{align}
where $c_n=\w(0)^2-\w(t)^2+V^n_0(\w_0)$.
We want to define the initial capitals $V^n_0$ in such a way that the sequence of simple self-financing strategies $(V_0^n,\phi^n)$ has non decreasing portfolio values at any time and the sequence of initial capitals converges. 
By writing
\beq\label{eq:kn}
A^n(t,\w)-A^{n-1}(t,\w)+k_n=V(t,\w;\phi^n)-V(t,\w;\phi^{n-1}),
\eeq
where $k_n=c_n-c_{n-1}=V^{n}_0(\w_0)-V^{n-1}_0(\w_0)$, we see that the monotonicity of $\{V(t,\w;\phi^n)\}_{n\in\NN}$ is obtained by opportunely choosing a finite $k_n\geq0$ (i.e. by choosing $V^n_0$), which is made possible by the boundedness of $\abs{A^n(t,\w)-A^{n-1}(t,\w)}$, implied by condition \eq{cn-conv}.
However, it is not sufficient to have $k_n<\infty$ for all $n\in\NN$, but we need the convergence of the series $\sum_{n=1}^\infty k_n$.
This is provided again by condition \eq{cn-conv}, because the minimum value of $k_n$ satisfying the positivity of \eq{kn} for all $t\in[0,T]$ is indeed $\max_{t\in[0,T]}(A^n(t,\w)-A^{n-1}(t,\w))^-$.
On the other hand, since both the sequence $\{V(t,\w;\phi^n)\}_{n\geq1}$ for any $t\in[0,T]$ and the sequence $\{V^n_0\}_{n\geq1}$ are regular, i.e. they have limit for $n$ going to infinity, by \eq{An} the sequence $\{A^n(t,\w)\}_{n\geq1}$ is also regular. Finally, since the sequence of initial capitals converges, the equation \eq{An} implies that the sequence of approximations of the quadratic variation of $\w$ converges if and only if $\{G(T,\w;\phi^n)\}_{n\geq1}$ converges.
But $U$ is a plausible set by assumption, thus convergence must hold.
\endproof

\section{Pathwise construction of the gain process}
\label{sec:gain}

In the following two propositions we show that we can identify a special class of (pathwise) self-financing trading strategies, respectively on the set of continuous price paths with \fqv{\Pi} and on the set of \cadlag\ price paths with \fqv{\Pi}, whose gain is computable path-by-path as a limit of Riemann sums.

\begin{proposition}[Continuous price paths]\label{prop:G}
 Let $\phi=(\phi(t,\cdot))_{t\in(0,T]}$ be an $\FF$-adapted $\R^d$-valued \caglad\ process on $(\O,\F)$ such that there exists $F\in\Cloc(\L_T)\cap\CC^{0,0}(\W_T)$ satisfying
  \begin{equation}
    \phi(t,\w)=\vd F(t,\w_{t})\quad\forall\w\in Q(\O,\Pi),t\in[0,T].
  \end{equation}
Then, there exists a \cadlag\ process $G(\cdot,\cdot;\phi)$ such that, for all $\w\in Q(\O^0,\Pi)$ and $t\in[0,T]$,
 \begin{align}
   G(t,\w;\phi)={}&\int_0^t \phi(u,\w_u)\cdot\ud^{\Pi}\w \label{eq:fi}\\
   \label{eq:pathint}
   ={}&\lim_{n\rightarrow\infty}\sum_{t^n_i\leq t}\vd F(t_i^n,\w^{n}_{t^n_i-})\cdot(\w(t_{i+1}^n\wedge T)-\w(t_i^n\wedge T)),
  \end{align}
where $\w^n$ is defined as in \eq{wn}.
Moreover, $\phi$ is the asset position of a pathwise self-financing trading strategy on $Q(\O^0,\Pi)$ with gain process $G(\cdot,\cdot;\phi)$.
\end{proposition}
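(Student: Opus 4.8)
The plan is to deduce everything from the pathwise change of variable formula for functionals of continuous paths, \thm{fif-c}, together with the definition of self-financing strategy on a set of paths, \defin{path-sf}, and the equivalence of its three conditions noted in \rmk{path-sf}. First I would fix $\w\in Q(\O^0,\Pi)$. Since $F\in\Cloc(\L_T)\cap\CC^{0,0}(\W_T)$, its restriction to $\W_T$ is locally regular, so \thm{fif-c} applies to $\w$: the Riemann sums in \eqref{eq:int-c} converge, their limit is by definition $\int_0^T\vd F(t,\w_t)\cdot\ud^\Pi\w$, and \eqref{eq:fif-c} holds. Because the same hypotheses hold for the stopped path $\w_t$ for every $t\in[0,T]$ (note $\w_t\in Q(\O^0,\Pi)$ as well, since stopping does not create finite quadratic variation problems and $[\w_t]=[\w](\cdot\wedge t)$), the pathwise integral up to time $t$ is well defined by the same limit of Riemann sums restricted to partition points $\le t$, i.e. exactly the right-hand side of \eqref{eq:pathint}. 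This defines $G(t,\w;\phi)$ via \eqref{eq:fi}–\eqref{eq:pathint}, and \thm{fif-c} then gives the representation
\begin{equation*}
G(t,\w;\phi)=F(t,\w_t)-F(0,\w_0)-\int_0^t\hd F(u,\w_u)\,\ud u-\frac12\int_0^t\tr\!\lf\vd^2F(u,\w_u)\,\ud[\w](u)\rg.
\end{equation*}
From this formula the path $t\mapsto G(t,\w;\phi)$ is \cadlag: $F(\cdot,\w_\cdot)$ is \cadlag\ by \rmk{regularity}(3) (and in fact continuous here since $\w$ is continuous), while the two integral terms are continuous in $t$; measurability and adaptedness of $G$ as a process on $(\O,\F)$ follow from those of $F,\hd F,\vd^2F$ and $[\w]$, noting each term depends only on $\w_t$. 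Since $\w$ is continuous, $\De G(t,\w;\phi)=0=\phi(t,\w)\cdot\De\w(t)$ trivially.

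Next I would produce the approximating simple self-financing strategies required by \defin{path-sf}. For each $n$, set $\phi^n:=\sum_{i=0}^{m(n)-1}\vd F(t^n_i,\w^{n}_{t^n_i-})\ind_{(t^n_i,t^n_{i+1}]}$ — more precisely, define $\l_i$ as the $\F_{t^n_i}$-measurable random variable $\w\mapsto\vd F(t^n_i,(\cdot)^{n}_{t^n_i-})$ evaluated appropriately, so that $\phi^n\in\Si(\pi^n)$, and let $(V_0,\phi^n,\psi^n)$ be the induced simple self-financing strategy with $V_0:=F(0,\cdot)$ and $\psi^n$ given by \eqref{eq:psi-sf}. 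By construction $G(t,\w;\phi^n)$ is exactly the $n$-th Riemann sum appearing in \eqref{eq:pathint} (with the boundary term $\l_{k}\cdot(\w(t)-\w(t^n_k))$ at a non-partition time $t$), and \thm{fif-c} — applied to $\w_t$ — gives $G(t,\w;\phi^n)\to G(t,\w;\phi)$ for every $t\in[0,T]$ and every $\w\in Q(\O^0,\Pi)$. The pointwise convergence $\phi^n(t,\w)\to\phi(t,\w)=\vd F(t,\w_t)$ for $\w\in Q(\O^0,\Pi)$ follows from left-continuity of $\vd F$ (i.e. $\vd F\in\CC^{0,0}_l$, part of $F\in\Cloc$) together with $\dinf$-convergence of the stopped step-path $\w^{n}_{t^n_i-}$ (with $t^n_i$ the last partition point before $t$) to $\w_t$ — here one uses uniform continuity of $\w$ on $[0,T]$ and $\abs{\pi^n}\to0$. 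With condition (i) of \defin{path-sf} verified (the jump identity being trivial on continuous paths), \rmk{path-sf} yields that $\phi$ is the asset position of a pathwise self-financing strategy on $Q(\O^0,\Pi)$ with gain process $G(\cdot,\cdot;\phi)$, $\psi$ given by \eqref{eq:psi}, and $V=V_0+G$.

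The main obstacle I anticipate is not the convergence of the Riemann sums — that is handed to us by \thm{fif-c} — but the bookkeeping needed to present $\phi^n$ as a genuine element of $\Si(\pi^n)$ in the functional sense, i.e. to check that $\w\mapsto\vd F(t^n_i,\w^{n}_{t^n_i-})$ is $\F_{t^n_i}$-measurable and that the resulting simple strategy's gain at an arbitrary time $t$ (not just at partition points) matches the truncated Riemann sum in \eqref{eq:pathint} including the correct boundary increment. A secondary delicate point is the convergence $\phi^n(t,\w)\to\phi(t,\w)$ at times $t$ where $\vd F$ might only be left-continuous: one must make sure the chosen step approximation evaluates $\vd F$ at the \emph{left} endpoint configuration so that left-continuity (rather than full continuity) of $\vd F$ suffices; this is exactly the reason the hypothesis is $F\in\Cloc(\L_T)\cap\CC^{0,0}(\W_T)$ and not merely $\CC^{0,0}(\W_T)$. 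Once these are handled, the statement follows by directly invoking \thm{fif-c} and \rmk{path-sf}.
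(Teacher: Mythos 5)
Your proposal is correct and follows essentially the same route as the paper: invoke the pathwise change of variable formula (\thm{fif-c}) for existence of the F\"ollmer integral and the identity \eqref{eq:pathint}, build the approximating simple strategies $\phi^n\in\Si(\pi^n)$ from the stepwise approximations $\w^n$, verify $\phi^n\to\phi$ pointwise via left-continuity of $\vd F$ and $\dinf$-convergence of the stopped step-paths, and conclude through condition (i) of \defin{path-sf} together with \rmk{path-sf}. The only cosmetic difference is that you make explicit the representation of $G(t,\w;\phi)$ furnished by the change of variable formula to see the \cadlag\ (here, continuous) property, whereas the paper records this property directly; the underlying arguments coincide.
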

\begin{proof}
  First of all, under the assumptions, the change of variable formula for functionals of continuous paths holds (\citep[Theorem 3]{contf2010}), which ensures the existence of the limit in \eq{pathint} and provide us with the definition of the F\"ollmer integral in \eq{fi}.
Then, we observe that each $n^{th}$ sum in the right-hand side of \eqref{eq:pathint} is exactly the accumulated gain of a pathwise self-financing strategy which trades only a finite number of times. Precisely, let us define, for all $\w\in\O$ and all $t\in[0,T)$, 
\begin{align*}
    \phi^n(t,\w):={}&\phi(0+,\w)\ind_{\{0\}}(t)+\sum_{i=0}^{m(n)-1}\phi\left(t^n_{i},\w^{n}_{t^n_i}\right)\ind_{(t^n_{i},t^n_{i+1}]}(t),\\
\intertext{and}
\psi^n(t,\w):={}& V_0-\phi(0+,\w)-\sum_{i=1}^{m(n)-1}\w(t^n_{i}\wedge t)\cdot(\phi^n(t^n_{i+1}\wedge t,\w)-\phi^n(t^n_{i}\wedge t,\w)),
\end{align*}
then $(V_0,\phi^n,\psi^n)$ is a simple self-financing strategy, with cumulative gain $G(\cdot,\cdot;\phi^n)$ given by
\begin{align*} 
G(t,\w;\phi^n)={}&\sum_{i=1}^{k}\vd F\lf t^n_{i-1},\w^{n}_{t^n_{i-1}-}\rg\cdot(\w(t^n_{i})-\w(t^n_{i-1}))\\
&\,+\vd F\lf t^n_{k},\w^{n}_{t^n_{k}-}\rg\cdot(\w(t)-\w(t^n_{k})).
\end{align*}
and portfolio value $V(\cdot,\cdot;\phi^n)$ given by
$$V(t,\w;\phi^n)=\psi^n(t,\w)+\phi^n(t,\w)\cdot\w(t)=V_0+G(t,\w;\phi^n).$$
Then, we have to verify that the simple asset position $\phi^n$ converges pointwise to $\phi$, i.e.
$$\forall\w\in\O,\,\forall t\in[0,T],\quad|\phi^n(t,\w)-\phi(t,\w)|\limn0.$$
This is true, because by assumption $\vd F\in\CC_l^{0,0}(\L_T)$ and this implies that the path $t\mapsto\vd F(t,\w_{t-})=\vd F(t,\w_{t})$ is left-continuous (see \rmk{regularity}). Indeed, for each $t\in[0,T],\w\in\O$ and $\e>0$, there exist $\bar n\in\NN$ and $\eta>0$, such that, for all $n\geq\bar n$, 
$$\dinf\left((t^n_k,\w^{n}_{t_k^n-}),(t,\w)\right)=\max\left\{||\w^n_{t^n_k-},\w_{t^n_k-}||_\infty,\sup_{u\in[t^n_k,t)}|\w(t^n_k)-\w(u)|\right\}+|t-t_k^n|<\eta,$$
where $k\equiv k(t,n):=\max\{i\in\{1,\ldots,m\}\;:\;t^n_i<t\}$, and
\begin{align*}
|\phi^n(t,\w)-\phi(t,\w)|={}&\abs{\phi(t_k^n,\w_{t_k^n}^{n})-\phi(t,\w)}\\
={}&\abs{\vd F(t_k^n,\w_{t_k^n-}^{n})-\vd F(t,\w)}\\
\leq{}&\e.
\end{align*}
We have thus built a sequence of self-financing simple trading strategies approximating $\phi$ and, if the realized price path $\w$ is continuous with finite quadratic variation along $\Pi$, then the gain of the simple strategies converges to a real-valued \cadlag\ function $G(\cdot,\w;\phi)$.
Namely, for all $t\in[0,T]$ and $\w\in Q(\O^0,\Pi)$,
$$G(t,\w;\phi^n)\limn G(t,\w;\phi),\quad G(t,\w;\phi)=\int_0^t\vd F(u,\w)\cdot\ud^{\Pi}\w.$$
Moreover, by the assumptions on $F$ and by \rmk{regularity}, the map $t\mapsto F(t,\w_{t})$ is continuous for all $\w\in C([0,T],\R^d)$. Therefore, by the change of variable formula for functionals of continuous paths, $G(\cdot,\w;\phi)$ is continuous for all $\w\in Q(\O^0,\Pi)$.

Thus, the process $G(\cdot,\cdot;\phi)$ satisfies the condition (i) in Definition \ref{def:path-sf} and so it is the gain process of the self-financing trading strategy with initial value $V_0$ and asset position $\phi$, on the set of continuous paths with \fqv{\Pi.}
\end{proof}

\begin{corollary}\label{cor:path-sf}
  Let $\phi$ be as in \prop{G}, then $\psi(\cdot,\cdot;\phi)$, defined for all $t\in[0,T]$ and $\w\in Q(\O^0,\Pi)$ by
  \begin{align*}
  \psi(t,\w;\phi)={}&V_0-\phi(0+,\w)\\
&{}-\Limn\sum_{i=1}^{k(t,n)}\w(t^n_i)\cdot\lf\vd F\lf t^n_{i},\w^{n}_{t^n_{i}-}\rg-\vd F\lf t^n_{i-1},\w^{n}_{t^n_{i-1}-}\rg\rg,
  \end{align*}
is the bond holding process of the self-financing trading strategy $(V_0,\phi)$ on $Q(\O^0,\Pi)$.
\end{corollary}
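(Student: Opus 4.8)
The plan is to extract $\psi(\cdot,\cdot;\phi)$ directly from the approximation already used to prove \prop{G}, with no fresh limiting argument required. That proposition produces, for $\phi(t,\w)=\vd F(t,\w_{t})$, a sequence of simple self-financing strategies $(V_0,\phi^n,\psi^n)$ with $\phi^n\to\phi$ pointwise and $G(\cdot,\w;\phi^n)\to G(\cdot,\w;\phi)$ for every $\w\in Q(\O^0,\Pi)$, and it identifies $(V_0,\phi)$ as a self-financing strategy on $Q(\O^0,\Pi)$ in the sense of \defin{path-sf}(i). By \rmk{path-sf} the three conditions of \defin{path-sf} are equivalent, so (ii) holds as well: the bond holding of $(V_0,\phi)$ exists and equals $\psi(\cdot,\cdot;\phi)=\Limn\psi^n$, with the closed form \eq{psi}. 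Hence the whole task reduces to evaluating this limit for the particular $\phi^n$ of \prop{G}.

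To do that, I would first bypass \eq{psi} and use instead the simple-strategy bond formula \eq{psi-sf} for each $\psi^n$. Since $\phi^n\in\Si(\pi^n)$ is the stepwise process taking on $(t^n_i,t^n_{i+1}]$ the $\F_{t^n_i}$-measurable value $\vd F(t^n_i,\w^n_{t^n_i-})$ — which is exactly the $\phi^n$ written out in the proof of \prop{G} — formula \eq{psi-sf} gives
\[
\psi^n(t,\w)=V_0-\phi(0+,\w)-\sum_{i=1}^{k(t,n)}\w(t^n_i)\cdot\lf\vd F\lf t^n_{i},\w^n_{t^n_i-}\rg-\vd F\lf t^n_{i-1},\w^n_{t^n_{i-1}-}\rg\rg .
\]
Passing to the limit $n\to\infty$ along $Q(\O^0,\Pi)$ — the limit exists by the previous paragraph — yields precisely the expression claimed in the statement. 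Since each $\psi^n$ is \cadlag\ and $\FF$-adapted on $(\O,\F)$, and by \rmk{path-sf} the limit is again a \cadlag\ $\FF$-adapted process satisfying the jump relation of \defin{path-sf}(ii) together with \eq{sf}, this identifies $\psi(\cdot,\cdot;\phi)$ as the bond holding process of the self-financing strategy $(V_0,\phi)$ on $Q(\O^0,\Pi)$, completing the argument.

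The only point requiring care is the bookkeeping at the right endpoint of $[0,t]$ when $t$ lies strictly inside a partition cell $(t^n_k,t^n_{k+1})$: one must check that in passing from \eq{psi} to the telescoped form above the incomplete last cell contributes nothing, so that the sum runs exactly over $i=1,\dots,k(t,n)$ with $k(t,n)=\max\{i:t^n_i<t\}$. This is, however, the same elementary cancellation that already underlies the definition \eq{psi-sf} of $\psi$ for simple self-financing strategies, and the convergence of the resulting Riemann-type sums is inherited for free from \prop{G} (ultimately from the change of variable formula \thm{fif-c}). I do not expect any further obstacle.
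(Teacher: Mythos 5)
Your argument is correct and matches the paper's intent: the corollary is stated without its own proof because it is an immediate consequence of \prop{G} together with the equivalence of conditions (i)–(iii) in \rmk{path-sf}, exactly as you lay out. You rightly note that each $\psi^n$ from the proof of \prop{G} is given by \eq{psi-sf} with $\l_i=\vd F(t^n_i,\w^n_{t^n_i-})$, that the partial-cell terms vanish so the sum runs over $i=1,\dots,k(t,n)$, and that convergence and the \cadlag\ adapted structure of the limit are inherited from \prop{G} via \rmk{path-sf}(ii). No gap.
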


\begin{proposition}[C\`adl\`ag price paths]\label{prop:G-cadlag}
 Let  $\phi=(\phi(t,\cdot))_{t\in(0,T]}$ be an $\FF$-adapted $\R^d$-valued \caglad\ process on $(\O,\F)$ such that there exists $F\in\Cloc(\L_T)\cap\CC^{0,0}_r(\L_T)$ with $\vd F\in\CC^{0,0}(\L_T)$, satisfying
  \begin{equation*}
    \phi(t,\w)=\vd F(t,\w_{t-})\quad\forall\w\in Q(\O,\Pi),\,t\in[0,T].
  \end{equation*}
Then, there exists a \cadlag\ process $G(\cdot,\cdot;\phi)$ such that, for all $\w\in Q(\O,\Pi)$ and $t\in[0,T]$,
 \begin{align}
   \nonumber
   G(t,\w;\phi)={}&\int_0^t \phi(u,\w_u)\cdot\ud^{\Pi}\w \\
   \label{eq:pathint-cadlag}
   ={}&\lim_{n\rightarrow\infty}\sum_{t^n_i\leq t}\vd F\lf t_i^n,\w^{n,\De\w(t_i^n)}_{t^n_i-}\rg\cdot(\w(t_{i+1}^n\wedge T)-\w(t_i^n\wedge T)),
  \end{align}
where $\w^n$ is defined as in \eq{wn}.
Moreover, $\phi$ is the asset position of a pathwise self-financing trading strategy on $Q(\O,\Pi)$ with gain process $G(\cdot,\cdot;\phi)$.
\end{proposition}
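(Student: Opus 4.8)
The plan is to follow the proof of \prop{G} almost verbatim, replacing the change of variable formula for functionals of continuous paths by its \cadlag\ counterpart \thm{fif-d}. Since $F\in\Cloc(\L_T)$, \thm{fif-d} applies to every $\w\in Q(\O,\Pi)$ (we may assume, as noted after that theorem, that all jump times of $\w$ belong to $\cup_n\pi^n$, so that \eq{ass_w} is automatic): the limit in \eq{pathint-cadlag} exists and defines the F\"ollmer integral $\int_0^\cdot\vd F(u,\w_{u-})\cdot\ud^{\Pi}\w$, and the formula \eq{fif-d} holds. Applying it to the stopped path $\w_t$ and rearranging, one gets for all $\w\in Q(\O,\Pi)$ and $t\in[0,T]$
\begin{align*}
G(t,\w;\phi)={}&F(t,\w_t)-F(0,\w_0)-\int_0^t\hd F(u,\w_{u-})\,\ud u-\frac12\int_0^t\tr\lf\vd^2F(u,\w_{u-})\,\ud[\w]^c(u)\rg\\
&{}-\sum_{0<u\le t}\lf F(u,\w_u)-F(u,\w_{u-})-\vd F(u,\w_{u-})\cdot\De\w(u)\rg.
\end{align*}
The two integral terms are continuous in $t$, the jump series is \cadlag\ in $t$ (it converges absolutely), and $t\mapsto F(t,\w_t)$ is \cadlag\ since $F\in\CC^{0,0}_r(\L_T)\cap\Cloc(\L_T)$; hence $G(\cdot,\w;\phi)$ is \cadlag. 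Moreover $\lim_{s\nearrow t}F(s,\w_s)=F(t,\w_{t-})$, because $(s,\w_s)\to(t,\w_{t-})$ in $\dinf$ as $s\nearrow t$ and $F$ is continuous; subtracting the jump of the series from the jump $F(t,\w_t)-F(t,\w_{t-})$ of $t\mapsto F(t,\w_t)$ yields $\De G(t,\w;\phi)=\vd F(t,\w_{t-})\cdot\De\w(t)=\phi(t,\w)\cdot\De\w(t)$.

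Next I would realize $G(\cdot,\cdot;\phi)$ as a limit of gains of simple self-financing strategies approximating $\phi$. Set $\phi^n(t,\w):=\phi(0+,\w)\ind_{\{0\}}(t)+\sum_{i=0}^{m(n)-1}\vd F\lf t^n_i,\w^{n,\De\w(t^n_i)}_{t^n_i-}\rg\ind_{(t^n_i,t^n_{i+1}]}(t)$ and let $\psi^n$ be the bond holding given by \eq{psi-sf} for this $\phi^n$. Because $\w^{n,\De\w(t^n_i)}_{t^n_i-}$ depends on $\w$ only through $\w|_{[0,t^n_i]}$, each coefficient is $\F_{t^n_i}$-measurable, so $\phi^n\in\Si(\pi^n)$ and $(V_0,\phi^n,\psi^n)$ is a simple self-financing strategy; its cumulative gain $G(\cdot,\w;\phi^n)$ is exactly the $n$-th partial Riemann sum in \eq{pathint-cadlag}, up to the terminal increment over $[t^n_{k(t,n)},t]$, which matches the corresponding term of \eq{pathint-cadlag} in the limit because $\w(t^n_{k(t,n)+1})-\w(t)\to0$ by right-continuity of $\w$. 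Thus \thm{fif-d} gives $G(t,\w;\phi^n)\to G(t,\w;\phi)$ for every $t\in[0,T]$ and every $\w\in Q(\O,\Pi)$.

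Then I would prove the pointwise convergence $\phi^n(t,\w)\to\phi(t,\w)=\vd F(t,\w_{t-})$ for all $t\in[0,T]$, $\w\in\O$. Writing $\phi^n(t,\w)=\vd F\lf t^n_{k(t,n)},\w^{n,\De\w(t^n_{k(t,n)})}_{t^n_{k(t,n)}-}\rg$, this reduces to the $\dinf$-convergence $\dinf\!\lf\lf t^n_{k(t,n)},\w^{n,\De\w(t^n_{k(t,n)})}_{t^n_{k(t,n)}-}\rg,(t,\w_{t-})\rg\to0$, followed by an appeal to $\vd F\in\CC^{0,0}(\L_T)$. The distance estimate is carried out as in the proof of \thm{fif-d}: split the supremum over $[0,t^n_{k(t,n)})$, where the perturbed path differs from $\w$ by at most the oscillation of $\w$ between consecutive partition points (which vanishes uniformly since $\Pi$ exhausts the jumps of $\w$), and over $[t^n_{k(t,n)},T]$, where the value is $\w(t^n_{k(t,n)})\to\w(t-)$. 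With $\phi^n\to\phi$ pointwise and $G(\cdot,\cdot;\phi^n)\to G(\cdot,\cdot;\phi)$ on $Q(\O,\Pi)$, where $G(\cdot,\w;\phi)$ is \cadlag\ and satisfies $\De G(t,\w;\phi)=\phi(t,\w)\cdot\De\w(t)$, condition (i) of \defin{path-sf} is met, so $\phi$ is the asset position of a pathwise self-financing strategy on $Q(\O,\Pi)$ with gain $G(\cdot,\cdot;\phi)$. The corresponding bond holding is read off as in \cor{path-sf}, now with $\vd F$ evaluated along the jump-perturbed stopped paths.

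I expect the main obstacle to be the bookkeeping around jumps, which is absent in \prop{G}: checking that the jump-perturbed stopped paths $\w^{n,\De\w(t^n_i)}_{t^n_i-}$ keep $\phi^n$ predictable and converge in $\dinf$ to $(t,\w_{t-})$ (this is where the "exhausting" property of $\Pi$, equivalently \eq{ass_w}, and the uniform vanishing of oscillations are used), and isolating the jump component $\phi(t,\w)\cdot\De\w(t)$ of the gain by matching the jump of $t\mapsto F(t,\w_t)$ against the jump-sum term of \eq{fif-d} — together with the minor technical point of passing from the $t=T$ statement of \thm{fif-d} to its localized version on $[0,t]$.
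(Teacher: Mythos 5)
Your proposal is correct and follows essentially the same approach as the paper: apply \thm{fif-d} along the \cadlag\ path, construct $\phi^n$ by freezing $\vd F$ at the jump-perturbed stopped paths $\w^{n,\De\w(t^n_i)}_{t^n_i-}$, verify $\phi^n\to\phi$ pointwise via the $\dinf$-convergence and joint continuity of $\vd F$, and read off the \cadlag\ property and the jump formula $\De G(t,\w;\phi)=\vd F(t,\w_{t-})\cdot\De\w(t)$ from the change-of-variable formula and \rmk{regularity}. The only difference from the paper is cosmetic — you establish the explicit expression for $G$ and its jumps before verifying $\phi^n\to\phi$, whereas the paper does it in the reverse order — and your use of $\vd F(t^n_i,\cdot)$ rather than $\phi(t^n_i+,\cdot)$ in the definition of $\phi^n$ is equivalent under the standing assumption $\vd F\in\CC^{0,0}(\L_T)$.
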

\begin{proof}
The proof follows the lines of that of \prop{G}, using the change of variable formula for functionals of \cadlag\ paths instead of continuous paths, which entails the definition of the pathwise integral \eq{pathint-cadlag}.
For all $\w\in\O$ and $t\in[0,T]$, we define
\begin{align*}
    \phi^n(t,\w):={}&\phi(0,\w)\ind_{\{0\}}(t)+\sum_{i=0}^{m(n)-1}\phi\left(t^n_{i}+,\w^{n,\De\w(t^n_{i})}_{t^n_i-}\right)\ind_{(t^n_{i},t^n_{i+1}]}(t)\\
\intertext{and}
\psi^n(t,\w):={}& V_0-\phi(0+,\w)-\sum_{i=1}^{m(n)-1}\w(t^n_{i}\wedge t)\cdot(\phi^n(t^n_{i+1}\wedge t,\w)-\phi^n(t^n_{i}\wedge t,\w)).
\end{align*}
then $(V_0,\phi^n,\psi^n)$ is a simple self-financing strategy, with cumulative gain $G(\cdot,\cdot;\phi^n)$ given by
\begin{align*}
G^n(t,\w)={}&\sum_{i=1}^{k}\vd F\lf t^n_{i-1},\w^{n,\De\w(t^n_{i-1})}_{t^n_{i-1}-}\rg\cdot(\w(t^n_{i})-\w(t^n_{i-1}))\\
&\,+\vd F\lf t^n_{k},\w^{n,\De\w(t^n_{k})}_{t^n_{k}-}\rg\cdot(\w(t)-\w(t^n_{k})), 
\end{align*}
Finally, we verify that
$$\forall\w\in\O,\,\forall t\in[0,T],\quad|\phi^n(t,\w)-\phi(t,\w)|\limn0.$$
This is true, by the left-continuity of $\vd F$: for each $t\in[0,T],\w\in\O$ and $n\in\N$, we have that
$\forall \e>0$, $\exists \eta=\eta(\e)>0$, $\exists\bar n=\bar n(t,\eta)\in\NN$ such that, $\forall n\geq\bar n$,
$$\dinf\left(\w^{n,\De\w(t^n_k)}_{t_k^n-},\w_{t-}\right)=\max\left\{||\w^n_{t^n_k-},\w_{t^n_k-}||_\infty,\sup_{u\in[t^n_k,t)}|\w(t^n_k)-\w(u)|\right\}+|t-t_k^n|<\eta,$$
hence
\begin{align*}
|\phi^n(t,\w)-\phi(t,\w)|={}&\abs{\lim_{s\searrow t^n_k}\phi(s,\w_{t_k^n-}^{n,\De\w(t^n_k)})-\phi(t,\w)}\\
={}&\lim_{s\searrow t^n_k}\abs{\vd F(s,\w_{t_k^n-}^{n,\De\w(t^n_k)})-\vd F(t,\w_{t-})}\\
\leq&\e.
\end{align*}
Therefore:
$$G(t,\w;\phi^n)=\limn G(t,\w;\phi),\quad G(t,\w;\phi)=\int_{(0,t]}\vd F(u,\w_{u-})\cdot\ud^{\Pi}\w,$$
where $G(t,\w;\phi)$ is an $\FF$-adapted real-valued process on $(\O,\F)$.
Moreover, by the change of variable formula \eq{fif-d} and \rmk{regularity}, it is \cadlag\ with left-side jumps
\begin{align*}
  \De G(t,\w;\phi)={}&\lim_{s\nearrow t}(G(t,\w;\phi)-G(s,\w;\phi))\\
  ={}& F(t,\w_{t})- F(t,\w_{t-})-\lf F(t,\w_{t})- F(t,\w_{t-})-\vd F(t,\w_{t-})\cdot\De\w(t)\rg\\
  ={}&\vd F(t,\w_{t-})\cdot\De\w(t).
\end{align*}
\end{proof}

\begin{corollary}
  Let $\phi$ be as in \prop{G-cadlag}, then $\psi(\cdot,\cdot;\phi)$, defined for all $t\in[0,T]$ and $\w\in Q(\O,\Pi)$ by
\begin{align*}
\psi(t,\w;\phi)={}&V_0-\phi(0+,\w)\\
&{}-\Limn\sum_{i=1}^{k(t,n)}\w(t^n_i)\cdot\lf\vd F_{t^n_{i}}\lf\w^{n,\De\w(t^n_{i})}_{t^n_{i}-}\rg-\vd F_{t^n_{i-1}}\lf\w^{n,\De\w(t^n_{i-1})}_{t^n_{i-1}-}\rg\rg
\end{align*}
is the bond position process of the trading strategy $(V_0,\phi,\psi)$ which is self-financing on $Q(\O,\Pi)$.
\end{corollary}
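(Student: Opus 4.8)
By \prop{G-cadlag}, the pair $(V_0,\phi)$ is self-financing on $Q(\O,\Pi)$, the witnessing sequence being the simple self-financing strategies $(V_0,\phi^n,\psi^n)$ constructed in its proof, for which condition (i) of \defin{path-sf} was verified. Hence, by the equivalence of the three conditions in \defin{path-sf} (see \rmk{path-sf}), the bond-holding process $\psi(\cdot,\cdot;\phi)$ is already well defined on $Q(\O,\Pi)$ as the pointwise limit $\Limn\psi^n(t,\w)$: it is \cadlag\ and $\FF$-adapted, it satisfies $V(t,\w;\phi)=V_0+G(t,\w;\phi)$ together with the jump identity $\psi(t+,\w;\phi)-\psi(t,\w;\phi)=-\w(t)\cdot\lf\phi(t+,\w)-\phi(t,\w)\rg$, and it obeys the general formula \eq{psi}. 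The content of the corollary is therefore only to rewrite that limit in the closed form displayed in the statement, and the plan is to insert the explicit shape of $\phi^n$ from \prop{G-cadlag} into \eq{psi-sf} and rearrange the finite sum.

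The key observation will be that the value of $\phi^n$ on an interval $(t^n_i,t^n_{i+1}]$, which by construction equals $\phi(t^n_i+,\w^{n,\De\w(t^n_i)}_{t^n_i-})$, can be replaced by $\vd F$ evaluated \emph{at} the partition point $t^n_i$: the stopped path $\w^{n,\De\w(t^n_i)}_{t^n_i-}$ is constant on $[t^n_i,T]$, so $\dinf\big((s,\w^{n,\De\w(t^n_i)}_{t^n_i-}),(t^n_i,\w^{n,\De\w(t^n_i)}_{t^n_i-})\big)=|s-t^n_i|\to0$ as $s\searrow t^n_i$, whence, since $\vd F\in\CC^{0,0}(\L_T)$, $\phi(t^n_i+,\w^{n,\De\w(t^n_i)}_{t^n_i-})=\vd F(t^n_i,\w^{n,\De\w(t^n_i)}_{t^n_i-})$. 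Substituting this into \eq{psi-sf}, one notes that the increments $\phi^n(t^n_{i+1}\wedge t,\w)-\phi^n(t^n_i\wedge t,\w)$ vanish for every $i$ with $t^n_i\geq t$ (both truncated arguments equal $t$), while for $i\leq k(t,n)$ one has $\w(t^n_i\wedge t)=\w(t^n_i)$ and $\phi^n(t^n_{i+1}\wedge t,\w)=\vd F(t^n_i,\w^{n,\De\w(t^n_i)}_{t^n_i-})$; hence the sum in \eq{psi-sf} collapses, with no boundary remainder, to
$$\psi^n(t,\w)=V_0-\phi(0+,\w)-\sum_{i=1}^{k(t,n)}\w(t^n_i)\cdot\big(\vd F_{t^n_i}(\w^{n,\De\w(t^n_i)}_{t^n_i-})-\vd F_{t^n_{i-1}}(\w^{n,\De\w(t^n_{i-1})}_{t^n_{i-1}-})\big).$$
Letting $n\to\infty$ and using the convergence $\psi^n\to\psi(\cdot,\cdot;\phi)$ furnished by \rmk{path-sf} yields exactly the formula in the statement; the fact that this $\psi$ is the bond position of the self-financing strategy $(V_0,\phi,\psi)$ on $Q(\O,\Pi)$ is then the first paragraph, or may be read off $\psi(t,\w;\phi)=V_0+G(t,\w;\phi)-\phi(t,\w)\cdot\w(t)$ together with $\De G(t,\w;\phi)=\vd F(t,\w_{t-})\cdot\De\w(t)=\phi(t,\w)\cdot\De\w(t)$ from \prop{G-cadlag}.

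The argument will be entirely parallel to that of \cor{path-sf}; the only two points requiring care are the replacement of $\phi(t^n_i+,\cdot)$ by $\vd F(t^n_i,\cdot)$ — which is precisely where the joint continuity $\vd F\in\CC^{0,0}(\L_T)$ is used, rather than the mere left-continuity that sufficed in the continuous case — and the bookkeeping of the truncations ``$\wedge t$'' that shrinks the summation range from $\{1,\dots,m(n)-1\}$ to $\{1,\dots,k(t,n)\}$ cleanly. I expect this second step, the telescoping rearrangement of the Abel sum in \eq{psi-sf}, to be the only genuinely fiddly part; everything else is a direct transcription of \rmk{path-sf} and of the construction carried out in the proof of \prop{G-cadlag}.
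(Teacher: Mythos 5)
Your proof is correct and fills in exactly the argument the paper leaves implicit (the corollary is stated without proof, mirroring \cor{path-sf}). The two points you flag as the crux — using $\vd F\in\CC^{0,0}(\L_T)$ to replace the one-sided limit $\phi(t^n_i+,\cdot)$ by $\vd F(t^n_i,\cdot)$ along the constant tail of the stopped path, and the bookkeeping showing the terms with $t^n_i\geq t$ vanish so the Abel sum in \eq{psi-sf} truncates at $k(t,n)$ — are indeed the only nontrivial steps, and you handle both correctly; the existence of the limit itself is, as you say, already secured by \prop{G-cadlag} via the equivalence in \rmk{path-sf}.
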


\section{Pathwise replication of contingent claims}
\label{sec:replication}

A non-probabilistic replication result restricted to the non-path-dependent case was obtained by \citet{bickwill}, as shown in Propositions \ref{prop:bw1},\ref{prop:bw2} in \Sec{pathint} of this thesis.
Here, we state the generalization to the replication problem for path-dependent contingent claims.

First, let us introduce the notation.

\begin{definition}\label{def:hedging_error}
  The \emph{hedging error} of a self-financing trading strategy $(V_0,\phi)$ on $U\subset D([0,T],\R^d_+)$ for a path-dependent derivative with payoff $H$ in a scenario $\w\in U$ is the value 
$$V(T,\w;\phi)-H(\w)=V_0(\w)+G(T,\w;\phi)-H(\w).$$
$(V_0,\phi)$ is said to \emph{replicate} $H$ on $U$ if its hedging error for $H$ is null on $U$, while it is called a \emph{super-strategy} for $H$ on $U$ if its hedging error for $H$ is non-negative on $U$, i.e.
$$V_0(\w)+ G(T,\w;\phi)\geq H(\w_T)\quad\forall\w\in U.$$
\end{definition}

For any \cadlag\ function with values in $\S^+(d)$, say $A\in D([0,T],\S^+(d))$, we denote by
$$Q_A(\Pi):=\left\{\w\in Q(\O,\Pi):\;[\w](t)=\int_0^tA(s)\ud s\quad\forall t\in[0,T]\right\}$$
the set of functions with finite quadratic variation along $\Pi$ and whose quadratic variation is absolutely continuous with density $A$. Note that the elements of $Q_A(\Pi)$ are continuous, by \eq{qv-jumps}.
\begin{proposition}\label{prop:hedge}
Consider a path-dependent contingent claim with exercise date $T$ and a continuous payoff functional $H:(\O,\norm{\cdot}_\infty)\mapsto\R$. Assume that there exists a \naf\ $F\in\Cloc(\W_T)\cap\CC^{0,0}(\W_T)$ that satisfies
\beq\label{eq:fpde1}
\left\{\bea{ll}
\hd F(t,\w)+\frac12\tr\lf A(t)\cdot\vd^2F(t,\w)\rg=0,& t\in[0,T),\w\in Q_A(\Pi)\\
F(T,\w)=H(\w).&
\ea\right.\eeq
Let $\tilde A\in D([0,T],\S^+(d))$. Then, the hedging error of the trading strategy $(F(0,\cdot),\vd F)$, self-financing on $Q(\O^0,\Pi)$, for $H$ in any price scenario $\w\in Q_{\tilde A}(\Pi)$, is 
\begin{equation}\label{eq:err}
  \frac12\int_{0}^T\tr\lf (A(t)-\tilde A(t))\vd^2F(t,\w)\rg \ud t.
\end{equation}
In particular, the trading strategy $(F(0,\cdot),\vd F)$ replicates the contingent claim $H$ on $Q_A(\Pi)$ and its portfolio value at any time $t\in[0,T]$ is given by $F(t,w_t)$.
\end{proposition}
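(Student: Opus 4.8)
The plan is to combine the pathwise self-financing property and the Riemann-sum representation of the gain furnished by \prop{G} with the functional change of variable formula \thm{fif-c} applied to $F$ along the realized price path, and then to close the computation using the functional PDE \eq{fpde1}.

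First I would note that $F$ satisfies what is needed for \prop{G}: it is locally regular and jointly continuous on $\W_T$, which is exactly what makes the change of variable formula for functionals of continuous paths available and lets the proof of \prop{G} go through verbatim. Consequently $(F(0,\cdot),\vd F)$ is a self-financing trading strategy on $Q(\O^0,\Pi)$ whose gain is the F\"ollmer integral $G(t,\w;\vd F)=\int_0^t\vd F(u,\w_u)\cdot\ud^{\Pi}\w$, the limit of the Riemann sums in \eq{pathint}, and whose portfolio value is $V(t,\w;\vd F)=F(0,\w)+G(t,\w;\vd F)$. Since every element of $Q_{\tilde A}(\Pi)$ is continuous, it lies in $Q(\O^0,\Pi)$, so all of this applies to any $\w\in Q_{\tilde A}(\Pi)$.

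Next, fix $\w\in Q_{\tilde A}(\Pi)$; then $[\w]$ is continuous with $\ud[\w](t)=\tilde A(t)\,\ud t$, so \thm{fif-c} applied to $F$ gives
\begin{align*}
F(T,\w)={}&F(0,\w)+\int_0^T\vd F(t,\w_t)\cdot\ud^{\Pi}\w\\
&{}+\int_0^T\hd F(t,\w_t)\,\ud t+\frac12\int_0^T\tr\lf\vd^2F(t,\w_t)\,\tilde A(t)\rg\ud t.
\end{align*}
Subtracting $V(T,\w;\vd F)=F(0,\w)+\int_0^T\vd F(t,\w_t)\cdot\ud^{\Pi}\w$ and using the terminal condition $H(\w)=F(T,\w)$, the hedging error of \defin{hedging_error} becomes
$$V(T,\w;\vd F)-H(\w)=-\int_0^T\hd F(t,\w_t)\,\ud t-\frac12\int_0^T\tr\lf\vd^2F(t,\w_t)\,\tilde A(t)\rg\ud t.$$
Substituting $\hd F(t,\w_t)=-\frac12\tr\lf A(t)\,\vd^2F(t,\w_t)\rg$ from \eq{fpde1} turns the right-hand side into $\frac12\int_0^T\tr\lf(A(t)-\tilde A(t))\,\vd^2F(t,\w_t)\rg\ud t$, which is \eq{err}.

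The step that needs the most care, and the one I expect to be the main obstacle, is precisely this last substitution: \eq{fpde1} is posited on $Q_A(\Pi)$, whereas it is being used along $\w\in Q_{\tilde A}(\Pi)$. What makes it legitimate is that $\hd F(t,\w_t)$ and $\vd^2F(t,\w_t)$ depend only on the stopped path $(t,\w_t)\in\W_T$: with $\hd F$ continuous at fixed times and $\vd^2F$ (left-)continuous on $(\W_T,\dinf)$, and the stopped paths arising from $Q_A(\Pi)$ being $\dinf$-dense in $\W_T$ in the models considered, the identity $\hd F(t,\eta_t)+\frac12\tr(A(t)\vd^2F(t,\eta_t))=0$ extends from $Q_A(\Pi)$ to all of $\W_T$, in particular to $\eta=\w$; equivalently, in the applications $F$ is the value functional of the hedger's model and solves the pricing equation pointwise in the sense of \prop{universalprice}. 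Finally, the closing assertion is a specialization: with $\tilde A=A$ the error vanishes, so $(F(0,\cdot),\vd F)$ replicates $H$ on $Q_A(\Pi)$; and running the same computation with the time horizon $T$ replaced by an arbitrary $t\in[0,T]$ — now applying \eq{fpde1} along a path in $Q_A(\Pi)$, where no density argument is needed — gives $V(t,\w;\vd F)=F(0,\w)+G(t,\w;\vd F)=F(t,\w_t)$ for every $\w\in Q_A(\Pi)$ and $t\in[0,T]$.
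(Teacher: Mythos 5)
Your proof follows the paper's own argument step for step: invoke \prop{G} to obtain the self-financing gain and portfolio value, apply \thm{fif-c} to $F$ along a path $\w\in Q_{\tilde A}(\Pi)$, and substitute \eq{fpde1} to arrive at \eq{err}. The computational core is correct and identical to the paper's.

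The one thing worth flagging is the paragraph you devote to justifying the substitution of \eq{fpde1} along $\w\in Q_{\tilde A}(\Pi)$. You are right to worry: as written, \eq{fpde1} is required only at stopped paths $(t,\w_t)$ with $\w\in Q_A(\Pi)$, whereas the proof evaluates it at $(t,\w_t)$ with $\w\in Q_{\tilde A}(\Pi)$, and for $t>0$ these two families of stopped paths are disjoint whenever $A\ne\tilde A$ on $[0,t]$ (since the quadratic variation of $\w_t$ on $[0,t]$ is determined by $\w$). The paper's own proof applies \eq{fpde1} on $Q_{\tilde A}(\Pi)$ without comment, so this is an imprecision in the source as much as in your write-up. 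The clean repair is exactly the second alternative you mention — strengthen the hypothesis so the functional PDE holds on a set of stopped paths that actually contains those arising from $Q_{\tilde A}(\Pi)$ (as indeed happens in the intended applications, where $F$ solves the universal pricing equation on $\supp(S,\PP)$). Your first suggested repair, a $\dinf$-density argument, is merely asserted and would require both a concrete density statement and continuity of $\hd F$, $\vd^2 F$ at the relevant scale; as stated it is not a proof. But the instinct that something beyond the literal hypothesis is needed is correct, and the rest of the argument is sound.
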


\begin{proof}[Proof]
  By \prop{G}, the gain at time $t\in[0,T]$ of the trading strategy $(F(0,\cdot),\vd F)$ in a price scenario $\w\in Q(\O^0,\Pi)$ is given by
  $$G(t,\w;\vd F)=\int_0^t\vd F(u,\w_u)\cdot\ud^{\Pi}\w(u).$$ Moreover, this strategy is self-financing on $Q(\O^0,\Pi)$, hence, by Remark \ref{rmk:path-sf}, its portfolio value at any time $t\in[0,T]$ in any scenario $\w\in Q(\O^0,\Pi)$ is given by
$$V(t,\w)=F(0,\w_{0})+\int_0^t\vd F(u,\w_u)\cdot\ud^{\Pi}\w.$$ 
In particular, since $F$ is smooth, we can apply the change of variable formula for functionals of continuous paths. This, by using the functional partial differential equation \eqref{eq:fpde1}, for all $\w\in Q_{\tilde A}(\Pi)$, gives 
\begin{align*}
V(T,\w)={}&F(0,\w_{0})+\int_{0}^T\vd F(t,\w)\cdot\ud^{\Pi}\w \\
 ={}&F(T,\w_T)-\int_{0}^T\hd F(t,\w)\ud t-\frac12\int_{0}^T\tr\lf \tilde A(t)\vd^2F(t,\w)\rg \ud t\\
={}&H-\frac12\int_{0}^T\tr\lf(\tilde A(t)-A(t))\vd^2F(t,\w)\rg \ud t.
\end{align*}
\end{proof}

\section{Pathwise isometries and extension of the pathwise integral}
\label{sec:isometry}
\sectionmark{Pathwise isometries and extension of the pathwise integral}

We denote $\mathring Q(\O,\Pi)$ the set of price paths $\w$ of non-trivial finite quadratic variation, that is $\w\in Q(\O,\Pi)$ such that $[\w](T)>0$.
Then, given $\w\in \mathring Q(\O,\Pi)$, we consider the measure space $([0,T],\B([0,T]),\ud[\w])$, where $\B([0,T])$ is the family of Borel sets of\OT\ and $\ud[\w]$ denotes the finite measure on \OT\ associated with $[\w]$. Here, we define the space of measurable $\R^d$-valued functions on $[0,T]$ with finite second moment with respect to the measure $\ud[\w]$, that is
\begin{align*}
\mathfrak L^2([0,T],[\w]):=\bigg\{&f:([0,T],\B([0,T]))\to\R^d\mbox{ measurable}:\\
&\int_0^T\pqv{f(t)\,^t\!f(t),\ud[\w](t)} <\infty\bigg\},
\end{align*}
where $\pqv{\cdot}$ denotes the Frobenius inner product, i.e. $\pqv{A,B}=\tr(^t\!AB)=\sum_{i,j}A_{i,j}B_{i,j}$.
Then, consider the set
\begin{align*}
\mathfrak L^2(\FF,[\w]):=\big\{&\phi\;\R^d\mbox{-valued, progressively measurable process on }(\O,\F,\FF),\\
&\phi(\cdot,\w)\in\mathfrak L^2([0,T],[\w])\big\}
\end{align*}
and 
we equip it with the following semi-norm:
$$\norm{\phi}^2_{[\w],2}:=\int_0^T\pqv{\phi(t,\w)\,^t\!\phi(t,\w),\ud[\w](t)},\quad \phi\in\mathfrak L^2(\FF,[\w])$$
We also define the quotient of the space of real-valued paths with finite quadratic variation by its subspace of paths with zero quadratic variation:
$$\bar Q(D([0,T],\R),\Pi):=Q(D([0,T],\R),\Pi)/ker([\cdot](T)),$$
where 
$ker([\cdot](T))=\{v\in Q(D([0,T],\R),\Pi):\;[v](T)=0\}$. 

\begin{proposition}\label{prop:Iw}
  For any price path $\w\in\mathring Q(\O,\Pi)$, let us define the pathwise integral operator 
\begin{eqnarray} \nonumber
  I^\w:\left(\bar\Si(\Pi),\norm{\cdot}_{[\w],2}\right)\!\!\!\!&\to&\lf \bar Q(D([0,T],\R),\Pi),\sqrt{[\cdot](T)}\rg\\
\phi&\mapsto&\int\phi\cdot\ud^\Pi\w, \label{eq:Iw1}
\end{eqnarray}
where $\bar\Si(\Pi):=\Si(\Pi)/ker(\norm{\cdot}_{[\w],2})$ and
\begin{align*}
  ker(\norm{\cdot}_{[\w],2})=\bigg\{&z=(z^1,\ldots,z^d)\in\mathfrak L^2(\FF,[\w]):\;\forall i,j=1,\ldots,d,\;\\
&[\w]_{i,j}\lf\{t\in[0,T]:\,z^i(t,\w)\neq0,\,z^j(t,\w)\neq0\}\rg=0\bigg\}.
\end{align*}
 $I^\w$ is an isometry between two normed spaces: 
\begin{equation}
\forall\phi\in\bar\Si(\Pi),\quad\left[\int\phi\cdot\ud^\Pi\w\right](T)=\int_0^T\pqv{\phi(t,\w)^t\!\phi(t,\w),\ud[\w](t)}.\label{eq:iso}
\end{equation}
Moreover, $I^w$ admits a closure on $L^2(\FF,[\w]):=\mathfrak L^2(\FF,[\w])/ker(\norm{\cdot}_{[\w],2})$, that is the isometry 
\begin{equation}
  \label{eq:Iw2}\bea{rcl}
  \tilde I^\w:\lf L^2(\FF,[\w]),\norm{\cdot}_{[\w],2}\rg&\to&\lf\bar Q(\O,\Pi),\sqrt{[\cdot](T)}\rg,\\\phi&\mapsto&\int\phi\cdot\ud^\Pi\w.
\ea
\end{equation}
\end{proposition}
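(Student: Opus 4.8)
The plan is to prove \prop{Iw} by first verifying the isometry \eq{iso} on simple integrands --- where the pathwise integral is just a finite Riemann sum --- and then obtaining $\tilde I^\w$ from it by density and continuous extension. Throughout, $\w\in\mathring Q(\O,\Pi)$ is fixed.

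First I would take $\phi\in\Si(\pi^n)$, $\phi(t,\cdot)=\sum_{i=0}^{m(n)-1}\l_i\ind_{(t^n_i,t^n_{i+1}]}(t)$ with $\l_i$ an $\F_{t^n_i}$-measurable $\R^d$-valued random variable. For such $\phi$ the path $Z:=I^\w\phi$ is simply the gain of the corresponding simple strategy, $Z(t)=\sum_{i:\,t^n_i<t}\l_i(\w)\cdot(\w(t^n_{i+1}\wedge t)-\w(t^n_i\wedge t))$, so on each block $[t^n_i,t^n_{i+1}]$ it is a \emph{fixed} affine image of $\w$, namely $Z(t)=Z(t^n_i)+\l_i(\w)\cdot(\w(t)-\w(t^n_i))$ (the coefficient $\l_i(\w)$ being frozen). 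Computing $[Z]_\Pi$ along any $\pi^m$ with $m\geq n$, nestedness guarantees that no sub-interval of $\pi^m$ straddles a node $t^n_i$, so the squared increments of $Z$ split block by block and, within $(t^n_i,t^n_{i+1}]$, equal $(\l_i(\w)\cdot(\w(t^m_{j+1})-\w(t^m_j)))^2$; letting $m\to\infty$ and using the polarization definition of the matrix quadratic variation (\defin{qvd}) gives, blockwise, $[Z]_\Pi(t)-[Z]_\Pi(t^n_i)=\int_{(t^n_i,t]}\pqv{\l_i(\w)^t\l_i(\w),\ud[\w](s)}$, together with the jump structure \eq{qv-jumps} since $Z$ jumps only where $\w$ does. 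Summing over $i$ and recalling $\phi(s,\w)=\l_i(\w)$ on $(t^n_i,t^n_{i+1}]$ yields $[I^\w\phi]_\Pi(T)=\norm{\phi}_{[\w],2}^2$, which is \eq{iso}. Since $\phi\mapsto I^\w\phi$ is linear on $\Si(\Pi)$ (again because $\Pi$ is nested, so $\Si(\pi^n)\subset\Si(\pi^m)$ for $m\geq n$), \eq{iso} shows $I^\w$ kills $ker(\norm{\cdot}_{[\w],2})$, hence descends to $\bar\Si(\Pi)$ and is there a linear isometry into $(\bar Q(D([0,T],\R),\Pi),\sqrt{[\cdot](T)})$.

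For the extension I would note that $\sqrt{[\cdot](T)}$ is the norm of the (polarized) quadratic-variation bilinear form $(x,y)\mapsto[x,y]_\Pi(T)$, so $(I^\w(\bar\Si(\Pi)),\sqrt{[\cdot](T)})$ is an inner-product space and $(L^2(\FF,[\w]),\norm{\cdot}_{[\w],2})$ is a Hilbert space; what remains is density of $\bar\Si(\Pi)$ in $L^2(\FF,[\w])$. Given $\phi\in\mathfrak L^2(\FF,[\w])$ one has $\phi(\cdot,\w)\in L^2([0,T],\ud[\w];\R^d)$, and the finite measure $\ud[\w]=\ud[\w]^c+\sum_s\De\w^2(s)\,\d_s$ has all its atoms at jump times of $\w$, hence --- since $\Pi$ is assumed throughout the chapter to exhaust $\w$ --- at points of $\cup_n\pi^n$. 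Thus the atomic part of $\phi(\cdot,\w)$ is reproduced exactly by a step function with nodes in $\cup_n\pi^n$, and the continuous part approximated by ordinary step functions over the dense set $\cup_n\pi^n$; refining into a single $\pi^n$ and taking the value on $(t^n_i,t^n_{i+1}]$ to be an $\F_{t^n_i}$-measurable (conditional-average) version of $\phi$ produces $\phi_k\in\Si(\Pi)$ with $\norm{\phi_k-\phi}_{[\w],2}\to 0$. With density in hand, $\tilde I^\w$ is defined by continuous extension: $(\phi_k)$ Cauchy forces $(I^\w\phi_k)$ Cauchy in the codomain, its limit is declared to be $\tilde I^\w\phi$, \eq{iso} passes to the limit, and isometry makes the construction independent of the approximating sequence and consistent with $I^\w$ on $\bar\Si(\Pi)$; this gives \eq{Iw2}.

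The main obstacle is not the first step (bookkeeping with the quadratic-variation definition) but the two ingredients of the extension: (i) the density of $\Si(\Pi)$ in $\mathfrak L^2(\FF,[\w])$ with breakpoints constrained to $\cup_n\pi^n$ and with the atoms of $\ud[\w]$ handled correctly; and (ii) locating the limit of the Cauchy sequence $(I^\w\phi_k)$ inside the stated codomain, i.e. showing that the relevant subspace of $(\bar Q(D([0,T],\R),\Pi),\sqrt{[\cdot](T)})$ is complete, or equivalently identifying $\bar Q(\O,\Pi)$ with the abstract Hilbert-space completion of $I^\w(\bar\Si(\Pi))$. Some care is needed because $Q(D([0,T],\R),\Pi)$ is not itself a vector space; one must work inside the genuine linear span of the images $I^\w\phi$, which are piecewise affine functions of the single path $\w$ and therefore, as noted in \Sec{qv}, stable under addition for the purpose of quadratic variation.
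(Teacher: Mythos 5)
Your argument follows the same route as the paper's: verify the isometry \eqref{eq:iso} on simple integrands by refining along nested partitions (the paper rewrites $[\w](t^n_i)-[\w](t^n_{i-1})$ as a limit over $\pi^m$ and interchanges, which is exactly your blockwise computation), then extend by density of $\bar\Si(\Pi)$ in $L^2(\FF,[\w])$ and completeness of the codomain. You are somewhat more explicit than the paper about two points it leaves implicit --- that the jump structure \eqref{eq:qv-jumps} for $I^\w\phi$ holds because $I^\w\phi$ jumps only at jump times of $\w$, and that the codomain's completeness deserves care since $Q(D([0,T],\R),\Pi)$ is not a vector space --- but these are refinements within the same proof strategy rather than a different argument.
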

\proof
The space $\left(\mathfrak L^2(\FF,[\w]),\norm{\cdot}_{[\w],2}\right)$ is a semi-normed space and its quotient with respect to the kernel of $\norm{\cdot}_{[\w],2}$ is a normed space, which is also a Banach space by the Riesz-Fischer theorem.
Moreover, for any $\phi\in\Si(\Pi)$, it holds
\begin{align*}
&  \int_0^T\pqv{\phi(t,\w)^t\!\phi(t,\w),\ud[\w](t)}=\\
={}& \sum_{i=1}^{m(n)}tr\lf\phi(t^n_i,\w)^t\!\phi(t^n_i,\w)([\w](t^n_i)-[\w](t^n_{i-1}))\rg\\
={}&\sum_{i=1}^{m(n)}tr\lf\phi(t^n_i,\w)^t\!\phi(t^n_i,\w)\lim_{m\to\infty}\sum_{t^n_{i-1}<t^m_j\leq t^n_i}(\w(t^m_j)-\w(t^m_{j-1}))^t\!(\w(t^m_j)-\w(t^m_{j-1}))\rg\\
={}&\lim_{m\to\infty}\sum_{t^m_j\in\pi^m}tr\lf\phi(t^m_j,\w)^t\!\phi(t^m_j,\w)(\w(t^m_j)-\w(t^m_{j-1}))^t\!(\w(t^m_j)-\w(t^m_{j-1}))\rg\\
={}&\lim_{m\to\infty}\sum_{t^m_j\in\pi^m}\lf\int_{t^m_{j-1}}^{t^m_j}\phi(\cdot,\w)\cdot\ud^\Pi\w\rg^2\\
={}&\left[\int\phi(\cdot,\w)\cdot\ud^\Pi\w\right](T).
\end{align*}
Finally, since $\lf \bar Q(D([0,T],\R),\Pi),\sqrt{[\cdot](T)}\rg$ is a Banach space and $\bar\Si(\Pi)$ is dense in $\lf L^2(\FF,[\w]),\norm{\cdot}_{[\w],2}\rg$, we can uniquely extend the isometry $I^\w$ in \eq{Iw1} to the isometry $\tilde I^\w$ in \eq{Iw2}.
\endproof

\begin{remark}
  For any $\w\in\mathring Q(\O,\Pi)$ and any $\phi\in L^2(\FF,[\w])$,
the pathwise integral of $\phi$ with respect to $\w$ along $\Pi$ is given by a limit of Riemann sums:
\beq
\int\phi\cdot\ud^\Pi\w =\Limn \sum_{t^n_i\in\pi^m}\phi^n(t^n_i,\w)\cdot(\w(t^n_i)-\w(t^n_{i-1})),
\eeq
independently of the sequence $(\phi^n)_{n\geq1}\in\bar\Si(\Pi)$ such that $$\norm{\phi^n(\cdot,\w)-\phi(\cdot,\w)}_{[\w],2}\limn0.$$
\end{remark}
Indeed, the definition of the isometry in \eq{Iw2} entails that, given $\phi(\cdot,\w)\in L^2(\FF,[\w])$, for any sequence $(\phi^n(\cdot,\w))_{n\geq1}\in\bar\Si(\Pi)$ such that $$\norm{\phi^n(\cdot,\w)-\phi(\cdot,\w)}_{[\w],2}\limn0,$$
then 
\beq\label{eq:limqv}
\left[\sum_{t^n_i\in\pi^m}\phi^n(t^n_i,\w)\cdot(\w(t^n_i)-\w(t^n_{i-1})) - \int\phi\cdot\ud^\Pi\w\right](T)\limn0.
\eeq
Since $\sqrt{[\cdot](T)}$ defines a norm on $ \bar Q(D([0,T],\R),\Pi)$, \eq{limqv} implies that the pathwise integral of $\phi$ with respect to $\w$ along $\Pi$ is a pointwise limit of Riemann sums:
$$\int\phi\cdot\ud^\Pi\w=\Limn\sum_{t^n_i\in\pi^m}\phi^n(t^n_i,\w)\cdot(\w(t^n_i)-\w(t^n_{i-1})),$$
independently of the chosen approximating sequence $(\phi^n)_{n\geq1}$.

\chapter{Pathwise Analysis of dynamic hedging strategies}
\label{chap:robust}
\chaptermark{Pathwise Analysis of dynamic hedging}

The issue of model uncertainty and its impact on the pricing and hedging of derivative securities has been the focus of a lot of research in the quantitative finance literature (see e.g. \citet{avlevyparas,bickwill,cont2006,lyons}).
Starting with Avellaneda et al.'s Uncertain Volatility Model \cite{avlevyparas}, the literature has focused on the analysis of the performance of pricing and hedging simple payoffs under model uncertainty.
The dominant approach in this stream of literature was to replace the assumption of a given, known, probability measure by a family of probability measures which reflects model uncertainty, and look for bounds on prices and performance measures for trading strategies using a worst-case analysis across the family of possible models.

A typical problem to consider is the hedging of a contingent claim. Consider a market participant who issues a contingent claim with payoff $H$ and maturity $T$ on some underlying asset. To price and hedge this claim, the issuer uses a pricing model (say, Black-Scholes), computes the price as
$$ V_t = E^{\mathbb{Q}}[H|{\cal F}_t]$$
and hedges the resulting profit and loss using the hedging strategy derived from the same model (say, Black-Scholes delta hedge for $H$).
However, the {\it true} dynamics of the underlying asset may, of course, be different from the assumed dynamics.
Therefore, the hedger is interested in a few questions: How good is the result of the model-based hedging strategy in a realistic scenario? How 'robust' is it to model mis-specification? How does the the hedging error relate to model parameters and option characteristics?
In 1998, \citet{elkaroui} provided an answer to these questions in the case of non-path-dependent options in the context of Markovian diffusion models. They provided an explicit formula for the profit and loss of the hedging strategy. \citet{elkaroui} showed that, when the underlying asset follows a Markovian diffusion
$$\ud S_t= \mu(t)S(t)\ud t+ S(t)\sigma_0(t,S(t)) \ud W(t) \qquad \text{under}\ \mathbb{P}^0,$$
 a hedging strategy
computed in a (mis-specified) local volatility model with volatility $\sigma$:
$$\ud S_t= r(t)S(t)\ud t+ S(t)\sigma(t,S(t)) \ud W(t) \qquad \text{under}\ \mathbb{Q}^\sigma$$
 leads, under some technical conditions on $\sigma,\sigma_0$ to a P\&L equal to
\beq\label{eq:elk}
\int_0^T \frac{\sigma^{2}(t,S(t))-\sigma_0^2(t,S(t))}{2}S(t)^2e^{\int_t^T r(s)\ud s}\overbrace{\partial^2_{xx}f(t,S(t))}^{\Gamma(t)}\ud t.
\eeq
$\mathbb{P}^0-$almost surely.
This fundamental result, called by Mark Davis \lq the most important equation in option pricing theory\rq\ \cite{davis}, shows that the exposure of a mis-specified delta hedge over  a short time period is proportional to the Gamma of the option times the specification error measured in quadratic variation terms.

In this chapter, we contribute to this line of analysis by developing  a general framework for analyzing  the performance and robustness of  delta hedging strategies for path-dependent derivatives across a given set of scenarios. Our approach is based on the pathwise financial framework introduced in \chap{path-trading}, which takes advantage of the non-anticipative functional calculus developed in \cite{contf2010}, which extends F\"ollmer's pathwise approach to \ito\ calculus \cite{follmer} to a functional setting.
Our  setting allows for general path-dependent payoffs and does not require any probabilistic assumption on the dynamics of the underlying asset, thereby extending previous results on robustness of hedging strategies in the setting of diffusion models to a much more general setting which is closer to the scenario analysis approach used by risk managers.
We obtain a pathwise formula for the hedging error for a general path-dependent derivative  and provide sufficient conditions ensuring the robustness of the delta hedge. Under the same conditions, we show that discontinuities in the underlying asset always deteriorate the hedging performance. We show in particular that robust hedges may be obtained in a large class of continuous exponential martingale models under a vertical convexity condition on the payoff functional. We apply these results to the case of hedging strategies for Asian options and  barrier options, both in the Black Scholes model with time-dependent volatility and in a model with path-dependent characteristics, the Hobson-Rogers model \cite{hobson-rogers}.

\section{Robustness of hedging  under model uncertainty: a survey}

\subsection{Hedging under uncertain volatility}

Two fundamental references in the literature on model uncertainty are \citet{avlevyparas} and \citet{lyons}.
\citet{avlevyparas} proposed a novel approach to  pricing and hedging under \lq volatility risk\rq: the \emph{Uncertain Volatility Model}. Instead of looking for the most accurate model (in terms of forward volatility of asset prices), they work under the assumption that the volatility is bounded between two extreme values. In particular, they assume that future stock prices are \ito\ processes
\beq\label{eq:unvol}
\ud S(t)=S(t)\lf\s(t)\ud W(t)+\mu(t)\ud t\rg,
\eeq
where $\mu,\s$ are adapted process such that $\s_{\min}\leq\s\leq\s_{\max}$ and $W$ is a standard Brownian motion. 
The problem under consideration was the pricing and hedging of a derivative security paying a stream of cash-flows at $N$ future dates: $f_1(S(t_1)),\ldots,f_N(S(t_N))$, where $f_j$ are known functions. By denoting $\P$ the class of probability measures on the set of paths under which the coordinate process $S$ has a dynamics \eq{unvol} for some $\s$ between the bounds, then in absence of arbitrage opportunities it is possible to construct an optimal (in the sense that the initial cost is minimal) self-financing portfolio that hedges a short position in the derivative and gives a non-negative value after paying out all the cash flows. This optimal portfolio consists of an initial capital $p^+(t,S(t))$ and a risky position $\partial_{S}p^+(t,S(t))$, where $p^+(t,S(t))=\sup_{\PP\in\P}\EE^\PP\left[\sum_{j=1}^Ne^{-r(t_j-t)}f_j(S(t_j))\right]$ is obtained by solving the Black-Scholes-Barenblatt equation
\begin{align*}
\partial_{t}p^+(t,S(t))+\frac12S(t)^2\s^*\lf\partial_{SS}p^+(t,S(t))\rg^2\partial_{SS}p^+(t,S(t))\\
=-\sum_{k=1}^{N-1}f_j(S(t))\d_{t_k}(t),\quad t<t_N,
\end{align*}
with final condition $p^+(t,S(t))=f_N(S(t_N))$ where the function $\s^*$ is defined as $\s^*(s)=\s_{\min}\ind_{(-\infty,0)}(s)+\s_{\max}\ind_{[0,\infty)}(s)$.

On the other hand, \citet{lyons} analyzes the same problem of \citet{avlevyparas} but uses a pathwise approach, in view of F\"ollmer's formula \eq{follmer_ito}. The security process $S$ is multi-dimensional and the only assumption is that it has finite quadratic variation at any time $t\geq0$ along the sequence of dyadic partitions and that the quadratic variation function $A=\{A_{i,j}\}_{i,j\in I}$ is such that, for all $u\geq0$, $A(u)$ belongs to the set
$$\bea{rl}O(\l,\L,K(u,S(u)):=&\!\!\!\left\{\g=\{\g_{i,j}\}_{i,j\in I}\text{ positive symmetric matrix, }\right.\\
&\left.\forall v\in\R^I_+,\; \l\,^t\!vK(u,S(u))v<^t\!v\g v<\L\,^t\!vK(u,S(u))v\right\},\ea$$
where $\l\leq1,\L\geq1$ are given constants and $K$ is a reference model for the squared volatility of the security, e.g. $K_{i,j}(t,s)=\s_{i,j}(t,s)s_is_j$. The main result in \cite{lyons} claims that there exists a hedging strategy with an initial investment $f(0,S(0))$ that replicates a derivative paying $F(\t,S(\t))$ at the first occasion $\t$ that the security $(t,S(t))$ leaves a fixed smooth domain $U\subset\R\times\R^I_+$. Moreover, such a strategy returns at any time $t<T$ an excess stream of money equal to
$$\int_0^t\frac12\lf\sum_{i,j\in I}(\tilde A_{i,j}(u,S(u))-A_{i,j}(u,S(u))\partial_{s_i s_j}f\rg(u,S(u))\ud u$$
and at time $T$ it holds exactly $F(T,S(T))$.
This is an application of the pathwise \ito\ formula proven by F\"ollmer and of the PDE theory, which guarantees that under appropriate conditions on $K$ the Pucci-maximal equation
$$\bea{l}\sup_{a\in O(\l,\L,K(u,S(u)))}\lf\frac12\sum_{i,j\in I}a_{i,j}\partial_{s_i s_j}f\rg(u,s)+\partial_{u}f(u,s)=0,\quad(u,s)\in U,\\
f(u,s)=F(u,s),\quad (u,s)\in\partial_pU\ea$$
has a smooth solution $f$ which is also the solution of the linear equation
$$\lf\frac12\sum_{i,j\in I}\tilde A_{i,j}\partial_{s_i s_j}f\rg(u,s)+\partial_{u}f(u,s)=0,\quad\tilde A_{i,j}\in O(\l,\L,K(u,s)).$$

In 1996, \citet{bergman} established the properties of European option prices as functions of the model parameters in case the underlying asset follows a one-dimensional diffusion or belongs to a certain restricted class of multi-dimensional diffusions, or stochastic volatility models, by using PDE methods. Their results have implications in the robustness analysis of pricing and hedging derivatives.
They assume absence of arbitrage opportunities and that the following stochastic differential equations are well-defined in terms of path-by-path uniqueness  of solutions and that parameters allow for the application of the Feynman-Kac theorem.
In the one-dimensional case, they assume that the risk-neutral dynamics of the underlying asset process $S$ is
\begin{equation}
  \label{eq:1dim}
  \ud S(t)=S(t)r(t)\ud t+S(t)\s(t,S(t))\ud W(t),
\end{equation}
where $W$ is a standard Brownian motion.
This holds the \textit{no-crossing} property, i.e.
\begin{equation}
  \label{eq:nocross}
s_2\geq s_1\;\Rightarrow\;S^{t,s_2}(u)\geq S^{t,s_1}(u),\;\text{almost surely}, \forall u\geq t,
\end{equation}
where $S^{s,t}$ solves \eq{1dim} with $S^{s,t}(t)=s$.
Indeed, fixed a realization $W(\cdot,\w)$ of the Brownian motion in \eq{1dim} and the correspondent paths $S^{t,s_2}(\cdot,\w)$ and $S^{t,s_1}(\cdot,\w)$, if there exists a time $\bar s\geq t$ such that $S^{t,s_2}(\bar s,\w)=S^{t,s_1}(\bar s,\w)$, then the two paths will coincide from $\bar s$ onwards, by the Markov property.
This property allows a claim price to inherit monotonicity from the payoff.
In the two-dimensional case, they assume that the risk-neutral dynamics is given by
\begin{equation}
  \label{eq:2dim}
\left\{\bea{ll}
  \ud S(t)={}&S(t)r(t)\ud t+S(t)\s(t,S(t),Y(t))\ud W^1(t),  \\
  \ud Y(t)={}&(\b(t,S(t),Y(t))-\l(t,S(t),Y(t)))\th(t,S(t),Y(t))\ud t \\
           &+\th(t,S(t),Y(t))\ud W^2(t),
\ea\right.
\end{equation}
where $W^1,W^2$ are standard Brownian motions with quadratic co-variation $[W^1,W^2](t)=\rho(t,S(t),Y(t))\ud t$.
Despite the fact that, unfortunately, multi-dimensional diffusions do not exhibit in general a similar behavior, there are conditions under which the process $S$ solving \eq{2dim} holds the no-crossing property \eq{nocross} as well.
A first important result concerns the inheritance of monotonicity from option prices and establishes bounds on the risky position of a delta-hedging portfolio.
\begin{theorem}[Theorem 1 in \cite{bergman}]
Let the payoff function $g$ be one-sided differentiable and at each point $x$ we also allow either $g'(x-)=\pm\infty$ or $g'(x+)=\pm\infty$. Suppose that $S$ follows either the one-dimensional diffusion~\eq{1dim}, or the two-dimensional diffusion~\eq{2dim} with the additional property that the drift and diffusion parameters do not depend on $s$. Then
$$\inf_x (\min\{g'(x-),g'(x+)\})\leq \partial_{s}v\leq\sup_x(\min\{g'(x-),g'(x+)\}),$$
uniformly in $s,t$, where $v$ is the value of the European claim with payoff $g$.
\end{theorem}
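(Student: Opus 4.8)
The plan is to establish the bounds on $\partial_s v$ by differentiating the Feynman-Kac representation $v(t,s)=\EE^{\QQ}\!\left[e^{-\int_t^T r(u)\ud u}g(S^{t,s}(T))\right]$ with respect to the initial value $s$, and then exploiting the no-crossing property to control the resulting terms. First I would fix $(t,s)$ and, for $h>0$, write a difference quotient
\[
\frac{v(t,s+h)-v(t,s)}{h}=\EE^{\QQ}\!\left[e^{-\int_t^T r(u)\ud u}\,\frac{g(S^{t,s+h}(T))-g(S^{t,s}(T))}{h}\right].
\]
By the no-crossing property \eqref{eq:nocross} (valid in the one-dimensional case \eqref{eq:1dim} and, under the stated extra hypothesis on the coefficients, in the two-dimensional case \eqref{eq:2dim}), we have $S^{t,s+h}(T)\geq S^{t,s}(T)$ almost surely, so the increment $g(S^{t,s+h}(T))-g(S^{t,s}(T))$ has a sign controlled by the monotonicity behaviour of $g$ between the two values. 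Using the one-sided differentiability of $g$, for each realisation the quotient $(g(b)-g(a))/(b-a)$ with $b\geq a$ is sandwiched between $\inf_x\min\{g'(x-),g'(x+)\}$ and $\sup_x\min\{g'(x-),g'(x+)\}$; this is the standard mean-value-type estimate for one-sided differentiable functions, which holds even where one of the one-sided derivatives is infinite because then the other one-sided derivative still bounds the slope from the relevant side.

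Next I would pass to the limit $h\to 0$. Since $S^{t,s+h}(T)/S^{t,s}(T)$ and the difference $S^{t,s+h}(T)-S^{t,s}(T)$ are controlled by the smooth dependence of the SDE flow on its initial condition (this is where the standing assumptions — path-by-path uniqueness and applicability of Feynman-Kac — are used), dominated convergence gives that $\partial_s v$ exists and equals $\EE^{\QQ}[e^{-\int_t^T r\,\ud u}\,g'(S^{t,s}(T))\,\partial_s S^{t,s}(T)]$ in the smooth case, but more robustly one simply keeps the difference-quotient bound: for every $h>0$,
\[
\inf_x\min\{g'(x-),g'(x+)\}\cdot\frac{S^{t,s+h}(T)-S^{t,s}(T)}{h}\le\frac{g(S^{t,s+h}(T))-g(S^{t,s}(T))}{h}\le\sup_x\min\{g'(x-),g'(x+)\}\cdot\frac{S^{t,s+h}(T)-S^{t,s}(T)}{h}.
\]
Taking expectations against the positive discount factor and using that $\EE^{\QQ}[e^{-\int_t^T r\,\ud u}(S^{t,s+h}(T)-S^{t,s}(T))]=s+h-s=h$ by the martingale property of discounted prices (the asset is traded, so $e^{-\int_0^\cdot r}S$ is a $\QQ$-martingale), the factors of $h$ cancel and we obtain the claimed inequalities uniformly in $(t,s)$, after letting $h\downarrow 0$ and handling $h\uparrow 0$ symmetrically.

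The main obstacle I anticipate is twofold. First, making the difference-quotient-of-$g$ estimate rigorous when $g$ has points with infinite one-sided derivative: one must argue that along the a.s.\ ordered pair $(S^{t,s}(T),S^{t,s+h}(T))$ the secant slope is still trapped between $\inf_x\min\{g'(x-),g'(x+)\}$ and $\sup_x\min\{g'(x-),g'(x+)\}$, which requires a careful case analysis of one-sided differentiable functions on an interval. Second, justifying the interchange of $\partial_s$ with the expectation (or, in the approach above, the dominated-convergence step and the martingale identity $\EE^{\QQ}[e^{-\int_t^T r}(S^{t,s+h}(T)-S^{t,s}(T))]=h$): in the two-dimensional model \eqref{eq:2dim} this is exactly where the hypothesis that the drift and diffusion of $Y$ do not depend on $s$ is essential, since otherwise the no-crossing comparison fails and the flow $s\mapsto S^{t,s}$ need not be monotone. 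I would therefore structure the write-up so that the no-crossing property is invoked as a black box (citing the discussion following \eqref{eq:nocross}), reducing the proof to the deterministic secant estimate for $g$ plus the martingale normalisation, both of which are then routine.
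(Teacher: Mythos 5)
Your proposal is correct and follows the route the paper indicates: it combines the no-crossing property with a secant-slope (generalized mean value) estimate for one-sided differentiable functions. You also make explicit the martingale normalisation $\EE^{\QQ}\!\left[e^{-\int_t^T r\,\ud u}\lf S^{t,s+h}(T)-S^{t,s}(T)\rg\right]=h$, which the paper's one-line remark leaves implicit but which is exactly what converts the pathwise secant estimate into the stated uniform bound on $\partial_s v$.
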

This follows directly by the no-crossing property and an application of the generalized intermediate value theorem of real analysis.
A second important result proves the inheritance of convexity of the claim price from the payoff function, which was already known for proportional one-dimensional diffusions (Black-Scholes setting).
\begin{theorem}[Theorem 2 in \cite{bergman}]\label{th:bergman2}
  Suppose that $S$ follows either the one-dimensional diffusion~\eq{1dim}, or the two-dimensional diffusion~\eq{2dim} with the additional property that the drift and diffusion parameters do not depend on $s$ and there exists a function $G:[0,\infty)^2\rightarrow\R$ such that
$$G(t,y)=\s(t,s,y)\th(t,s,y)\rho(t,s,y).$$
Then, if the payoff function is convex (concave), the calms value is a convex (concave) function of the current underlying price.
\end{theorem}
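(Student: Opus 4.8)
The plan is to establish convexity of the price $v$ in the current underlying value $s$ by PDE methods: write the risk-neutral pricing equation, differentiate it twice in $s$, show that the equation for the second derivative \emph{closes} under the structural hypotheses, and conclude by a maximum principle. First I would record the pricing equation for the value $v$ of the European claim with (convex) payoff $g$. In the two-dimensional setting \eq{2dim}, using that the drift and diffusion parameters of $Y$ do not depend on $s$ and that $\s\th\rho\equiv G(t,y)$, the pricing equation reads
\beq
\partial_t v + \tfrac12\s^2 s^2\,\partial_{ss}v + G(t,y)\,s\,\partial_{sy}v + \tfrac12\th^2\,\partial_{yy}v + rs\,\partial_s v + (\b-\l)\th\,\partial_y v - rv = 0,\qquad v(T,\cdot)=g;
\eeq
the one-dimensional case \eq{1dim} is the obvious specialization with the $y$-variable and the terms $G,\th,\b,\l$ suppressed. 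Existence, uniqueness, smoothness and the Feynman--Kac representation of $v$ are supplied by the standing hypotheses.

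Next I would first treat a smooth payoff, $g\in\C^\infty$ with $g''\ge0$ (the general convex case being recovered by mollification at the end), set $w:=\partial_{ss}v$, and differentiate the pricing equation twice in $s$. The crucial observation is that $G(t,y)s$ depends on $s$ only through the explicit linear factor and $\th,\b,\l$ do not depend on $s$ at all, so the only $s$-derivatives of coefficients that arise are those of $\tfrac12\s^2 s^2$; consequently the resulting equation \emph{closes} in $w$, namely
\beq
\partial_t w + a\,\partial_{ss}w + G(t,y)s\,\partial_{sy}w + \tfrac12\th^2\,\partial_{yy}w + (2a_s+rs)\,\partial_s w + \lf 2G(t,y)+(\b-\l)\th\rg\partial_y w + (a_{ss}+r)\,w = 0,
\eeq
with $a:=\tfrac12\s^2 s^2$ and terminal condition $w(T,\cdot)=g''\ge0$. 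Its second-order part is degenerate elliptic, because the matrix with entries $a$, $\tfrac12 Gs$, $\tfrac12\th^2$ is half the diffusion matrix of $(S,Y)$, so $(\tfrac12 Gs)^2=\tfrac14\rho^2\s^2 s^2\th^2\le a\cdot\tfrac12\th^2$. Applying Feynman--Kac to this linear Cauchy problem gives $w(t,x)=\EE\!\lf e^{\int_t^T(a_{ss}+r)(u,\widehat X_u)\ud u}\,g''(\widehat X_T)\mid \widehat X_t=x\rg\ge0$, where $\widehat X$ is the auxiliary diffusion with the indicated generator; equivalently one invokes the weak parabolic maximum principle together with a polynomial growth bound. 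Hence $v$ is convex in $s$ for smooth convex payoffs, and likewise concave in $s$ for smooth concave payoffs, all inequalities reversed.

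To remove the smoothness assumption I would approximate a general convex $g$ by convex $g_n\in\C^\infty$ with $g_n\to g$ locally uniformly (e.g. by convolution with a smooth kernel), let $v_n$ be the corresponding prices, apply the above to obtain $\partial_{ss}v_n\ge0$, and pass to the limit using continuity of the price in the payoff (dominated convergence in the Feynman--Kac representation of $v_n$) together with the fact that pointwise limits of convex functions are convex. The concave case is identical with reversed inequalities.

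I expect the middle step to be the main obstacle: justifying that $v$ is regular enough to be differentiated twice in $s$ and that the $w$-equation is well posed with the stated representation. This is precisely where the two structural hypotheses are indispensable --- without the $s$-independence of the $Y$-drift term and of $\th$, and without the factorization $\s\th\rho=G(t,y)$, differentiating the cross term and the $Y$-drift term would produce contributions proportional to $\partial_{sy}v$ and $\partial_y v$ that cannot be written in terms of $w$, so the equation would fail to close and the sign argument would break down. A secondary technical nuisance is controlling the behaviour of $v$ and its derivatives as $s\to0^+$, $s\to\infty$ and in $y$, in order to legitimately apply the maximum principle and dominated convergence on the unbounded domain; this is handled by the polynomial growth built into the Feynman--Kac assumptions.
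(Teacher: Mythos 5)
Your proposal is correct in spirit, but it takes a genuinely different route from the one sketched in the thesis (which follows Bergman--Grundy--Wiener). The paper's argument differentiates the pricing PDE \emph{once} in $s$, obtaining a linear Cauchy problem for $u:=\partial_s v$ with terminal datum $g'$, then applies Feynman--Kac to represent $u(t,s)$ as $\EE[\,g'(\tilde S^{t,s}_T)\,]$ for an auxiliary diffusion $\tilde S$ inheriting the no-crossing property~\eqref{eq:nocross}; monotonicity of $g'$ (i.e.\ convexity of $g$) together with no-crossing then gives that $u$ is nondecreasing in $s$, which is the convexity of $v$. You instead differentiate the PDE \emph{twice} in $s$, show the equation for $w:=\partial_{ss}v$ closes thanks to the same two structural hypotheses, and conclude $w\ge 0$ from the Feynman--Kac representation because the terminal datum $g''$ is nonnegative. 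The trade-off: the paper's approach asks for only one $s$-derivative of the coefficient $s^2\s^2(t,s,y)$ and offloads the sign argument onto the probabilistic no-crossing property (which is really just monotonicity of the flow coming from pathwise uniqueness of the SDE), whereas your approach needs one extra order of regularity (you must differentiate $a=\tfrac12\s^2 s^2$ twice and make sense of $a_{ss}$ as a zero-order potential in the $w$-equation) but arrives at the sign conclusion more immediately, without invoking the no-crossing lemma. Your observation that the cross term and the $Y$-drift close precisely under the hypotheses $\th,\b,\l$ independent of $s$ and $\s\th\rho=G(t,y)$ is the same structural point the paper is implicitly relying on; you just extract a cleaner algebraic consequence of it one derivative further down.
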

The proof proceeds by applying the Feynman-Kac theorem to write the claim value as the solution of a Cauchy problem with final datum given by the payoff function $g$; then, by taking the $s$-partial derivative of the PDE, we get a new Cauchy problem for $\partial_{s}v$ with final datum $g'$. It suffices to apply again the Feynman-Kac theorem, taking into account the hypothesis on coefficients, to write $\partial_{s}v$ as an expectation of $g'$ composed to a new stochastic process which holds the no-crossing property. Finally, the no-crossing property gives the monotonicity of $\partial_{s}v$ and equivalently the convexity (concavity) of $v$ in the underlying asset price.
A consequence of the previous results in terms of robustness analysis of hedging strategies is the extension of the comparative statics known in a Black-Scholes setting to a one-dimensional diffusion. In particular, an ordering in the volatility functions is preserved in the claim value functions:
\begin{theorem}[Theorem 6 in \cite{bergman}]
  Let $\s_1(t,s)\geq\s_2(t,s)$ for all $s,t$ and strict inequality holds in some region, then $v_1(t,s)\geq v_2(t,s)$ for all $s,t$.
\end{theorem}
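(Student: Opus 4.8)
This is a monotonicity-in-volatility comparison for European claim prices, in the setting of Theorem \ref{th:bergman2} (in particular with $g$ convex); the quickest route combines the tracking-error identity \eqref{eq:elk} with the propagation of convexity. By relabelling the time origin it suffices to prove $v_1(0,s)\ge v_2(0,s)$ for every $s$, so fix $S(0)=s$ and write $v_1,v_2$ for the value functionals of the claim with payoff $g$ under \eqref{eq:1dim} with volatilities $\sigma_1\ge\sigma_2$ respectively.

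The first step is to consider the self-financing delta-hedge built from the \emph{model-$\sigma_1$} price: initial capital $v_1(0,s)$ and asset position $\partial_sv_1(t,S(t))$, while the underlying $S$ evolves according to the \emph{true} dynamics, namely with volatility $\sigma_2$ (under $\QQ^{\sigma_2}$). Applying \eqref{eq:elk} with hedging model $\sigma_1$ and true model $\sigma_0=\sigma_2$, the terminal value of this portfolio equals
\[
g(S(T))+\int_0^T\frac{\sigma_1^2(t,S(t))-\sigma_2^2(t,S(t))}{2}\,S(t)^2\,e^{\int_t^Tr(u)\,\ud u}\,\partial_{ss}v_1(t,S(t))\,\ud t ,\qquad\QQ^{\sigma_2}\text{-a.s.}
\]
By hypothesis $\sigma_1\ge\sigma_2$, and $s\mapsto v_1(t,s)$ is convex by Theorem \ref{th:bergman2} applied to the model with volatility $\sigma_1$, so $\partial_{ss}v_1\ge0$ and the integral is nonnegative: the strategy super-replicates the claim when the true volatility is $\sigma_2$.

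The second step is to take $\QQ^{\sigma_2}$-expectations of the discounted terminal value and use that the discounted gains process of a self-financing portfolio is a $\QQ^{\sigma_2}$-martingale (a true martingale under the standing square-integrability assumptions), which gives
\[
v_1(0,s)=\EE^{\QQ^{\sigma_2}}\!\left[e^{-\int_0^Tr(u)\ud u}g(S(T))\right]+\EE^{\QQ^{\sigma_2}}\!\left[\int_0^T\frac{\sigma_1^2-\sigma_2^2}{2}\,S(t)^2\,e^{-\int_0^tr(u)\ud u}\,\partial_{ss}v_1(t,S(t))\,\ud t\right].
\]
The first term is $v_2(0,s)$ and the second is nonnegative, whence $v_1\ge v_2$; the strict inequality on a region, together with the no-crossing property \eqref{eq:nocross}, would make the second term strictly positive there, although only the non-strict inequality is asserted. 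Equivalently, in PDE form: setting $w:=v_1-v_2$ and subtracting the two pricing equations, $w$ solves the linear parabolic equation with diffusion coefficient $\tfrac12\sigma_2^2s^2$ and source term $-\tfrac12(\sigma_1^2-\sigma_2^2)s^2\partial_{ss}v_1\le0$ with zero terminal datum, so $w\ge0$ by the parabolic maximum principle (equivalently, by the Feynman--Kac representation, which reproduces the display above).

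The main obstacle is regularity: both the use of \eqref{eq:elk} and the appearance of $\partial_{ss}v_1$ require $v_1\in C^{1,2}$, which rests on the standing smoothness and non-degeneracy assumptions on $\sigma_1,r$ and on enough regularity of $g$; for a payoff that is only one-sided differentiable one would first approximate $g$ from below by smooth convex payoffs $g_k\uparrow g$, prove $v_1^{(k)}\ge v_2^{(k)}$ for each $k$, and pass to the limit using continuity of the price in the terminal payoff. A secondary technical point is justifying the martingale property in the second step (integrability of the discounted gains under $\QQ^{\sigma_2}$), handled by localization on exit times from large bounded regions exactly as in the derivation of \eqref{eq:elk}.
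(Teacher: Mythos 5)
The paper states this only as a cited result from Bergman, Grundy and Wiener, describing it as ``a consequence of the previous results'' (i.e.\ of Theorem~\ref{th:bergman2}, the propagation of convexity) but giving no proof of its own, so there is no paper proof to match against. Your argument is correct, with the right key ingredient correctly identified: the whole thing hinges on $\partial_{ss}v_1\ge 0$, which you obtain from the propagation of convexity (Theorem~\ref{th:bergman2}); the theorem statement as transcribed in the paper suppresses the convexity hypothesis on the payoff, and you are right to read it as implicit, since without a sign on the Gamma the conclusion is false. Of your two routes, the secondary one --- subtract the two pricing PDEs, observe that $w=v_1-v_2$ solves a linear parabolic equation with nonpositive source and zero terminal data, and invoke the maximum principle --- is the one that matches the PDE methodology of Bergman et al.\ that the paper is summarizing. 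Your primary route, via the tracking-error formula \eqref{eq:elk} and the martingale property of the discounted self-financing portfolio under $\QQ^{\sigma_2}$, is the El~Karoui-style probabilistic argument reviewed a little later in the same section (compare Theorems~\ref{th:elkaroui1} and~\ref{th:elkaroui2}); it is a legitimate alternative and has the added payoff of exhibiting the hedging error explicitly, at the cost of needing the $C^{1,2}$ regularity and integrability hypotheses that you flag. The regularization $g_k\uparrow g$ by smooth convex payoffs, passing to the limit by monotone convergence of prices, is the standard fix and is fine. The only quibble is cosmetic: in the second step the factor $e^{\int_t^T r(u)\ud u}$ inside \eqref{eq:elk} combines with the discount $e^{-\int_0^T r(u)\ud u}$ to give $e^{-\int_0^t r(u)\ud u}$, which is what you wrote, but it is worth saying explicitly that this is why your displayed expectation has the discount factor it does.
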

This result turns out to be of special interest if one has knowledge of deterministic bounds on the volatility and the claim to hedge is a plain vanilla option, e.g. a call option; in such a case  it implies to have both the call option and its Delta bounded respectively by the correspondent Black-Scholes call prices and appropriate Black-Scholes Deltas.
\begin{theorem}[Theorem 8 in \cite{bergman}]
If for all $s,t$, $\ushort\s(t)\leq\s(t,s)\leq\bar\s(t)$, then, for all $s,t$,
$$\bea{cc}c^{\mathrm{BS}(\ushort\s)}(t,s)\leq c(t,s)\leq c^{\mathrm{BS}(\bar\s)}(t,s),\\\partial_{s}c^{\mathrm{BS}(\bar\s)}(t,s'')\leq \partial_{s}c(t,s)\leq \partial_{s}c^{\mathrm{BS}(\bar\s)}(t,s'),\ea$$
where $s',s''$ solve respectively
$$\bea{l}c^{\mathrm{BS}(\ushort\s)}(t,s)=c^{\mathrm{BS}(\bar\s)}(t,s'')+\partial_{s}c^{\mathrm{BS}(\bar\s)}(t,s'')(s-s''),\\c^{\mathrm{BS}(\ushort\s)}(t,s)=c^{\mathrm{BS}(\bar\s)}(t,s')-\partial_{s}c^{\mathrm{BS}(\bar\s)}(t,s')(s'-s).\ea$$
\end{theorem}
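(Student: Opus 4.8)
The plan is to derive both statements from two facts already established for one\hyp dimensional diffusions: the propagation of convexity (Theorem~2 in \cite{bergman}, i.e.\ Theorem~\ref{th:bergman2}) and the monotonicity of the price with respect to the volatility (Theorem~6 in \cite{bergman}). First I would dispose of the price bounds. The call payoff $g(s)=(s-K)^+$ is convex, so Theorem~\ref{th:bergman2} applied to the diffusion \eqref{eq:1dim} gives that $s\mapsto c(t,s)$ is convex for each $t$; the Black--Scholes functionals $c^{\mathrm{BS}(\ushort\sigma)}(t,\cdot)$ and $c^{\mathrm{BS}(\bar\sigma)}(t,\cdot)$ are convex as well. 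Since $\ushort\sigma(t)\le\sigma(t,s)\le\bar\sigma(t)$, two applications of Theorem~6 in \cite{bergman} (with $(\sigma_1,\sigma_2)=(\sigma,\ushort\sigma)$ and $(\sigma_1,\sigma_2)=(\bar\sigma,\sigma)$, the non\hyp strict version of the inequality being obtained by an approximation argument) yield $c^{\mathrm{BS}(\ushort\sigma)}(t,s)\le c(t,s)\le c^{\mathrm{BS}(\bar\sigma)}(t,s)$ for all $(t,s)$; the lack of $C^2$ regularity of $g$ is handled by mollifying the payoff and passing to the limit.

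For the Delta bounds, fix $(t,s)$ with $t<T$ (at $t=T$ the ``Delta'' is read as a one\hyp sided derivative of the payoff) and assume $\ushort\sigma\not\equiv\bar\sigma$ on $[t,T]$, the remaining case being trivial. Write $\ushort c(\cdot)=c^{\mathrm{BS}(\ushort\sigma)}(t,\cdot)$ and $\bar c(\cdot)=c^{\mathrm{BS}(\bar\sigma)}(t,\cdot)$. Since $c(t,\cdot)$ is convex and $c(t,\cdot)\le\bar c$ by Step~1, its tangent at $s$ lies below $\bar c$, so for every $x$ one has $\bar c(x)\ge c(t,s)+\partial_{s}c(t,s)(x-s)\ge\ushort c(s)+\partial_{s}c(t,s)(x-s)$, using also $c(t,s)\ge\ushort c(s)$. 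Dividing by $x-s$ and taking the infimum over $x>s$ (resp.\ the supremum over $x<s$) gives $\partial_{s}c(t,s)\le\inf_{x>s}\frac{\bar c(x)-\ushort c(s)}{x-s}$ and $\partial_{s}c(t,s)\ge\sup_{x<s}\frac{\bar c(x)-\ushort c(s)}{x-s}$. It therefore suffices to identify these two extremal secant slopes of $\bar c$ seen from the external point $(s,\ushort c(s))$.

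Here I would invoke the elementary convex geometry of the Black--Scholes call function: $\bar c$ is strictly convex on $\R_+$ with $\bar c(0)=0$, $\partial_{s}\bar c(0+)=0$ and $\partial_{s}\bar c(x)\uparrow1$ as $x\to\infty$, while the point $(s,\ushort c(s))$ lies strictly below the graph of $\bar c$ (by Step~1 and $\ushort\sigma\not\equiv\bar\sigma$) and above the intrinsic value. Consequently the function $x\mapsto\frac{\bar c(x)-\ushort c(s)}{x-s}$ is unimodal on each of $(s,\infty)$ and $(0,s)$, its unique minimum over $(s,\infty)$ being attained at the contact point $s'>s$ of the tangent to $\bar c$ through $(s,\ushort c(s))$, and its unique maximum over $(0,s)$ at the contact point $s''<s$ of the other tangent; at those points the secant slope equals $\partial_{s}\bar c(s')$ and $\partial_{s}\bar c(s'')$ respectively. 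The tangency conditions $\partial_{s}\bar c(s')(s'-s)=\bar c(s')-\ushort c(s)$ and $\partial_{s}\bar c(s'')(s''-s)=\bar c(s'')-\ushort c(s)$ are exactly the two equations defining $s'$ and $s''$ in the statement, and combining with the previous inequalities gives $\partial_{s}c^{\mathrm{BS}(\bar\sigma)}(t,s'')\le\partial_{s}c(t,s)\le\partial_{s}c^{\mathrm{BS}(\bar\sigma)}(t,s')$. The main obstacle is precisely this last geometric step: one must check that the two tangents from the external point exist and are unique and that the secant\hyp slope functions are genuinely unimodal, which is where the precise asymptotic behaviour of $c^{\mathrm{BS}(\bar\sigma)}$ (slopes $0$ and $1$ at the two ends, value squeezed between $0$ and $s-Ke^{-\int_t^T r}$) is used; the reduction carried out in Steps~1--2 is otherwise routine.
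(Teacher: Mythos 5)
Your argument is correct and follows the route the paper itself indicates: the text immediately after the statement says only that ``the bounds on the delta are an immediate consequence of bounds on the call price and of inherited convexity,'' which is precisely the structure of your proof. Step~1 (price bounds via Theorem~6 and convexity via Theorem~2 of \cite{bergman}, with mollification for the kink in the call payoff) and the subsequent tangent\hyp from\hyp an\hyp external\hyp point reading of the Delta bounds are the expected fleshing\hyp out of that one\hyp sentence justification; your geometric analysis of the secant slopes from $(s,\ushort c(s))$ to the graph of $\bar c$, and the identification of the tangency points $s''<s<s'$ with the equations in the statement, is sound, modulo the degenerate cases you already flag (e.g.\ $\ushort\sigma\equiv0$ pushing $s'$ to $\infty$, or $\ushort\sigma\equiv\bar\sigma$ collapsing the bounds).
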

The bounds on the delta are an immediate consequence of bounds on the call price and of inherited convexity. When the values of $s$ and $c(t,s)$ are observed, these bounds can even be tightened.
Finally, they remark that relaxing either the continuity or the Markov property in the one-dimensional case, or the restrictions on the two-dimensional diffusion, the no-crossing property does not need to hold, hence call option prices may exhibit unexpected behaviors.

In 1998, \citet{elkaroui} derived results analogous to \citet{bergman} for both European and American options under a one-dimensional diffusion setting, by an independent approach based on stochastic flows rather than PDEs.
While completeness is not assumed, the market is equipped with the strongest form of no-arbitrage condition, namely discounted stock prices are martingales under the objective probability measure $\PP$. The stock price is assumed to follow \begin{equation}\label{eq:elk-dS}
  \ud S(t)=r(t)S(t)\ud t+\s(t)S(t)\ud W(t),
\end{equation}
where $W$ is a standard $(\Ft,\PP)$-Brownian motion, the interest rate $r$ is a deterministic function in $L^1([0,T],\ud t)$ and the volatility process $\s$ is non-negative, $\Ft$-adapted, almost surely in $L^1([0,T],\ud t)$ and such that the discounted stock price
$$\frac{S(t)}{M(t)}=S(0)\exp\left(\int_0^t\s(u)\ud W(u)-\frac12\int_0^t\s^2(u)\ud u\right),\quad 0\leq t\leq T,$$
is a square-integrable martingale.
A trading strategy, or \emph{portfolio process}, is defined as a bounded adapted process, while a \emph{payoff function} is defined as a convex function on $\R_+$ having bounded one-sided derivatives.
Let $h$ be the payoff function of a European contingent claim, $\phi$ a portfolio process and $P$ an adapted process such that $P(T)=h(S(T))$ (called a \emph{price process}), the \emph{tracking error} associated with $(P,\phi)$ is defined as $e:=V-P$, where $V$ is the value process of the self-financing portfolio with trading strategy $\phi$ and initial investment $V(0)=P(0)$. Then, $(P,\phi)$ is called a
\begin{itemize}
\item \emph{replicating strategy} if $\frac e M\equiv0$, in which case the hedger exactly replicates the option at maturity, i.e. $V(T)=h(S(T))$, and $P(0)=\EE^\PP\left[\frac{h(S(T))}{M(T)}\right]$ is an arbitrage price for the claim;
\item\emph{super-strategy} if $\frac e M$ is non-decreasing, in which case the hedger super-replicates a short position in the claim at maturity, i.e. $V(T)\geq h(S(T))$, and $P(0)\geq\EE^\PP\left[\frac{h(S(T))}{M(T)}\right]$;
\item \emph{sub-strategy} if $\frac e M$ is non-increasing, hence the hedger super-replicates a long position in the claim and the above inequalities are reversed.
\end{itemize}
The main purpose in \cite{elkaroui} is to analyze the performance of a hedging portfolio derived from a model with mis-specified volatility.
First, assuming completeness, they provide two counterexamples of the familiar properties of option prices, when volatility is allowed to be stochastic in a path-dependent manner. On the one hand, a volatility process depending on the initial stock price and the driving Brownian motion may cause the value of a European call to fail the monotonicity property, even if the volatility is non-decreasing in the initial stock price, as it happens for
\begin{equation}\label{eq:counterex1}
  \s(t)=\ind_{\{W(t)<S(0)\}}\ind_{\{t\leq T_a\}},\quad a>0,\quad T_a:=\inf\{t\geq0,W(t)=a\}.
\end{equation}
 On the other hand, even when the underlying dynamics allows the claim value to preserve both monotonicity and convexity, it may happen that an ordering on volatilities is not passed on to the respective call values, e.g.
\begin{equation}\label{eq:counterex2}
  \s(t)\leq\hat\s(t):=\ind_{\{t\leq T_a\}}\quad\text{but}\quad v(x)>\hat v(x)=0\;\forall x\in(0,a).
\end{equation}
Given a mis-specified model
\begin{equation}\label{eq:misS}
  \ud S_\g(t)=S_\g(t) r(t)\ud t+S_\g(t)\g(t,S_\g(t))\ud W(t),
\end{equation}
where the only source of randomness in the volatility is the dependence on the current stock price, the following theorem states the important property of propagation of convexity, also obtained by \citet{bergman}, for one-dimensional diffusions, but the proof follows a completely independent approach.
\begin{theorem}[Theorem 5.2 in \cite{elkaroui}]\label{th:elkaroui1}
  Suppose that $\g:[0,T]\times\R_+\rightarrow\R$ is continuous and bounded from above and $s\mapsto\partial_{s}(s\g(t,s))$ is Lipschitz-continuous and bounded in $\R_+$, uniformly in $t\in[0,T]$. Then, if $h$ is a payoff function, the mis-specified claim value $$v_\g(x)=\EE^\PP\left[h(S_\g(T))|S_\g(0)=x\right]$$ is a convex function of $x>0$.
\end{theorem}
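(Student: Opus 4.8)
The plan is to establish convexity of $v_\gamma$ by a stochastic-flow argument: differentiate $v_\gamma$ in its initial condition, reinterpret the first-variation process as the density of an equivalent change of measure, and identify the resulting expectation as one taken along an auxiliary one-dimensional diffusion to which the no-crossing property \eq{nocross} applies. First I would reduce to a smooth payoff: since $h$ is convex with bounded one-sided derivatives, the mollifications $h_\ep:=h\ast\rho_\ep$ are smooth and convex, with $\|h_\ep'\|_\infty\leq\|h'\|_\infty$ and $h_\ep\to h$ locally uniformly. Writing $\sigma(t,s):=s\gamma(t,s)$, the hypothesis that $\partial_s\sigma(t,s)=\partial_s(s\gamma(t,s))$ is bounded, together with $\sigma(t,0)=0$, makes $\sigma$ globally Lipschitz in $s$ with linear growth, and the same holds trivially for $b(t,s):=r(t)s$; hence \eq{misS} has a unique non-exploding strong solution $S^x$ with $\EE[h_\ep(S^x_T)]<\infty$. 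Thus $v^\ep_\gamma(x):=\EE[h_\ep(S^x_T)]\to v_\gamma(x)$ pointwise by dominated convergence (linear growth of $h$, $\EE[S^x_T]<\infty$), and a pointwise limit of convex functions is convex, so it suffices to treat $h\in C^\infty$ with $h'$ bounded.

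Under the stated regularity ($\partial_s\sigma$ Lipschitz) the flow $x\mapsto S^x$ is $C^1$ and the first variation $Y^x_t:=\partial_x S^x_t$ solves $\ud Y^x_t=r(t)Y^x_t\,\ud t+\partial_s\sigma(t,S^x_t)Y^x_t\,\ud W(t)$, $Y^x_0=1$, so $Y^x_t=e^{\int_0^t r(u)\ud u}\,\mathcal E\!\big(\int_0^{\cdot}\partial_s\sigma(u,S^x_u)\,\ud W(u)\big)_t>0$. Differentiating under the expectation — legitimate because $h'$ is bounded and continuous and the flow is $C^1$ with $L^p$-controlled derivative — yields $v_\gamma'(x)=\EE[h'(S^x_T)Y^x_T]$. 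Since $\partial_s\sigma$ is bounded, Novikov's criterion makes $M^x:=\mathcal E\!\big(\int_0^{\cdot}\partial_s\sigma(u,S^x_u)\,\ud W(u)\big)$ a true martingale; set $\ud\QQ^x=M^x_T\,\ud\PP$. By Girsanov, $\widehat W(t):=W(t)-\int_0^t\partial_s\sigma(u,S^x_u)\,\ud u$ is a $\QQ^x$-Brownian motion, and under $\QQ^x$ the process $S^x$ solves $\ud S^x_t=\widetilde\mu(t,S^x_t)\,\ud t+\sigma(t,S^x_t)\,\ud\widehat W(t)$ with $\widetilde\mu(t,s):=r(t)s+\sigma(t,s)\partial_s\sigma(t,s)$. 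Writing $\rho:=e^{\int_0^T r(u)\ud u}$, this gives
$$v_\gamma'(x)=\rho\,\EE^{\QQ^x}\!\big[h'(S^x_T)\big]=\rho\,\EE\big[h'(\widetilde S^x_T)\big],$$
where $\widetilde S^x$ solves, under $\PP$, $\ud\widetilde S^x_t=\widetilde\mu(t,\widetilde S^x_t)\,\ud t+\sigma(t,\widetilde S^x_t)\,\ud W(t)$, $\widetilde S^x_0=x$ (the second equality being equality in law, valid because $\widetilde\mu$ is locally Lipschitz with linear growth and $\sigma$ is Lipschitz, so this SDE is well-posed).

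Now $\widetilde S^{\cdot}$ is a one-dimensional diffusion with coefficients regular enough for pathwise uniqueness, so the one-dimensional comparison theorem gives the no-crossing property \eq{nocross} for $\widetilde S$: $x_1\leq x_2\Rightarrow\widetilde S^{x_1}_T\leq\widetilde S^{x_2}_T$ $\PP$-a.s. Since $h$ is convex, $h'$ is non-decreasing, so $x\mapsto h'(\widetilde S^x_T)$ is non-decreasing a.s., hence $v_\gamma'(x)=\rho\,\EE[h'(\widetilde S^x_T)]$ is non-decreasing in $x$; equivalently $v_\gamma$ is convex on $(0,\infty)$. Passing back through the mollification $v^\ep_\gamma\to v_\gamma$ gives the assertion for the original payoff $h$.

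I expect the main obstacle to be the rigorous $C^1$-regularity of the flow $x\mapsto S^x$ and the interchange of differentiation with expectation under precisely the stated assumptions on $\gamma$, together with the uniform integrability estimates $\EE[\sup_{x\in K}|Y^x_T|^p]<\infty$ needed to pass difference quotients to the limit; once these are in place, the Girsanov change of measure (using boundedness of $\partial_s\sigma$) and the one-dimensional comparison theorem (using its local Lipschitz regularity) are routine.
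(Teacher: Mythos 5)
Your proof is essentially correct and follows the same global strategy as the paper's (and El~Karoui--Jeanblanc--Shreve's) argument: compute the first variation $Y^x=\partial_xS^x_\gamma$ via \ito's formula, observe that its discounted version is an exponential martingale, apply Girsanov to obtain $v_\gamma'(x)=\rho\,\EE^{\QQ^x}[h'(S^x_\gamma(T))]$, introduce an auxiliary diffusion $\widetilde S^x$ whose law under $\PP$ matches that of $S^x_\gamma$ under $\QQ^x$, and conclude via the no-crossing property \eq{nocross} for one-dimensional diffusions. The one place you diverge is at the start: you mollify the payoff $h$ so that it is $C^\infty$ with bounded derivative, establish convexity of each $v^\ep_\gamma$ by the clean differentiation argument, and pass to the limit using that pointwise limits of convex functions are convex. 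The paper instead works with $h$ as given and handles its mere one-sided differentiability directly, bounding the incremental ratios $\frac{v_\gamma(y)-v_\gamma(x)}{y-x}$ and applying Fatou's lemma on both sides to obtain the exact formula $v_\gamma'(x\pm)=\EE^{\PP}[h'(\widetilde S^x(T)\pm)]$. Your route is shorter for the convexity conclusion alone, but the paper's version buys more: the explicit formula for the one-sided derivatives of $v_\gamma$ is what yields, at the end of the sketch, the assertion that $v_\gamma$ inherits the same Lipschitz bound as $h$, which is used in the subsequent robustness analysis. If you only need convexity, your mollification shortcut is perfectly adequate, and you correctly flagged the real technical burden (the $C^1$-regularity of the flow, uniform $L^p$-control on $Y^x$, and the applicability of the comparison theorem under the stated Lipschitz hypothesis on $\partial_s(s\gamma(t,s))$).
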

Indeed, by denoting $S_\g^x$ the solution of \eq{misS} with initial value $S_\g^x(0)=x$ and by applying the \ito\ formula to the process $\partial_{x}S_\g^x$, 
the discounted process $\z^x=\lf\frac{\partial_ xS_\g^x(t)}{M(t)}\rg_{t\in[0,T]}$ turns out to be the exponential martingale of $(N(t))_{t\in[0,T]}$, $N(t)=\int_0^t\partial_{s}(S_\g^x(u)\g(u,S_\g^x(u)))\ud W(u)$, i.e.
$\z^x(t)=\linebreak[0]\exp\big\{N(t)-\frac12\pqv{N}(t)\big\}.$
Then, Girsanov's theorem says that the process $W^x$, defined by $W^x(t)=W(t)-\int_0^t\partial_{s}(S_\g^x(u)\g(u,S_\g^x(u)))\ud u$, is a $\PP^x$-Brownian motion, where $\frac{\ud\PP^x}{\ud\PP}=\z(T)$.
The idea now is to prove that $v$ has increasing one-sided derivatives. In order to do that, the first step is to bound the incremental ratios $\frac{v_\g(y)-v_\g(x)}{y-x}$, for $y>x$, in such a way to be able to apply on both sides a version of Fatou's lemma. This gives
\begin{align*}\EE^{\PP^x}\left[h'(S_\g^x(T)+)\right]\leq{}&\liminf_{y\searrow x}\frac{v_\g(y)-v_\g(x)}{y-x}\\
\leq{}&\limsup_{y\searrow x}\frac{v_\g(y)-v_\g(x)}{y-x}\leq\EE^{\PP^x}\left[h'(S_\g^x(T)+)\right],
\end{align*}
and an analogous estimate holds for $y<x$, $y\nearrow x$, thus $$v_\g'(x\pm)=\EE^{\PP^x}\left[h'(S_\g^x(T)\pm)\right].$$
Let us notice that, to achieve the above bounds, it is used the same no-crossing property \eq{nocross} which is fundamental in \cite{bergman}.
Lastly, to remove the dependence on $x$ of the expectation operators, they define a new process $\tilde S^x$, whose law under $\PP$ is the same as the law of $S_\g^x$ under $\PP^x$ and which still holds the no-crossing property, hence rewrite $v_\g'(x\pm)=\EE^\PP\left[h'(\tilde S^x(T)\pm)\right]$.
From the last argument it also follows that the one-sided derivatives of $v$ have the same bounds as $h$.
Under additional requirements, \citet{elkaroui} proved a robustness principle similar to Theorem~\ref{th:bergman2} but also providing the explicit formula of the tracking error, which is fundamental to monitor hedging risks.
\begin{theorem}\label{th:elkaroui2}
  Under the assumptions of Theorem~\ref{th:elkaroui1}, let $r,\g$ be H\"older-continuous in their arguments. Then, if
  \begin{equation}\label{eq:vol-dom}
    \s(t)\leq\g(t,S(t))\text{ for Lebesgue-almost all }t\in[0,T], \;\PP-a.s.,
  \end{equation}
then $(P_\g,\De_\g)$ is a super-strategy, where $P_\g(t):=v_\g(t,S(t))$ and $\De_\g(t):=\partial_{s}v_\g(t,S(t))$ for all $t\in[0,T]$. If the volatilities satisfy the reversed inequality in \eq{vol-dom}, then $(P_\g,\De_\g)$ is a sub-strategy. Moreover, the tracking error associated with $(V_\De,P_\g)$ is
\begin{equation}\label{eq:disc-e}
  e_\g(t)=M(t)\frac12\int_0^t\lf\g^2(u,S(u))-\s^2(u)\rg S^2(u)\partial^2_{xx}v_\g(u,S(u))\frac{\ud u}{M(u)}.
\end{equation}

\end{theorem}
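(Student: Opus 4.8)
The plan is to derive the tracking-error formula by applying the classical Itô formula to the discounted value functional $v_\g(t,S(t))/M(t)$ and then comparing the resulting drift with the drift obtained from the self-financing condition, exploiting the fact that $v_\g$ solves the Black--Scholes pricing PDE associated with the mis-specified volatility $\g$. Concretely, by Theorem~\ref{th:elkaroui1} (and the additional Hölder regularity of $r,\g$) the function $v_\g$ is $\CC^{1,2}$ on $[0,T)\times\R_+$ and is the classical solution of
\begin{equation*}
\partial_tv_\g(t,x)+r(t)x\,\partial_xv_\g(t,x)+\tfrac12\g^2(t,x)x^2\,\partial_{xx}^2v_\g(t,x)=r(t)v_\g(t,x),\quad v_\g(T,x)=h(x).
\end{equation*}
Write $P_\g(t)=v_\g(t,S(t))$ and $\De_\g(t)=\partial_xv_\g(t,S(t))$. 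First I would apply Itô's formula to $P_\g(t)/M(t)$ using the true dynamics \eqref{eq:elk-dS}, in which $S$ has \emph{realized} volatility $\s$; the drift term that appears involves $\partial_tv_\g+r(t)S\partial_xv_\g+\tfrac12\s^2(t)S^2\partial_{xx}^2v_\g-r(t)v_\g$. Using the PDE to eliminate the $\partial_t v_\g$ and first-order terms, this drift collapses to $\tfrac12(\s^2(t)-\g^2(t,S(t)))S^2(t)\partial_{xx}^2v_\g(t,S(t))/M(t)$, leaving the martingale part $\int_0^t\partial_xv_\g(u,S(u))\,\ud(S(u)/M(u))$.

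The second step is to identify the discounted value $V_{\De_\g}(t)/M(t)$ of the self-financing portfolio with initial capital $P_\g(0)$ and asset position $\De_\g$. By the classical self-financing relation in the numeraire $M$, $\ud(V_{\De_\g}(t)/M(t))=\De_\g(t)\,\ud(S(t)/M(t))$, so $V_{\De_\g}(t)/M(t)=P_\g(0)+\int_0^t\partial_xv_\g(u,S(u))\,\ud(S(u)/M(u))$. Subtracting the two expressions gives
\begin{equation*}
\frac{e_\g(t)}{M(t)}=\frac{V_{\De_\g}(t)-P_\g(t)}{M(t)}=\frac12\int_0^t\bigl(\g^2(u,S(u))-\s^2(u)\bigr)S^2(u)\,\partial_{xx}^2v_\g(u,S(u))\,\frac{\ud u}{M(u)},
\end{equation*}
which is exactly \eqref{eq:disc-e}. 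The statements about super- and sub-strategies then follow immediately: by Theorem~\ref{th:elkaroui1} the convexity of $h$ propagates, so $\partial_{xx}^2v_\g\ge0$; hence under \eqref{eq:vol-dom} the integrand is non-negative and $e_\g/M$ is non-decreasing, making $(P_\g,\De_\g)$ a super-strategy, with the reversed conclusion under the reversed volatility inequality.

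**The main obstacle.** The delicate point is not the formal Itô computation but justifying it: $v_\g$ need only be $\CC^{1,2}$ on the open strip $[0,T)\times\R_+$, with $\partial_{xx}^2v_\g$ possibly blowing up as $t\uparrow T$ (e.g. for a call payoff the Gamma is unbounded near maturity at the strike), and the one-sided derivative $h'$ of the payoff is merely bounded, not continuous. So I would first prove the formula on $[0,T-\eps]$ for each $\eps>0$, controlling the stochastic integrals via the square-integrability of $S/M$ and the uniform bounds on $\g$ and on $s\mapsto\partial_s(s\g(t,s))$ (which give moment bounds on $S$ and on $\partial_xv_\g$ through the Feynman--Kac representation $v_\g'(x\pm)=\EE^{\PP^x}[h'(S_\g^x(T)\pm)]$ established in Theorem~\ref{th:elkaroui1}), and then pass to the limit $\eps\to0$ using the continuity of $v_\g$ up to $T$ and a uniform-integrability / dominated-convergence argument for the terminal values $v_\g(T-\eps,S(T-\eps))\to h(S(T))$. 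A secondary technical point is the Hölder regularity hypothesis on $r$ and $\g$, which is what guarantees enough smoothness of $v_\g$ (via Schauder-type interior estimates for the parabolic PDE) to run the Itô formula in the first place; I would invoke this exactly at the point of asserting $v_\g\in\CC^{1,2}([0,T)\times\R_+)$.
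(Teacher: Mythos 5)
Your proposal follows essentially the same route as the paper's sketch (itself summarizing El~Karoui, Jeanblanc, and Shreve): apply It\^o's formula to $v_\g(t,S(t))$ under the realized dynamics, use the pricing PDE $L_\g v_\g=0$ to isolate the extra drift $\tfrac12(\s^2-\g^2)S^2\partial_{xx}^2v_\g$, compare with the self-financing portfolio, and then invoke the propagated convexity of $v_\g$ from Theorem~\ref{th:elkaroui1} together with the volatility domination to get monotonicity of $e_\g/M$. Your additional care about the possible blow-up of $\partial_{xx}^2 v_\g$ as $t\uparrow T$ and the localization on $[0,T-\eps]$ followed by a uniform-integrability passage to the limit is a genuine refinement of a step the paper's survey sketch glosses over, and is exactly the right way to make the It\^o argument legitimate.
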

Indeed, under the assumptions, the value function $v_\g$ defined by
$$v_\g(t,x):=\EE\left[e^{-\int_t^Tr(u)\ud u}h(S_\g^{t,x}(T))\right],\quad t\in[0,T],\;x>0,$$
where $S_\g^{t,x,}$ is the solution of \eq{misS} with initial condition $S_\g^{t,x}(t)=x$, belongs to $\C^{1,2}([0,T)\times\R_+)\cap\C([0,T]\times\R_+)$ and satisfies the partial differential equation $L_\g v_\g=0$ on $[0,T)\times\R_+$, with the operator defined by
\begin{equation}\label{eq:Lgamma}
  L_\g f(t,x):=\partial_{t}f(t,x)+r(t)x\partial_{x}f(t,x)+\frac12\g^2(t,x)x^2\partial^2_{xx}f(t,x)-r(t)f(t,x).
\end{equation}
 Then, the value $V_{\De}$ of the self-financing portfolio $\De_\g$ will evolve according to
$$\ud V_\De(t)=r(t)V_\De(t)\ud t+\De_\g(t)(\ud S(t)-r(t)S(t)\ud t),$$
whereas 
the price process is governed by
\begin{eqnarray*}
\ud P_\g(t)&={}&r(t)P_\g(t)\ud t+\De_\g(t)(\ud S(t)-r(t)S(t)\ud t)\\
  &&+\frac12\lf\s^2(t)-\g^2(t,S(t))\rg S^2(t)\partial^2_{xx}v_\g(t,S(t))\ud t.
\end{eqnarray*}
Finally, the convexity of $v_\g$ and the domination of the mis-specified volatility over the \lq true\rq\ one end the proof.
Important remarks about weakening the assumption \eq{vol-dom} are reported in the appendix of \cite{elkaroui}.
By the way, under the regularity requirements, equation \eq{disc-e} for the discounted tracking error is still true, independently of the domination of volatilities. If $\s,\g$ are both non-negative, square-integrable and deterministic functions of time, satisfying
\begin{equation}
  \label{eq:weak-dom}
  \lf\int_t^T\s^2(u)\ud u \rg^{\frac12}\leq\lf\int_t^T\g^2(u)\ud u \rg^{\frac12},\quad\text{for all }0\leq t\leq T,
\end{equation}
then the mis-specified value of the claim succeeds to dominate the true price, but the mis-specified delta-hedging portfolio is not guaranteed to replicate the option at maturity, in the sense that the expected tracking error under the market probability measure can be negative.

In 1998, \citet{hobson} also addressed the monotonicity and super-replication properties of options prices under mis-specified models. The theorems presented in \cite{hobson} are similar to the results found in \cite{bergman} and \cite{elkaroui}, but the author uses a further different approach, based on coupling techniques.

The setting is that of a continuous-time frictionless market with finite horizon $T$, where the interest rate is set to $r=0$ and the stock price process $S$ is a weak solution to the stochastic differential equation
\begin{equation}\label{eq:hob-dS}
  \ud S(t)=S(t)\s(t)\ud B(t),\quad S(0)=s_0,
\end{equation}
for some standard Brownian motion $B$ on a stochastic basis $(\O,\F,\PP)$ and an adapted volatility process $\s$.
For the moment, completeness of the model is assumed, so that options prices are given by $\PP$-expectations of the respective claims at maturity.
The first main theorem goes under the name of ``option price monotonicity''.
\begin{theorem}\label{th:hob-mono}
  Let $h$ be a convex function and consider two candidate models for \eq{hob-dS}, namely $\s(\cdot)=\tilde\s(\cdot,S(\cdot))$ or $\s(\cdot)=\hat\s(\cdot,S(\cdot))$, such that $\hat\s(t,s)\geq\tilde\s(t,s)$ for all $t\in[0,T]$, $s\in\R$. Then, the European option with payoff $h(S(T))$ has a higher value under the model with volatility $\hat\s$ than under the one with volatility $\tilde\s$.
\end{theorem}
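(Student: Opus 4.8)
The plan is to realise both candidate price processes on a single probability space as \emph{time changes} of one common reference martingale, and then to use that a convex function of a martingale, evaluated at a later (random) time, is larger in expectation; this is the coupling idea behind Hobson's result, and it does not require knowing in advance that the price functionals are convex.

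Concretely, I would fix a Brownian motion $W$ and set $X_u:=s_0\exp(W_u-u/2)$, the canonical exponential martingale started at $s_0$, defined for all $u\ge 0$. For each admissible local volatility $\sigma\in\{\hat\sigma,\tilde\sigma\}$ (under mild regularity on the two functions --- continuity, boundedness, non-degeneracy --- ensuring everything below is well defined) I would solve, \emph{pathwise along the fixed Brownian trajectory}, the random ODE $\dot\tau^\sigma_t=\sigma(t,X_{\tau^\sigma_t})^2$, $\tau^\sigma_0=0$, $t\in[0,T]$, and set $S^\sigma_t:=X_{\tau^\sigma_t}$. A change-of-time computation together with the Lévy characterisation (writing $\log S^\sigma_t=\log s_0+W_{\tau^\sigma_t}-\tfrac12\tau^\sigma_t$ and observing $d\langle W_{\tau^\sigma}\rangle_t=\sigma(t,S^\sigma_t)^2\,dt$) shows that $S^\sigma$ is a weak solution of $dS=S\,\sigma(t,S)\,dB$ with $S^\sigma_0=s_0$; hence its law is exactly the one in the statement and the option value under that model equals $\mathbb{E}[h(S^\sigma_T)]=\mathbb{E}[h(X_{\tau^\sigma_T})]$.

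Now $\hat\tau$ and $\tilde\tau$ are driven by the \emph{same} path of $X$, so a comparison is available: since $\tilde\sigma\le\hat\sigma$ pointwise, $\tilde\tau$ is a subsolution of the ODE defining $\hat\tau$, and an ODE comparison argument (using the non-degeneracy $\sigma>0$ so that the clocks are strictly increasing and invertible) gives $\tilde\tau_t\le\hat\tau_t$ for all $t\in[0,T]$, in particular $\tilde\tau_T\le\hat\tau_T$ almost surely; moreover both are bounded (by $\|\sigma\|_\infty^2 T$) stopping times for the natural filtration of $X$. Finally, $h$ convex and $X$ a martingale make $N_u:=h(X_u)$ a submartingale on the finite interval it lives on, and optional sampling of $N$ at the ordered bounded stopping times $\tilde\tau_T\le\hat\tau_T$ yields $\mathbb{E}[h(X_{\tilde\tau_T})]\le\mathbb{E}[h(X_{\hat\tau_T})]$, i.e.\ the value under $\tilde\sigma$ does not exceed the value under $\hat\sigma$.

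The main obstacle is making the time-change construction fully rigorous: existence/uniqueness of the pathwise ODE for $\tau^\sigma$ when $u\mapsto X_u$ is only H\"older continuous (so $y\mapsto\sigma(t,X_y)^2$ need not be Lipschitz, which is precisely where the clean ODE comparison can fail and where a one-sided monotonicity/reparametrisation trick is needed), the measurability checks making $\tau^\sigma_T$ a genuine stopping time, and the integrability needed for optional sampling (which follows once $\tau^\sigma_T$ is bounded, together with the integrability under which the option values are assumed finite). An alternative route, granting the propagation-of-convexity result already recorded in the excerpt (Theorem~\ref{th:elkaroui1}), avoids the pathwise ODE altogether: set $\hat v(t,s):=\mathbb{E}[h(\hat S_T)\mid\hat S_t=s]$, which is then convex in $s$ and solves $\partial_t\hat v+\tfrac12\hat\sigma^2 s^2\partial_{ss}\hat v=0$; applying It\^o's formula to $\hat v(t,\tilde S_t)$ produces a stochastic integral plus the drift $\tfrac12(\tilde\sigma^2-\hat\sigma^2)s^2\partial_{ss}\hat v\,dt\le 0$, so after the usual localisation one gets $\mathbb{E}[h(\tilde S_T)]=\mathbb{E}[\hat v(T,\tilde S_T)]\le\hat v(0,s_0)=\mathbb{E}[h(\hat S_T)]$ --- there the difficulty shifts to the regularity of $\hat v$ up to maturity and the true-martingale property of the stochastic integral.
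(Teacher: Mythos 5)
Your proof is essentially the paper's argument for \thm{hob-mono} (taken from Hobson 1998): realize both candidate price processes as time changes of a single reference martingale driven by the same Brownian path, show that the clock for the smaller volatility lags the clock for the larger one, and conclude by a Jensen/optional-sampling argument for the submartingale $h$ composed with the reference process at the two ordered bounded stopping times. The only cosmetic difference is the choice of reference martingale (you use the exponential martingale $s_0e^{W_u-u/2}$ and the ODE $\dot\tau_t=\sigma(t,X_{\tau_t})^2$, whereas the paper uses a Brownian motion started at $s_0$ and an inverse clock $A$ satisfying $\dot A_t=W(A_t)^2\sigma^2(t,W(A_t))$); both routes contain the same heuristic step you correctly flag, namely that the clock comparison is only a one-sided touching argument and not a clean Lipschitz ODE comparison, and your second paragraph is in effect the El Karoui et al.\ PDE route (\cite{elkaroui}), which the paper also records.
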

The proof is based on the joint application on the Brownian representation of local martingales and a coupling argument. Precisely, fixed a Brownian motion $W$ issued of $s_0$, define, for each model, a strictly increasing process $\t$ as the solution, for almost all $\w\in\O$, of the ordinary differential equation
$$\frac{\ud \t(t;\w))}{\ud t}=\frac1{W^2(t;\w)\s^2(\t(t;\w),W(t;\w))},\quad t\in[0,T].$$
Then, define $A(\cdot;\w)$ as the inverse of $\t(\cdot;\w)$ and consider the process $P=W(A)$ (again one for each model). This is a local martingale whose quadratic variation has time-derivative given by
$$\partial_t A(t)=W^2(A(t))\s^2(\t(A(t)),W(A(t)))=P^2(t)\s^2(t,P(t))$$
Thus, $P$ is a weak solution to the SDE $\ud P(t)=P(t)\s(t,P(t))\ud B$ for some Brownian motion $B$. By this representation, $\hat A\geq\tilde A$ on [0,T], almost surely. Indeed, at time 0, $\hat P(0)=\tilde P(0)=s_0$ and $\hat A(0)=\tilde A(0)=0$; afterward, if $\hat P(t)=\tilde P(t)$ then $\ud\hat A(t)\geq\ud\tilde A(t)$ and if $\hat A(t)=\tilde A(t)$ then $\hat P(t)=\tilde P(t)$.
Finally, by Jensen's inequality and properties of the Brownian motion,
\begin{eqnarray*}
  \EE[h(\hat P(T))]&={}&\EE\left[\Et{h(\hat P(T))}{\tilde A(T)}\right]\\
&\geq&\EE\left[h\lf\Et{\hat P(T)}{\tilde A(T)}\rg\right]\\
&={}&\EE\left[h\lf\Et{W(\tilde A(T))+(W(\hat A(T))-W(\tilde A(T)))}{\tilde A(T)}\rg\right]\\
&={}&\EE[h(\tilde P(T))].
\end{eqnarray*}
Notice that Hobson's method allows to generalize the statement of the theorem in two directions:
\begin{itemize}
\item it does not require the completeness assumption, which is used only in the last step of proof, when pricing the European claim by taking the expectation under the risk-neutral probability $\PP$, and can be omitted provided an agreed pricing measure;
\item it has not to restrict to diffusion models, as the same construction applies also to the case of path-dependent volatility $\s(t)=\s(t,S_t)$, provided that $\t$ and its inverse can be defined and by assuming that, for all $t\in[0,T],s\in\R$,
\begin{equation}\label{eq:path-dom}
\hat\s(t,\hat s_t)\geq\tilde\s(t,\tilde s_t)\quad\forall \hat s_t,\tilde s_t\in
\{\{f(u\wedge t)\}_{u\in[0,T]},\; f(0)=s_0,\; f(t)=s\}
\end{equation}
The contradiction that seems to arise with the counterexample \eq{counterex2} in \cite{elkaroui} is not consistent here. In fact, in \cite{elkaroui} the price process is defined to be the strong solution of the SDE \eq{elk-dS}, so that the coupling argument could not be applied, while in \cite{hobson} it is instead a weak solution. In effect, what matters to the aim of derivative pricing and hedging is the law of the price process, rather than its relation with a specific Brownian motion.
\end{itemize}

The second property of option prices addressed by \citeauthor{hobson} is the preservation of convexity from the payoff to the value function. This is then used to derive the so-called \lq super-replication property\rq.
\begin{theorem}\label{th:hob-conv}
  Suppose the asset price follows the complete diffusion model \eq{hob-dS}
where the volatility function has sufficient regularity to ensure that the solution is unique-in-law {\upshape(e.g. $s\mapsto s\s(t,s)$ Lipschitz)} and a true martingale {\upshape(e.g. $\s$ bounded)}. If $h$ is a convex payoff function, then the claim value at each time prior to maturity is convex in the current underlying price.
\end{theorem}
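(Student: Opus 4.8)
The plan is to follow Hobson's coupling method, building on the no-crossing construction already used above for Theorem~\ref{th:hob-mono}. Fix $t\in[0,T)$. The claim value at time $t$ is $v(t,s)=\EE^\PP[h(S^{t,s}(T))]$, where $S^{t,s}$ is the weak solution of \eqref{eq:hob-dS} on $[t,T]$ issued from $s$ at time $t$: by the uniqueness-in-law hypothesis this is well defined in law, and by the true-martingale hypothesis (and linear growth of $h$) the expectation is finite. Convexity of $s\mapsto v(t,s)$ amounts to
$$v(t,c)\ \le\ \l\,v(t,a)+(1-\l)\,v(t,b)\qquad\text{whenever }a<b,\ \l\in(0,1),\ c:=\l a+(1-\l)b ,$$
so it suffices to establish this for an arbitrary such triple.

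First I would realise the whole family $\{S^{t,x}\}_{x>0}$ on one stochastic basis, coupled so as to be non-decreasing in the starting value. This is the no-crossing coupling exploited for Theorem~\ref{th:hob-mono}: on a space carrying a standard Brownian motion $W$ issued from $0$, set $W^x:=x+W$ and, for each $x$, let $\t^x$ solve $\tfrac{\ud\t^x(u)}{\ud u}=\big((W^x(u))^2\s^2(t+\t^x(u),W^x(u))\big)^{-1}$, $\t^x(0)=0$, put $A^x:=(\t^x)^{-1}$ and $S^x:=W^x\circ A^x$. Each $S^x$ is a weak solution of \eqref{eq:hob-dS} from $x$, so $v(t,x)=\EE[h(S^x(T))]$, and the Lipschitz/boundedness regularity on $y\mapsto y\,\s(\cdot,y)$ makes the ODEs well posed and --- together with uniqueness-in-law, via the argument that two solutions agreeing at one instant agree thereafter --- yields the pathwise ordering $x_1\le x_2\Rightarrow S^{x_1}\le S^{x_2}$ on $[t,T]$; in particular $S^a(T)\le S^c(T)\le S^b(T)$.

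The remaining, and genuinely delicate, step is to turn this monotone coupling into the convex inequality, and I see two natural routes. (i) Convex-order route: show the terminal relation $S^c(T)\le_{\mathrm{cx}}\l S^a(T)+(1-\l)S^b(T)$ --- both sides have mean $c$ since these processes are martingales --- by realising $S^c$ as an adapted time-change $S^c=R\circ\rho$ of the martingale $R:=\l S^a+(1-\l)S^b$ with $\rho(T)\le T$, so that $S^c(T)=\EE[R(T)\mid\F^R_{\rho(T)}]$ by optional sampling (the true-martingale hypothesis supplying uniform integrability); Jensen for this conditional expectation and convexity of $h$ then give $h(S^c(T))\le\EE[h(R(T))\mid\F^R_{\rho(T)}]\le\EE[\l h(S^a(T))+(1-\l)h(S^b(T))\mid\F^R_{\rho(T)}]$, and taking expectations closes the argument. (ii) One-sided-derivative route, in the spirit of El Karoui's Theorem~\ref{th:elkaroui1}: show $v(t,\cdot)$ is locally Lipschitz with $v'(t,x\pm)=\EE\big[h'(S^x(T)\pm)\big]$, whence $v'(t,\cdot)$ is non-decreasing because $x\mapsto S^x(T)$ is non-decreasing pathwise and $h'$ is non-decreasing; this identification follows by squeezing the difference quotients $\epsilon^{-1}\big(v(t,x+\epsilon)-v(t,x)\big)$ between expectations of $h'$ evaluated at the coupled endpoints and passing to the limit. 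Completeness of the model enters only at the very end, to identify $v(t,\cdot)$ with the $\PP$-expectation of the terminal payoff (it can be dropped given an agreed pricing measure), and the whole scheme goes through unchanged for path-dependent volatility $\s=\s(t,S_t)$ under the analogue of \eqref{eq:path-dom}.

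The main obstacle is precisely this last step. In route (i) the random times $A^x(T)$ are functionals of the entire Brownian path, not $\F^W$-stopping times, so optional sampling must be justified along an approximating sequence of stopping times or inside the time-changed filtration via the strong Markov property; more seriously, producing the time-change $\rho$ with $\rho(T)\le T$ requires a domination of the quadratic variation of $R$ over that of $S^c$ along the sandwich $S^a\le S^c\le S^b$, which is not automatic and has to be extracted by a Gr\"onwall comparison on the gap $R-S^c$ using the Lipschitz control on $y\mapsto y\,\s(t,y)$. In route (ii) the obstacle shifts to controlling the derivative flow $\epsilon^{-1}(S^{x+\epsilon}(T)-S^{x}(T))$ uniformly enough to apply dominated convergence / Fatou to the difference quotients --- again exactly where the regularity of $y\mapsto y\,\s(t,y)$ is used. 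Either way, all the probabilistic hypotheses (completeness, true martingale, uniqueness-in-law, and the Lipschitz/growth conditions on the volatility) get consumed, and no tool beyond the no-crossing coupling, a time-change argument, and Jensen's inequality is needed.
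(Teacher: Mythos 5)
Your two proposed routes are both genuinely different from the paper's proof, and neither is carried to completion. The paper follows Hobson's convexity argument, which is a distinct coupling from the monotone (no-crossing) one used for Theorem~\ref{th:hob-mono}: one takes three copies $X,Y,Z$ started from $x>y>z$ and driven by three \emph{independent} Brownian motions, sets $H_X=\inf\{t:X(t)=Y(t)\}$, $H_Y=\inf\{t:Y(t)=Z(t)\}$, $\tau=H_X\wedge H_Y\wedge T$, and observes that conditionally on $\{\tau=H_X\}$ one has $X(T)\stackrel{d}{=}Y(T)$ (and analogously on $\{\tau=H_Y\}$), while on $\{\tau=T\}$ the strict ordering $Z(T)<Y(T)<X(T)$ persists. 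On each of these events the inequality $\EE[(X(T)-Z(T))h(Y(T))]\le\EE[(Y(T)-Z(T))h(X(T))]+\EE[(X(T)-Y(T))h(Z(T))]$ follows either from the identity in law (equality) or from convexity of $h$; independence then factorizes the expectations and gives $(x-z)\EE[h(Y(T))]\le(x-y)\EE[h(Z(T))]+(y-z)\EE[h(X(T))]$, which is the claimed convexity. Your opening step, which couples all three processes monotonically on a \emph{single} Brownian motion, is the setup for Theorem~\ref{th:hob-mono}, not for this theorem; the convexity proof deliberately avoids it.

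Concerning your two routes: Route (ii) is essentially the Bergman/El Karoui one-sided-derivative argument. The paper explicitly remarks that this works but requires stronger hypotheses than the theorem states, notably differentiability of $s^2\sigma^2(t,s)$ and a bounded one-sided derivative of $h$; under the stated assumptions ($s\mapsto s\sigma(t,s)$ merely Lipschitz, $h$ convex but with possibly infinite one-sided derivatives) the identification $v'(t,x\pm)=\EE[h'(S^x(T)\pm)]$ and the dominated-convergence pass on the difference quotients are not available without further work, which is exactly what Hobson's proof circumvents. Route (i) contains a genuine gap that your own discussion flags but does not repair: the claim $S^c(T)\le_{\mathrm{cx}}\lambda S^a(T)+(1-\lambda)S^b(T)$ is not established, and the proposed time-change mechanism to prove it fails. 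Even after the pathwise sandwich $S^a\le S^c\le S^b$, the martingales $S^c$ and $R:=\lambda S^a+(1-\lambda)S^b$ are \emph{not} time changes of one another (they are not time changes of a common Brownian motion), so there is no $\rho$ with $S^c=R\circ\rho$; the DDS representation gives each a different underlying Brownian motion, and the intended optional-sampling step has nothing to apply to. The quadratic-variation domination you mention is not automatic and does not by itself produce the required embedding. So route (i) as written does not close, and route (ii) needs hypotheses the theorem does not grant; neither matches the paper's argument, whose three-way independent coupling with crossing times is what actually makes the proof go through under the stated assumptions (at the cost, as the paper notes, of not extending to non-diffusion models where identities in law break down).
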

The coupling argument used here is the following.
Take $0<z<y<x$ and define $X,Y,Z$ as the solutions to \eq{hob-dS} with respect to independent Brownian motions and starting point respectively $x,y,z$ at time 0. Denote the crossing times with $H_X:=\inf\{t\geq0,X(t)=Y(t)\}$ and $H_Y:=\inf\{t\geq0,Y(t)=Z(t)\}$, and $\t:=H_X\wedge H_Y\wedge T$. Conditionally on $\{\t=H_X\}$ (respectively on $\t=H_Y$), $X(T)\stackrel d= Y(T)$ (respectively $Y(T)\stackrel d= Z(T)$), while on $\{\t=T\}$ we have $Z(T)<Y(T)<X(T)$.
Thus,  by using the identities in law and the convexity of $h$,
\begin{align*}
\EE[(X(T)-Z(T))h(Y(T))]\leq{}&\EE[(Y(T)-Z(T))h(X(T))]\\
&{}+\EE[(X(T)-Y(T))h(Z(T))].
\end{align*}
Then, the independence of the driving Brownian motions gives
$$(x-z)\EE[h(Y(T))]\leq(x-y)\EE[h(Z(T))]+(y-z)\EE[h(X(T))],$$
that is the convexity of the option price, by arbitrariness of starting points.

It should be noticed that this proof cannot be extended to non-diffusion models, where the identities in law could not be used.

The same property is also proved in \cite{bergman} and \cite{elkaroui}, however both require more restrictive conditions, such as the differentiability of the diffusion coefficient $s^2\s^2(t,s)$ and a bounded (possibly one-sided) derivative for $h$.
In case $h$ has a derivative bounded by a constant $C$ on $[0,\infty)$, then bounds on the option price and its spatial derivative at any time $t\in[0,T]$ are a direct consequence:
$$h(0)-CS(t)\leq v(t,S(t))\leq h(0)+CS(t),\quad \left|\partial_{s}v(t,S(t))\right|\leq C.$$

In \cite{elkaroui} the property of inherited convexity is used to prove robustness of a delta-hedging portfolio, accordingly to their definition.
\citeauthor{hobson} reproduces the same steps to prove the \lq super-replication property\rq, stated as follows.
\begin{theorem}\label{th:hob-super}
  Under the model assumption of Theorem~\ref{th:hob-conv}, assume also that option prices from the model are of class $\C^{1,2}([0,T]\times\R)$ (e.g. $\s>0$ and H\"older continuous). If the model volatility $\s$ dominates the true volatility $\hat\s$, i.e. $\s(t,s)\geq\hat\s(t,s)$ for all $t\in[0,T]$, $s\in\R$, and if the payoff function is convex, then pricing and hedging according to the model will super-replicate the option payout.
\end{theorem}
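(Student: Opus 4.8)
\emph{Proof idea.} The plan is to reproduce, in the present weak-solution setting, the tracking-error argument used by El~Karoui et al.\ (cf.\ Theorem~\ref{th:elkaroui2}), which is exactly what Hobson does. The first step is to represent the model value functional: since the model \eq{hob-dS} with volatility $\s(t,s)$ is complete, unique in law and a true martingale, the time-$t$ value of the claim $h(S(T))$ is $v(t,s)=\EE\!\left[h(S^{t,s}(T))\right]$, where $S^{t,s}$ solves \eq{hob-dS} started from $s$ at time~$t$. Under the standing hypothesis $v\in\C^{1,2}([0,T]\times\R)$ (guaranteed, e.g., by $\s>0$ and H\"older continuous via interior parabolic estimates), the Feynman--Kac theorem forces $v$ to solve
\[
\partial_t v(t,s)+\tfrac12\,\s^2(t,s)\,s^2\,\partial_{ss}v(t,s)=0\quad\text{on }[0,T)\times\R,\qquad v(T,s)=h(s).
\]

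Second, I would invoke the inherited convexity from Theorem~\ref{th:hob-conv}: because $h$ is convex, $v(t,\cdot)$ is convex for every $t\in[0,T]$, hence $\partial_{ss}v\ge0$ everywhere. Third comes the core computation. Let $S$ now denote the \emph{true} price process, a weak solution of $\ud S(t)=S(t)\hat\s(t)\ud B(t)$ with $\hat\s(t)=\hat\s(t,S(t))$, and put $P(t):=v(t,S(t))$, $\Delta(t):=\partial_s v(t,S(t))$. Applying It\^o's formula to $P$, using $\ud[S](t)=S^2(t)\hat\s^2(t)\ud t$ together with the PDE above, gives $\PP$-a.s.
\[
\ud P(t)=\Delta(t)\,\ud S(t)+\tfrac12\,S^2(t)\bigl(\hat\s^2(t)-\s^2(t,S(t))\bigr)\partial_{ss}v(t,S(t))\,\ud t .
\]
Since $r=0$, the value $V$ of the self-financing portfolio with share position $\Delta$ and initial capital $V(0)=v(0,s_0)=P(0)$ satisfies $\ud V(t)=\Delta(t)\,\ud S(t)$, so the tracking error $e:=V-P$ starts at $e(0)=0$ and obeys
\[
e(T)=\tfrac12\int_0^T S^2(t)\bigl(\s^2(t,S(t))-\hat\s^2(t,S(t))\bigr)\partial_{ss}v(t,S(t))\,\ud t .
\]
Since volatilities are non-negative, $0\le\hat\s\le\s$ gives $\s^2-\hat\s^2\ge0$; the second step gives $\partial_{ss}v\ge0$; and $S^2\ge0$. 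Hence $e(T)\ge0$, i.e.\ $V(T)\ge P(T)=v(T,S(T))=h(S(T))$ $\PP$-a.s., which is the asserted super-replication property; taking expectations in addition yields $v(0,s_0)\ge\EE[h(S(T))]$ once one knows $\int\Delta\,\ud S$ is a supermartingale.

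The main obstacle is making the It\^o step fully rigorous under minimal integrability: one must ensure that $\Delta(\cdot)=\partial_s v(\cdot,S(\cdot))$ is integrable against $S$ so that $\int_0^T\Delta\,\ud S$ is well defined. This is immediate when $h$ has a bounded derivative (then $|\partial_s v|\le C$ by the bound inherited from $h$); in general it is handled by approximating $h$ from below by smooth convex payoffs with bounded derivative, applying the result to each, and passing to the limit in the $\PP$-a.s.\ inequality by a standard localization argument. The path-dependent volatility case requires no new idea: reading \eq{hob-dS} with $\s(t)=\s(t,S_t)$ and replacing the pointwise domination by the functional one \eq{path-dom}, only the inequality $\s^2\ge\hat\s^2$ along the realized trajectory enters the final estimate, so the same conclusion holds.
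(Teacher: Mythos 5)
Your proof is correct and follows essentially the same route as the paper's: the paper explicitly notes that Hobson ``reproduces the same steps'' as the El~Karoui tracking-error argument (Theorem~\ref{th:elkaroui2}) -- pricing PDE from Feynman--Kac, inherited convexity from Theorem~\ref{th:hob-conv} giving $\partial_{ss}v\geq0$, It\^o under the true dynamics to obtain the hedging-error integrand $\tfrac12 S^2(\s^2-\hat\s^2)\partial_{ss}v\geq0$ -- which is exactly your decomposition, and your closing remark on integrability of $\int\partial_s v\,\ud S$ (bounded-derivative payoffs and approximation/localization for the general case) also matches the paper's post-theorem discussion.
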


In order to prove that the model price dominates the true price, the portfolio value process, in particular the stochastic integral $\int_0^\cdot\partial_{s}v(u,S(u))\ud S(u)$, has to be a martingale. In case of a payoff function with bounded derivative, this is achieved by assuming that $\EE\left[\lf\int_0^T S^2(u)\s^2(u,S(u))\ud u\rg^{\frac12}\right]<\infty$, which makes $S$ itself a true martingale, even if not necessarily square-integrable.

\subsection{Robust hedging of discretely monitored options}
\label{sec:ss}

More recently, \citet{ss} revisited the notion of robustness by considering the performance of a model-based hedging strategy when applied to the realized observed path of the underlying asset price, rather than to some supposedly \lq true\rq\ model, inspired by the \follmer's pathwise \ito\ calculus.
\citet{ss} studied the performance of  delta hedging strategies for a path-dependent discretely monitored derivative, obtained under a local volatility model.

The stock price process $S$ is assumed to follow a local volatility model where the volatility process is a deterministic function of time and the current stock price,
\begin{equation}\label{eq:ss-dS}
  \ud S(t)=S(t)\s(t,S(t))\ud W(t),
\end{equation}
where the local volatility function is assumed to satisfy the following regularity conditions.
\begin{assumption}\label{ass:ss}
  \begin{itemize}\item[]
  \item $\s\in\C^1([0,T]\times\R_+,\R_+)$, bounded above and below away from 0;
  \item $s\mapsto s\s(t,s)$ Lipschitz continuous, uniformly in $t\in[0,T]$.
  \end{itemize}
\end{assumption}
The derivatives considered here have a path-dependent claim of the form $H(S)=h(S(t_1),\ldots,S(t_n))$, where $0=t_0<t_1<\ldots t_n\leq T$ and $h:[0,\infty)^n\rightarrow[0,\infty)$ is continuous and satisfies $h(x)\leq C(1+|x|^p)$ for all $x\in[0,\infty)^n$ and certain $C,p\geq0$, in which case $h$ is referred to as a \emph{payoff function}.

Using the Markov property, the price at time $t\in[t_k,t_{k+1})$ is given by
{\setlength\arraycolsep{2pt}  
\begin{eqnarray}
\nonumber  v(t,s_1,\ldots,s_k,s)&=&\EE[H(S)\mid S(t_1)=s_1,\ldots,S(t_k)=s_k,S(t)=s]\\
\label{eq:ss-v} &=&\EE[h(s_1,\ldots,s_k,S(t_{k+1}),\ldots,S(t_n))\mid S(t)=s]
\end{eqnarray}}
We denote $\ds v(t,x):=\sum_{k=1}^n\ind_{[t_k,t_{k+1})}(t) v(t,s_1,\ldots,s_k,s)$, where $x\in\C([0,T],\R_+)$ is a deterministic function matching the observed stock price path, i.e. $x(t_1)=s_1,\ldots,x(t_k)=s_k,x(t)=s$. It is also assumed that all observed price paths are continuous and have finite quadratic variation along a fixed sequence of time partitions $\{\pi^n\}_{n\geq1}$, $\pi^n=(t_i^n)_{i=0,\ldots,m(n)}$, $0=t_0^n<\ldots<t_{m(n)}^n=T$ for all $n\geq1$, with mesh going to 0.
The following result shows the regularity of the value function and makes use of the \follmer's pathwise calculus presented in \chap{pfc}.
\begin{proposition}\label{prop:hob-pde}
  Let $h$ be a payoff function. Under Assumption~\ref{ass:ss}, the map $(t,s)\mapsto v(t,x)$ belongs to
$\ds \C^{1,2}\Big({\setlength{\extrarowheight}{-0.7cm}\bea{c}\scriptstyle n-1\\\cup\\\scriptstyle k=0\ea} (t_k,t_{k+1})\times[0,\infty)\Big)\cap\C([0,T]\times[0,\infty))$ and satisfies the partial differential equation
  \begin{equation}\label{eq:ss-pde}
    \partial_{t}v(t,x)+\frac12\s^2(t,s)s^2\partial_{ss}v(t,x)=0,\quad t\in{\setlength{\extrarowheight}{-0.7cm}\bea{c}\scriptstyle n-1\\\cup\\\scriptstyle k=0\ea}(t_k,t_{k+1}),\,s\in[0,\infty).
  \end{equation}
Furthermore, the \follmer\ integral $\int_0^T\partial_{s}v(t,x)\ud x(t)$ is well defined and the pathwise \ito\ formula holds:
$$v(T,x)=v(0,x)+\int_0^T\partial_{s}v(t,x)\ud x(t)+\frac12\int_0^T\partial_{ss}v(t,x)\ud\pqv{x}(t)+\int_0^T\partial_{t}v(t,x)\ud t.$$
\end{proposition}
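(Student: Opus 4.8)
\emph{Overview and regularity.} My plan is to obtain the regularity of $v$ by a downward induction over the monitoring grid $0=t_0<t_1<\dots<t_n\le T$, reducing each step to the classical theory of linear parabolic Cauchy problems via the Feynman--Kac formula, and then to read off the pathwise It\^o formula by patching F\"ollmer's change of variable formula over the open monitoring intervals. Fix a path $x\in\C([0,T],\R_+)$ and set $s_i:=x(t_i)$. On the last interval $[t_{n-1},t_n)$ the representation \eq{ss-v} gives $v(t,x)=u_{n-1}(t,x(t))$ with $u_{n-1}(t,s)=\EE\big[h(s_1,\dots,s_{n-1},S(t_n))\,\big|\,S(t)=s\big]$, so $u_{n-1}$ is, by Feynman--Kac, the solution of the Cauchy problem for $\mathcal L_\s f:=\partial_t f+\frac12\s^2(t,s)\,s^2\,\partial_{ss}f$ with terminal datum the continuous, polynomially bounded map $s\mapsto h(s_1,\dots,s_{n-1},s)$. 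Passing to the log-price $y=\log s$ turns $\mathcal L_\s$ into a uniformly parabolic operator with $\C^1$, bounded, non-degenerate coefficients (this is where \ass{ss} is used), so the classical existence--uniqueness--regularity theory yields $u_{n-1}\in\C^{1,2}\big((t_{n-1},t_n)\times(0,\infty)\big)\cap\C\big([t_{n-1},t_n]\times[0,\infty)\big)$, $\mathcal L_\s u_{n-1}=0$ in the interior, polynomial growth propagated backward in time, and continuity up to $s=0$ (where $S^{t,0}\equiv0$ forces $u_{n-1}(t,0)=h(s_1,\dots,s_{n-1},0)$, constant in $t$). I would then iterate: assuming $u_{k}$ has been built with these properties, letting $t\uparrow t_{k+1}$ in \eq{ss-v} and using the Markov property identifies the terminal datum of $u_k$ on $[t_k,t_{k+1})$ as the diagonal restriction $s\mapsto u_{k+1}(t_{k+1},s)$ with the $(k{+}1)$-st history entry set equal to $s$, which is again continuous with polynomial growth; the same PDE theory then gives $u_k$ with the claimed regularity. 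Gluing, $v(t,x)=\sum_{k=0}^{n-1}\ind_{[t_k,t_{k+1})}(t)\,u_k(t,x(t))$ (and $v\equiv h(s_1,\dots,s_n)$ on $[t_n,T]$ when $t_n<T$) has the asserted $\C^{1,2}$ regularity on $\bigcup_k(t_k,t_{k+1})\times[0,\infty)$ and satisfies \eq{ss-pde}.

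\emph{Continuity across monitoring dates.} To upgrade this to $v\in\C([0,T]\times[0,\infty))$ I would check there is no jump across the $t_k$: by the tower property $v(t_k-,x)=u_{k-1}(t_k-,x(t_k))$ equals the diagonal value at $t_k$ of the terminal datum of $u_k$, which is exactly $u_k(t_k,x(t_k))=v(t_k,x)$; combined with the continuity of each $u_k$ up to its parabolic boundary, this gives both one-sided continuities in $t$ and joint continuity in $(t,s)$. In particular $v(T,x)=h(x(t_1),\dots,x(t_n))=H(x)$, and $v$ is a well-defined non-anticipative functional, path-dependent only through the finitely many frozen past coordinates $x(t_1),\dots,x(t_k)$ on $[t_k,t_{k+1})$.

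\emph{Pathwise It\^o formula.} On $[t_k,t_{k+1})$ the functional $v(\cdot,x)$ is the restriction of the $\C^{1,2}$ function $(t,s)\mapsto u_k(t,s)$ (with the past monitoring values frozen), so on any compact $[t_k+\d,\,t_{k+1}-\d]\subset(t_k,t_{k+1})$ the change of variable formula for functionals of continuous paths (\thm{fif-c}), equivalently F\"ollmer's time-dependent formula, applies and gives
\[
v(t_{k+1}-\d,x)-v(t_k+\d,x)=\int_{t_k+\d}^{t_{k+1}-\d}\partial_s v(t,x)\,\ud x(t)+\frac12\int_{t_k+\d}^{t_{k+1}-\d}\partial_{ss}v(t,x)\,\ud\pqv{x}(t)+\int_{t_k+\d}^{t_{k+1}-\d}\partial_t v(t,x)\,\ud t .
\]
Summing over $k=0,\dots,n-1$, using the continuity of $v(\cdot,x)$ on the left-hand side, and letting $\d\downarrow0$ would produce the stated formula on $[0,T]$ --- provided the three limiting integrals converge; since $\partial_t v=-\frac12\s^2 s^2\partial_{ss}v$ by \eq{ss-pde}, both this and the well-posedness of the F\"ollmer integral $\int_0^T\partial_s v(t,x)\,\ud x$ come down to the integrability, near each $t_k$, of $t\mapsto x(t)^2\,\partial_{ss}v(t,x(t))$ with respect to $\ud t$ and to $\ud\pqv{x}$.

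\emph{Main obstacle.} The delicate point is exactly this integrability: at each $t_k$ the terminal datum is only continuous, so a priori $\partial_{ss}v$ blows up as $t\uparrow t_{k+1}$. I would control it by an interior parabolic (Schauder / heat-kernel) estimate --- differentiating the equation for $u_k$ and exploiting the bounds and non-degeneracy of \ass{ss} --- of the form
\[
s^{2}\,\bigl|\partial_{ss}u_k(t,s)\bigr|\ \le\ C\,(1+s^{p})\,(t_{k+1}-t)^{-1/2}\qquad\text{for }(t,s)\in(t_k,t_{k+1})\times(0,\infty),
\]
which is integrable in $t$; combined with the standing restriction that admissible price paths have absolutely continuous quadratic variation, $\ud\pqv{x}(t)=a(t)\,\ud t$ with $a(t)\le c\,x(t)^2$ (a ``market-volatility'' bound), this makes $\int_0^T\partial_{ss}v\,\ud\pqv{x}$ and $\int_0^T\partial_t v\,\ud t$ finite and forces the partial F\"ollmer integrals to converge, hence yields existence of $\int_0^T\partial_s v\,\ud x$ and the pathwise It\^o identity. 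Establishing such a bound \emph{uniformly} --- up to $s=0$, with the right polynomial growth, and propagated consistently through all $n$ induction steps (each of which may worsen the modulus of continuity of the data) --- is the technical heart of the argument.
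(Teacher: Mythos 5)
Your proof follows essentially the same route as the paper: backward induction across the monitoring grid, reduction to a one-dimensional European Cauchy problem via Feynman--Kac at each step, identification of the intermediate terminal data as the auxiliary payoff functions $h_k$ (your ``diagonal restrictions''), and patching of F\"ollmer's formula over the open intervals $(t_k,t_{k+1})$. The paper's proof is only a short sketch that establishes regularity and the PDE by citing a standard result and defining $h_k$; it does not touch the well-posedness of the F\"ollmer integral or of the $\ud t$- and $\ud\pqv{x}$-integrals, so your flagging of the possible Gamma singularity near each $t_k$ as the ``technical heart'' is a genuine and useful addition rather than a restatement. One caveat, though: the bound $s^2|\partial_{ss}u_k(t,s)|\le C(1+s^p)(t_{k+1}-t)^{-1/2}$ is the heat-kernel estimate for \emph{Lipschitz} terminal data, whereas a ``payoff function'' here is only assumed continuous with polynomial growth, and the diagonal restrictions $h_k$ inherit only that regularity from $h$; for merely continuous data the worst-case uniform blow-up is order $(t_{k+1}-t)^{-1}$, which is not $\ud t$-integrable. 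So this estimate either requires Lipschitz regularity of $h$, or must be replaced by an argument that controls $\partial_{ss}v(t,x(t))$ \emph{along the fixed path} rather than uniformly in $s$ (or, alternatively, by an approximation/localization argument as in Schied--Stadje in which only the sum $\frac12\int\partial_{ss}v\,\ud\pqv{x}+\int\partial_t v\,\ud t=\frac12\int(\z-\s^2)x^2\partial_{ss}v\,\ud t$ is controlled and the F\"ollmer integral is defined as the remaining difference). Also note that your invocation of $\ud\pqv{x}(t)=a(t)\ud t$ with $a(t)\le c\,x(t)^2$ corresponds exactly to the restriction $\pqv{x}(t)=\int_0^t\z(u)x^2(u)\ud u$ with bounded $\z$ used throughout \Sec{ss}, so it is consistent with the standing assumptions rather than an added hypothesis.
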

The regularity and the PDE characterization of the value function are proven by backward induction and using the following standard result for a European non-path-dependent option with payoff $h:[0,\infty)\rightarrow\R_+ $, that is: let $v(t,s):=\EE[h(S(T))\mid S(t)=s]$, then $v\in\C^{1,2}([0,T]\times(0,\infty))\cap\C([0,T]\times[0,\infty))$, satisfies a polynomial growth condition in $s$ uniformly in $t\in[0,T]$ and solves the Cauchy problem \eq{ss-pde} on $[0,T]\times(0,\infty)$. So, at step 1, let $t\in[t_{n-1},t_n)$, the problem reduces to the standard case. Then, at each step $k>1$, let $t\in[t_{n-k},t_{n-k+1})$,  define the auxiliary function
$$h_k(s)=\EE[h(s_1,\ldots,s_{n-k},s,S(t_{n-k+2}),\ldots,S(t_n))\mid S(t_{n-k+1})=s],$$ which is a payoff function such that $v(t,s_1,\ldots,s_{n-k},s)=\EE[h_k(S(t_{n-k+1}))\mid S(t)=s]$ and again the standard result applies.

Using the same notation above for $x$ and $H$, \citeauthor{ss} defined the delta-hedging strategy for $H$ obtained from the model~\eq{ss-dS} to be \textit{robust} if, when the model volatility \textit{overestimates} the market volatility, i.e. $\int_r^t\s^2(u,x(u))x^2(u)\ud u\geq \pqv{x}(t)-\pqv{x}(r)$ for all $0\leq r<t\leq T$, or equivalently $\s(t,x(t))\geq\sqrt{\z(t)}$, where $\pqv{x}(t)=\int_0^t\z(u)x^2(u)\ud u$ and $\z\geq0$, for Lebesgue-almost every $t\in[0,T]$, then
  \begin{equation}
    \label{eq:super}
    v(0,x)+\int_{0}^T\partial_{s}v(u,x) \ud^\Pi x(u) \geq H(x).
  \end{equation}

They pointed out that, under the assumptions of Proposition~\ref{prop:hob-pde}, the positivity of the option Gamma leads to a robust delta-hedging strategy.
An application of this first basic result is the generalized Black-Scholes model, where the value function of any convex payoff function is again convex and hence the corresponding delta hedge is robust. This follows directly from the fact that a geometric Brownian motion with time-dependent volatility is affine in its starting point and convexity is invariant under affine transformations.

However, in a general local volatility model, convexity of a payoff function does not guarantee the robustness property. Indeed, the main theorem in \cite{ss} spots sufficient conditions on the payoff function resulting in convexity for the value function and consequent robustness for the delta hedge.
\begin{theorem}
  If the payoff function $h$ is \emph{directionally convex}, i.e. for all $i=1,\ldots,n$ the map $x_i\mapsto h(x_1,\ldots,x_i,\ldots,x_n)$ is convex and has increasing right-derivative  with respect to any other component $j=1,\ldots,n$, then, for all $k=1,\ldots,n$ and for any $t\in[t_k,t_{k+1})$, the value function $(s_1,\ldots,s_k,s)\mapsto v(t,s_1,\ldots,s_k,s)$ is also directionally convex and hence convex in the last variable, and the delta-hedging strategy is robust.
\end{theorem}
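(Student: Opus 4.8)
The plan is to argue by backward induction over the monitoring dates $t_n>t_{n-1}>\dots>t_1$, in the spirit of the proof of Proposition~\ref{prop:hob-pde}, but carrying the full directional‑convexity structure along rather than only convexity in the current price. Fix $k\in\{1,\dots,n\}$ and $t\in[t_k,t_{k+1})$. Using the Markov property of \eqref{eq:ss-dS} and conditioning at the next fixing date $t_{k+1}$, the representation \eqref{eq:ss-v} becomes
\[
v(t,s_1,\dots,s_k,s)=\EE\big[\widetilde h_k\big(s_1,\dots,s_k,\,S^{t,s}(t_{k+1})\big)\big],\qquad
\widetilde h_k(s_1,\dots,s_k,y):=v\big(t_{k+1},s_1,\dots,s_k,y\big),
\]
where $S^{t,s}$ is the solution of \eqref{eq:ss-dS} started from $s$ at time $t$ and $v(t_{k+1},\cdot)$ is the value functional immediately after the fixing at $t_{k+1}$, with $y$ playing the double role of the most recent fixing and of the restart point of the subsequent evolution. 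By the induction hypothesis $v(t_{k+1},\cdot)$ is directionally convex, and since directional convexity passes to the diagonal section where the current price equals the last fixing, $\widetilde h_k$ is directionally convex in $(s_1,\dots,s_k,y)$ (with polynomial growth); the base case $k=n$ is trivial, $v=h$. Hence everything reduces to the one‑interval statement, which I would isolate as a lemma: \emph{if $g:\R_+^{m}\to\R$ is directionally convex with polynomial growth, then $(x_1,\dots,x_{m-1},s)\mapsto\EE[g(x_1,\dots,x_{m-1},S^{t,s}(u))]$ is again directionally convex}, for every $t\le u$.

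To prove the lemma I would re‑run the stochastic‑flow argument of El~Karoui et al.\ underlying Theorem~\ref{th:elkaroui1}. The properties involving only the frozen variables $x_1,\dots,x_{m-1}$ — convexity in each $x_i$, and monotonicity of $x_i\mapsto\partial^{+}_{x_j}\EE[g(\dots,S^{t,s}(u))]$ for $i,j\le m-1$ — are immediate from linearity of the expectation and the corresponding properties of $g$. For the current‑price variable $s$ one uses the no‑crossing property \eqref{eq:nocross}, which makes $s\mapsto S^{t,s}(u)$ almost surely nondecreasing, together with the Girsanov change of measure from the proof of Theorem~\ref{th:elkaroui1}, yielding $\partial^{\pm}_{s}\EE[g(\dots,S^{t,s}(u))]=\EE^{\PP^{s}}[\partial^{\pm}_{m}g(\dots,S^{t,s}(u))]$ once the incremental ratios are controlled (here the polynomial growth of the one‑sided derivatives of $g$ and the moment bounds on $S$ from Assumption~\ref{ass:ss} legitimize the Fatou and dominated‑convergence steps, a localization handling the unboundedness of the derivatives). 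Convexity in $s$ then follows since $\partial^{+}_{m}g(x_1,\dots,x_{m-1},\cdot)$ is nondecreasing and is composed with the nondecreasing flow; the mixed condition between a frozen $x_i$ and $s$ follows from $\partial^{+}_{x_i}\EE[g(\dots,S^{t,s}(u))]=\EE[\partial^{+}_{x_i}g(x_1,\dots,x_{m-1},S^{t,s}(u))]$ with $\partial^{+}_{x_i}g(x_1,\dots,x_{m-1},\cdot)$ nondecreasing (the $(i,m)$ cross condition of directional convexity of $g$), composed with the nondecreasing flow; symmetrically $\partial^{+}_{s}\EE[g(\dots,S^{t,s}(u))]$ is nondecreasing in $x_i$ because $\partial^{+}_{m}g$ is nondecreasing in $x_i$ while $\partial^{+}_{s}S^{t,s}(u)\ge0$ does not depend on $x_i$. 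This closes the induction; in particular $v(t,\cdot)$ is convex in its last argument, so $\partial_{ss}v(t,x)\ge0$ on each strip $[t_k,t_{k+1})\times[0,\infty)$.

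Robustness is then read off from Proposition~\ref{prop:hob-pde}: along any admissible path $x$, the pathwise Itô formula there and the pricing equation \eqref{eq:ss-pde} give, writing $\pqv{x}(t)=\int_0^t\zeta(u)x^2(u)\,\ud u$,
\[
v(0,x)+\int_0^T\partial_{s}v(u,x)\,\ud^{\Pi}x(u)-H(x)
=\frac12\int_0^T\partial_{ss}v(u,x)\big(\sigma^2(u,x(u))-\zeta(u)\big)x^2(u)\,\ud u .
\]
When the model volatility overestimates the realized one, i.e.\ $\sigma^2(u,x(u))\ge\zeta(u)$ for Lebesgue‑a.e.\ $u$, the integrand is nonnegative by the convexity just established, so the right‑hand side is $\ge0$, which is exactly the super‑replication inequality \eqref{eq:super}. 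The main obstacle in this programme is the lemma: transporting \emph{directional} convexity through the expectation — not merely convexity in a single variable — forces a careful differentiation‑under‑the‑integral argument, essentially redoing the El~Karoui coupling/flow estimates while tracking the extra cross‑derivative monotonicities and absorbing the merely polynomial (rather than bounded one‑sided‑derivative) growth of $h$ by localization; the single‑variable parts and the final robustness step are routine.
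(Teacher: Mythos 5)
Your proposal has the same architecture as the proof in Schied--Stadje \cite{ss} that the thesis describes: backward induction over the fixing dates, reduction to a one-step lemma that conditional expectation with respect to the diffusion preserves directional convexity, and the observation that restriction to the diagonal (identifying the new fixing with the current price) preserves directional convexity. The robustness step, reading off the sign of the hedging error from Proposition~\ref{prop:hob-pde} and the pricing PDE, is also identical.

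Where you genuinely diverge is in the proof of the one-step lemma. You propose to re-run the El~Karoui stochastic-flow machinery underlying Theorem~\ref{th:elkaroui1}, working with one-sided partial derivatives and a Girsanov change of measure. The \cite{ss} argument instead works with Wright convexity, i.e.\ with first-order difference quotients $x\mapsto g(\ldots,x_i+\delta,\ldots)-g(\ldots,x_i,\ldots)$, which sidesteps differentiability entirely: the cross monotonicity conditions then follow from composing a monotone difference of $g$ with the a.s.\ nondecreasing flow $s\mapsto S^{t,s}(u)$, plus linearity of expectation, with no differentiation under the integral needed. That is cleaner given that $h$ is only assumed continuous with polynomial growth. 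Your route is viable but costs more: the identity $\partial^{\pm}_{s}v=\EE^{\PP^{s}}[\partial^{\pm}_{m}g(\ldots,S^{t,s}(u))]$ as you state it is not yet usable to prove monotonicity in $s$, since both the measure $\PP^{s}$ and the process $S^{t,s}$ still depend on $s$; El~Karoui's actual argument then introduces an auxiliary process $\tilde S^{s}$ whose $\PP$-law equals the $\PP^{s}$-law of $S^{s}$ and which retains the no-crossing property, after which one gets $\partial^{\pm}_{s}v=\EE^{\PP}[\partial^{\pm}_{m}g(\ldots,\tilde S^{s}(u))]$ with only the argument of $g$ depending on $s$. You allude to ``redoing the coupling/flow estimates,'' so you are aware that work is hiding there, but spelling this step out is not optional. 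With that caveat, and the localization you already mention to handle polynomially growing one-sided derivatives, the proposal is a correct alternative route.
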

The crucial step in the proof of the above theorem is the inherited directional convexity of a map of the form
$$u(s_1,\ldots,s_n)=\EE[h(s_1,\ldots,s_{n-1},S(T))\mid S(t)=s_n],$$
which is proven by means of the notion of Wright convexity.
Furthermore, given a directionally convex function of $k+1$ arguments $u(s_1,\ldots,s_{k+1})$, the contraction $\tilde u(s_1,\ldots,s_{k})=u(s_1,\ldots,s_k,s_k)$ is also directionally convex. By this remark, the proof ends by induction on $k=0,\ldots,n$, noticing that for $t\in[t_{n-k},t_{n-k+1})$ the value function can be written as
$$v(t,s_1,\ldots,s_{n-k},s)=\EE[v(t_{n-k+1}s_1,\ldots,s_{n-k},S(t_{n-k+1}),S(t_{n-k+1}))\mid S(t)=s].$$

A counter-example consisting of a local volatility model where the delta hedge fails to be robust in case of any convex payoff which is not identically linear and is positively homogeneous, implies that every payoff function that is both positively homogeneous and directionally convex must be linear.

The results obtained in \cite{ss} in the context of robustness of hedging strategies are specific to one-dimensional local volatility models. In more general models, the issue of propagation of convexity is quite intricate: in multivariate local volatility models, the convexity of prices of European options depends on the volatility matrix and value functions of European call options may fail to be convex.

\section{Robustness and the hedging error formula}
\label{sec:path-robust}
In this thesis, we consider the following problem: a market participant sells a path-dependent derivative with maturity $T$ and payoff functional $H$ and uses a model of preference to compute the price of such derivative and the corresponding hedging strategy.

This situation is typical of financial institutions issuing derivatives and subject to risk management constraints.
The behavior of the underlying asset during the lifetime of the derivative may or may not correspond to a typical trajectory of the model used by the issuer for constructing the hedging strategy. More importantly, the hedger only experiences a single path for the underlying so it is not even clear what it means to assess whether the model correctly describes the risk along this path. The relevant question for the hedger is to assess, ex-post, the performance of the hedging strategy in the realized scenario and to quantify, ex-ante, the magnitude of possible losses across different plausible risk scenarios. This calls for a scenario analysis --or pathwise analysis-- of the performance of such hedging strategies. In fact such scenario analysis, or stress testing, of hedging strategies are routinely performed in financial institutions using simulation methods, but a theoretical framework for such a pathwise analysis was missing.

In the general case where either the payoff or the volatility are path-dependent, the value at time t of the claim will be a non-anticipative functional of the path of the underlying asset.

In this chapter, we keep to the one-dimensional case and we work on the canonical space of continuous paths $(\O,\F,\FF)$, where $\O:=C([0,T],\R_+)$, $\F$ is the Borel sigma-field and $\FF=\Ft$ is the natural filtration of the coordinate process $S$, given by $S(u,\w)=\w(u)$ for all $\w\in\O$, $t\in[0,T]$.
The coordinate process $S$ represents the asset price process and we assume that the hedger's model consists in a square-integrable martingale measure for $S$:
\begin{assumption}\label{ass:S}
  The market participant prices and hedges derivative instruments assuming that the underlying asset price $S$ evolves according to $\ud S(t)=\s(t) S(t) \ud W(t)$, i.e.
\begin{equation}
  \label{eq:S}
  S(t)=S(0)e^{\int_0^t\s(u)\ud W(u)-\frac12\int_0^t\s(u)^2\ud u},\,t\in[0,T],
\end{equation}
where $W$ is a standard Brownian motion on $(\O,\F,\FF,\PP)$ and the volatility $\s$ is a non-negative $\FF$-adapted process such that 
$S$ is a square-integrable $\PP$-martingale.
\end{assumption}
This assumption includes the majority of models commonly used for pricing and hedging derivatives. The assumption of square-integrability is not essential and may be removed by localization arguments but we will retain it to simplify some arguments. Note that this is an assumption on the pricing model used by the hedger, not an assumption on the evolution of the underlying asset itself. We will not make any assumption on the process generating the dynamics of the underlying asset.

\begin{assumption}\label{ass:H}
  Let $H:D([0,T],\R)\mapsto\R$ be the payoff of a path-dependent derivative with maturity $T$, such that $\EE^{\PP}[|H(S_T)|^2]<\infty$.
\end{assumption}

Under Assumptions \ref{ass:S} and \ref{ass:H}, the replicating portfolio for $H$ is given by the delta-hedging strategy $(Y(0),\nabla_SY)$ and its value process coincides with $Y$.

We denote by
\beq  \label{eq:suppS}
  \supp(S,\PP):=\big\{\w\in\O:\;\PP(S_T\in V)>0\;\forall \text{neighborhood $V$ of }\w\text{ in }\lf\O,\norm{\cdot}_\infty\rg\big\},
\eeq
the \emph{topological support of $(S,\PP)$ in $(\O,\norm{\cdot}_\infty)$}, that is the smallest closed set in $(\O,\norm{\cdot}_\infty)$ such that it contains $S_T$ with $\PP$-measure equal to one.
Since $S$ may not have full support in $(\O,\norm{\cdot}_\infty)$, we will need to specifically work on the support of $S$ in order to pass from equations that hold $\PP$-almost surely for functionals of the price process $S$ to pathwise equations for functionals defined on the space of stopped paths.

Throughout this chapter, we consider a fixed sequence of partitions $\Pi=(\pi^n)_{n\geq1}$, $\pi^n=\{0=t^n_0<t^n_1<\ldots,t^n_{m(n)}=T\}$, with mesh going to 0 as $n$ goes to $\infty$.
For paths of absolutely continuous finite quadratic variation along $\Pi$, we define the \emph{local realized volatility} as
$$\s^{\mathrm{mkt}}:[0,T]\times\mathcal A\to\R,\quad (t,\w)\mapsto\s^{\mathrm{mkt}}(t,\w)=\frac1{\w(t)}\sqrt{\frac{\ud}{\ud t}[\w](t)},$$
where
$$\mathcal A:=\{\w\in Q(\O,\Pi),\;t\mapsto[\w](t)\text{ is absolutely continuous}\}.$$

Our main results apply to paths with finite quadratic variation along the given sequence $\Pi$ of partitions, as it is a necessary assumption in the theory of functional pathwise calculus. However, as remarked in Subsection \ref{sec:reasonable}, this assumption is also reasonable in terms of avoiding undesirable strategies that carry infinite gain with bounded initial capital on some paths.

If $Y\in\Cb(S)$, with $Y(t)=F(t,S_t)$ $\ud t\times\ud\PP$-almost surely, the universal hedging equation \eq{univ-hedge} holds and the asset position of the hedger's portfolio at almost any time $t\in[0,T]$ and for $\PP$-almost all scenarios $\w$, is given by $\nabla_SY(t,\w)=\vd F(t,\w)$.
Note that, even if the \naf\ $F:\W_T\mapsto\R$ does depend on the choice of the functional representation $F$ of $Y$ such that $Y(t)=F(t,\w)$ for Lebesgue-almost all $t\in[0,T]$ and $\PP$-almost all $\w$, the process $\nabla_SY(\cdot)=\vd F(\cdot,S_\cdot)$ does not, up to indistinguishable processes.
Moreover, if it also satisfies $F\in\CC^{0,0}(\W_T)$, according to \prop{G} the trading strategy $(F(0,\cdot),\vd F)$ is self-financing on $Q(\O,\Pi)$ and allows a path-by-path computation of the gain from trading as a \follmer\ integral.
 We will therefore restrict to this class of pathwise trading strategies, which are of main interest:
\beq\label{eq:nabla}
\VV:=\{\vd F,\quad F\in\Cloc(\W_T)\cap\CC^{0,0}(\W_T)\}.
\eeq
Note that $\VV$ has a natural structure of vector space; we call its elements \emph{vertical 1-forms}.

In line with \rmk{path-sf}, the portfolio value of a self-financing trading strategy $(V_0,\phi)$ with asset position a vertical 1-form $\phi=\vd F$ and initial investment $V_0=F(0,\cdot)$ will be given by, at any time $t\in[0,T]$ and in any scenario $\w\in Q(\O,\Pi)$,
$$V(t,\w)=F(0,\w)+\int_{0}^t\vd F(u,\w)\ud^\Pi\w(u).$$
The portfolio value functional $V(T,\cdot)$ at the maturity date can be different from the payoff $H$ with strictly positive $\PP$-measure. What is important about this mis-replication is the sign of the difference between the portfolio value at maturity and the payoff in a given scenario. 

By the arguments above and recalling \defin{hedging_error}, we remark that
 the hedging error of a trading strategy $(V_0,\phi)$, with $\phi\in\VV$, for a derivative with payoff $H$ and in a scenario $\w\in Q(\O,\Pi)$, is given by
$$V(T,\w)-H(\w_T)=V_0(\w)+\int_{0}^T\phi(u,\w)\ud^\Pi\w(u)-H(\w_T).$$
Moreover, $(V_0,\phi)$ is a super-strategy for $H$ on $U\subset Q(\O,\Pi)$ if 
$$V_0(\w)+\int_{0}^T\phi(u,\w) \ud^\Pi\w(u) \geq H(\w_T)\quad\forall\w\in U.$$

\begin{definition}\label{def:rob}
  Given $F\in\Cloc(\W_T)\cap\CC^{0,0}$ such that $Y(t)=F(t,S_t)$ $\ud t\times\ud\PP$-almost surely, the delta-hedging strategy $(Y(0),\nabla_S Y)$ for $H$ is said to be \emph{robust} on $U\subset Q(\O,\Pi)$ if $(F(0,\cdot),\vd F)$ is a \emph{super-strategy} for $H$ on $U$.
\end{definition}

\begin{proposition}[Pathwise hedging error formula]\label{prop:robust}
  If there exists a \naf\ $F:\L_T\to\R$ such that
  \begin{align}
&F\in\Cb(\W_T)\cap\CC^{0,0}(\W_T),\quad \hd F\in\CC^{0,0}_l(\W_T),&\label{eq:regF}\\
&F(t,S_t)=\EE^{\PP}[H(S_T)|\F_t]\quad \ud t\times \ud\PP\text{-a.s.}    \label{eq:valueF}
  \end{align}
then, the hedging error of the delta hedge $(F(0,\cdot),\vd F)$ along any path $\w\in Q(\O,\Pi)\cap\supp(S,\PP)$
is explicitly given by
 \begin{align*}
 &\!\!\!\!\!\!V_0(\w)+\int_{0}^T\vd F(u,\w) \ud^\Pi\w(u)-H(\w_T)\\
 ={}&\frac12\int_{0}^T\s(t,\w)^2\w^2(t)\vd^2F(t,\w)\ud t-\frac12\int_{0}^T\vd^2F(t,\w)\ud[\w](t).
\end{align*}
In particular, if $\w\in\A\cap\supp(S,\PP)$, then
  \begin{align}
 &\!\!\!\!\!\!V_0(\w)+\int_{0}^T\vd F(u,\w) \ud^\Pi\w(u)-H(\w_T)\nonumber\\
 ={}& \frac12\int_{0}^T \lf\s(t,\w)^2-\s^{\mathrm{mkt}}(t,\w)^2\rg\w^2(t)\vd^2F(t,\w) \ud t. \label{eq:tr_err}
  \end{align}
Furthermore, if for all $\w\in U\subset(\A\cap\supp(S,\PP))$ and Lebesgue-almost every $t\in[0,T)$,
\beq\label{eq:supervol}
\vd^2F(t,\w)\geq0\text{ (resp. $\leq$),\quad and \quad}\s(t,\w)\geq\s^{\mathrm{mkt}}(t,\w)\text{ (resp.$\leq$)},
\eeq
then the  delta hedge for $H$ is robust on $U$.
\end{proposition}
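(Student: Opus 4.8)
The plan is to derive the hedging error formula by combining the pathwise change of variable formula for functionals of continuous paths (Theorem \ref{thm:fif-c}) with the functional Kolmogorov equation satisfied by the pricing functional $F$. First I would observe that under Assumption \ref{ass:S}, $S$ is a square-integrable martingale with $\ud[S](t)=\s(t)^2S(t)^2\ud t$, so the process $Y(t)=F(t,S_t)=\EE^\PP[H(S_T)|\F_t]$ is a $\PP$-martingale. Since $F\in\Cb(\W_T)$ with $\hd F\in\CC^{0,0}_l(\W_T)$, the universal pricing equation of \prop{universalprice} applies: $F$ satisfies the functional PDE $\hd F(t,\w_t)+\frac12\s(t,\w)^2\w^2(t)\vd^2F(t,\w_t)=0$ on the topological support $\supp(S,\PP)$, for all $t\in[0,T]$. (Here I am using that the model volatility in \eq{pricing}, written abstractly as $\s(t,\w){}^t\!\s(t,\w)$, specializes in the one-dimensional exponential setting to $\s(t,\w)^2\w^2(t)$.)

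Next I would fix a path $\w\in Q(\O,\Pi)\cap\supp(S,\PP)$ and apply Theorem \ref{thm:fif-c} to $F$ along $\w$. Because $F\in\Cloc(\W_T)\cap\CC^{0,0}(\W_T)$ (which follows from \eq{regF}, since $\Cb(\W_T)\subset\Cloc(\W_T)$), the \follmer\ integral $\int_0^T\vd F(t,\w_t)\cdot\ud^\Pi\w$ exists and
\begin{align*}
H(\w_T)=F(T,\w)={}&F(0,\w)+\int_0^T\vd F(t,\w)\cdot\ud^\Pi\w+\int_0^T\hd F(t,\w)\ud t\\
&{}+\frac12\int_0^T\vd^2F(t,\w)\ud[\w](t),
\end{align*}
where I used $F(T,\w)=H(\w_T)$ from \eq{valueF} extended to the support (the terminal identity holds pointwise on $\supp(S,\PP)$ by continuity of $F$ and $H$). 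Substituting the functional PDE to replace $\hd F(t,\w)=-\frac12\s(t,\w)^2\w^2(t)\vd^2F(t,\w)$ and rearranging gives the first displayed formula:
\begin{align*}
V_0(\w)+\int_0^T\vd F(t,\w)\cdot\ud^\Pi\w-H(\w_T)={}&\frac12\int_0^T\s(t,\w)^2\w^2(t)\vd^2F(t,\w)\ud t\\
&{}-\frac12\int_0^T\vd^2F(t,\w)\ud[\w](t),
\end{align*}
with $V_0(\w)=F(0,\w)$. For $\w\in\A\cap\supp(S,\PP)$, the quadratic variation is absolutely continuous with $\frac{\ud}{\ud t}[\w](t)=\s^{\mathrm{mkt}}(t,\w)^2\w^2(t)$ by the definition of $\s^{\mathrm{mkt}}$, so $\ud[\w](t)=\s^{\mathrm{mkt}}(t,\w)^2\w^2(t)\ud t$ and the two integrals combine into \eq{tr_err}. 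Finally, under \eq{supervol}, the integrand $(\s(t,\w)^2-\s^{\mathrm{mkt}}(t,\w)^2)\w^2(t)\vd^2F(t,\w)$ is nonnegative (resp. nonpositive) Lebesgue-almost everywhere, so the hedging error is $\geq0$ on $U$, i.e. $(F(0,\cdot),\vd F)$ is a super-strategy for $H$ on $U$; by \defin{rob} the delta hedge is robust on $U$.

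The main obstacle I anticipate is the careful passage from the $\PP$-almost-sure identity \eq{valueF} (which holds $\ud t\times\ud\PP$-a.e.) to a genuinely pathwise statement valid for \emph{every} $\w$ in $Q(\O,\Pi)\cap\supp(S,\PP)$: this is exactly why the support of $S$ enters, and one must invoke the continuity of $F$ on $(\W_T,\dinf)$ together with \prop{universalprice} to know the functional PDE holds at every point of $\supp(S,\PP)$ rather than merely almost everywhere. A secondary technical point is ensuring the terminal condition $F(T,\w)=H(\w_T)$ holds pointwise on the support — this uses joint continuity of $F$ and continuity of $H$ in sup norm (Assumption \ref{ass:H} gives $H$ defined on $D([0,T],\R)$, and its restriction to $\O$ is continuous by hypothesis in \ass{H}'s standing assumptions). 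Everything else is a direct substitution and sign inspection, with no delicate estimates.
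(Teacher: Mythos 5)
Your proof is correct and follows essentially the same route as the paper's: invoke \prop{universalprice} to get the functional Kolmogorov equation on $\supp(S,\PP)$, apply the pathwise change of variable formula (\thm{fif-c}) together with \prop{G} to express the terminal portfolio value, substitute the PDE, specialize to $\A$ using the definition of $\s^{\mathrm{mkt}}$, and conclude robustness by sign inspection. Your explicit discussion of why the terminal identity $F(T,\w)=H(\w_T)$ passes from a $\PP$-a.s. statement to a pointwise statement on the support via continuity is a small but welcome addition of rigor that the paper leaves implicit.
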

\proof
Assumptions \eq{regF}-\eq{valueF} imply $Y\in\Cb(S)$, with $Y(t)=F(t,S_t)$ $\ud t\times\ud\PP$-almost surely, thus $F(\cdot,S_\cdot)$ satisfies the functional \ito\ formula for functionals of continuous semimartingales \eq{fif-csm}.
Moreover, by \prop{universalprice}, the universal pricing equation holds: for all $\w\in\supp(S,\PP)$,
\beq\label{eq:fpde}
\hd F(t,\w)+\frac12\vd^2F(t,\w)\s^2(t,\w)\w^2(t)=0\quad \forall t\in[0,T)
\eeq
By \prop{G} and using the pathwise change of variable formula for functionals of continuous paths (\thm{fif-c}), the value of the hedger's portfolio at maturity is given by, for all $\w\in Q(\O,\Pi)$,
\begin{align}
V(T,\w)={}&F(0,\w_{0})+\int_{0}^T\vd F(t,\w) \ud^\Pi\w(t)\nonumber\\
={}& H-\int_{0}^T\hd F(t,\w)\ud t-\frac12\int_{0}^T\vd^2F(t,\w) \ud[\w](t).\label{eq:V}
\end{align}
Then, using the equations \eq{V} and \eq{fpde}, we get an explicit expression for the hedging error along any path $\w$ in $\A\cap\supp(S,\PP)$ as
\begin{align*}
V(T,\w)-H ={}& \int_{0}^T\lf\frac12{\s}^2(u,\w)\w^2(u)\vd^2F(u,\w)-\frac12\vd^2F(u,\w) \ud[\w](t)\rg \ud u \\
 &{} -\int_{0}^T\hd F(u,\w)\ud t - \int_{0}^T\frac12{\s}(u,\w)^2\w^2(u) \vd^2F(u,\w)\ud u \\
={}& \frac12\int_{0}^T \lf{\s}(u,\w)^2-\s^{\mathrm{mkt}}(t,\w)^2\rg\w^2(u)\vd^2F(u,\w) \ud u.
\end{align*}
Moreover, the inequalities \eq{supervol} imply that, for all $\w\in U$,
\begin{align*}
V(T,\w)\geq{}& H-\int_{0}^T\hd F(t,\w)\ud t-\frac12\int_{0}^T\s(t,\w)^2\w^2(t)\vd^2F(t,\w)\ud t\\
={}&H.
\end{align*}
This proves the robustness of the delta hedge on $U$.
\endproof
\begin{remark}
  \prop{robust} simply requires the price trajectory to have an absolutely continuous quadratic variation in a pathwise sense, but does not assume any specific probabilistic model. Nevertheless, it applies to any model whose sample paths fulfill these properties almost-surely: this applies in particular to diffusion models and other models based on continuous semimartingales analyzed in \cite{avlevyparas,bergman,elkaroui,hobson}. However, note that we do not even require the price process to be a semimartingale. For example, our results also hold when the price paths are generated by a (functional of a) fractional Brownian motion with index $H\geq\frac12$.
\end{remark}

\section{The impact of jumps}
\label{sec:jumps}

The presence of jumps in the price trajectory affects the hedging error of the delta-hedging strategy in an unfavorable way.

\begin{proposition}[Impact of jumps on delta hedging]\label{prop:jumps}
  If there exists a \naf\ $F:\L_T\to\R$ such that
  \begin{align*}
&F\in\Cb(\L_T)\cap\CC^{0,0}(\L_T),\quad \vd F\in\CC^{0,0}(\L_T),\quad \hd F\in\CC^{0,0}_l(\W_T)\\
&F(t,S_t)=\EE^{\PP}[H(S_T)|\F_t]\quad \ud t\times \ud\PP\text{-a.s.}    
  \end{align*}
then, for any $\w\in Q(D([0,T],\mathbb{R}_+),\Pi)$ such that $[\w]^c$ is absolutely continuous, the hedging error of the delta hedge $(F(0,\cdot),\vd F)$ for $H$ is explicitly given by
  \begin{align} \label{eq:tr_err-jumps}
     &\frac12\int_{0}^T \lf\s(t,\w)^2-\s^{\mathrm{mkt}}(t,\w)^2\rg\w^2(t)\vd^2F(t,\w) \ud t\\
&-\sum_{t\in(0,T]}\lf F(t,\w_t)-F(t,\w_{t-})-\vd F(t,\w_{t-})\cdot\De\w(t)\rg.
  \end{align}
\end{proposition}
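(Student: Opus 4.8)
The plan is to mirror the proof of \prop{robust} but now invoke the \cadlag\ version of the functional \ito\ formula (\thm{fif-sm}) and the corresponding pathwise change-of-variable formula (\thm{fif-d}), since the price path $\w$ is now allowed to have jumps. First I would observe that the regularity hypotheses on $F$ — namely $F\in\Cb(\L_T)\cap\CC^{0,0}(\L_T)$, $\vd F\in\CC^{0,0}(\L_T)$ and $\hd F\in\CC^{0,0}_l(\W_T)$ — together with $F(t,S_t)=\EE^\PP[H(S_T)\mid\F_t]$ $\ud t\times\ud\PP$-a.s., put us in the setting of \prop{universalprice}, so the universal pricing equation
\beq
\hd F(t,\w)+\tfrac12\vd^2F(t,\w)\,\s^2(t,\w)\,\w^2(t)=0,\qquad t\in[0,T),\ \w\in\supp(S,\PP),
\eeq
still holds on the topological support. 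This is the only place where \ass{S} (the hedger's continuous martingale model) enters, and it only constrains $F$ through its restriction to continuous paths, so the left-continuity requirement on $\hd F$ over $\W_T$ is enough.

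Next I would apply \prop{G-cadlag}: under the stated hypotheses on $F$ the pair $(F(0,\cdot),\vd F)$ is a self-financing trading strategy on $Q(\O,\Pi)$ (here $\O=D([0,T],\R_+)$), with gain process $G(t,\w;\vd F)=\int_0^t\vd F(u,\w_{u-})\cdot\ud^\Pi\w$, so the portfolio value at maturity is $V(T,\w)=F(0,\w_0)+\int_0^T\vd F(u,\w_{u-})\cdot\ud^\Pi\w$. Then I would invoke the pathwise change-of-variable formula for \cadlag\ paths \eq{fif-d} to rewrite $F(T,\w)$: since $F(T,\w)=H(\w_T)$,
\begin{align*}
\int_0^T\vd F(u,\w_{u-})\cdot\ud^\Pi\w
={}&H(\w_T)-F(0,\w_0)-\int_0^T\hd F(t,\w_{t-})\,\ud t-\tfrac12\int_0^T\vd^2F(t,\w_{t-})\,\ud[\w]^c_\Pi(t)\\
&{}-\sum_{t\in(0,T]}\lf F(t,\w_t)-F(t,\w_{t-})-\vd F(t,\w_{t-})\cdot\De\w(t)\rg.
\end{align*}
Subtracting $H(\w_T)$ from $V(T,\w)$ and substituting the pricing equation to eliminate $\hd F$ (noting $\hd F(t,\w_{t-})=\hd F(t,\w_t)$ off the jump times, which form a Lebesgue-null set, so the $\ud t$-integral is unaffected) gives, using $[\w]^c_\Pi(t)=\int_0^t\s^{\mathrm{mkt}}(s,\w)^2\w^2(s)\,\ud s$ on the set where $[\w]^c$ is absolutely continuous,
\begin{align*}
V(T,\w)-H(\w_T)={}&\tfrac12\int_0^T\lf\s(t,\w)^2-\s^{\mathrm{mkt}}(t,\w)^2\rg\w^2(t)\,\vd^2F(t,\w)\,\ud t\\
&{}-\sum_{t\in(0,T]}\lf F(t,\w_t)-F(t,\w_{t-})-\vd F(t,\w_{t-})\cdot\De\w(t)\rg,
\end{align*}
which is exactly \eq{tr_err-jumps}.

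The main obstacle I anticipate is a bookkeeping point rather than a deep one: making sure the continuity/left-continuity hypotheses assumed on $F$ here are genuinely sufficient both for \thm{fif-d} (which needs $F\in\Cloc(\L_T)$, implied by $F\in\Cb(\L_T)$) and for \prop{G-cadlag} (which additionally needs $F\in\CC^{0,0}_r(\L_T)$ with $\vd F\in\CC^{0,0}(\L_T)$ — one should check that $\CC^{0,0}(\L_T)\subset\CC^{0,0}_r(\L_T)$, so joint continuity suffices), and that the jump sum is absolutely convergent so that it may be split off and the two limits ($n\to\infty$ in the Riemann sums, and the jump-time summation) do not interfere — but this is precisely the content of \thm{fif-d}, so it can simply be cited. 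A secondary point is that the pricing equation only holds on $\supp(S,\PP)$, whereas the statement asserts the formula for every $\w\in Q(D([0,T],\R_+),\Pi)$ with $[\w]^c$ absolutely continuous; I would either restrict the claim to $\w\in\supp(S,\PP)$ as in \prop{robust}, or note that $\hd F$ is defined pathwise everywhere and the identity $V(T,\w)-H(\w_T)=-\int_0^T\hd F\,\ud t-\tfrac12\int_0^T\vd^2F\,\ud[\w]^c-\sum(\cdots)$ holds on all such paths, the pricing equation merely being the tool that converts $-\int_0^T\hd F\,\ud t$ into the volatility-difference form on the support.
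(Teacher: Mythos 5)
Your proposal reproduces the paper's proof essentially verbatim: invoke \prop{universalprice} for the pricing equation on $\supp(S,\PP)$, apply \prop{G-cadlag} to identify the gain as the F\"ollmer integral, use the change-of-variable formula \thm{fif-d} to rewrite $F(T,\w)$, and substitute the pricing equation to eliminate $\hd F$. Your closing caveat is well-founded and, in fact, also applies to the paper's own proof: the statement is asserted for every $\w\in Q(D([0,T],\R_+),\Pi)$ with $[\w]^c$ absolutely continuous, yet the pricing equation used in the last step is only established on $\supp(S,\PP)\subset C([0,T],\R_+)$, so the claim, as written, implicitly requires the pricing equation to extend (e.g.\ by continuity of $\hd F$ and $\vd^2 F$) to the \cadlag\ stopped paths $(t,\w_t)$ that actually occur along a path with jumps.
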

\proof
We follow the same steps as in the proof of \prop{robust}, with the appropriate modifications.
The universal pricing equation holds on the support of $S$, that is, for all $\w\in\supp(S,\PP)$,
$$\hd F(t,\w)+\frac12\vd^2F(t,\w)\s^2(t,\w)\w^2(t)=0\text{ for Lebesgue-a.e. }t\in[0,T).$$
By \prop{G-cadlag} and using the pathwise change of variable formula for functionals of \cadlag\ paths (\thm{fif-d}), the value of the hedger's portfolio at maturity in the scenario $\w$ is given by
\begin{align}
V(T,\w)={}&F(0,\w_{0})+\int_{0}^T\vd F(t,\w) \ud^\Pi\w(t)\nonumber\\
={}& H-\int_{0}^T\hd F(t,\w)\ud t-\frac12\int_{0}^T\vd^2F(t,\w) \ud[\w]^c(t)\label{eq:V1}\\
&-\sum_{t\in(0,T]}\lf F(t,\w_t)-F(t,\w_{t-})-\vd F(t,\w_{t-})\cdot\De\w(t)\rg.\label{eq:V2}
\end{align}
Then, using the equations \eq{V1}, \eq{V2} and \eq{fpde}, we get an explicit expression for the hedging error in the scenario $\w$:
\begin{align*}
V(T,\w)-H ={}& \frac12\int_{0}^T \lf{\s}(u,\w)^2-\s^{\mathrm{mkt}}(u,\w)^2\rg\w^2(u)\vd^2F(u,\w) \ud u\\
&-\sum_{t\in(0,T]}\lf F(t,\w_t)-F(t,\w_{t-})-\vd F(t,\w_{t-})\De\w(t)\rg.
\end{align*}
\endproof

\begin{remark}
Using a Taylor expansion of $e\mapsto F(t,\w_{t-}+e\ind_{[t,T]})$, we can rewrite the hedging error as
\begin{align*}
V(T,\w)-H ={}& \frac12\int_{0}^T \lf{\s}(u,\w)^2-\s^{\mathrm{mkt}}(u,\w)^2\rg\w^2(u)\vd^2F(u,\w) \ud u\\
&{}-\frac12\sum_{t\in(0,T]} \vd^2F(t,\w_{t-}+\x\ind_{[t,T]})\De\w(t)^2,
\end{align*}
for an appropriate $\xi\in B(0,\abs{\De\w(t)})$. This shows that the exposure to jump risk is quantified by the Gamma of the option computed in a \lq jump scenario\rq, i.e. along a vertical perturbation of the original path.
\end{remark}

\section{Regularity of pricing functionals}
\label{sec:exist}

\prop{robust} requires some regularity on the pricing functional $F$, which is in general defined as a conditional expectation, therefore it is not obvious to verify such regularities for $F$ on the space of stopped paths.
In \prop{exist}, we give sufficient conditions on the payoff functional which lead to a \emph{vertically smooth} pricing functional.

\begin{definition}\label{def:vsmooth}
  A functional $h:D([0,T],\R)\mapsto\R$ is said to be \emph{vertically smooth on $U\subset D([0,T],\R)$} if $\forall(t,w)\in[0,T]\times U$ the map
\begin{eqnarray*}g^h(\cdot;t,\w):\R&\to&\R,
\\ e&\mapsto &h\lf\w+e\ind_{[t,T]}\rg\end{eqnarray*}
is twice continuously differentiable on a neighborhood $V$ and such that there exist $K,c,\b>0$ such that, for all $\w,\w'\in U$, $t,t'\in[0,T]$,
$$\abs{\partial_ e g^h(e;t,\w)}+\abs{\partial_{ee}g^h(e;t,\w)}\leq K,\quad e\in V,$$
and
\beq\label{eq:gh-lip}
\bea{c}
\abs{\partial_e g^h(0;t,\w)-\partial_e g^h(0;t',\w')}+\abs{\partial_e^2 g^h(0;t,\w)-\partial_e^2 g^h(0;t',\w')}\\
\leq c(\norm{\w-\w'}_\infty+\abs{t-t'}^\b).
\ea
\eeq
\end{definition}

We define, for all $t\in[0,T]$, the concatenation operator $\conc{t}$ as
$$\bea{rl}
\conc{t}:&D([0,T],\R)\times D([0,T],\R)\rightarrow D([0,T],\R),\\
&(\w,\w')\mapsto\w\underset{t}{\oplus}\w'=\w\ind_{[0,t)}+\w'\ind_{[t,T]}.
\ea$$
This will appear in the proof of Propositions \ref{prop:exist} and \ref{prop:convex}.

The following result shows how to construct a (vertically) smooth version of the conditional expectation that gives the price of a path-dependent contingent claim.
\begin{proposition}\label{prop:exist}
 Let $H:(D([0,T],\R),\norm{\cdot}_\infty)\mapsto\R$ a locally-Lipschitz payoff functional such that $\EE^{\PP}[|H(S_T)|]<\infty$ and define $h:(D([0,T],\R)\to\R$ by $h(\w_T)=H(\exp\w_T)$, where $\exp\w_T(t):=e^{\w(t)}$ for all $t\in[0,T]$. If $h$ is vertically smooth on $\C([0,T],\R_+)$ in the sense of \defin{vsmooth}, then
\begin{equation}
  \label{eq:F02}
\exists F\in\CC^{0,2}_b(\W_T)\cap\CC^{0,0}(\W_T),\quad F(t,S_t)=\EE^{\PP}[H(S_T)|\F_t]\quad \ud t\times \ud\PP\text{-a.s.}
\end{equation}
\end{proposition}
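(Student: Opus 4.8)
The plan is to construct $F$ explicitly as a conditional expectation and then read off its regularity from the vertical smoothness of $h$ together with the structure of the exponential martingale model \eqref{eq:S}. First I would fix $(t,\w)\in\W_T$ and define the candidate functional by freezing the past of the path up to time $t$ and letting the model dynamics run on $[t,T]$. Concretely, writing $X:=\log S$ for the log-price (so $X(t)=X(0)+\int_0^t\s(u)\ud W(u)-\frac12\int_0^t\s(u)^2\ud u$), and using the concatenation operator $\conc{t}$ introduced just before the statement, I would set
\begin{equation*}
F(t,\w):=\EE^{\PP}\!\left[h\!\left(\log\w_t\underset{t}{\oplus}X_T\right)\Big|\,X(t)=\log\w(t)\right],
\end{equation*}
i.e.\ the price obtained by pasting the observed (continuous, stopped) path $\w_t$ onto the conditional law of the remaining log-trajectory. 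A preliminary step is to check that this is a bona fide non-anticipative functional on $\W_T$, that $F(t,S_t)=\EE^{\PP}[H(S_T)\mid\F_t]$ $\ud t\times\ud\PP$-a.s.\ (Markov-type argument using that the increments of $X$ after $t$ are independent of $\F_t$ in the Brownian case, and a regular-conditional-probability / monotone-class argument in general since $\s$ is merely $\FF$-adapted), and that $\EE^\PP[|H(S_T)|]<\infty$ makes it finite.

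Next I would establish the \emph{vertical} regularity $F\in\CC^{0,2}_b(\W_T)$. The key point is that a vertical perturbation $\w_t^e=\w_t+e\ind_{[t,T]}$ shifts the whole future log-path by $\log(\w(t)+e)-\log\w(t)$; hence
\begin{equation*}
F(t,\w_t^e)=\EE^{\PP}\!\left[g^h\!\Big(\log\tfrac{\w(t)+e}{\w(t)};t,\log\w_t\underset{t}{\oplus}X_T\Big)\right],
\end{equation*}
where $g^h$ is exactly the map from \defin{vsmooth}. Differentiating twice in $e$ at $e=0$ and justifying the interchange of $\partial_e$ and $\EE^\PP$ by the uniform bounds $|\partial_e g^h|+|\partial_{ee}g^h|\le K$ on $V$ (dominated convergence), I get closed expressions for $\vd F$ and $\vd^2F$ as expectations of $\partial_e g^h$, $\partial_{ee}g^h$ against the conditional law of $X_T$, times elementary factors coming from $\tfrac{\ud}{\ud e}\log(\w(t)+e)$. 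The uniform bound $K$ on $g^h$ and its derivatives then yields $\vd F,\vd^2F\in\BB(\W_T)$ directly, and boundedness of $F$ itself follows from $|g^h(0;\cdot)|\le$ (bound obtained by integrating $|\partial_e g^h|\le K$ from $0$), so $F\in\CC^{0,2}_b$ modulo continuity.

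The continuity statements $\vd^jF\in\CC^{0,0}(\W_T)$ for $j=0,1,2$ — and joint continuity $F\in\CC^{0,0}(\W_T)$ — are where the Lipschitz estimate \eqref{eq:gh-lip} and the local-Lipschitz property of $H$ enter: given $(t,\w),(t',\w')$ close in $\dinf$, I would split $|\vd^jF(t,\w)-\vd^jF(t',\w')|$ into (a) the difference coming from $\partial_e^jg^h(0;t,\cdot)-\partial_e^jg^h(0;t',\cdot)$ evaluated along the \emph{same} future randomness, controlled by \eqref{eq:gh-lip} with the $\norm{\cdot}_\infty+|\cdot|^\b$ modulus, and (b) the difference coming from changing the conditioning time/level, controlled by $L^1$-continuity of $t\mapsto X_T$ under the conditional law (continuity of Brownian paths, square-integrability of $S$) plus the uniform Lipschitz bound $K$ on the integrand. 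The main obstacle I anticipate is precisely step (b) in the general (path-dependent volatility) case: one must argue that the conditional distribution of the future log-path depends continuously (in a suitable Wasserstein-type sense) on the conditioning, which in the Brownian-martingale case is transparent but with $\FF$-adapted $\s$ requires care — I would handle it by working on $\supp(S,\PP)$ and invoking square-integrability of $S$ to get $L^2$-continuity of $S_T$ in the conditioning data, which upgrades to the desired continuity of the expectations because the integrands $\partial_e^jg^h$ are uniformly Lipschitz in their path argument near $e=0$ (a consequence of \eqref{eq:gh-lip}). Assembling (a) and (b) gives the four continuity properties, completing $F\in\CC^{0,2}_b(\W_T)\cap\CC^{0,0}(\W_T)$.
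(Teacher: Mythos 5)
Your proposal follows essentially the same route as the paper: construct $F$ via a regular conditional distribution (the paper uses the Stroock--Varadhan construction and then the scaling identity $\mathrm{Law}(S,\PP^{(t,x+\eps)})=\mathrm{Law}(S(1+\eps/x),\PP^{(t,x)})$, which is exactly your ``vertical perturbation in price $=$ additive shift of the future log-path''), differentiate under the expectation using the uniform bounds on $\partial_e g^h,\partial_{ee}g^h$ from \defin{vsmooth} for the vertical derivatives, and deploy the H\"older modulus \eqref{eq:gh-lip} together with H\"older/Doob estimates on the exponential martingale for joint continuity. The only substantive slip is the aside that boundedness of $F$ ``follows from integrating $|\partial_e g^h|\le K$ from $0$'' — that bound controls increments $|g^h(e;\cdot)-g^h(0;\cdot)|$ on $V$, not $g^h(0;\cdot)=h$ itself; fortunately the claimed class $\CC^{0,2}_b$ requires $\vd F,\vd^2F\in\BB(\W_T)$ rather than $F\in\BB(\W_T)$, so nothing is needed there, and the paper makes no such claim.
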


\proof
The first step is to construct analytically a regular \naf\ representation $F:\L_T\mapsto\R$ of the claim price, then the properties of regularity and vertical smoothness of $F$ will follow from the conditions of the payoff $H$.

By Theorem 1.3.4 in \cite{str-var} on the existence of regular conditional distributions, for any $t\in[0,T]$ there exists a regular conditional distribution $\{\PP^{(t,\w)},\,\w\in\O\}$ of $\PP$ given the (countably generated) sub-$\s$-algebra $\F_t\subset\F$, i.e. a family of probability measures $\PP^{(t,\w)}$ on $(\O,\F)$ such that
\begin{enumerate}
\item $\forall B\in\F$, the map $\O\ni\w\mapsto\PP^{(t,\w)}(B)\in[0,1]$ is $\F_t$-measurable;
\item $\forall A\in\F_t,\forall B\in\F$, $\PP(A\cap B)=\int_A\PP^{(t,\w)}(B)\PP(\ud\w)$;
\item $\forall A\in\F_t, \forall\w\in\O$, $\PP^{(t,\w)}(A)=\ind_A(\w)$.
\end{enumerate}
Moreover, for any random variable $Z\in L^1(\O,\F,\PP)$, it holds
$$\EE^{\PP^{(t,\w)}}[|Z|]<\infty\text{ and }\EE^{\PP}\left[Z|\F_t\right](\w)=\EE^{\PP^{(t,\w)}}[Z]\text{ for }\PP\text{-almost all }\w\in\O.$$
By taking $Z=H(S_T)$, since $\PP^{(t,\w)}$ is concentrated on the subspace $\O^{(t,\w)}:=\{\w'\in\O:\w'_t=\w_t\}$, we can rewrite $\EE^{\PP^{(t,\w)}}[H(S_T)]=\EE^{\PP^{(t,\w)}}[H(\w\underset{t}{\oplus} S_T)]$.

For any $t\in[0,T],x>0$, we denote $\PP^{(t,x)}$ the law of the stochastic process $x\ind_{[0,t)}+S^{(t,x)}\ind_{[t,T]}$ on $(\O,\F,\PP)$, where $\{S^{(t,x)}(u)\}_{u\in[t,T]}$ is defined by
\begin{equation}
  \label{eq:Seps}
  S^{(t,x)}(u)= x+\int_t^u\s(r)S^{(t,x)}(r)\ud W(r),\quad u\in[t,T].
\end{equation}
Note that $S$ has the same law under $\PP^{(t,x+\eps)}$ that $S\lf1+\frac\eps x\rg$ has under $\PP^{(t,x)}$.
Indeed:
\begin{align*}
S^{(t,x+\eps)}={}&\lf x+\eps+\int_t^\cdot\s(u)S^{(t,x+\eps)}(u)\ud W(u)\rg\ind_{[t,T]}  \\
={}&(x+\eps)e^{\int_t^\cdot\s(s)\ud W(s)-\frac12\int_t^\cdot\s^2(u)\ud u}\ind_{[t,T]}\\
={}&S^{(t,x)}\lf1+\frac\eps x\rg,
\end{align*}
hence we have the following identities in law
\begin{align*}
\mathrm{Law}(S,\PP^{(t,x+\eps)})={}&\mathrm{Law}\lf(x+\eps)\ind_{[0,t)}+S^{(t,x+\eps)}\ind_{[t,T]},\PP\rg\\
={}&\mathrm{Law}\lf\lf x\ind_{[0,t)}+S^{(t,x)}\ind_{[t,T]}\rg\lf1+\frac\eps x\rg,\PP\rg\\
={}&\mathrm{Law}\lf S\lf1+\frac\eps x\rg,\PP^{(t,x)}\rg.
\end{align*}
Then, consider the \naf\ $F:\L_T\rightarrow\R$ defined by, for all $(t,\w)\in\L_T$,
\begin{align}
F(t,\w)={}&\EE^{\PP^{(t,\w(t))}}\left[H\lf\w\conc{t}S_T\rg\right]  \label{eq:Fw}\\
={}&\EE^{\PP}\left[H\lf \w\conc{t}\w(t)e^{\int_t^\cdot\s(s)\ud W(s)-\frac12\int_t^\cdot\s^2(u)\ud u}\ind_{[t,T]} \rg\right]. \nonumber
\end{align}
If computed respectively on a continuous stopped path $(t,\w)\in\W_T$ and on its vertical perturbation in $t$ of size $\eps$, it gives
$$F(t,\w)=\EE^{\PP^{(t,\w)}}\left[H\lf \w\conc{t}S_T \rg\right]=\EE^{\PP}\left[H(S_T)|\F_t\right](\w)\quad \PP\text{-a.s.},$$
$$F(t,\w^\eps_t)=\EE^{\PP^{(t,\w(t)+\eps)}}\left[H\lf \w\conc{t}S_T \rg\right]=\EE^{\PP^{(t,\w)}}\left[H\lf\w\conc{t}\lf S_T\lf1+\frac\eps{\w(t)}\rg\rg\rg\right].$$

Since $H$ is locally Lipschitz continuous, given $(t,\w)\in[0,T]\times C([0,T],\R_+)$, there exist $\y=\y(\w)>0$ and $K_\w\geq 0$ such that 
$$\|\w -\w'\|_\infty \leq\y(\w) \quad \Rightarrow\quad |H(\omega)-H(\omega')| \leq K_\omega \|\w -\w'\|_\infty.$$

Now, we prove the joint-continuity, by showing the computation for the right side - the other being analogous because of symmetric properties; this also proves continuity at fixed times.
So, given $(t,\w)\in\W_T$, for $t'\in[t,T]$, $(t',\w')\in\W_T$ such that $\dinf((t,\w),(t',\w'))\leq\y$, then:
\begin{align*}
&\!\!\!\!\!\!\!\! \abs{F(t,\w)-F(t',\w')}=\\
={}&\abs{\EE^{\PP^{(t,\w)}}\left[H\lf\w\conc{t}S_T\rg\right]-\EE^{\PP^{(t',\w')}}\left[H\lf\w'\conc{t'}S_T\rg\right]} \\
={}& \EE^{\PP}\left[\left\lvert H\lf\w\ind_{[0,t)}+\w(t)e^{\int_{t}^{\cdot}\s(u)\ud W(u)-\frac12\int_{t}^{\cdot}\s^2(u)\ud u}\ind_{[t,T]}\rg \right.\right.\\
 &\left.\left.\quad\quad\quad\quad -H\lf\w'\ind_{[0,t')}+\w'(t')e^{\int_{t'}^{\cdot}\s(u)\ud W(u)-\frac12\int_{t'}^{\cdot}\s^2(u)\ud u}\ind_{[t',T]}\rg \right\rvert\right]\\
\leq{}&\,K_\w\,\EE^{\PP}\left[\norm{(\w-\w')\ind_{[0,t)}}_\infty \right. 
+\norm{\big(\w(t)e^{\int_t^{\cdot}\s(u)\ud W(u)-\frac12\int_t^{\cdot}\s^2(u)\ud u}-\w'\big)\ind_{[t,t')}}_\infty \\
&\,\left.+\norm{\big(\w(t)e^{\int_t^{\cdot}\s(u)\ud W(u)-\frac12\int_t^{\cdot}\s^2(u)\ud u}-\w'(t')e^{\int_{t'}^{\cdot}\s(u)\ud W(u)-\frac12\int_{t'}^{\cdot}\s^2(u)\ud u}\big)\ind_{[t',T]}}_\infty\right] \\
\leq{}&\,K_\w \lf\y+|\w(t)|\EE^{\PP}\left[\norm{\big(e^{\int_t^{\cdot}\s(u)\ud W(u)-\frac12\int_t^{\cdot}\s^2(u)\ud u}-1\big)\ind_{[t,t')}}_\infty\right]+\y \right.\\
&{}+|\w(t)|\EE^{\PP}\left[\norm{e^{\int_{t'}^{\cdot}\s(u)\ud W(u)-\frac12\int_{t'}^{\cdot}\s^2(u)\ud u}\ind_{[t',T)}}_\infty\abs{e^{\int_t^{t'}\s(u)\ud W(u)-\frac12\int_t^{t'}\s^2(u)\ud u}-1}\right] \\
&\left. +\y\EE^{\PP}\left[\norm{e^{\int_{t'}^{\cdot}\s(u)\ud W(u)-\frac12\int_{t'}^{\cdot}\s^2(u)\ud u}\ind_{[t',T)}}_\infty\right] \rg
\end{align*}

\begin{align}
\leq{}&K_\w\left[ 2\y+|\w(t)|\lf\EE^{\PP}\bigg[\sup_{s\in[t,t')}\abs{e^{\int_t^s\s(u)\ud W(u)-\frac12\int_t^s\s^2(u)\ud u}-1}\bigg] \right.\right. \nonumber\\
 &\left.{} +\EE^{\PP}\bigg[\sup_{s\in[t',T)}\abs{e^{\int_{t'}^s\s(u)\ud W(u)-\frac12\int_{t'}^s\s^2(u)\ud u}}\bigg]\EE^{\PP}\left[\abs{e^{\int_t^{t'}\s(u)\ud W(u)-\frac12\int_t^{t'}\s^2(u)\ud u}-1}\right] \rg \nonumber\\
 &\left.{} +\y\EE^{\PP}\bigg[\sup_{s\in[t',T)}\abs{e^{\int_{t'}^s\s(u)\ud W(u)-\frac12\int_{t'}^s\s^2(u)\ud u}}\bigg]\right]\label{eq:jc-1}
\end{align}
The first and third expectations in \eq{jc-1} go to 0 as $t'$ tends to $t$, indeed:

\begin{align*}
0\leq{}&\EE^{\PP}\left[\abs{e^{\int_t^{t'}\s(u)\ud W(u)-\frac12\int_t^{t'}\s^2(u)\ud u}-1}\right]\\
\leq{}&\EE^{\PP}\bigg[\sup_{s\in[t,t')}\abs{e^{\int_t^{\cdot}\s(u)\ud W(u)-\frac12\int_t^{\cdot}\s^2(u)\ud u}-1}\bigg] \\
\leq{}&\EE^{\PP}\bigg[\sup_{s\in[t,t')}\abs{e^{\int_t^{\cdot}\s(u)\ud W(u)-\frac12\int_t^{\cdot}\s^2(u)\ud u}-1}^2\bigg]^{\frac12},\text{ by H\"older's inequality} \\
\leq{}&2\EE^{\PP}\left[\abs{e^{\int_t^{t'}\s(u)\ud W(u)-\frac12\int_t^{t'}\s^2(u)\ud u}-1}^2\right]^{\frac12}, \text{ by Doob's martingale inequality} \\
={}&2\lf\EE^{\PP}\left[(M(t')-1)^2\right]\rg^{\frac12}\\
={}&2\sqrt{\EE^{\PP}\Big[[M](t')\Big]},
\end{align*}
where $M$ denotes the exponential martingale
$$M(s)=e^{\int_t^s\s(u)\ud W(u)-\frac12\int_t^s\s(u)^2\ud u},\quad s\in[t,T].$$
So, the expectation goes to 0 as $t'$ tends to $t$, by \ass{S}.
On the other hand, the second and fourth expectations in \eq{jc-1} are bounded above, again by H\"older's and Doob's martingale inequalities:
\begin{align*}
\EE^{\PP}\bigg[\sup_{s\in[t',T)}\abs{e^{\int_{t'}^s\s(u)\ud W(u)-\frac12\int_{t'}^s\s^2(u)\ud u}}\bigg]\leq{}&\EE^{\PP}\bigg[\sup_{s\in[t',T)}e^{2\int_{t'}^s\s(u)\ud W(u)-\int_{t'}^s\s^2(u)\ud u}\bigg]^{\frac12}\\
\leq{}&2\EE^{\PP}\left[\lf\frac{M(T)}{M(t')}\rg^2\right]^{\frac12}\\
={}&2\EE^{\PP}\left[\frac{[M](T)}{M(t')}-1\right]^{\frac12},
\end{align*}
which is finite by \ass{S}.

The vertical incremental ratio of F is given by
\begin{eqnarray*}
\frac{F(t,\w^\eps_t)-F(t,\w)}\eps&=&\frac1\eps \EE^{\PP^{(t,\w)}}\left[H\lf \w\conc{t}S_T \lf1+\frac{\eps}{\w(t)}\ind_{[t,T]}\rg \rg - H\lf\w\conc{t}S_T\rg\right]\\
&=&\frac1\eps \EE^{\PP^{(t,\w)}}\left[h\lf\log\lf\frac{\w\conc{t}S_T\lf1+\frac{\eps}{\w(t)}\ind_{[t,T]}\rg}{\w(0)}\rg \rg\right.\\
 &&\left.\phantom{\lf\frac{\lf\frac{\eps}{\w(t)}\rg}{\w(0)}\rg}- h\lf\log\lf\frac{\w\conc{t}S_T}{\w(0)}\rg \rg\right]\\
&=&\frac1\eps \EE^{\PP^{(t,\w)}}\left[h\lf\log\lf\frac{\w\conc{t}S_T}{\w(0)}\rg+\log\lf1+\frac\eps{\w(t)}\rg\ind_{[t,T]} \rg \right.\\
&&\left.\qquad\qquad{}- h\lf\log\lf\frac{\w\conc{t}S_T}{\w(0)}\rg \rg\right].
\end{eqnarray*}
Then, the vertical smoothness of h allows to use a dominated convergence argument to go to the limit for $\eps$ going to 0 inside the expectation. So we get:
\begin{eqnarray*}
  \vd F(t,\w)&=&\frac1{\w(t)}\EE^{\PP^{(t,\w)}}\left[\partial_{e}g^h\lf0;t,\log\lf\frac{\w\conc{t}S_T}{\w(0)}\rg\rg\right],\\
  \vd^2F(t,\w)&=&\frac1{\w(t)^2}\lf\EE^{\PP^{(t,\w)}}\left[\ppa{e}g^h\lf0;t,\log\lf\frac{\w\conc{t}S_T}{\w(0)}\rg\rg\right]\right.\\
&&\left.\qquad-\EE^{\PP^{(t,\w)}}\left[\partial_{e}g^h\lf0;t,\log\lf\frac{\w\conc{t}S_T}{\w(0)}\rg\rg\right]\rg
\end{eqnarray*}
The joint continuity of the first and second-order vertical derivative of $F$ are proved similarly, by means of the H\"older condition \eq{gh-lip}. 
Indeed, if $\dinf((t,\w),(t,\w'))<\eta$, then:

\begin{align}
 &\!\!\!\!\!\!\!\! \abs{\vd F(t,\w)-\vd F(t',\w')}=\nonumber\\
={}&\abs{\frac1{\w(t)}\EE^{\PP^{(t,\w)}}\left[\partial_{e} g^h\lf0;t,\log\lf\frac{\w\conc{t}S_T}{\w(0)}\rg\rg\right]\right. \nonumber \\
&\left.-\frac1{\w'(t')}\EE^{\PP^{(t',\w')}}\left[\partial_{e} g^h\lf0;t',\log\lf\frac{\w'\conc{t'}S_T}{\w'(0)}\rg\rg\right]}  \nonumber\\
={}&\frac{1}{\w(t)\w'(t')}\EE^{\PP}\left[\left|\w'(t')\partial_{e}g^h\lf0;t,\log\lf\frac{\w\ind_{[0,t)}+\w(t)e^{\int_{t}^{\cdot}\s(u)\ud W(u)-\frac12\int_{t}^{\cdot}\s^2(u)\ud u}\ind_{[t,T]}}{\w(0)}\rg\rg\right.\right. \nonumber\\
&\left.\left.-\w(t)\partial_{e}g^h\lf0;t',\log\lf\frac{\w'\ind_{[0,t')}+\w(t')e^{\int_{t'}^{\cdot}\s(u)\ud W(u)-\frac12\int_{t'}^{\cdot}\s^2(u)\ud u}\ind_{[t',T]}}{\w'(0)}\rg\rg\right|\right] \nonumber\\
\leq{}&\frac{1}{\w(t)(\w(t)-\y)}\Bigg\{\EE^{\PP}\left[\y\abs{\partial_{e}g^h\lf0;t,\log\lf\frac{\w\ind_{[0,t)}+\w(t)e^{\int_{t}^{\cdot}\s(u)\ud W(u)-\frac12\int_{t}^{\cdot}\s^2(u)\ud u}\ind_{[t,T]}}{\w(0)}\rg\rg}\right] \nonumber\\
&{}+K|\w(t)|\lf|t'-t|^\b+\norm{\lf\log\frac\w{\w(0)}-\log\frac{\w'}{\w'(0)}\rg\ind_{[0,t)}}_\infty\right.\nonumber\\
&{}+\EE^{\PP}\Bigg[\norm{\lf\log\lf\frac{\w(t)}{\w(0)}e^{\int_t^{\cdot}\s(u)\ud W(u)-\frac12\int_t^{\cdot}\s^2(u)\ud u}\rg-\log\frac{\w'}{\w'(0)}\rg\ind_{[t,t')}}_\infty \nonumber\\
&\left.{}+\left\lVert \lf\log\lf\frac{\w(t)}{\w(0)}e^{\int_t^{\cdot}\s(u)\ud W(u)-\frac12\int_t^{\cdot}\s^2(u)\ud u}\rg-\log\lf\frac{\w'(t')}{\w'(0)}e^{\int_{t'}^{\cdot}\s(u)\ud W(u)-\frac12\int_{t'}^{\cdot}\s^2(u)\ud u}\rg\rg\ind_{[t',T]}\right\rVert_\infty \bigg]\rg\Bigg\} \nonumber\\
\leq{}&\frac{1}{\w(t)(\w(t)-\y)}\Bigg\{\y C_1 +K|\w(t)|\lf|t'-t|^\b+2\y'\right. \label{eq:jc-3}\\
&{}+\EE^{\PP}\left[\norm{\lf\int_t^{\cdot}\s(u)\ud W(u)-\frac12\int_t^{\cdot}\s^2(u)\ud u\rg\ind_{[t,t')}}_\infty\right] \nonumber\\
&{}+\EE^{\PP}\left[\abs{\int_t^{t'}\s(u)\ud W(u)-\frac12\int_t^{t'}\s^2(u)\ud u}\right]\Bigg\} \nonumber\\
\leq{}&K'\lf\y+|t'-t|^\b+2\y'+3\EE^{\PP}\left[\abs{\int_{t}^{t'}\s(u)\ud W(u)}^2\right]^{\frac12}+\bar\s^2(t'-t)\rg \label{eq:jc-4}
\end{align}
The two constants $C_1$ and $\y'$ in \eq{jc-3} come respectively from the uniform bound on $\partial_{e} g^h$ and from the bound of $\norm{\log\frac\w{\w(0)}-\log\frac{\w'}{\w'(0)}}_\infty$, while to obtain \eq{jc-4} we used the H\"older's and Doob's martingale inequalities.
\endproof

\section{Vertical convexity as a condition for robustness}
\label{sec:convex}

The path-dependent analogue of the convexity property that plays a role in the analysis of hedging strategies turns out to be the following.

\begin{definition}\label{def:verticalconvex}
  A \naf\ $G:\L_T\to\R$ is called \emph{vertically convex on $U\subset\L_T$} if, for all $(t,\w)\in U$, there exists a neighborhood $V\subset\R$ of 0 such that the map
$$\bea{rcl}V&\to&\R\\
e&\mapsto&G\lf t,\w+e\ind_{[t,T]}\rg
\ea$$
is convex.
\end{definition}
It is readily observed that if $F\in\CC^{0,2}$ is vertically convex on $U$, then $\vd^2F(t,\w)\geq0$ for all $(t,\w)\in U$.

We now provide a sufficient condition on the payoff functional which ensures that the vertically smooth value functional in \eq{F02} is vertically convex.
\begin{proposition}[Vertical convexity of pricing functionals]\label{prop:convex}
  Assume that, for all $(t,\w)\in\mathbb T\times\supp(S,\PP)$, there exists an interval $\mathcal I\subset\R$, $0\in\mathcal I$, such that the map
  \begin{equation}    \label{eq:gh}
  \bea{rcl}  v^H(\cdot;t,\w):\mathcal I&\to&\R,\\
  e&\mapsto&v^H(e;t,\w)=H\lf\w(1+e\ind_{[t,T]})\rg
  \ea
  \end{equation}
is convex.
If the value functional $F$ defined in \eq{Fw} is of class $\CC^{0,2}(\W_T)$, then it is vertically convex on $\mathbb T\times\supp(S,\PP)$. In particular:
\beq\label{eq:vd2F}
\forall(t,\w)\in\mathbb T\times\supp(S,\PP),\quad \vd^2F(t,\w)\geq0.
\eeq
\end{proposition}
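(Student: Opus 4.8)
The plan is to express the pricing functional $F$ via the regular conditional distribution representation \eq{Fw} and then transfer the vertical convexity of the payoff to $F$ by an averaging (integral) argument. Concretely, for a fixed continuous stopped path $(t,\w)\in\mathbb T\times\supp(S,\PP)$, recall from the proof of \prop{exist} the identity in law showing that $S$ under $\PP^{(t,\w(t)+\w(t)e)}$ has the same law as $S(1+e\ind_{[t,T]})$ under $\PP^{(t,\w(t))}$. Using this, I would write, for $e$ in a suitable neighborhood $V$ of $0$,
$$
F\lf t,\w+\w(t)e\ind_{[t,T]}\rg=\EE^{\PP^{(t,\w)}}\!\left[H\lf\lf\w\conc{t}S_T\rg\lf1+e\ind_{[t,T]}\rg\rg\right]=\EE^{\PP^{(t,\w)}}\!\left[v^{H}\lf e;t,\w\conc{t}S_T\rg\right],
$$
where $v^H(\cdot;t,\cdot)$ is the map in \eq{gh}. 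Since $\w\conc{t}S_T(\omega')\in\supp(S,\PP)$ for $\PP^{(t,\w)}$-a.e. $\omega'$ (the conditional law is concentrated on paths agreeing with $\w$ up to $t$ and lying in the support), the hypothesis says that $e\mapsto v^{H}(e;t,\w\conc{t}S_T(\omega'))$ is convex on $\mathcal I$ for a.e.\ $\omega'$; a convex combination (here, an expectation) of convex functions is convex, hence $e\mapsto F(t,\w+\w(t)e\ind_{[t,T]})$ is convex on $V$. Rescaling the perturbation variable by the positive constant $\w(t)>0$ preserves convexity, so $e\mapsto F(t,\w+e\ind_{[t,T]})$ is convex near $0$, which is exactly vertical convexity of $F$ on $\mathbb T\times\supp(S,\PP)$ in the sense of \defin{verticalconvex}.

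For the final assertion \eq{vd2F}, I would invoke the elementary remark stated just before \prop{convex}: if $F\in\CC^{0,2}$ is vertically convex on $U$, then $\vd^2F(t,\w)\geq0$ on $U$. Since $F\in\CC^{0,2}(\W_T)$ by hypothesis and we have just shown vertical convexity on $\mathbb T\times\supp(S,\PP)$, the map $e\mapsto F(t,\w^e_t)$ is convex and twice differentiable at $0$, so its second derivative at $0$, namely $\vd^2F(t,\w)$, is non-negative for every $(t,\w)\in\mathbb T\times\supp(S,\PP)$.

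The main obstacle I anticipate is the bookkeeping around the domain of the perturbation: I must make sure that the neighborhood $\mathcal I$ of $0$ on which $v^H(\cdot;t,\w\conc{t}S_T)$ is assumed convex can be chosen uniformly enough (or at least measurably in $\omega'$) so that the interchange of expectation and the convexity argument is legitimate on a common neighborhood $V$ of $0$; this is a routine but slightly delicate point, handled by shrinking $V$ and using that the relevant perturbed paths stay in $\supp(S,\PP)$ with full conditional probability. A secondary technical point is justifying that $\w\conc{t}S_T$ indeed lands in $\supp(S,\PP)$ almost surely under the regular conditional distribution, which follows from the definition \eq{suppS} of the support together with property (3) of the regular conditional distribution ($\PP^{(t,\w)}$ is carried by $\{\omega':\omega'_t=\w_t\}$) exactly as used in the proof of \prop{exist}. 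Everything else is the observation that expectations preserve convexity and that positive affine reparametrisation preserves convexity.
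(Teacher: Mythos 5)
Your argument is correct and rests on exactly the same two ingredients as the paper's proof: the regular conditional distribution representation \eqref{eq:Fw} for $F$, and the observation that $\w\conc{t}S_T$ lies in $\supp(S,\PP)$ under $\PP^{(t,\w)}$, so the convexity hypothesis on $v^H$ applies pathwise to the conditional realizations; you then rescale the perturbation by $\w(t)>0$ and invoke the remark preceding the proposition for \eqref{eq:vd2F}. The one genuine difference is how convexity is passed through the expectation: you argue directly that an average of convex functions is convex, whereas the paper takes a small detour through Wright-convexity, writing the increment $F(t,\w^{e+\eps}_t)-F(t,\w^e_t)$ as the expectation of the pathwise increments of $v^H$ and showing it is nondecreasing in $e$. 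Since $F$ is continuous (indeed $\CC^{0,2}$), Wright-convexity and convexity coincide, so your phrasing is slightly more direct and equally rigorous; the Wright-convexity formulation buys nothing here. As for the concern you flag about the uniformity of $\mathcal I$ across the conditional realizations $\w\conc{t}S_T(\omega')$: the paper's proof has the same implicit gap (it applies the convexity hypothesis pathwise without checking that the intervals $\mathcal I(t,\w\conc{t}S_T(\omega'))$ share a common neighborhood of $0$), so you are not missing a step the paper makes explicit.
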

\proof
We only need to show that convexity of the map in \eq{gh} is inherited by the map $e\mapsto F(t,\w_t^e)$, which is also twice differentiable in 0 by assumption, hence \eq{vd2F} follows.
A simple way of proving convexity of a continuous function is through the property of Wright-convexity, introduced by \citet{wright} in 1954.
Precisely, we want to prove that for every $(t,\w)\in\mathbb T\times\supp(S,\PP)$, for all $\eps,e>0$ such that $\frac{e}{\w(t)},\frac{e+\eps}{\w(t)}\in\mathcal I$, the map
$$\I'\to\R,\quad e\mapsto F(t,\w^{e+\eps}_t)-F(t,\w^e_t)$$ is increasing:
\begin{align*}
F(t,\w^{e+\eps}_t)-F(t,\w^e_t)={}&\EE^{\PP^{(t,\w)}}\left[H\lf \lf\w\conc{t}S_T\rg\lf1+\frac{e+\eps}{\w(t)}\ind_{[t,T]}\rg \rg \right.\\
&\left.\quad\quad\quad- H\lf \lf\w\conc{t}S_T\rg\lf1+\frac{e}{\w(t)}\ind_{[t,T]}\rg \rg\right]\\
={}&\EE^{\PP^{(t,\w)}}\left[v^H\lf\frac{e+\eps}{\w(t)};t,\w\conc{t}S_T\rg-v^H\lf\frac{e}{\w(t)};t,\w\conc{t}S_T\rg\right].
\end{align*}
Since $v^H(\cdot;t,\w)$ is continuous and convex, hence Wright-convex, on $\I$, the random variable inside the expectation is pathwise increasing in $e$. Hence also $\mathcal I'\ni e\mapsto F(t,\w_t^e)$ is Wright-convex, where $\I':=\w(t)\I\subset\R$, $0\in\mathcal I'$. Therefore, $F$ is vertically convex. Moreover, since $F\in\CC^{0,2}(\W_T)$, \defin{verticalconvex} implies that
$$\forall(t,\w)\in\mathbb T\times\supp(S\PP),\quad \vd^2F(t,\w)\geq0.$$
\endproof

\begin{remark}
  If there exists an interval $\mathcal I\subset\R$, $B\lf0,\frac{\abs{\De\w(t)}}{\w(t)}\rg\subset\mathcal I$, such that the map $v^H(\cdot;t,\w)$ defined in \eq{gh} is convex, then
\beq\label{eq:vd2F-jumps}
\vd^2F(t,\w_{t-}+\x\ind_{[t,T]})\geq0\quad\forall\xi\in B(0,\abs{\De\w(t)}).
\eeq
\end{remark}

\section{A model with path-dependent volatility: Hobson-Rogers}
\label{sec:HR}
\sectionmark{A model with path-dependent volatility: Hobson-Rogers}

In the model proposed by \citet{hobson-rogers}, under the market probability $\tilde\PP$, the discounted log-price process $Z$, $Z(t)=\log S(t)$ for all $t\in[0,T]$, is assumed to solve the stochastic differential equation
$$\frac{\ud Z(t)}{Z(t)}=\s(t,Z_t)\ud \tilde W(t)+\mu(t,Z_t)\ud t,$$
where $\tilde W$ is a $\tilde\PP$-Brownian motion and $\s,\mu$ are non anticipative functionals of the process itself, which can be rewritten as Lipschitz-continuous functions of the current time, price and offset functionals of order up to $n$:
$$\bea{c}
\s(t,\w)=\s^n(t,\w(t),o^{(1)}(t,\w),\ldots,o^{(n)}(t,\w)),\\
\mu(t,\w)=\mu^n(t,\w(t),o^{(1)}(t,\w),\ldots,o^{(n)}(t,\w)),\\
o^{(m)}(t,\w)=\int_0^\infty \l e^{-\l u}(\w(t)-\w(t-u))^m\ud u,\quad m=1,\ldots,n.
\ea$$
Note that, in the original formulation in \cite{hobson-rogers}, the authors take into account the interest rate and denote by $Z(t)=\log(S(t)e^{-rt})$ the discounted log-price. We use the same notation for the forward log-prices instead.

Even if the coefficients of the SDE are path-dependent functionals, \cite{hobson-rogers} proved that the $n+1$-dimensional process $(Z,O^{(1)},\ldots,O^{(n)})$ composed of the price process and the offset processes up to order $n$, $O^{(m)}(t):=o^{(m)}(t,Z_t)$, is a Markov process.
In the special case $n=1$ and $\s^n(t,x,o)=\s^n(o)$, $\mu^n(t,x,o)=\mu^n(o)$, denoted $O:=O^{(1)}$, they proved the existence of an equivalent martingale measure $\PP$ defined by
$${\frac{\ud\PP}{\ud\tilde\PP}}\rvert_{\F_t}=\exp\left\{-\int_0^t\th(O(u))\ud W(u)-\frac12\int_0^t\th(O(u))^2\ud u\right\},$$
where $\th(o)=\frac12\s^n(o)+\frac{\mu^n(o)}{\s^n(o)}$.
Then, the offset process solves
\begin{eqnarray*}
  \ud O(t)&=&\s^n(O(t))\ud\tilde W(t)+(\mu^n(O(t))-\l O(t))\ud t\\
&=&\s^n(O(t))\ud W(t)-\frac12(\s^n(O(t))^2+\l O(t))\ud t,
\end{eqnarray*}
where $W$ is the $\PP$-Brownian motion defined by $W(t)=\tilde W(t)+\int_0^t\th(O(u))\ud u$.
So, the (forward) price process solves
\beq\label{eq:HR}
\ud S(t)=S(t)\s^n(O(t))\ud W(t),
\eeq
where $W$ is a standard Brownian motion on $(\O,\F,\FF,\PP)$ and $\s^n:\R\to\R$ is a Lipschitz-continuous function, satisfying some integrability conditions such that the correspondent pricing PDEs admit a classical solution.

The price of a European contingent claim with payoff $H(S(T))$, satisfying appropriate integrability and growth conditions, is given by ,for all $(t,\w)\in\W_T$,
$$F(t,\w)=f(t,\w(t),o(t,\w)),\quad o(t,\w)=\int_0^\infty\l e^{-\l u}(\w(t)-\w(t-u))\ud u,$$
where $f$ is the solution $f\in C^{1,2,2}([0.T)\times\R_+\times\R)\cap\C([0.T]\times\R_+\times\R)$ of the partial differential equation on $[0,T)\times\R_+\times\R$
$$\frac{\s^n(o)^2}2(x^2\partial^2_{xx}f+2x\partial_{xo}f+\partial_{oo}f)-\lf\frac12\s^n(o)^2+\l o\rg\partial_o f+\partial_{t}f=0,$$
where $f\equiv f(t,x,o)$, with final datum $f(T,x,o)=H(x)$.
Using a change of variable, the pricing problem simplifies to solving the following degenerate PDE on $[0,T]\times\R\times\R$:
\beq\label{eq:pde-HR}
\frac12\s^n(x_1-x_2)^2(\partial_{x_1x_1}u-\partial_{x_1}u)+\l(x_1-x_2)\partial_{x_2}u-\partial_t u=0,
\eeq
where $u\equiv u(T-t,x_1,x_2)=f(t,e^{x_1},x_1-x_2)$, with initial condition $u(0,x_1,x_2)=H(e^{x_1})$.
Note that the pricing PDE~\eq{pde-HR} reduces to the universal pricing equation~\eq{fpde}, where, for all $(t,\w)\in\W_T$,
$$F(t,\w)=u(T-t,\log\w(t),\log\w(t)-o(t,\w)),$$
and
\begin{eqnarray*}
\hd F(t,\w)&=&-\partial_t u(T-t,\log\w(t),\log\w(t)-o(t,\w))\\
&&{}+\l\partial_{x_2}u(T-t,\log\w(t),\log\w(t)-o(t,\w)), \\
\vd F(t,\w)&=&\partial_{x_1}u(T-t,\log\w(t),\log\w(t)-o(t,\w)),\\
\quad\vd^2 F(t,\w)&=&\frac1{\w(t)^2}(\partial_{x_1x_1}u(T-t,\log\w(t),\log\w(t)-o(t,\w))\\
&&\quad\quad{}-\partial_{x_1}u(T-t,\log\w(t),\log\w(t)-o(t,\w)).
\end{eqnarray*}


\section{Examples}
\label{sec:ex}

We now show how the above results apply to specific examples of hedging
strategies for path-dependent derivatives.

\subsection{Discretely-monitored path-dependent derivatives}
\label{sec:discr}

The simplest class of path-dependent derivatives are those which are discretely-monitored. The robustness of delta-hedging strategies for discretely-monitored path-dependent derivatives was studied in \cite{ss} as shown in \Sec{ss}. 
In the case of a Black-Scholes pricing model with time-dependent volatility, we show such results may be derived, without probabilistic assumptions on the true price dynamics, as a special case of the results presented above, and we obtain explicit expressions for the first and second order sensitivities of the pricing functional (see also Cont and Yi [9]).

The following lemma describes the regularity of pricing functionals for
discretely-monitored options in a Black-Scholes model with time-dependent
volatility $\s:[0,T]\rightarrow\R_+$ such that $\int_0^T\s^2(t)\ud t<\infty$.
The regularity assumption on the payoff functional is weaker then the ones required for \prop{exist}, thanks to the finite dimension of the problem.

\begin{lemma}[Discretely-monitored path-dependent derivatives]\label{lem:BS}
 Let $H:D([0,T],\R_+)$ and assume that there exist a partition $0=t_0<t_1<\ldots<t_n\leq T$ and a function $h\in C^2_b(\R^n;\R_+)$ such that
$$\forall \w\in D([0,T],\R_+),\quad H(\w_T)=h(\w(t_1),\w(t_2),\ldots,\w(t_n)).$$
Then, the \naf\ $F$ defined in \eq{Fw} is locally regular, that is $F\in\Cloc(\W_T)$, with horizontal and vertical derivatives given in a closed form.
\end{lemma}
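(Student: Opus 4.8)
The plan is to derive an explicit functional representation of the price of a discretely-monitored payoff and then read off the regularity and the derivatives directly from that formula. Fix a partition $0=t_0<t_1<\ldots<t_n\leq T$ and a function $h\in C^2_b(\R^n;\R_+)$ with $H(\w_T)=h(\w(t_1),\ldots,\w(t_n))$. In the Black--Scholes model with time-dependent volatility $\s$, the price process is $S(t)=S(0)\exp\big(\int_0^t\s(u)\ud W(u)-\frac12\int_0^t\s^2(u)\ud u\big)$, so conditionally on $\F_t$ the future values $S(t_k)$ for $t_k>t$ are explicit log-normal functions of $S(t)$ and of the independent Gaussian increments of $\int_t^{t_k}\s\,\ud W$. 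Writing, for $t\in[t_{j},t_{j+1})$, $\Sigma^2(s,u):=\int_s^u\s^2(r)\ud r$, the conditional expectation
\[
F(t,\w)=\EE^{\PP^{(t,\w(t))}}\big[h(\w(t_1),\ldots,\w(t_j),S(t_{j+1}),\ldots,S(t_n))\big]
\]
becomes an $(n-j)$-dimensional Gaussian integral in which $\w$ enters only through the already-fixed coordinates $\w(t_1),\ldots,\w(t_j)$ and the current value $\w(t)$, the latter appearing as a multiplicative factor $\w(t)\exp(\cdots)$ inside $h$. So the first step is to write $F(t,\w)=G_j\big(t,\w(t_1),\ldots,\w(t_j),\w(t)\big)$ with
\[
G_j(t,x_1,\ldots,x_j,x)=\int_{\R^{n-j}} h\Big(x_1,\ldots,x_j,\,x\,e^{y_{j+1}},\ldots,x\,e^{y_n}\Big)\,p_j(t;\ud y),
\]
where $p_j(t;\cdot)$ is the law of the centered-and-drift-corrected Gaussian vector $\big(\int_t^{t_k}\s\,\ud W-\tfrac12\Sigma^2(t,t_k)\big)_{k=j+1}^n$; the covariance is $\min$-structured and depends on $t$ only through the $t_{j+1}-t$ entries.

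The second step is a smoothness-transfer argument: since $h\in C^2_b$, one differentiates under the Gaussian integral sign (justified by $C^2$-boundedness of $h$ together with the Gaussian tails, which dominate the $e^{y_k}$ factors), obtaining that $G_j$ is $C^1$ in $t$ and $C^2$ in $x$ on each strip $(t_j,t_{j+1})\times\R_+^{j}\times\R_+$, with all derivatives bounded on sets where $x$ stays in a compact subset of $\R_+$. Translating back through $\w\mapsto(\w(t_1),\ldots,\w(t_j),\w(t))$ gives local boundedness of $F$ and its derivatives in the functional sense. The horizontal derivative is then $\hd F(t,\w)=\partial_t G_j(t,\w(t_1),\ldots,\w(t_j),\w(t))$, which is obtained by differentiating the Gaussian kernel $p_j(t;\cdot)$ in $t$; equivalently, by integration by parts in the Gaussian integral one recovers the Black--Scholes-type identity $\hd F(t,\w)=-\tfrac12\s^2(t)\w(t)^2\,\vd^2 F(t,\w)$ on each strip, consistent with \eqref{eq:fpde}. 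The vertical derivatives are
\[
\vd F(t,\w)=\partial_x G_j\big(t,\w(t_1),\ldots,\w(t_j),\w(t)\big),\qquad
\vd^2 F(t,\w)=\partial_{xx}G_j\big(t,\w(t_1),\ldots,\w(t_j),\w(t)\big),
\]
which, after differentiating inside the integral, equal $\frac1{\w(t)}\EE^{\PP^{(t,\w)}}\big[\sum_{k>j}e^{Y_k}\partial_k h(\cdots)\big]$ and a similar second-order expression, $Y_k$ denoting the Gaussian exponents; these are the announced closed forms.

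The third step is to assemble the local regularity claim $F\in\Cloc(\W_T)$ in the sense of the definition recalled after \defin{derF}. One checks $F\in\CC^{0,0}(\W_T)$ by continuity of each $G_j$ and matching of the pieces across the monitoring dates $t_k$ (the value at $t_k$ from the strip $(t_{k-1},t_k)$ agrees with the value at $t_k$ read as a fresh conditioning point, because $S$ is continuous), then produces the localizing sequence of stopping times together with the family $\{F^k\in\Cb(\L_T)\}$: here the natural choice is to localize $\w(t)$ away from $0$ and $\infty$ via $\t_k=\inf\{t:\w(t)\notin(1/k,k)\}\wedge T$ and to multiply $G_j$ by a cutoff in the $x$-variable, so that each truncated functional is globally bounded together with its horizontal and vertical derivatives. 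The main obstacle is the bookkeeping at the monitoring dates: one must verify that the piecewise-defined functional, which lives on a different $G_j$ on each strip $(t_j,t_{j+1})$, glues into a single $\CC^{0,0}(\W_T)$ functional and that horizontal left-continuity of $\vd^{\,0},\vd,\vd^2 F$ holds at the $t_k$'s — this is where one uses that at $t=t_k$ the new argument $\w(t_k)$ enters $G_k$ both as a fixed monitored coordinate and as the current value, and a short continuity computation shows the two descriptions agree in the limit $t\uparrow t_k$. Everything else is routine differentiation under a Gaussian integral.
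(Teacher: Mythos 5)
Your proposal is correct and follows essentially the same route as the paper: on each inter-monitoring strip $[t_{\bar k},t_{\bar k+1})$ both arguments exploit the reduction $F(t,\w)=G_{\bar k}\big(t,\w(t_1),\ldots,\w(t_{\bar k}),\w(t)\big)$ via the explicit log-normal conditional law, write $G_{\bar k}$ as a finite-dimensional Gaussian integral against $h$, and differentiate under the integral sign to read off $\hd F$, $\vd F$, $\vd^2 F$ in closed form. Two small points where you are in fact more careful than the paper are worth flagging. First, you correctly note that the covariance of the vector $\big(\int_t^{t_{\bar k+i}}\s\,\ud W\big)_i$ is nested (``min-structured''), whereas the paper's displayed density $\prod_i v_i(0)\,\ud y_i$ treats the cumulative exponents $y_i$ as if they were independent; the product form of the density is only valid after changing variables to the independent increments $z_i=\int_{t_{\bar k+i-1}}^{t_{\bar k+i}}\s\,\ud W$, in which case the arguments of $H$ should read $\w(t)e^{z_1+\cdots+z_i}$ rather than $\w(t)e^{y_i}$. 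Second, the paper stops at the explicit derivative formulas and leaves the verification of $F\in\Cloc(\W_T)$ (joint continuity, left-continuity of $\vd F,\vd^2 F$ across the monitoring dates, existence of a localizing sequence with globally bounded pieces) entirely implicit, while you spell it out — both the matching at each $t_k$ (where $\w(t_k)$ transitions from ``current spot'' to ``frozen coordinate'') and the natural localization by the exit times of $\w(t)$ from $(1/k,k)$; this is a genuine gap in the paper's exposition that your write-up fills.
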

\proof
For any $\w\in\O$ and $t\in[0,T]$, let us denote $\bar k\equiv\bar k(n,t):=\max\{i\in\{1,\ldots,n\}\;:\;t_i\leq t\}$, then for $s$ small enough $t+s\in[t_{\bar k},t_{\bar k+1})$ and we have
\begin{align*}
&\!\!\!\!F(t+s,\w_t)-F(t,\w_t)\\
 ={}&\EE^{\QQ}\left[H\lf \w(t_1),\ldots,\w(t_{\bar k}),\w(t)e^{\int_{t+s}^{t_{\bar k+1}}\s(u)\ud W(u)-\frac12\int_{t+s}^{t_{\bar k+1}}\s^2(u)\ud u},\ldots,\right.\right.\\
&\qquad\qquad\left.\w(t)e^{\int_{t+s}^{t_{n}}\s(u)\ud W(u)-\frac12\int_{t+s}^{t_n}\s^2(u)\ud u}\rg+{}\\
&\quad\quad {}-H\lf\w(t_1),\ldots,\w(t_{\bar k}),\w(t)e^{\int_{t}^{t_{\bar k+1}}\s(u)\ud W(u)-\frac12\int_{t}^{t_{\bar k+1}}\s^2(u)\ud u},\ldots,\right.\\
&\qquad\qquad\left.\w(t)e^{\int_{t}^{t_{n}}\s(u)\ud W(u)-\frac12\int_{t}^{t_n}\s^2(u)\ud u}\rg\bigg]\\
={}&\idotsint H\lf\w(t_1),\ldots,\w(t_{\bar k}),\w(t)e^{y_1},\ldots,\w(t)e^{y_{n-\bar k}}\rg\prod_{i=1}^{n-\bar k}\frac{e^{-\frac{\lf y_i+\frac12\int_{t+s}^{t_{\bar k+i}}\s^2(u)\ud u\rg^2}{2\int_{t+s}^{t_{\bar k+i}}\s^2(u)\ud u}}}{\sqrt{2\pi\int_{t+s}^{t_{\bar k+i}}\s^2(u)\ud u}}\ud y_i \\
&{}- \idotsint H\lf\w(t_1),\ldots,\w(t_{\bar k}),\w(t)e^{y_1},\ldots,\w(t)e^{y_{n-\bar k}}\rg\prod_{i=1}^{n-\bar k}\frac{e^{-\frac{\lf y_i+\frac12\int_{t}^{t_{\bar k+i}}\s^2(u)\ud u\rg^2}{2\int_{t}^{t_{\bar k+i}}\s^2(u)\ud u}}}{\sqrt{2\pi\int_{t}^{t_{\bar k+i}}\s^2(u)\ud u}}\ud y_i .
\end{align*}
By denoting $$v_i(s):=\frac{e^{-\frac{\lf y_i+\frac12\int_{t+s}^{t_{\bar k+i}}\s^2(u)\ud u\rg^2}{2\int_{t+s}^{t_{\bar k+i}}\s^2(u)\ud u}}}{\sqrt{2\pi\int_{t+s}^{t_{\bar k+i}}\s^2(u)\ud u}},\quad i=1,\ldots,n-\bar k,$$
dividing by $s$ and taking the limit for $s$ going to 0, we obtain
\begin{align}
\hd F(t,\w)={}&\lim_{s\rightarrow0}\frac{F(t+s,\w_t)-F(t,\w_t)}s \nonumber\\
={}&\sum_{j=1}^{n-\bar k}\idotsint H\lf\w(t_1),\ldots,\w(t_{\bar k}),\w(t)e^{y_1},\ldots,\w(t)e^{y_{n-\bar k}}\rg\!\!\!\!\prod_{\bea{c}\scriptstyle{i=1,\ldots,n-\bar k}\\\scriptstyle{i\neq j}\ea}\!\!\!\!\!\!v_j'(0)v_i(0)\ud y_i\ud y_j,
\end{align}
where, for $i=1,\ldots,n-\bar k$,
$$\bea{l}v_i'(0)=\frac{v_i(0)\s^2(t)}{2\lf \int_{t}^{t_{\bar k+i}}\s^2(u)\ud u\rg^2}\left( \left( y_i+\frac12\int_{t}^{t_{\bar k+i}}\s^2\ud u\rg^{\phantom{1}}\!\int_{t}^{t_{\bar k+i}}\s^2(u)\ud u\right.\\
\left.\quad\qquad\qquad\qquad\qquad\qquad-\lf y_i+\frac12\int_{t}^{t_{\bar k+i}}\s^2(u)\ud u\rg^2+\int_{t}^{t_{\bar k+i}}\s^2(u)\ud u \rg.\ea $$
Moreover, the first and second vertical derivatives are explicitly computed as:
\begin{align}
    \vd F(t,\w)={}&\sum_{j=1}^{n-k}\idotsint \partial_{k+j} H\lf\w(t_1),\ldots,\w(t_{k}),\w(t)e^{y_1},\ldots,\w(t)e^{y_{n-\bar k}}\rg e^{y_j}\prod_{i=1}^{n-k}v_i(0)\ud y_i,\\
    \vd^2F(t,\w)={}&\sum_{i,j=1}^{n-k}\idotsint\partial_{k+i,k+j} H\lf\w(t_1),\ldots,\w(t_{k}),\w(t)e^{y_1},\ldots,\w(t)e^{y_{n-\bar k}}\rg e^{y_i+y_j}\prod_{l=1}^{n-k}v_l(0)\ud y_l,
\end{align}
where $k\equiv k(n,t):=\max\{i\in\{1,\ldots,n\}\;:\;t_i<t\}$.
\endproof


\subsection{Robust hedging for Asian options}
\label{sec:asian}

Asian options, which are options on the average price computed across a certain fixing period, are  commonly traded in currency and commodities markets.
The payoff of Asian options depends on an average of prices during the lifetime of the option, which can be of two types: an arithmetic average
$$M^A(T)=\int_0^TS(u)\mu(\ud u),$$
or a geometric average
$$M^G(T)=\int_0^T\log S(u)\mu(\ud u).$$
We consider Asian call options with date of maturity $T$, whose payoff is given by a continuous functional on $(D([0,T],\R),\norm{\cdot}_\infty)$:
$$\bea{ll}
H^A(S_T)=(M^A(T)-K)^+=:\Psi^A(S(T),M^A(T))&\text{arithmetic Asian call},\\
H^G(S_T)=(e^{M^G(T)}-K)^+=:\Psi^G(S(T),M^G(T))&\text{geometric Asian call}.
\ea$$
Various weighting schemes may be considered:
\begin{itemize}
\item if $\mu(\ud u)=\d_{\{T\}}(\ud u)$, we reduce to an European option, with strike price $K$;
\item if $\mu(\ud u)=\frac1T\ind_{[0,T]}(u)\ud u$, we have a \textit{fixed strike} Asian option, with strike price $K$;
\item in the arithmetic case, if $\mu(\ud u)=\d_{\{T\}}(\ud u)-\frac1T\ind_{[0,T]}(u)\ud u$ and $K=0$, we have a \textit{floating strike} Asian option; the geometric floating strike Asian call has instead payoff $(S(T)-e^{M^G(T)})^+$ with $\mu(\ud u)=\frac1T\ind_{[0,T]}(u)\ud u$.
\end{itemize}
Here, we consider the hedging strategies for fixed strike Asian options, first in a Black-Scholes pricing model, where the volatility is a deterministic function of time, then in a model with path-dependent volatility, the Hobson-Rogers model introduced in \Sec{HR}.
First, we show that these models admit a smooth pricing functional.
Then, we show that the assumptions of \prop{convex} are satisfied, which
leads to robustness of the hedging strategy.

\subsubsection{Black-Scholes delta-hedging for Asian options}
\label{sec:BS-asian}

In the Black-Scholes model, the value functional of such options can be computed in terms of a standard function of three variables (see e.g. \cite[Section 7.6]{pascucci}). In the arithmetic case: for all $(t,\w)\in\W_T$,
\beq\label{eq:Ff-BS-arit}
F(t,\w)=f(t,\w(t),a(t,\w)),\quad a(t,\w)=\int_0^t\w(s)\ud s,
\eeq
where $f\in C^{1,2,2}([0.T)\times\R_+\times\R_+)\cap\C([0.T]\times\R_+\times\R_+)$ is the solution of the following Cauchy problem with final datum:
\beq \label{eq:asianpde-BS-arit}
\begin{cases}
\frac{\s^2(t)x^2}2\partial^2_{xx}f(t,x,a)+x\partial_{a}f(t,x,a)+\partial_{t}f(t,x,a)=0,&t\in[0,T),\,a,x\in\R_+\\
f(T,x,a)=\Psi^A\lf x,\frac{a}{T}\rg.&
\end{cases}
\eeq
Different parametrizations were suggested in order to facilitate the computation of the solution, which is however not in a closed form. For example, \cite{dupire} shows a different characterization which improves the numerical discretization of the problem, while \cite{rogershi} reduces the pricing issue to the solution of a parabolic PDE in two variable, thus decreasing the dimension of the problem, as done in \cite{ingersoll} for the case of a floating-strike Asian option.

In the geometric case: for all $(t,\w)\in\W_T$,
\beq\label{eq:Ff-BS-geom}
F(t,\w)=f(t,\w(t),g(t,\w)),\quad g(t,\w)=\int_0^t\log\w(s)\ud s,
\eeq
where $f\in C^{1,2,2}([0.T)\times\R_+\times\R)\cap\C([0.T]\times\R_+\times\R)$ is the solution of the following Cauchy problem with final datum: for $t\in[0,T)$, $x\in\R_+$, $g\in\R$,
\beq \label{eq:asianpde-BS-geom}
\begin{cases}
\frac{\s^2(t)x^2}2\partial_{xx}^2f(t,x,g)+\log x\partial_{g}f(t,x,g)+\partial_{t}f(t,x,g)=0,\\
f(T,x,g)=\Psi^G\lf x,\frac{g}{T}\rg.
\end{cases}
\eeq
As in the arithmetic case, the dimension of the problem \eq{asianpde-BS-geom} can be reduced to two by a change of variable. Moreover, in this case, it is possible to obtain a Kolmogorov equation associated to a degenerate parabolic operator that has a Gaussian fundamental solution.

We remark that the pricing PDEs~\eq{asianpde-BS-arit},\eq{asianpde-BS-geom} are both equivalent to the functional partial differential equation~\eq{fpde} for $F$ defined respectively by \eq{Ff-BS-arit} and \eq{Ff-BS-geom}.
Indeed, computing the horizontal and vertical derivatives of $F$ yields
$$\bea{l}
\hd F(t,\w)=\partial_t f(t,\w(t),a(t,\w))+\w(t)\partial_a f(t,\w(t),a(t,\w)), \\
\vd F(t,\w)=\partial_x f(t,\w(t),a(t,\w)),\quad\vd^2 F(t,\w)=\partial_{xx}^2 f(t,\w(t),a(t,\w))
\ea $$
for the arithmetic case, and
$$\bea{l}
\hd F(t,\w)=\partial_t f(t,\w(t),g(t,\w))+\log\w(t)\partial_g f(t,\w(t),g(t,\w)), \\
\vd F(t,\w)=\partial_x f(t,\w(t),g(t,\w)),\quad\vd^2 F(t,\w)=\partial_{xx}^2 f(t,\w(t),g(t,\w))
\ea $$
for the geometric case.

Thus, the standard pricing problems for the arithmetic and geometric Asian call options turn out to be particular cases of \prop{hedge}, with $A=\s^2\w^2$. In particular, the delta-hedging strategy is given by
\begin{align*}
\phi(t,\w)=\vd F(t,\w)={}&\partial_{x} f(t,\w(t),a(t,\w))\quad\text{(arithmetic), or}\\
={}&\partial_{x} f(t,\w(t),g(t,\w))\quad\text{(geometric)}.
\end{align*}

The following claim is an application of \prop{convex}.
\begin{corollary}\label{cor:BS-robust}
  If the Black-Scholes volatility term structure over-estimates the realized market volatility, i.e.
$$\s(t)\geq\s^{\mathrm{mkt}}(t,\w)\quad \forall\w\in \A\cap\supp(S,\PP)$$
then the Black-Scholes delta hedges for the Asian options with payoff functionals
$$\bea{ll}
H^A(S_T)=(\frac1T\int_0^TS(t)\ud t-K)^+&\text{arithmetic Asian call},\\
H^G(S_T)=(e^{\frac1T\int_0^T\log S(t)\ud t}-K)^+&\text{geometric Asian call},
\ea$$
are robust on $\A\cap\supp(S,\PP)$. Moreover, the hedging error at maturity is given by
$$\frac12\int_0^T \lf{\s}(t)^2-\s^{\mathrm{mkt}}(t,\w)^2\rg\w^2(t) \ppa{x}f \ud t,$$
where $f$ stays for, respectively, $f(t,\w(t),a(t,\w))$ solving the Cauchy problem \eq{asianpde-BS-arit}, and  $f(t,\w(t),g(t,\w))$ solving the Cauchy problem \eq{asianpde-BS-geom}.
\end{corollary}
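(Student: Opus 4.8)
The plan is to obtain \cor{BS-robust} as a direct application of \prop{robust} combined with \prop{convex}, once the Black--Scholes pricing functional for the two Asian payoffs is shown to satisfy the standing hypotheses of \prop{robust}. Concretely, I would proceed in three steps: (i) verify that the functional $F$ of \eq{Ff-BS-arit} (resp.\ \eq{Ff-BS-geom}) is regular enough, namely $F\in\Cb(\W_T)\cap\CC^{0,0}(\W_T)$ with $\hd F\in\CC^{0,0}_l(\W_T)$, and represents the conditional expectation $F(t,S_t)=\EE^\PP[H(S_T)\mid\F_t]$; (ii) show $\vd^2F\ge0$ on $\A\cap\supp(S,\PP)$ via the vertical-convexity criterion of \prop{convex}; (iii) feed this into \prop{robust} and read off both the super-replication property and the explicit hedging error.

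For step (i), the key observation is that the standard parabolic theory applied to the degenerate Cauchy problems \eq{asianpde-BS-arit} and \eq{asianpde-BS-geom} produces a solution $f\in C^{1,2,2}([0,T)\times\R_+\times\R)\cap C([0,T]\times\R_+\times\R)$, and that the running functionals $a(t,\w)=\int_0^t\w(s)\,\ud s$ and $g(t,\w)=\int_0^t\log\w(s)\,\ud s$ are horizontally differentiable, with $\hd a(t,\w)=\w(t)$ and $\hd g(t,\w)=\log\w(t)$, and jointly continuous for $\dinf$. Composing $f$ with $(t,\w(t),a(t,\w))$, resp.\ $(t,\w(t),g(t,\w))$, and using the chain-rule expressions for $\hd F,\vd F,\vd^2F$ already recorded after \eq{Ff-BS-geom}, yields the required functional regularity (any failure of global boundedness of the derivatives of $f$ is handled by a routine localization through stopping times of the form $\t_k=\inf\{t:\ \w(t)\notin[1/k,k]\}$, landing first in $\Cloc(\W_T)$, which is all \prop{convex} needs, and then in $\Cb(\W_T)$ piecewise as in \prop{robust}). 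The identity $F(t,S_t)=\EE^\PP[H(S_T)\mid\F_t]$ is the Feynman--Kac representation for these Cauchy problems, and, as already noted in the text, the pricing PDEs \eq{asianpde-BS-arit}, \eq{asianpde-BS-geom} are precisely the functional pricing equation \eq{fpde}.

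For step (ii), \prop{convex} reduces matters to the convexity near $0$ of $e\mapsto H(\w(1+e\ind_{[t,T]}))$. In the arithmetic case this is immediate:
\[
H^A\bigl(\w(1+e\ind_{[t,T]})\bigr)=\Bigl(\tfrac1T\!\int_0^t\w(s)\,\ud s+\tfrac{1+e}{T}\!\int_t^T\w(s)\,\ud s-K\Bigr)^+ ,
\]
an affine function of $e$ composed with the convex nondecreasing map $x\mapsto x^+$, hence convex on all of $\R$; so $\mathcal I=\R$ in \eq{gh} and $\vd^2F\ge0$ everywhere. In the geometric case one computes $H^G(\w(1+e\ind_{[t,T]}))=\bigl(e^{\frac1T\int_0^T\log\w(s)\,\ud s}(1+e)^{(T-t)/T}-K\bigr)^+$ and must verify convexity of this one-variable map on a neighbourhood $\mathcal I\ni0$; since the expression inside the positive part is concave in $e$, this verification is genuinely delicate and is the point I expect to require the most care (the remainder being bookkeeping). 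Granting it, \prop{convex} gives $\vd^2F(t,\w)=\partial_{xx}^2f\ge0$ along every path in $\A\cap\supp(S,\PP)$. Finally, \ass{S} makes $S$ a square-integrable $\PP$-martingale and the Asian payoffs are $\norm{\cdot}_\infty$-continuous with at most linear growth, so \ass{H} holds; thus all hypotheses of \prop{robust} are in force, the domination $\s(t)\ge\s^{\mathrm{mkt}}(t,\w)$ on $\A\cap\supp(S,\PP)$ is exactly the second inequality in \eq{supervol}, and \prop{robust} delivers both that $(F(0,\cdot),\vd F)$ is a super-strategy on $\A\cap\supp(S,\PP)$ and the error formula \eq{tr_err} with $\vd^2F=\partial_{xx}^2f$, which is precisely the claimed hedging error at maturity.
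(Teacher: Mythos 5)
Your strategy matches the paper's: derive the result by combining \prop{exist}/Feynman--Kac for regularity of the pricing functional, \prop{convex} for the sign of $\vd^2 F$, and \prop{robust} for robustness and the explicit hedging-error formula. Your treatment of the arithmetic case is correct and identical to the paper's: $v^{H^A}(\cdot;t,\w)$ is an affine function of $e$ composed with the convex increasing map $x\mapsto x^+$, hence convex on all of $\R$.

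Your unease about the geometric case is well placed, and in fact it pinpoints a defect in the paper's own proof. The paper asserts that $e\mapsto\log\bigl(\w(u)(1+e)\bigr)$ is convex ``by preservation of convexity under affine transformation''; this is false, since $\log$ composed with a positive increasing affine map is \emph{concave}. Evaluated directly,
\[
v^{H^G}(e;t,\w)=\Bigl(c\,(1+e)^{\alpha}-K\Bigr)^{+},\qquad c=e^{\frac1T\int_0^T\log\w(u)\,\ud u},\quad \alpha=\tfrac{T-t}{T}\in(0,1),
\]
and $e\mapsto(1+e)^{\alpha}$ is strictly concave for $\alpha\in(0,1)$; whenever $c>K$ (in-the-money scenarios, which lie in $\supp(S,\PP)$) the positive part is inactive near $e=0$, so $v^{H^G}(\cdot;t,\w)$ is \emph{strictly concave} on a neighbourhood of $0$ and the hypothesis of \prop{convex} fails there. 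Nor is this merely a failure of the sufficient condition: writing $f(t,x,g)=\psi(cx^\alpha)$ with $\psi(y)=\EE[(yZ-K)^+]$ (convex increasing, $Z$ lognormal), one has
\[
\partial_{xx}f=\psi''(cx^{\alpha})\,(c\alpha x^{\alpha-1})^{2}+\psi'(cx^{\alpha})\,c\,\alpha(\alpha-1)\,x^{\alpha-2},
\]
and for $x$ large the second (negative) term dominates since $\psi'\to\EE[Z]>0$ while $\psi''(y)\to 0$ superpolynomially as $y\to\infty$. So $\vd^2F(t,\w)<0$ on a nonempty part of $\supp(S,\PP)$, and robustness of the geometric Asian hedge cannot be deduced from \prop{robust}. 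The step you flagged as delicate and then ``granted'' is precisely where the argument breaks: as written, the geometric half of the corollary lacks a valid proof and the failure of positivity of $\vd^2 F$ means it appears not to hold as stated; the arithmetic half is fine.
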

Let us emphasize again that the hedger's profit-and-loss depends explicitly on the Gamma of the option and on the distance of the Black-Scholes volatility from the realized volatility during the lifetime of the contract.

\proof
The integrability of $H^A,H^G$ in $(\O,\PP)$ follows from the Feynman-Kac representation of the solution of the Cauchy problems with final datum \eq{asianpde-BS-arit}, \eq{asianpde-BS-geom}.

By the functional representation in~\eq{Ff-BS-arit}, respectively \eq{Ff-BS-geom}, the pricing functional $F$ is smooth, i.e. it satisfies \eq{regF}.
If the assumptions of \prop{convex} are satisfied, we can thus apply \prop{robust} to prove the robustness property.
We have to check the convexity of the map $v^H(\cdot;t,\w)$ in \eq{gh} for all $(t,\w)\in[0,T]\times Q(\O,\Pi)$.
Concerning the arithmetic Asian call option, we have:
\begin{align*}
  v^{H^A}(e;t,\w)={}&H^A\lf\w(1+e\ind_{[t,T]})\rg\\
={}&\lf\frac1T\lf\int_0^t\w(u)\ud u+\int_t^T\w(u)(1+e)\rg-K\rg^+\\
={}&\lf m(T)+\frac e T(a(T)-a(t))-K \rg^+\\
={}&\frac{a(T)-a(t)}T\lf e-K'\rg^+,
\end{align*}
where $m(T)=\frac1T a(T)$ and $K'=\frac{KT-a(T)}{a(T)-a(t)}$, which is clearly convex in $e$.

As for the geometric Asian call option, we have:
\begin{align*}
  v^{H^G}(e;t,\w)={}&H^G\lf\w(1+e\ind_{[t,T]})\rg\\
={}&\lf e^{\frac1T\int_0^t\log\w(u)\ud u}e^{\frac1T\int_t^T\log(\w(u)(1+e))\ud u}-K\rg^+\end{align*}
which is a convex function in $e$ around 0, since $\w$ is bounded away from 0 on $[0,T]$. Indeed: $e\mapsto\int_t^T\log(\w(u)(1+e))\ud u$ is convex since it is the integral in $u$ of a function of $(u,e)$ which is convex in $e$ by preservation of convexity under affine transformation; then $e\mapsto e^{\frac1T\int_t^T\log(\w(u)(1+e))\ud u}$ is convex because it is the composition of a convex increasing function and a convex function.
\endproof

\begin{remark}
The robustness of the Black-Scholes-delta hedging for the arithmetic Asian option is in fact a direct consequence of \prop{robust}.
Indeed, in the Black-Scholes framework, the Gamma of an Asian call option is non-negative, as it has been shown for different closed-form analytic approximations found in the literature. An example can be seen in \cite{mil-posner}, where the density of the arithmetic mean is approximated by a reciprocal gamma distribution which is the limit distribution of an infinite sum of correlated log-normal random variables. This already implies the condition \eq{vd2F}.
\end{remark}

\subsubsection{Hobson-Rogers  delta-hedging for Asian options}
\label{sec:RH}

We have already shown in \Sec{HR} that the Hobson-Rogers model admits a smooth pricing functional for suitable non-path-dependent payoffs. \citet{pascucci-difra} proved that also the problem of pricing and hedging a geometric Asian option can be similarly reduced to a degenerate PDE belonging to the class of Kolmogorov equations, for which a classical solution exists. In this case, the pricing functional can be written as a function of four variables
\beq\label{eq:Fu-geom}
F(t,\w)=u(T-t,\log\w(t),\log\w(t)-o(t,\w),g(t,\w)),
\eeq
where $u$ is the classical solution of the following Cauchy problem on $[0,T]\times\R\times\R\times\R$:
\beq\label{eq:asianpde-HR-geom}\begin{cases}
\frac12\s^n(x_1-x_2)^2(\partial^2_{x_1x_1}u-\partial_{x_1}u)+\l(x_1-x_2)\partial_{x_2}u+x_1\partial_{x_3}u-\partial_t u=0,\\
u(0,x_1,x_2,x_3)=\Psi^G(e^{x_1},\frac{x_3}T).
\end{cases}\eeq


The following claim is the analogous of \cor{BS-robust} for the Hobson-Rogers model; the proof is omitted because it follows exactly the same arguments as the proof of \cor{BS-robust}.
\begin{corollary}
  If the Hobson-Roger volatility in \eq{HR} over-estimates the realized market volatility, i.e.
$$\s(t,\w)=\s^n(o(t,\w))\geq\s^{\mathrm{mkt}}(t,\w)\quad \forall\w\in \A\cap\supp(S,\PP)$$
then the Hobson-Rogers delta hedge for the geometric Asian option with payoff functional
$$H^G(S_T)=(e^{\frac1T\int_0^T\log S(t)\ud t}-K)^+$$
is robust on $\A\cap\supp(S,\PP)$. Moreover, the hedging error at maturity is given by
$$\frac12\int_0^T \lf{\s^n}(o(t,\w))^2-\s^{\mathrm{mkt}}(t,\w)^2\rg\w^2(t) \ppa{x}u(T-t,\log\w(t),\log\w(t)-o(t,\w),g(t,\w)) \ud t,$$
where $u$ is the solution of the Cauchy problem \eq{asianpde-HR-geom}.
\end{corollary}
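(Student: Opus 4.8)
The plan is to mirror the proof of \cor{BS-robust}, with the Black--Scholes functional representation \eq{Ff-BS-geom} replaced throughout by the Hobson--Rogers representation \eq{Fu-geom}. First I would check the standing hypotheses of \prop{robust}. Square-integrability of $H^G$ under $\PP$ (\ass{H}) follows from the Feynman--Kac representation of the classical solution $u$ of the Cauchy problem \eq{asianpde-HR-geom}, together with the growth and integrability conditions on $\s^n$ recalled in \Sec{HR}, which were imposed precisely so that \eq{asianpde-HR-geom} is classically well posed by \citet{pascucci-difra}. The regularity \eq{regF} of the pricing functional $F$ defined in \eq{Fu-geom} is then inherited from the classical regularity of $u$ and from the smoothness of the auxiliary non-anticipative functionals $t\mapsto\w(t)$, $o(t,\w)=\int_0^\infty\l e^{-\l s}(\w(t)-\w(t-s))\,\ud s$ and $g(t,\w)=\int_0^t\log\w(s)\,\ud s$: each of these is horizontally and vertically differentiable with (left-)continuous derivatives on $\W_T$, so that $F\in\Cb(\W_T)\cap\CC^{0,0}(\W_T)$ and $\hd F\in\CC^{0,0}_l(\W_T)$. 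A computation analogous to that in \Sec{HR} shows that, under the change of variables $F(t,\w)=u(T-t,\log\w(t),\log\w(t)-o(t,\w),g(t,\w))$, the degenerate PDE \eq{asianpde-HR-geom} is equivalent to the universal pricing equation \eq{fpde} with $\s(t,\w)=\s^n(o(t,\w))$; differentiating the representation twice yields
$$\vd^2F(t,\w)=\frac1{\w(t)^2}\lf\partial^2_{x_1x_1}u-\partial_{x_1}u\rg(T-t,\log\w(t),\log\w(t)-o(t,\w),g(t,\w)).$$

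Next I would verify the convexity hypothesis of \prop{convex}. This is exactly the geometric-Asian computation from the proof of \cor{BS-robust}: for $(t,\w)\in\mathbb T\times\supp(S,\PP)$,
$$v^{H^G}(e;t,\w)=H^G\lf\w(1+e\ind_{[t,T]})\rg=\lf e^{\frac1T\int_0^t\log\w(u)\,\ud u}\,e^{\frac1T\int_t^T\log(\w(u)(1+e))\,\ud u}-K\rg^+,$$
which is convex in $e$ on a neighborhood of $0$ because $\w$ is bounded away from $0$ on $[0,T]$: $e\mapsto\int_t^T\log(\w(u)(1+e))\,\ud u$ is convex, its exponential is convex and increasing, and $(\cdot-K)^+$ is convex and non-decreasing. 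By \prop{convex}, $\vd^2F(t,\w)\geq0$ on $\mathbb T\times\supp(S,\PP)$. Finally, with the model volatility $\s(t,\w)=\s^n(o(t,\w))$ dominating the realized volatility $\s^{\mathrm{mkt}}$ on $\A\cap\supp(S,\PP)$, conditions \eq{supervol} of \prop{robust} are met, so the delta hedge $(F(0,\cdot),\vd F)$ is a super-strategy for $H^G$ on $\A\cap\supp(S,\PP)$ --- i.e.\ robust --- and the hedging error at maturity is given by \eq{tr_err}; substituting the expression for $\vd^2F$ above produces the stated integral.

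I expect the only genuine obstacle to be the first step: verifying with care that the classical solution $u$ of the \emph{degenerate} Kolmogorov-type equation \eq{asianpde-HR-geom} exists with enough regularity and that, composed with the path functionals $o(\cdot,\cdot)$ and $g(\cdot,\cdot)$, it yields an element of $\Cb(\W_T)$ whose horizontal derivative is left-continuous. This rests on \citet{pascucci-difra} for solvability of the Asian Hobson--Rogers PDE and on an elementary but slightly tedious check that $o$ and $g$ are regular functionals in the sense of \defin{derF}; everything else is a transcription of the Black--Scholes argument.
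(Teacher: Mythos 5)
Your proposal matches the paper's (omitted) proof, which the paper itself says ``follows exactly the same arguments as the proof of \cor{BS-robust}''; you correctly fill in the steps: square-integrability of $H^G$ via Feynman--Kac, regularity of $F$ from the classical solution of the degenerate Kolmogorov PDE established in \cite{pascucci-difra}, equivalence of \eq{asianpde-HR-geom} with the universal pricing equation under the log-price change of variable, the convexity of $e\mapsto v^{H^G}(e;t,\w)$ (identical to the Black--Scholes case), and then \prop{convex} together with \prop{robust}. One small note of caution: your computed $\vd^2 F(t,\w)=\frac1{\w(t)^2}\lf\partial^2_{x_1x_1}u-\partial_{x_1}u\rg$ is the correct second vertical derivative under the stated change of variables (and it is consistent with what the paper derives for the non-Asian Hobson--Rogers case in \Sec{HR}); but substituting it into \eq{tr_err} yields $\lf\partial^2_{x_1x_1}u-\partial_{x_1}u\rg$, not the $\ppa{x}u$ that appears in the corollary as printed --- the discrepancy is an abbreviation (or typo) in the paper's statement, not a gap in your argument, and it is worth flagging rather than silently asserting that the substitution ``produces the stated integral.''
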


Other models that generalize Hobson-Rogers and allow to derive finite-dimensional Markovian representation for the price process and its arithmetic mean are given by \citet{pascucci-foschi,salvatore-tankov}. They thus guarantee the existence of a smooth pricing functional for arithmetic Asian options, then robustness of the delta hedge can be proved the same way as we showed in the Black-Scholes and Hobson-Rogers cases.

\subsection{Dynamic  hedging of barrier options}

Barrier options are examples of path-dependent derivatives for which delta-hedging strategies are not robust.

Consider the case of an up-and-out barrier call option with strike price $K$ and barrier $U$, whose payoff functional is
\beq \label{eq:barrier}
 H(S_T)=(S(T)-K)^+\ind_{\{\overline S(T)<U\}}.
\eeq
The pricing functional of a barrier option is determined by regular solutions of classical Dirichlet problems, opportunely stopped at the barrier hitting times.
The pricing functional for the claim with payoff \eqref{eq:barrier} is given, at time $t\in[0,T]$, by
$$F(t,\w)=f(t\wedge \t_U(\w),\w(t\wedge \t_U(\w))),$$
where $\t_U(\w):=\inf\{t\geq0: \w(t)\in[U,+\infty)\}$ and $f$ is the $\C^{1,2}([0,T)\times(0,U))\cap\C([0,T]\times(0,U))$ solution of the following Dirichlet problem:
\begin{equation}  \label{eq:barrierPDE}
  \left\{\begin{array}{ll}
    \frac12\s^2(t)x^2\partial_{xx}^2f(t,x)+\partial_{t}f(t,x)=0,& (t,x)\in[0,T)\times (0,U),\\
    f(t,U)=0,& t\in[0,T],\\
    f(T,x)=H(x),& x\in(0,U).
  \end{array}\right.
\end{equation}
The delta-hedging strategy is then given by
$$\phi(t,\w)=\partial_{x} f(t,\w(t))\ind_{[0,\t_U(\w))}(t).$$
Analogously to the application in \Sec{asian}, we can compute the hedging error of the delta hedge for the barrier option.
However, unlike for Asian options, the delta hedge for barrier options fails to have the robustness property, because the price collapses at $t=\t_U$, disrupting the positivity of the Gamma.
On the other end, the Gamma of barrier options can be quite large in magnitude, so it is crucial to have a good estimate of volatility, in order to keep the hedging error as small as possible.
\begin{remark}
Let $H$ be the payoff functional of the up-and-out barrier call option with strike price $K$ and barrier $U$ in \eq{barrier}.
Then the Black-Scholes delta hedge for $H$ is not robust to volatility mis-specifications. Any mismatch between the model volatility $\s$ and the realized volatility $\s^{mkt}$ is amplified by the Gamma of the option as the barrier is approached and the resulting error can have an arbitrary sign due to the non-constant sign of the option Gamma near the barrier.
\end{remark}

The assumptions of \prop{convex} are not satisfied, indeed: for any $(t,\w)\in[0,T]\times\C([0,T],\R_+)$,
\begin{align*}
  v^H(e;t,\w)={}&(\w(T)+\w(T)e-K)^+\ind_{(0,U)}\lf\sup_{s\in[0,T]}\lf\w(s)(1+e\ind_{[t,T]}(s))\rg\rg\\
={}&\w(T)\lf e-\frac{K-\w(T)}{\w(T)}\rg^+\ind_{(0,U)}(\g(e))\\
={}&\w(T)\lf e-\frac{K-\w(T)}{\w(T)}\rg^+\ind_{\{\g^{-1}((0,U))\}}(e)
\end{align*}
where $\g:\R\rightarrow\R_+$,
\begin{align*}
  \g(e):={}&\sup_{s\in[0,T]}\lf\w(s)(1+e\ind_{[t,T]}(s))\rg\\
={}&\max\left\{\over\w(t),(1+e)\sup_{s\in[t,T]}\w(s))\right\}\\
={}&\sup_{s\in[t,T]}\w(s)\lf e-\frac{\over\w(t)-\sup_{s\in[t,T]}\w(s)}{\sup_{s\in[t,T]}\w(s)}\rg^++\over\w(t).
\end{align*}
$\g^{-1}(A)$ denote the counter-image of $A\subset\R_+$ via $\g$, and $\over\w(t):=\sup_{s\in[0,t]}\w(s)$.
Since $\g$ is a positive non-decreasing continuous function, we have
$$\g^{-1}((0,U))=\begin{cases}\emptyset,&\text{if }U\leq\over\w(t)\\
\lf-\infty,\frac{U-\sup_{s\in[t,T]}\w(s)}{\sup_{s\in[t,T]}\w(s)}\rg,&\text{otherwise.}\end{cases}$$
Thus, there exist an interval $\mathcal I\subset\R$, $0\in\mathcal I$, such that $v^H(\cdot;t,\w):\mathcal I\rightarrow\R$ is convex if and only if $U>\sup_{s\in[t,T]}\w(s)$.
However, \prop{convex} requires the map $v^H(\cdot;t,\w)$ to be convex for all $\w\in\supp(S,\PP)$ in order to imply vertical convex of the value functional.

Thus, we observe that unlike the case of Asian options, delta-hedging strategies do not provide a robust approach to the hedging of barrier options.

\chapter{Adjoint expansions in local L\'evy models}

This chapter is based on a joint work with Stefano Pagliarani and Andrea Pascucci, published in 2013 \cite{ppr}.

Analytical approximations and their applications to finance have been studied by several authors
in the last decades because of their great importance in the calibration and risk management
processes. The large body of the existing literature (see, for instance, \cite{Hagan99},
\cite{Howison2005}, \cite{WiddicksDuckAndricopoulosNewton2005},
\cite{GatheralHsuLaurenceOuyangWang2010}, \cite{BenhamouGobetMiri2010b},
\cite{CorielliFoschiPascucci2010}, \cite{ChengCostanzinoLiechtyMazzucatoNistor2011}) is mainly
devoted to purely diffusive (local and stochastic volatility) models or, as in
\cite{BenhamouGobetMiri2009} and \cite{XuZheng2010}, to local volatility (LV) models with Poisson
jumps, which can be approximated by Gaussian kernels.

The classical result by Hagan \cite{Hagan99} is a particular case of our expansion, in the sense
that for a standard LV model with time-homogeneous coefficients our formulae reduce to Hagan's
ones (see \Sec{secsimpl}). While Hagan's results are heuristic, here we also provide explicit error estimates for time-dependent coefficients as well.

The results of \Sec{Merton} on the approximation of the transition density for
jump-diffusions are essentially analogous to the results in \cite{BenhamouGobetMiri2009}: however
in \cite{BenhamouGobetMiri2009} ad-hoc Malliavin techniques for LV models with Merton jumps are
used and only a first order expansion is derived. Here we use different techniques (PDE and
Fourier methods) which allows to handle the much more general class of local L\'evy processes: this
is a very significant difference from previous research. Moreover we derive higher order
approximations, up to the $4^{\text{th}}$ order.

Our approach is also more general than the so-called ``pa\-ra\-me\-trix'' methods recently
proposed in \cite{CorielliFoschiPascucci2010} and \cite{ChengCostanzinoLiechtyMazzucatoNistor2011}
as an approximation method in finance. The parametrix method is based on repeated application of
Duhamel's principle which leads to a recursive integral representation of the fundamental
solution: the main problem with the parametrix approach is that, even in the simplest case of a LV
model, it is hard to compute explicitly the parametrix approximations of order greater than one.
As a matter of fact, \cite{CorielliFoschiPascucci2010} and
\cite{ChengCostanzinoLiechtyMazzucatoNistor2011} only contain first order formulae. The adjoint
expansion method contains the parametrix approximation {\it as a particular case}, that is at
order zero and in the purely diffusive case. However the general construction of the adjoint
expansion is substantially different and allows us to find explicit higher-order formulae for the
general class of local L\'evy processes.

\section{General framework}
\label{sec:sec1}

In a local L\'evy model, we assume that the log-price process $X$ of the underlying asset of interest solves the SDE
\begin{equation}\label{X}
    \ud X(t)=\m(t,X(t-))\ud t+\s(t,X(t)) \ud W(t)+ \ud J(t).
\end{equation}
In \eqref{X}, 
$W$ is a standard real Brownian motion on a filtered probability space
$(\O,\F,(\F_t)_{0\leq t\leq T},\mathbb{P})$ with the usual assumptions on the filtration and $J$
is a pure-jump L\'evy process, independent of $W$, with L\'evy triplet
$(\m_{1},0,\n)$. 
In order to guarantee the martingale property for the
discounted asset price $\tilde{S}(t):=S_{0}e^{X(t)-rt}$, we set
\begin{equation}\label{30}
 \m(t,x)=\rle-\m_{1}-\frac{\s^{2}(t,x)}{2},
\end{equation}
where
\begin{equation}\label{31}
 \rle=r-\int_{\R}\left(e^{y}-1-y\mathds{1}_{\{|y|<1\}}\right)\n(dy).
\end{equation}
We denote by
  $$X^{t,x}:T\mapsto X^{t,x}(T)$$
the solution of \eqref{X} starting from $x$ at time $t$ and by
 $$\p_{X^{t,x}(T)}(\x)=E\left[e^{i\x X^{t,x}(T)}\right],\qquad \x\in\R,$$
the characteristic function of $X^{t,x}(T)$.
Provided that $X^{t,x}(T)$ has density $\G(t,x;T,\cdot)$, then its characteristic function is
equal to
 $$\p_{X^{t,x}(T)}(\x)=
 \int_{\R} e^{i\x y}\G(t,x;T,y)dy.$$
Notice that $\G(t,x;T,y)$ is the fundamental solution of the Kolmogorov operator
\begin{equation}\label{L}
\begin{split}
  Lu(t,x)&= \frac{\s^{2}(t,x)}{2}\left({\partial}_{xx}-{\partial}_{x}\right)u(t,x)+\rle{\partial}_{x}u(t,x)+{\partial}_{t}u(t,x)\\
  &\quad+\int_{\R}\left(u(t,x+y)-u(t,x)-{\partial}_{x}u(t,x)y\mathds{1}_{\{|y|<1\}}\right)\n(dy).
\end{split}
\end{equation}

\begin{example}\label{ex3}
Let $J$ be a compound Poisson process with Gaussian jumps, that is
  $$J(t)=\sum_{n=1}^{N(t)} Z_n $$
where $N(t)$ is a Poisson process with intensity $\l$ and $Z_n$ are i.i.d. random variables
independent of $N(t)$ with Normal distribution $\mathcal{N}_{m,\d^{2}}$. In this case, $\n=\l
\mathcal{N}_{m,\d^{2}}$ and
  $$\m_{1}=\int_{|y|<1}y\n(dy).$$
Therefore the drift condition \eqref{30} reduces to
\begin{align}\label{30b}
 \m(t,x)=r_{0}-\frac{\s^{2}(t,x)}{2},
\end{align}
where
\begin{equation}\label{30c}
 r_{0}=r-\int_{\R}\left(e^{y}-1\right)\n(dy)=r-\lambda\left(e^{m+\frac{\delta^2}2}-1\right).
\end{equation}
Moreover, the characteristic operator can be written in the equivalent form
\begin{equation}\label{LPoi}
\begin{split}
  L u(t,x)&=\frac{\s^{2}(t,x)}{2}\left({\partial}_{xx}-{\partial}_{x}\right)u(t,x)+r_{0}{\partial}_{x}u(t,x)+{\partial}_{t}u(t,x)\\
  &\quad+\int_{\R}\left(u(t,x+y)-u(t,x)\right)\n(dy).
\end{split}
\end{equation}
\end{example}
\begin{example}\label{ex4}
Let $J$ be a Variance-Gamma process (cf. \cite{MadanSeneta1990})
obtained by subordinating a Brownian motion with drift $\th$ and standard deviation $\r$, by
a Gamma process with variance $\kappa$ and unitary mean. In this case the L\'evy measure is
given by
\begin{equation}\label{70}
  \n(dx)=\frac{e^{-\l_{1}x}}{\kappa x}\caratt_{\{x>0\}}dx+\frac{e^{\l_{2}x}}{\kappa|x|}\caratt_{\{x<0\}}dx
\end{equation}
where
  $$\l_{1}=\left(\sqrt{\frac{\th^{2}\kappa^{2}}{4}+\frac{\r^{2}\kappa}{2}}+\frac{\th\kappa}{2}\right)^{-1},
  \qquad \l_{2}=\left(\sqrt{\frac{\th^{2}\kappa^{2}}{4}+\frac{\r^{2}\kappa}{2}}-\frac{\th\kappa}{2}\right)^{-1}.$$
The risk-neutral drift in \eqref{X} is equal to
  $$\m(t,x)=r_{0}-\frac{\s^{2}(t,x)}{2}$$
where
\begin{equation}\label{71}
  r_{0}=r+\frac{1}{\kappa}\log\left(1-\l_{1}^{-1}\right)\left(1+\l_{2}^{-1}\right)
  =r+\frac{1}{\kappa}\log\left(1-\kappa\left(\th+\frac{\r^{2}}{2}\right)\right),
\end{equation}
and the expression of the characteristic operator $L$ is the same as in \eqref{LPoi} with $\n$ and
$r_{0}$ as in \eqref{70} and \eqref{71} respectively.
\end{example}

Our goal is to give an accurate analytic approximation of the characteristic function and, when
possible, of the transition density of $X$. The general idea is to consider an approximation of
the volatility coefficient $\s$.
More precisely, to shorten notations we set
\begin{equation}\label{a}
    a(t,x)=\s^{2}(t,x)
\end{equation}
and we assume that $a$ is regular enough: more precisely, for a fixed $N\in\NN$, we make the
following

\smallskip\noindent{\bf Assumption $\text{A}_{N}$.} {\it The function $a=a(t,x)$ is continuously
differentiable with respect to $x$ up to order $N$. Moreover, the function $a$ and its derivatives
in $x$ are bounded and Lipschitz continuous in $x$, uniformly with respect to $t$.}
\smallskip

Next, we fix a basepoint $\bar{x}\in\R$ and consider the $N^{\text{th}}$-order Taylor polynomial of $a(t,x)$ about
$\bar{x}$:
  $$  \a_0(t)+2\sum_{n=1}^{N}\a_n(t)(x-\bar{x})^n,$$
where $\a_0(t)=a(t,\bar{x})$ and
 \begin{equation}\label{43bis}
  \a_n(t)=\frac{1}{2}\frac{\partial_x^na(t,\bar{x})}{n!}, \qquad  n\le N.
 \end{equation}
Then we introduce the $n^{\text{th}}$-order approximation of $L$:
\begin{equation}\label{43}
  L_{n}:=L_{0}+\sum_{k=1}^{n}\a_k(t)(x-\bar{x})^k\lf\partial_{xx}-\partial_x\rg, \qquad  n\le N,
\end{equation}
where
\begin{equation}\label{42}
\begin{split}
  L_{0} u(t,x)&=\frac{\a_0(t)}{2} \lf\partial_{xx}u(t,x)-\partial_xu(t,x)\rg + \rle\partial_{x}u(t,x)+{\partial}_{t}u(t,x)\\
  &\quad+\int_{\R}\left(u(t,x+y)-u(t,x)-\partial_{x}u(t,x)y\caratt_{\{|y|<1\}}\right)\n(dy).
\end{split}
\end{equation}
Following the perturbation method proposed in \cite{PagliaraniPascucci2011}, and also recently
used in \cite{FoschiPagliaraniPascucci2011} for the approximation of Asian options, the
$n^{\text{th}}$-order approximation of the fundamental solution $\G$ of $L$ is defined by
\begin{equation}\label{34}
  \Gamma^{n}(t,x;T,y):=\sum_{k=0}^n G^k(t,x;T,y), \qquad t<T,\ x,y\in\R.
\end{equation}
The leading term $G^0$ of the expansion in \eqref{34} is the fundamental solution of $L_{0}$
and, for any $(T,y)\in\R_{+}\times\R$ and $k\le N$, the functions $G^{k}(\cdot,\cdot;T,y)$ are
defined recursively in terms of the solutions of the following sequence of Cauchy problems on the
strip $]0,T[\times \R$:
\begin{equation}\label{2.2}
  \begin{cases}
     L_{0} G^k(t,x;T,y)\hspace{-9pt} &=- \sum\limits_{h=1}^k\left(L_{h}-L_{h-1}\right) G^{k-h}(t,x;T,y)\\
                       \hspace{-9pt} &=- \sum\limits_{h=1}^k\a_h(t)(x-\bar{x})^h \lf\partial_{xx}-\partial_x\rg
                        G^{k-h}(t,x;T,y),\\
     \hspace{9pt}G^k(T,x;T,y) \hspace{-9pt}&= 0
     .
  \end{cases}
\end{equation}
In the sequel, when we want to specify explicitly the dependence of the approximation $\G^{n}$ on the basepoint $\xbar$, we shall use the notation
\begin{equation}\label{and123}
    \Gamma^{\xbar,n}(t,x;T,y)\equiv \Gamma^{n}(t,x;T,y).
\end{equation}

In \Sec{Merton} we show that, in the case of a LV model with Gaussian jumps, it is
possible to find {\it the explicit solutions} to the problems \eqref{2.2} by an iterative
argument. When general L\'evy jumps are considered, it is still possible to compute the explicit
solution of problems \eqref{2.2} {\it in the Fourier space}. Indeed, in \Sec{LV-J}, we get
an expansion of the characteristic function
$\p_{X^{t,x}(T)}$ having as leading term the characteristic function of the process whose Kolmogorov operator is $L_{0}$ in \eqref{42}.

We explicitly notice that, if the function $\s$ only depends on time, then {\it the approximation
in \eqref{34} is exact at order zero.}

We now provide global error estimates for the approximation
in the purely diffusive case. The proof is postponed to the Appendix (\Sec{app}).
\begin{theorem}\label{t11}
Assume the parabolicity condition
\begin{equation}\label{80}
  m\le \frac{a(t,x)}{2}\le M,\qquad (t,x)\in[0,T]\times\R,
\end{equation}
where $m,M$ are  positive constants and let $\bar{x}=x$ or $\xbar=y$ in \eqref{and123}.
Under Assumption A$_{N+1}$, for any $\e>0$ we have
\begin{equation}\label{81}
  \left|\Gamma(t,x;T,y)-\Gamma^{\xbar,N}(t,x;T,y)\right|\le
  g_{N}(T-t)\bar{\Gamma}^{M+\e}(t,x;T,y),
\end{equation}
for $x,y\in\R$ and $t\in [0,T[$, where
$\bar{\Gamma}^{M}$ is the Gaussian fundamental solution of the heat operator
  $$M{\partial}_{xx}+{\partial}_{t},$$
and $g_{N}(s)=\text{O}\left(s^{\frac{N+1}{2}}\right)$ as $s\to 0^{+}$.
\end{theorem}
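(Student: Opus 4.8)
The plan is to establish the error estimate via the parametrix method, exploiting the recursive structure of the expansion \eqref{34}--\eqref{2.2} together with Gaussian upper bounds on the leading-order fundamental solution $G^0$. First I would observe that, under the parabolicity condition \eqref{80}, the operator $L_0$ in \eqref{42} is a uniformly parabolic (integro-)differential operator with time-dependent-but-space-independent second-order coefficient $\a_0(t)=a(t,\bar x)$; in the purely diffusive case its fundamental solution $G^0(t,x;T,y)$ is an explicit Gaussian kernel with variance $\int_t^T\a_0(s)\,\ud s$, hence satisfies a bound $G^0(t,x;T,y)\le C\,\bar\Gamma^{M+\e/2}(t,x;T,y)$ and, crucially, the standard estimates on its space derivatives: $|\partial_x^jG^0(t,x;T,y)|\le C(T-t)^{-j/2}\bar\Gamma^{M+\e/2}(t,x;T,y)$ for $j=1,2$.

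Next I would set $R^N:=\Gamma-\Gamma^{\bar x,N}$ and derive the Cauchy problem it solves. Applying $L$ to $\Gamma^{\bar x,N}=\sum_{k=0}^N G^k$ and using \eqref{2.2}, one finds that $L\Gamma^{\bar x,N}$ equals a remainder term coming from (i) the truncation of the Taylor expansion of $a(t,\cdot)$ at order $N$ and (ii) the terms $\sum_{h>k}$ that were dropped; by Assumption $\text{A}_{N+1}$ and Taylor's theorem with integral remainder, this source term is controlled by $C|x-\bar x|^{N+1}(|\partial_{xx}G^{k}|+|\partial_x G^k|)$ summed over $k\le N$. Then $R^N$ solves $L R^N = -\,(\text{this source})$ with zero terminal datum, so by Duhamel's principle $R^N(t,x;T,y)=\int_t^T\!\!\int_\R \Gamma(t,x;s,\xi)\,\big(\text{source}\big)(s,\xi;T,y)\,\ud\xi\,\ud s$. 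Here I would use the classical Gaussian upper bound $\Gamma(t,x;s,\xi)\le C\,\bar\Gamma^{M+\e/2}(t,x;s,\xi)$ for the true fundamental solution, valid under \eqref{80} and Assumption $\text{A}_1$.

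The core of the estimate is then a Gaussian convolution computation: one must bound iterated integrals of the form $\int_t^T(s-t)^{-1+\text{(half-integer)}}\,\ud s$ against products of Gaussians with different variances, using the reproducing/Chapman--Kolmogorov-type inequality $\int_\R\bar\Gamma^{M+\e/2}(t,x;s,\xi)\,|\xi-\bar x|^{N+1}\bar\Gamma^{M+\e/2}(s,\xi;T,y)\,\ud\xi \le C(T-t)^{(N+1)/2}\bar\Gamma^{M+\e}(t,x;T,y)$, the slight enlargement of the constant from $M+\e/2$ to $M+\e$ absorbing the polynomial factor $|\xi-\bar x|^{N+1}$. Iterating this $k$ times (to handle each $G^k$, which itself is built from $k$ nested Duhamel integrals against Gaussians) produces the time factor $(T-t)^{(N+1)/2}$ with an extra nonnegative power of $(T-t)$ from the telescoping $\int$, giving $g_N(s)=\text{O}(s^{(N+1)/2})$ as $s\to 0^+$. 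The two choices $\bar x=x$ and $\bar x=y$ are handled symmetrically since in either case the polynomial factor $|\xi-\bar x|^{N+1}$ is dominated by the Gaussians adjacent to it in the convolution chain. The main obstacle I anticipate is the bookkeeping for the nested Duhamel representation of the higher-order terms $G^k$: each $G^k$ carries its own chain of integrations producing singular time-weights $(s-t)^{-1/2}$ that must be tracked carefully so that the total time exponent comes out exactly $(N+1)/2$ and all intermediate integrals converge; the jump part contributes only lower-order (bounded, non-singular) terms via $\nu$ and does not affect the leading time asymptotics, which is why the estimate is stated in the purely diffusive case but the method is robust to the inclusion of $J$.
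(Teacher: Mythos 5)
Your approach---a direct Duhamel representation $R^N=\Gamma-\Gamma^{\bar x,N}=-\int_t^T\int_\R \Gamma(t,x;s,\xi)\,L^{(s,\xi)}\Gamma^{\bar x,N}(s,\xi;T,y)\,d\xi\,ds$ combined with the Aronson-type Gaussian upper bound $\Gamma\le C\bar\Gamma^{M+\e/2}$---is a genuinely different route from the paper, which instead uses a modified parametrix construction: $\Gamma=\Gamma^N+G^0\ast\Phi^N$ with $\Phi^N=\sum_n Z_n^N$ built by iterating kernels against the approximate density $G^0$, together with the factorial decay of the $Z_n^N$ (Lemma \ref{lemapp3}) to make the series converge. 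Your route imports the Gaussian bound on the true $\Gamma$ as external input; the paper's route is self-contained and constructs $\Gamma$ along the way. In either case the hard technical content---an explicit structural description of $G^k$ (Lemma \ref{lemapp1}) and the derivative bounds $|\partial_{xx}G^k|\le C(T-t)^{(k-2)/2}\bar\Gamma^{M+\e}$ (Lemma \ref{lemapp2})---is required and you correctly flag it as the main obstacle, though you do not carry it out. A small slip: the source is $\sum_{n=0}^N|x-\bar x|^{n+1}\bigl(|\partial_{xx}G^{N-n}|+|\partial_x G^{N-n}|\bigr)$, i.e.\ the Taylor remainder of order $n+1$ acts on $G^{N-n}$; your single power $|x-\bar x|^{N+1}$ is not quite right.

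The real gap is your claim that $\bar x=x$ and $\bar x=y$ are ``handled symmetrically.'' They are not, in the naive Duhamel estimate. For $\bar x=y$ every polynomial factor in the source---$|\xi-y|^{n+1}$ from the Taylor remainder, and $|\xi-y|^{i}$ from the structure of $G^k$---is matched to the \emph{right} Gaussian $G^0(s,\xi;T,y)$, converting into nonnegative powers of $\sqrt{T-s}$ that offset the $(T-s)^{-\ell/2}$ singularities of $\partial_\xi^\ell G^0$; the source is then $O\bigl((T-s)^{(N-1)/2}\bigr)\bar\Gamma$, integrable, and the time integral produces exactly $(T-t)^{(N+1)/2}$. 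For $\bar x=x$, however, the same polynomial factors read $|\xi-x|^{\,\cdot}$ and are absorbed by the \emph{left} Gaussian $\Gamma(t,x;s,\xi)$, producing powers of $\sqrt{s-t}$; the $(T-s)^{-\ell/2}$ singularities on the right remain unmatched. Already at $N=1$ the term $(L-L_1)G^0\sim|\xi-x|^2\partial_{\xi\xi}G^0(s,\xi;T,y)$ yields, after absorbing $|\xi-x|^2$ on the left, an integrand $\lesssim(s-t)(T-s)^{-1}\bar\Gamma$, and $\int_t^T(s-t)(T-s)^{-1}\,ds$ diverges logarithmically. The estimate can be rescued for $\bar x=x$ by integrating by parts in $\xi$ so that the derivatives land on $\Gamma(t,x;s,\xi)$ rather than on $G^0(s,\xi;T,y)$ (equivalently, by working with the adjoint operator in the dual variables); this is precisely what the paper alludes to when it delegates $\bar x=x$ to the ``backward parametrix method'' of \cite{CorielliFoschiPascucci2010}. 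As written, your argument only establishes the theorem for $\bar x=y$.
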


Theorem \ref{t11} improves some known results in the literature. In particular in
\cite{BenhamouGobetMiri2010b} asymptotic estimates for option prices in terms of
$(T-t)^{\frac{N+1}{2}}$ are proved under a stronger assumption on the regularity of the
coefficients, equivalent to Assumption A$_{3N+2}$. Here we provide error estimates for the
transition density: error bounds for option prices can be easily derived from \eqref{81}.
Moreover, for small $N$ it is not difficult to find the explicit expression of $g_{N}$.

Estimate \eqref{81} also justifies a time-splitting procedure which nicely adapts to our approximation operators, as shown in detail in Remark 2.7 in
\cite{PagliaraniPascucci2011}.

\section{LV models with Gaussian jumps}
\label{sec:Merton}

In this section we consider the SDE \eqref{X} with $J$ as in Example \ref{ex3}, namely $J$ is a
compound Poisson process with Gaussian jumps. Clearly, in the particular case of a constant
diffusion coefficient $\s(t,x)\equiv \s$, we have the classical Merton jump-diffusion model
\cite{Merton1976}:
  $$X^{\text{Merton}}(t)=\left(r_0-\frac{\s^{2}}{2}\right) t + \s W(t) + J(t),$$
with $r_{0}$ as in \eqref{30c}. We recall that the analytical approximation of this kind of models
has been recently studied by Benhamou, Gobet and Miri in \cite{BenhamouGobetMiri2009} by Malliavin
calculus techniques.

The expression of the pricing operator $L$ was given in \eqref{LPoi} and in this case the leading term of the approximation (cf. \eqref{42}) is equal to
\begin{equation}\label{L0}
\begin{split}
  L_{0} v(t,x)=&\,\frac{\a_0(t)}{2} \lf\partial_{xx}v(t,x)-\partial_xv(t,x)\rg + r_{0} \partial_xv(t,x)\\
  &+ \pa_tv(t,x) + \int_{\R}\left(v(t,x+y)-v(t,x)\right)\nu(d y).
\end{split}
\end{equation}
The fundamental solution of $L_{0}$ is the transition density of a Merton process, that is
\begin{equation}\label{Gamma0}
   G^{0}(t,x;T,y)=e^{-\l(T-t)} \sum_{n=0}^{+\infty} \frac{(\l(T-t))^n}{n!} \Gamma_n(t,x;T,y),
\end{equation}
where
\begin{equation}\label{36}
\begin{split}
   \Gamma_n(t,x;T,y)&=\frac{1}{\sqrt{2\pi \left(A(t,T)+n\d^2\right)}}\,e^{-\frac{\lf x-y+(T-t){r_{0}}-\frac12A(t,T)+nm\rg^2}{2\left(A(t,T)+n\d^2\right)}}, \\
    A(t,T)&=\int_t^T\a_0(s)d s.
\end{split}
\end{equation}
In order to determine the explicit solution to problems \eqref{2.2} for $k\ge 1$, we use some
elementary properties of the functions $\lf\Gamma_n\rg_{n\geq0}$. The following lemma can be
proved as Lemma 2.2 in \cite{PagliaraniPascucci2011}.
\begin{lemma}\label{l1} For any $x,y,\xbar\in\R$, $t<s<T$ and $n,k\in\NN_{0}$, we have
\begin{align}\label{repr}
  \Gamma_{n+k}(t,x;T,y)=&\int_{\R}\Gamma_n(t,x;s,\eta) \Gamma_k(s,\eta;T,y)d \eta,\\
  \pa^{k}_y\Gamma_n(t,x;T,y) =&\, (-1)^{k}\pa^{k}_x\Gamma_n(t,x;T,y),\label{d}\\
  (y-\bar{x})^{k} \Gamma_n(t,x;T,y) =&\, V_{t,T,x,n}^{k}\Gamma_n(t,x;T,y),\label{V}
\end{align}
where $V_{t,T,x,n}$ is the operator defined by
\begin{equation}\label{38}
\begin{split}
  V_{t,T,x,n}f(x) =& \left(x-\bar{x}+(T-t){r_{0}}-\frac12 A(t,T) +nm\right)f(x)\\
  & +\lf A(t,T) +n\d^2\rg\pa_x f(x).
\end{split}
\end{equation}
\end{lemma}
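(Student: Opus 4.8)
The plan is to obtain the three identities directly from the explicit Gaussian form \eqref{36} of $\Gamma_n$. Setting $c_n:=(T-t)r_{0}-\tfrac12A(t,T)+nm$ and $v_n:=A(t,T)+n\d^{2}$, one has $\Gamma_n(t,x;T,y)=\tfrac{1}{\sqrt{2\pi v_n}}\exp\!\big(-\tfrac{(x-y+c_n)^{2}}{2v_n}\big)$; the key structural features are that $\Gamma_n$ depends on $(x,y)$ only through the difference $x-y$, and that $c_n$ and $v_n$ depend on $t,T,n$ but not on $x$ or $y$. I would also record the additivity $A(t,T)=A(t,s)+A(s,T)$, which is immediate from $A(t,T)=\int_t^T\a_0(u)\,\ud u$.

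The identity \eqref{repr} is then exactly the Chapman--Kolmogorov (convolution) property of Gaussian densities: the $\eta$-convolution of $\Gamma_n(t,x;s,\cdot)$, which is Gaussian with drift $(s-t)r_{0}-\tfrac12A(t,s)+nm$ and variance $A(t,s)+n\d^{2}$, with $\Gamma_k(s,\cdot;T,y)$, Gaussian with drift $(T-s)r_{0}-\tfrac12A(s,T)+km$ and variance $A(s,T)+k\d^{2}$, is again Gaussian with drift and variance equal to the respective sums; by the additivity of $A$ these are $c_{n+k}$ and $v_{n+k}$, so the convolution equals $\Gamma_{n+k}(t,x;T,y)$. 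The identity \eqref{d} is even simpler: since $\Gamma_n$ is a function of $x-y$ with prefactor independent of $x,y$, we have $\partial_x\Gamma_n=-\partial_y\Gamma_n$, and iterating gives $\partial_y^{k}\Gamma_n=(-1)^{k}\partial_x^{k}\Gamma_n$.

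For \eqref{V} I would first compute $\partial_x\Gamma_n=-\tfrac{x-y+c_n}{v_n}\Gamma_n$, so that $v_n\partial_x\Gamma_n=(y-x-c_n)\Gamma_n$ and hence $V_{t,T,x,n}\Gamma_n=(x-\bar x+c_n)\Gamma_n+v_n\partial_x\Gamma_n=(y-\bar x)\Gamma_n$, which is the case $k=1$. The general case follows by induction on $k$: because $V_{t,T,x,n}$ consists only of multiplication by $x$-dependent coefficients and of $\partial_x$, it leaves any factor depending on $y$ alone untouched, so $V_{t,T,x,n}\big((y-\bar x)^{k}\Gamma_n\big)=(y-\bar x)^{k}\,V_{t,T,x,n}\Gamma_n=(y-\bar x)^{k+1}\Gamma_n$; combining this with the inductive hypothesis $V_{t,T,x,n}^{k}\Gamma_n=(y-\bar x)^{k}\Gamma_n$ yields $V_{t,T,x,n}^{k+1}\Gamma_n=(y-\bar x)^{k+1}\Gamma_n$, which closes the induction.

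None of these steps involves a genuine difficulty; the only place to proceed with a little care is the iteration in \eqref{V}, where one must make sure that $V_{t,T,x,n}$ really acts as the identity on functions of $y$ alone — which it does precisely because $c_n$ and $v_n$ carry no $x$-dependence. This is the same computation as in Lemma~2.2 of \cite{PagliaraniPascucci2011}, which may also be cited directly.
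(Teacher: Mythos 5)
Your proof is correct and complete. The paper itself omits the argument, simply remarking that the lemma ``can be proved as Lemma~2.2 in \cite{PagliaraniPascucci2011}''; your write-up supplies exactly the details that citation refers to: the Chapman--Kolmogorov convolution identity for \eqref{repr} (using the additivity $A(t,T)=A(t,s)+A(s,T)$ so that both the drift $c_n$ and the variance $v_n$ add up to $c_{n+k}$ and $v_{n+k}$), the translation-invariance $\Gamma_n=\Gamma_n(x-y)$ for \eqref{d}, and the one-step computation $V_{t,T,x,n}\Gamma_n=(y-\bar x)\Gamma_n$ followed by the induction for \eqref{V}, where the crucial observation that $V_{t,T,x,n}$ commutes with multiplication by $(y-\bar x)^k$ is exactly right because $c_n$ and $v_n$ carry no $x$- or $y$-dependence.
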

Our first results are the following first and second order expansions of the transition density $\G$.
\begin{theorem}[1st order expansion]\label{t1}
The solution $G^1$ of the Cauchy problem \eqref{2.2} with $k=1$ is given by
\begin{align}\label{11}
  G^1(t,x;T,y) =& \sum_{n,k=0}^{+\infty} J^1_{n,k}(t,T,x) \Gamma_{n+k}(t,x;T,y).
\end{align}
where $J^1_{n,k}(t,T,x)$ is the differential operator defined by
 \begin{equation}\label{13}
    J^1_{n,k}(t,T,x) = e^{-\l(T-t)} \frac{\l^{n+k}}{n!k!} \int_t^T \a_1(s) (s-t)^n(T-s)^k V_{t,s,x,n} d s\,
    ({\partial}_{xx}-{\partial}_{x}).
  \end{equation}
\end{theorem}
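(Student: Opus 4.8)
The plan is to solve the Cauchy problem \eqref{2.2} for $k=1$ by Duhamel's principle and then exploit the algebraic structure of the source term. First I would write, using the representation of solutions of the inhomogeneous equation $L_0 u = -f$ with zero terminal datum in terms of the fundamental solution $G^0$ of $L_0$,
$$
G^1(t,x;T,y)=\int_t^T\!\!\int_\R G^0(t,x;s,\eta)\,\a_1(s)(\eta-\bar x)\lf\partial_{\eta\eta}-\partial_\eta\rg G^0(s,\eta;T,y)\,\ud\eta\,\ud s .
$$
The idea is then to push the operator $(\partial_{\eta\eta}-\partial_\eta)$ and the multiplication by $(\eta-\bar x)$ off of $G^0(s,\eta;T,y)$ and onto $G^0(t,x;s,\eta)$ by integration by parts in $\eta$, so that the inner integral collapses via the Chapman--Kolmogorov-type identity \eqref{repr}.

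The key steps, in order: (i) expand $G^0$ at both the $(t,x;s,\cdot)$ and $(s,\cdot;T,y)$ ends using \eqref{Gamma0}, producing the double Poisson sum $\sum_{n,k\ge0} e^{-\l(s-t)}e^{-\l(T-s)}\frac{(\l(s-t))^n}{n!}\frac{(\l(T-s))^k}{k!}\,\Gamma_n(t,x;s,\eta)\Gamma_k(s,\eta;T,y)$; note $e^{-\l(s-t)}e^{-\l(T-s)}=e^{-\l(T-t)}$, which will give the prefactor in \eqref{13}. (ii) Integrate by parts twice in $\eta$ to move $\partial_{\eta\eta}-\partial_\eta$ from $\Gamma_k(s,\eta;T,y)$ onto $\Gamma_n(t,x;s,\eta)$ (the adjoint of $\partial_{\eta\eta}-\partial_\eta$ is $\partial_{\eta\eta}+\partial_\eta$ acting the other way, but since these are Gaussian kernels in $\eta$ the boundary terms vanish); then use \eqref{d} to convert $\eta$-derivatives of $\Gamma_n(t,x;s,\eta)$ into $x$-derivatives, which can be pulled outside the $\eta$-integral. (iii) Handle the factor $(\eta-\bar x)$: write $(\eta-\bar x)\Gamma_n(t,x;s,\eta)=V_{t,s,x,n}\Gamma_n(t,x;s,\eta)$ by \eqref{V}, and again pull the operator $V_{t,s,x,n}$ (which acts on the $x$ variable) outside the $\eta$-integral. (iv) Now the inner integral is exactly $\int_\R \Gamma_n(t,x;s,\eta)\Gamma_k(s,\eta;T,y)\,\ud\eta=\Gamma_{n+k}(t,x;T,y)$ by \eqref{repr}. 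Collecting the surviving operators $\a_1(s)\,V_{t,s,x,n}\,(\partial_{xx}-\partial_x)$ together with the Poisson weights and integrating in $s$ over $[t,T]$ yields precisely $J^1_{n,k}(t,T,x)$ as in \eqref{13}, and summing over $n,k$ gives \eqref{11}.

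One subtlety I would be careful about is the order in which the operators $V_{t,s,x,n}$ and $(\partial_{xx}-\partial_x)$ appear and whether they act before or after the $\eta$-integration: since $V_{t,s,x,n}$ and the $x$-derivatives are operators in $x$ only, and the $\eta$-integration is in $\eta$ only, they commute with $\int_\R\cdots\ud\eta$, but one must keep track of the fact that in \eqref{13} the operator $(\partial_{xx}-\partial_x)$ sits to the right of $V_{t,s,x,n}$, reflecting that it originally acted on the $\Gamma_k$ factor before the integration by parts transferred it. The main obstacle, and the step deserving the most care, is justifying the two integrations by parts and the interchange of the (doubly infinite) Poisson sums with the $\eta$- and $s$-integrals; this is where Assumption $\text{A}_N$ and the Gaussian decay of the $\Gamma_n$ (together with the factorial decay of the Poisson weights, uniform on compact time intervals) are used, exactly as in the proof of Lemma 2.2 and Theorem in \cite{PagliaraniPascucci2011} to which the statement refers. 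Everything else is bookkeeping with Gaussian kernels.
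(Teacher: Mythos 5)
Your proposal is correct and follows the paper's proof essentially step for step: Duhamel's representation for $G^1$, double Poisson expansion of both $G^0$ factors via \eqref{Gamma0}, the identity \eqref{V} to turn $(\eta-\bar x)\Gamma_n$ into $V_{t,s,x,n}\Gamma_n$, integration by parts together with \eqref{d} to convert $\eta$-derivatives into $x$-derivatives, and the semigroup identity \eqref{repr} to collapse the inner integral. When writing up the details, apply \eqref{V} \emph{before} the integration by parts, as the paper does (i.e.\ swap your steps (ii) and (iii)): if the $(\eta-\bar x)$ factor is still present when $\partial_{\eta\eta}-\partial_\eta$ is moved onto $\Gamma_n$, the product rule generates cross-terms which, although they eventually cancel against the commutator of $\partial_{xx}-\partial_x$ with $V_{t,s,x,n}$, obscure the clean reduction of the inner integral to $\Gamma_{n+k}$.
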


\noindent{\it Proof.} By the standard representation formula for solutions to the non-homogeneous
parabolic Cauchy problem  \eqref{2.2} with null final condition, we have
\begin{align*}
  G^1(t,x;T,y) &=\int_t^T\int_{\R}G^0(t,x;s,\eta) \a_1(s) (\eta-\bar{x})\cdot\\
 &\quad\cdot (\pa_{\eta\eta}-\pa_{\eta}) G^0(s,\eta;T,y)d \eta d
  s=
\intertext{(by \eqref{V})}
 &= \sum_{n=0}^{+\infty} \frac{\l^{n}}{n!}\int_t^T \a_1(s) e^{-\l(s-t)}  (s-t)^n\cdot\\
 &\quad\cdot V_{t,s,x,n} \int_{\R}\Gamma_n(t,x;s,\eta) (\pa_{\eta\eta}-\pa_{\eta}) G^0(s,\eta;T,y)d \eta d s=
\intertext{(by parts)}
 &= e^{-\l(T-t)}\sum_{n,k=0}^{+\infty} \frac{\l^{n+k}}{n!k!} \int_t^T\a_1(s)  (T-s)^k (s-t)^n \cdot\\
 &\quad\cdot V_{t,s,x,n} \int_{\R}(\pa_{\eta\eta}+\pa_{\eta}) \Gamma_n(t,x;s,\eta) \Gamma_k(s,\eta;T,y)d \eta ds=
\intertext{(by \eqref{d} and \eqref{repr})}
 &=  e^{-\l(T-t)} \sum_{n,k=0}^{\infty} \frac{\l^{n+k}}{n!k!} \int_t^T \a_1(s) (T-s)^k (s-t)^n  V_{t,s,x,n}d s\cdot\\
 &\quad\cdot (\pa_{xx}-\pa_x) \Gamma_{n+k}(t,x;T,y)
\end{align*}
and this proves \eqref{11}-\eqref{13}. \qquad\endproof

\begin{remark}\label{r4}
A straightforward but tedious computation shows that the operator $J^1_{n,k}(t,T,x)$ can be
rewritten in the more convenient form
\begin{equation}\label{J1}
  J^1_{n,k}(t,T,x) = \sum_{i=1}^{3}\sum_{j=0}^1 f^1_{n,k,i,j}(t,T)(x-\bar{x})^j \pa_x^i,
\end{equation}
for some deterministic functions $f^1_{n,k,i,j}$.
\end{remark}

\begin{theorem}[2nd order expansion]\label{t2}
The solution $G^2$ of the Cauchy problem \eqref{2.2} with $k=2$ is given by
\begin{align}\nonumber
  G^2(t,x;T,y) =& \sum_{n,h,k=0}^{+\infty} J^{2,1}_{n,h,k}(t,T,x) \Gamma_{n+h+k}(t,x;T,y) \\ \label{12}
 & + \sum_{n,k=0}^{\infty } J^{2,2}_{n,k}(t,T,x) \Gamma_{n+k}(t,x;T,y),
\end{align}
where
\begin{align*}
  J^{2,1}_{n,h,k}(t,T,x) =&\, \frac{\l^{n}}{n!} \int_t^T \a_1(s) e^{-\l(s-t)} (s-t)^n V_{t,s,x,n} ({\partial}_{xx}-{\partial}_{x}) \tilde{J}^1_{n,h,k}(t,s,T,x) d s  \\
  J^{2,2}_{n,k}(t,T,x) =&\, e^{-\l(T-t)} \frac{\l^{n+k}}{n!k!} \int_t^T \a_2(s) (s-t)^n(T-s)^k V_{t,s,x,n}^2 d s\, ({\partial}_{xx}-{\partial}_{x})
\end{align*}
and $\tilde{J}^1_{n,h,k}$ is the ``adjoint'' operator of $J^1_{h,k}$, defined by
\begin{equation}\label{Jtilde}
    \tilde{J}^1_{n,h,k}(t,s,T,x) = \sum_{i=1}^3\sum_{j=0}^1 f^1_{h,k,i,j}(s,T)V_{t,s,x,n}^j {\partial}_{x}^i
\end{equation}
with $f^1_{h,k,i,j}$ as in \eqref{J1}. Also in this case we have the alternative representation
\begin{align}
    J^{2,1}_{n,h,k}(t,T,x) =& \sum_{i=1}^{6}\sum_{j=0}^2f^{2,1}_{n,h,k,i,j}(t,T)(x-\bar{x})^j \pa_x^i \label{J21} \\
    J^{2,2}_{n,k}(t,T,x) =& \sum_{i=1}^{6}\sum_{j=0}^2f^{2,2}_{n,k,i,j}(t,T)(x-\bar{x})^j \pa_x^i,\label{J22}
\end{align}
with $f^{2,1}_{n,h,k,i,j}$ and $f^{2,2}_{n,k,i,j}$ deterministic functions.
\end{theorem}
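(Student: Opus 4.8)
The plan is to prove Theorem~\ref{t2} by the same iterative (parametrix-type) argument used for Theorem~\ref{t1}, now applied at the next order of the expansion. The starting point is the Cauchy problem \eqref{2.2} with $k=2$, whose right-hand side splits into two contributions: the term $\a_1(s)(x-\bar x)(\pa_{xx}-\pa_x)G^1$ coming from $(L_1-L_0)G^1$, and the term $\a_2(s)(x-\bar x)^2(\pa_{xx}-\pa_x)G^0$ coming from $(L_2-L_1)G^0$. By linearity of the Cauchy problem, $G^2=G^{2,1}+G^{2,2}$, where $G^{2,1}$ solves the problem with source built from $G^1$ and $G^{2,2}$ the one with source built from $G^0$; these will give rise to the two sums in \eqref{12} with operators $J^{2,1}_{n,h,k}$ and $J^{2,2}_{n,k}$ respectively.

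First I would treat $G^{2,2}$, which is essentially a direct repetition of the proof of Theorem~\ref{t1} but with $\a_2(s)(\eta-\bar x)^2$ in place of $\a_1(s)(\eta-\bar x)$. Writing the Duhamel representation
$$G^{2,2}(t,x;T,y)=\int_t^T\!\!\int_\R G^0(t,x;s,\eta)\,\a_2(s)(\eta-\bar x)^2(\pa_{\eta\eta}-\pa_\eta)G^0(s,\eta;T,y)\,d\eta\,ds,$$
expanding $G^0$ via \eqref{Gamma0}, using \eqref{V} twice to replace $(\eta-\bar x)^2\Gamma_n(t,x;s,\eta)$ by $V_{t,s,x,n}^2\Gamma_n(t,x;s,\eta)$, then integrating by parts and applying \eqref{d} and the Chapman--Kolmogorov identity \eqref{repr}, one obtains exactly the stated formula for $J^{2,2}_{n,k}$. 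For $G^{2,1}$ the source already contains $G^1=\sum_{h,k}J^1_{h,k}(s,T,\eta)\Gamma_{h+k}(s,\eta;T,y)$; plugging the representation \eqref{J1} of $J^1_{h,k}$, one is led to integrals of the form $\int_\R G^0(t,x;s,\eta)\,\a_1(s)(\eta-\bar x)(\pa_{\eta\eta}-\pa_\eta)\big[(\eta-\bar x)^j\pa_\eta^i\Gamma_{h+k}(s,\eta;T,y)\big]\,d\eta$. The key algebraic device is that when one transfers the $\eta$-derivatives and the powers $(\eta-\bar x)^j$ off $\Gamma_{h+k}(s,\eta;T,y)$ and onto $\Gamma_n(t,x;s,\eta)$ — via \eqref{d} to swap $\pa_\eta\leftrightarrow-\pa_x$ and via \eqref{V} to turn $(\eta-\bar x)^j$ into $V_{t,s,x,n}^j$ — the composite operator acting on $\Gamma_n(t,x;s,\eta)$ is precisely the adjoint $\tilde J^1_{n,h,k}(t,s,T,x)$ defined in \eqref{Jtilde}. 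After this the $\eta$-integral collapses by \eqref{repr} to $\Gamma_{n+h+k}(t,x;T,y)$, leaving the claimed expression for $J^{2,1}_{n,h,k}$.

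Finally, the alternative representations \eqref{J21}--\eqref{J22} follow by the same bookkeeping as Remark~\ref{r4}: one substitutes the explicit form \eqref{38} of $V_{t,s,x,n}$ (an operator of the shape ``affine-in-$x$ coefficient $+$ time-dependent coefficient times $\pa_x$''), composes the at most two factors of $V_{t,s,x,n}$ with the up to three $\pa_x$'s coming from $(\pa_{xx}-\pa_x)$ and the $\pa_x^i$, $i\le 3$, inside $\tilde J^1$ (resp.\ from $J^1$), and collects terms. Each composition raises the differential order by at most the number of $\pa_x$'s carried and the polynomial degree in $(x-\bar x)$ by at most one per factor of $V$; since there are at most two $V$'s and the total derivative count is bounded by $6$, one gets exactly $\sum_{i=1}^{6}\sum_{j=0}^{2}$, with the coefficient functions $f^{2,1}_{n,h,k,i,j}$, $f^{2,2}_{n,k,i,j}$ being explicit (finite sums of products of the $f^1$'s, the $\a$'s integrated against powers of $(s-t)$, $(T-s)$, and the polynomial data in \eqref{38}). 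The integrability/convergence of the double and triple series in $n,h,k$ is inherited from the Poisson weights $e^{-\l(T-t)}\l^{n+h+k}/(n!h!k!)$ exactly as in Theorem~\ref{t1}. The main obstacle I anticipate is purely organizational rather than conceptual: correctly tracking how the non-commuting operators $V_{t,s,x,n}$ and $\pa_x$ interlace when one moves derivatives and polynomial factors across the Chapman--Kolmogorov convolution, i.e.\ verifying that the operator produced really is the adjoint $\tilde J^1_{n,h,k}$ and not some other ordering — this is the step where an error in integration by parts or in the use of \eqref{d}, \eqref{V} would propagate, so I would carry it out with the operators kept in normal-ordered form ($x$-multiplications to the left of $\pa_x$'s) throughout.
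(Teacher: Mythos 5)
Your proof follows essentially the same route as the paper: decompose by linearity into the $\alpha_1 G^1$ and $\alpha_2 G^0$ source contributions, expand $G^0$ and $G^1$ in Poisson series, use \eqref{V}, \eqref{d}, integration by parts and \eqref{repr} to collapse the Chapman--Kolmogorov integral, and identify the composite operator on $\Gamma_n$ as the adjoint $\tilde J^1_{n,h,k}$ (the paper isolates this adjoint-transfer as a preliminary identity before computing $I_1$ and $I_2$). One small caveat for the alternative representation \eqref{J21}: the stated bound $i\le 6$ relies on the fact that $f^1_{h,k,3,1}\equiv 0$ (i.e.\ $J^1$ contains no $(x-\bar{x})\,\partial_x^3$ term), which caps the differential order of $\tilde J^1_{n,h,k}$ at $3$ rather than $4$; your count of ``up to three $\partial_x$'s from $(\partial_{xx}-\partial_x)$ and $\partial_x^i$, $i\le 3$, inside $\tilde J^1$'' plus two factors of $V$ would otherwise give a degree-$7$ bound, so this vanishing needs to be observed explicitly when carrying out the normal-ordering.
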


\noindent{\it Proof.} We show a preliminary result: from formulae \eqref{J1} and \eqref{Jtilde}
for $J^1$ and $\tilde{J}^1$ respectively, it follows that
\begin{align}\nonumber
   & \int_{\R}\Gamma_n(t,x;s,\eta) J^1_{h,k}(s,T,\eta) \Gamma_{h+k}(s,\eta;T,y) d \eta =
\intertext{(by \eqref{d} and \eqref{V})}\nonumber
   & = \int_{\R}\tilde{J}^1_{n,h,k}(s,T,x) \Gamma_n(t,x;s,\eta) \Gamma_{h+k}(s,\eta;T,y) d \eta  \\ \nonumber
   & = \tilde{J}^1_{n,h,k}(s,T,x) \int_{\R}\Gamma_n(t,x;s,\eta) \Gamma_{h+k}(s,\eta;T,y) d \eta =
\intertext{(by \eqref{repr})} \label{15}
   & = \tilde{J}^1_{n,h,k}(s,T,x) \Gamma_{n+h+k}(x,t;T,y).
\end{align}
Now we have
  $$G^2(t,x;T,y) = I_1 + I_2, $$
where, proceeding as before,
{\allowdisplaybreaks
\begin{align*}
  I_1 &= \int_t^T\int_{\R}G^0(t,x;s,\eta) \a_1(s) (\eta-\bar{x}) (\pa_{\eta\eta}-\pa_{\eta}) G^1(s,\eta;T,y)d \eta d s \\
 &= \sum_{n,h,k=0}^{+\infty} \frac{\l^{n}}{n!} \int_t^T\a_1(s)e^{-\l(s-t)} (s-t)^n  \cdot  \\
 &\quad  \cdot V_{t,s,x,n}\int_{\R}\Gamma_n(t,x;s,\eta)(\pa_{\eta\eta}-\pa_{\eta}) J^1_{h,k}(s,T,\eta) \Gamma_{h+k}(s,\eta;T,y) d \eta d s \\
 &= \sum_{n,h,k=0}^{+\infty} \frac{\l^{n}}{n!} \int_t^T\a_1(s)e^{-\l(s-t)} (s-t)^n \cdot  \\
 &\quad  \cdot V_{t,s,x,n} (\pa_{xx}-\pa_x) \int_{\R}\Gamma_n(t,x;s,\eta) J^1_{h,k}(s,T,\eta) \Gamma_{h+k}(s,\eta;T,y) d \eta d s=
\intertext{(by \eqref{15})}
 &= \sum_{n,h,k=0}^{+\infty} \frac{\l^{n}}{n!} \int_t^T \a_1(s) e^{-\l(s-t)} (s-t)^n V_{t,s,x,n} (\pa_{xx}-\pa_x)
 \tilde{J}^1_{n,h,k}(s,T,x)d s\cdot\\
 &\quad\cdot \Gamma_{n+h+k}(x,t;T,y) \\
 &= \sum_{n,h,k=0}^{+\infty} J^{2,1}_{n,h,k}(t,T,x) \Gamma_{n+h+k}(t,x;T,y)
\end{align*}}
and
{\allowdisplaybreaks
\begin{align*}
  I_2 &= \int_t^T\int_{\R}G^0(t,x;s,\eta) \a_2(s) (\eta-\bar{x})^2 (\pa_{\eta\eta}-\pa_{\eta}) G^0(s,\eta;T,y)d \eta d s \\
 &=  e^{-\l(T-t)}\sum_{n,k=0}^{+\infty} \frac{\l^{n+k}}{n!k!} \int_t^T \a_2(s) (T-s)^k (s-t)^n \cdot\\
 &\quad\cdot V_{t,s,x,n}^2 \int_{\R}\Gamma_n(t,x;s,\eta) (\pa_{\eta\eta}-\pa_{\eta}) \Gamma_k(s,\eta;T,y)d \eta d s \\
 &= e^{-\l(T-t)} \sum_{n,k=0}^{+\infty} \frac{\l^{n+k}}{n!k!} \int_t^T \a_2(s) (T-s)^k (s-t)^n \cdot\\
 &\quad\cdot V_{t,s,x,n}^2 (\pa_{xx}-\pa_x)  \int_{\R}\Gamma_n(t,x;s,\eta) \Gamma_k(s,\eta;T,y)d \eta d s \\
 &= e^{-\l(T-t)} \sum_{n,k=0}^{+\infty} \frac{\l^{n+k}}{n!k!} \int_t^T \a_2(s) (T-s)^k (s-t)^n \cdot\\
 &\quad\cdot V_{t,s,x,n}^2d s\, (\pa_{xx}-\pa_x) \Gamma_{n+k}(t,x;T,y) \\
 &= \sum_{n,k=0}^{+\infty} J^{2,2}_{n,k}(t,T,x) \Gamma_{n+k}(t,x;T,y).
\end{align*}}
This concludes the proof.\qquad\endproof

\begin{remark}
Since the derivatives of a Gaussian density can be expressed in terms of Hermite polynomials, the
computation of the terms of the expansion \eqref{34} is very fast. Indeed, we have
\begin{equation*}
    \frac{\pa_x^i\Gamma_n(t,x;T,y)}{\Gamma_n(t,x;T,y)} = \frac{(-1)^{i}h_{i,n}(t,T,x-y)}{\left(2 \left(A(t,T) +n\d^2\right)
    \right)^{\frac{i}{2}}}
\end{equation*}
where
  $$h_{i,n}(t,T,z)=\mathbf{H}_{i}\lf \frac{z+(T-t){\m_{0}}-\frac12 A(t,T) +nm}{\sqrt{2 \left(A(t,T) +n\d^2\right)}}\rg$$
and $\mathbf{H}_{i}=\mathbf{H}_{i}(x)$ denotes the Hermite polynomial of degree $i$. Thus we can
rewrite the terms $\lf G^k\rg_{k=1,2}$ in \eqref{11} and \eqref{12} as follows:
{\allowdisplaybreaks
\begin{equation}\label{35}
\begin{split}
  G^1(t,x;T,y) =& \sum_{n,k=0}^{\infty}\mathbf{G}_{n,k}^1(t,x;T,y) \Gamma_{n+k}(t,x;T,y)  \\
  G^2(t,x;T,y) =& \sum_{n,h,k=0}^{\infty} \mathbf{G}_{n,h,k}^{2,1}(t,x;T,y) \Gamma_{n+h+k}(t,x;T,y)\\
  & +\sum_{n,k=0}^{\infty} \mathbf{G}_{n,k}^{2,2}(t,x;T,y) \Gamma_{n+k}(t,x;T,y),
\end{split}
\end{equation}
}
where {\allowdisplaybreaks
\begin{align*}
\mathbf{G}_{n,k}^1(t,x;T,y) =&\sum_{i=1}^{3}(-1)^{i}\sum_{j=0}^1f^1_{n,k,i,j}(t,T)(x-\bar{x})^j
\frac{h_{i,n+k}(t,T,x-y)}{\left(2\left( A(t,T) +(n+k)\d^2\right)\right)^{\frac{i}{2}}} \\
\mathbf{G}_{n,h,k}^{2,1}(t,x;T,y) =&
\sum_{i=1}^{6}(-1)^{i}\sum_{j=0}^1f^{2,1}_{n,h,k,i,j}(t,T)(x-\bar{x})^j
\frac{h_{i,n+h+k}(t,T,x-y)}{\left(2 \left(A(t,T) +(n+h+k)\d^2\right)\right)^{\frac{i}{2}}}  \\
\mathbf{G}_{n,k}^{2,2}(t,x;T,y) =&
\sum_{i=1}^{6}(-1)^{i}\sum_{j=0}^1f^{2,2}_{n,k,i,j}(t,T)(x-\bar{x})^j
\frac{h_{i,n+k}(t,T,x-y)}{\left(2 \left(A(t,T) +(n+k)\d^2\right)\right)^{\frac{i}{2}}}.
\end{align*}}
In the practical implementation, we truncate the series in \eqref{Gamma0} and \eqref{35} to a
finite number of terms, say $M\in\mathds{N}\cup\{0\}$. Therefore we put
\begin{equation*}
\begin{split}
  G^0_{M}(t,x;T,y) &= e^{-\l(T-t)} \sum_{n=0}^{M} \frac{(\l(T-t))^n}{n!}
  \Gamma_n(t,x;T,y),\\
  G^1_{M}(t,x;T,y) &= \sum_{n,k=0}^{M}\mathbf{G}_{n,k}^1(t,x;T,y) \Gamma_{n+k}(t,x;T,y), \\
  G^2_{M}(t,x;T,y) &= \sum_{n,h,k=0}^{M} \mathbf{G}_{n,h,k}^{2,1}(t,x;T,y) \Gamma_{n+h+k}(t,x;T,y)\\
  &\quad +\sum_{n,k=0}^{M} \mathbf{G}_{n,k}^{2,2}(t,x;T,y) \Gamma_{n+k}(t,x;T,y),
\end{split}
\end{equation*}
and we approximate the density $\G$ by
\begin{equation}\label{33}
  \Gamma^{2}_{M}(t,x;T,y):=G^0_{M}(t,x;T,y)+G^1_{M}(t,x;T,y)+G^2_{M}(t,x;T,y).
\end{equation}
\end{remark}

Next we denote by $C(t,S(t))$ the price at time $t<T$ of a European option with payoff function
$\p$ and maturity $T$; for instance,
  $\p(y)=\left(y-K\right)^{+}$
in the case of a Call option with strike $K$. From the expansion of the density in \eqref{33}, we
get the following second order approximation formula.
\begin{remark}
We have
  $$ C(t,S(t)) \approx e^{-r(T-t)} u_{M}(t,\log S(t)) $$
where
\begin{align}\nonumber
 u_{M}(t,x)
 \nonumber
 &=\int_{\R^+} \frac1S \Gamma_{ M }^2(t,x;T,\log S) \p(S) d S\\
 \nonumber
 &= e^{-\l(T-t)} \sum_{n=0}^{ M } \frac{(\l(T-t))^n}{n!} \mathrm{CBS}_n(t,x) \\ \nonumber
 &\quad +\sum_{n,k=0}^{ M } \left(J^1_{n,k}(t,T,x)+J^{2,2}_{n,k}(t,T,x)\right) \mathrm{CBS}_{n+k}(t,x) \\ \label{uM}
 &\quad + \sum_{n,h,k=0}^{ M } J^{2,1}_{n,h,k}(t,T,x) \mathrm{CBS}_{n+h+k}(t,x)
\end{align}
and $\mathrm{CBS}_n(t,x)$ is the BS price\footnote{Here the BS price is expressed as a function of the time $t$ and of the log-asset $x$.} under the Gaussian law
$\Gamma_n(t,x;T,\cdot)$ in \eqref{36}, namely
  $$\mathrm{CBS}_n(t,x) = \int_{\R^+} \frac1S \Gamma_n(t,x;T,\log S) \p(S) d S.$$
\end{remark}

\subsection{Simplified Fourier approach for LV models}
\label{sec:secsimpl}
Equation \eqref{X} with $J=0$ reduces to the standard SDE of a LV model. In this case we can
simplify the proof of Theorems \ref{t1}-\ref{t2} by using Fourier analysis methods. Let us first
notice that $L_{0}$ in \eqref{L0} becomes
\begin{equation}\label{40}
 L_{0}=\frac{\a_0(t)}{2} \lf\partial_{xx}-\partial_x\rg + r \partial_x + \pa_t,
\end{equation}
and its fundamental solution is the Gaussian density
  $$G^0(t,x;T,y) =\frac{1}{\sqrt{2\pi A(t,T)}}\,e^{-\frac{\lf x-y+(T-t)r-\frac12A(t,T)\rg^2}{2A(t,T)}},$$
with $A$ as in \eqref{36}.
\begin{corollary}[1st order expansion]\label{cor1}
In case of $\l=0$, the solution $G^1$ in \eqref{11} is given by
\begin{equation}\label{G1LV}
  G^1(t,x;T,y) = J^1(t,T,x) G^0(t,x;T,y)
  \end{equation}
where $J^1(t,T,x)$ is the differential operator
\begin{equation}\label{J1LV}
 J^1(t,T,x) = \int_t^T \a_1(s) V_{t,s,x} d s\, ({\partial}_{xx}-{\partial}_{x}),
\end{equation}
with $V_{t,s,x}\equiv V_{t,s,x,0}$ as in \eqref{38}, that is
  $$V_{t,T,x}f(x) =\left(x-\bar{x}+(T-t){r}-\frac12 A(t,T)\right)f(x)+ A(t,T)\pa_x f(x).$$
\end{corollary}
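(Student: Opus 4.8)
\textbf{Proof proposal for Corollary \ref{cor1}.}

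The plan is to specialize Theorem \ref{t1} to the purely diffusive case $\l=0$, but since the hint suggests a shortcut via Fourier methods, I would instead give a direct self-contained derivation. First I would write the Cauchy problem \eqref{2.2} for $k=1$ with $\l=0$: the operator $L_0$ is now the Gaussian-type operator in \eqref{40}, its fundamental solution $G^0$ is the explicit Gaussian kernel displayed above, and the source term is $-\a_1(s)(x-\xbar)(\pa_{xx}-\pa_x)G^0(s,x;T,y)$. By Duhamel's principle,
\begin{equation*}
G^1(t,x;T,y)=\int_t^T\int_{\R}G^0(t,x;s,\eta)\,\a_1(s)\,(\eta-\xbar)\,(\pa_{\eta\eta}-\pa_{\eta})G^0(s,\eta;T,y)\,\ud\eta\,\ud s.
\end{equation*}

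The key step is to move the multiplication by $(\eta-\xbar)$ and the $\eta$-derivatives off $G^0(s,\eta;T,y)$ and onto $G^0(t,x;s,\eta)$, using the three identities of Lemma \ref{l1} in the $n=0$ case (where $\Gamma_0=G^0$): namely \eqref{V} which gives $(\eta-\xbar)\Gamma_0(t,x;s,\eta)=V_{t,s,x}\Gamma_0(t,x;s,\eta)$, \eqref{d} which converts $\pa_\eta$ acting on $\Gamma_0(s,\eta;T,y)$ into $-\pa_\eta$ under the integral after integration by parts (so $(\pa_{\eta\eta}-\pa_\eta)$ becomes $(\pa_{\eta\eta}+\pa_\eta)$ on the first factor), and finally relation \eqref{repr}, the Chapman--Kolmogorov identity $\int_\R\Gamma_0(t,x;s,\eta)\Gamma_0(s,\eta;T,y)\,\ud\eta=\Gamma_0(t,x;T,y)=G^0(t,x;T,y)$. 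Here I must be careful that $V_{t,s,x}$ and the operators $\pa_{xx}-\pa_x$ are in the variable $x$ and commute out of the $\ud\eta$-integral and the $\ud s$-integral; this is exactly the manipulation already carried out in the proof of Theorem \ref{t1}, just with all Poisson indices set to zero. Carrying this out yields
\begin{equation*}
G^1(t,x;T,y)=\int_t^T\a_1(s)\,V_{t,s,x}\,(\pa_{xx}-\pa_x)\,\ud s\;G^0(t,x;T,y),
\end{equation*}
which is precisely \eqref{G1LV}--\eqref{J1LV} once one notes $V_{t,s,x}$ commutes past $(\pa_{xx}-\pa_x)$ only up to reordering absorbed in the definition \eqref{J1LV} of $J^1(t,T,x)$, and that $V_{t,s,x,0}$ reduces to the stated first-order differential operator because $A(s,T)+0\cdot\d^2=A(s,T)$ and the jump parameters $m$ drop out when $\l=0$, giving $V_{t,T,x}f(x)=\bigl(x-\xbar+(T-t)r-\tfrac12A(t,T)\bigr)f(x)+A(t,T)\pa_xf(x)$.

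The main obstacle I anticipate is purely bookkeeping rather than conceptual: one must justify interchanging the $\ud\eta$-integral with the $x$-derivatives and with $V_{t,s,x}$, and justify the integration by parts in $\eta$ (no boundary terms, thanks to the Gaussian decay of $G^0$ and its derivatives). These are routine given Assumption $\text{A}_N$ and the explicit Gaussian form, so I would simply invoke them, or alternatively remark that the whole computation is the $\l=0$ specialization of the already-proved Theorem \ref{t1}, in which \eqref{11}--\eqref{13} collapse to a single term ($n=k=0$, $\Gamma_0=G^0$, $e^{-\l(T-t)}=1$). Either route gives the result; the Fourier-analytic variant alluded to in the text would instead transform \eqref{40} into a first-order ODE in the dual variable and is an equally short alternative I would mention but not develop.
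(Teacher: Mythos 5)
Your proof is correct, but it deliberately takes the route the paper declines. The paper opens the proof of Corollary~\ref{cor1} by conceding that \emph{``the result follows directly from Theorem~\ref{t1}''} and then announces an \emph{alternative} proof by Fourier transform: the Cauchy problem~\eqref{2.2} with $k=1$ is transformed in the spatial variable, reduced to the first-order ODE~\eqref{Cpb}, solved explicitly for $\hat G^1$ using~\eqref{41} and the commutation identity $f(\x)(i\pa_\x+\xbar)g(\x)=(i\pa_\x+\xbar)(fg)-ig\,\pa_\x f$, and then inverted back to the physical space using that $G^0$ is Gaussian. Your argument instead re-runs the proof of Theorem~\ref{t1} with all Poisson sums collapsed to the $n=k=0$ terms (Duhamel, then Lemma~\ref{l1}: identity~\eqref{V} to trade $(\eta-\xbar)$ for the operator $V_{t,s,x}$ acting in $x$, integration by parts plus~\eqref{d} to move the $\eta$-derivatives onto the first factor and convert them into $(\pa_{xx}-\pa_x)$, and Chapman--Kolmogorov~\eqref{repr} to contract the kernels). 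That is shorter and needs nothing beyond what is already proved, whereas the Fourier route is a deliberate preview of the machinery that becomes indispensable in \Sec{LV-J}, where the general L\'evy density is unavailable but the characteristic function is explicit, so the choice is pedagogical rather than logical. One small slip in your write-up: you attribute to~\eqref{d} the sign flip $(\pa_{\eta\eta}-\pa_\eta)\to(\pa_{\eta\eta}+\pa_\eta)$, but that flip comes from integration by parts alone; \eqref{d} then converts the resulting $\eta$-derivatives of $\Gamma_0(t,x;s,\eta)$ into $x$-derivatives, giving $(\pa_{xx}-\pa_x)$. Also note the paper's Fourier proof only treats the time-homogeneous normalization $\a_0=2$, $r=0$, whereas your specialization inherits the generality of Theorem~\ref{t1}, which is a modest advantage of your route.
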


\proof Although the result follows directly from Theorem \ref{t1}, here we propose an alternative
proof
of formula \eqref{J1LV}.
The idea is to determine the
solution of the Cauchy problem \eqref{2.2} in the Fourier space, where all the computation can be
carried out more easily; then, using the fact that the leading term $G^{0}$ of the expansion is a
Gaussian kernel, we are able to compute explicitly the inverse Fourier transform to get back to the
analytic approximation of the transition density.

Since we aim at showing the main ideas of an alternative approach, for simplicity we only consider the
case of time-independent
coefficients, precisely we set $\a_{0}=2$ and $r=0$.
In this case we have
 $$L_{0}=\partial_{xx}-\partial_x  + \pa_t$$
and the related Gaussian fundamental solution is equal to
  $$G^{0}(t,x;T,y)=\frac{1}{\sqrt{4\pi (T-t)}}\,e^{-\frac{\lf x-y-(T-t)\rg^2}{4(T-t)}}.$$
Now we apply the Fourier transform (in the variable $x$) to the Cauchy problem \eqref{2.2} with
$k=1$ and we get
\begin{equation}\label{Cpb}
  \begin{cases}
    {\partial}_t\hat{G}^1(t,\x;T,y) &\hspace{-8pt}= \left(\x^2-i\x\right)\hat{G}^1(t,\x;T,y)\\
     &+\a_1(i{\partial}_{\x}+\bar{x}) \lf-\x^2+i\x\rg \hat{G}^0(t,\x;T,y),\\
    \hat{G}^1(T,\x;T,y) &\hspace{-13pt}= 0,  \qquad \x\in\R.
  \end{cases}
\end{equation}
Notice that
\begin{equation}\label{41}
  \hat{G}^0(t,\x;T,y)=e^{-\x^{2}(T-t)+i\x(y+(T-t))}.
\end{equation}
Therefore the solution to the ordinary differential equation \eqref{Cpb} is
\begin{align*}
 \hat{G}^1(t,\x;T,y)&=-\a_{1}\int_{t}^{T}e^{(s-t)(-\x^{2}+i\x)}(i{\partial}_{\x}+\bar{x})
 \left((-\x^{2}+i\x) \hat{G}^0(s,\x;T,y)\right)ds=
\intertext{(using the identity
$f(\x)(i{\partial}_{\x}+\bar{x})(g(\x))=(i{\partial}_{\x}+\bar{x})(f(\x)g(\x))-ig(\x){\partial}_{\x}f(\x)$)}
 &=-\a_{1}\int_{t}^{T}(i{\partial}_{\x}+\bar{x})\left((-\x^{2}+i\x)e^{(s-t)(-\x^{2}+i\x)}
 \hat{G}^0(s,\x;T,y)\right)ds\\
 &\quad+i\a_{1}\int_{t}^{T}(-\x^{2}+i\x)\hat{G}^0(s,\x;T,y){\partial}_{\x}e^{(s-t)(-\x^{2}+i\x)}ds=
\intertext{(by \eqref{41})}
 &=-\a_{1}\int_{t}^{T}(i{\partial}_{\x}+\bar{x})\left((-\x^{2}+i\x)e^{i\x(y+(T-t))-\x^{2}(T-t)}\right)ds\\
 &\quad+i\a_{1}\int_{t}^{T}(-\x^{2}+i\x)(s-t)(-2\x+i)e^{i\x(y+(T-t))-\x^{2}(T-t)}ds=
\intertext{(again by \eqref{41})}
 &=-\a_{1}(T-t)(i{\partial}_{\x}+\bar{x})\left((-\x^{2}+i\x)\hat{G}^0(t,\x;T,y)\right)\\
 &\quad+i\a_{1}\frac{(T-t)^2}{2}(-\x^{2}+i\x)(-2\x+i)\hat{G}^0(t,\x;T,y).
\end{align*}
Thus, inverting the Fourier transform, we get
 \begin{align*}
 G^1(t,x;T,y) &=\a_{1}(T-t)(x-\bar{x})({\partial}_x^{2}-{\partial}_x)G^0(t,x;T,y) + \\
 &\quad -\a_{1}\frac{(T-t)^2}{2}(-2{\partial}_x^3+3{\partial}_x^{2}-{\partial}_x)G^0(t,x;T,y) \\
 &=\a_{1}\left((T-t)^2{\partial}_x^3 + \lf(x-\bar{x})(T-t)-\frac32(T-t)^2\rg{\partial}_x^2 + \right.\\
 &\quad\left. +\lf-(x-\bar{x})(T-t)+\frac{(T-t)^2}{2}\rg{\partial}_x\right)G^0(t,x;T,y),
\end{align*}
where the operator acting on $G^0(t,x;T,y)$ is exactly the same as in \eqref{J1LV}.
\qquad\endproof

\begin{remark}
As in Remark \ref{r4}, operator $J^1(t,T,x)$ can also be rewritten in the form
\begin{equation}\label{37}
  J^1(t,T,x) = \sum_{i=1}^{3}\sum_{j=0}^1 f^1_{i,j}(t,T)(x-\bar{x})^j \pa_x^i,
\end{equation}
where $f^1_{i,j}$ are deterministic functions whose explicit expression can be easily derived.
\end{remark}
The previous argument can be used to prove the following second order expansion.
\begin{corollary}[2nd order expansion]\label{cor2}
In case of $\l=0$, the solution $G^2$ in \eqref{12} is given by
  $$ G^2(t,x;T,y) = J^2(t,T,x)G^0(t,x;T,y) $$
where
\begin{equation}\label{J2LV}
\begin{split}
  J^2(t,T,x) &= \int_t^T \a_1(s) V_{t,s,x} ({\partial}_{xx}-{\partial}_{x}) \tilde{J}^1(t,s,T,x) d s  \\
             & + \int_t^T \a_2(s) V_{t,s,x}^2 d s\, ({\partial}_{xx}-{\partial}_{x})
\end{split}
\end{equation}
and $\tilde{J}^1$ is the ``adjoint'' operator of $J^1$, defined by
  $$ \tilde{J}^1(t,s,T,x) = \sum_{i=1}^3\sum_{j=0}^1 f^1_{i,j}(s,T)V_{t,s,x}^j {\partial}_{x}^i $$
with $f^1_{i,j}$ as in \eqref{37}.
\end{corollary}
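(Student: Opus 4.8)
The plan is to prove Corollary~\ref{cor2} by the same Fourier-space argument used for Corollary~\ref{cor1}, which is itself the analytic counterpart, in the case $\l=0$, of the proof of Theorem~\ref{t2}. As in Corollary~\ref{cor1} I would first reduce, without loss of generality, to time-independent coefficients with $\a_{0}=2$ and $r=0$, so that $L_{0}=\pa_{xx}-\pa_{x}+\pa_{t}$ and $G^{0}(t,x;T,y)$ is the explicit Gaussian kernel whose Fourier transform in $x$ is $\hat{G}^{0}(t,\x;T,y)=e^{-\x^{2}(T-t)+i\x(y+(T-t))}$ as in \eqref{41}; the general, time-dependent case then follows by carrying $\a_{1}$ and $\a_{2}$ inside the time integrals exactly as in the statement of Corollary~\ref{cor1}.

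First I would write the Cauchy problem \eqref{2.2} for $k=2$ explicitly as $L_{0}G^{2}=-\a_{1}(x-\bar{x})(\pa_{xx}-\pa_{x})G^{1}-\a_{2}(x-\bar{x})^{2}(\pa_{xx}-\pa_{x})G^{0}$ with $G^{2}(T,\cdot;T,y)=0$. Taking the Fourier transform in $x$ and using that multiplication by $(x-\bar{x})$ corresponds to the operator $(i\pa_{\x}+\bar{x})$, this becomes a linear first-order ODE in $t$ for $\hat{G}^{2}(t,\x;T,y)$ with zero terminal condition and source term $\a_{1}(i\pa_{\x}+\bar{x})((-\x^{2}+i\x)\hat{G}^{1})+\a_{2}(i\pa_{\x}+\bar{x})^{2}((-\x^{2}+i\x)\hat{G}^{0})$, into which I would substitute the closed form of $\hat{G}^{1}(s,\x;T,y)$ obtained in the proof of Corollary~\ref{cor1}.

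Solving this ODE by variation of parameters represents $\hat{G}^{2}$ as an integral over $s\in[t,T]$ of the integrating factor $e^{(s-t)(-\x^{2}+i\x)}$ times the source. The decisive simplification, the Fourier-side analogue of identity \eqref{15}, is that $e^{(s-t)(-\x^{2}+i\x)}\hat{G}^{0}(s,\x;T,y)=\hat{G}^{0}(t,\x;T,y)$, together with the product rule $f(\x)(i\pa_{\x}+\bar{x})g(\x)=(i\pa_{\x}+\bar{x})(f(\x)g(\x))-ig(\x)\pa_{\x}f(\x)$, applied repeatedly to commute the operators $(i\pa_{\x}+\bar{x})^{j}$ past the integrating factor. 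Upon inverting the Fourier transform, each such commutation turns multiplication by $(x-\bar{x})$ acting ``at time $s$'' into the differential operator $V_{t,s,x}$ acting on $x$; this is precisely the mechanism that transports $J^{1}(s,T,\cdot)$ applied to $G^{0}(s,\cdot;T,y)$ into the adjoint $\tilde{J}^{1}(t,s,T,x)$ applied to $G^{0}(t,x;T,y)$, while the $\a_{2}$-term produces $V_{t,s,x}^{2}(\pa_{xx}-\pa_{x})$ composed with $G^{0}(t,x;T,y)$. Collecting the two contributions and recognising the resulting time integrals as \eqref{J2LV} completes the proof; the final inversion is elementary because every term is a polynomial in $\x$ times the Gaussian $\hat{G}^{0}$, hence maps to a differential operator (with Hermite-polynomial coefficients) applied to $G^{0}$.

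I expect the main obstacle to be the bookkeeping in the last step: one must verify carefully that pushing $(i\pa_{\x}+\bar{x})^{j}$ through both the integrating factor and the already-inverted form of $\hat{G}^{1}$ reproduces exactly the adjoint operator $\tilde{J}^{1}(t,s,T,x)=\sum_{i=1}^{3}\sum_{j=0}^{1}f^{1}_{i,j}(s,T)V_{t,s,x}^{j}\pa_{x}^{i}$, with the same coefficient functions $f^{1}_{i,j}$ as in \eqref{37}, rather than some permuted variant; the non-commutativity of $V_{t,s,x}$ with multiplication by $(x-\bar{x})$ makes this delicate. As a safety net, one can also obtain the statement directly from Theorem~\ref{t2}: when $\l=0$ only the $n=h=k=0$ terms of \eqref{12} survive and $\Gamma_{0}=G^{0}$, so $G^{2}=(J^{2,1}_{0,0,0}(t,T,x)+J^{2,2}_{0,0}(t,T,x))G^{0}(t,x;T,y)$, which coincides with \eqref{J2LV} after matching notation; this bypasses the Fourier computation but hides the structural reason why the adjoint appears.
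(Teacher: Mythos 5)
Your proposal is correct and follows exactly the route the paper intends: the paper states no explicit proof of Corollary~\ref{cor2}, only remarking that ``the previous argument'' (the Fourier computation of Corollary~\ref{cor1}) can be reused, and your sketch fills in that computation faithfully, including the semigroup identity $e^{(s-t)(-\xi^{2}+i\xi)}\hat{G}^{0}(s,\xi;T,y)=\hat{G}^{0}(t,\xi;T,y)$ and the commutation rule for $(i\partial_{\xi}+\bar{x})$ that together produce the adjoint operator $\tilde{J}^{1}$. Your ``safety net'' observation is equally valid and is the counterpart of the paper's own parenthetical remark, in the proof of Corollary~\ref{cor1}, that the $\lambda=0$ result follows directly from the jump case: when $\lambda=0$ every factor $\lambda^{n+h+k}/n!h!k!$ in Theorem~\ref{t2} vanishes except at $n=h=k=0$, $\Gamma_{0}=G^{0}$, and $J^{2,1}_{0,0,0}+J^{2,2}_{0,0}$ reduces to $J^{2}$ in \eqref{J2LV}, since $f^{1}_{0,0,i,j}=f^{1}_{i,j}$.
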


\begin{remark}
In a standard LV model, the leading operator of the approximation, i.e. $L_{0}$ in \eqref{40}, has
a Gaussian density $G^{0}$ and this allowed us to use the inverse Fourier transform in order to get the approximated density.
This approach does not work in the general case of models with jumps because typically the explicit expression of the fundamental solution of an integro-differential
equation is not available. On the other hand, for several L\'evy processes used in finance, the characteristic function is known explicitly even if the density is not. This suggests
that the argument used in this section may be adapted to obtain an approximation of the
characteristic function of the process instead of its density. This is what we are going to
investigate in \Sec{LV-J}.
\end{remark}

\section{Local L\'evy models}
\label{sec:LV-J}

In this section, we provide an expansion of the characteristic function for the local L\'evy
model \eqref{X}. We denote by
  $$\hat{\Gamma}(t,x;T,\x)=\F\left(\Gamma(t,x;T,\cdot)\right)(\x)$$
the Fourier transform, with respect to the second spatial variable, of the transition density
$\Gamma(t,x;T,\cdot)$; clearly, $\hat{\Gamma}(t,x;T,\x)$ is the characteristic function of
$X^{t,x}(T)$. Then, by applying the Fourier transform to the expansion \eqref{34}, we find
\begin{equation}\label{34b}
  \p_{X^{t,x}(T)}(\x)\, \approx \,
  \sum_{k=0}^n \hat{G}^k(t,x;T,\x).
\end{equation}
Now we recall that $G^{k}(t,x;T,y)$ is defined, as a function of the variables $(t,x)$, in terms
of the sequence of Cauchy problems \eqref{2.2}. Since the Fourier transform in \eqref{34b} is
performed with respect to the variable $y$, in order to take advantage of such a transformation it
seems natural to characterize $G^{k}(t,x;T,y)$ as a solution of the {\it adjoint operator} in the
dual variables $(T,y)$.

To be more specific, we recall the definition of adjoint operator. Let $L$ be the operator in
\eqref{L}; then its adjoint operator $\tilde{L}$ satisfies (actually, it is defined by) the
identity
  $$\int_{\R^{2}}u(t,x)Lv(t,x)dxdt=\int_{\R^{2}}v(t,x)\tilde{L}u(t,x)dxdt$$
for all $u,v\in C_{0}^{\infty}$. More explicitly, by recalling notation \eqref{a}, we have
\begin{align*}
  \tilde{L}^{(T,y)}u(T,y)&=\frac{a(T,y)}{2}{\partial}_{yy}u(T,y)+b(T,y){\partial}_y u(T,y)\\
  &\quad-{\partial}_Tu(T,y)+c(T,y)u(T,y)\\
  &\quad +\int_{\R}\left(u(T,y+z)-u(T,y)-z{\partial}_y u(T,y)\caratt_{\{|z|<1\}}\right)\bar{\n}(dz),
\end{align*}
where
  $$b(T,y)={\partial}_ya(T,y)-\left(\bar{r}-\frac{a(T,y)}{2}\right),\qquad c(T,y)=\frac12({\partial}_{yy}+{\partial}_y) a(T,y),$$
and $\bar{\n}$ is the L\'evy measure with reverted jumps, i.e. $\bar{\n}(dx)=\n(-dx)$. Here the
superscript in $\tilde{L}^{(T,y)}$ is indicative of the fact that the operator $\tilde{L}$ is
acting in the variables $(T,y)$.

By a classical result (cf., for instance, \cite{GarroniMenaldi1992}) the fundamental solution
$\G(t,x;T,y)$ of $L$ is also a solution of $\tilde{L}$ in the dual variables, that is
\begin{equation}\label{Lad}
 \tilde{L}^{(T,y)}\G(t,x;T,y)=0,\qquad t<T,\ x,y\in\R.
\end{equation}
Going back to approximation \eqref{34b}, the idea is to consider the series of the dual Cauchy
problems of \eqref{2.2} in order to solve them by Fourier-transforming in the variable $y$ and
finally get an approximation of $\p_{X^{t,x}(T)}$.

For sake of simplicity, from now on we only consider the case of time-independent coefficients:
the general case can be treated in a completely analogous way. First of all, 
we consider the integro-differential operator $L_0$ in \eqref{42}, which in this case becomes
\begin{equation}\label{L0F}
\begin{split}
  L_0^{(t,x)}u(t,x)&=\frac{\a_0}{2}({\partial}_{xx}-{\partial}_x)u(t,x)+\bar{r}{\partial}_xu(t,x)+{\partial}_tu(t,x)\\
  &\quad+\int_{\R}\left(u(t,x+y)-u(t,x)-y{\partial}_x u(t,x)\caratt_{\{|y|<1\}}\right)\n(dy),
\end{split}
\end{equation}
and its adjoint operator 
\begin{equation}\label{Ltilde0}
\begin{split}
  \tilde{L}_0^{(T,y)}u(T,y)&=\frac{\a_0}{2}({\partial}_{yy}+{\partial}_y)u(T,y)-\bar{r}{\partial}_y u(T,y)-{\partial}_T u(T,y)\\
  &\quad +\int_{\R}\left(u(T,y+z)-u(T,y)-z{\partial}_y u(T,y)\caratt_{\{|z|<1\}}\right)\bar{\n}(dz).
\end{split}
\end{equation}
By \eqref{Lad}, for any $(t,x)\in\R^{2}$, the fundamental solution $G^{0}(t,x;T,y)$ of $L_0$ solves the
dual Cauchy problem
\begin{equation}\label{50}
  \begin{cases}
     \tilde{L}_0^{(T,y)}G^{0}(t,x;T,y) = 0,\qquad &T>t,\ y\in\R,\\
     G^{0}(t,x;t,\cdot) = \d_x.
  \end{cases}
\end{equation}
It is remarkable that a similar result holds for the higher order terms of the approximation
\eqref{34b}. Indeed, let us denote by $L_{n}$ the $n^{\text{th}}$ order approximation of $L$ in
\eqref{43}:
\begin{equation}\label{43b}
 L_{n}=L_{0}+\sum_{k=1}^{n}\a_k(x-\bar{x})^k\lf\partial_{xx}-\partial_x\rg
\end{equation}
Then we have the following result.
\begin{theorem}
For any $k\ge 1$ and $(t,x)\in\R^{2}$, the function $G^k(t,x;\cdot,\cdot)$ in \eqref{2.2} is the
solution of the following dual Cauchy problem on $]t,+\infty[\times \R$
\begin{equation}\label{51}
  \begin{cases}
     \tilde{L}^{(T,y)}_{0} G^k(t,x;T,y)=- \sum\limits_{h=1}^k\left(\tilde{L}^{(T,y)}_{h}-\tilde{L}^{(T,y)}_{h-1}\right)
     G^{k-h}(t,x;T,y),\\
     G^k(t,x;t,y)= 0,  \qquad y\in\R,
  \end{cases}
\end{equation}
where
\begin{align*}
  \tilde{L}^{(T,y)}_{h}-\tilde{L}^{(T,y)}_{h-1}&=\a_h(y-\bar{x})^{h-2}
  \Big((y-\bar{x})^{2}\partial_{yy}+(y-\bar{x})\left(2h+(y-\bar{x})\right)\partial_y\\
  &\quad +h\left(h-1+y-\bar{x}\right)\Big).
\end{align*}
\end{theorem}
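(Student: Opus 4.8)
The statement claims two things: first, that $G^k(t,x;\cdot,\cdot)$ — originally defined via the forward Cauchy problems \eqref{2.2} in the variables $(t,x)$ — also solves the dual (backward, or adjoint) Cauchy problem \eqref{51} in the variables $(T,y)$; second, that the difference of consecutive adjoint approximations has the explicit form displayed at the end. The plan is to treat these two claims separately, starting with the easier algebraic identity and then passing to the PDE characterization.

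First I would establish the formula for $\tilde L^{(T,y)}_h - \tilde L^{(T,y)}_{h-1}$. From \eqref{43b} we have $L_h - L_{h-1} = \a_h (x-\bar x)^h(\partial_{xx}-\partial_x)$, which is a purely differential operator (the jump part cancels). So I only need to compute the formal adjoint of the second-order operator $u \mapsto \a_h (x-\bar x)^h (u_{xx} - u_x)$. This is a routine integration-by-parts exercise: for $\phi,\psi \in C_0^\infty$, write $\int \phi\, \a_h (x-\bar x)^h (\psi_{xx}-\psi_x)\,dx$ and move both derivatives onto $\phi \cdot (x-\bar x)^h$, using the Leibniz rule to differentiate the polynomial factor. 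Carrying out $\partial_{yy}\big((y-\bar x)^h w\big)$ and $\partial_y\big((y-\bar x)^h w\big)$ and collecting terms by the order of derivative hitting $w$, one gets exactly $\a_h(y-\bar x)^{h-2}\big((y-\bar x)^2 \partial_{yy} + (y-\bar x)(2h+(y-\bar x))\partial_y + h(h-1+y-\bar x)\big)$. This is just bookkeeping; the factorization $(y-\bar x)^{h-2}$ out front is a convenient way to display it and should be checked to be consistent (for $h=1$ one reads it as the obvious polynomial operator without negative powers).

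Next, the PDE characterization. The key input is the classical fact, quoted in the excerpt as \eqref{Lad} (citing \cite{GarroniMenaldi1992}), that the fundamental solution $\Gamma$ of a Kolmogorov-type operator $L$ solves the adjoint equation $\tilde L^{(T,y)}\Gamma(t,x;T,y)=0$ in the dual variables. Applying this to $L_0$ gives \eqref{50}: $G^0(t,x;\cdot,\cdot)$ solves $\tilde L_0^{(T,y)}G^0 = 0$ with the delta initial condition. Then I would argue by induction on $k$. Suppose $G^0,\dots,G^{k-1}$ solve their respective dual problems. The function $G^k(t,x;T,y)$ is, by \eqref{2.2}, given by Duhamel's formula: $G^k(t,x;T,y) = \int_t^T\!\!\int_\R G^0(t,x;s,\eta)\,\big(-\sum_{h=1}^k (L_h-L_{h-1})^{(s,\eta)}G^{k-h}(s,\eta;T,y)\big)\,d\eta\,ds$. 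One then transfers the operators $(L_h-L_{h-1})^{(s,\eta)}$, acting in the first pair of variables of $G^{k-h}$, onto $G^0$ via integration by parts, converting them into their adjoints $(\tilde L_h - \tilde L_{h-1})$ acting in $(s,\eta)$ — but since $G^{k-h}(s,\eta;T,y)$ depends on $(s,\eta)$ as its \emph{first} arguments, one must be careful that the adjoint here is with respect to those variables. The cleanest route is actually the reverse: verify directly that the right-hand side of \eqref{51}, call it $R^k(t,x;T,y)$, together with the Duhamel representation, forces $\tilde L_0^{(T,y)}G^k = R^k$. Concretely, apply $\tilde L_0^{(T,y)}$ to the Duhamel integral for $G^k$; using $\tilde L_0^{(T,y)}G^0(s,x;T,\cdot)$-type relations and the inductive hypothesis on the lower-order terms appearing in the source, the PDE identity in \eqref{51} falls out, and the homogeneous terminal/initial value $G^k(t,x;t,y)=0$ is immediate from \eqref{2.2}.

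\textbf{Main obstacle.} The delicate point is the bookkeeping of \emph{which} pair of variables each operator acts on and in which pair the adjoint is taken. The operators $L_h-L_{h-1}$ in \eqref{2.2} act in $(t,x)$, but the claim \eqref{51} is a statement in $(T,y)$; reconciling these requires using \eqref{Lad}-type duality to swap the roles, and one has to be scrupulous that the L\'evy measure gets reflected ($\bar\nu(dx)=\nu(-dx)$) while the polynomial-coefficient differential part is self-reflecting up to the adjoint computed in the first paragraph. A secondary technical nuisance is justifying the integration by parts / Fubini manipulations: the fundamental solutions have Gaussian-type decay away from the diagonal (this is exactly the content of \thm{t11} and the estimates in the Appendix), so the boundary terms vanish and everything is absolutely convergent, but one should at least point to those estimates rather than wave hands. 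Once the variable-tracking is pinned down, the induction is mechanical.
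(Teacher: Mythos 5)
Your plan is correct in substance but takes a genuinely different route from the paper. The paper never applies $\tilde L_0^{(T,y)}$ to anything: writing $M_h=L_h-L_{h-1}$ and $\tilde M_h=\tilde L_h-\tilde L_{h-1}$, it proves by induction on $k$ that the backward Duhamel representation of $G^k$ coming from \eqref{2.2}, namely $G^k(t,x;T,y)=\sum_{h=1}^{k}\int_{t}^{T}\int_{\R}G^{0}(t,x;s,\eta)\,M^{(s,\eta)}_{h}\,G^{k-h}(s,\eta;T,y)\,d\eta\,ds$, coincides with the \emph{forward} Duhamel representation $G^k(t,x;T,y)=\sum_{h=1}^{k}\int_{t}^{T}\int_{\R}G^{0}(s,\eta;T,y)\,\tilde{M}^{(s,\eta)}_{h}\,G^{k-h}(t,x;s,\eta)\,d\eta\,ds$; once this equality is established, \eqref{51} is immediate from the standard representation theorem for forward parabolic Cauchy problems, with no further PDE computation. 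The inductive step is a regrouping of a double sum together with one integration by parts. Your proposed route---applying $\tilde L_0^{(T,y)}$ directly to the backward Duhamel integral---also works: the Leibniz rule on $\partial_T$ of the $s$-integral produces a boundary term at $s=T$ which, using $G^0(T,\eta;T,y)=\delta_y(\eta)$ and $G^m(T,\cdot;T,\cdot)\equiv0$ for $m\ge1$, contributes exactly $-\tilde M_k^{(T,y)}G^0(t,x;T,y)$, while the inductive hypothesis applied to $\tilde L_0^{(T,y)}G^{k-h}(s,\eta;T,y)$ together with a reindexing of the resulting double sum recovers the full source $-\sum_{j=1}^{k}\tilde M_j^{(T,y)}G^{k-j}(t,x;T,y)$. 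What your route buys is a direct PDE verification without invoking uniqueness of the Duhamel representation; what it costs is the boundary-term bookkeeping and the observation that $\tilde M_h$ acts only in the spatial variable (no $\partial_s$), so that the Leibniz derivative of the integral limit is well-defined. Both proofs rest on the same two ingredients---integration by parts justified by Gaussian kernel decay, and induction on $k$---and both treat the explicit adjoint formula for $\tilde L_h-\tilde L_{h-1}$ as a routine Leibniz computation, as you do.
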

\proof
By the standard representation formula for the solutions of the {\it backward} parabolic Cauchy problem \eqref{2.2}, for $k\geq 1$ we have
\begin{equation}\label{999}
    G^k(t,x;T,y)=\sum_{h=1}^{k}\int_{t}^{T}\int_{\R}G^{0}(t,x;s,\y)M^{(s,\y)}_{h}G^{k-h}(s,\y;T,y)d\y ds,
\end{equation}
where to shorten notation we have set
\begin{equation*}
M^{(t,x)}_{h}=L^{(t,x)}_{h}-L^{(t,x)}_{h-1}.
\end{equation*}
By \eqref{50} and since
\begin{equation*}
 \tilde{M}^{(T,y)}_{h}=\tilde{L}^{(T,y)}_{h}-\tilde{L}^{(T,y)}_{h-1}.
\end{equation*}
the assertion is equivalent to
\begin{equation}\label{998}
 G^k(t,x;T,y)=\sum_{h=1}^{k}\int_{t}^{T}\int_{\R}G^{0}(s,\y;T,y)\tilde{M}^{(s,\y)}_{h}G^{k-h}(t,x;s,\y)d\y ds,
\end{equation}
where here we have used the representation formula for the solutions of the {\it forward} Cauchy problem \eqref{51} with $k\ge 1$.

We proceed by induction and first prove \eqref{998} for $k=1$. By \eqref{999} we have
\begin{align*}
    G^1(t,x;T,y)&=\int_{t}^{T}\int_{\R}G^{0}(t,x;s,\y)M^{(s,\y)}_{1}G^{0}(s,\y;T,y)d\y ds\\
    &=\int_{t}^{T}\int_{\R}G^{0}(s,\y;T,y)\tilde{M}^{(s,\y)}_{1}G^{0}(t,x;s,\y)d\y ds,
\end{align*}
and this proves \eqref{998} for $k=1$.

Next we assume that \eqref{998} holds for a generic $k> 1$ and we prove the thesis for $k+1$.
Again, by \eqref{999} we have
{\allowdisplaybreaks
\begin{align*}
    G^{k+1}(t,x;T,y)&=\sum_{j=1}^{k+1}\int_{t}^{T}\int_{\R}G^{0}(t,x;s,\y)M^{(s,\y)}_{j}G^{k+1-j}(s,\y;T,y)d\y ds\\
    &=\int_{t}^{T}\int_{\R}G^{0}(t,x;s,\y)M^{(s,\y)}_{k+1}G^{0}(s,\y;T,y)d\y ds\\
     &\quad+\sum_{j=1}^{k} \int_{t}^{T}\int_{\R}G^{0}(t,x;s,\y)M^{(s,\y)}_{j}G^{k+1-j}(s,\y;T,y)d\y ds=
\end{align*}}
(by the inductive hypothesis) 
{\allowdisplaybreaks
\begin{align*}
     &=\int_{t}^{T}\int_{\R}G^{0}(t,x;s,\y)M^{(s,\y)}_{k+1}G^{0}(s,\y;T,y)d\y ds\\
     &\quad+\sum_{j=1}^{k}\int_{t}^{T}\int_{\R}G^{0}(t,x;s,\y)M^{(s,\y)}_{j}\cdot\\
     &\quad\cdot\sum_{h=1}^{k+1-j}\int_{s}^{T}\int_{\R}G^{0}(\t,\z;T,y)\tilde{M}^{(\t ,\z)}_{h} G^{k+1-j-h}(s,\y;\t,\z)d\z d\t d\y ds\\
     &=\int_{t}^{T}\int_{\R}G^{0}(t,x;s,\y)M^{(s,\y)}_{k+1}G^{0}(s,\y;T,y)ds d\y\\
     &\quad+\sum_{h=1}^{k}\sum_{j=1}^{k+1-h}\int_{t}^{T}\int_{t}^{\t}\int_{\R^2}G^{0}(t,x;s,\y)G^{0}(\t,\z;T,y)\cdot\\
     &\quad\cdot M^{(s,\y)}_{j}\tilde{M}^{(\t ,\z)}_{h} G^{k+1-j-h}(s,\y;\t,\z)d\y d\z ds d\t \\
     &=\int_{t}^{T}\int_{\R}G^{0}(s,\y;T,y)\tilde{M}^{(s,\y)}_{k+1}G^{0}(t,x;s,\y)ds d\y\\
     &\quad+\sum_{h=1}^{k}\int_{t}^{T}\int_{\R}G^{0}(\t,\z;T,y)\tilde{M}^{(\t ,\z)}_{h} \cdot\\
     &\quad\cdot\left(\sum_{j=1}^{k+1-h}\int_{t}^{\t}\int_{\R}G^{0}(t,x;s,\y)M^{(s,\y)}_{j} G^{k+1-h-j}(s,\y;\t,\z)d\y ds\right) d\z d\t=
     \intertext{(again by \eqref{999})}
     &=\int_{t}^{T}\int_{\R}G^{0}(t,\y;T,y)\tilde{M}^{(s,\y)}_{k+1}G^{0}(t,x;s,\y)ds d\y\\
     &\quad+\sum_{h=1}^{k}\int_{t}^{T}\int_{\R}G^{0}(\t,\z;T,y)\tilde{M}^{(\t ,\z)}_{h} G^{k+1-h}(t,x;\t,\z) d\z d\t \\
     &=\sum_{h=1}^{k+1}\int_{t}^{T}\int_{\R}G^{0}(\t,\z;T,y)\tilde{M}^{(\t ,\z)}_{h} G^{k+1-h}(t,x;\t,\z) d\z d\t. \qquad
\end{align*}}
\endproof

Next we solve problems \eqref{50}-\eqref{51} by applying the Fourier transform in the variable $y$
and using the identity
\begin{equation}\label{52}
  \F_{y}\left(\tilde{L}_0^{(T,y)}u(T,y)\right)(\x)=\psi(\x)\hat{u}(T,\x)-\partial_{T}\hat{u}(T,\x),
\end{equation}
where
\begin{equation}\label{53}
    \psi(\x)=-\frac{\a_0}{2}(\x^2+i\x)+i\bar{r}\x+\int_{\R}\left(e^{iz\x}-1-iz\x\caratt_{\{|z|<1\}}\right)\n(dz).
\end{equation}
We remark explicitly that $\psi$ is the characteristic exponent of the L\'evy process
\begin{equation}\label{60}
 d X^{0}(t)=\left(\bar{r}-\frac{\a_0}{2}\right)d t+ \sqrt{\a_0} d W(t) +d J(t),
\end{equation}
whose Kolmogorov operator is $L^{0}$ in \eqref{L0F}.
Then:
\begin{enumerate}[(i)]
  \item from \eqref{50} we obtain the ordinary differential equation
\begin{equation}\label{CpbF0}
  \begin{cases}
     \partial_T\hat{G}^{0}(t,x;T,\xi)=\psi(\x)\hat{G}^{0}(t,x;T,\xi),\qquad T>t,\\
     \hat{G}^{0}(t,x;t,\xi) = e^{i\x x}.
  \end{cases}
\end{equation}
with solution
\begin{equation}\label{53b}
    \hat{G}^{0}(t,x;T,\xi)=e^{i\x x+(T-t)\psi(\x)}
\end{equation}
which is the $0^{\text{th}}$ order approximation of the characteristic function
$\p_{X^{t,x}(T)}$.

  \item from \eqref{51} with $k=1$, we have
\begin{equation*}
  \begin{cases}
     {\partial}_T\hat{G}^1(t,x;T,\x) \hspace{-9pt} &= \psi(\x)\hat{G}^1(t,x;T,\x)\\
     \hspace{-9pt} &\quad+ \a_1\lf (i{\partial}_{\x}+\bar x)(\x^2+i\x)-2i\x+1 \rg \hat{G}^0(t,x;T,\xi) \\
     \hspace{15pt}\hat{G}^1(t,x;t,\x)\hspace{-9pt} & = 0,
  \end{cases}
\end{equation*}
with solution
    $$\hat{G}^1(t,x;T,\x) = \int_t^Te^{\psi(\x)(T-s)}\a_1 \lf (i{\partial}_{\x}+\bar x)(\x^2+i\x)-2i\x+1 \rg \hat{G}^0(t,x;s,\xi)
    ds=$$
(by \eqref{53b})
\begin{align}\nonumber
    &= -e^{ix\x+\psi(\x)(T-t)}\a_{1} \int_t^T  (\xi^{2}+i\x ) \left(x-\bar{x}-i (s-t)
    \psi'(\xi)\right)ds\\ \label{54}
    &=-\hat{G}^0(t,x;T,\xi)\a_1 (T-t)(\xi^{2}+i\x) \left(x- \bar{x}-\frac{i}{2}(T-t)
    \psi'(\xi)\right),
\end{align}
 which is the first order term in the expansion \eqref{34b}.

 \item regarding \eqref{51} with $k=2$, a straightforward computation based on analogous arguments
 shows that the second order term in the expansion \eqref{34b} is given by
\begin{equation}\label{55}
    \hat{G}^2(t,x;T,\xi)=\hat{G}^0(t,x;T,\xi)\sum_{j=0}^{2}g_{j}(T-t,\x)(x-\bar{x})^{j}
\end{equation}
where
{\allowdisplaybreaks
\begin{align*}
    g_{0}(s,\x)&=\frac{1}{2}s^{2} \a_2 \xi  (i+\xi ) \psi''(\xi)\\
     &\quad -\frac{1}{6}s^{3} \xi  (i+\xi ) \psi''(\xi)\left(\a_1^2 (i+2 \xi )-2 \a_2 \psi''(\xi)+\a_1^2 \xi  (i+\xi )\right)\\
     &\quad -\frac{1}{8}s^{4} \a_1^2 \xi^2 (i+\xi )^2 \psi''(\xi)^2,\\
    g_{1}(s,\x)&= \frac{1}{2}s^{2} \xi  (i+\xi ) \left(\a_1^2 (1-2 i \xi )+2 i \a_2 \psi''(\xi)\right)\\
    &\quad -\frac{1}{2}s^{3} i \a_1^2 \xi ^2 (i+\xi )^2 \psi''(\xi),\\
    g_{2}(s,\x)&=-\a_2 s\xi  (i+\xi )+ \frac{1}{2}s^{2} \a_1^2 \xi ^2 (i+\xi )^2.
\end{align*}}
\end{enumerate}
Plugging \eqref{53b}-\eqref{54}-\eqref{55} into \eqref{34b}, we finally get the second order
approximation of the characteristic function of $X$. In Subsection \ref{HOA}, we also provide the
expression of $\hat{G}^k(t,x;T,\xi)$ for $k=3,4$, appearing in the $4^{\text{th}}$ order
approximation.
\begin{remark}
The basepoint $\bar{x}$ is a parameter which can be
freely chosen in order to sharpen the accuracy of the approximation. In general, the simplest
choice $\bar{x}=x$ seems to be sufficient to get very accurate results.
\end{remark}
\begin{remark}
To overcome the use of the adjoint operators, it would be interesting to investigate an
alternative approach to the approximation of the characteristic function based of the following
remarkable symmetry relation valid for time-homogeneous diffusions
\begin{equation}\label{56}
 m(x)\G(0,x;t,y)=m(y)\G(0,y;t,x)
\end{equation}
where $m$ is the so-called density of the speed measure
 $$m(x)=\frac{2}{\s^{2}(x)}\exp\left(\int_{1}^{x}\left(\frac{2r}{\s^{2}(z)}-1\right)dz\right).$$
Relation \eqref{56} is stated in \cite{ItoMcKean1974} and a complete proof can be found in
\cite{EkstromTysk2011}.
\end{remark}

For completeness, we close this section by stating an integral pricing formula for European
options proved by Lewis \cite{Lewis2001}; the formula is  given in terms of the characteristic
function of the underlying log-price process. Formula below (and other Fourier-inversion methods
such as the standard, fractional FFT algorithm or the recent COS method \cite{Oosterlee2008}) can
be combined with the expansion \eqref{34b} to price and hedge efficiently hybrid LV models with
L\'evy jumps.

We consider a risky asset $S(t)=e^{X(t)}$ where $X$ is the process whose risk-neutral dynamics under a
martingale measure $Q$ is given by \eqref{X}. We denote by
  $H(t,S(t))$
the price at time $t<T$, of a European option with underlying asset $S$, maturity $T$ and payoff
$f=f(x)$ (given as a function of the log-price): to fix ideas, for a Call option with strike $K$
we have
 $$f^{\text{Call}}(x)=\left(e^{x}-K\right)^{+}.$$
The following theorem is a classical result which can be found in several textbooks (see, for
instance, \cite{Pascucci2011book}).
\begin{theorem}\label{t10}
 Let
   $$f_{\g}(x)=e^{-\g x}f(x)$$
 and assume that there exists $\g\in\R$ such that
\begin{enumerate}
  \item[{\it i)}] $f_{\g},\hat{f}_{\g}\in L^{1}(\R)$;
  \item[{\it ii)}] $E^{Q}\left[S(T)^{\g}\right]$ is finite.
\end{enumerate}
Then, the following pricing formula holds:
  $$H(t,S(t))=\frac{e^{-r(T-t)}}{\pi}\int_{0}^{\infty}\hat{f}(\x+i\g)\p_{X^{t,\log S(t)}(T)}(-(\x+i\g))d\x.$$
\end{theorem}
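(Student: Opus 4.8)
\textbf{Proof proposal for Theorem \ref{t10} (Lewis' pricing formula).}

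The plan is to derive the pricing formula via Parseval's identity for the Fourier transform, working in a strip of the complex plane where all the relevant integrals converge absolutely. First I would write the option price as a risk-neutral expectation,
\[
H(t,S(t))=e^{-r(T-t)}\,E^{Q}\!\left[f\big(X^{t,\log S(t)}(T)\big)\right]
=e^{-r(T-t)}\int_{\R}f(y)\,\G(t,\log S(t);T,y)\,dy,
\]
where $\G(t,x;T,\cdot)$ is the (risk-neutral) transition density of the log-price. The idea is that, although $f$ itself (e.g.\ the Call payoff $f^{\mathrm{Call}}(x)=(e^{x}-K)^{+}$) is not integrable on $\R$, the dampened payoff $f_{\g}(x)=e^{-\g x}f(x)$ is, precisely by hypothesis (i). So I would insert the identity $f(y)=e^{\g y}f_{\g}(y)$ and absorb the factor $e^{\g y}$ into the density, writing
\[
H(t,S(t))=e^{-r(T-t)}\int_{\R}f_{\g}(y)\,e^{\g y}\G(t,x;T,y)\,dy,\qquad x=\log S(t).
\]

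Next I would apply the Parseval/Plancherel identity $\int_{\R} g(y)\overline{h(y)}\,dy=\tfrac{1}{2\pi}\int_{\R}\hat g(\x)\overline{\hat h(\x)}\,d\x$ to the pair $g=f_{\g}$ and $h(y)=e^{\g y}\G(t,x;T,y)$. The Fourier transform of $h$ is, up to complex shift, the characteristic function of $X^{t,x}(T)$ evaluated at a complex argument: indeed
\[
\int_{\R}e^{-i\x y}\,e^{\g y}\G(t,x;T,y)\,dy
=\int_{\R}e^{i(-\x-i\g)y}\G(t,x;T,y)\,dy
=\p_{X^{t,x}(T)}\big(-(\x+i\g)\big),
\]
which is well-defined and finite by hypothesis (ii), since $E^{Q}[S(T)^{\g}]=E^{Q}[e^{\g X(T)}]<\infty$ guarantees the exponential moment that makes the integral converge (this is exactly where (ii) is used). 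Combining this with $\hat f_{\g}\in L^{1}$ from (i), Parseval gives
\[
H(t,S(t))=\frac{e^{-r(T-t)}}{2\pi}\int_{\R}\hat f(\x+i\g)\,\p_{X^{t,\log S(t)}(T)}\big(-(\x+i\g)\big)\,d\x,
\]
after identifying $\widehat{f_{\g}}(\x)=\hat f(\x+i\g)$ by the shifting property of the Fourier transform. Finally, using the symmetry $\hat f(\x+i\g)\,\p(-(\x+i\g))$ (the fact that the payoff transform and the density are real, so the integrand over $(-\infty,0)$ is the conjugate of the integrand over $(0,\infty)$), the integral over $\R$ collapses to twice the real part of the integral over $(0,\infty)$, i.e.\ $\frac{1}{\pi}\int_{0}^{\infty}$, which is the stated formula.

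The main obstacle is the rigorous justification of the contour/strip manipulations: one must verify that the Fourier transform of $e^{\g y}\G(t,x;T,y)$ really is the analytic continuation of the characteristic function to the horizontal line $\mathrm{Im}\,\x=\g$, and that all Fubini/Parseval exchanges are licit. This rests on the two hypotheses — (i) ensures $f_{\g}$ and $\hat f_{\g}$ are in $L^{1}$ so that the Fourier inversion and Parseval identity apply in the classical sense, and (ii) ensures $\p_{X^{t,x}(T)}(\cdot)$ extends holomorphically to a strip containing $\mathrm{Im}\,\x=\g$ with the integrability needed for the $\x$-integral to converge. Since this is a classical result with a standard proof (I would simply cite \cite{Pascucci2011book} or \cite{Lewis2001} for the full measure-theoretic details), the proof in the paper can be kept short, emphasizing the Parseval step and the role of the damping parameter $\g$.
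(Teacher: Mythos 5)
Your proposal is correct and is the standard proof of Lewis' formula. Note, however, that the paper does not actually prove this theorem: it states it as ``a classical result which can be found in several textbooks'' and simply cites \cite{Pascucci2011book}. Your Parseval/Plancherel argument is precisely the one found in those references, so your expectation that ``the proof in the paper can be kept short'' by citing is exactly what the authors did. One small remark: the identity $\int_{\R}e^{-i\x y}e^{\g y}\G(t,x;T,y)\,dy=\p_{X^{t,x}(T)}(-(\x+i\g))$ that you label ``the Fourier transform of $h$'' is, under the paper's convention $\hat u(\x)=\int e^{i\x y}u(y)\,dy$, actually $\overline{\hat h(\x)}$ — but since $h$ is real-valued this is exactly the quantity required in the Parseval pairing $\int g\,\overline{h}=\tfrac{1}{2\pi}\int\hat g\,\overline{\hat h}$, so the computation is correct; it would just be cleaner to state it as such. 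Finally, the passage from $\int_{\R}$ to $\tfrac{2}{\pi}\int_0^\infty$ uses the Hermitian symmetry $g(-\x)=\overline{g(\x)}$ of the integrand, which gives $\int_\R g=2\int_0^\infty\mathrm{Re}\,g$; as is customary, the statement silently identifies $\int_0^\infty g$ with its real part.
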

For example, $f^{\text{Call}}$ verifies the assumptions of Theorem \ref{t10} for any $\g>1$ and we
have
  $$\hat{f}^{\text{Call}}(\x+i\g)=\frac{K^{1-\g}e^{i\x \log K}}{\left(i\x-\g\right)\left(i\x-\g+1\right)}.$$
Other examples of typical payoff functions and the related Greeks can be found in
\cite{Pascucci2011book}.

\subsection{High order approximations}\label{HOA} The analysis of \Sec{LV-J} can be carried out to get approximations
of arbitrarily high order. Below we give the more accurate (but more complicated) formulae up to
the $4^{\text{th}}$ order that we used in the numerical section. In particular we give the
expression of $\hat{G}^k(t,x;T,\xi)$ in \eqref{34b} for $k=3,4$. For simplicity, we only consider
the case of time-homogeneous coefficients and $\xbar=x$.

We have
    $$\hat{G}^3(t,x;T,\xi)=\hat{G}^0(t,x;T,\xi)\sum_{j=3}^{7}g_{j}(\x)(T-t)^{j}$$
where {\allowdisplaybreaks
\begin{align*}
 g_3(\xi)&= \frac{1}{2} \a_3 (1-i \xi ) \xi  \psi^{(3)}(\x),\\
 g_4(\xi)&=  \frac{1}{6} i \xi  (i+\xi ) \bigg(2 \psi'(\x) \left(\a_1 \a_2-3 \a_3
   \psi''(\x)\right)\\
   &\quad+\a_1 \a_2 \left(3(i+2 \xi ) \psi''(\x)+2 \xi  (i+\xi ) \psi^{(3)}(\x)\right)\bigg),\\
 g_5(\xi)&=  \frac{1}{24} (1-i \xi ) \xi  \Big(-8 \a_1 \a_2 (i+2 \xi ) \psi'(\x)^2+6 \a_3
   \psi'(\x)^3\\
   &\quad+\a_1 \psi'(\x) \left(\a_1^2 (-1+6 \xi  (i+\xi ))-16 \a_2 \xi  (i+\xi )
   \psi''(\x)\right)\\
   &\quad+\a_1^3 \xi  (i+\xi ) \left(3( i+2 \xi ) \psi''(\x)+\xi  (i+\xi )
    \psi^{(3)}(\x)\right)\Big),\\
 g_6(\xi)&=  -\frac{1}{12} i \a_1 \xi ^2 (i+\xi )^2 \psi'(\x) \Big(\a_1^2 (i+2 \xi ) \psi'(\x)\\
 &\quad-2 \a_2 \psi'(\x)^2+\a_1^2 \xi  (i+\xi ) \psi''(\x)\Big),\\
 g_7(\xi)&=  -\frac{1}{48} i \left(\a_1 \xi (i+\xi ) \psi'(\x)\right)^3.
\end{align*}}
Moreover, we have
    $$\hat{G}^4(t,x;T,\xi)=\hat{G}^0(t,x;T,\xi)\sum_{j=3}^{9}g_{j}(\x)(T-t)^{j}$$
where {\allowdisplaybreaks
\begin{align*}
 g_3(\xi)&= -\frac{1}{2} \a_{4} \xi  (i+\xi ) \psi^{(4)}(\x),\\
 g_4(\xi)&= \frac{1}{6} \xi  (i+\xi ) \Big(2 \psi''(\x) \left(\a_2^2+3 \a_1 \a_3-3 \a_4
   \psi''(\x)\right)\\
   &\quad +2 \left(\left(\a_2^2+2 \a_1 \a_3\right) (i+2 \xi )-4 \a_4
   \psi'(\x)\right) \psi^{(3)}(\x)\\
   &\quad +\left(\a_2^2+2 \a_1 \a_3\right) \xi  (i+\xi )
   \psi^{(4)}(\x)\Big),\\
 g_5(\xi)&=  -\frac{1}{24} \xi  (i+\xi ) \Big(\a_1^2 \a_2 (-7+44 \xi  (i+\xi
   )) \psi''(\x)\\
   &\quad -\left(7 \a_2^2+15 \a_1 \a_3\right) \xi  (i+\xi ) \psi''(\x)^2\\
   &\quad -2
   \psi'(\x)^2 \left(2 \a_2^2+9 \a_1 \a_3-18 \a_4 \psi''(\x)\right)\\
   &\quad +\psi'(\x) \Big((i+2
   \xi ) \left(8 \a_1^2 \a_2-\left(14 \a_2^2+33 \a_1 \a_3\right) \psi''(\x)\right)\\
   &\quad-\left(10
   \a_2^2+21 \a_1 \a_3\right) \xi  (i+\xi ) \psi^{(3)}(\x)\Big)\\
   &\quad +3 \a_1^2 \a_2 \xi  (i+\xi )
   \left(4(i+2 \xi ) \psi^{(3)}(\x)+\xi  (i+\xi ) \psi^{(4)}(\x)\right)\Big),\\
 g_6(\xi)&=  \frac{1}{120} \xi  (i+\xi ) \Big(2 \left(8 \a_2^2+21 \a_1 \a_3\right) (i+2 \xi )
   \psi'(\x)^3-24 \a_4 \psi'(\x)^4\\
   &\quad +2 \psi'(\x)^2 \left(\a_1^2 \a_2 (11-70 \xi  (i+\xi
   ))+\left(26 \a_2^2+57 \a_1 \a_3\right) \xi  (i+\xi ) \psi''(\x)\right)\\
   &\quad +\a_1^2 \psi'(\x)
   \Big((i+2 \xi ) \left(\a_1^2 (-1+12 \xi  (i+\xi ))-112 \a_2 \xi  (i+\xi ) \psi''(\x)\right)\\
   &\quad -38 \a_2 \xi ^2 (i+\xi )^2 \psi^{(3)}(\x)\Big)+\a_1^2 \xi  (i+\xi ) \Big(\a_1^2 (-7+36 \xi  (i+\xi
   )) \psi''(\x)\\
   &\quad -26 \a_2 \xi  (i+\xi ) \psi''(\x)^2+\a_1^2 \xi  (i+\xi ) \left(6 (i+2 \xi )
   \psi^{(3)}(\x)+\xi  (i+\xi ) \psi^{4}(\x)\right)\Big)\Big),\\
 g_7(\xi)&= \frac{1}{144} \xi ^2 (i+\xi )^2 \Big(-32 \a_1^2 \a_2 (i+2 \xi ) \psi'(\x)^3+2
   \left(4 \a_2^2+9 \a_1 \a_3\right) \psi'(\x)^4\\
   &\quad +2 \a_1^4 \xi ^2 (i+\xi )^2 \psi''(\x)^2\\
   &\quad+\a_1^2 \psi'(\x)^2 \left(\a_1^2 (-5+26 \xi  (i+\xi ))-47 \a_2 \xi  (i+\xi )
   \psi''(\x)\right)\\
   &\quad +\a_1^4 \xi  (i+\xi ) \psi'(\x) \left(13 (i+2 \xi ) \psi''(\x)+3 \xi
   (i+\xi ) \psi^{(3)}(\x)\right)\Big),\\
 g_8(\xi)&= \frac{1}{48} \a_1^2 \xi ^3 (i+\xi )^3 \psi'(\x)^2 \Big(\a_1^2 (i+2 \xi )
   \psi'(\x)\\
   &\quad -2 \a_2 \psi'(\x)^2+\a_1^2 \xi  (i+\xi ) \psi''(\x)\Big),\\
 g_9(\xi)&=  \frac{1}{384} \a_1^4 \xi ^4 (i+\xi )^4 \psi'(\x)^4.
   \end{align*}

\section{Numerical tests}
\label{sec:numeric}

In this section our approximation formulae \eqref{34b} are tested and compared with a standard
Monte Carlo method. We consider up to the $4^{\text{th}}$ order expansion (i.e. $n=4$ in
\eqref{34b}) even if in most cases the $2^{\text{nd}}$ order seems to be sufficient to get very
accurate results. We analyze the case of a constant elasticity of variance (CEV) volatility
function with L\'evy jumps of Gaussian or Variance-Gamma type. Thus, we consider the log-price
dynamics \eqref{X} with
 $$\sigma(t,x)=\sigma_{0} e^{(\b-1)x},\quad\b\in[0,1],\ \s_{0}>0,$$
and $J$ as in Examples \ref{ex3} and \ref{ex4} respectively. In our experiments we assume the
following values for the parameters:
\begin{enumerate}[(i)]
  \item $S_0=1$ (initial stock price);
  \item $r=5\%$ (risk-free rate)
  \item $\s_{0}=20\%$ (CEV volatility parameter);
   \item $\b=\frac{1}{2}$ (CEV exponent).
\end{enumerate}
In order to present realistic tests, we allow the range of strikes to vary over the maturities;
specifically, we consider extreme values of the strikes where Call prices are of the order of
$10^{-3}S_{0}$, that is we consider deep-out-of-the-money options which are very close to be
worthless. To compute the reference values, we use an Euler-\MC method with $10$ millions
simulations and $250$ time-steps per year.

\subsection{Tests under CEV-Merton dynamics}
In the CEV-Merton model of Example \ref{ex3}, we consider the following set of parameters:
\begin{enumerate}[(i)]
  \item $\l=30\%$ (jump intensity);
  \item $m=-10\%$ (average jump size);
  \item $\d=40\%$ (jump volatility).
\end{enumerate}
In Table \ref{tab:MertonCEV}, we give detailed numerical results, in terms of prices and implied
volatilities, about the accuracy of our fourth order formula (PPR-$4^{\text{th}}$) compared with
the bounds of the Monte Carlo $95\%$-confidence interval.
\begin{table}[htb]
  \centering

{\footnotesize
\begin{tabular}{c|c|c l@{ -- }l |c l@{ -- }l}
  \hline\hline
  & & \multicolumn{3}{c}{Call prices} & \multicolumn{3}{c}{Implied volatility (\%)}
  \\[1ex]
  $T$&$K$ & PPR-$4^{\text{th}}$ & \multicolumn{2}{c}{MC-$95\%$ c.i.}
  & PPR-$4^{\text{th}}$ & \multicolumn{2}{c}{MC-$95\%$ c.i.}
  \\[1ex]
  \hline \hline
 & 0.5 & 0.50669 & 0.50648 & 0.50666 & 57.81 & 54.03 & 57.31
 \\ &0.75 & 0.26324 & 0.26304 & 0.26321 & 37.91 & 37.48 & 37.84
 \\ 0.25 & 1 & 0.05515 & 0.05501 & 0.05514 & 24.58 & 24.50 & 24.57
 \\ & 1.25 & 0.00645 & 0.00637 & 0.00645 & 30.48 & 30.39 & 30.49
 \\ & 1.5 & 0.00305 & 0.00300 & 0.00306 & 42.05 & 41.93 & 42.07
 \\ \hline & 0.5 & 0.52720 & 0.52700 & 0.52736 & 38.82 & 38.35 & 39.20
 \\ & 1 & 0.13114 & 0.13097 & 0.13125 & 27.06 & 27.01 & 27.08
 \\ 1 & 1.5 & 0.01840 & 0.01836 & 0.01852 & 29.04 & 29.03 & 29.10
 \\ & 2 & 0.00566 & 0.00566 & 0.00575 & 34.45 & 34.45 & 34.55
 \\ & 2.5 & 0.00209 & 0.00208 & 0.00214 & 37.65 & 37.62 & 37.77
 \\ \hline & 0.5 & 0.72942 & 0.72920 & 0.73045 & 32.88 & 32.81 & 33.21
 \\ & 1 & 0.52316 & 0.52293 & 0.52411 & 29.67 & 29.64 & 29.80
 \\ 10 & 5 & 0.05625 & 0.05604 & 0.05664 & 26.12 & 26.09 & 26.17
 \\ & 7.5 & 0.02267 & 0.02246 & 0.02290 & 26.34 & 26.30 & 26.39
 \\ & 10 & 0.01241 & 0.01091 & 0.01126 & 27.05 & 26.54 & 26.66
 \\ \hline \hline
\end{tabular}
  \caption{Call prices and implied volatilities in the CEV-Merton model for the fourth order formula (PPR-$4^{\text{th}}$) and
  the Monte Carlo (MC-$95\%$) with 10 millions simulations using Euler scheme with 250 time
steps per year, expressed as a function of strikes at the expiry T = 3M, 1Y, 10Y. Parameters:
$S_0=1$ (initial stock price), $r=5\%$ (risk-free rate), $\s_{0}=20\%$ (CEV volatility parameter),
$\b=\frac{1}{2}$ (CEV exponent), $\l=30\%$ (jump intensity), $m=-10\%$ (average jump size),
$\d=40\%$ (jump volatility). }
  \label{tab:MertonCEV}
}
\end{table}

Figures \ref{fig1}, \ref{fig2} and \ref{fig3} show the performance of the $1^{\text{st}}$,
$2^{\text{nd}}$ and $3^{\text{rd}}$ approximations against the Monte Carlo $95\%$ and $99\%$
confidence intervals, marked in dark and light gray respectively. In particular, Figure \ref{fig1}
shows the cross-sections of absolute (left) and relative (right) errors 
for the price of a Call with short-term maturity $T=0.25$ and strike $K$ ranging from $0.5$ to
$1.5$. The relative error is defined as
  $$\frac{\text{Call}^{\text{approx}}-\text{Call}^{\text{MC}}}{\text{Call}^{\text{MC}}}$$
where $\text{Call}^{\text{approx}}$ and $\text{Call}^{\text{MC}}$ are the approximated and Monte
Carlo prices respectively. In Figure \ref{fig2} we repeat the test for the medium-term maturity
$T=1$ and the strike $K$ ranging from $0.5$ to $2.5$. Finally in Figure \ref{fig3} we consider the
long-term maturity $T=10$ and the strike $K$ ranging from $0.5$ to $4$.

Other experiments that are not reported here, show that the $2^{\text{nd}}$ order expansion
\eqref{33}, which is valid only in the case of Gaussian jumps, gives the same results as formula
\eqref{34b} with $n=2$, at least if the truncation index $M$ is suitable large, namely $M\ge 8$
under standard parameter regimes. For this reason we have only used formula \eqref{34b} for our
tests.

\subsection{Tests under CEV-Variance-Gamma dynamics}
In this subsection we repeat the previous tests in the case of the CEV-Variance-Gamma model.
Specifically, we consider the following set of parameters:
\begin{enumerate}[(i)]
  \item $\kappa=15\%$ (variance of the Gamma subordinator);
  \item $\th=-10\%$ (drift of the Brownian motion);
  \item $\s=20\%$ (volatility of the Brownian motion).
\end{enumerate}
Analogously to Table \ref{tab:MertonCEV}, in Table \ref{tab:VGCEV} we compare our Call price
formulas with a high-precision Monte Carlo approximation (with $10^{7}$ simulations and $250$
time-steps per year) for several strikes and maturities. For both the price and the implied
volatility, we report our $4^{\text{th}}$ order approximation (PPR $4^{\text{th}}$) and the
boundaries of the Monte Carlo $95\%$-confidence interval.
\begin{table}[htb]
  \centering

{\footnotesize
\begin{tabular}{c|c|c l@{ -- }l |c l@{ -- }l}
  \hline\hline
  & & \multicolumn{3}{c}{Call prices} & \multicolumn{3}{c}{Implied volatility (\%)}
  \\[1ex]
  $T$&$K$ & PPR $4^{\text{th}}$ & \multicolumn{2}{c}{MC 95\% c.i.}
  & PPR $4^{\text{th}}$ & \multicolumn{2}{c}{MC 95\% c.i.}
  \\[1ex]
  \hline \hline
 & 0.8 & 0.23708 & 0.23704 & 0.23722 & 55.61 & 55.57 & 55.72
 \\ & 0.9 & 0.15489 & 0.15482 & 0.15497 & 47.09 & 47.05 & 47.14
 \\ 0.25 & 1 & 0.08413 & 0.08403 & 0.08415 & 39.29 & 39.24 & 39.30
 \\ & 1.1 & 0.03436 & 0.03426 & 0.03433 & 33.27 & 33.22 & 33.26
 \\ & 1.2 & 0.00968 & 0.00961 & 0.00965 & 29.28 & 29.21 & 29.25
 \\ \hline & 0.5 & 0.54643 & 0.54630 & 0.54679 & 61.02 & 60.91 & 61.30
 \\ & 0.75 & 0.35456 & 0.35438 & 0.35479 & 52.35 & 52.28 & 52.44
 \\ 1 & 1 & 0.20071 & 0.20049 & 0.20082 & 45.42 & 45.36 & 45.45
 \\ & 1.5 & 0.03394 & 0.03374 & 0.03387 & 35.16 & 35.09 & 35.14
 \\ & 2 & 0.00188 & 0.00185 & 0.00188 & 29.08 & 29.01 & 29.07
 \\ \hline & 0.5 & 0.80150 & 0.80279 & 0.80502 & 52.60 & 52.95 & 53.53
 \\ &  1 & 0.66691 & 0.66775 & 0.66990 & 49.09 & 49.21 & 49.52
 \\ 10 & 5 & 0.22948 & 0.22836 & 0.22986 & 42.02 & 41.93 & 42.05
 \\ & 7.5 & 0.13680 & 0.13497 & 0.13618 & 40.34 & 40.17 & 40.29
 \\ & 10 & 0.08664 & 0.08418 & 0.08518 & 39.21 & 38.93 & 39.05
\end{tabular}
  \caption{Call prices and implied volatilities in the CEV-Variance-Gamma model for the fourth order formula (PPR-$4^{\text{th}}$) and
  the Monte Carlo (MC-$95\%$) with 10 millions simulations using Euler scheme with 250 time
steps per year, expressed as a function of strikes at the expiry T = 3M, 1Y, 10Y. Parameters:
$S_0=1$ (initial stock price), $r=5\%$ (risk-free rate), $\s_{0}=20\%$ (CEV volatility parameter),
$\b=\frac{1}{2}$ (CEV exponent), $\kappa=15\%$ (variance of the Gamma subordinator), $\th=-10\%$
(drift of the Brownian motion), $\s=20\%$ (volatility of the Brownian motion).}
  \label{tab:VGCEV}
}
\end{table}

Figures \ref{fig4}, \ref{fig5} and \ref{fig6} show the cross-sections of absolute (left) and
relative (right) errors of the $2^{\text{nd}}$, $3^{\text{rd}}$ and $4^{\text{th}}$ approximations
against the Monte Carlo $95\%$ and $99\%$ confidence intervals, marked in dark and light gray
respectively. Notice that, for longer maturities and deep out-of-the-money options, the lower
order approximations give good results in terms of absolute errors  but only the $4^{\text{th}}$
order approximation lies inside the confidence regions. For a more detailed comparison, in Figures
\ref{fig5} and \ref{fig6} we plot the $2^{\text{nd}}$ (dotted line), $3^{\text{rd}}$ (dashed
line), $4^{\text{th}}$ (solid line) order approximations. Similar results are obtained for a wide
range of parameter values.

\section{Appendix: proof of Theorem \ref{t11}}
\label{sec:app}
In this appendix we prove Theorem \ref{t11} under Assumption A$_{N+1}$ where $N\in\NN$ is fixed.
For simplicity we only consider the case of $r=0$ and time-homogeneous coefficients. Recalling
notation \eqref{43bis}, we put
\begin{equation}\label{42b}
\begin{split}
  L_{0}=\frac{\a_0}{2} \lf\partial_{xx}-\partial_x\rg +{\partial}_{t}
\end{split}
\end{equation}
and
\begin{equation}\label{43ba}
  L_{n}=L_{0}+\sum_{k=1}^{n}\a_k(x-\xbar)^k\lf\partial_{xx}-\partial_x\rg, \qquad  n\le N.
\end{equation}

Our idea is to modify and adapt the standard characterization of the fundamental solution given by
the parametrix method originally introduced  by Levi \cite{Levi1907}. The parametrix method is a constructive technique
that allows to prove the existence of the fundamental solution $\G$ of a parabolic operator with
variable coefficients of the form
  $$Lu(t,x)= \frac{a(x)}{2}\left({\partial}_{xx}-{\partial}_{x}\right)u(t,x)+{\partial}_{t}u(t,x).$$
In the standard parametrix method, for any fixed $\x\in\R$, the fundamental solution $\G_{\x}$ of
the frozen operator
  $$L_{\x}u(t,x)= \frac{a(\x)}{2}\left({\partial}_{xx}-{\partial}_{x}\right)u(t,x)+{\partial}_{t}u(t,x)$$
is called a {\it parametrix} for $L$. A fundamental solution $\G(t,x;T,y)$ for $L$ can be
constructed starting from $\G_{y}(t,x;T,y)$ by means of an iterative argument and by suitably
controlling the errors of the approximation.

Our main idea is to {\it use the $N^{\text{th}}$-order approximation $\G^{N}(t,x;T,y)$ in
\eqref{34}-\eqref{2.2} (related to $L_{n}$ in \eqref{42b}-\eqref{43ba}) as a parametrix.} In order
to prove the error bound \eqref{81}, we carefully generalize some Gaussian estimates: in
particular, for $N=0$ we are back into the classical framework, but in general we need accurate
estimates of the solutions of the nested Cauchy problems \eqref{2.2}.

By analogy with the classical approach (see, for instance, \cite{Friedman} or the recent and more
general presentation in \cite{DiFrancescoPascucci2}), we have that $\G$ takes the form
  $$\G(t,x;T,y)=\G^{N}(t,x;T,y)+\int_{t}^{T}\int_{\R}\G^{0}(t,x;s,\x)\Phi^{N}(s,\x;T,y)d\x ds$$
where $\Phi^{N}$ is the function in \eqref{101} below, which is determined by imposing the
condition $L\G=0$. More precisely, we have
  $$0=L\G(z;\z)=L\G^{N}(z;\z)+\int_{t}^{T}\int_{\R}L\G^{0}(z;w)\Phi^{N}(w;\z)dw-\Phi^{N}(z;\z),$$
where, to shorten notations, we have set $z=(t,x)$, $w=(s,\x)$ and $\z=(T,y)$. Equivalently, we
have
 $$\Phi^{N}(z;\z)=L\G^{N}(z;\z)+\int_{t}^{T}\int_{\R}L\G^{0}(z;w)\Phi^{N}(w;\z)dw$$
and therefore by iteration
\begin{equation}\label{101}
 \Phi^{N}(z;\z)=\sum_{n=0}^{\infty}Z_{n}(z;\z)
\end{equation}
where
\begin{align*}
  Z^{N}_{0}(z;\z) & =L\G^{N}(z;\z), \\
  Z^{N}_{n+1}(z;\z)& =\int_{t}^{T}\int_{\R}L\G^{0}(z;w)Z_{n}(w;\z)dw.
\end{align*}
The thesis is a consequence of the following lemmas.
\begin{lemma}\label{lemapp1}
For any $n\le N$ the solution of \eqref{2.2}, with $L_{n}$ as in \eqref{42b}-\eqref{43ba}, takes the form
\begin{equation}\label{Gnlem}
G^n(t,x;T,y)=\sum_{ i\le n,\, j\le n(n+3),\, k\le \frac{n(n+5)}{2}\atop i+j-k\ge n}
c^n_{i,j,k}(x-\xbar)^i(\sqrt{T-t})^{j}\partial_x^k G^0(t,x;T,y),
\end{equation}
where $c^n_{i,j,k}$ are polynomial functions of $\a_0,\a_1,\dots,\a_n$.
\end{lemma}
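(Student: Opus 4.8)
The statement is a structural claim: the solution $G^n$ of the Cauchy problem \eqref{2.2} (with the polynomial operators $L_n$ defined in \eqref{42b}--\eqref{43ba}) is a finite linear combination of terms of the shape $(x-\xbar)^i(\sqrt{T-t})^{j}\partial_x^k G^0$, with the indices constrained by $i\le n$, $j\le n(n+3)$, $k\le \frac{n(n+5)}{2}$, and $i+j-k\ge n$, and with scalar coefficients that are polynomials in $\a_0,\dots,\a_n$. I would prove this by induction on $n$. The base case $n=0$ is trivial: $G^0=G^0$ corresponds to $i=j=k=0$, and all constraints are satisfied ($0\le 0$, $0+0-0\ge 0$), with $c^0_{0,0,0}=1$.

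\textbf{Inductive step.} Assume the representation \eqref{Gnlem} holds for all orders up to $n-1$. By the standard Duhamel representation for the backward Cauchy problem \eqref{2.2} with zero terminal data,
\begin{equation*}
G^n(t,x;T,y)=\sum_{h=1}^n\int_t^T\!\!\int_{\R} G^0(t,x;s,\eta)\,\a_h(\eta-\xbar)^h\lf\partial_{\eta\eta}-\partial_{\eta}\rg G^{n-h}(s,\eta;T,y)\,d\eta\,ds.
\end{equation*}
The plan is to plug the inductive representation of $G^{n-h}$ into this integral and reduce everything, via repeated application of Lemma \ref{l1} (or rather its analogue for the pure-diffusion Gaussian kernel $G^0$ here), to terms of the claimed form. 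Concretely: the factor $(\eta-\xbar)^h$ times derivatives of $G^0$ against the first Gaussian can be converted, using the identity $(y-\xbar)^k\Gamma_n=V^k\Gamma_n$ and the derivative-transfer identity $\partial_y^k\Gamma=(-1)^k\partial_x^k\Gamma$, into differential operators in $x$ acting on $G^0(t,x;T,y)$ after integrating out $\eta$ via the Chapman--Kolmogorov relation $\int G^0(t,x;s,\eta)G^0(s,\eta;T,y)\,d\eta=G^0(t,x;T,y)$. Each such operator, applied to the Gaussian $G^0$, produces a polynomial in $(x-\xbar)$ and in $(T-t)^{-1}$ (or, after bookkeeping, non-negative powers of $\sqrt{T-t}$) times higher $x$-derivatives of $G^0$ — this is exactly the "Hermite polynomial" phenomenon noted in the remark after Theorem \ref{t2}.

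\textbf{Index bookkeeping — the main obstacle.} The genuinely delicate part is verifying that the degree constraints propagate correctly. One must track, for each of the three operations — (i) multiplying by $(\eta-\xbar)^h$, i.e. applying $V_{t,s,x}$ up to $h$ times, each instance of which raises the $(x-\xbar)$-degree by at most $1$, raises the power of $\sqrt{\,\cdot\,}$ by at most $2$, and possibly raises the $\partial_x$-order by $1$; (ii) applying $\partial_{xx}-\partial_x$, which raises $k$ by at most $2$; and (iii) performing the $ds$-integration over $[t,T]$, which contributes one extra power of $(T-t)$, i.e. two extra powers of $\sqrt{T-t}$ — how the triple $(i,j,k)$ evolves, and to check the inequalities $i\le n$, $j\le n(n+3)$, $k\le \frac{n(n+5)}{2}$, and the crucial homogeneity-type bound $i+j-k\ge n$. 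The bound $i+j-k\ge n$ is the heart of it: it encodes a parabolic scaling invariance (each application of $L_h-L_{h-1}$ is "degree-$h$ homogeneous" in the parabolic sense, and integrating against $G^0$ over a time interval of length $T-t$ restores one unit), and I would verify it by a careful additive accounting across the sum over $h=1,\dots,n$ and the nested inductive bounds. That the coefficients $c^n_{i,j,k}$ are polynomials in $\a_0,\dots,\a_n$ is immediate from the construction: the only scalars entering are the $\a_h$ appearing in $L_h-L_{h-1}$ and the (polynomial-in-$\a_0$) coefficients already present in $V_{t,s,x}$ and in the Hermite expansion of derivatives of $G^0$. Once Lemma \ref{lemapp1} is in hand, the companion estimates on $\|Z_n^N\|$ and on the convergence of the series \eqref{101} follow by the usual parametrix Gaussian bounds, but the structural lemma above is the piece that requires the detailed index tracking and is where I expect the bulk of the work.
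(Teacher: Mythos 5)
Your proposal matches the paper's own proof essentially step for step: induction on $n$, the Duhamel representation of $G^n$ as a sum over $h=1,\dots,n$, conversion of the $(\eta-\xbar)^h$ factor and $\eta$-derivatives into $V_{t,s,x}$-operators and $\partial_x$-derivatives acting on $G^0(t,x;T,y)$ via the Chapman--Kolmogorov identity, and then index bookkeeping using the finite expansion of $V^n_{t,s,x}$ (the paper's \eqref{I2a}) to verify the four constraints. The only difference is that you defer the detailed arithmetic verifying $i\le n$, $j\le n(n+3)$, $k\le n(n+5)/2$, and $i+j-k\ge n$, whereas the paper carries it out (e.g. $j_1+j+j_2+2-(k+2+j_3)\ge i+j-k+h\ge n+1$); but your accounting of how each of the three operations moves the triple $(i,j,k)$ is exactly right, so that verification is a routine completion of your plan.
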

\begin{proof}
We proceed by induction on $n$. For $n=0$ the thesis is trivial. Next by \eqref{2.2} we have
$G^{n+1}(t,x;T,y)=I_{n,2}-I_{n,1}$ where
 $$I_{n,l}=\sum\limits_{h=1}^{n+1} \a_h\int_t^T\int_{\R}G^0(t,x;s,\eta)
 (\eta-\bar{x})^h \pa_{\eta}^{l} G^{n+1-h}(s,\eta;T,y)d\eta ds,\quad l=1,2.$$
We only analyze the case $l=2$ since the other one is analogous. By the inductive hypothesis
\eqref{Gnlem}, we have that $I_{n,2}$ is a linear combination of terms of the form
\begin{equation}\label{Ia}
 \begin{split}
 \int_t^T\int_{\R}G^0(t,x;s,\eta) (\sqrt{T-s})^{j}(\eta-\bar{x})^{h+i-p}{\partial}_{\y}^{k+2-p}G^0(s,\eta;T,y)d \eta ds
\end{split}
\end{equation}
for $p=0,1,2$ and $h=1,\dots,n+1$; moreover we have
\begin{align}\label{I3a1}
  &i+j-k\ge n+1-h,\\ \label{I3a2}
  &i\le n+1-h, \\ \label{I3a3}
  &j\le (n+1-h)(n+4-h)\le n(n+3),\\ \label{I3a4}
  &k\le \frac{(n+1-h)(n+6-h)}{2}\le \frac{n(n+5)}{2}.
\end{align}
Again we focus only on $p=0$, the other cases being analogous: then by properties \eqref{V},
\eqref{d} and \eqref{repr}, we have that the integral in \eqref{Ia} is equal to
\begin{equation}\label{I3a5}
 \int_t^T (\sqrt{T-s})^{j}V_{t,s,x}^{h+i}ds\, \pa_{x}^{k+2} G^0(t,x;T,y)
\end{equation}
where $V_{t,T,x}\equiv V_{t,T,x,0}$ is the operator in \eqref{38}. Now we remark that
$V^{n}_{t,s,x}$ is a finite sum of the form
\begin{equation}\label{I2a}
  V^{n}_{t,s,x}=\sum_{0\le j_{1},\frac{j_{2}}{2},j_{3}\le n \atop j_{1}+j_{2}-j_{3}\ge n}
  b^{n}_{j_{1},j_{2},j_{3}}(x-\xbar)^{j_{1}}(\sqrt{s-t})^{j_{2}}{\partial}_{x}^{j_{3}}
\end{equation}
for some constants $b^{n}_{j_{1},j_{2},j_{3}}$. Thus the integral in \eqref{I3a5} is a linear
combination of terms of the form
  $$(x-\xbar)^{j_{1}}(\sqrt{T-s})^{j+2+j_{2}} \pa_{x}^{k+2+j_{3}} G^0(t,x;T,y)$$
where
\begin{align}\label{and10a}
  &0\le j_{1},\,\frac{j_{2}}{2},\,j_{3}\le h+i,\\  \label{and10b}
  &j_{1}+j_{2}-j_{3}\ge h+i.
\end{align}
Eventually we have
\begin{align*}
    &j_{1}+j+j_{2}+2-(k+2+j_{3})\ge
\intertext{(by \eqref{and10b})}
    &\ge i+j-k+h \ge
\intertext{(by \eqref{I3a1})}
    &\ge n+1.
\end{align*}
On the other hand, by \eqref{and10a} and \eqref{I3a2} we have
\begin{align*}
    j_{1}\le h+i\le n+1.
\end{align*}
Moreover, by \eqref{and10a}, \eqref{I3a2} and \eqref{I3a3} we have
\begin{align*}
    j+2+j_{2}\le j+2+2(n+1)\le n(n+3)+2+2(n+1)=(n+1)(n+4).
\end{align*}
Finally, by \eqref{and10a}, \eqref{I3a2} and \eqref{I3a4} we have
\begin{align*}
    k+2+j_{3}&\le k+2+h+i\le k+n+3\\
    &\le\frac{n(n+5)}{2}+n+3=\frac{(n+1)(n+6)}{2}.
\end{align*}
This concludes the proof.
\qquad\end{proof}

Now we set $\xbar=y$ and prove the thesis only in this case: to treat the case $\xbar=x$, it
suffices to proceed in a similar way by using the backward parametrix method introduced in
\cite{CorielliFoschiPascucci2010}. 
\begin{lemma}\label{lemapp2}
For any $\epsilon,\t >0$ there exists a positive constant $C$, only dependent on
$\e,\t,m,M,N$ and $\max\limits_{k\le N}\|\a_{k}\|_{\infty}$, such that
\begin{equation}\label{and14}
\left|\partial_{xx}G^n(t,x;T,y)\right| \leq C(T-t)^{\frac{n-2}{2}}\bar{\G}^{M+\epsilon}(t,x;T,y),
\end{equation}
for any $n\le N$, $x,y\in\R$ and $t,T\in\R$ with $0<T-t\le \t$.
\end{lemma}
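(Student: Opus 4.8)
The plan is to estimate the second spatial derivative of each term $G^n$ in the expansion by combining the explicit representation from Lemma \ref{lemapp1} with standard Gaussian estimates for the derivatives of the Gaussian kernel $G^0$. First I would recall that, under the parabolicity condition \eqref{80}, the leading kernel $G^0$ is the Gaussian fundamental solution of the heat operator $\frac{\a_0}{2}({\partial}_{xx}-{\partial}_x)+{\partial}_t$ with $\a_0=a(\cdot,\xbar)$, so that for any multi-index and any $\epsilon>0$ there is a constant with
\[
\left|(x-y)^i\,\partial_x^k G^0(t,x;T,y)\right|\le C\,(T-t)^{\frac{i-k}{2}}\,\bar{\G}^{M+\epsilon}(t,x;T,y),
\]
the $\epsilon$ being needed to absorb polynomial factors into a slightly larger Gaussian. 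This is the one genuinely classical ingredient; it is the analogue of the estimates used in the standard parametrix construction (cf. \cite{Friedman}, \cite{DiFrancescoPascucci2}).

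Next I would plug in the structure of $G^n$. By Lemma \ref{lemapp1} with $\xbar=y$,
\[
\partial_{xx}G^n(t,x;T,y)=\!\!\!\sum_{\substack{i\le n,\ j\le n(n+3),\ k\le \frac{n(n+5)}{2}\\ i+j-k\ge n}}\!\!\! c^n_{i,j,k}\,\partial_{xx}\!\Big((x-y)^i(\sqrt{T-t})^{j}\partial_x^k G^0(t,x;T,y)\Big).
\]
Carrying the two extra $x$-derivatives through the product by Leibniz produces finitely many terms of the form $(x-y)^{i'}(\sqrt{T-t})^{j}\partial_x^{k'} G^0$ with $i'\ge i-2$ and $k'\le k+2$, hence with $i'+j-k'\ge i+j-k-4\ge n-4$; more precisely, since each factor $(x-y)^{-1}$ lost by differentiating the monomial is compensated by the scaling homogeneity of $\partial_x G^0$, a bookkeeping of exponents gives that every resulting term is bounded by $C(T-t)^{\frac{n-4}{2}}\bar\G^{M+\epsilon}$ — but one checks that the terms where both derivatives hit $(x-y)^i$ (the only ones producing the worst power) carry $j\ge i+\text{(something)}$, and using $i+j-k\ge n$ together with the bound $j\le n(n+3)$ sharpens the estimate to the claimed exponent $\frac{n-2}{2}$. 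The constant $C$ depends only on $\epsilon,\t,m,M,N$ and $\max_{k\le N}\|\a_k\|_\infty$ because the $c^n_{i,j,k}$ are fixed polynomials in $\a_0,\dots,\a_n$, which are bounded by Assumption A$_{N+1}$, and the Gaussian bounds are uniform for $0<T-t\le\t$.

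The main obstacle I anticipate is precisely this exponent bookkeeping: naively differentiating twice costs a factor $(T-t)^{-1}$, which would give $\frac{n-4}{2}$ instead of $\frac{n-2}{2}$, and recovering the extra $(T-t)^{1/2}$ requires exploiting the constraint $i+j-k\ge n$ in Lemma \ref{lemapp1} rather than treating $i,j,k$ as independent — i.e. showing that whenever the $(x-y)$-exponent is driven down to its worst value by the differentiation, the time-exponent $j$ is correspondingly large. A clean way to organise this is to observe that each summand of $G^n$ already satisfies the scaling estimate $|G^n|\le C(T-t)^{n/2}\bar\G^{M+\epsilon}$ (this is the $k=0$-derivative case of the same computation, and is what one would prove first as a warm-up), and then note that applying $\partial_x$ to a term respecting the index inequality $i+j-k\ge m$ yields terms respecting $i+j-k\ge m-1$, so two derivatives cost at most $(T-t)^{-1}$ in the worst case but the homogeneity of $G^0$ shows $\partial_x$ actually costs only $(T-t)^{-1/2}$ in the Gaussian norm. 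I would phrase the induction on $n$ so that the index inequality is carried along at each step, exactly as in the proof of Lemma \ref{lemapp1}, and the Gaussian estimate is applied only at the very end.
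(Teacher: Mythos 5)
Your overall strategy — apply Lemma~\ref{lemapp1} with $\xbar=y$, carry $\partial_{xx}$ through the sum, and then invoke the Gaussian parametrix estimates for $(x-y)^i\partial_x^k G^0$ — is exactly what the paper does, and it is the right strategy. However, the worry you spend most of the proposal on (that the two extra derivatives \lq\lq cost\rq\rq\ a factor $(T-t)^{-1}$, giving $\frac{n-4}{2}$ rather than $\frac{n-2}{2}$, and that recovering the missing half-power requires delicate use of the constraint $i+j-k\ge n$) is a non-issue, and the bookkeeping you sketch to resolve it is based on an incorrect premise.

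Here is why there is nothing to fix. When you apply $\partial_{xx}$ to a single summand $(x-y)^i\,\partial_x^k G^0$ by Leibniz, each of the two derivatives either lowers the monomial exponent by one \emph{or} raises the derivative order on $G^0$ by one, but never both. Thus the resulting terms are of the form $(x-y)^{i-p}\,\partial_x^{k+2-p}G^0$ with $p\in\{0,1,2\}$, and crucially the combination $i'-k' = (i-p)-(k+2-p) = i-k-2$ is \emph{independent of} $p$. Applying the two Gaussian estimates \eqref{and13} (or your combined version of them) to any such term gives the bound $c\,(T-t)^{\frac{i-k-2}{2}}\,\bar\G^{M+\e}$ — the same for every term, with no \lq\lq worst case\rq\rq. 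Multiplying by the prefactor $(\sqrt{T-t})^{\,j}$ yields $(T-t)^{\frac{i+j-k-2}{2}}\bar\G^{M+\e}$, and the constraint $i+j-k\ge n$ gives $\frac{i+j-k-2}{2}\ge\frac{n-2}{2}$, so for $0<T-t\le\t$ the excess power is absorbed into the constant. Your statement that the naive count gives $\frac{n-4}{2}$ is an arithmetic slip (two $\partial_x$'s in the parabolic scaling cost $(T-t)^{-1}$, not $(T-t)^{-2}$), and your claimed asymmetry between terms where both derivatives hit the monomial versus $G^0$ does not exist. The induction-on-derivatives reorganisation you propose would work, but it is unnecessary: the lemma follows in one line from \eqref{and13} once you observe the cancellation $i'-k'=i-k-2$.
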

\begin{proof}
By Lemma \ref{lemapp1} with $\xbar=y$, we have
\begin{align*}
 \left|\partial_{xx}G^n(t,x;T,y)\right| &\leq
 \sum_{ i\le n,\, j\le n(n+3),\, k\le \frac{n(n+5)}{2}\atop i+j-k\ge n}
  \left|c^n_{i,j,k}\right|\left(\sqrt{T-t}\right)^{j}\cdot\\
  &\quad\cdot\left|{\partial}_{xx}\left((x-y)^i\partial_x^k G^0(t,x;T,y)\right)\right|.
\end{align*}
Then the thesis follows from the boundedness of the coefficients $\a_{k}$, $k\le N$, (cf.
Assumption A$_{N}$) and the following standard Gaussian estimates (see, for instance, Lemma A.1
and A.2 in \cite{CorielliFoschiPascucci2010}):
\begin{equation}\label{and13}
\begin{split}
  &\partial_x^k G^0(t,x;T,y) \le c\,\left(\sqrt{T-t}\right)^{-k}\bar{\G}^{M+\epsilon}(t,x;T,y),\\
  &\left(\frac{x-y}{\sqrt{T-t}}\right)^{k} G^0(t,x;T,y) \le c\,\bar{\G}^{M+\epsilon}(t,x;T,y),
\end{split}
\end{equation}
where $c$ is a positive constant which depends on $k,m,M,\e$ and $\t$.
\qquad\end{proof}

\begin{lemma}\label{lemapp3}
For any $\epsilon,\t >0$ there exists a positive constant $C$, only dependent on
$\e,\t,m,M,N$ and $\max\limits_{k\le N+1}\|\a_{k}\|_{\infty}$, such that
\begin{equation}
\left|Z_n^N(t,x;T,y) \right| \leq \kappa_{n}(T-t)^{\frac{N+n-1}{2}}\bar{\G}^{M+\epsilon}(t,x;T,y),
\end{equation}
for any $n\in\NN$, $x,y\in\R$ and $t,T\in\R$ with $0<T-t\le \t$, where
  $$\kappa_{n}=C^{n}\frac{\G_{E}\left(\frac{1+N}{2}\right)}{\G_{E}\left(\frac{n+1+N}{2}\right)}$$
and $\G_{E}$ denotes the Euler Gamma function.
\end{lemma}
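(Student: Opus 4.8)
The plan is to estimate $Z_n^N$ by induction on $n$, exactly as in the classical parametrix method but using the more delicate Gaussian bounds of Lemma \ref{lemapp2} in place of the crude frozen-coefficient estimates. The base case $n=0$ is $Z_0^N(t,x;T,y)=L\G^N(t,x;T,y)$; since $L_N\G^N=L_N\sum_{k=0}^N G^k$ telescopes by construction of the Cauchy problems \eqref{2.2}, the only surviving contribution is $(L-L_N)\G^N$, which is $\sum_{k=1}^{N+1}\a_k(t)(x-\xbar)^k(\partial_{xx}-\partial_x)$ applied to a sum of terms of the form described in Lemma \ref{lemapp1}, plus the residual jump/coefficient terms of order $N+1$ in the Taylor expansion of $a$ (this is why Assumption A$_{N+1}$, not A$_N$, is needed, and why $\max_{k\le N+1}\|\a_k\|_\infty$ appears). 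Invoking Lemma \ref{lemapp2} for the second-derivative terms and the Gaussian estimates \eqref{and13} for the rest, together with the factor $(x-y)^{N+1}$ balanced against powers of $\sqrt{T-t}$ exactly as in that lemma's proof, one gets $|Z_0^N(t,x;T,y)|\le C(T-t)^{\frac{N-1}{2}}\bar\G^{M+\e}(t,x;T,y)$, which is the claimed bound with $\kappa_0=C$.

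\textbf{The inductive step.} Assume $|Z_n^N(s,\x;T,y)|\le\kappa_n(T-s)^{\frac{N+n-1}{2}}\bar\G^{M+\e}(s,\x;T,y)$. By definition,
\begin{equation*}
Z_{n+1}^N(t,x;T,y)=\int_t^T\!\!\int_{\R}L\G^0(t,x;s,\x)\,Z_n^N(s,\x;T,y)\,d\x\,ds.
\end{equation*}
Here $L\G^0=(L-L_0)\G^0$ (again a telescoping identity, since $L_0\G^0=0$), which is $\a_0$-independent and involves $\partial_{xx}\G^0$ plus jump and drift corrections; by \eqref{and13} one has $|L\G^0(t,x;s,\x)|\le C(s-t)^{-1}\bar\G^{M+\e}(t,x;s,\x)$. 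Substituting the two bounds and using the reproduction (Chapman--Kolmogorov) property of the Gaussian kernel $\bar\G^{M+\e}$, $\int_{\R}\bar\G^{M+\e}(t,x;s,\x)\bar\G^{M+\e}(s,\x;T,y)\,d\x=\bar\G^{M+\e}(t,x;T,y)$, reduces the $\x$-integral to a multiple of $\bar\G^{M+\e}(t,x;T,y)$, leaving the time integral
\begin{equation*}
C\kappa_n\,\bar\G^{M+\e}(t,x;T,y)\int_t^T (s-t)^{-1+\frac12}(T-s)^{\frac{N+n-1}{2}}\,ds,
\end{equation*}
where the extra $(s-t)^{1/2}$ is gained from the $(x-\x)/\sqrt{s-t}$ factors absorbed into the Gaussian, preventing the singularity at $s=t$ from being non-integrable. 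The beta integral $\int_t^T(s-t)^{-1/2}(T-s)^{\frac{N+n-1}{2}}ds=(T-t)^{\frac{N+n}{2}}B\!\left(\tfrac12,\tfrac{N+n+1}{2}\right)=(T-t)^{\frac{N+n}{2}}\frac{\G_E(1/2)\,\G_E(\frac{N+n+1}{2})}{\G_E(\frac{N+n+2}{2})}$ then yields the bound with $(T-t)^{\frac{N+(n+1)-1}{2}}$ and $\kappa_{n+1}=C\kappa_n\,\G_E(1/2)\,\G_E(\frac{N+n+1}{2})/\G_E(\frac{N+n+2}{2})$, which telescopes to the stated $\kappa_{n+1}=C^{n+1}\G_E(\frac{1+N}{2})/\G_E(\frac{n+2+N}{2})$ after absorbing $\G_E(1/2)$ and $\G_E(\frac{1+N}{2})$ into the constant $C$ (and renaming).

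\textbf{Main obstacle.} The routine part is the beta-function bookkeeping; the delicate point is obtaining the uniform Gaussian control of $\partial_{xx}G^n$ with the \emph{right} power $(T-t)^{(n-2)/2}$ — i.e. Lemma \ref{lemapp2} — since a naive term-by-term bound from Lemma \ref{lemapp1} would produce $(T-t)^{(j-k-2)/2}$ with $j-k$ possibly as negative as $-n$ (from the constraint $i+j-k\ge n$ with $i$ up to $n$), which is worse. The resolution is that each factor $(x-y)^i$ must be paired with $i$ derivatives' worth of decay via the second estimate in \eqref{and13}, i.e. $(x-y)^i\partial_x^{k+2}G^0$ is controlled by $(T-t)^{-(k+2-i)/2}\bar\G^{M+\e}$, not $(T-t)^{-(k+2)/2}\bar\G^{M+\e}$; combined with the surviving $(\sqrt{T-t})^j$ and $i+j-k\ge n$ this gives the exponent $\frac{j-(k+2-i)}{2}\ge\frac{n-2}{2}$. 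Checking that this pairing is always available — that no term in the expansion \eqref{Gnlem} has "unmatched" $(x-y)$ powers — is the crux, and it is guaranteed precisely by the index constraints recorded in Lemma \ref{lemapp1}. Once Lemma \ref{lemapp2} is in hand, Lemma \ref{lemapp3} (and hence the summability of $\Phi^N=\sum_n Z_n^N$ via the ratio $\kappa_{n+1}/\kappa_n\to 0$, which gives the error estimate \eqref{81} with $g_N(s)=\mathrm{O}(s^{(N+1)/2})$ after one more convolution against $\G^0$) follows by the induction above.
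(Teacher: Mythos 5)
Your inductive step is sound and matches the paper's strategy: writing $L\G^0=(L-L_0)\G^0$, bounding it by the Gaussian kernel with a gain of $(s-t)^{1/2}$ from pairing $(x-\x)$ with the kernel, invoking the Chapman--Kolmogorov property of $\bar\G^{M+\e}$, and evaluating the resulting Beta integral to obtain the Gamma-function ratio in $\kappa_n$. That part is essentially what the paper does by referring to the classical parametrix arguments (Lemma 4.3 of \cite{DiFrancescoPascucci2}).

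The base case, however, has a genuine gap. You claim that $L_N\G^N=L_N\sum_{k=0}^N G^k$ ``telescopes by construction of the Cauchy problems'' so that $L\G^N=(L-L_N)\G^N$. This is false: the problems \eqref{2.2} are formulated with $L_0$ acting on $G^k$, not $L_N$, and a direct computation gives
\begin{equation*}
L_N\G^N=\sum_{h=1}^N(L_h-L_{h-1})\bigl(\G^N-\G^{N-h}\bigr)\neq 0.
\end{equation*}
For instance, $L_1\G^1=(L_1-L_0)G^1$, which is not zero. These residual terms contribute at the same order $(T-t)^{(N-1)/2}$ as $(L-L_N)\G^N$, so they cannot simply be dropped. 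The correct identity, which the paper proves by a separate induction (its equation \eqref{and11}), is
\begin{equation*}
Z_0^N=L\G^N=\sum_{n=0}^N(L-L_n)\,G^{N-n},
\end{equation*}
in which the coefficient is frozen at a \emph{different} order $n$ for each term $G^{N-n}$, not uniformly at order $N$. Each summand is then bounded by pairing the Taylor remainder $|x-y|^{n+1}$ of $L-L_n$ (controlled via $\|\a_{n+1}\|_\infty$; this is where Assumption A$_{N+1}$ enters) against the bound $(T-t)^{(N-n-2)/2}$ from Lemma \ref{lemapp2} for $\partial_{xx}G^{N-n}$, giving the uniform exponent $(T-t)^{(N-1)/2}$. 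A second, smaller slip: you identify $L-L_N$ with $\sum_{k=1}^{N+1}\a_k(x-\xbar)^k(\partial_{xx}-\partial_x)$, but that expression is $L_{N+1}-L_0$. The coefficient of $L-L_N$ is the $N$-th order Taylor \emph{remainder} of $a/2$ at $\xbar$, which is only $O(|x-\xbar|^{N+1})$ after an appeal to the boundedness of $\partial_x^{N+1}a$; it has no finite polynomial form in general.
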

\begin{proof}
On the basis of definitions \eqref{34} and \eqref{2.2}, by induction we can prove the following formula:
\begin{equation}\label{and11}
 Z_0^N(z;\z) =L\G^N(z;\z)=\sum_{n=0}^N (L-L_{n})G^{N-n}(z;\z).
\end{equation}
Indeed, for $N=0$ we have
 $$L\G^0(z;\z)=(L-L_0)G^{0}(z;\z),$$
because $L_0G^{0}(z;\z)=0$ by definition. Then, assuming that \eqref{and11} holds for $N\in\NN$, for $N+1$ we have
\begin{align*}
 L\G^{N+1}(z;\z)&= L\G^N(z;\z)+L G^{N+1}(z;\z)=
 \intertext{(by inductive hypothesis and \eqref{2.2})}
 &= \sum_{n=0}^N (L-L_n)G^{N-n}(z;\z) + (L-L_0)G^{N+1}(z;\z)\\
 &\quad- \sum_{n=1}^{N+1} (L_{n}-L_{n-1})G^{N+1-n}(z;\z)\\
 &=\sum_{n=1}^{N+1} (L-L_{n-1})G^{N-(n-1)}(z;\z)+ (L-L_0)G^{N+1}(z;\z)\\
 &\quad- \sum_{n=1}^{N+1} (L_{n}-L_{n-1})G^{N+1-n}(z;\z)\\
 &=(L-L_0)G^{N+1}+\sum_{n=1}^{N+1} (L-L_{n})G^{N+1-n}(z;\z)
\end{align*}
from which \eqref{and11} follows.

Then, by \eqref{and11} and Assumption A$_{N+1}$ we have
\begin{equation}\label{A2}
\left|Z_0^N(z;\z) \right|\leq\sum_{n=0}^N
 \|\a_{n+1}\|_{\infty}|x-y|^{n+1}\left|(\partial_{xx}-\pa_x)G^{N-n}(z;\z)\right|
\end{equation}
and for $n=0$ the thesis follows from estimates \eqref{and14} and \eqref{and13}.
In the case $n\ge 1$, proceeding by induction, the thesis follows from the previous estimates by using the arguments in Lemma 4.3 in \cite{DiFrancescoPascucci2}: therefore the proof is omitted.
\qquad\end{proof}

\begin{figure}[p]
  \centering

  \begin{tabular}{cc}
    \includegraphics[width=.48\linewidth]{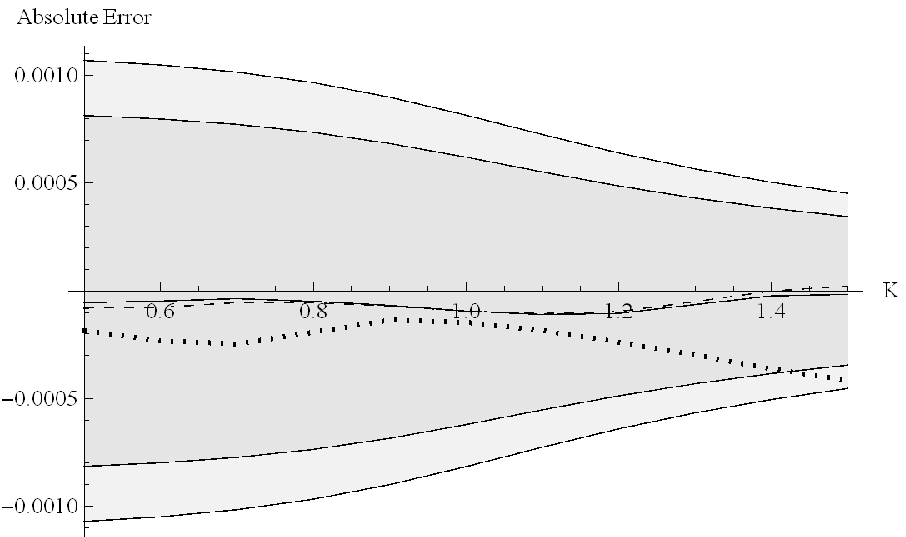}
    &
    \includegraphics[width=.48\linewidth]{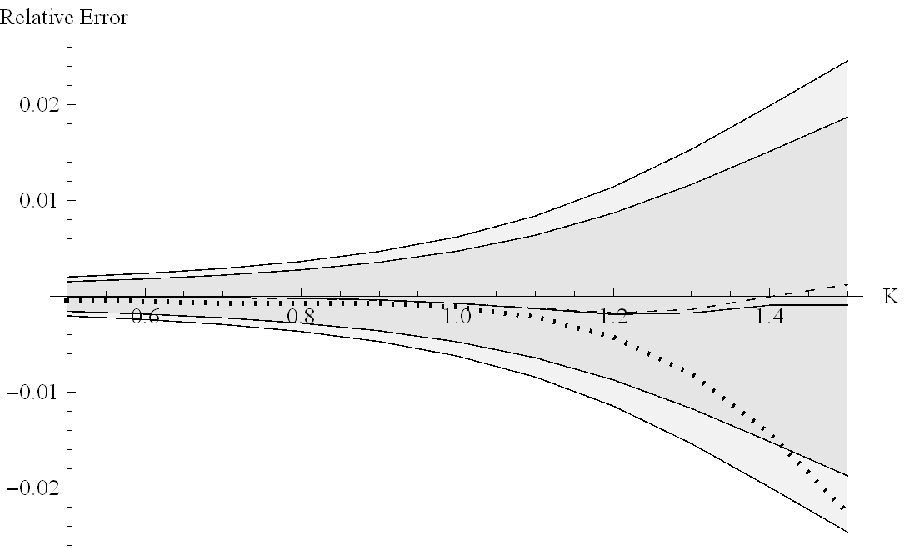}
  \end{tabular}

  \caption{Absolute (left) and relative (right) errors of the $1^{\text{st}}$ (dotted line),
  $2^{\text{nd}}$ (dashed line), $3^{\text{rd}}$ (solid line) order approximations of a Call price
  in the {\bf CEV-Merton} model with maturity $\mathbf{T=0.25}$ and strike $\mathbf{K\in[0.5,1.5]}$.
  The shaded bands show the $95\%$ (dark gray) and $99\%$ (light gray) Monte Carlo confidence regions}
  \label{fig1}
\end{figure}

\begin{figure}[p]
  \centering

  \begin{tabular}{cc}
    \includegraphics[width=.48\linewidth]{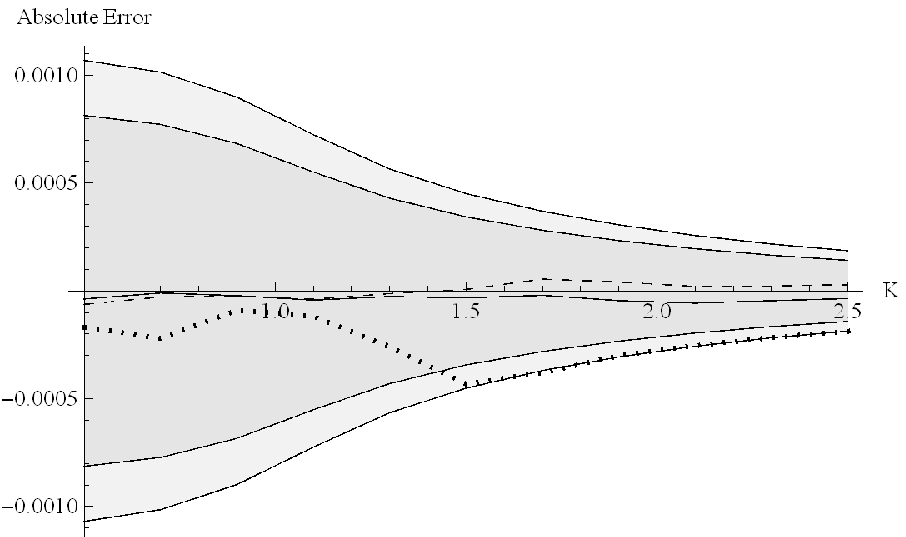}
    &
    \includegraphics[width=.48\linewidth]{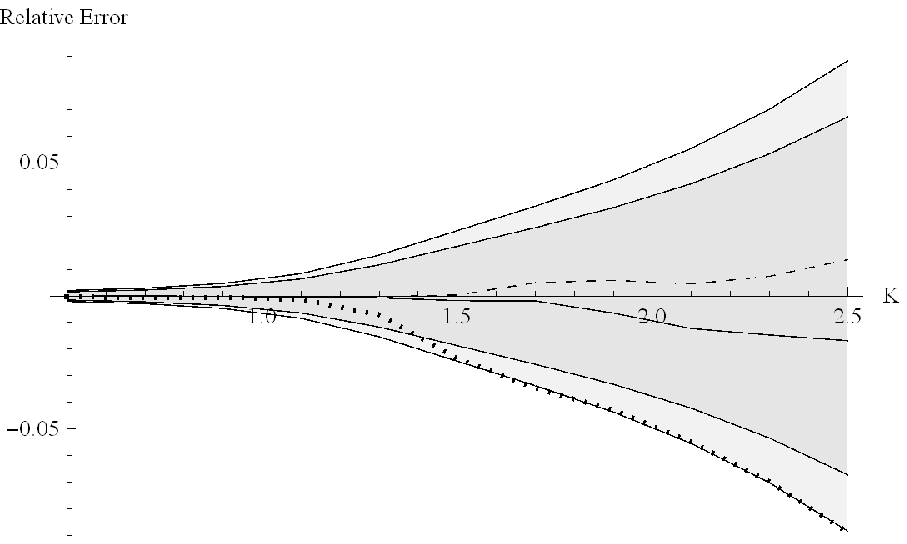}
  \end{tabular}

  \caption{Absolute (left) and relative (right) errors of the $1^{\text{st}}$ (dotted line),
  $2^{\text{nd}}$ (dashed line), $3^{\text{rd}}$ (solid line) order approximations of a Call price
  in the {\bf CEV-Merton} model with maturity $\mathbf{T=1}$ and strike $\mathbf{K\in[0.5,2.5]}$
  }
  \label{fig2}
\end{figure}

\begin{figure}[p]
  \centering

  \begin{tabular}{cc}
    \includegraphics[width=.48\linewidth]{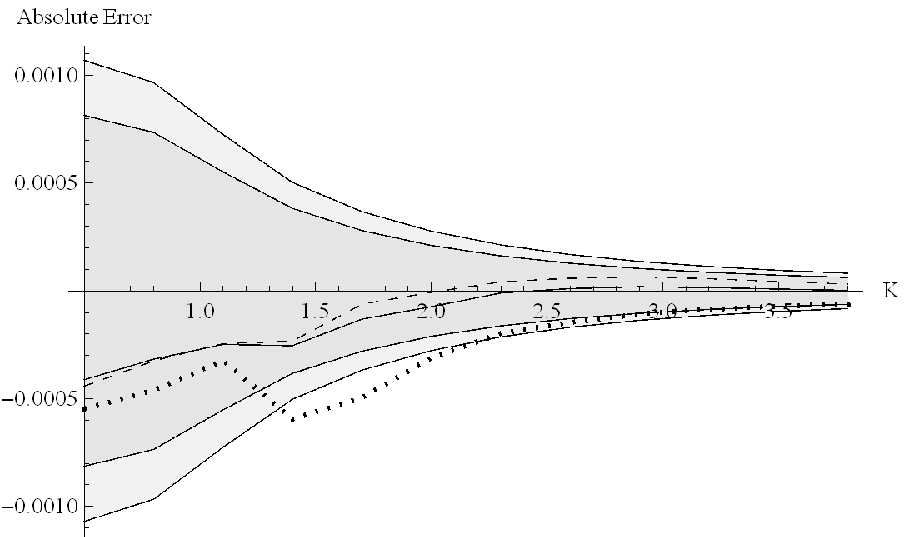}
    &
    \includegraphics[width=.48\linewidth]{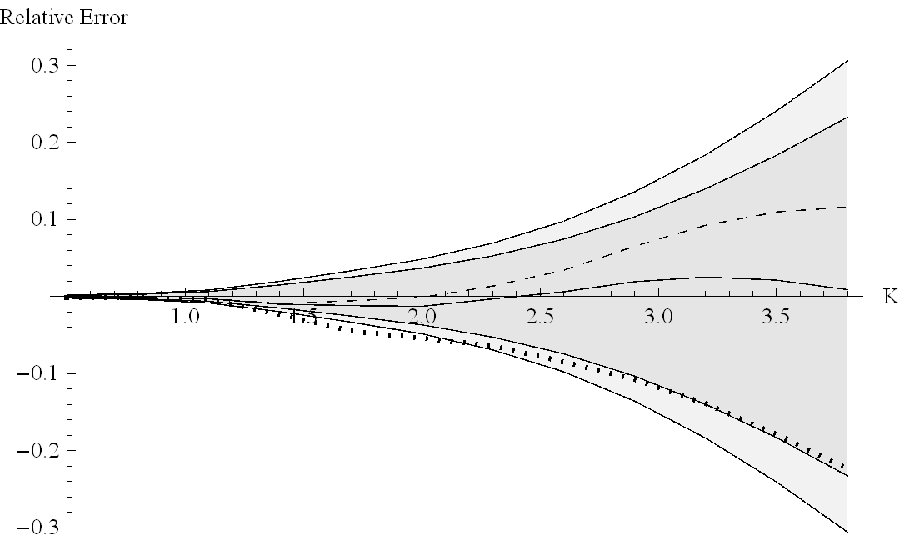}
  \end{tabular}

  \caption{Absolute (left) and relative (right) errors of the $1^{\text{st}}$ (dotted line),
  $2^{\text{nd}}$ (dashed line), $3^{\text{rd}}$ (solid line) order approximations of a Call price
  in the {\bf CEV-Merton} model with maturity $\mathbf{T=10}$ and strike $\mathbf{K\in[0.5,4]}$
  }
  \label{fig3}
\end{figure}

\begin{figure}[p]
  \centering

  \begin{tabular}{cc}
    \includegraphics[width=.48\linewidth]{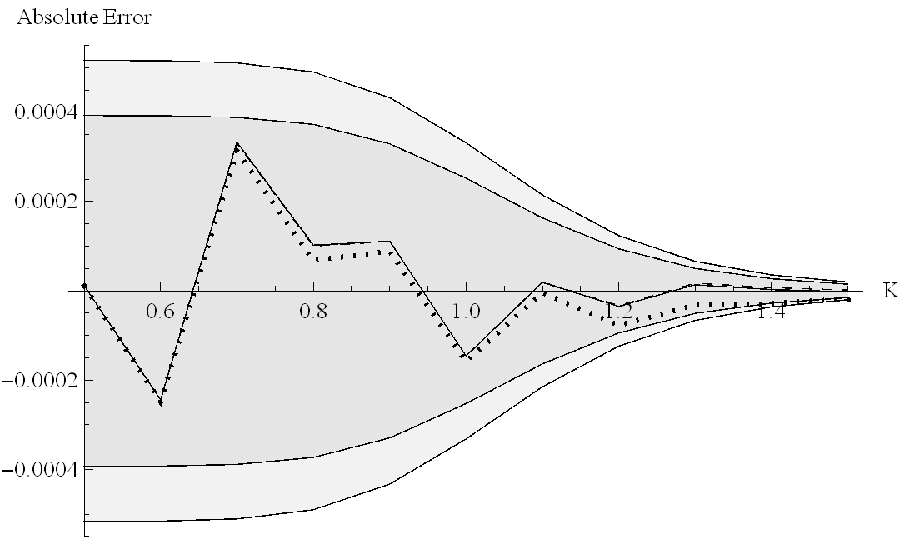}
    &
    \includegraphics[width=.48\linewidth]{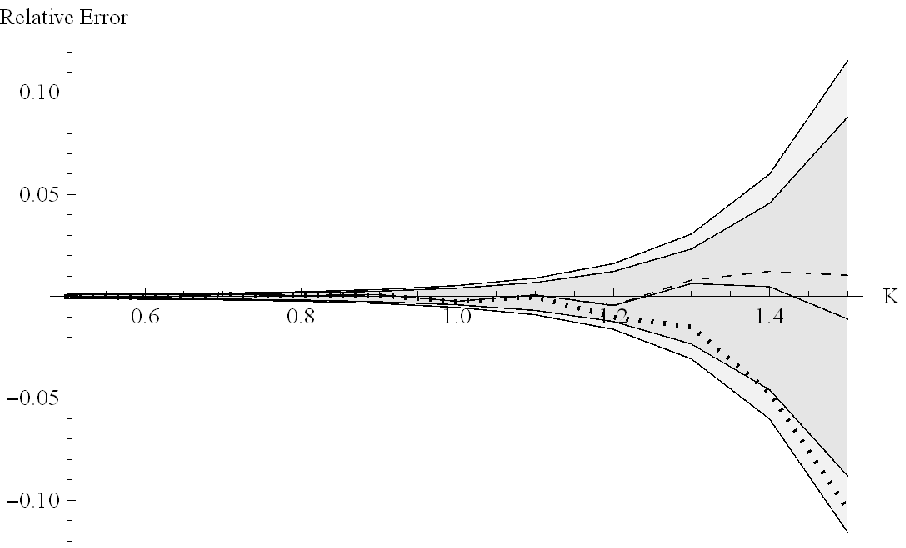}
  \end{tabular}

  \caption{Absolute (left) and relative (right) errors of the $1^{\text{st}}$ (dotted line),
  $2^{\text{nd}}$ (dashed line), $3^{\text{rd}}$ (solid line) order approximations of a Call price
  in the {\bf CEV-Variance-Gamma} model with maturity $\mathbf{T=0.25}$ and strike $\mathbf{K\in[0.5,1.5]}$.
  The shaded bands show the $95\%$ (dark gray) and $99\%$ (light gray) Monte Carlo confidence regions}
  \label{fig4}
\end{figure}

\begin{figure}[p]
  \centering

  \begin{tabular}{cc}
    \includegraphics[width=.48\linewidth]{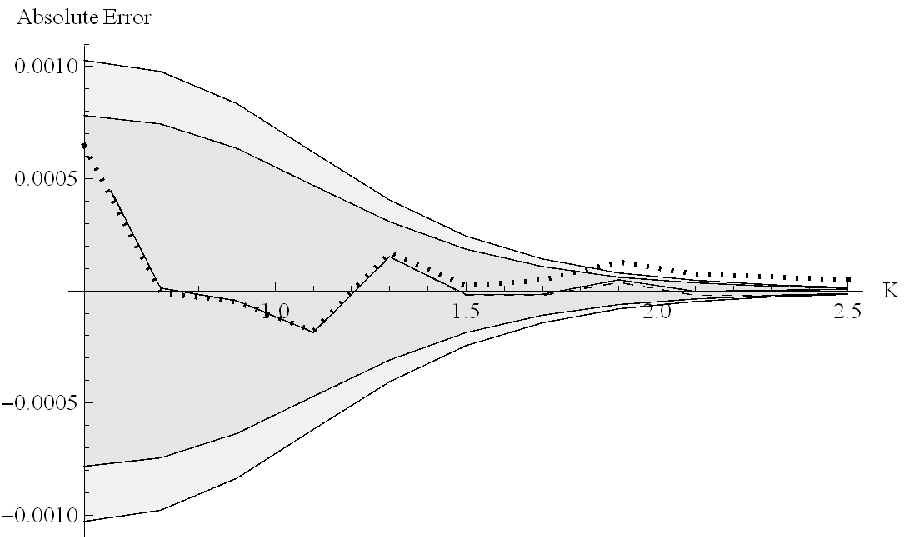}
    &
    \includegraphics[width=.48\linewidth]{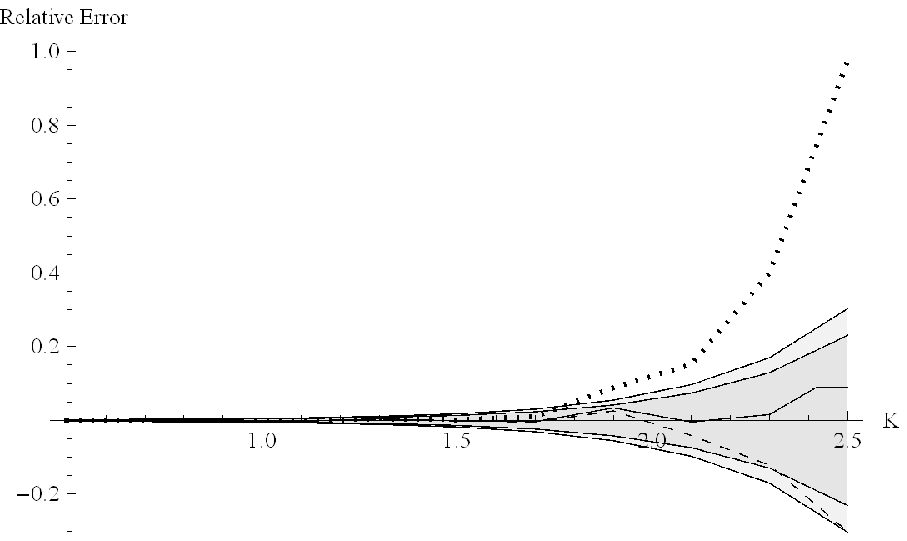}
  \end{tabular}

  \caption{Absolute (left) and relative (right) errors of the $2^{\text{nd}}$ (dotted line),
  $3^{\text{rd}}$ (dashed line), $4^{\text{th}}$ (solid line) order approximations of a Call price
  in the {\bf CEV-Variance-Gamma} model with maturity $\mathbf{T=1}$ and strike $\mathbf{K\in[0.5,2.5]}$
  }
  \label{fig5}
\end{figure}

\begin{figure}[p]
  \centering

  \begin{tabular}{cc}
    \includegraphics[width=.48\linewidth]{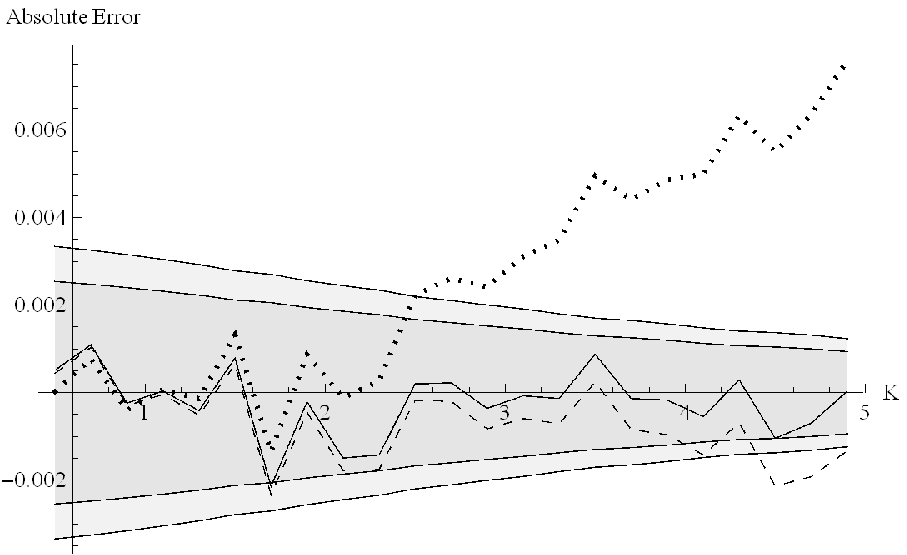}
    &
    \includegraphics[width=.48\linewidth]{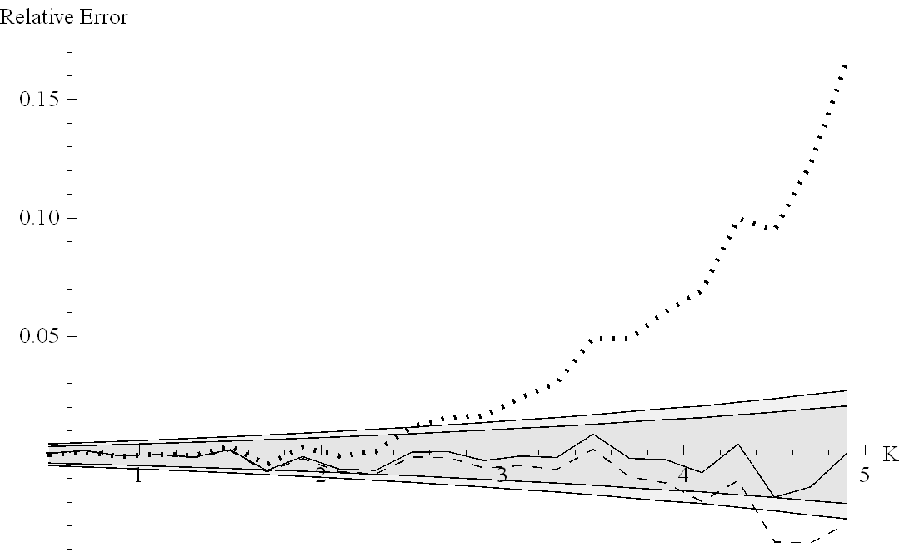}
  \end{tabular}

  \caption{Absolute (left) and relative (right) errors of the $2^{\text{nd}}$ (dotted line),
  $3^{\text{rd}}$ (dashed line), $4^{\text{th}}$ (solid line) order approximations of a Call price
  in the {\bf CEV-Variance-Gamma} model with maturity $\mathbf{T=10}$ and strike $\mathbf{K\in[0.5,5]}$}
  \label{fig6}
\end{figure}

\backmatter

\addcontentsline{toc}{chapter}{Bibliography}
\markboth{\MakeUppercase{Bibliography}}{}
\bibliographystyle{plainnat}
\bibliography{thesis}

\end{document}